%-------------------------------------------------------------------------
%AMS-LaTeX Paper *********************************************************
%-------------------------------------------------------------------------
\documentclass[a4paper,11pt]{amsbook}
\usepackage{amsmath}
\usepackage{amsthm}
\usepackage{amsfonts}
\usepackage{amssymb}
\usepackage{latexsym}
\usepackage{dutchcal} % for script font with lower case letters
\usepackage{enumitem}

\usepackage[section]{placeins}

\usepackage[abs]{overpic}
\usepackage{imakeidx}

%% Use to show labels.
%\usepackage[notref,notcite]{showkeys}

\usepackage[titletoc]{appendix}

\usepackage[usenames]{color}
\usepackage[all]{xy}

\usepackage{graphicx}
\usepackage{latexsym}
\usepackage[cp850]{inputenc}
\usepackage{epsfig}
\usepackage{psfrag}
%\usepackage{mathabx}
%\usepackage{dsfont}

%%% next four for restate  theorems
\usepackage{thmtools}
\usepackage{thm-restate}

\usepackage{hyperref}
\usepackage{cleveref}

% ENVIRONMENTS ----------------------------------------------------------
\numberwithin{section}{chapter}
\numberwithin{figure}{section}

\newtheorem{theorem}{Theorem}[section]{\bf}{\it}
\newtheorem{lemma}[theorem]{Lemma}{\bf}{\it}
\newtheorem{proposition}[theorem]{Proposition}{\bf}{\it}
\newtheorem{corollary}[theorem]{Corollary}{\bf}{\it}
{\bf}{\it} % Main Theorems, numbered A,B,...
{\bf}{\it}
\newtheorem*{theorem*}{Theorem}

\newtheorem{remark}[theorem]{Remark}%[theorem]{Remark}{\bf}{\it}
\newtheorem*{remark*}{Remark}
{\bf}{\it}
{\bf}{\it}

{\bf}{\it}
{\bf}{\it}
\newtheorem{convention}[theorem]{Convention}{\bf}{\it}
\newtheorem{example}[theorem]{Example}
\newtheorem{definition}[theorem]{Definition}
\newtheorem{notation}[theorem]{Notation}

\newtheorem{standing}[theorem]{Standing assumptions}{\bf}{\it}

\theoremstyle{remark}

\theoremstyle{definition}

\theoremstyle{remark}

% MATHEMATICAL SYMBOLS --------------------------------------------------

%\newcommand{\sfCh}{\mathsf {Ch}}

\newcommand{\diam}{\operatorname{diam}}

\newcommand{\R}{\mathbb R}
\newcommand{\Z}{\mathbb Z}

\newcommand{\N}{\mathbb N}

\newcommand{\loc}{{\operatorname{loc}}}

\newcommand{\dist}{{\operatorname{dist}\,}}
\newcommand{\id}{{\operatorname{id}}}

\newcommand{\pr}{{\operatorname{pr}}}
% COMMANDS --------------------------------------------------------------

\newdimen\vintkern\vintkern11pt
\def\vint{-\kern-\vintkern\int}

% Joitakin komentoja ----------------------------------------------------

\newcommand{\norm}[1]{\lVert #1 \rVert}

\newcommand{\bS}{\mathbb{S}}

\newcommand{\cI}{\mathcal{I}}

\newcommand{\cF}{\mathcal{F}}
\newcommand{\cL}{\mathcal{L}}

\newcommand{\cP}{\mathcal{P}}

\newcommand{\interior}{\mathrm{int}}

\newcommand{\Star}{\mathrm{St}}
\newcommand{\Link}{\mathrm{Lk}}

\newcommand{\cdiff}{\mathcal{Diff}}
\newcommand{\cD}{\mathcal{D}}

\newcommand{\ind}{\mathrm{indent}}

\newcommand{\cT}{\mathcal{T}}

\newcommand{\cC}{\mathcal{C}}
\newcommand{\cQ}{\mathcal{Q}}
\newcommand{\cR}{\mathcal{R}}

\newcommand{\sfK}{\mathsf{K}}

\newcommand{\Diff}{\mathsf{Diff}}

\newcommand{\Wedge}{\mathrm{Wedge}}

\newcommand{\cA}{\mathcal{A}}

\newcommand{\fL}{\mathfrak{L}}

\newcommand{\cU}{\mathcal{U}}

\newcommand{\CW}{\mathrm{CW}}

\newcommand{\Clover}{\mathrm{Clover}}
\newcommand{\cl}{\mathrm{cl}}

\newcommand{\Exp}{\mathsf{Exp}}
\newcommand{\EXP}{\mathsf{EXP}}

\newcommand{\X}{\mathsf{X}}

\newcommand{\SL}{\mathrm{sl}}

\newcommand{\cS}{\mathcal S}

\newcommand{\sK}{\mathsf{K}}

\renewcommand{\emptyset}{\varnothing}

\newcommand{\Refine}{\mathrm{Ref}}
\newcommand{\Core}{\mathrm{Core}}

\newcommand{\sL}{\mathsf{L}}

\newcommand{\Comp}{\mathrm{Comp}}
\newcommand{\Real}{\mathrm{Real}}

\newcommand{\Rec}{\mathsf{Rec}}
\newcommand{\Tr}{\mathsf{Tr}}

\newcommand{\Span}{\mathrm{Span}}
\newcommand{\sfJ}{\mathsf{J}}

\newcommand{\sfE}{\mathsf{E}}
\newcommand{\sfC}{\mathsf{C}}
\newcommand{\sfQ}{\mathsf{Q}}
\newcommand{\sfR}{\mathsf{R}}
\newcommand{\sfT}{\mathsf{T}}

\newcommand{\sfRC}{\mathsf{RC}}

\newcommand{\sF}{\mathcal{F}}

\newcommand{\Channel}{\mathsf{Ch}}

\newcommand{\Dent}{\mathsf{Dent}}

\newcommand{\TOP}{\mathrm{top}}
\newcommand{\BOT}{\mathrm{bot}}

\newcommand{\scrS}{\mathcal S}
\newcommand{\PS}{\mathsf{PS}}

\newcommand{\Lfundamental}{\sL^\dagger}

\newcommand{\Ldef}{\sL_{\mathrm{def}}}
\newcommand{\op}{\mathrm{op}}
\newcommand{\tr}{\mathrm{tr}}

\newcommand{\Layer}{\mathrm{Layer}}

\newcommand{\wall}{\mathrm{wall}}
\newcommand{\ceiling}{\mathrm{ceiling}}
\newcommand{\floor}{\mathrm{floor}}

\newcommand{\PC}{\mathsf{PC}}

\newcommand{\spare}{\mathrm{spare}}

\title[Quasiregular cobordism]{Quasiregular cobordism theorem}

\author{Pekka Pankka}
\address{Department of Mathematics and Statistics, P.O. Box 68 (Pietari Kalmin katu 5), FI-00014 University of Helsinki, Finland}

\email{pekka.pankka@helsinki.fi}

\author{Jang-Mei Wu}
\address{Department of Mathematics, University of Illinois,  1409 West Green Street, Urbana, IL 61822, USA}
\email{jmwu@illinois.edu}

\date{\today}

%\dedicatory{Seppo Rickman (1935--2017) in memoriam}

\thanks{This work was supported in part by the Academy of Finland projects \#256228, \#297258 and \#332671, and a grant from the Simons Foundation \#353435. This material is partly based upon work supported by the National Science Foundation under Grant No. DMS-1928930 while P.P. participated in a program hosted
by the Mathematical Sciences Research Institute in Berkeley, California, during
the Spring 2022 semester.}
\date{\today}
\subjclass[2010]{Primary 30C65; Secondary 57M12, 30L10}

\makeindex

\begin{document}

\maketitle

%    Dedication.  If the dedication is longer than a line or two,
%    remove the centering instructions and the line break.
\cleardoublepage
\thispagestyle{empty}
%    If this book uses the documentclass stml-l or mmono-s, change
%    13.5pc to 10.5pc.
%\vspace*{13.5pc}
\begin{center}
%  Dedication text (use \\[2pt] for line break if necessary)
Seppo Rickman (1935--2017) in memoriam.
\end{center}
\cleardoublepage

\setcounter{tocdepth}{1}
\tableofcontents

\chapter{Introduction}

In this article, we prove a quantitative theorem on the existence of quasiregular mappings from a compact oriented Riemannian $n$-manifolds with boundary to the $n$-sphere $\bS^n$ with finitely many mutually disjoint $n$-balls removed.
The result in its simplest form states as follows.  

\begin{restatable}{theorem}{introthm:cobordism_short}
\label{intro-thm:cobordism_short}
Let $n\geq 3$, $m\geq 2$, $M$ be a compact connected oriented Riemannian $n$-manifold with $m$ boundary components, and $B_1,\ldots, B_m$ be pairwise disjoint closed Euclidean $n$-balls in $\bS^n$. 
Then there exists a constant $\sK=\sK(n,M, B_1,\ldots, B_m)\ge 1$ such that, for each $d_0\in \N$, there exists a surjective mapping
\[
M \to \bS^n \setminus \interior (B_1 \cup \cdots \cup B_m)
\]
of degree at least $d_0$, which is $\sK$-quasiregular in the interior of $M$.
\end{restatable}

Quasiregular maps between manifolds are natural generalization of planar holomorphic maps. Non-constant quasiregular mappings are sense preserving branched covering maps (discrete and open maps) with controlled geometry. More precisely, a continuous mapping $f\colon M \to N$ between oriented Riemannian $n$-manifolds ($n\geq 2$) is \emph{$\sK$-quasiregular} for $\sK \ge 1$, if $f$ belongs to the Sobolev space $W^{1,n}_\loc(M;N)$ and satisfies the distortion inequality
\[
\norm{Df}^n \le \sK J_f\quad \text{a.e.}\ M,
\]
where $\norm{Df}$ is the norm of the weak differential $Df$ of $f$ and $J_f$ is the Jacobian determinant of $f$; we refer to Rickman's monograph \cite{Rickman_book} for the theory of quasiregular mappings. \index{quasiregular map} 

Theorem \ref{intro-thm:cobordism_short} states that, in dimensions $n \ge 3$, there is a threshold for distortion, above which the global degree of a quasiregular mapping is not limited by its distortion. The interest to this statement stems from results of Martio \cite{Martio_capacity_1970} and Sarvas \cite{Sarvas_Hausdorff_1975} on the relationship between  local degree and distortion of quasiregular mappings in dimensions $n\ge 3$. In particular, the distortion of the mapping bounds the Hausdorff dimension of the sets of large local index; see \cite[Chapter III]{Rickman_book} for a detailed discussion on these results.

The mapping $f\colon M \to \bS^n\setminus \interior(B_1\cup \cdots \cup B_m)$ in Theorem \ref{intro-thm:cobordism_short} is a \emph{(generalized) branched cover} in the sense that it is orientation preserving, discrete, and open map. Recall that a mapping is \emph{discrete} if its pre-images are discrete and \emph{open} if it maps open sets to open sets.

From a purely topological point of view, extension of branched covering maps on the boundary of a $3$-manifold  to the interior  has been studied by Hirsch \cite{Hirsch-1977}  and Berstein and Edmonds \cite{Berstein-Edmonds_TAMS}. In particular, Theorem 6.2 in \cite{Berstein-Edmonds_TAMS} states that if $M$ is a compact oriented $3$-manifold $M$ with two  boundary components, $\Sigma_1$ and $ \Sigma_2$, and $f_j \colon \Sigma_j \to \bS^2\times \{j\}$,  $j=0,1$, are oriented branched coverings of  degree $d \geq 3$, then there is a PL branched covering $ M\to \bS^2\times [0,1]$ which extends $f_0$ and $f_1$. This result has been extended by Heinonen and Rickman  \cite{Heinonen-Rickman_Duke} to $3$-manifolds $M$ with at least two boundary components; see also \cite{Pankka-Rajala-Wu}. 

Theorem \ref{intro-thm:cobordism_short} may be viewed as a higher dimensional analog of this theorem of Heinonen and Rickman. In both theorems, the methods stem from construction methods used to show the sharpness of Rickman's Picard theorem; see Rickman \cite{Rickman_Acta} for $n=3$ and Drasin and the first named author \cite{Drasin-Pankka} for $n\ge 4$.

The theorem of Heinonen and Rickman in  \cite{Heinonen-Rickman_Duke} and Theorem \ref{intro-thm:cobordism_short} are far from optimal degree. Regarding optimal degree Piergallini and Zuddas  \cite{Piergallini_Zuddas_2018}  proved, in dimension $n=4$, that every compact connected oriented  PL $4$-manifold $M$ with $p (\geq 0)$ boundary components admits a 'simple' branched covering map $M\to \bS^4\setminus  \interior (B_1\cup \cdots \cup B_p )$, where $B_i$'s are pairwise disjoint Euclidean balls, of degree either $4$ or $5$.

\medskip

Before we discuss applications of Theorem \ref{intro-thm:cobordism_short} and its proof, we restate the result in a stronger form, which provides information of the  boundary map, and allows the domain and the target to have different numbers of boundary components. We recall the terminology after the statement.

\begin{restatable}[Quasiregular cobordism theorem]{theorem}{introthmqrcobordism}
\label{intro-thm:qrcobordism}
Let $n\geq 2$,  $m\ge p\geq 2 $, and $M$ be a compact connected oriented Riemannian $n$-manifold with $m$ boundary components. Let $N=\bS^n \setminus \interior (B_1\cup \cdots \cup B_p)$ be an oriented manifold, where $B_1,\ldots, B_p$ are pairwise disjoint closed Euclidean balls, and  $c\colon \cC(K) \to \{1,\ldots,p\}$ be a surjection defined on the collection $\cC(K)$ of connected components of $\partial M$. Then there exists a constant $\sK=\sK(n,M, N)\ge 1$ for the following.

For each $d_0\in \mathbb N$, there exists a $\sK$-quasiregular map
\[
f\colon M \to \bS^n \setminus \interior (B_1\cup \cdots \cup B_p),
\]
of degree at least $ d_0$, for which the boundary map $f|_{\partial M}$ is BLD and for each $\Sigma\in \cC(K)$,  $f|_{\Sigma} \colon \Sigma\to \partial B_{c(\Sigma)}$  is an Alexander map expanded by simple covers. 
\end{restatable}

The term Alexander map stems from a 1920 article of J.~W.~Alexander \cite{Alexander}, in which he showed that every closed oriented piecewise linear  $n$-manifold $M$ can be triangulated to admit an orientation-preserving  branched covering map  $M\to \bS^n$, which maps neighboring $n$-simplices to the upper and the lower hemispheres of $\bS^n$, respectively. In the literature, any such map from a triangulated $n$-manifold (not necessarily simplicial and possibly having boundary) to $\bS^n$ is called an \emph{Alexander map}.

A cubical $n$-complex $K$ is a complex, analogous to a simplicial complex, whose elements are $k$-cubes,  $0\leq k \leq n$. 
Every Riemannian $n$-manifold $(M,g)$ supports a cubical complex $K$ with a flat structure and  a flat metric $d_K$ for which $(|K|, d_K)$ is quasi-similar to $(M,g)$; see Proposition \ref{prop:Riemannian-to-cubical}. Under the flat metric, all $n$-cubes are isometric. Thus the barycentric triangulation $K^\Delta$ of $K$ yields a simplicial structure in which all $n$-simplices are bilipschitz equivalent.
It is with respect to this structure and this metric, the Alexander maps on $\partial M$ are defined in  Section \ref{sec:Alexander-maps}

The notion of  simple cover stems from Rickman's article \cite{Rickman_Acta}. 
We use simple covers  to adjust degrees of mappings - each simple cover increases the multiplicity of a map by one; see Figure \ref{fig:Alexander_map}. This concept is formalized  in Section \ref{sec:simple-cover}.

\begin{figure}[h!]
\begin{overpic}[scale=0.8,unit=1mm]{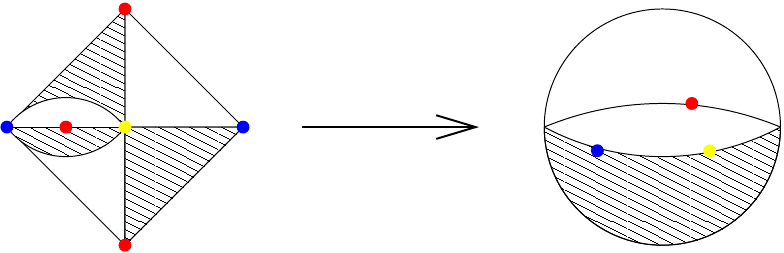} % add: grid
\end{overpic}
\caption{A degree $2$ Alexander map expanded by a simple cover; }
\label{fig:Alexander_map}
\end{figure}

Finally, a mapping $f\colon X\to Y$ between metric spaces is a \emph{mapping of bounded length distortion}, or an \emph{$L$-BLD mapping for $L\ge 1$}, if $f$ is discrete and open and \index{BLD map}
\[
\frac{1}{L} \ell(\gamma) \le \ell(f\circ \gamma) \le L \ell(\gamma)
\]
for all paths $\gamma$ in $X$, where $\ell(\cdot)$ is the length of the path. BLD mappings are quasiregular; we refer to Heinonen and Rickman \cite{Heinonen-Rickman_Duke} about other properties of BLD maps.

\begin{remark}[Remark on the name of the theorem]
We call Theorem \ref{intro-thm:qrcobordism} a 'cobordism theorem' as it provides a (virtual) extension of cubical Alexander maps over a cobordism $M$ of the boundary $\partial M$ in the following sense. The space $M$ admits a cubical structure, whose barycentric subdivision admits simplicial Alexander maps from the boundary components of $\partial M$ onto spheres $\partial B_j$ for $j=1,\ldots, p$. These Alexander maps extend over $M$ modulo a modification with a number of so-called \emph{simple covers}. Heuristically these simple covers balance these Alexander maps so that their extensions can be combined inside $M$. 
\end{remark}

\section{Applications}

An immediate consequence of Theorem \ref{intro-thm:qrcobordism} is an observation that in dimensions $n\ge 3$, there exist quasiregular maps $\bS^n \to \bS^n$ with preassigned preimages, at finitely many points. 

\begin{restatable}[Preassigned preimages]{theorem}{introthmpreimages}
\label{intro-thm:QR_preimages}
Let $n\ge 3$, $p\ge 1$, and let $z_1,\ldots, z_p$ be distinct points in $\bS^n$ and let $Z_1,\ldots, Z_p$ be mutually disjoint finite non-empty sets in $\bS^n$.  Then there exists a quasiregular map $f\colon \bS^n \to \bS^n$ satisfying $f^{-1}(z_i) = Z_i$ for each $i=1,\ldots, p$.
\end{restatable}
This property of quasiregular mappings $\bS^n\to\bS^n$ for $n \ge 3$ is in a striking contrast to quasiregular mappings $\bS^2\to \bS^2$; see Section \ref{sec:prescribed_preimages}.

\medskip

It was generally expected that the complexity of  branched sets and the high local index on a large set will  drive up the distortion of a quasiregular map. Martio, Rickman, and V\"ais\"al\"a conjectured \cite{MRV} in 1971 that:
\begin{quote}
\emph{Given $n\ge 3$ and $\sK\ge 1$, there exists a constant $c=c(n,\sK)\ge 1$ having the property: for any $\sK$-quasiregular mapping $f\colon \R^n \to \R^n$, the set $E_f=\{ x\in \R^n \colon i(x,f) \ge c\}$ does not have accumulation points.}
\end{quote}
Here $i(x,f)$ is the local index of the map $f$ at a point $x\in \R^n$.

Rickman's local index theorem \cite{Rickman-AASF} provides a counterexample to this conjecture in dimension $n=3$. 
Applying Theorem \ref{intro-thm:qrcobordism}, we obtain counterexamples, similar to Rickman's, for dimension $n\geq 3$.

\begin{restatable}[Large local index]{theorem}{introthmRickmanAASF}
\label{intro-thm:Rickman-AASF}
Given  $n\ge 3$,  there exists a constant $\sK=\sK(n)\ge 1$ having the property:  for each $c \ge 1$ there exists a $\sK$-quasiregular mapping $F\colon \bS^n \to \bS^n$ of degree at least $c$ for which
\[
E_F = \{x \in \bS^n \colon i(x,F) = \deg(F)\}
\]
is a Cantor set.
\end{restatable}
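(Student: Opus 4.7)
The plan is to adapt Rickman's three-dimensional construction \cite{Rickman-AASF} to general dimension $n \ge 3$, using Theorem \ref{intro-thm:mixing_short} as the higher-dimensional building block that was previously unavailable. First, I will fix $m \ge 2$ and set up an iterated function system $\sigma_1, \ldots, \sigma_m$ of contracting similarities of $\bS^n$ with mutually disjoint closed attracting balls $B_1, \ldots, B_m$ inside a fixed closed ball $B_\emptyset$. For each finite multi-index $\alpha = (\alpha_1, \ldots, \alpha_k)$, I will write $\sigma_\alpha = \sigma_{\alpha_1} \circ \cdots \circ \sigma_{\alpha_k}$, $B_\alpha = \sigma_\alpha(B_\emptyset)$, and $V_\alpha = B_\alpha \setminus \interior \bigcup_j B_{\alpha \cdot j}$; by arranging the IFS carefully, all shells $V_\alpha$ will be pairwise similar. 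The Cantor set $C = \bigcap_k \bigcup_{|\alpha|=k} B_\alpha$ will be the set $E_F$. A parallel construction with similarities $\sigma'_j$ yields a target family $\{B'_\alpha\}$, shells $V'_\alpha$, and a target Cantor set $C' \subset \bS^n$.

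Next, I will apply Theorem \ref{intro-thm:mixing_short} to $V := V_\emptyset$ to produce, for any prescribed $d_0 \in \N$, a $\sK$-quasiregular map $g \colon V \to V'$ of degree at least $d_0$ whose restriction to each boundary sphere is an Alexander map expanded by free simple covers. After relabelling and adjusting the free simple covers so that $g$ matches the IFS boundary correspondence, I will define
\[
F\vert_{V_\alpha} \;:=\; \sigma'_\alpha \circ g \circ \sigma_\alpha^{-1} \quad \text{for every } \alpha,
\]
extend to $C$ by continuity through the multi-index bijection $C \to C'$, and extend $F$ on $\bS^n \setminus B_\emptyset$ by a fixed bilipschitz homeomorphism onto $\bS^n \setminus B'_\emptyset$.

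Because similarities preserve quasiregularity with the same distortion, every piece $F\vert_{V_\alpha}$ is $\sK$-quasiregular for the same $\sK$. With the IFS contractions chosen small enough, $C$ has Hausdorff dimension strictly less than $n$ and is therefore removable for such maps (see~\cite{Rickman_book}), so $F$ is globally $\sK$-quasiregular on $\bS^n$. Induction on the nested structure gives $\deg(F\vert_{B_\alpha}) = d_0$ at every level, whence $\deg F = d_0$; and for $x \in C$ with defining sequence $(\alpha_k)$,
\[
i(x, F) \;=\; \lim_{k \to \infty} \deg\bigl(F\vert_{B_{\alpha_k}}\bigr) \;=\; d_0 \;=\; \deg F,
\]
placing $C \subseteq E_F$. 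Since the local index of $g$ is bounded by a constant $I(n)$ coming from the bounded combinatorics of Alexander maps with free simple covers, the same bound holds for $F$ outside $C$; choosing $d_0 > I(n)$ then forces equality $E_F = C$.

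The hard part will be the combinatorial matching of the Alexander-map-plus-free-simple-cover decorations on the two sides of each interface $\partial B_{\alpha \cdot j}$, which is required for the continuity of the assembled $F$. The flexibility that Theorem \ref{intro-thm:mixing_short} provides in choosing the degree and distributing free simple covers on each boundary component is precisely what makes this matching possible, though realizing it consistently at every level of the iteration requires careful combinatorial bookkeeping.
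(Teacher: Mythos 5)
Your proposal follows the same macro-strategy as the paper — apply the mixing theorem to one ``shell'' of a self-similar configuration and propagate it to arbitrarily deep levels to obtain a Cantor set of maximal local index — but propagating by Euclidean similarities $\sigma_j$ rather than by sphere reflections produces a fatal gap at the outer boundary. The map $g\colon V_\emptyset \to V'_\emptyset$ obtained from Theorem \ref{intro-thm:mixing_short} has degree $d_0$, hence $g|_{\partial B_\emptyset}$ is a degree-$d_0$ branched cover of $\partial B'_\emptyset$. Your bilipschitz filling of $\bS^n\setminus B_\emptyset$ restricts to a degree-$1$ homeomorphism of $\partial B_\emptyset$; two maps of a sphere of different degrees cannot agree pointwise, so the assembled $F$ is not even continuous. (You can also see the contradiction globally: a regular value in $V'_\emptyset$ would give $\deg F = d_0$, while a regular value in $\bS^n\setminus B'_\emptyset$ would give $\deg F = 1$.) Replacing the bilipschitz filling by a radial degree-$d_0$ cone extension, as in Step~3 of the proof of Proposition \ref{prop:Rickman-AASF}, restores continuity, but then the cone point in $\bS^n\setminus B_\emptyset$ has local index $d_0=\deg F$ and lies in $E_F$; since it is at positive distance from your IFS attractor $C\subset B_\emptyset$, it is an isolated point of $E_F$, so $E_F$ fails to be perfect and is not a Cantor set.

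The obstruction is structural: similarities of $\R^n$ fix $\infty$, so the exterior region $\bS^n\setminus B_\emptyset$ can never be absorbed into the self-similar tree, and there is always a distinguished ``top level'' whose boundary trace cannot carry degree $d_0$. The paper avoids this by using the reflections $\gamma_j$ in the spheres $\partial B^n(x_j,r)$ as the generating system of the iteration: since $\gamma_j$ swaps the inside and outside of $B^n(x_j,r)$, the top shell $M_0=\bS^n\setminus\bigcup_j B^n(x_j,r)$ is conjugate to each first-level copy $\gamma_j M_0\subset B^n(x_j,r)$ (both are a sphere minus $p$ round balls), and the equivariance $F\circ\gamma_w|_{M_0}=\gamma_w\circ F|_{M_0}$ propagates the shell map consistently to all depths with the same degree on every interface. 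Your instinct to apply the mixing theorem to a shell and iterate is exactly right, and your last paragraph correctly flags the Alexander-map-plus-free-simple-cover interface matching as the delicate combinatorial point; the fix is to make the propagating maps Möbius (reflections/inversions, which preserve distortion just as similarities do) and to drop the enclosing ball $B_\emptyset$, iterating directly from $M_0 = \bS^n\setminus\bigcup B_j$. Once you do that, you recover the paper's argument: $M_0$ plays the role of your $V_\emptyset$, Proposition \ref{prop:Rickman-AASF} supplies the map on $M_0$ with the needed boundary invariance $f^{-1}f(B^n(x_j,r))=B^n(x_j,r)$, and the Schottky group limit set is the Cantor set $E_F$.
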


The expected tension between the distortion $\sK$ and the local index $i(\cdot,F)$ behind the conjecture of Martio, Rickman, and V\"is\"al\"a stems from the observation that, for a quasiregular map $f\colon M\to N$ between $n$-manifolds, a large local index $i(x,f)$ at a point $x\in M$ has the effect of shrinking small neighborhoods of $x$ severely. Since the branch set $B_f$ of $f$, where $f$ is not a local homeomorphism, is either empty or has an image $f(B_f)$ of positive $(n-2)$-Hausdorff measure, this makes excessive shrinking impossible. A quantitative statement to this effect was obtained by Martio in \cite{Martio_capacity_1970}. Rickman and Srebro \cite{Rickman-Srebro} have shown that a  $\mathsf K$-quasiregular map can not maintain a high local index  on a large set of evenly distributed points in some quantitative sense.

\medskip

In a more topological vein, by applying  Rickman's $2$-dimensional deformation theory, Heinonen and Rickman constructed in \cite{Heinonen-Rickman_Topology}  a quasiregular mapping $\bS^3 \to \bS^3$ whose branch set contains an Antoine's necklace. Using  Theorem \ref{intro-thm:qrcobordism}, together with a quasi-self-similar wild Cantor set in $\R^4$ constructed in the Appendix, we prove the existence of a Heinonen-Rickman type map in dimension $n=4$.

\begin{restatable}[Wildly branching quasiregular map]{theorem}{introthmHeinonenRickmanTopologyfourdim}
\label{intro-thm:Heinonen-Rickman_Topology_fourdim} There exist a wild Cantor set $X\subset \R^4$, and constants $\sK\ge 1$, $c_0\ge 1$, and $m_0\ge 1$ for the following. For each $c\ge c_0$, there exist $c'\ge c$ and a $\sK$-quasiregular mapping $F \colon \bS^4\to \bS^4$ whose branched set is $X$ and whose local index   $i(x,F)= c'$ for each $x\in X$, and $i(x,F) \le m_0$ for each $x\in \bS^4\setminus X$. 

Furthermore, given $s_0\ge 1$, the mapping $F$ may be chosen for which 
\[
\frac{1}{C} \dist(x, X)^{s}\leq J_F(x) \leq C\dist (x,X)^{s}
\]
for some $s\ge s_0$ and  almost every $x\in \bS^4\setminus X$.
\end{restatable}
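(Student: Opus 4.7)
The plan is to adapt the three-dimensional wildly branching quasiregular map construction of Heinonen and Rickman \cite{Heinonen-Rickman_Topology} to dimension four, with Theorem~\ref{intro-thm:mixing_short} taking the place of the two-dimensional deformation theory of Alexander maps used there. Let $X=\bigcap_{k\ge 0}V_k$ be the wild quasi-self-similar Cantor set constructed in the Appendix, where each $V_k=\bigsqcup_\alpha T_k^\alpha$ is a disjoint union of PL solid $4$-tori and each $T_k^\alpha$ contains, in its interior, an Antoine-type chain of $m$ children $T_{k+1}^{\alpha,1},\ldots,T_{k+1}^{\alpha,m}$. The key structural input is the quasi-self-similarity: uniformly bi-Lipschitz homeomorphisms $\Psi_k^\alpha\colon T_0\to T_k^\alpha$ that carry the nested structure of $T_0$ onto that of $T_k^\alpha$; this uniformity is what will eventually produce a distortion constant $\sK$ independent of the index parameter $c'$.

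The core of the construction is a single model shell map. Setting $W=T_0\setminus\bigsqcup_j\interior(T_1^j)$ and applying Theorem~\ref{intro-thm:mixing_short} to this oriented PL $4$-manifold with $m+1$ boundary components produces, for any prescribed degree bound $d$, a $\sK$-quasiregular map
\[
g\colon W\to \bS^4\setminus \interior(B_0\cup B_1\cup\cdots\cup B_m)
\]
of degree at least $d$, whose boundary restrictions are Alexander maps expanded by free simple covers. I arrange the target balls $B_1,\ldots,B_m$ to be bi-Lipschitz rescaled copies of the complement of $B_0$ via maps $\Phi_j$, so that the target configuration is self-similar. Transporting $g$ through the nesting by $F|_{W_k^\beta}=\Phi_{k,\beta}\circ g\circ(\Psi_k^\beta)^{-1}$, where $\Phi_{k,\beta}$ is the composition of the rescalings along the branch $\beta$, and running the iteration to a depth $N$ so that the accumulated winding reaches a prescribed value $c'\ge c$, and then closing up each innermost torus $T_N^\alpha$ by a quasiregular extension that concentrates the residual branching on $X\cap T_N^\alpha$, yields a $\sK$-quasiregular map $F\colon \bS^4\to \bS^4$. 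Continuity of $F$ across each interface $\partial T_k^\alpha$ reduces to matching the inner- and outer-side Alexander-plus-free-simple-cover boundary data supplied by Theorem~\ref{intro-thm:mixing_short}, which is handled by the deformation, weaving, and dented-molecule machinery of the paper.

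By construction, $i(x,F)=c'$ for $x\in X$: the nested winding of $F$ around the target image of $X$ accumulates to exactly $c'$ when measured on a small sphere encircling $x$. A point $x\in \bS^4\setminus X$ sits in at most one shell $W_k^\alpha$ and so has local index bounded by the maximum local index of the fixed model map $g$, a uniform constant $m_0$. The Jacobian bound follows from geometric scaling: if $\Psi_k^\alpha$ and $\Phi_{k,\beta}$ have bi-Lipschitz constants comparable to $\lambda^k$ and $\mu^k$ respectively, with $\mu<\lambda<1$, then at level $k$ the change-of-variables yields $J_F\asymp(\mu/\lambda)^{4k}$, and translating $k$ to $\dist(x,X)\asymp\lambda^k$ gives $J_F(x)\asymp \dist(x,X)^s$ with $s=4\log(\mu/\lambda)/\log\lambda>0$; prescribing $s\ge s_0$ is done by taking $\mu$ small enough relative to $\lambda$ in the target configuration.

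The main obstacle is maintaining, across all levels of the nested construction, the compatibility of the Alexander boundary data supplied by Theorem~\ref{intro-thm:mixing_short} on the two sides of each interface $\partial T_k^\alpha$. The free simple cover expansion is not symmetric across boundary components, so matching the outer side of $\partial T_k^\alpha$ with the inner side after the bi-Lipschitz identification $\Psi_{k+1}^{\alpha,j}$ requires careful coordination at the model level; once this is arranged at level zero, the same matching propagates through the quasi-self-similar iteration at no additional distortion cost. Carrying out this coordination is the technical heart of the argument and is precisely where the cubical Alexander deformation theory developed in the main body of the paper is indispensable.
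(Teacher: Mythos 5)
The backbone of your argument — a single model shell map from Theorem~\ref{intro-thm:mixing_short} transported through the quasi-self-similar nesting — is the right starting point and matches the paper's construction. But the step where you ``run the iteration to a depth $N$ so that the accumulated winding reaches $c'$'' and then ``close up each innermost torus'' rests on a misunderstanding of where the local index comes from, and that plan cannot prove the theorem as stated.

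The degree of the shell map does not accumulate across levels. If $g\colon W\to \bS^4\setminus\interior(B_0\cup\cdots\cup B_m)$ is a branched cover of degree $c_0$, then, since the correspondence between boundary components of $W$ and of the target is a bijection, every boundary restriction of $g$ also has degree $c_0$; the shell map on $T_w\setminus\bigsqcup_j\interior T_{wj}$, being $g$ conjugated by quasi-similarities, has degree $c_0$ for every word $w$. Gluing therefore produces $f\colon X_0\setminus X\to B^4$ of total degree $c_0$, and since $f^{-1}(\lambda_w(B^4))=T_w$ for every $w$ one gets $i(x,f)=c_0$ for every $x\in X$, independently of how deep the nesting goes. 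The large local index $c'\geq c$ is thus obtained by making the \emph{single} shell map of large degree — that is, by choosing $d_0$ large in Theorem~\ref{intro-thm:mixing_short}, which the paper realizes through a sufficiently fine cubical refinement — and the iteration must be carried to infinity so that $f$ is defined on all of $X_0\setminus X$ and extends continuously across $X$. Stopping at finite depth $N$ and ``closing up'' $T_N^\alpha$ is both unnecessary and fatal: whatever extension you put on $T_N^\alpha$ determines $i(x,F)$ for every $x$ in the wild Cantor set $X\cap T_N^\alpha$, and producing an extension with $i\equiv c'$ on that set is exactly the original problem at a smaller scale, not a terminal step. Your remaining points — that quasi-self-similarity keeps $\sK$ uniform, that $J_F\asymp\dist(x,X)^s$ by comparing target and domain scaling factors, and that matching the Alexander boundary data across each interface is the technical heart — are correct in spirit; the paper achieves the boundary matching by building cubical structures on the boundary $3$-tori that are totally shellable refinements of one model complex, so that the deformation theory (Theorem~\ref{thm:cell_uniqueness} and Corollary~\ref{cor:refinement}) yields a bounded-distortion branched cover homotopy between the inner and outer data.
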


It is known that, for each $n\geq 3$ and $n\neq 4$, there exists a metric $n$-sphere $(S,d)$, nearly indistinguishable from $\bS^n$ by classical analysis in the sense advocated by Semmes \cite{Semmes}, which is  not a bilipschitz copy of $\bS^n$ but can be mapped onto $\bS^n$ by a BLD map. The example in dimension $3$  is due to  Semmes and \cite{Semmes} and Heinonen and Rickman \cite{Heinonen-Rickman_Topology,Heinonen-Rickman_Duke}, and in dimension $n\geq 5$ is the double suspension of a non-trivial homology $(n-2)$-sphere, observed by Siebenmann and Sullivan in\cite{Siebenmann-Sullivan}.

Theorem \ref{intro-thm:Heinonen-Rickman_Topology_fourdim} may be used to furnish an example of this type for dimension $n=4$; see Section \ref{sec:intro_BLD_parametrization}.

\bigskip

Theorem \ref{intro-thm:qrcobordism} can also be used to construct examples in quasiregular dynamics. The composition of two $\sK$-quasiregular maps is again  quasiregular but often has  distortion  larger than $\sK$. Thus in dynamics,  it is natural to consider only \emph{uniformly quasiregular maps}, or \emph{UQR maps}, $f\colon M \to M$ between Riemannian manifolds for which the mapping $f$ and all its iterates are $\sK$-quasiregular for a number $\sK >1$. \index{UQR map}

Using conformal traps, Iwaniec and Martin  \cite{Iwaniec-Martin} constructed UQR maps $\bS^n\to \bS^n$ whose Julia sets are tame Cantor sets. UQR maps in $\bS^3$ whose Julia sets are wild Cantor sets are constructed  by Fletcher and the second named author in \cite{Fletcher-Wu}. Theorem \ref{intro-thm:qrcobordism} may be used to extend the example of wild Julia set to $\bS^4$.

\begin{restatable}[Wild Julia set]{theorem}{introthmwildJuliasetdimfour}
\label{intro-thm:wild_Julia_set_dim4}\index{Julia set}
For each $k\in \N$, there exists a uniformly quasiregular map $\bS^4 \to \bS^4$ of degree at least $k$, whose Julia set is a wild Cantor set.
\end{restatable}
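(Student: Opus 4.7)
The plan is to adapt to dimension four the construction of Fletcher and the second named author for $\bS^3$ \cite{Fletcher-Wu}, replacing Rickman's two-dimensional Alexander-map deformation by the four-dimensional theory developed in this paper (accessed through the topological version Theorem \ref{intro-thm:Berstein-Edmonds_existence} of Theorem \ref{intro-thm:mixing_short}), and using the quasi-self-similar wild Cantor set $X \subset \R^4$ constructed in the appendix.

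I would take as starting data a wild Cantor set $X \subset \bS^4$ with nested PL defining sequence $V_0 \supset V_1 \supset \cdots$ of disjoint $4$-dimensional handlebodies such that $X = \bigcap_j V_j$; each component of $V_j$ contains exactly $m$ components of $V_{j+1}$, with $m \ge k$ chosen in the first subdivision; and quasi-self-similarity produces $L$-bilipschitz homeomorphisms of $\bS^4$ sending components of $V_j$ to components of $V_{j'}$ (matching children with children) with $L$ independent of $j, j'$. These rescalings play the role of renormalisations.

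I would then build a \emph{renormalisation block} as follows. Fix a component $W$ of $V_0$ with children $W_1, \dots, W_m \subset V_1$ and set $M = W \setminus \interior(W_1 \cup \cdots \cup W_m)$. Applying Theorem \ref{intro-thm:Berstein-Edmonds_existence} to $M$ gives a $\sK_0$-quasiregular branched cover from $M$ onto $\bS^4$ with $m+1$ disjoint closed balls removed, of degree at least $m$, whose restriction to each boundary component is an Alexander map expanded by free simple covers. After a fixed bilipschitz identification of the target with $\bS^4 \setminus \interior(V_0)$ that sends each removed interior ball to a distinct component of $V_0$, this produces a map $h \colon M \to \bS^4 \setminus \interior(V_0)$. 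The flexibility in choosing the free-simple-cover extensions on $\partial W$ and on each $\partial W_i$ is used to arrange that, after the self-similar rescaling, these boundary Alexander maps agree with the boundary data of the copies of $h$ used one level deeper. One then defines $f$ on $V_0 \setminus \interior V_1$ by $h$, transports $h$ to each component of $V_1$ by the quasi-self-similar scaling and iterates downward, and on $\bS^4 \setminus V_0$ defines $f$ to be a fixed BLD branched cover onto $\bS^4 \setminus V_0$ extending $h|_{\partial V_0}$ and having an attracting fixed point.

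The resulting $f \colon \bS^4 \to \bS^4$ has degree $m \ge k$ and satisfies $f^{-1}(V_j) = V_{j+1}$, so $\bigcap_n f^{-n}(V_0) = X$. Every iterate $f^n$ decomposes, near any point, as a composition of at most one copy of $h$ with quasi-self-similar rescalings of uniformly bounded distortion, so $f^n$ has distortion bounded by a constant depending only on $\sK_0$ and $L$; hence $f$ is uniformly quasiregular. Points in $\bS^4 \setminus X$ escape $V_0$ in finitely many iterations and converge to the Fatou attractor, while the quasi-self-similar expansion on $X$ forces $X$ to be the Julia set, which is wild by construction. The main technical obstacle is exactly this boundary-matching across every shared hypersurface $\partial V_j$: the Alexander maps produced by $h$ on $\partial W$ and on each $\partial W_i$ must be compatible, after rescaling, with the block one level deeper. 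This compatibility is precisely what the flexibility of Theorem \ref{intro-thm:mixing_short} delivers, and is what the two-dimensional Rickman theory used in \cite{Fletcher-Wu} cannot supply in dimension four.
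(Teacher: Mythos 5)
Your high-level strategy (conformal trap plus renormalisation, with the quasi-self-similar Blankinship necklace as the Julia set) is the right one, and it is also the skeleton of the paper's proof. However, there is a genuine gap in the way you treat the attracting region, and you invoke the wrong theorem for the quantitative control.

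\medskip

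\noindent\textbf{The outer map must be UQR in its own right, not merely a ``BLD branched cover with an attracting fixed point.''} In your construction, every orbit in $\bS^4\setminus X$ eventually escapes into the region $\bS^4\setminus V_0$ and converges to the attracting fixed point. The uniform distortion of $f^n$ therefore hinges entirely on the dynamics of the outer piece: a fixed BLD map with an attracting fixed point generally has iterates whose distortion blows up (think of a linear map $\mathrm{diag}(1/2,1/4,1/4,1/4)$), so UQR-ness would fail immediately. The paper handles this by choosing the outer map to be a \emph{modified Mayer power map} $q$ of degree $m=48d^3$ built from the canonical triangulation $K^\Delta$ of the standard cubical structure, which is UQR by construction (Mayer's theorem). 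This in turn forces a degree-matching constraint: the number $m$ of linked $4$-tubes in the necklace must be exactly $48d^3$, which is compatible with the parity and size constraints \eqref{eq:rho_b}--\eqref{eq:m} of the appendix but is not accounted for in your proposal, where $m$ is only required to exceed $k$.

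\medskip

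\noindent\textbf{Theorem \ref{intro-thm:Berstein-Edmonds_existence} is topological and does not supply the needed distortion bound.} As the paper notes explicitly in a remark following Theorem \ref{intro-thm:Berstein-Edmonds_extension}, the BLD-constants produced by that construction ``depend on the choices and refinements made in the proof,'' so the constant $\sK_0$ in your renormalisation block is not under control, and your final distortion bound in terms of $\sK_0$ and $L$ is circular. The paper avoids invoking Theorem \ref{intro-thm:Berstein-Edmonds_existence} at all here: $f|_{X_1}$ is \emph{defined} to be the inverse of the quasi-self-similarity $\varphi_j^{-1}\colon X_{1,j}\to X_0$ (so the boundary compatibility you worry about holds tautologically at every level), the extension over $B(3)\setminus X_1$ is an explicit composition of two winding maps $\omega,\omega'$ conjugated by bilipschitz homeomorphisms (hence BLD with a computable constant), and the transition annulus $B(4)\setminus B(3)$ is filled in by the Cubical Deformation/Hopf Theorems (Theorems \ref{intro-thm:cubical_deformation}, \ref{Hopf_theorem_v2}), which deform the degree-$m$ boundary map on $\partial B(3)$ to the degree-$m$ Alexander map $q|_{\partial B(4)}$ through a level-preserving BLD homotopy. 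It is this deformation machinery, rather than the Berstein--Edmonds existence theorem, that is the four-dimensional replacement for Rickman's planar deformation used in \cite{Fletcher-Wu}. Your proposal correctly locates boundary-matching as the technical crux, but the ``flexibility of free simple covers'' is not a substitute for the explicit degree bookkeeping and the Hopf-type deformation that make the matching BLD-controlled.
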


The restriction to dimension $n=4$ here and in Theorem \ref{intro-thm:Heinonen-Rickman_Topology_fourdim}  stems from the fact that at present we are only able to construct \emph{quasi-self-similar} Antoine-Blankinship's necklaces in $\bS^4$.
We refer to Blankinship \cite{Blankinship} and \cite{Pankka-Vellis} for the topological construction of wild Cantor sets in $\R^n$, $ n\ge 4$; see also \cite{Pankka-Vellis}.

These applications are discussed in more detail in Part \ref{part:Applications}.

\section{Outline of the proof}

The proof of the Quasiregular cobordism theorem spans from Part \ref{part:separating_complexes} to Part \ref{part:Alexander-Rickman}. Whereas parts \ref{part:separating_complexes} and \ref{part:deformation} are independent, parts \ref{part:QR-extension} and \ref{part:Alexander-Rickman} are methodologically independent but use the particular constructions in parts \ref{part:separating_complexes} and \ref{part:deformation}. We discuss now, in heuristic terms, the methods used in the proofs. For brevity, we discuss the case $p=m$ only.

Let $K$ be a cubical $n$-complex on the $n$-manifold $(M,g)$ with a flat structure and flat metric $d_K$ for which $(|K|, d_K)$ is quasi-similar to $(M,g)$.

Unlike the typical procedure for constructing maps - from  boundary  towards  interior - we build maps from the interior towards the boundary. Topology of the boundary of $M$ has no role in the construction.

We begin by proving that  there exists  an $(n-1)$-dimensional (separating) subcomplex $Z$ of $\Refine(K)$, which separates the space $|K|$ of $K$ into connected components $D_\Sigma$, one associated to each boundary component $\Sigma\in \cC(K)$, for which  $D_\Sigma$  is a collar $|\Sigma| \times [0,1)$; see Theorem \ref{thm:separating-complex-existence}. Refinement $\Refine(K)$  is a complex obtained from $K$ by subdividing each $\ell$-cube in $K$ into $N^\ell $ subcubes where $N$ is a fixed integer, and $\cC(K)$ is the collection of boundary component of $K$.

To construct mappings of arbitrarily large degrees having a uniformly bounded distortion, we modify the separating complexes iteratively in higher order refinements $\Refine^k(K)$ in such a way that the complementary components of the separating complexes $Z_k$ in $|K|$ are progressively  more intertwined, yet quasiconformally stable, as stated in Theorem \ref{theorem:evolution-short}.

This is achieved with an inductive channeling procedure: making indentation to the components of $|K|\setminus |Z_{k-1}|$,  then redistributing the space vacant by the dents in the form of tunnels to the dented components. Tunnels are complexes whose adjacency graphs are trees.  Channeling, resembling a geometrically controlled construction of Lakes of Wada,  is explained   in Part \ref{part:separating_complexes}.

As an initial thought, the construction of mappings, for a fixed $k$, would start with an Alexander maps $f \colon (Z_k)^\Delta \to \bS^{n-1}$ on the triangulated $Z_k$, and  follow with an extension of $f$ over  each component $D_\Sigma (=D_{k;\Sigma})$ separated by $Z_k$. An immediately obstacle occurs from the fact that $Z_k$ is rarely a manifold. Hence, in general, $(Z_k)^\Delta$ does not support a well-defined Alexander map and  the closure $\overline D_\Sigma$ of $D_\Sigma$ in $|K|$ is not $\Sigma \times [0,1]$. See see Figure \ref{fig:Separating_complex}. On the other hand, the closure $\widetilde D_\Sigma$ of $D_\Sigma$ with respect to the inner path metric in $D_\Sigma$ is always $|\Sigma|\times [0,1]$.

For this reason,  we lift the problem of constructing maps to 
a complex $\Real_\Sigma$, called a realization, supported on $\widetilde D_\Sigma$, which projects canonically down to the complex $\Refine^k(K)|_{\overline D_\Sigma}$  and whose     
 $n$-cubes are in one-to-one correspondence with those in $\Refine^k(K)|_{\overline D_\Sigma}$. The realization $\Real_\Sigma$ has an outer boundary component $\Sigma$ and  an inner boundary component $\Upsilon_\Sigma$ lifted from $Z_k \cap \overline D_\Sigma$. 
The inner component $\Upsilon_\Sigma$ is a manifold and supports a well-defined BLD Alexander map  $f_\Sigma\colon \Upsilon_\Sigma \to \bS^{n-1}$.   

The iterative channeling process endows the realization $\Real_\Sigma$ a tree-like structure $\cT_\Sigma$,
with the root at a dented collar of $\Sigma$ and the $j$-th generation vertices at the tunnels added at the $j$-th channeling step,  $1\le j\le k$, as described in Proposition  \ref{prop:general-localized-realization-structure}.

For the extension of Alexander map  $f_\Sigma \colon \Upsilon_\Sigma \to \bS^{n-1}$ to a BLD map $F_\Sigma \colon|\Real_\Sigma| \to \bS^{n-1} \times [0, N^k]$,  we establish a dimension-free deformation method in Part \ref{part:deformation} for Alexander mappings defined on shellable cubical complexes.  Deformation is largely topological, however, when applied on specific cubical modules, yields a BLD-controlled branched cover homotopy  from Alexander map $f_\Sigma$ to a  simpler map, modulo  degree adjustment.

Extension to $|\Real_\Sigma|$ is built along the tree $\cT_\Sigma$ on $\Real_\Sigma$  from leaves to root, one dented tunnel at a time, by local BLD deformations; this procedure is explained in Part \ref{part:QR-extension}. 
The extended maps $F_\Sigma$ on $|\Real_\Sigma|$, $\Sigma\in \cC(K)$, serve as the building blocks for the mapping asserted in Theorem \ref{intro-thm:qrcobordism}.

Gluing mappings $F_\Sigma, \Sigma\in \cC(K)$, together naively along $Z_k$ by projecting them down to $|K|$ will  produce at most a map $|K|\to \bS^{n-1} \times [0,N^k]$ whose target  has only two boundary components.
To  combine mappings $F_\Sigma$ properly, 
we partition the target $\bS^n\setminus \interior (B_1\cup\cdots \cup B_m)$, by a branched sphere $\mathbf{S}$, into spherical shells $E_1,\ldots, E_m$.
We construct, on the domain side,  a weaved approximation $X$ of $Z_k$ in $|K|$ whose complementary components $\delta_\Sigma, \,\Sigma\in  \cC(K),$ in $|K|$ are in one-to-one correspondence with $D_\Sigma$ bilipschitzly. Moreover, the space $X$ admits  a so-called Alexander-Rickman map $X \to \mathbf{S}$ which extends to a map  $|K|\to \bS^n\setminus \interior (B_1\cup\cdots \cup B_m)$ that maps $\delta_\Sigma$ to  $E_{c(\Sigma)}$.
The construction of  weaved approximation spans the entire Part \ref{part:Alexander-Rickman}.

The mapping $f$ asserted in Theorem \ref{intro-thm:qrcobordism} is obtained by gluing the domains $|\Real_\Sigma|$ of mappings $F_\Sigma$ along $X$;  $f$ has degree at least $N^k$.

\begin{figure}[htp]
\begin{overpic}[scale=0.16,unit=1mm]{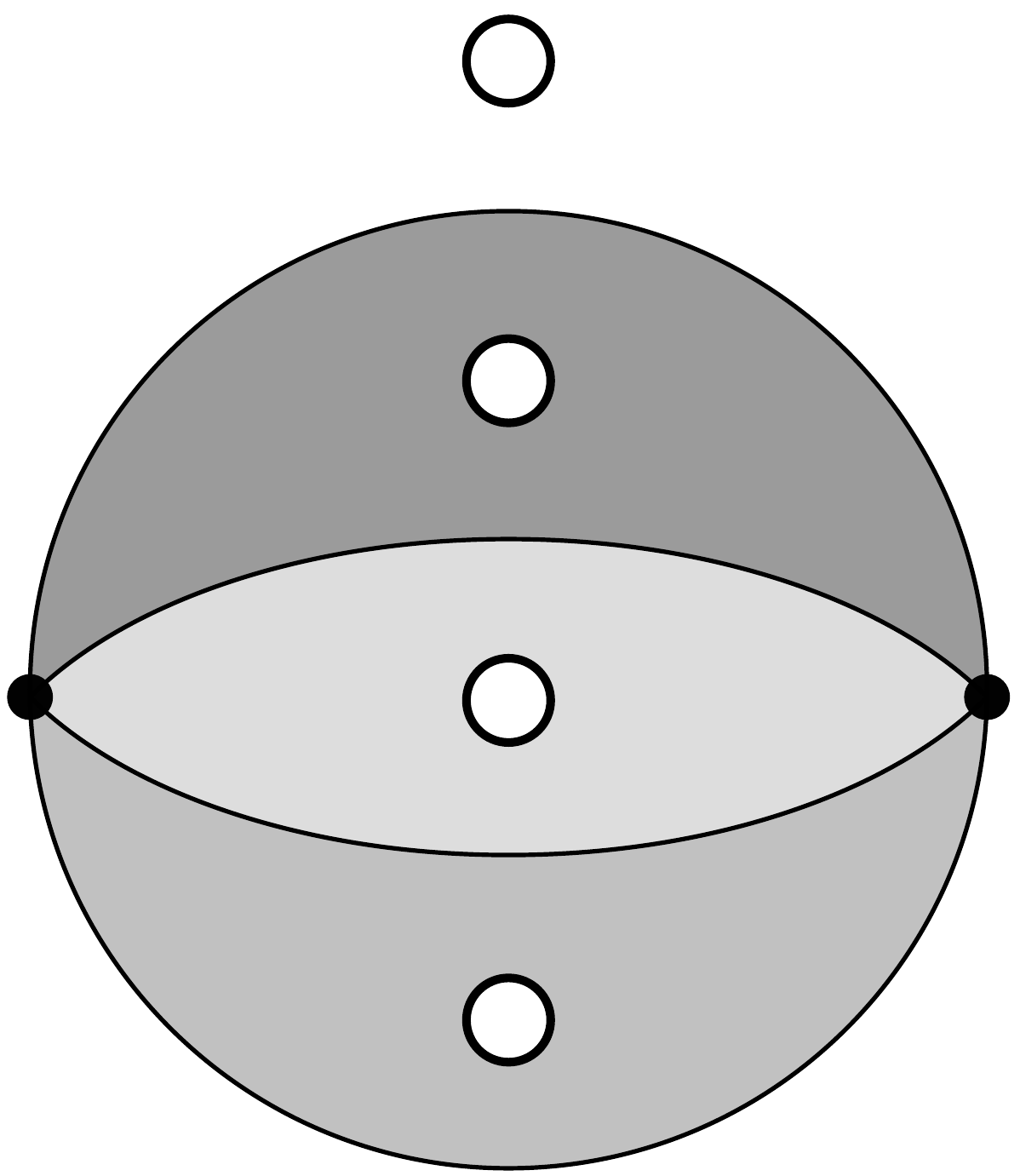} % add: grid
\put(18,36){\tiny$B_1$}
\put(18,5){\tiny$B_2$}
\put(18,15.5){\tiny$B_3$}
\put(18,25.5){\tiny$B_4$}

\put(27,12){$\mathbf{S}$}
\end{overpic}
\caption{$\bS^2\setminus \interior (B_1\cup\cdots \cup B_4)$ partitioned by branched sphere $\mathbf{S}$ into $4$ spherical shells.}
\label{fig:intro-bs-balls}
\end{figure}

A remark on the distortion is in order.
Extension to $|\Real_k(\Sigma)|$ is made in essentially disjoint dented tunnels in succession. 
Within each dented tunnel, the extension is built by BLD deformations together with local bilipschitz homeomorphisms. During this process, every point in the domain is moved by non-conformal maps at most a bounded number of times, with the number depending only on $n$ and $K$, before being mapped to the target.  Therefore the distortion of the final extension remains bounded by a constant depending only on $n$ and $K$, in particular independent of $k$. 

This concludes the heuristic description of the proof of Theorem \ref{intro-thm:qrcobordism}.

\section{Remarks on deformation and partitioning}

To close the introduction, we state two theorems on deformation of Alexander maps on shellable cubical complexes to give a hint of the higher dimension theory. 
We also state an example, of independent interest, emerging from the evolution of separating complexes.

\medskip

One of the methods Rickman introduced in his 1985 paper \cite{Rickman_Acta} on the sharpness of the Picard theorem for quasiregular mappings is the deformation of $2$-dimensional Alexander maps. Although it is possible to avoid this deformation method in \cite{Drasin-Pankka}, we do not see how it could be by-passed in the proof of Theorem \ref{intro-thm:qrcobordism} due to the conditions posed to the boundary map. To obtain a dimension free deformation theory, we restrict ourselves to deformation of Alexander maps on shellable cubical complexes.

A finite cubical $n$-complex $K$, whose space $|K|$ is an $n$-cell, is \emph{shellable} if there exists an order $Q_1,\ldots, Q_\ell$ for the $n$-cubes in $K$  for which the intersection $(Q_1\cup \cdots \cup Q_{k-1}) \cap Q_k$ is an $(n-1)$-cell for each $k=1,\ldots, \ell$. Not all cubical $n$-complexes of cells are shellable; see Remark \ref{rmk:non_shellable}.

The flat structure on a cubical complex $K$ provides the canonical triangulation 
$K^\Delta$ the stability needed for deformation; it is a local regularity condition. The shellability, on the other hand,  is a global condition which allows the underlying complexes and the mappings to be reduced inductively.

\begin{figure}[htp]
\begin{overpic}[scale=0.50,unit=1mm]{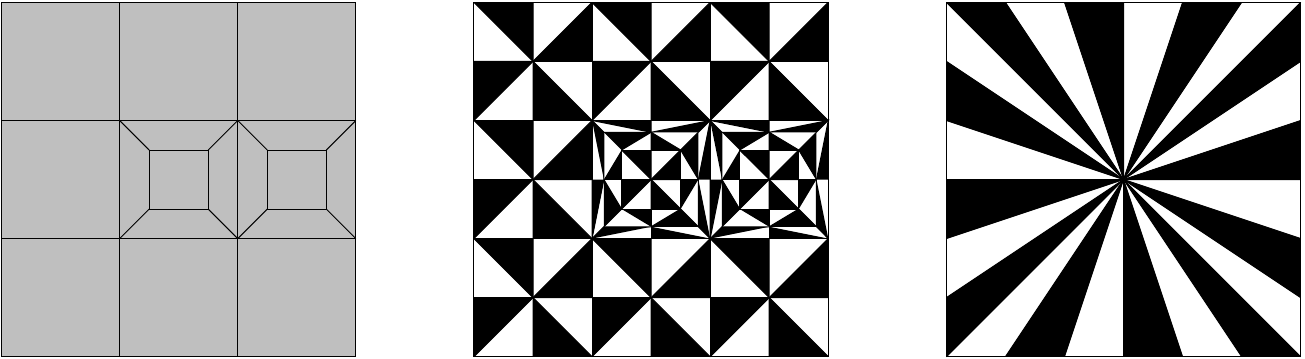} % add: grid
\put(-5,28){\tiny $K$}
\put(35,28){\tiny $K^\Delta$}
\put(75,28){\tiny $K^*$}
\end{overpic}
\caption{A shellable cubical $2$-complex $K$, its barycentric triangulation $K^\Delta$, and a star-replacement $K^*$ of $K^\Delta$.}
\label{fig:Cubical-shellable-Alexander_map}
\end{figure}

Given a cubical complex $K$ on an $n$-cell, we denote by $K^*$  a (combinatorially unique) simplicial complex which is  star of a vertex in the interior of $|K|$ and agrees with $\partial (K^\Delta)$ on the boundary, also denote by $(K^\Delta)^{[k]}$ the collection of $k$-simplices in the simplicial complex $K^\Delta$.

\begin{restatable}[Deformation of Alexander maps]{theorem}{introthmdeformation}
\label{thm:QR-deformation-Alexander} 
Let $n\ge 2$, $K$ be a shellable cubical $n$-complex, $K^\Delta$ the barycentric triangulation of $K$, and $K^*$ a star-replacement of $K^\Delta$. Let $f\colon |K|\to \bS^n$ be a $K^\Delta$-Alexander map, and  
\[
m = \left( \# (K^\Delta)^{[n]}-\# (K^*)^{[n]} \right)/2.
\]
Then there exists an $\sL(n,K,K^*)$-BLD homotopy 
\[ F\colon |K|\times [0,1]\to \bS^n\times [0,1]\quad (\text{modulo}\,\,\, |\partial K|) \]
from $f$ to an expansion of a $K^*$-Alexander map $|K|\to \bS^n$ by $m$ simple covers.
\end{restatable}

The classical Hopf degree theorem states that \emph{for a closed, connected, and oriented $n$-manifold $M$, two maps $M\to \bS^n$ are homotopic if and only if they have the same degree.} In particular, the homotopy class of a continuous map $M\to \bS^n$ is classified by a single integer.

Theorem \ref{thm:QR-deformation-Alexander}  leads to a version of the Hopf degree theorem for cubical Alexander maps between spheres.
We say a cubical complex $K$ on the sphere $\bS^n$ is \emph{shellable} if there exists an $n$-cube $Q\in K$ for which $K\setminus \{Q\}$ is a shellable complex on the $n$-cell $\bS^n\setminus \interior Q$.

\begin{restatable}[Hopf theorem for Alexander maps]{theorem}{introthmHopftheoremvone}
\label{intro-thm:Hopf_theorem_v1}
Let $n\geq 2$ and $K_1$ and $K_2$ be two shellable cubical $n$-complexes on $\bS^n$ having the same number of $n$-cubes. Then a $K_1^\Delta$-Alexander map and a $K_2^\Delta$-Alexander map, with the same orientation, are quasiregularly homotopic.
\end{restatable}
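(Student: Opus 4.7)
The plan is to deform both Alexander maps into a common normal form by combining the Cubical Deformation Theorem (Theorem~\ref{intro-thm:cubical_deformation}) with PL ambient isotopies of $\bS^n$. First, by the definition of shellability on $\bS^n$, there exist for $i=1,2$ $n$-cubes $Q_i\in K_i$ such that $K_i\setminus \{Q_i\}$ is a shellable cubical complex on the $n$-cell $\bS^n\setminus \interior Q_i$. Since any two oriented PL $n$-cubes in $\bS^n$ are related by an orientation-preserving PL ambient isotopy, we precompose the second Alexander map with such an isotopy carrying $Q_2$ together with its cubical structure onto $Q_1$. This is a branched cover homotopy, so we may assume henceforth that $Q_1 = Q_2 =: Q$, that the cubical structures on $Q$ induced from $K_1$ and $K_2$ coincide, and that the induced boundary triangulations $\partial Q^\Delta$ agree.

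The restrictions $f_i|_Q$ are then Alexander maps on the canonical triangulation $Q^\Delta$ with matching orientation. Their $2$-colorings of $n$-simplices therefore agree, and the remaining freedom lies in the PL homeomorphisms of individual $n$-simplices onto the hemispheres, all of which are mutually PL isotopic rel $\partial Q$. A branched cover homotopy rel $\partial Q$ thus brings $f_2|_Q$ to $f_1|_Q$ and extends trivially to $\bS^n\setminus \interior Q$; so we may assume $f_1|_Q = f_2|_Q$. We then apply the Cubical Deformation Theorem to the shellable complex $K_i\setminus \{Q\}$ on the $n$-cell $C:=\bS^n\setminus \interior Q$: each $f_i|_C$ is branched cover homotopic rel $\partial Q$ to a $(K_i\setminus\{Q\})^*$-Alexander map augmented by $m_i$ mutually essentially disjoint free simple covers.

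A direct count shows $m_1=m_2$. If $N$ denotes the common number of $n$-cubes in $K_1$ and $K_2$, then $\#((K_i\setminus \{Q\})^\Delta)^{(n)} = n!(N-1)$, while $(K_i\setminus \{Q\})^*$ is the star of an interior vertex over the common boundary triangulation $\partial Q^\Delta$, so $\#((K_i\setminus \{Q\})^*)^{(n)} = \#(\partial Q^\Delta)^{(n-1)} = 2\cdot n!$, a quantity independent of $i$. Hence $m_1 = m_2 = n!(N-3)/2 =: m$. Moreover, the two star-replacements $(K_i\setminus \{Q\})^*$ are combinatorially identical, and a PL ambient isotopy of $C$ rel $\partial Q$ moving one interior star vertex onto the other identifies them and the corresponding $K^*$-Alexander maps. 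By the freedom in placing free simple covers afforded by the Cubical Deformation Theorem, we may prescribe the $m$ simple covers in both deformations at the same essentially disjoint locations in $\interior C$, producing a common normal form.

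Concatenating the homotopies from the preceding steps yields the desired branched cover homotopy from $f_1$ to $f_2$. The main obstacle is to ensure that the chain of PL isotopies---the ambient isotopy of $\bS^n$ matching $Q_1$ and $Q_2$ along with their cubical structures, the matching of $f_i|_Q$, the relocation of the interior star vertex, and the alignment of the free simple covers---are all realized as genuine branched cover homotopies of the Alexander maps themselves. This rests on the local model for free simple covers together with the combinatorial rigidity provided by the canonical triangulation of $\partial Q$.
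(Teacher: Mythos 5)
Your proposal is correct and follows essentially the same route the paper intends: remove the distinguished cube witnessing shellability from each sphere, match the two excised cubes by an ambient PL isotopy and align the Alexander maps on them, then reduce on the common complementary $n$-cell to a star-replacement plus free simple covers via the Cubical Deformation Theorem, using the freedom to move simple covers (Theorem~\ref{thm:move}) to reach a common normal form. The paper treats this theorem as an immediate corollary of its Homotopy theorem of Alexander maps (Theorem~\ref{thm:cell_uniqueness}), and your argument in effect replays the short proof of that theorem together with the preliminary matching of the removed cubes, so the two treatments coincide.
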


\medskip

The partition methods in Part \ref{part:separating_complexes} can be used to produce geometric Wada continua in dimensions $n\ge 3$. Recall that a \emph{Wada continuum} is a compact connected subset of $\bS^n$ which is the common boundary of its (at least three) complementary components. These complementary domains are called Lakes of Wada; for historical background, see  Section \ref{sec:Wada}.

An outcome of evolution of separating complexes, repeated indefinitely,  is the paradoxical example of Lakes of Wada in $\bS^n, n\geq 3,$ which are quasiconformal to Euclidean balls. 
This  result is also contained implicitly in \cite{Drasin-Pankka}, where it is obtained by a different method.

\begin{restatable}[Lakes of Wada]{theorem}{introthmWadaRiemannianmanifold}
\label{intro-thm:Wada-on-sphere} 
Given $n\ge 3$ and $m \ge 2$, there exists a compact connected continuum $X$ in $\bS^n$ whose complement  has exactly $m$ components, $ M_1,\ldots, M_m$,  for which
\[ 
X= \partial M_1 = \cdots = \partial M_m  =\bigcap_{i=1}^m\,  \overline{M_i},
\]  
and each component $M_j $ is quasiconformal to the open Euclidean  $B^n(0,1)$.
\end{restatable}

\medskip

\noindent {\bf Acknowledgments} We thank David Drasin for his continuous support on this project. We also thank Julie Kaufman for the $3$-dimensional drawings.

%%%%%%%%%%%%%%%%%%%%%%%%%%%%%%%%%%%%%%%%%%%%%%%%%%%%%%%%%%%%%%%%%%%%%%%%%%%%%%%
%%%%%%%%%%%%%%%%%%%%%%%%%%%%%%%%%%%%%%%%%%%%%%%%%%%%%%%%%%%%%%%%%%%%%%%%%%%%%%%
%%%%%%%%%%%%%%%%%%%%%%%%%%%%%%%%%%%%%%%%%%%%%%%%%%%%%%%%%%%%%%%%%%%%%%%%%%%%%%%

\part{Preliminaries}

\chapter{Preliminaries on complexes}

\section{Cubical complexes}\label{sec:prelim-cubical-complex}
In this section we define a class of cubical complexes, which give the underlying structure of our constructions.

In what follows, a \emph{complex $K$} is a collection of sets with the property that, for $Q,Q'\in K$, the intersection $Q\cap Q'$ belongs to $K$. As usual, a subset $K'\subset K$ is a \emph{subcomplex} if $K'$ is a complex. In particular, given $Q\in K$, the restriction of the complex $K$ to $Q$, that is, the collection 
\[
K|_Q = \{ Q' \cap Q\colon Q'\in K\},
\]
is a complex and a subcomplex of $K$. The \emph{space $|K|$ of a complex} is the union of all its elements, that is, $|K| = \bigcup_{Q \in K} Q$. The topology of $|K|$ is the topology induced by the closed subsets of elements in $K$.  

We say that $(K_1,\ldots, K_\ell)$ is an essential partition of a complex $K$, if $K_1,\ldots, K_\ell$ are subcomplexes of $K$ for which $(K_i \cap K_j)^{[n]} = \emptyset$ for $1\leq i< j\leq \ell$, and $K= K_1\cup \cdots \cup K_\ell$.

We say that a complex $K$ is \emph{connected} if for elements $Q,Q'\in K$, there exists a chain $Q=Q_0,\ldots, Q_m=Q'$ of elements of $K$ for which $Q_{i-1}\cap Q_i \ne \emptyset$ for $i\in \{1,\ldots, m\}$. Equivalently, a complex $K$ is connected if and only if its space $|K|$ is connected. A non-empty subcomplex $P \subset K$ is a \emph{connected component of $K$} if $P$ is connected and $P \cap \Span_K(K\setminus P) = \emptyset$. Equivalently, a non-empty subcomplex $P$ is connected component of $K$ if and only if $|P|$ is connected and open in $|K|$. \index{cubical complex!connected}

Given a homeomorphism $\phi \colon X\to |K|$, we denote 
\[
\phi^*K = \{ \phi^{-1}Q \colon Q\in K \}
\]
the \emph{pull-back of the complex $K$ under $\phi$}; clearly $|\phi^*K| = X$.

As usual, a \emph{simplicial complex} is a complex whose elements are cells with the standard simplicial structure. The collection of non-negative integers is denoted by $\N$.

For the definition of cubical complexes, we call, for each $n\in \N\cup \{-1\}$, the complex $\sfC_n=\{\{0\},\{1\}, [0,1]\}^n$ a \emph{standard cubical structure on $[0,1]^k$}; in addition we set $\sfC_{-1}=\{\{0\},\{1\}, [0,1]\}^{-1}=\emptyset$ and $\sfC_0 = \{\{0\},\{1\}, [0,1]\}^0=\{\{0\}\}$. In particular, the cubical structure $\sfC_n$ of $[0,1]^n$ is the product 
\[
\sfC_{n-1}\times \sfC_1 = \{ q \times q' \colon q \in \sfC_{n-1},\ q' \in \sfC_1\}
\]
of the standard cubical structures $\sfC_{n-1}$ and $\sfC_1$ of $[0,1]^{n-1}$ and $[0,1]$, respectively. Given an $n$-cell $Q$ and a homeomorphism $\phi \colon Q\to [0,1]^n$, we call the complex 
\[
\sfC_\phi(Q) = \{ \phi^{-1}q \colon q\in \sfC_n\} = \phi^*\sfC_n
\]
a \emph{standard cubical structure (induced by $\phi$) on $Q$}. If there is no ambiguity on the parametrizing homeomorphism, we simply denote $\sfC(Q) = \sfC_\phi(Q)$. The $(n-1)$-cubes and $(n-2)$-cubes in $\sfC_\phi(Q)$ are called \emph{faces and edges of $Q$ (with respect to $\sfC_\phi(Q)$)}, respectively. Each $k$-cube $q\subset Q$ has a unique \emph{opposite $k$-cube $q^\op$ in $Q$} satisfying $q^\op \cap q' = \emptyset$ for all $(n-k)$-cubes $q'\in Q$ which intersect $q$.

\begin{definition}\index{cubical complex}
A complex $K$ is a \emph{cubical $n$-complex (for $n\in \N$)} if the following conditions are satisfied:
\begin{enumerate}
\item $K$ is finite;
\item elements of $K$ are cells;
\item for each $n$-cell $Q\in K$, the restriction $K|_Q$ is a standard cubical structure on $Q$; 
\item each cell in $K$ is contained in an $n$-cell; and
\end{enumerate}
For each $k\in \N$, the $k$-cells of $K$ are called $k$-cubes.
\end{definition}

Given a cubical $n$-complex $K$ and $k\in \N$, we denote
\[
K^{[k]} = \{ q \in K \colon q \text{ is a $k$-cube}\}
\quad
\text{and}
\quad
K^{(k)} = \bigcup_{\ell \le k} K^{[\ell]}.
\]
We call the subcomplex $K^{(k)} \subset K$ the \emph{$k$-skeleton of $K$}.

Similarly as for cubes, given cubical complexes $K$ and $K'$, we have the product cubical complex
\[
K \times K' = \{ q \times q' \colon q\in K,\ q'\in K'\}.
\]

Our terminology is mostly standard with a few exceptions. 
Let $S\subset K$ be a subset of a cubical $n$-complex $K$. The subcomplex $\Span_K(S)$ \emph{spanned by $S$} is the smallest subcomplex of $K$ containing $S$. The \emph{star $\Star_K(q)$ of a $k$-cube $q$ in $K$} is the smallest subcomplex spanned by all cubes in $K$ containing $q$, that is,
\[
\Star_K(q)=\Span_K(\{Q\in K\colon q\subset Q\}).
\]
 Note that this definition is more restrictive that the usual definition, which states that a star is spanned by all cubes meeting $q$. The \emph{star $\Star_K(S)$ of a subset $S\subset K$} is \index{$\Star_K(S)$} 
\[
\Star_K(S) = \bigcup_{q\in S} \Star_K(q).
\]

Given subcomplexes $P$ and $N$ of $K$, we also denote 
\[
P - N = \Span_K(\{ q \in P \colon q\not \in N\})
\]
the \emph{difference of $P$ and $N$}. \index{cubical complex!difference $P - N$}

Regarding boundary of a cubical complex,  we say that an $(n-1)$-cube $q$ in a cubical $n$-complex $K$ is a \emph{one-sided} if there exists only one $n$-cube in $K$ containing $q$. 
The boundary $\partial K$ is the subcomplex of $K$ spanned by all one-sided cubes, that is,
\[
\partial K = \Span_K\left( \{ q\in K^{[n-1]} \colon q \text{ is a boundary cube}\right).
\]

Given cubical complexes $K$ and $K'$, a homeomorphism $\phi \colon |K|\to |K'|$ is a \emph{cubical isomorphism} if $\phi^*(K') = K$.  We say a continuous map $\varphi \colon |K|\to |K'|$ is \emph{cubical} if the map $\varphi_* \colon K\to K', Q\mapsto \varphi(Q)$, is a well-defined map of cubical complexes.

\subsubsection*{Flat Structure and flat metric}

To keep track of the distortion of the cubes resulted from a subdivision or an expansion of a complex, we first define a flat structure  as follows.

\begin{definition}
\label{def:flat-structure}
A \emph{flat structure $\sF$ on a cubical $n$-complex $K$} is a family of cubical isomorphisms $\{ \phi_Q \colon Q\to [0,1]^n \colon Q\in K^{[n]}\}$ having the property that, for $n$-cubes $Q$ and $Q'$ having a common face $q=Q\cap Q' \in K^{[n-1]}$, the transition map
\[
\phi_{Q'} \circ \phi_Q^{-1}|_{\phi_Q(q)} \colon \phi_Q(q) \to \phi_{Q'}(q)
\]
is a Euclidean isometry. \index{cubical complex!flat structure}
\end{definition}

Let  $K$ be a cubical $n$-complex with a flat structure $\sF$. For a continuum $\beta\subset |K|$, define \index{cubical complex!flat metric}
\[
\diam_{\sF}(\beta) = \sum_{Q\in K^{[n]}} \diam \phi_Q(\beta\cap Q), 
\]
where $\phi_Q \colon Q\to [0,1]^n$ belongs to $\sF$. The \emph{flat metric  $d_\sF\colon |K|\times |K|\to [0,\infty)$ induced by $\sF$} is defined by the formula,
\[
d_\sF(x,x') = \inf_{\beta} \diam_{\sF}(\beta),
\]
for $x,x'\in |K|$, where the infimum is taken over all continua $\beta\subset |K|$ containing $x$ and $x'$. \index{metric!flat}

Metric $d_\sF$ has the following property resembling that of $d_T$: for an $n$-cube $Q\in K^{[n]}$ and $x, x'\in Q$,
\[
d_\sF(x,x') = \inf_\gamma \ell_\cF(\gamma),
\]
where the infimum is taken over all paths in $Q$ connecting $x$ and $x'$, and $\ell_\sF(\cdot)$ is the length of a path in metric $d_\sF$.

\begin{convention}
Unless otherwise stated, we assume that we have fixed a flat structure $\sF_K$ for a cubical complex $K$ and that its space $|K|$ is a metric space with flat metric $d_{\sF_K}$. 
\end{convention}

\subsection{Passing from PL structure to a cubical structure}
\label{sec:congruent-cubulation} 

In the subdivisions below,  we use the common terminology that $n$-cells $C_1,\ldots, C_\ell$ \emph{subdivide an $n$-cell $C$} if cells $C_1,\ldots, C_\ell$ have mutually disjoint interiors and $C=C_1\cup \cdots \cup C_\ell$. 

Let $T$ be a simplicial complex for which $|T|$ is an $n$-manifold with boundary. \index{cubical complex!subdivision from PL}

Let $\Delta_n = [e_1,\ldots, e_{n+1}] \subset \R^{n+1}$ be the standard $n$-simplex, and fix for each $n$-simplex $\sigma\in T^{[n]}$ an affine simplicial isomorphism $\phi_\sigma \colon \sigma \to \Delta_n$. Let
\[
\sF(T) = \{ \phi_\sigma \colon  \, \sigma\in T^{[n]}\}.
\]
We observe that, for maps $\phi \colon \sigma \to \Delta_n$ and $\phi'\colon \sigma' \to \Delta_n$ in $\sF(T)$ and $\tau = \sigma\cap \sigma'\ne \emptyset$, the transition map $\phi' \circ \phi^{-1}|_{\phi(\tau)} \colon \phi(\tau) \to \phi'(\tau)$ is a Euclidean isometry. In this sense, we may consider $\sF(T)$ as a flat structure on $T$. 

We now associate to $T$ and $\sF(T)$ a cubical $n$-complex $T^\boxplus$ and an induced flat structure as follows.

To pass from a simplicial structure to a cubical structure, we subdivide each $n$-simplex in $T$ into $n$ polyhedra with isomorphic cubical structures. Although the construction is well-known, we recall some details on the geometry of the obtained cubes which have an important role in what follows. 

In the following statement, an $n$-cube $Q$ is a polyhedron with a cubical structure $\sfC(Q)$ isomorphic to $\sfC_n$ and that two $n$-cubes $Q$ and $Q'$ are \emph{congruent} if there exists a cubical isomorphism $\psi \colon Q \to Q'$ which is an isometry.

\begin{lemma}
\label{lemma:congruent-cubulation} 
There exists a cubical complex $\Delta_n^\boxplus$ having $n$-simplex $\Delta_n= [v_0,v_1,\ldots, v_n]$ as its space and $(n+1)$ congruent $n$-cubes $Q_0,\ldots, Q_n$, for which
the restriction $\Delta_n^\boxplus|_\sigma$, of $\Delta_n^\boxplus$  to each face $\sigma = [v_0,\ldots, \widehat{v_i}, \ldots, v_n]$ of $\Delta_n$, is a well-defined cubical $(n-1)$-complex and with $n$ congruent $(n-1)$-cubes.

Moreover, there exist cubical isomorphisms $h_i \colon Q_i \to [0,1]^n$ for $i=0, 1,\ldots,n,$ for which the transition maps $h_i\circ h_j^{-1}|_{h_j(Q_i\cap Q_j)} \colon h_j(Q_i\cap Q_j) \to h_i(Q_i\cap Q_j)$ are Euclidean isometries.
\end{lemma}
\begin{proof}

Let $X$ be the barycentric subdivision of $\Delta_n$ for which simplices of the same dimension are congruent.  For each vertex $v_i$ of $\Delta_n$, let $\Star(v_i)$ be the star of $v_i$ in the complex $X$ and let $Q_i=|\Star(v_i)|$ be its space. 

We claim  that each $Q_i$ admits a cubical structure $\sfC(Q_i)$ isomorphic to the standard one $\sfC_n$ on $[0,1]^n$ for which each  $k$-cube in $\sfC(Q_i)$, $0\leq k\leq n$, 
is the union of certain $k$-simplices in $X$. The argument is an induction on dimension.

For $n=1$, the claim clearly holds. Suppose the claim holds in dimension $n-1$. For dimension $n$, it suffices to consider the vertex $v_0 = e_1$. By the induction assumption,  for each $i=1,\ldots, n$,  the restriction of the star $\Star(v_0)$ to the face $[v_0,v_1,\ldots,v_{i-1}, v_{i+1}, \ldots, v_n]$  admits a cubical structure $K_{0\, i}$ which has exactly one $(n-1)$-cube and whose $k$-cubes, $0 \leq k\leq  n-1,$ are obtained by taking unions of $k$-simplices in $X|_{Q_0}$.  Thus the complex $K_0 = K_{0\, 1}\cup \cdots \cup K_{0\, n}$ is a cubical $(n-1)$-complex having $n$ cubes of dimension $(n-1)$.

Let now $v$ be the unique vertex of $X$ in the interior of $\Delta_n$. Let $L_0$ be the link of vertex $v_0$ in $X$, and $\tau_0$ be the face of $\Delta_n$ opposite to vertex $v_0$. Since $X$ is the barycentric subdivision of $\Delta_n$, the link $L_0$, as a simplicial complex,  is isomorphic  to the barycentric subdivision $X|_{\tau_0}$ of $\tau_0$. 
By the induction assumption,  $X|_{\tau_0}$ admits a cubical $(n-1)$-complex which consists of $n$ cubes of dimension $(n-1)$, and  is the star of the unique vertex in the interior of $\tau_0$.
Thus the same holds true for the link $L_0$ and vertex $v$. We denote this complex on $|L_0|$ by $K'_0$.

Then the union $K_0 \cup K'_0$ is a cubical complex on the boundary of the star $\Star(v_0)$ in $X$, obtained by taking unions of simplices in $X$, which consists of $2n$  dimension $n-1$ cubes.  Moreover, $|K_0|$ and $|K'_0|$ are $(n-1)$-cells and $\partial Q_0= |K_0\cup K'_0|$ is an $(n-1)$-sphere. Thus  $Q_0$ is an $n$-cell and is the space of the cubical complex $\sfC(Q_0) = K_0\cup K'_0 \cup \{Q_0\}$ isomorphic to $\sfC_n$.

We now fix, for each $i=1,\ldots, n$, an isometry $\rho_i \colon \Delta_n \to \Delta_n$ which maps $v_i$ to $v_0$.  Then $\rho_i$ fixes the barycenter $v$ of $\Delta_n $ and satisfies $\rho_i(Q_i)=Q_0$. Thus  $Q_i$ admits a cubical structure $\sfC(Q_i)$ for which $\rho_i^*(\sfC(Q_i)) = \sfC(Q_0)$.

The  congruence relation among the $n$-cubes $Q_0,\ldots, Q_n$ follows from definition.  The fact that  $\sfC(Q_i)|_{Q_i\cap Q_j} = \sfC(Q_j)|_{Q_i\cap Q_j}$ for $i\ne j$ follows  from the induction assumption and the Euclidean barycentric subdivision of $\Delta_n$. 

We set $\Delta_n^\boxplus$ to be the union of complexes $\sfC(Q_0),\ldots, \sfC(Q_n)$. 

Based on the proof of Lemma \ref{lemma:congruent-cubulation}, we may associate to   $\Delta_n^\boxplus$ a flat structure as follows.  Let $h \colon Q_0 \to [0,1]^n$ be a cubical isomorphism with the property that $h(v_0) = 0$, $h(v) = e_1 + \cdots +e_n$, and that $h$ conjugates each isometry of $Q_0$ fixing $v_0$ and $v$ to an isometry of $[0,1]^n$, that is, if $\iota \colon Q_0 \to Q_0$ is an isometry fixing $v_0$ and $v$ then $h \circ \iota \circ h^{-1} \colon [0,1]^n \to [0,1]^n$ is an isometry fixing $0$ and $e_1+\cdots + e_n$. We set  $h_0 = h \colon Q_0 \to [0,1]^n$ and, for $i>0$, and $h_i = h \circ \rho_i \colon Q_i \to [0,1]^n$. 

Then $h_i \cap h_j^{-1}|_{h_j(Q_j\cap Q_i)} \colon h_j(Q_j\cap Q_i) \to h_i(Q_j\cap Q_i)$ is a Euclidean isometry. Indeed, $\rho_j(Q_j\cap Q_i)$ and $\rho_i(Q_j\cap Q_i)$ are unions of faces of $Q_0$ in $K_0'$ which differ by an isometry. Thus $(h_i\circ h_j^{-1})|_{h_j(Q_j\cap Q_i})$ is a Euclidean isometry.
\end{proof}

An immediate consequence of the previous lemma is the existence of a cubical complex $T^\boxplus$ subdividing $T$, which admits a flat structure.

For the statement of the proposition, we recall that a map $f\colon X\to Y$ between metric spaces is \emph{$L$-quasisimilarity for $L\ge 1$} if there exist $\lambda>0$ having the property that, for all points $x,x'\in X$, we have \index{quasisimilarity}
\[
\frac{\lambda}{L}d(x,y) \le d(f(x),f(y)) \le \lambda L d(x,y).
\]

\begin{proposition}
\label{prop:PL-to-cubical}
Let $T$ be a simplicial $n$-complex whose space $(|T|, d)$ is a Riemannian $n$-manifold with boundary. Then there exists a cubical complex $K=T^\boxplus$ having the same space for which the restriction $T^\boxplus|_\sigma$ is a well-defined cubical $n$-complex for each $n$-simplex $\sigma$. Moreover, there exists a flat structure $\sF_{T^\boxplus}$ of $T^\boxplus$ with the following property: for each $n$-simplex $\sigma$ in $T$ and $n$-cube $Q\in T^\boxplus|_\sigma$, the map $\phi_Q \colon Q\to [0,1]^n$ is an $L$-quasisimilarity, where $L=L(\sigma)$. \index{$T^\boxplus$}
\end{proposition}

\begin{proof}
We fix for each $n$-simplex $\sigma\in T$, an affine simplicial homeomorphism $\psi_\sigma \colon \sigma \to \Delta_n$. Since the transition maps $\psi_{\sigma'} \circ \psi_{\sigma}^{-1}$ are Euclidean isometries on common faces, the structures $\psi_\sigma^*(\Delta_n^\boxplus)$ and $\psi_{\sigma'}^*(\Delta_n^\boxplus)$ agree on $\sigma\cap \sigma'$. Thus we may take 
\[
T^\boxplus = \bigcup_{\sigma \in T^{[n]}} \,\psi_\sigma^*(\Delta_n^\boxplus).
\]

We now construct a flat structure $\sF_{T^\boxplus}$ on $T^\boxplus$. Recall first from Lemma \ref{lemma:congruent-cubulation} that 
 all $n$-cubes $Q_0,\ldots, Q_n$ in $\Delta_n^\boxplus$ are congruent to $Q_0$ and that  maps $h_i \colon Q_i \to [0,1]^n$ are cubical isomorphisms.

Let  $\sigma$ be an $n$-simplex in $T$ and  $Q$ be an $n$-cube in $ T^\boxplus|_\sigma$. Let $\ell_Q \in \{0,1\ldots, n\}$ be the index for which $\psi_\sigma(Q) = Q_{\ell_Q}$. Set $\phi_Q = h_{\ell_Q} \circ \psi_\sigma|_Q \colon Q\to [0,1]^n$, and take
\[
\sF_{T^\boxplus}  = \bigcup_{\sigma \in T^{[n]}} \, ( \bigcup_{Q\in T^\boxplus|_{|\sigma|}} \,\, \{ \phi_Q\} \,\,). 
\]

To check that transition maps $\phi_{Q'} \circ \phi_Q^{-1} \colon \phi_Q(Q\cap Q') \to \phi_{Q'}(Q\cap Q')$ are Euclidean isometries for cubes $Q, Q'$ sharing a face $q= Q\cap Q'$, we consider two cases: (i)  $Q$ and $Q'$ belonging to the same $n$-simplex $\sigma$, and (ii) $Q$ and $Q'$ belonging to two different $n$-simplices $\sigma$ and $\sigma'$. We omit the straightforward details. Finally, the geometrical property of the flat structure follows from the fact that $\psi_\sigma$ is a quasisimilarity  with a constant depending on $\sigma$, and that $h$ is a bilipschitz map, in Euclidean metric, with a constant depending on the dimension $n$.
\end{proof}

In view of Proposition \ref{prop:PL-to-cubical}, we may associate to $|T|$ a flat metric $d_{\sF_{T^\boxplus}}$ induced by  $\sF_{T^\boxplus}$.

An immediate corollary of Proposition \ref{prop:PL-to-cubical} is the following comparison of metrics.
\begin{corollary}
\label{cor:flat-metrics}
Let $T$ be a simplicial $n$-complex whose space $(|T|, d)$ is a Riemannian $n$-manifold with boundary. Let $T^\boxplus$ be the cubical complex associated to $T$, and $\sF_{T^\boxplus}$ be a flat structure on $T^\boxplus$. Then, the flat metric $d_{\sF_{T^\boxplus}}$ and the Riemannian metric $d$ are quasisimilar with a constant depending only on the constant $L$ in Proposition \ref{prop:PL-to-cubical}.
\end{corollary}

\subsection{Standard refinements and standard metrics}
\label{sec:refinement-metric}

The standard cubical structure $\sfC_n$ of the Euclidean $n$-cube $[0,1]^n$ admits a natural subdivision into $3^n$ congruent $n$-cubes, and a barycentric subdivision into congruent $n$-simplices.
We begin by fixing these subdivisions on $\sfC_n$, and then extend these notions to cubical complexes.
\index{cubical complex!refinement} 

For each $v\in \{0,1,2\}^n$, we set $q_v = \frac{1}{3}\left( v+[0,1]^n \right)$ and $\iota_v \colon q_v \to [0,1]^n$ to be the congruence $x\mapsto 3^n(x-v)$; we denote $\sfC(q_v) = \sfC_{\iota_v}(q_v)$. Then cubes $q_v$, $v\in \{0,1,2\}^n$, are congruent Euclidean cubes which cover $[0,1]^n$, have mutually disjoint interiors, and satisfy $q_v \cap q_{v'} \in \sfC(q_v) \cap \sfC(q_{v'})$ for $v,v'\in \{0,1,2\}^n$. Thus
\[
\sfR_n = \bigcup_{v\in \{0,1,2\}^n} \sfC(q_v)
\]
is a cubical $n$-complex with space $[0,1]^n$. We call $\sfR_n$ the \emph{standard refinement of $\sfC_n$}.

Let now $K$ be a cubical complex with a flat structure $\Phi_K=\{ \phi_Q \colon Q\to [0,1]^n \colon Q\in K^{[n]}\}$. For each $Q\in K^{[n]}$, we take   
\[
\Refine(Q) = \phi_Q^*(\sfR_n).
\]
Since the transition maps of the flat structure are Euclidean isometries, we observe that, for $n$-cubes $Q$ and $Q'$ having a common face $q=Q\cap Q'\in K^{[n-1]}$, we have that
\[
\Refine(Q)|_q = \Refine(Q')|_q.
\]
Thus
\[
\Refine(K) = \bigcup_{Q\in K^{[n]}} \Refine(Q)
\]
is a well-defined cubical complex. We call  $Q$ the  \emph{parent of the cubes in $\Refine(Q)$}.

A flat structure $\sF_{\Refine(K)}$ on $\Refine(K)$ may also be defined as follows. Let $Q\in K^{[n]}$ and $Q'\in \Refine(K)^{[n]}$ be an $n$-cube contained in $Q$. Then $\phi_Q(Q')=q_v$ for some $v\in \{0,1,2\}^n$. Let $\phi_{Q'}\colon Q' \to [0,1]^n$  be the map $\phi_{Q'} = \iota_v \circ \phi_Q|_{Q'}$, and set  
\[ \sF_{\Refine(K)} = \{ \phi_{Q'} \colon Q'\to [0,1]^n \colon Q'\in \Refine(K)^{[n]}\}.\]
Again, the transition maps for $\sF_{\Refine(K)}$
 are Euclidean isometries. Moreover, the maps in the flat structure $\sF_{\Refine(Q)}$ inherit their quasisimilarity constants from their parents.

\begin{definition}\label{def:standard-metric} 
The cubical $n$-complex $\Refine(K)$ is called the \emph{standard refinement of a cubical $n$-complex $K$ (with respect to the flat structure $\sF_K$)}. For $k\ge 1$, the complex $\Refine^k(K) = \Refine(\Refine^{k-1}(K))$ is called the \emph{$k$th iterated refinement of $K$}.

The metric on $\Refine^k(K)$ given by
\[d_K \equiv d_{\sF_K} \colon |\Refine^k(K)|\times |\Refine^k(K)| \to [0,\infty)\]
is called the \emph{standard metric on $\Refine^k(K)$}.  \index{metric!on refinement}
\end{definition} 

\begin{remark}
We observe that the iterated refinement may be defined, equivalently, by the formula \index{$\Refine^k(K)$}
\[
\Refine^k(K) = \bigcup_{Q\in K^{[n]}} \phi_Q^*(\Refine^k(\sfC_n)).
\]
Also, when complex $K$ has a flat structure, its refinement $\Refine^k(K)$ has a well-defined flat structure inherited from $K$.
\end{remark}

\begin{remark}The flat metric $d_{\sF_{\Refine^k(K)}}$ in $\Refine^k(K)$ induced by the flat structure $\sF_{\Refine^k(K)}$  satisfies
\[ d_{\sF_{\Refine^k(K)}} (x,x')= 3^k\,  d_K(x,x')\]
for $x, x' \in \Refine^k(K)$.
\end{remark}

\begin{remark}
Passing to a refinement $\Refine(K)$ of a complex $K$ respects subcomplexes and skeleta, that is,  the refinement $\Refine(K')$ of a subcomplex $K'$ of $K$ is the restriction of $\Refine(K)$ to $|K'|$. We may unambiguously  say that a $k$-cube in $\Refine(K')$ is contained in $\Refine(K)$, or a $k$-cube in $\Refine(K')$  is contained in a $k$-cube in $K$. 
\end{remark}

We finish this discussion by fixing the notion of center cube for the forthcoming sections. Let $K$ be a cubical $n$-complex with a flat structure $\sF_K$. We denote by
\[
c \colon K \to \Refine(K),
\]
the map which takes  $q \in K^{[k]}$ to  the unique \emph{center cube} $c(q)$ in $(\Refine(K)|_q)^{[k]}$.

\subsection{Passing from Riemannian structure to cubical structure}
\label{sec:From-Riem-To-Cubical}

Every Riemannian $n$-manifold $(M,g)$ admits a triangulation $T$ in which every $n$-simplex $(\sigma, g|_{\sigma})$ is bilipschitz to an affine Euclidean $n$-simplex. See \cite{Cairns}, \cite{WhiteheadC1}, and \cite{Munkres}.

The following proposition, which follows immediately from Proposition \ref{prop:PL-to-cubical}, explains our reason in focusing on cubical complexes when studying the quasiconformal and quasiregular mappings on manifolds.

\begin{proposition}\label{prop:Riemannian-to-cubical}Associated to each  Riemannian manifold $(M,g)$, there exist a cubical complex $K$ with a flat metric $d_K$ for which each cubic is isometric a Euclidean unit cube, and a quasi-similar homeomorphism $(M,g) \to (K,d_K)$.
\end{proposition}

\subsection{Barycentric triangulation of a cubical complex}
\label{sec:triangulation}

As the last subdivision to be discussed, we recall the standard barycentric subdivision $K^\Delta$ of a cubical complex $K$; see Figure \ref{fig:canonical} for an example.

\begin{figure}[h!]
\begin{overpic}[scale=0.8,unit=1mm]{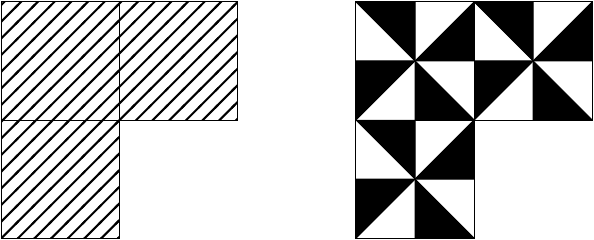} % add: grid
\end{overpic}
\caption{A cubical complex $K$ and its triangulation $K^\Delta$.}
\label{fig:canonical}
\end{figure}

Let $\sfC_n$ be the standard cubical structure on the unit cube $[0,1]^n$.
We say that a simplicial complex $T_n= \sfC_n^\Delta$ is a \emph{barycentric triangulation of $\sfC_n$, or of the Euclidean cube $[0,1]^n$}, if all vertices  of $\sfC_n$ are vertices of $T_n$, and $T_n$ has exactly one vertex in the interior of each cube $q \in \sfC_n$. We may obtain this triangulation, for example, as follows.

 The barycentric triangulation $T_n$ of $\sfC_n$ is obtained inductively with respect to the skeleta $\sfC_n^{(k)}$ as follows. For the construction, we denote, for each cube $q\in \sfC_n$, $b_q\in q$ the Euclidean barycenter of $q$, that is, average of the corners of $q$.

For $k=0$, we fix $\tau_0 = \{b_q \colon q\in \sfC_n\}$. Then $\tau_0 = \sfC_n^{(0)}$. Suppose that for $k\in (0,n]$, we have constructed a sequence of simplicial complexes $\tau_0 \subset \cdots \subset \tau_{k-1}$, for which  $\tau_\ell$ has $|\sfC_n^{(\ell)}|$ as its space and  the restriction $\tau_{\ell}|_q$ to each $q\in \sfC_n^{[\ell]}$   is a barycentric triangulation of $q$.

We fix the complex $\tau_k$ as follows. Let $q\in \sfC_n^{[k]}$. By assumption, the restriction $\tau_{k-1}|_{\partial q}$ is a simplicial subcomplex of $\tau_{k-1}$. We extend each $\ell$-simplex $\xi$ in $\tau_{k-1}|_{\partial q}$ to an $(\ell+1)$-simplex $\xi_q = \xi * b_q$, where $*$ is the join of $\tau$ and $b_q$. Then, for each $q\in \sfC_n$, 
\[
\tau(q)=\{ \xi_q \colon \xi \in \tau_{k-1}|_{\partial q}\} \cup (\tau_{k-1}|_{\partial q})
\]
is a simplicial complex with having $q$ as its space. Moreover, for $q,q'\in \sfC_n^{[k]}$, we have that $\tau(q)|_{q\cap q'} = \tau(q')|_{q\cap q'} = \tau_{k-1}|_{q\cap q'}$. Thus
\[
\tau_k = \bigcup_{q\in \sfC_n^{[k]}} T(q)
\]
is a simplicial complex with space $|\sfC_n^{(k)}|$, and the sequence $\tau_0\subset \cdots \subset \tau_{k-1}\subset \tau_k$   satisfies the induction assumption. Finally we set $T_n = \tau_n$. This ends the construction of the barycentric triangulation $T_n=\sfC^\Delta_n$ of $\sfC_n$.

\medskip

Given now a cubical complex $K$ with a flat structure $\sF_K=\{ \phi_Q \colon Q\to [0,1]^n \colon Q\in K^{[n]}\}$, the barycentric subdivision of $K$ is then the simplicial complex \index{$K^\Delta$} \index{cubical complex!barycentric triangulation $K^\Delta$} \index{$K^\Delta$}
\[
K^\Delta = \bigcup_{Q\in K^{[n]}} \phi_Q^*(\sfC^\Delta_n).
\]
Again, since the transition functions of the flat structure are Euclidean isometries and the $n$-simplices in $T^\Delta_n$ are congruent, we have that $K^\Delta$ is a well-defined complex.

\subsubsection{Barycentric subdivisions of triangulations}
\label{sec:subdivision-triangulation}
 
As we work mostly with cubical complexes in what follows, we only comment briefly the usual barycentric subdivision on triangulations.

As for cubical complexes, we may define, for a simplicial $n$-complex, a flat structure $\cF(T) = \{ \phi_\sigma \colon |\sigma|\to \Delta_n \colon \sigma \in T^{[n]} \} $ of a simplicial $n$-complex $T$. Here we -- and in what follows -- tacitly assume that $|T|$ is the union of $n$-simplices.

Let $B$ be a barycentric subdivision of $\Delta_n$. Having the flat structure $d_{\cF_T}$ at our disposal, we may define a geometric barycentric subdivision $T^\Delta$ of $T$ by $T^\Delta = \bigcup_{\sigma\in T^{[n]}} \phi_\sigma^*(B)$.

\section{Adjacency graph of a complex}
\label{sec:adjacency-graph}

In this section, we introduce the notion of an adjacency graph of a cubical complex and consider cubical complexes which are realizations of subgraphs of adjacency graphs. For the definition of the adjacency, let $K$ be  a cubical $n$-complex. The corresponding definition for simplicial complexes is similar.

\index{cubical complex!adjacency graph}
\begin{definition}
\label{def:adjacent}
\index{adjacency graph}
Two $n$-cubes $Q$ and $Q'$ in a cubical $n$-complex $K$ are \emph{adjacent} if $Q\cap Q'$ is a common face of $Q$ and $Q'$, that is, $Q\cap Q'\in K^{[n-1]}$. The graph 
\[
\Gamma(K) = \left( K^{[n]}, \left\{ \{Q,Q'\} \colon Q\ne Q',\ Q\cap Q'\in K^{[n-1]}\right\}\right)
\]
is called the \emph{adjacency graph of $K$}. 
\end{definition}

\begin{definition}\index{cubical complex!adjacently connected}
\label{def:adjacently-connected}
\index{adjacently connected complex}
A cubical $n$-complex $K$ is \emph{adjacently connected} if its adjacency graph $\Gamma(K)$ is connected.
\end{definition}

A remark on this definition is in order.

\begin{remark}
Clearly, two $n$-cubes $Q$ and $Q'$ of $K$ may have non-empty intersection without being adjacent. Thus, given a collection $S\subset K^{(n]}$ of $n$-cubes, the space $|\Span_K(S)|$ may be connected as a topological space, but the graph $\Gamma(\Span_K(S))$ is not connected.
\end{remark}

To each subgraph of the adjacency graph $\Gamma(K)$, we associate a subcomplex of $K$ as follows. 

\begin{definition}\index{cubical complex!subcomplex spanned by a graph} 
\label{def:G-span}
\index{subcomplex spanned by a graph $\Span_K(G)$}
Let $K$ be a cubical $n$-complex and $G = (V,E) \subset \Gamma(K)$ a subgraph. We call
\[
 K_G = \Span_K(G) =  \Span_K\left( \{ Q\in K^{[n]} \colon Q\in V\}\right) 
\]
the \emph{subcomplex of $K$ spanned by $G$}.
\end{definition}

\begin{example}
\label{ex:adjacency}
Let $K$ be a $2$-complex consisting of four unit squares in $\R^2$ and having space $[0,2]\times [0,2]$, and $G\subset \Gamma(K)$ a spanning tree in $K$. In this case, $\Gamma(K)$ is a loop, hence $G\subsetneq \Gamma(K)$. Since $K_G=K$, we have $\Gamma(K_G)=\Gamma(K)$.
\end{example}

\begin{figure}[htp]
\begin{overpic}[scale=1,unit=1mm]{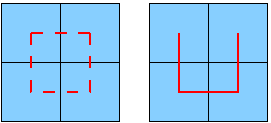} % add: grid
\end{overpic}
\label{fig:Realization-1}
\caption{Left: the adjacency graph $\Gamma(K)$ of complex $K$ in Example \ref{ex:adjacency}. Right: A spanning tree $G$ of $\Gamma(K)$.}
\end{figure}

\subsection{Realization of a subgraph}
\label{sec:realization-subgraph}

Each subgraph $G$ of an adjacency graph $\Gamma(K)$ is an adjacency graph of a cubical $n$-complex $R_G=\Real(G)$; we call the complex $\Real(G)$  a \emph{realization of $G$}. For this construction, we give first some notations. 

Let $K$ be a cubical $n$-complex and let $G = (V_G,E_G) \subset \Gamma(K)$ be a subgraph. Denote by
\[
S_G = \bigsqcup_{Q\in V_G} Q
\]
the disjoint union of all the $n$-cubes in the graph $G$, and by
\[
F_G = \bigsqcup_{Q\in V_G} \sfC(Q)
\]
a cubical $n$-complex having $S_G$ as its space.

Let $\sim_G$ be the equivalence relation in $S_G$ generated by relations $x\sim_G x'$ between points  $x\in Q$ and $x'\in Q'$ for which  $\{Q,Q'\}\in E_G$ is an edge and $x = x'$ in $|K|$.  Denote by $[x]$  the equivalence class of $x\in S_G$ induced in $S_G/{\sim_G}$. We denote $[q] = \{ [x]\colon x\in q\}$ for each $q\in F_G$,

We are now ready to give the definition of a realization of a subgraph; see Figure \ref{fig:Realization-2} for an illustration of a realization related to Example \ref{ex:adjacency}.

\begin{definition}
\label{def:realization}
\index{realization of a subgraph $\Real(G)$}
The \emph{realization $\Real(G)$} of a subgraph $G \subset \Gamma(K)$ of the adjacency graph of a cubical $n$-complex $K$ is the cubical $n$-complex
\[
\Real(G)= F_G/{\sim_G} = \{ [q] \colon q\in F_G\}.
\]
\end{definition}

\begin{figure}[htp]
\begin{overpic}[scale=1,unit=1mm]{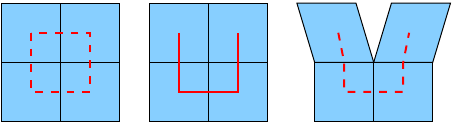} % add: grid
\end{overpic}
\label{fig:Realization-2}
\caption{From left to right: the adjacency graph $\Gamma(K)$ of complex $K$ in Example \ref{ex:adjacency}, a spanning tree $G$ of $\Gamma(K)$, and the realization $\Real(G)$ of $G$.}
\end{figure}

\subsubsection{Quotient maps associated to realizations}

We associate to he realization $R_G=\Real(G)$ of a subgraph $G\subset \Gamma(K)$ canonical maps $R_G \to K_G$ and $|R_G|\to |K_G|$ defined as follows. Let $\iota_G \colon S_G \to |K_G|$ be the map induced by inclusion of each $n$-cube $Q\in V_G$ into $|K_G|$. We denote by the same symbol also the induced map $\iota_G \colon F_G \to K_G$ between complexes. By construction, the mapping $\iota_G \colon S_G \to |K_G|$ factorizes through the quotient map $S_G \to |R_G|$, $x\mapsto [x]$, and the mapping $\iota_G \colon F_G \to K_G$ through the induced quotient map $F_G \to R_G$, $q\mapsto [q]$. Thus we obtain diagrams
\[
\xymatrix{
S_G \ar[rr]^{\iota_G} \ar[dr]_{x\mapsto [x]} & & |K_G| \\
& |R_G| \ar[ru]_{\pi_G} &
}
\quad \text{and} \quad
\xymatrix{
F_G \ar[rr]^{\iota_G} \ar[dr]_{q\mapsto [q]} & & K_G \\
& R_G \ar[ru]_{\pi_G} &
}
\]
where we denote the both factors $|R_G| \to |K_G|$ and $R_G \to K_G$ with the same symbol $\pi_G$, respectively. We follow this convention from now on and call both canonical maps 
\[\pi_G \colon |R_G|\to |K_G|, \quad \text{and}\,\,\,\, \pi_G \colon R_G \to K_G\]
 as \emph{$G$-quotient maps}. 

Since the equivalence relation $\sim_G$ does not make non-trivial identifications among points in a single $n$-cube $Q$ in $V_G$, the image of a cube under either map is a cube and, similarly, a preimage of a cube is a cube. We call -- in the case of both maps -- the image $\pi_G(c)$ of a cube $c\in R_G$ a \emph{projection} and the preimage $\pi_G^{-1}(q)$  of a cube $q\in K_G$ a \emph{lift}. Note that, we further obtain that the restrictions $\pi_G|_Q \colon Q\to |K_G|$  and  $\pi_G|_{\sfC(Q)} \colon \sfC(Q) \to K_G$ are embeddings for each $Q\in V_G$, that is, a homeomorphism and an isomorphism to their images, respectively.

Finally, we note that the cubical map $\pi_G \colon R_G \to K_G$ induces a map of adjacency graphs $\Gamma(\pi_G) \colon \Gamma(R_G) \to \Gamma(K_G)$, determined by the map $C\mapsto \pi_G(C)$ on the vertices and by the map $\{C,C'\}\mapsto \{ \pi_G(C),\pi_G(C')\}$ on the edges. Recall that $\pi_G([Q]_j) = Q$.

\subsubsection{Elementary properties of realizations}

In the future, we consider mainly realizations $R_G=\Real(G)$ associated to spanning trees $G$ of adjacency graphs $\Gamma(K)$; in such case, the spaces $|R_G|$ are $n$-cells. We now consider realizations more generally. As a motivating example, in Figure \ref{fig:Realization-3} we have illustrated a simple example of a cubical $2$-complex $K$ for which $|K|$ is not a $2$-cell but $\Gamma(R_G) = \Gamma(K)$ and $|R_G|$ is a $2$-cell for $G=\Gamma(K)$. 

\begin{figure}[htp]
\begin{overpic}[scale=1,unit=1mm]{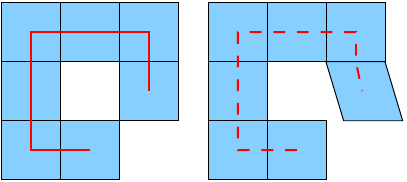} % add: grid
\end{overpic}
\caption{Left: Cubical $2$-complex $K$ consisting of $6$ squares and graph $G=\Gamma(K)$. Right: The realization $\Real(G)$ of $G$.}
\label{fig:Realization-3}
\end{figure}

Our first observation is that the adjacency graph of the realization $R_G$ is isomorphic to $G$.

\begin{lemma}
\label{lemma:realization-1}
Let $K$ be a cubical $n$-complex and let $G=(V_G,E_G)\subset \Gamma(K)$ be a subgraph. Then the graph $\Gamma(R_G)$ is isomorphic to $G$. Moreover, the map $\Gamma(\pi_G) \colon \Gamma(R_G) \to \Gamma(K_G)$ is an isomorphism onto the subgraph $G$ of $\Gamma(K_G)$.
\end{lemma}

\begin{proof}
We observe first that, for each $Q\in V_G$, we have $\pi_G([Q]) = Q$. Thus the induced map $\Gamma(\pi_G) \colon \Gamma(R_G) \to G$ is a bijection on vertices. It suffices to show that it is also an isomorphism on edges.

We show first that $\Gamma(R_G)$ is isomorphic to $G$. Suppose that $Q$ and $Q'$ are $n$-cubes in $K_G$ for which $\{Q,Q'\}\in E_G$. Then $[Q]\cap [Q'] = [Q\cap Q']$. Thus $[Q]$ and $[Q']$ are adjacent in $R_G$ and $\Gamma(\pi_G)(\{[Q],[Q']\}) = \{Q,Q'\}$. 

Suppose now that $[Q]$ and $[Q']$ are adjacent in $R_G$. Then $\xi = [Q]\cap [Q']\in R_G^{[n-1]}$ is a common face of $[Q]$ and $[Q']$ and there exists $q\in K^{[n-1]}$ for which $\xi = [q]$. Thus $\{Q,Q'\}\in E_G$ and $\Gamma(\pi_G)(\{ [Q],[Q']\}) = \{Q,Q'\}$. We conclude that $\Gamma(\pi_G)$ is an isomorphism $\Gamma(R_G) \to G$.
\end{proof}

We show in the next lemma that the connected components of the realization $R_G$ are in one-to-one correspondence with the connected components of $G$.
\begin{lemma}
\label{lemma:realization-2}
Let $K$ be a cubical $n$-complex and let $G=(V_G,E_G)\subset \Gamma(K)$ be a subgraph. Then the connected components of $|R_G|$ are in one-to-one correspondence with those of $G$.
\end{lemma}

\begin{proof}
Let $Q\in V_G$, and let $\cC$ be the connected component of $G$ containing $Q$ and let $P_{\cC}$ be the subcomplex of $R_G$ spanned by the $n$-cubes in $\cC$.

Let $\Omega$ be the connected component of $|R_G|$ containing $[Q]$. We observe first that $\Omega$ is a union of $n$-cubes in $R_G$. Indeed, if $[Q]\cap \Omega\ne \emptyset$ for $Q\in V_G$, then $[Q]\subset \Omega$ by connectedness of $[Q]$. Let now $P_\Omega$ be the subcomplex of $R_G$ for which $\Omega = |P|$. Since $|P_{\cC}|$ is connected, we have that $|P_{\cC}|\subset \Omega$.

Let now $[Q']$ be an $n$-cube in $P_\Omega$ for which $[Q']\cap |P_{\cC}|\ne \emptyset$. Then there exists an $n$-cube $[Q'']$ in $\cC$ for which $[Q']\cap [Q'']\ne \emptyset$. Thus there are points $x'$ and $x''$ in cubes $Q'$ and $Q''$, which are identified. By the definition of $\sim_G$, there exists a chain of adjacent cubes $Q'=Q_0, \ldots, Q_m=Q''$ in $G$ for which $[x']\subset [Q_i]$ for each $i=0,\ldots, m$; recall that $[x']=[x'']$. Thus $[Q_0],\ldots, [Q_m]$ is a chain in $\Gamma(R_G)$ from $[Q']$ to $[Q'']$ and they belong to the same component of $\Gamma(R_G)$. Thus $[Q']\in P_{\cC}$. We conclude that $\Omega \subset |P_{\cC}|$ by connectedness of $\Omega$.
\end{proof}

We now formalize the local connectedness property of $R_G$ illustrated in Figure \ref{fig:Realization-2}.

\begin{lemma}
\label{lemma:realization-3}
Let $K$ be a cubical $n$-complex and let $G=(V_G,E_G)\subset \Gamma(K)$ be a subgraph. Let also $C,C'\in R_G^{[n]}$ be $n$-cubes with a non-empty intersection $C\cap C'$ which is not an $n$-cube. Then there exists a chain $C=C_0,\ldots, C_m = C'$ of adjacent $n$-cubes in $R_G$ for which $C\cap C' \subset C_i$ for each $i=0,\ldots, m$. In particular, $C \cap C'$ is contained in an $(n-1)$-cube $c \in R_G$ which is an edge in $\Gamma(R_G)$. 
\end{lemma}

\begin{proof}
Suppose first that points $x,y\in S_G$ are identified under $\sim_G$. Since $\sim_G$ is generated by relations $x\sim_G x'$ between points $x\in Q$ and $x'\in Q'$ for which $\iota_G(x)=\iota_G(x')$ and $\{Q,Q'\}\in E_G$, we have that $\iota_G(x)=\iota_G(y)$ and that there exists $n$-cubes $Q_0,\ldots, Q_m$ in $V_G$, each containing the point $x$, for which $\{Q_i,Q_{i+1}\}\in G$ for each $i=0,\ldots, m-1$. Then $[Q_0],\ldots, [Q_m]$ is a chain of adjacent $n$-cubes in $R_G$ and $[x]\in [Q_i]$ for each $i=0,\ldots, m$. In particular, $[x]\in [Q_0]\cap [Q_1]$, where $\{[Q_0],[Q_1]\}\in \Gamma(R_G)$.

Suppose next that $C$ and $C'$ are $n$-cubes in $R_G$ having a non-empty intersection. Let $Q$ and $Q'$ be $n$-cubes in $V_G$ for which $C=[Q]$ and $C'=[Q']$. Since $C\cap C'$ is not an $n$-cube, we have that $Q\ne Q'$. If $C\cap C'$ is a singleton $[x]$, both claims follow from the above observation. Suppose that $C\cap C'$ is not a singleton. Then we may fix $[x]\in C\cap C'$ in the interior of $C\cap C'$. By the above observation, there exists a chain $C_0,\ldots, C_m$ of adjacent $n$-cubes in $R_G$ each containing $[x]$. Since $C\cap C'$ is a cube, we have that $C\cap C'\subset C_i$ for each $i=0,\ldots, m$. In particular, $C\cap C'\subset C_0\cap C_1$. Since $C_0$ and $C_1$ are adjacent in $R_G$, the claim follows.
\end{proof}

Lemma \ref{lemma:realization-2} yields that the realization of a tree in $\Gamma(K)$ is a cell.

\begin{lemma}
\label{lemma:realization-cell}
Let $K$ be a cubical $n$-complex and  $G \subset \Gamma(K)$ be a tree. Then $|R_G|$ is an $n$-cell. 
\end{lemma}

\begin{proof}
Since $G$ is a tree, so is $\Gamma(R_G)=(V,E)$. Let $Q\in R_G^{[n]}$ be a leaf of $\Gamma(R_G)$ and  $Q'\in R_G^{[n]}$ be the unique $n$-cube adjacent to $Q$. Let  
$R'_G =\Span\{R_G\setminus \sfC(Q)\}$ be the cubical $n$-complex obtained by removing from $R_G$ all the lower dimensional cubes contained in $Q$ but not in any other $n$-cubes. Lemma \ref{lemma:realization-3} 
implies that $Q$ does not meet any other leaf in $R_G$. Thus there exists a PL homeomorphism $h_Q \colon |R_G| \to  |R'_G|$ for which  $h_Q|_q=\id$ for all $n$-cubes $q\in R_G$, $q \neq Q, Q'$. 

By removing leaves of $G$ iteratively, we may reduce $R_G$ to an $n$-cube, which is of course an $n$-cell.
\end{proof}

\subsection{Tunnels}

We  single out a particular family of complexes, called \emph{tunnels}, which are crucial in the forthcoming constructions.

\begin{definition}
\label{def:tunnel}
\index{tunnel}
A cubical $n$-complex $K$ is a \emph{tunnel} if its adjacency graph $\Gamma(K)$ is a tree and $K$ is isomorphic to the realization of $\Gamma(K)$.
\end{definition}

Realization of a tree is always a tunnel. We record this observation as a corollary.
\begin{corollary}
\label{cor:tunnel}
Let $K$ be a cubical $n$-complex, and  $G \subset \Gamma(K)$  a tree. Then the realization $R_G$ of $G$ is a tunnel.
\end{corollary}

It follows from Lemma \ref{lemma:realization-cell} that the space of a tunnel is an $n$-cell. Moreover, we have the following (tunnel-contracting) bilipschitz homeomorphism from a complex expanded with a tunnel to the original complex.

\begin{proposition}[Tunnel-contracting]
\label{prop:tunnel-contracting}
Let $K$ be a cubical $n$-complex and let $T$ be a cubical $n$-complex which is a tunnel for which the intersection $q_T=K\cap T$ is an $(n-1)$-cube. Then there exists $L=L(n,\# T^{[n]})\ge 1$ and a piecewise linear $L$-bilipschitz homeomorphism $\phi_T \colon |K\cup T| \to |K|$ for which $\phi_T$ is an identity on $|K-Q_T|$, where $Q_T$ is the unique $n$-cube in $K$ having $q_T$ as a face.
\end{proposition}

\begin{proof}
Since also $T'=T\cup Q_T$ is a tunnel, $\Gamma(T')$ is a tree. We give $\Gamma(T')$ a partial order $<$, induced by the tree structure, in which $Q_T$ is the unique smallest element. The proof is by induction with respect to this order. It suffices to consider the case of leaves, i.e.~maximal elements, of $\Gamma(T')$; other cases follow by induction.

Let $Q\in {T'}^{[n]}$ be a maximal element in $\Gamma(T')$ and let $Q'$ be the unique element in $T'$ sharing a face $q=Q\cap Q'$ with $Q$, i.e.~$\{Q,Q'\}$ is the unique edge in $\Gamma(T')$ containing $Q$. Since $T$ is isomorphic to the realization of $\Gamma(T)$, we have that $Q\cap T' = Q\cap Q'$. Thus there exists a piecewise linear homeomorphism $\varphi_Q \colon |K\cup T| \to |(K\cup T) -Q|$, which is bilipschitz with a constant depending only on $n$ and which is the identity on $|(K \cup T) - (Q\cup Q')| \cup |\partial Q' - q|$.

The wanted homeomorphism is now obtained as a composition of such homeomorphism. Thus its bilipschitz constant depends only on the dimension $n$ and the size of the tree $\Gamma(T)$, which is $\#T^{[n]}$. 
\end{proof}

\subsection{Cut graphs of adjacency graph}
\label{sec:cut-graphs}
\index{cubical complex!cut-graph}

In Part \ref{part:separating_complexes} we use a particular class of subgraphs of the adjacency graph $\Gamma(K)$ obtained by cutting $\Gamma(K)$ along a codimension one subcomplex of $K$.

\begin{definition}
\label{def:complementary-graph}
\index{adjacency graph!cut-graphs} \index{$\Gamma(K;Y)$}
Let $K$ be a cubical $n$-complex and $Y \subset K$ be a cubical $(n-1)$-subcomplex. The subgraph
\[
\Gamma(K;Y) = \left( K^{[n]}, \{ \{ Q\cap Q'\} \in \Gamma(K) \colon Q\cap Q'\not \in Y\}\right)
\]
of $\Gamma(K)$ is called the \emph{cut-graph of $\Gamma(K)$ along $Y$}. \index{cut-graph}
\end{definition}

We denote 
\[
\Real_K(Y) = \Real(\Gamma(K;Y))
\]
the associated realization, and 
\index{$\pi_{(K,Y)}$}
\[
\pi_{(K,Y)} \colon \Real_K(Y) \to K
\]
the associated canonical map.

Suppose that space $|Y|$ of $Y$ is contained in the interior of $|K|$. For a connected component $\Sigma$ of $\partial K$, we denote by $\Gamma(K;Y,\Sigma)$  the unique connected component of $\Gamma(K;Y)$ for which
\[
\Sigma \subset \Span_K(\Gamma(K;Y,\Sigma)).
\]
In this case, we call the subcomplex \index{$\Comp_K(Y;\Sigma)$} \index{cubical complex!components separated by codimension one subcomplex}
\[
\Comp_K(Y;\Sigma) = \Span_K(\Gamma(K;Y, \Sigma))
\]
the \emph{$\Sigma$-component of $K$ separated by $Y$}, and the realization
\[
\Real_K(Y; \Sigma) = \Real(\Gamma(K;Y,\Sigma))
\]
the \emph{$\Sigma$-realization of $\Gamma(K;Y,\Sigma)$}. \index{realization of cut-graph $\Real_K(Y;\Sigma)$} 

We may canonically associate  $\Real_K(Y;\Sigma)$ with the connected component  $\pi_{(K,Y)}^{-1}(\Comp_K(Y;\Sigma))$ containing $\Sigma$ in $\Real_K(Y)$. In particular, for each boundary component $\Sigma \subset \partial K$, we have that  \index{$\pi_{(K,Y;\Sigma)}$}
\[
\pi_{(K,Y)}|_{\Real_K(Y;\Sigma)} = \pi_{(K,Y;\Sigma)} \colon \Real_K(Y;\Sigma) \to \Comp_K(Y;\Sigma).
\]

\begin{remark}
As the complex $Y$ need not separate all boundary components of $K$, we may have $\Comp_K(Y;\Sigma) = \Comp_K(Y;\Sigma')$, and hence also $\Real_K(Y;\Sigma) = \Real_K(Y;\Sigma')$, for different components $\Sigma$ and $\Sigma'$ of $\partial K$. 

As $\Gamma(K;Y)$ may have a connected component whose vertices do not meet $\partial K$, we may have $\bigcup_\Sigma \Comp_K(Y;\Sigma)\subsetneq K$.
\end{remark}

\begin{remark}
For a $(n-1)$-subcomplex $Y$ of $K$ and $\Sigma$ a boundary component of $ K$, we have
\[
\Comp_{\Refine^k(K)}(\Refine^k(Y);\Refine^k(\Sigma)) = \Refine^k(\Comp_K(Y; \Sigma)),
\]
and
\[
\Real_{\Refine^k(K)}(\Refine^k(Y);\Refine^k(\Sigma)) = \Refine^k(\Real_K(Y; \Sigma)).
\]
\end{remark}

\begin{convention}
For a $(n-1)$-subcomplex $Y'$ of a refinement $\Refine^k(K)$ and $\Sigma$ a boundary component of $K$, we do not repeat the refinement $\Refine^k(\Sigma)$ in the notation of $\Sigma$-component, and simply write 
\[
\Comp_{\Refine^k(K)}(Y';\Sigma) = \Comp_{\Refine^k(K)}(Y';\Refine^k(\Sigma)),
\]
and
\[
\Real_{\Refine^k(K)}(Y';\Sigma) = \Real_{\Refine^k(K)}(Y';\Refine^k(\Sigma)).
\]
\end{convention}

\subsubsection{Metrics on  realizations}\label{sec:metric-realization}

Let $K$ be a cubical $n$-complex with a flat metric $d_K$, and $\Sigma$ a boundary component of $\partial K$.  

Suppose that  $Y$  is an $(n-1)$-subcomplex of $K$.  Since the $\Sigma$-component $\Comp_K(Y;\Sigma)$ is a subcomplex of $K$, the distance between  points $x$ and $y$ in $|\Comp_K(Y;\Sigma)|$ is  $d_K(x,y)$.  

The realization $\Real_K(Y; \Sigma)$, on the other hand, is equipped with a flat metric $d_{\Real_K(Y; \Sigma)}$ associated to its own cubical flat structure. Spaces $(\interior (|\Comp_K(Y;\Sigma)|\setminus |Y|),d_K)$ and  $(\interior |\Real_K(Y; \Sigma)|,d_{\Real_K(Y; \Sigma)})$ are locally isometric, therefore conformal, via   $\pi_{(K,Y;\Sigma)}$; here interior is the manifold interior.  However they are not bilipschitz equivalent.

When the complexes are refined, the refinements $\Refine^k(\Comp_K(Y;\Sigma))$ and $\Refine^k(\Real_K(Y; \Sigma))$ are equipped with the standard metrics $d_K$ and $d_{\Real_K(Y; \Sigma)}$, respectively, inherited from the complexes $(\Comp_K(Y;\Sigma),d_K)$ and $(\Real_K(Y; \Sigma),d_{\Real_K(Y; \Sigma)})$, respectively, in which each $n$-cube has size $3^{-k}$. Recall the definition of the standard metric on the refinement of a complex in Definition \ref{def:standard-metric}.

\begin{convention}\label{convention:realization-metric}
Suppose  that $Y'$ is an $(n-1)$-subcomplex of the refined complex $\Refine^k(K)$. Then the distance in the space $|\Comp_{\Refine^k(K)}(Y';\Sigma)|$ is measured by the standard metric $d_K$ on $\Refine^k(K)$.

The  realization $\Real_{\Refine^k(K)}(Y';\Sigma)$ is equipped with the \emph{lifted standard metric}, that is, the path metric for which each $n$-cube in $\Real_{\Refine^k(K)}(Y';\Sigma)$ is locally isometric, via $\pi^{-1}_{(\Refine^k(K),Y';\Sigma)}$, to the corresponding $n$-cube in the space $(|\Comp_{\Refine^k(K)}(Y';\Sigma)|, d_K)$. Therefore,  $\interior (|\Comp_{\Refine^k(K)}(Y';\Sigma)|\setminus |Y'|)$ and $\interior (|\Real_{\Refine^k(K)}(Y';\Sigma)|)$ are conformal.
\end{convention}

%%%%%%%%%%%%%%%%%%%%%%%%%%%%%%%%%%%%%%%%%%%%%%%%%%%%%%%%%%%%%%%%%%%%%%%%%%%%%%%%
%%%%%%%%%%%%%%%%%%%%%%%%%%%%%%%%%%%%%%%%%%%%%%%%%%%%%%%%%%%%%%%%%%%%%%%%%%%%%%%%
%%%%%%%%%%%%%%%%%%%%%%%%%%%%%%%%%%%%%%%%%%%%%%%%%%%%%%%%%%%%%%%%%%%%%%%%%%%%%%%%

\chapter{Preliminaries on quasiregular maps}
\label{part:preliminary_branched_covers}

\section{Alexander maps}
\label{sec:Alexander-maps}

In 1920, J. W. Alexander \cite{Alexander} showed that every closed oriented piecewise linear  $n$-manifold $M$ can be triangulated to admit an orientation-preserving  branched covering map  $M\to \bS^n$, which maps adjacent $n$-simplices to the upper and the lower hemispheres of $\bS^n$ respectively. Such maps from a triangulated $n$-manifold (possibly having boundary) to $\bS^n$ are called Alexander maps. For the definition, we first fix a structure on $\bS^n$.

\subsubsection*{Structures $\bS^n$} Let $\Delta_n = [e_1,\ldots, e_n, e_{n+1} ]$ be the standard Euclidean $n$-simplex in $\R^{n+1}$, and  
\[
\Sigma^2(\Delta_n) = \left( \Delta_n\times \{-1,1\}\right)\Big/{\sim},
\]
be the quotient space for which $\sim$ is the smallest equivalence relation satisfying $(x,1)\sim (x,-1)$ for $x\in \partial \Delta_n$. \index{$\Sigma^2(\Delta_n)$}
Let $[(x,t)]$ be the equivalence class of $(x,t)$ under $\sim$, and 
\[\Pi \colon \Delta_n\times \{-1,1\}\to \Sigma^2(\Delta_n)\]
 be the quotient map $(x,t)\mapsto [(x,t)]$.

The structure on $\Sigma^2(\Delta_n)$, consisting of two simplices (hemispheres) $(\Delta_n)_{\pm 1}= \Pi(\Delta_n\times \{\pm 1\})$ having all vertices 
\[
w_0,w_1,\ldots, w_n
\]
in common  and  intersection $(\Delta_n)_{+1}\cap (\Delta_n)_{-1} = \partial ((\Delta_n)_{+1}) = \partial ((\Delta_n)_{-1})$, is  a (non-simplicial) $\CW$-complex.
 
The space of $\Sigma^2(\Delta_n)$ is homeomorphic to $\bS^n$. In fact, $\Sigma^2(\Delta_n)$ admits a path metric $d_{\Sigma^2(\Delta_n)}$ in which $n$-simplices $(\Delta_n)_{\pm 1}$ are isometric to the standard $n$-simplex $\Delta_n$. For this metric, $\Sigma^2(\Delta_n)$ is bilipschitz homeomorphic to $\bS^n$ with a bilipschitz homeomorphism $\Sigma^2(\Delta_n) \to \bS^n$, whose bilipschitz constant depends only on the dimension $n$ and which maps simplices $(\Delta_n)_{\pm 1}$ to upper and lower hemispheres $\bS^n_+ = \bS^n \cap \R^n \times [0,\infty)$ and $\bS^n_-= \bS^n \cap \R^n\times (-\infty,0]$, respectively. \index{$\Sigma^2(\Delta_n)$}

We give $\bS^n$ also another simplicial structure $\Sigma^2(\Delta^\square_n)$, constructed similarly, where $\Delta^\square_n$ is an $n$-simplex congruent to $n$-simplices in a barycentric triangulation of the unit cube $[0,1]^n$. We also give $\Sigma^2(\Delta^\square_n)$ the associated length metric $d_{\Sigma^2(\Delta^\square_n)}$. \index{$\Sigma^2(\Delta^\square_n)$} 

An affine map $\Delta_n \to \Delta^\square_n$ induces a bilipschitz equivalence of of $\Sigma^2(\Delta_n)$ and $\Sigma^2(\Delta^\square_n)$.  The notations $\Sigma^2(\Delta_n)$ and $\Sigma^2(\Delta^\square_n)$ anticipate the discussion in Part \ref{part:Alexander-Rickman}.

\begin{convention}
In what follows, we consider $\bS^n$ as the flat $n$-sphere $(\Sigma^2(\Delta_n), d_{\Sigma^2(\Delta_n)})$ or the flat $n$-sphere $(\Sigma^2(\Delta^\square_n), d_{\Sigma^2(\Delta^\square_n)})$ depending on the setting.
\end{convention}

We are now ready to define simplicial Alexander maps and cubical Alexander maps.

\begin{definition}
Let $K$ be a simplicial $n$-complex. A map $f\colon |K|\to \bS^n$ is 
\emph{$(K, \Sigma^2(\Delta_n))$-simplicial} if $f$ maps each $n$-simplex in $K$ to an $n$-simplex in $\Sigma^2(\Delta_n)$ by an affine map.
\end{definition}

Alexander maps are simplicial maps, which map adjacent simplices to opposite hemispheres. 

\begin{definition}
\label{def:simplicial-Alexander-map}
Let $K$ be a simplicial $n$-complex whose space $|K|$ is an orientable connected manifold. A map $f\colon |K|\to \bS^n$ 
is a \emph{$K$-Alexander map} \index{Alexander map} if $f$ is $(K, \Sigma^2(\Delta_n))$-simplicial and $f$ maps adjacent $n$-simplices to opposite hemispheres. We call the pair $(K,f)$ a (simplicial) Alexander complex.
\end{definition}

Cubical Alexander maps are Alexander maps defined on the barycentric triangulations $K^\Delta$ of cubical complexes $K$ and whose target $\bS^n$ is equipped with the structure $\Sigma^2(\Delta^\square_n)$.

\begin{definition}\label{def:cubical-Alexander-map}
Let $K$ be an $n$-dimensional cubical complex  on an orientable connected manifold. A mapping $f\colon |K|\to \bS^n$ is called a \emph{cubical $K$-Alexander map} (or \emph{$K^\Delta$-Alexander map})  if $f$ is $(K^\Delta, \Sigma^2(\Delta^\square_n))$-simplicial and $f$ maps adjacent $n$-simplices to opposite hemispheres. We call the pair $(K,f)$ a (cubical) Alexander complex. \index{cubical Alexander map}
\end{definition}

\begin{remark}
\label{rmk:Alexander-is-simplicial-isometry}
The reason for separate definitions for simplicial and cubical Alexander is geometrical. In these definitions, a $K$-Alexander map $|K|\to \bS^n$, $K$ either simplicial or cubical, is an isometry on simplices when $|K|$ is equipped with the flat metric associated to the flat structure of $K$.
\end{remark}

All cubical complexes, whose spaces are orientable connected manifolds, admit cubical Alexander maps. We formulate this observation as follows.

\begin{proposition}
\label{prop:orientation_Alexander}
Let  $K$ be an $n$-dimensional cubical complex whose space is  an orientable connected manifold. Then there exists a cubical $K$-Alexander map $f \colon |K| \to \bS^n$.
\end{proposition}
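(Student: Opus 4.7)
The plan is to construct $f \colon M \to \bS^n$ as a $(K^\Delta, K_{\bS^n})$-simplicial map whose vertex assignment is forced by the simplicial structure of the canonical triangulation, and whose choice of target hemisphere for each $n$-simplex is dictated by the orientation of $M$. First, I would observe that each $n$-simplex of $K^\Delta$ arises from a strict chain $q_0 \subset q_1 \subset \cdots \subset q_n$ of cubes of $K$ with $\dim q_j = j$, so that its vertex set $\{v_{q_0}, \ldots, v_{q_n}\}$ (the chosen barycenters) is naturally labelled by dimension. I would then define a simplicial vertex map $\varphi \colon (K^\Delta)^{(0)} \to (K_{\bS^n})^{(0)}$ by $\varphi(v_q) = w_{\dim q}$; each $n$-simplex of $K^\Delta$ has its vertices carried bijectively to $\{w_0, \ldots, w_n\}$, so $\varphi$ extends simplicially onto the entire $(n-1)$-skeleton of $K^\Delta$, carrying each $(n-1)$-simplex onto a codimension-one face of the common boundary $\partial B^n_+ = \partial B^n_-$ of the two $n$-cells of $K_{\bS^n}$.

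Next, I would use orientability of $M$ to 2-color the $n$-simplices of $K^\Delta$, thereby deciding which hemisphere each is sent onto. Fix an orientation of $M$. For every $n$-simplex $T$ of $K^\Delta$, write its vertices in the canonical order $[v_{q_0}, \ldots, v_{q_n}]$ dictated by increasing dimension of source cubes, and let $\epsilon(T) \in \{+,-\}$ record whether this ordering orientation agrees or disagrees with the $M$-orientation of $T$. The key combinatorial observation is that any two adjacent $n$-simplices $T, T'$ share an $(n-1)$-simplex $\tau$ obtained by deleting the vertex at some fixed dimension index $i$ in both canonical orderings, so the induced orientations on $\tau$ from the canonical orderings of $T$ and $T'$ coincide (both equal $(-1)^i$ times the same ordering of the vertices of $\tau$). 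Since $M$-orientations on $T$ and $T'$ necessarily induce opposite orientations on $\tau$, this forces $\epsilon(T) \neq \epsilon(T')$ at every interior $(n-1)$-face, so the coloring is consistent across all of $K^\Delta$.

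Finally, I would extend $\varphi$ to $f\colon |K| \to \bS^n$ by sending each $n$-simplex $T$ homeomorphically, via the affine extension of $\varphi|_T$, onto the $n$-cell $B^n_{\epsilon(T)}$. These extensions agree on every shared $(n-1)$-face because there they both restrict to the simplicial embedding into $\partial B^n_+ = \partial B^n_-$ provided by $\varphi$, so the resulting $f$ is well-defined, continuous, and $(K^\Delta, K_{\bS^n})$-simplicial; by construction it is homeomorphic on each $n$-simplex and sends adjacent $n$-simplices to opposite hemispheres. By the characterization of Alexander maps noted immediately after the definition in Section \ref{sec:Alexander_map}, such a simplicial map is automatically a branched covering map, so $f$ is a cubical $K$-Alexander map. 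The only nontrivial step is the 2-coloring, which is precisely where orientability is used; the remaining assertions are routine consequences of the combinatorics of the canonical triangulation of a cubical complex.
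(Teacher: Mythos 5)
Your proof is correct and follows essentially the same route as the paper: you use an orientation of $M$ to two-color the $n$-simplices of $K^\Delta$ (the paper phrases this as a parity function defined by comparing the canonical vertex ordering against a fixed fundamental class in $H_n(|K|,\partial|K|)$), and your verification that adjacent $n$-simplices receive opposite colors is the same calculation comparing boundary orientations induced on the shared $(n-1)$-face. The remaining step---that the vertex assignment $v_q \mapsto w_{\dim q}$ together with a consistent two-coloring produces a simplicial branched cover---is exactly the characterization the paper invokes.
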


Readers familiar with Alexander maps may notice immediately that this proposition is essentially a result of Alexander \cite{Alexander} stated in our terminology. The main difference is that we consider here maps with respect to a fixed triangulation $K^\Delta$. For other results on the existence of Alexander maps with prescribed local behavior; see e.g.\;Rickman \cite[Section 2]{Rickman_Acta} and Peltonen \cite{Peltonen}.

\begin{proof}[Proof of Proposition \ref{prop:orientation_Alexander}]
Since we may map the vertices of $K^\Delta$ by the formula $v\mapsto w_k$ if $v$ is in the interior of a $k$-cube in $K$, it suffices to construct a function $\nu \colon (K^\Delta)^{[n]} \to \{-1,1\}$, which assigns opposite signs for adjacent $n$-simplices.

Let $\omega = \sum_{\sigma\in (K^\Delta)^{(n)}} \omega_\sigma$ be a representative of a fundamental class in $H_n(|K|, \partial |K|)$, where $\omega_\sigma$ is an oriented $n$-simplex, i.e.\;an $n$-simplex $\sigma$ with a choice of a permutation class of vertices.

Let now $\sigma\in (K^\Delta)^{(n)}$. Then $\sigma$ has a (unique) preferred choice for the order of its vertices induced by the canonical triangulation, namely $\sigma = [v_0,\ldots, v_n]$, where each $v_k$ is in the interior of a $k$-cube for $k>0$. Let $\nu \colon (K^\Delta)^{(n)} \to \{-1,1\}$ be the function satisfying $[v_0,\ldots, v_n] = \nu(\sigma) \omega_\sigma$, as oriented simplices, for each $\sigma = [v_0,\ldots, v_n]$.

To verify that $\nu$ assigns opposite sign for adjacent $n$-simplices, let $\sigma$ and $\sigma'$ be adjacent $n$-simplices in $K^\Delta$. Then $\sigma = [v_0,\ldots, v_n]$ and $\sigma' = [v'_0,\ldots, v'_n]$, where $v_k$ and $v'_k$ are vertices in $K^\Delta$ which are in the interior of a $k$-cube in $k$ for $k>0$. Since $\sigma \cap \sigma'$ is an $(n-1)$-simplex, we conclude that there exists unique $j\in \{0,\ldots, n\}$ for which $v'_j \ne v_j$ and $v_k = v'_k$ for $k\ne j$. Since $\sigma \cap \sigma' = [v_0,\ldots, \widehat v_j, \ldots, v_n]$, exactly one of $[v_0,\ldots, v_n]$ and $[v'_0,\ldots, v'_n]$ is positively oriented with respect to $\omega_\sigma$ and $\omega_{\sigma'}$ in the fixed chain $\omega$. Thus $\nu$ has the wanted property.
\end{proof}

\begin{remark}
Let $f\colon |K|\to \bS^n$ be a cubical $K$-Alexander map. From now on, we always assume as we may, after relabeling the vertices of $\Sigma^2(\Delta_n)$ if necessary, that $f(v) = w_k$ for each $k\in \{0,\ldots, n\}$ and each $v\in (K^\Delta)^{[0]}$ that is in the interior of a $k$-cube in $K$.
\end{remark}

The proof of Proposition \ref{prop:orientation_Alexander} is also applicable to a barycentric subdivision $T^\Delta$ of a simplicial $n$-complex $T$; see Section \ref{sec:subdivision-triangulation}. Thus we also obtain the following classical result, which we record as a corollary.

\begin{corollary}
\label{cor:orientation_Alexander-simplicial}
Let $T$ be a simplicial $n$-complex whose space is an orientable connected manifold. Then there exists a $T^\Delta$-Alexander map $|T^\Delta|\to \bS^{n}$.
\end{corollary}

\newcommand{\sptMod}{\mathrm{supp}}
\newcommand{\Alex}{\mathrm{Alex}}

\section{Quasiregular mappings on simplicial complexes}

For the forthcoming discussion, we give the following definition of a quasiregular map $|K|\to \bS^n$, where $K$ is a simplicial $n$-complex on an oriented manifold equipped with a flat structure and flat metric $d_K$.

An orientation preserving continuous map $f\colon |K|\to \bS^n$ is \emph{$\sK$-quasiregular} if, for each $n$-simplex $\sigma \in K$, $f|_{\interior |\sigma|} \colon \interior |\sigma|\to\bS^n$ is $\sK$-quasiregular in the usual sense of manifolds with respect to the flat metrics on $\sigma$ and $\bS^n=\Sigma^2(\Delta_n)$, that is,  $f$ belongs to the Sobolev space $W^{1,n}_\loc(|K|,\bS^n)$ and satisfies the distortion inequality 
\[
\norm{Df}^n \le K J_f
\]
almost everywhere in $|K|$, where $\norm{DF}$ is the operator norm of the differential and $J_f$ the Jacobian determinant.

Recall that a quasiregular mapping $f$ is called a \emph{mapping of $\sL$-bounded length distortion (or $\sL$-BLD)} if 
\[
\frac{1}{\sL} \le \norm{DF} \le \sL
\]
almost everywhere. Also, $f$ is $\sL$-BLD if $f\colon |K|\to \bS^n$ is a discrete, open, and sense-preserving map satisfying 
\[
\frac{1}{\sL} \ell(\gamma) \le \ell(\gamma \circ f) \le \sL \ell(\gamma)
\]
for all paths $\gamma$, where $\ell(\cdot)$ is the length of a path. We refer to Martio--V\"ais\"al\"a \cite{Martio-Vaisala_BLD} and Heinonen--Rickman \cite{Heinonen-Rickman_Duke} for a detailed discussion on definitions of BLD mappings.

We make two remarks on BLD mappings.

\begin{remark}
An $L$-BLD mapping $f\colon |K|\to \bS^n$ is $L^{2n}$-quasiregular. Indeed, by definition $f$ is quasiregular and by the BLD property
\[
\norm{Df}^n \le L^n \le L^{2n} J_f.
\]
\end{remark}

\begin{remark}
\label{rmk:Alexander-BLD}
Related to Alexander maps, we note that, for a cubical $n$-complex $K$, cubical $K$-Alexander maps $|K|\to \Sigma^2(\Sigma^\square_n)$ are $1$-BLD.
\end{remark}

In what follows, the BLD mappings play an important role in the homotopy theory of quasiregular mappings. Indeed, it is easy to check that a product mapping $F\colon |K|\times [0,1] \to \bS^n \times [0,1]$, $(x,t) \mapsto (f(x),t)$, is quasiregular if and only if the mapping $f \colon |K|\to \bS^n$ is BLD. 

In the following, an orientation preserving continuous mapping $F\colon |K|\times [a,b]\to \bS^n\times [a,b]$ is said to be \emph{$\sK$-quasiregular} if $F|_{\interior |\sigma| \times (0,1)} \colon \interior |\sigma| \times (a,b) \to \bS^n \times (a,b)$ is $\sK$-quasiregular in the usual sense of manifolds with respect to product metrics.

\begin{remark}
Suppose that $K$ is a simplicial complex on a Riemannian manifold $M$ as in Section \ref{sec:From-Riem-To-Cubical}. Since the flat sphere is bilipschitz to the standard unit $n$-sphere in $\R^{n+1}$, quasiregular mappings $|K|\to \bS^n$ in this sense are quasiregular, quantitatively, in the usual sense. The same observation applies also to quasiregular mappings $|K|\times [a,b]\to \bS^n\times [a,b]$. 
\end{remark}

\begin{definition}
A mapping $F\colon |K|\times [a,b]\to \bS^n\times [a,b]$ is a \emph{$\sK$-quasiregular homotopy from $f \colon |K|\to \bS^n$ to $f' \colon |K|\to \bS^n$} if $F(x,a) = (f(x),a)$ and $F(x,b) = (f'(x),b)$ for $x\in |K|$. We also say that $F$ is a \emph{quasiregular homotopy modulo $|\partial K|$} if, in addition, $F(x,t)=F(x,a)$ for each $x\in |\partial K|$.
\end{definition}

\subsection{Quasiregular expansion of Alexander maps}

In this section, we introduce the terminology for quasiregular modifications of Alexander maps. Let again $K$ be a simplicial complex, whose space is an oriented manifold, with a flat structure and flat metric $d_K$.

\begin{definition}
A quasiregular map $f\colon |K|\to \bS^n$ is a \emph{quasiregular modification of a $K$-Alexander map $h \colon |K|\to \bS^n$} if there exists a closed  $G \subsetneq |K|$ for which $|K^{[n-1]}| \cap \interior G = \emptyset$  and  $f|_{|K|\setminus G} = h|_{|K|\setminus G}$.  
\end{definition}

Associated to a modification $f$ of $h$, we denote $\sptMod f$ the minimal $G$ for which $f|_{|K|\setminus G} = h|_{|K|\setminus G}$ and call $\sptMod f$ the \emph{support of the modification}.

A simplicial complex $K$, whose space is an $n$-manifold, admits either none, or exactly two Alexander maps for which one is orientation preserving and the other orientation reversing. Thus, given a quasiregular modification $f\colon |K|\to \bS^n$ of a $K$-Alexander map, the original Alexander map $|K|\to \bS^n$ is uniquely determined. We denote $f_\Alex$ the Alexander map associated to $f$.

We now give the main definition for studying homotopy of simple covers.

\begin{definition}
\label{def:expansion} \index{Alexander map!expansion of}
A quasiregular map $f \colon |K|\to \bS^n$ is an \emph{expansion of a $K$-Alexander map $h\colon |K|\to \bS^n$ on $K$}, if $f$ is a modification of $h$ and its support $\sptMod f$ is a union of mutually disjoint $n$-cells $\widetilde E = \widetilde E_1 \cup \cdots \cup \widetilde E_k$ such that 
for each $n$-cell $\widetilde E_i$, there exists an $n$-cell $E_i \subset \widetilde E_i$ for which \index{Alexander map!support of expansion} \index{$\sptMod f$}
\begin{enumerate}
\item $f(E_i) = \bS^n$, 
\item $f|_{\interior E_i} \colon \interior E_i \to \bS^n$, and
\item $f(\widetilde E_i \setminus \interior E_i)\ne \bS^n$.
\end{enumerate}

In this case, we  say that  \emph{$f$ is an expansion on $K$}, for short. \index{expansion on $K$}
\end{definition}

We call a pair $(\widetilde E_i, E_i)$ of $n$-cells, as in the definition of quasiregular expansion, a \emph{simple cover in $f$}, and say that $f$ is an \emph{expansion on $K$ by simple covers $(\widetilde E_1, E_1),\ldots, (\widetilde E_k, E_k)$}. \index{simple cover}

 The terms 'expansion' and 'simple cover' stem from the following observation.
\begin{remark} 
Let $f\colon |K|\to \bS^n$ be a quasiregular expansion. Since $f$ is a orientation preserving, each component of $\sptMod f$ adds the degree of $f_\Alex$ by one. Thus, in notation of the previous definition, $\deg f = \deg f_\Alex + k$.
\end{remark}

\section{Homotopy theorems for simple covers}\label{sec:simple-cover}

We now discuss homotopy theorems on shrinking and relocating simple covers. The main results in this section are Proposition \ref{prop:shrink} and \ref{prop:move}; see Figure \ref{fig:simple_cover_move} for an illustration.

\begin{figure}[h!]
\begin{overpic}[scale=.50,unit=1mm]{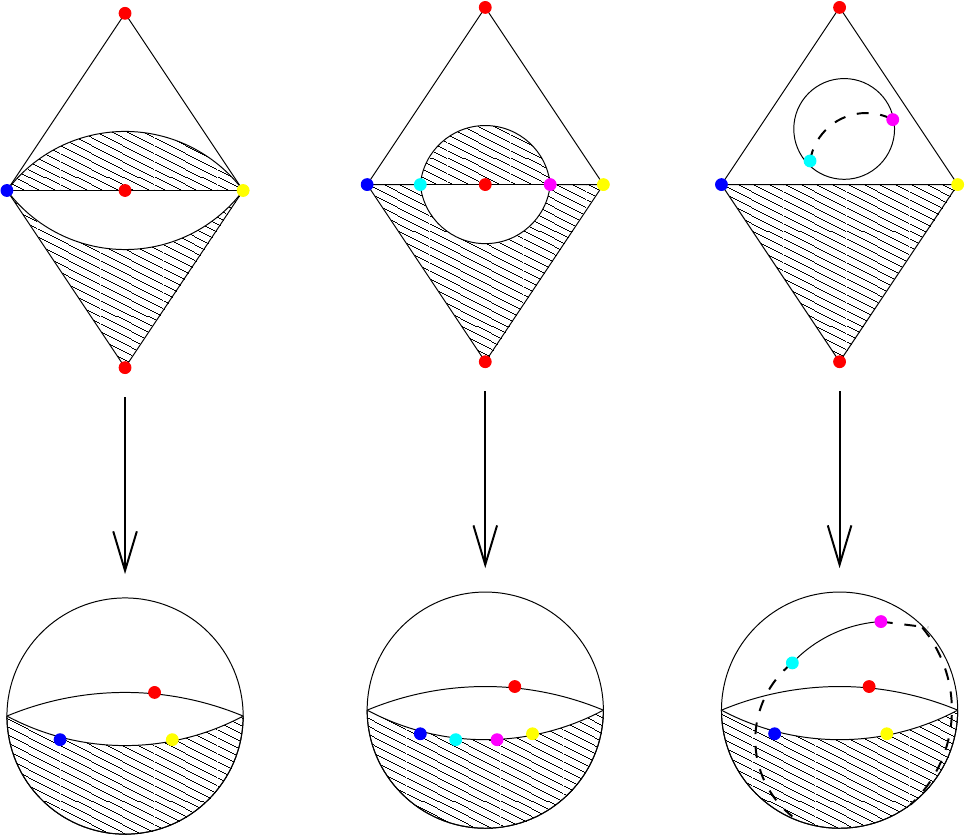} % add: grid
\end{overpic}
\caption{Shrinking and relocating a simple cover.}
\label{fig:simple_cover_move}
\end{figure}

As already noted that, in general, products of quasiregular mappings need not be quasiregular and, even if they are, the distortion constant of the product may depend on the mappings in question as is indicated in the following example.

\begin{example}
Let $A \colon |K|\to \bS^n$ be a $K$-Alexander map. Let now $\sigma\in K^{[n]}$ be an $n$-simplex in $K$ and let $B \subset \sigma$ be a Euclidean ball in $\sigma$. We fix a quasiregular expansion $f_1 \colon |K|\to \bS^n$ of $A$ by a single simple cover $(\widetilde E, E)$ contained in $B$, then take a small concentric ball $B'\subset B$ and scale the simple cover $(\widetilde E, E)$ to a congruent pair $(\widetilde E',E')$ in $B'$. we expand the Alexander map $A \colon |K|\to \bS^n$ again in the pair $(\widetilde E',E')$ to obtain a quasiregular expansion $f_2 \colon |K|\to \bS^n$ of $A$. 
Mappings $f_1$ and $f_2$ can be taken to have the same distortion constant $\sK$ but, if  ball $B'$ is small enough, then they have very different BLD constants $\sL_1$ and $\sL_2$.

Also the distortion constant of the trivial homotopy $F_i \colon |K|\times [0,1]\to \bS^n\times [0,1]$, $(x,t) \mapsto (f_i(x),t)$, from $f_i$ to itself depends on the BLD constant of $f_i$, and hence on the expansion. In particular, the distortion constant of $F_2$ is larger than $\sL_2$ for a small ball $B'$.
\end{example}

 We give concrete examples of BLD expansions of Alexander mappings when we discuss deformation theory in Part \ref{part:deformation}.

As this example shows, in order to obtain quantitative results, it is preferable to work with homotopies of BLD mappings instead of general quasiregular mappings. For this reason, we continue our discussion with BLD expansions.

\subsection{Bilipschitz framing of a simple cover}

We define now a geometric structure for simple covers in a BLD expansion $|K|\to \bS^n$ on $K$ for Propositions \ref{prop:shrink} and \ref{prop:move}.

Let $e_\Diamond = \bar B^{n-1}\times \{0\}$ and let $E_\Diamond$ and $\widetilde E_\Diamond $ be the convex hulls of $\{\pm e_n,  e_\Diamond\}$ and $\{\pm 2e_n, e_\Diamond\}$, respectively; see Figure \ref{fig:Euclidean-package}. We call the pair $(\widetilde E_\Diamond, E_\Diamond)$ a \emph{model package}.
\index{$(\widetilde E_\Diamond, E_\Diamond)$}
Let $\rho_\Diamond \colon \widetilde E_\Diamond \to \widetilde E_\Diamond$ be the map satisfying $\rho_\Diamond|_{\partial  \widetilde E_\Diamond} = \id$, and for each $x\in \bar B^{n-1}$,
\begin{enumerate}
\item $\rho_\Diamond((\{ x\} \times \R) \cap E_\Diamond) = \{x\}$, and
\item $\rho_\Diamond|_{(\{x\} \times \R) \cap (\widetilde E_\Diamond\setminus \interior E_\Diamond)}\colon (\{x\} \times \R) \cap (\widetilde E_\Diamond\setminus \interior E_\Diamond)\to  (\{x\} \times \R) \cap \widetilde E_\Diamond$ is linear.
\end{enumerate}
\index{$\rho_\Diamond$}

Heuristically, $\rho_\Diamond$ projects the inner diamond $E_\Diamond$ onto $\bar B^{n-1}$ while keeping the boundary of the larger diamond $\widetilde E_\Diamond$ fixed.

\begin{figure}[h!]
\begin{overpic}[scale=1,unit=1mm]{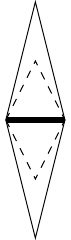} % add: grid
\put(4, 22){\tiny $e_\Diamond$}
\put(4, 41){\tiny $\widetilde E_\Diamond$}
\put(4,31){\tiny $E_\Diamond$}
\end{overpic}
\caption{Model package in dimension $n=2$.}
\label{fig:Euclidean-package}
\end{figure}

\begin{definition}\index{simple cover!bilipschitz framing}
Let $f \colon |K|\to \bS^n$ be a BLD expansion on $K$ and $(\widetilde E, E)$ a simple cover in $f$. 
We say that $(\widetilde E, E)$ is a \emph{framed $\sL$-bilipschitz simple cover} if there exists an
$\sL$-bilipschitz embedding $\theta \colon \widetilde E_\Diamond \to |K|$ for which $\theta(\widetilde E_\Diamond)=\widetilde E$, $\theta(E_\Diamond) = E$, 
and 
\[
f \circ \theta|_{\widetilde E_\Diamond \setminus E_\Diamond} = f_\Alex \circ \theta \circ \rho_\Diamond|_{\widetilde E_\Diamond \setminus E_\Diamond}. 
\]
In this case, we call $\theta$ an \emph{$\sL$-bilipschitz framing of $(\widetilde E, E)$}, and call $\widetilde E$ the \emph{support of the simple cover $(\widetilde E, E)$}. \index{simple cover!support}
\end{definition}

We define now a flow of a framing $\theta \colon \widetilde E_\Diamond\to |K|$. For the statement, we denote, for a level preserving mapping $H \colon X\times [a,b]\to Y\times [a,b]$, by $H_t \colon X\to Y$ the mapping defined by $H(x,t) = (H_t(x),t)$ for $(x,t)\in X\times [a,b]$.

\begin{definition} Let $A\colon |K|\to \bS^n$ be a $K$-Alexander map.
A pair $(\Theta, \Psi)$ is a \emph{$K$-flow of a framing $\theta \colon \widetilde E_\Diamond \to |K|$ (for time $[a,b]$)} if there exist
\begin{enumerate}
\item a level preserving bilipschitz embedding $\Theta \colon \widetilde E_\Diamond \times [a,b]\to |K|\times [a,b]$, where $\Theta_a = \theta$ and
\item a level preserving bilipschitz homeomorphism $\Psi \colon \bS^n \times [a,b]\to \bS^n \times [a,b]$, where $\Psi_a = \id$, 
\end{enumerate}
for which 
\[
\Psi_t \circ A \circ \theta|_{e_\Diamond} = A \circ \Theta_t|_{e_\Diamond}. 
\]
for each $t\in [a,b]$.
\end{definition}

The mapping $\Theta_t$ is called a 'relocation of the framing $\theta$' and $\Psi_t$ a 'rotation of the framing $\theta$'. The term 'flow' stems from the following lemma.

\begin{lemma}
\label{lemma:flow}
Let $f\colon |K|\to \bS^n$ be a BLD expansion on $K$, $(\widetilde E, E)$ a simple cover of $f$, $\theta$ a framing of $(\widetilde E, E)$, and $(\Theta, \Psi)$ an $K$-flow of $\theta$ on $[a,b]$. Denote $\widetilde E_t = \Theta_t(\widetilde E_\Diamond)$ and $E_t = \Theta_t(E_\Diamond)$ for $t\in [a,b]$.

Suppose that $(\sptMod f\setminus \widetilde E) \cap \widetilde E_t = \emptyset$  for all $t\in [a,b]$. Then the mappings $f_t \colon |K|\to \bS^n$, defined by 
\begin{enumerate}
\item $f_t|_{|K|\setminus \widetilde E_t} = f|_{|K|\setminus \widetilde E_t}$,
\item $f_t|_{\widetilde E_t \setminus E_t} = f_\Alex \circ \Theta_t \circ \rho_\Diamond \circ \Theta_t^{-1}|_{\widetilde E_t \setminus E_t}$, and
\item $f_t|_{E_t} = \Psi_t \circ f \circ \theta \circ \Theta_t^{-1}|_{E_t}$,
\end{enumerate}
are well-defined BLD expansions on $K$ with $\sptMod f_t = (\sptMod f\setminus \widetilde E) \cup \widetilde E_t$, and the mapping $F\colon |K|\times [a,b]\to \bS^n \times [a,b]$, $(x,t) \mapsto (f_t(x),t)$, is a BLD map.
\end{lemma}

The formula for $f_t$ shows that the result is quantitative in the sense that the BLD constant of $f_t$ depends only on the BLD constant of $f$ and bilipschitz constants of $\theta$, $\Theta_t$, and $\Psi_t$. Similarly, the BLD constant of $F$ depends on BLD constants of $f$ and bilipschitz constants of $\Theta$ and $\Psi$.

\begin{proof}[Proof of Lemma \ref{lemma:flow}]
It suffices to check that the mapping $f_t$ is well-defined. Since $f|_{\partial \widetilde E} = f_\Alex|_{\partial \widetilde E}$ and $\rho_\Diamond = \id$ on $\partial \widetilde E_\Diamond$, we have that $f_t$ is well-defined on $\partial \widetilde E_t$.
 
On $\partial E_\Diamond$, we have that $f \circ \theta = f_\Alex \circ \theta \circ \rho_\Diamond$. Thus, on $\partial E_t$, we have that
\[
\Psi_t \circ f \circ \theta \circ \Theta_t^{-1} = \Psi_t \circ f_\Alex\circ \theta \circ \rho_\Diamond \circ \Theta_t^{-1}. 
\]
Thus $f_t$ is well-defined. 
\end{proof}

\subsection{Two particular flows}

We discuss now two particular flows of simple covers - shrinking and relocating.

\begin{definition}
Let $f \colon |K|\to \bS^n$ be a BLD expansion on $K$. A simple cover $(\widetilde E, E)$ of $f$ is \emph{shrinkable} if there exists a framing $\theta \colon \widetilde E_\Diamond \to |K|$ of $(\widetilde E,E)$ and a flow $(\Theta,\Psi)$ of $\theta$ on $[0,1]$ for which $\diam(\Theta_1(\widetilde E_\Diamond)) < 1$. 
\end{definition}

We call $(\Theta,\Psi)$ an \emph{shrinking flow of $\theta$},  and $\theta$ an shrinkable framing. 

Although the following statement follows immediately from Lemma \ref{lemma:flow}, we state it as a proposition due to its importance.

\begin{proposition}[Shrinking] 
\label{prop:shrink}
Let $K$ be a simplicial Alexander $n$-complex, let $f \colon |K|\to \bS^n$ be a BLD expansion on $K$ and $(\widetilde E, E)$ a shrinkable simple cover of $f$. Then, for each $\varepsilon\in (0,1]$, there exists a BLD homotopy $F \colon |K|\times [0,1] \to \bS^n \times [0,1]$ from $f$ to a BLD expansion $f'\colon |K|\to \bS^n$ on $K$ satisfying $f'|_{|K|\setminus \widetilde E} = f|_{|K|\widetilde E}$, $\sptMod f' \cap \widetilde E = \widetilde E'$ and $\diam \widetilde E' < \varepsilon$. The result is quantitative in the sense that the distortion constant of $F$ and BLD constant of $f'$ depend only the BLD constant of $f$ and bilipschitz constants of a framing of $(\widetilde E,E)$ and its $\varepsilon$-shrinking flow.
\end{proposition}

In the proposition, we may assume that all $\varepsilon$-shrinkable simple covers, for sufficiently small $\varepsilon$, are tame in the following sense.

\begin{definition}
A simple cover $(\widetilde E,E)$ of a BLD expansion $f\colon |K|\to \bS^n$ is \emph{tame} if $\widetilde E$ is contained in a ball $B$ satisfying $2B \cap |K^{(n-2)}\cup \partial K| =\emptyset$.
\end{definition}

We combine now the properties of quasiregular expansions to one definition.

\begin{definition}
A BLD expansion $f \colon |K|\to \bS^n$ on $K$ is \emph{$\sL$-controlled} if  \index{Alexander map!BLD-controlled expansion}
\begin{enumerate}
\item  $f$ is an $\sL$-BLD expansion on $K$ by tame simple covers,
\item $\sptMod f$ is covered by balls $B_i$, $i\in I$, for which each $B_i$  contains exactly one simple cover and  balls $2B_i$ are mutually disjoint, and
\item each simple cover $(\widetilde E,E)$ of $f$ admits an $\sL$-bilipschitz shrinkable framing.
\end{enumerate}
\end{definition}

The other application of the flow is the relocation of simple cover.

\begin{proposition}
\label{prop:move}
Let $f\colon |K|\to \bS^n$ be a BLD expansion on $K$, $(\widetilde E, E)$ a tame simple cover of $f$, and $\theta \colon \widetilde E_\Diamond \to |K|$ a framing of $(\widetilde E, E)$. Let also $T\colon \widetilde E \times [a,b]\to |K|\times [a,b]$ be a level preserving bilipschitz embedding for which
\begin{enumerate}
\item $T_a = \id$,
\item $(\sptMod f \setminus \widetilde E) \cap T_t (\widetilde E)= \emptyset$ for each $t\in [a,b]$, and 
\item $T_t \colon \widetilde E \to |K|$ is an isometry for which $T_t(\widetilde E)$ is contained in a pair of adjacent simplices of $K$ for each $t\in [a,b]$. 
\end{enumerate}
Then there exists a BLD homotopy $F \colon |K|\times [a,b]\to \bS^n\times [a,b]$ from $f$ to a BLD expansion $f'\colon |K|\to \bS^n$ on $K$ for which $\sptMod f' = (\sptMod f\setminus \widetilde E) \cup T_b(\widetilde E)$. The result is quantitative in the sense that the mappings $f$ and $f'$ have the same BLD constant and the distortion constant of $F$ depends only on the BLD constant of $f$ and bilipschitz constant of $T$. Furthermore, if $f$ is an $\sL$-controlled expansion, so is $f'$.
\end{proposition}

\begin{proof}
Let $\Theta \colon \widetilde E_\Diamond \times [a,b]\to |K|\times [a,b]$ be the composition $\Theta = T \circ (\theta \times \id)$. For the flow of $\theta$ it suffices now to construct the rotation $\Psi \colon \bS^n\times [a,b] \to \bS^n \times [a,b]$. Let $\widetilde E_t = T_t(\widetilde E)$ for each $t\in [a,b]$. Recall that we have modeled the $n$-sphere as $\bS^n = \Sigma^2(\Delta^\square_n)$.

Since each $\widetilde E_t$ is contained in a pair of adjacent $n$-simplices in $K$, we have that the Alexander map $f_\Alex$ is an isometry on $\widetilde E_t$. We define, for each $t\in [a,b]$, a mapping $\Psi_t = f_\Alex\circ T_t \circ (f_\Alex|_{\widetilde E})^{-1} \colon f_\Alex(\widetilde E) \to \bS^n$. 

Let now $P$ be the $(n-2)$-skeleton of $\Sigma^2(\Delta^\square_n)$. We extend now each $\Psi_t$ as a continuous map $\bS^n \to \bS^n$ in such a way that $\Psi_t$ is a local isometry for each $y\in \bS^n$ for which $y\not \in P$ and $\Psi_t(y)\not \in P$. Then $\Psi_t$ is a bilipschitz map. Since the extension is unique in $P\cup \Psi_t^{-1}P$, we have that the mapping $\Psi \colon \bS^n \times [a,b]\to \bS^n \times [a,b]$, $(x,t) \mapsto (\Psi_t(x),t)$, is a bilipschitz map. Moreover, we have that 
\[
\Psi_t \circ f_\Alex \circ \theta|_{e_\Diamond} = f_\Alex \circ T_t \circ \theta|_{e_\Diamond} = f_\Alex \circ \Theta_t|_{e_\Diamond}.
\]
Thus $(\Theta, \Psi)$ is a flow of $\theta$. The claim follows now from Lemma \ref{lemma:flow}.
\end{proof}

An immediate corollary of Proposition \ref{prop:move} and its proof is the following result.

\begin{corollary} 
Let $B\subset |K|$ be a ball having the property that $2B$ is contained in an $n$-simplex of $K$. Then, for an $\sL$-controlled BLD expansion $f\colon |K|\to \bS^n$ on $K$, there exists a BLD homotopy $F\colon |K|\times [0,D] \to \bS^n \times [0,D]$, where 
$D = \max\{ \dist(x,B)\colon x\in \sptMod f\}$,  from $f$ to an $\sL$-controlled BLD expansion $f'\colon |K|\to \bS^n$ satisfying $\sptMod f' \subset B$. The result is quantitative in the sense that the distortion constant of $F$ depends only on $\sL$.
\end{corollary}

%%%%%%%%%%%%%%%%%%%%%%%%%%%%%%%%%%%%%%%%%%%%%%%%%%%%%%%%%%%%%%%%%%%%%%%%%%%%%%
%%%%%%%%%%%%%%%%%%%%%%%%%%%%%%%%%%%%%%%%%%%%%%%%%%%%%%%%%%%%%%%%%%%%%%%%%%%%%%
%%%%%%%%%%%%%%%%%%%%%%%%%%%%%%%%%%%%%%%%%%%%%%%%%%%%%%%%%%%%%%%%%%%%%%%%%%%%%%
%%%%%%%%%%%%%%%%%%%%%%%%%%%%%%%%%%%%%%%%%%%%%%%%%%%%%%%%%%%%%%%%%%%%%%%%%%%%%%

%%%%%%%%%%%%%%%%%%%%%%%%%%%%%%%%%%%%%%%%%%%%%%%%%%%%%%%%%%%%%%%%%%%%%%%%%%%%%%%
%%%%%%%%%%%%%%%%%%%%%%%%%%%%%%%%%%%%%%%%%%%%%%%%%%%%%%%%%%%%%%%%%%%%%%%%%%%%%%%
%%%%%%%%%%%%%%%%%%%%%%%%%%%%%%%%%%%%%%%%%%%%%%%%%%%%%%%%%%%%%%%%%%%%%%%%%%%%%%%
%%%%%%%%%%%%%%%%%%%%%%%%%%%%%%%%%%%%%%%%%%%%%%%%%%%%%%%%%%%%%%%%%%%%%%%%%%%%%%%
%%%%%%%%%%%%%%%%%%%%%%%%%%%%%%%%%%%%%%%%%%%%%%%%%%%%%%%%%%%%%%%%%%%%%%%%%%%%%%%
%%%%%%%%%%%%%%%%%%%%%%%%%%%%%%%%%%%%%%%%%%%%%%%%%%%%%%%%%%%%%%%%%%%%%%%%%%%%%%%

\part{Partition} 
\label{part:separating_complexes}

\chapter{Indentation in good complexes}

%%%In this Part, we introduce the notion of a separating complex of a complex, which is central to the proof of the Quasiregular cobordism theorem. 

%%%In order for the components partitioned by a separating complex to have certain required properties,  we perturb the separating complexes by removing dents and attaching tunnels iteratively. In this chapter, we state this concept in a preliminary proposition as a motivation for  formalizing the notion of indentation.
\section{Goal of this chapter}

The goal of this chapter is to prove the following result. 
For the statement, we define the notion of local multiplicity of a complex; the definition of a good complex is defined in the next section.

\begin{definition}
\label{def:mu}
\index{local multiplicity of a complex}
\index{$\mu(K)$}
Given a cubical $n$-complex $K$, the quantity
\[
\mu(K) = \max\left\{\#\{q\in K^{[n-1]}\colon e\in q^{[n-2]}\} \colon e\in K^{[n-2]}  \right\}
\]
is called the \emph{local multiplicity of $K$}. 
\end{definition}

\begin{theorem}\label{theorem:RC-vague}
Let $K$ be a good cubical $n$-complex and $Y\subset K$ an adjacently connected $(n-1)$-dimensional subcomplex. Given $m\ge 1$ and $\nu\in \N$ satisfying $3^\nu \ge 3^{10} m \mu(K)^2$,  there exist $\lambda=\lambda(m, \# Y^{[n-1]})\ge 1$,$L=L(n,\mu(K))\ge 1$,  an $n$-subcomplex $D$ in $\Refine^\nu(K)$, and an $(n-1)$-complex $S$ in $ D$ for which 
\begin{enumerate}
\item the cut-graph $\Gamma(D; S)$ has $m$ components $\tau_1,\ldots, \tau_m$, each of which is a tree of size at most $\lambda$,
\item for each $q\in Y^{[n-1]}$ and $i=1,\ldots, m$, there exists an $n$-cube in $\Span_{\Refine^\nu(K)}(\tau_i)$ having a face in $q$, and 
\item there exists an $L$-bilipschitz homeomorphism 
\[|\Real_K(Y)-\widetilde D| \to |\Real_K(Y)|,\]
where $\widetilde D \subset \Real_K(Y)$
 is the $n$-dimensional subcomplex for which $\pi_{(K,Y)}(\widetilde D) = D$.
\end{enumerate}
\end{theorem}

Heuristically this result says that we may remove a collection of  tunnels -- separated by complex $S$ -- which visit every $(n-1)$-cube of a given subcomplex $Y$, without changing the bilipschitz geometry of the complement of $Y$ with respect to realization. In the forthcoming chapters we use this observation to modify the complementary components of a given (separating) $(n-1)$-subcomplex; see Chapter \ref{chap:Separating-complexes}.

To give  a precise description on the subcomplex $D$, we introduce the notions of indentation and bent indentation; the complex $\widetilde D$ in Theorem \ref{theorem:RC-vague}  is a bent indentation in $\Real_K(Y)$. Then, in Section \ref{sec:reservoir-canal-system}, we give a concrete construction of  a reservoir-canal system, which proves the existence of the tunnels above and gives also a more detailed version of  Theorem \ref{theorem:RC-vague}.

\section{Good complexes} 

We formalize a notion of good complexes by collecting the essential technical assumptions needed in building stable complexes.

\begin{definition}
\label{def:good-complex} \index{good complex}
A  cubical $n$-complex $K$ is \emph{good} if 
\begin{enumerate}
\item $K$ has a flat structure $\sF_K$, \label{item:good-complex-flat}
\item $\Gamma(K)$ is connected,  \label{item:good-complex-adjacency-1}
\item for each component $\Sigma \subset |\partial K|$, the graph $\Gamma(\partial K|_\Sigma)$ is connected, \label{item:good-complex-adjacency-2}
\item the subcomplex $\Star_K(\partial K)$ is isomorphic to $\partial K \times [0,1]$, and \label{item:good-complex-collar}
\item each $(n-1)$-cube is a face of at most two $n$-cubes, and \label{item:good-complex-regularity-a}
\item star of each $(n-2)$-cube in $K$ is an adjacently connected $n$-complex. \label{item:good-complex-regularity-b} 
\end{enumerate}
\end{definition}

%%We discuss now briefly some elementary properties of good complexes needed in the forthcoming sections.

Condition \eqref{item:good-complex-adjacency-2} implies that components of $|\partial K|$ are in one-to-one correspondence with the components of $\Gamma(\partial K)$. \emph{A fortiori} conditions \eqref{item:good-complex-adjacency-2} and \eqref{item:good-complex-collar} together imply that, if $\Sigma$ and $\Sigma'$ are components of $\partial K$, then $\Star_K(\Sigma) \cap \Star_K(\Sigma') = \emptyset$.

By condition \eqref{item:good-complex-regularity-a}, each $(n-1)$-cube in a good cubical $n$-complex is a face of at most two $n$-cubes. Thus, for a good cubical $n$-complex $K$, we may identify an edge $\{Q,Q'\}$ of $\Gamma(K)$ with the face $Q\cap Q'\in K^{[n-1]}$, hence the set of edges of $\Gamma(K)$ with a subset of $K^{[n-1]}$. Specifically, edges of $\Gamma(K)$ correspond precisely to the  $(n-1)$-cubes in $K$ which are not boundary cubes, i.e, we may identify $\Gamma(K)$ with the pair
\[
\Gamma(K) = \left( K^{[n]}, \{ q \in K^{[n-1]} \colon q\not \in \partial K\} \right).
\]

Conditions \eqref{item:good-complex-regularity-a} and \eqref{item:good-complex-regularity-b} together force the adjacency graphs of the stars of codimension $2$ simplices to be cyclic. For the statement, we say that a graph $G=(V,E)$ is \emph{cyclic} if it is isomorphic to either
\[
(\{1,\ldots, m\}, \{ \{1,2\}, \{2,3\},\ldots, \{m-1,m\}\})
\]
or 
\[
(\{1,\ldots, m\}, \{ \{1,2\}, \{2,3\},\ldots, \{m-1,m\}, \{m,1\}\}).
\]

\begin{lemma}
\label{lemma:star-cyclic}
Let $K$ be a good cubical $n$-complex. Then the adjacency graph $\Gamma(\Star_K(\xi))$ of the star of an $(n-2)$-cube $\xi\in K^{[n-2]}$ is cyclic.
\end{lemma}

\begin{proof}
Let $\xi\in K^{[n-2]}$. By condition \eqref{item:good-complex-regularity-b} in Definition \ref{def:good-complex}, the adjacency graph $\Gamma(\Star_K(\xi))$ is connected. Let $m\in \Z_+$ be the number of $n$-cubes in $\Gamma(\Star_K(\xi))$. 

We observe that the valence of $\Gamma(\Star_K(\xi))$ at each vertex is at most two. Indeed, let $Q\in \Star_K(\xi)$ be an $n$-cube. Then $\xi$ is intersection of exactly two faces of $Q$. Thus $Q$ is adjacent to at most two other $n$-cubes in $\Star_K(\xi)$. Thus $\Gamma(\Star_K(\xi))$ is a connected graph having valence at vertices at most two. We conclude that $\Gamma(\Star_K(\xi))$ is cyclic. 
\end{proof}

The notion of a good complex is stable under refinement: refinements of good complexes are also good complexes. We omit the straightforward proof.

\begin{lemma}
\label{lemma:refinement-strongly-connected-good-complex}
Let $K$ be an adjacently connected cubical $n$-complex. Then also $\Refine(K)$ is also an adjacently connected cubical $n$-complex. If $K$ is a good cubical $n$-complex then  $\Refine(K)$ is a good cubical $n$-complex.
\end{lemma}

\subsection{Existence of good cubical complexes}
\label{sec:existence-good-complexes}

The cubical structure $T^\boxplus$, associated to a triangulation $T$ of a manifold with boundary, is a good cubical complex modulo condition \eqref{item:good-complex-collar}.

\begin{proposition}
\label{prop:cubical-starting-point}
Let $T$ be a simplicial complex whose space $|T|$ is a connected $n$-manifold with boundary. Let $\iota \colon \partial T^\boxplus \to \partial T^\boxplus \times [0,1]$ be the cubical inclusion $q \mapsto q\times \{0\}$, and let $K$ be the cubical $n$-complex
\[
K = T^\boxplus \, {\bigcup}_\iota \left(\partial T^\boxplus \times [0,1]\right).
\]
Then $K$ is a good cubical $n$-complex.
\end{proposition}

\begin{proof}
We verify the conditions of a good complex. condition \eqref{item:good-complex-collar}  is satisfied by the construction, so it suffices to check the other conditions.

By Proposition \ref{prop:PL-to-cubical}, the complex $T^\boxplus$ has a flat structure $\mathcal F_{T^\boxplus}$. It is straightforward to extend the flat structure $\mathcal F_{T^\boxplus}$ to a flat structure of $K$.

To show that $\Gamma(K)$ is connected we proceed as follows. Since $|T|$ is a connected manifold with boundary, $T$ is adjacently connected in terms of $n$-simplices, that is, for $n$-simplices $\sigma$ and $\sigma'$ in $T$, there exists a chain $\sigma = \sigma_0, \ldots, \sigma_m=\sigma'$ of $n$-simplices for which $\sigma_{i-1}\cap \sigma_i$ is an $(n-1)$-simplex for each $i\in \{1,\ldots, m\}$. Since $\Delta_n^\boxplus$ is adjacently connected cubical $n$-complex and $T$ is adjacently connected in the simplicial sense, we conclude that $\Gamma(T^\boxplus)$ is connected. Since each $n$-cube in $K$ is either in $T^\boxplus$ or adjacent to an $n$-cube in $T^\boxplus$, we have that $\Gamma(K)$ is connected.

Let $S \subset |\partial K|$ be a component and let $\Sigma = K|_S$. Let $\Sigma' \subset \Star_K(S)$ be the link of $S$, that is, the subcomplex of $\Star_K(S)$ which does not meet $\Sigma$. By the construction of $K$, $\Sigma$ and $\Sigma'$ are isomorphic and $\Sigma' \subset \partial T^\boxplus$. Let $S'=|\Sigma'|$. Since $S$ is a component of $|\partial K|$,  $S'$ is a component of $|\partial T|$, hence an $(n-1)$-manifold. Since the simplicial adjacency graph of $T|_{S'}$ is connected,  $\Gamma(T^\boxplus|_{S'})$ is also connected. Thus $\Gamma(\Sigma)$ is connected.

To check condition \eqref{item:good-complex-regularity-a}, we observe that, since $|T|$ is an $n$-manifold,  each $(n-1)$-simplex in $T$ is a face of at most two $n$-simplices in $T$. Thus each $(n-1)$-cube in $T^\boxplus$ is a face of at most two $n$-cubes in $T^\boxplus$.

Finally, condition \eqref{item:good-complex-regularity-b} follows from Lemma \ref{lemma:star-cyclic}.
\end{proof}

\section{Spectrum of a subcomplex}
For the definition of an indentation, we introduce the notion of spectrum of a subcomplex.

\begin{definition}
\label{def:spectrum}
\index{spectrum of a subcomplex} \index{$\cS_*(P)$}
Let $K$ be a cubical $n$-complex. The \emph{spectrum $\cS_*(P)$} of an $n$-dimensional subcomplex $P$ of $\Refine^k(K)$ is a sequence 
\[
\cS_*(P) = (\cS_0(P), \cS_1(P),\ldots, \cS_k(P)),
\]
where $\cS_j(P)$ is a subcomplex of $\Refine^j(K)$ for each $j\in \{0,1,\ldots, k\}$, satisfying the following properties:
\begin{enumerate}
\item $P = \bigcup_{j=0}^k \Refine^{k-j}(\cS_j(P))$ and 
\item $\cS_j(P)$ does not contain the refinement complex $\Refine(Q)$ of any $n$-cube $Q\in \Refine^{j-1}(K)$.  \label{item:disjoint}
\end{enumerate}
We denote 
\[
\cS_*(P)^{[n]} = \bigcup_{j=0}^k \cS_j(P)^{[n]}
\]
call the $n$-cubes in this family \emph{spectral cubes of $P$}.
\end{definition}

\begin{figure}[htp]
\begin{overpic}[scale=.5,unit=1mm]{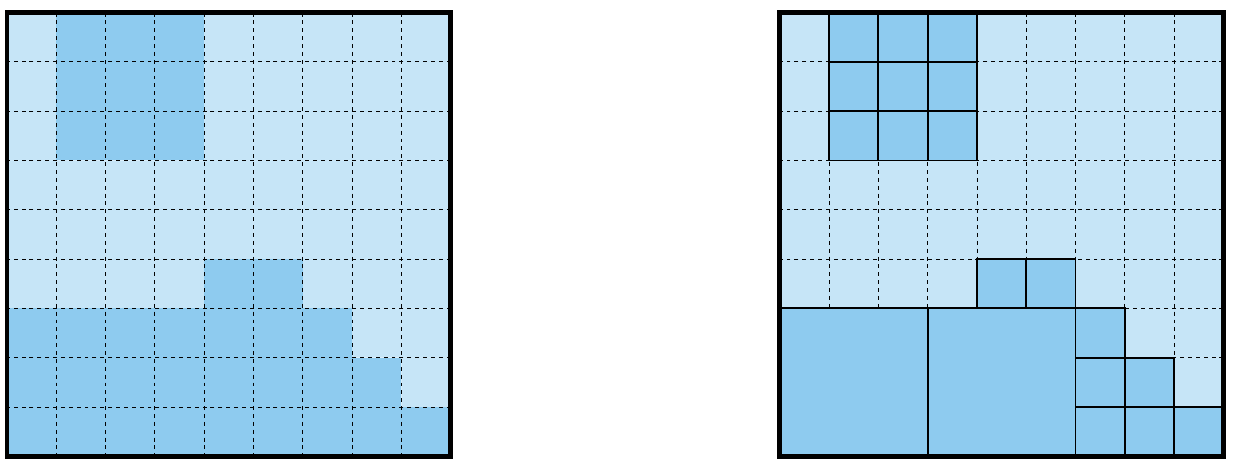} % add: grid
\put(16,7){{$P$}}
\put(10,33){{$P$}}
\end{overpic}
\label{fig:Spectrum}
\caption{Left: a cubical complex $K$ consisting of one $2$-cube and a subcomplex $P$ of $\Refine^2(K)$. Right: spectral cubes $\cS_*(P)^{[2]}$ of $P$.}
\end{figure}

\begin{remark} 
First, in view of condition \eqref{item:disjoint} in the definition, the complexes $\Refine^{k-j}(\cS_j(P))$, $j=0,1,\dots, k$, are essentially disjoint. Thus each subcomplex $P \subset \Refine^k(K)$ has unique spectrum $\cS_*(P)$.
Second,  the spectrum of a subcomplex is stable under refinements, that is, for $P \subset \Refine^k(K)$, we have $\cS_*(\Refine(P)) = \cS_*(P)$.
\end{remark}

Let $P \subset \Refine^k(K)$ be a non-empty subcomplex, and $k\geq 1$. We say that $\delta(P)\in \{0,\ldots, k\}$ is the \emph{depth of the spectrum of $P$} if $\delta(P)$ is the largest index $j$ for which $\cS_j(P)\ne \emptyset$. We also define a \emph{spectral reduction of $P$} to be the subcomplex \index{spectral reduction} \index{$P^\dagger$} 
\[
P^\dagger = \bigcup_{j=0}^{\delta(P)-1} \Refine^{k-j}(\cS_j(P))
\]
of $P$. Note that $\delta(P^\dagger) < \delta(P)$ and that $\cS_j(P^\dagger) = \cS_j(P)$ for $0\le j < \delta(P)$.

\section{Indentations}
\label{sec:indentation}

We are now ready to define indentations, that is, subcomplexes which lie in a spectral sense along the boundary of a given complex.

\begin{definition}
\label{def:indentation}
\index{indentation}
Let $K$ be a cubical $n$-complex with boundary. A cubical complex $D\subset \Refine^k(K)$ is an \emph{indentation in $K$} if the spectrum $\cS_*(D)$ has the following properties:
\begin{enumerate}
\item $\cS_0(D)=\cS_1(D) = \emptyset$; \label{item:indentation-1}
\item for each $j\in\{2,\ldots, k\}$, each $n$-cube $Q\in \cS_j(D)^{[n]}$ has exactly one  face  in $\Refine^j(\partial K)$; \label{item:indentation-2}
\item for $Q\in \cS_j(D)$ and $Q'\in \cS_{j'}(D)$, where $j\le j'$, the intersection $Q\cap Q'$ is either empty or a face of $Q'$ which, in the case of $j<j'$, intersects at most two faces of $Q$;
\label{item:indentation-3} 
\item for any three distinct cubes $Q,Q',Q''$ in $\cS_*(D)^{[n]}$, the intersection $Q\cap Q'\cap Q''$ is empty. \label{item:indentation-4}
\end{enumerate}
\end{definition}

Note that an indentation need not be connected by definition; each component is also an indentation by itself.  However each connected component of an indentation must be adjacently connected.

See Figure \ref{fig:bent-indentation} for examples of indentation $D$.

We record some simple facts about indentation which follow from the definition.

\begin{remark}\label{rmk:dent-cell}
Let $D\subset \Refine^k(K)$ be an indentation in a cubical $n$-complex. Suppose that $D$ is connected. Then $|D|$ is an $n$-cell, $|D \cap \partial K|$ is an $(n-1)$-cell, and $|\partial D - \partial K|$ is also an $(n-1)$-cell. 
\end{remark}

\begin{remark}
\label{rmk:indentation-1}
condition \eqref{item:indentation-3} in the definition yields the following. Suppose that $Q\in \cS_j(D)$ and $Q'\in \cS_{j'}(D)$,  $j\le j'$, have nonempty intersection. Then this face $Q\cap Q'$ of $Q'$ is not contained in $\partial K$.
\end{remark}

\begin{remark}
\label{rmk:indentation-2}
If an $n$-cube $Q\in \cS_j(D)^{[n]}$ meets two $n$-cubes $Q'$ and $Q''$ in $\cS_{j'}(D)^{[n]}$ for $j'\leq j$,  then, by \eqref{item:indentation-3}  and  \eqref{item:indentation-4}, $Q\cap Q'$ and $Q\cap Q''$ are opposite faces of $Q$.

An $n$-cube $Q\in \cS_j(D)^{[n]}$, however, may meet two or more smaller $n$-cubes $Q', Q''\in \cS_*(D)^{[n]}$ in the same face of $Q$.
\end{remark}

\subsection{Wedges}
Wedge is a notion of neighborhood adapted to the spectrum of  indentation. We give the definition of wedge here, however, more generally for all subcomplexes.

\begin{definition}
\label{def:wedge}
\index{wedge} \index{$\Wedge_K(P)$}
Let $K$ be a cubical $n$-complex with boundary, and $P \subset \Refine^k(K)$ be a subcomplex. We say that $x\in |K|$ is in the \emph{wedge of $P$} if there exists $p\in |P|$ for which 
\[
\dist_K(x,|P|) = d_K(x,p) \le \dist_K(p,|\partial K|)/4,
\]
where $d_K$ is the standard metric on $K$ (see Section \ref{sec:refinement-metric}). We call
\[
\Wedge_K(P)= \{ x\in |K| \colon x \text{ is in the wedge of } P\}
\]
the \emph{wedge of $P$ in $|K|$}.
\end{definition}

\begin{remark}
Observe that  $\Wedge_K(P)=\Wedge_{\Refine(K)}(\Refine(P))$.
\end{remark}

A key property of the wedge is the following non-overlapping property associated to the spectral decomposition of an indentation.

\begin{lemma}
\label{lemma:Wedge-intersections}
Let $k\geq 2$, and $D\subset \Refine^k(K)$ be an indentation. Then
\begin{enumerate}
\item if $Q$ and $Q'$ are cubes in $\cS_*(D)^{[n]}$ for which $Q\cap Q'=\emptyset$, then $\Wedge_K(Q)\cap \Wedge_K(Q')=\emptyset$; and \label{item:Wedge-intersections-1}
\item if $Q,Q',Q''$ are three distinct cubes in $\cS_* (D)^{[n]}$,  then $\Wedge_K(Q)\cap \Wedge_K(Q') \cap \Wedge_K(Q'')=\emptyset$. \label{item:Wedge-intersections-2}
\end{enumerate}
\end{lemma}
\begin{proof}
To prove \eqref{item:Wedge-intersections-1}, let $Q\in \cS_j(D)$ and $Q'\in \cS_{j'}(D)$ be two non-intersecting $n$-cubes. We may assume that
$j\leq j'$. Then the distance between $Q$ and $Q'$ is at least $3^{-j'}$.

Suppose towards contradiction that $\Wedge_K(Q) \cap \Wedge_K(Q')\ne \emptyset$ and let $x\in \Wedge_K(Q')\cap \Wedge_K(Q'')$. Let also $p\in Q$ and $p'\in Q'$ be points for which $\dist(x,Q) = d(x,p)\le \dist(p,|\partial K|)/4$ and $\dist(x,Q') = d(x,p')\le \dist(p',|\partial K|)/4$. 
Observe that
\begin{align*}
\dist(p,|\partial K|)&\leq d(p,x)+d(x,p')+ \dist(p',|\partial K|) \\
&\leq \dist(p,|\partial K|)/4+ 5\, \dist(p',|\partial K|)/4.
\end{align*}
Thus, $\dist(p,|\partial K|)\leq 5\, \dist(p',|\partial K|)/3$, and
\begin{align*}
\dist(Q,Q') &\le d(p,p') \le d(p,x) + d(x,p') \\ &\le \dist(p,|\partial K|)/4 + \dist(p',|\partial K|)/4 \\
&\le 2\dist(p',|\partial K|)/3 \le 2\cdot 3^{-j'-1}.
\end{align*}
This is a contradiction. Thus $\Wedge_K(Q')\cap \Wedge_K(Q'') = \emptyset$.

To prove \eqref{item:Wedge-intersections-2}, let $Q,Q',Q''$ be three different cubes in $\cS_*(D)^{[n]}$ and let $j,j',j''$ be indices for which $Q\in \Refine^{j}(K)$, $Q'\in \Refine^{j'}(K)$, and $Q''\in \Refine^{j''}(K)$, respectively. We may assume that $j\le j'\le j''$. By condition \eqref{item:indentation-1} in the definition of indentation, $Q\cup Q'\cup Q''$ is the contained in one, or the union two adjacent $n$-cubes in $K$. Therefore the argument below is Euclidean.

Suppose  that $\Wedge_K(Q)\cap \Wedge_K(Q') \cap \Wedge_K(Q'')$ is non-empty. Then, by the first part of this lemma, $q=Q\cap Q''\ne \emptyset$ and $q'=Q'\cap Q''\ne \emptyset$ are opposite faces of $Q''$. 
Since also $Q\cap Q'\ne \emptyset$, this is impossible by Euclidean geometry.  Hence $\Wedge_K(Q)\cap \Wedge_K(Q') \cap \Wedge_K(Q'')=\emptyset$. This is a contradiction.
\end{proof}

Since connected components of an indentation are indentations, we have the following non-overlapping property of their wedges.

\begin{corollary}
\label{cor:Wedge-intersections}
Let $D\subset \Refine^k(K)$ be an indentation, and let $D_1$ and $D_2$ be two different connected components of $D$. Then
\[
\Wedge_K(D_1)\cap \Wedge_K(D_2) =\emptyset.
\] 
\end{corollary}

\subsection{Flattening indentations} 
Indentations can be flattened. The basic property of flattening reads as follows.
\begin{proposition}
\label{prop:first-flattening-theorem}
There exists a constant $L=L(n)\ge 1$ for the following. Let $D \subset \Refine^k(K)$ be an  indentation. Then there exists a piecewise linear $L$-bilipschitz homeomorphism 
\[
\phi_D \colon |\Refine^k(K)-D| \to |\Refine^k(K)-D^\ddagger|,
\]
which is the identity on $|K|\setminus \Wedge_K(\cS_{\delta(D)}(D))$.
\end{proposition}

 We emphasize that the bilipschitz constant $L$ in the theorem depends only on the dimension $n$, not on either the refinement scale or the structure of the indentation $D$.

The idea behind the proof of Proposition \ref{prop:first-flattening-theorem} is based on the following simple observation.

\begin{lemma}
\label{lemma:flattening-key-lemma}
Let $D \subset \Refine^k(K)$ be an indentation and $Q\in \cS_{\delta(D)}(D)^{[n]}$. Then $C=Q\cap (\Refine^k(K)-D)$ and $\partial Q-C$ are $(n-1)$-cells.
\end{lemma}
\begin{proof}
Let $q_0 = Q\cap |\partial K|$ and let $t\in \{0,1,2\}$ be the number of $n$-cubes in $\cS_*(D)^{[n]}$ meeting $Q$. We have three cases. 

If $t=0$ and $Q$ does not meet other $n$-cubes in $\cS_*(D)^{[n]}$, then $\partial Q - C = q_0$, $C = \partial Q - q_0$ and the claim holds. 

If $t=1$ and $Q$ meets exactly one $n$-cube $Q_1\in \cS_*(D)^{[n]}$, then $q_1=Q\cap Q_1$ is a face of $Q$ and $q_1$ meets $q_0$ in an edge. Thus $\partial Q-C = q_0\cup q_1$ and $C=\partial Q - (q_0\cup q_1)$ are $(n-1)$-cells. 

Finally, if $t=2$ and $Q$ meets exactly two $n$-cubes $Q_1$ and $Q_2$ of $\cS_*(D)^{[n]}$, then $q_1 = Q\cap Q_1$ and $q_2=Q \cap Q_2$ are opposite faces of $Q$ each of which meets $q_0$ in an edge. Thus $\partial Q - C = q_0\cup q_1\cup q_2$ and $C = \partial Q - (q_0\cup q_1 \cup q_2)$ are $(n-1)$-cells. 
\end{proof}

\begin{proof}[Proof of Proposition \ref{prop:first-flattening-theorem}]
Fix an (arbitrary) enumeration $Q_1,\ldots$, $ Q_s$ of the $n$-cubes in $\cS_{\delta(D)}(D)$ for the order in which the cubes to be flattened. For each $i=1,\ldots, s$, let $E_i= Q_1\cup \cdots \cup Q_i$.
We also set $E_0 = \emptyset$. Then each $D-E_i$ is an indentation of $\Refine^k(K)$ and $D-E_s = D^\ddagger$. 

Let $\beta(Q_i)$ be the collection of $n$-cubes in $E_{i-1}$ which are adjacent to $Q_i$ and   $\alpha(Q_i)$ be the collection of $n$-cubes  in $\cS_{\delta(D)}(D)-E_i$ adjacent to $Q_i$.
Since $Q_i$ meets $0$, $1$, or $2$ $n$-cubes in $\cS_{\delta(D)}(D)-E_i$,  there are at most two cubes in $\alpha(Q_i) \cup \beta(Q_i)$.

By Lemma \ref{lemma:flattening-key-lemma}, there exists $L'=L'(n)\ge 1$ and, for each $i=1,\ldots, s$, a piecewise linear $L'$-bilipschitz map $\psi_i \colon |(\Refine^k(K)-D)\cup E_{i-1}| \to |(\Refine^k(K)-D)\cup E_i|$ satisfying the following conditions:
\begin{enumerate}
\item $\psi_i$ is an identity in the complement of $\Wedge_K(Q_i)$ and 
\item  $\psi_i \left(\Wedge_K(Q')\right) \, \cap\, \Wedge_K(Q'')=\emptyset$, for $Q'\in \beta(Q_i)$ and $Q''\in \alpha(Q_i)$.\label{item:wedge-image}
\end{enumerate}
Due to these conditions and also Lemma \ref{lemma:Wedge-intersections}, we have that for each $x\in|K|$, 
\[\psi_i(\psi_{i-1}\ldots (\psi_1(x)))\ne \psi_{i-1}(\psi_{i-2} \ldots( \psi_1(x)))\]
for at most two indices $i\in \{2,\ldots, s\}$. 

Thus the composition
\[
\psi_D = \psi_s \circ \cdots \circ \psi_1 \colon |\Refine^k(K)-D| \to |\Refine^k(K)-D^\ddagger|
\]
is an $(L')^2$-bilipschitz homeomorphism, which is an identity in the complement of $\Wedge_K(\cS_{\delta(D)}(D))$.
\end{proof}

For an indentation $D$,  its spectrum reduction $D^\ddagger$ is also an indentation. An iterative application of Proposition \ref{prop:first-flattening-theorem} yields a result on flattening indentation.

\begin{corollary}
\label{cor:first-flattening-theorem}
There exists $L=L(n)\ge 1$ for the following. Let $k\geq 2$ and $D\subset \Refine^k(K)$ be an indentation. Then there exists a piecewise linear $L$-bilipschitz homeomorphism
\[
\phi_D \colon |\Refine^k(K)-D|\to |\Refine^k(K)|,
\]
which is an identity on $|K| \setminus \Wedge_K(D)$.
\end{corollary}

\begin{proof}
It suffices to observe that, for each $0\le j \le k$, 
\[
\Wedge_K(\cS_j(D)) = \bigcup_{Q\in \cS_j(D)^{[n]}} \Wedge_K(Q).
\]
Thus, for any three different indices $j,j',j''$, 
\[
\Wedge_K(\cS_j(D)) \cap \Wedge_K(\cS_{j'}(D)) \cap \Wedge_K(\cS_{j''}(D)) = \emptyset.
\]
by Lemma \ref{lemma:Wedge-intersections}. An iterative application of   Proposition \ref{prop:first-flattening-theorem} and its proof yields a  composition of bilipschitz homeomorphisms
\begin{align*}
|\Refine^k(K)-D_0| \to |\Refine^k(K)-D_1| \to \cdots \to |\Refine^k(K) - D_t| \to |\Refine^k(K)|
\end{align*}
which is identity on $|\Refine^k(K)|\setminus \Wedge_k(D)$. Adapting condition \eqref{item:wedge-image}  in the proof of Lemma \ref{lemma:flattening-key-lemma}  to the spectra $D_1, \ldots, D_t$, we may construct individual $L(n)$-bilipschitz homeomorphisms for which their composition is $L^2(n)$-bilipschitz.
Here $D_0 = D$, $D_i = (D_{i-1})^\dagger$, and $t\in [1,k]$ is the number of indices $j$ for which $\cS_j(D)\ne \emptyset$. 
\end{proof}

\subsection{Partial star indentations}
\label{sec:partial-star-indentations}

We begin with a definition of a partial star.

\begin{definition}
Let $K$ be a cubical $n$-complex, and $e\in (\Refine^j(K))^{[n-2]}$. 
We call an $n$-subcomplex $S\subsetneq \Star_{\Refine^j(K)}(e)$  a \emph{partial star} if $S$ is an adjacently connected. We denote $\mu_e(S)$ the number of $n$-cubes in $S$. 
\end{definition}

Partial-star indentations are defined as follows. For an illustration, see Figure \ref{fig:bent-indentation}; type \eqref{item:boundary-star-indentation} on the right and type \eqref{item:inner-star-indentation} on the left.

\begin{definition}
\label{def:partial-star-indentation}\index{indentation!partial-star} 
Let $K$ be a cubical $n$-complex with boundary. A cubical subcomplex $V=\Refine^{k-j}(S) \subset \Refine^k(K)$, where $2\leq j\leq k$, is a \emph{partial-star-indentation of an edge $e\in \Refine^j(\partial K)$ in $K$} if either
\begin{enumerate}
\item $S \subset \Star_{\Refine^j(K)}(e)$ is a partial star for which  $S\cap \Refine^j(\partial K)$ contains precisely the two $(n-1)$-cubes in $\partial S$ containing $e$, or \label{item:boundary-star-indentation}
\item $S=\Star_{\Refine^j(K)}(e^\op)$, where $e^\op$ is the edge opposite to $e$ in an $n$-cube $Q\in \Refine^j(K)$. \label{item:inner-star-indentation}
\end{enumerate} 

For type \ref{item:inner-star-indentation}, we call the unique $n$-cube $Q$ in $S$, which does not meet $e$, a \emph{spare cube}. \index{spare cube}
\end{definition}

\begin{remark}
In type \eqref{item:inner-star-indentation}, the partial-star-indention could be also called as \emph{star-indentation}, but we do not make this distinction in what follows as these cases are discussed mostly simultaneously. Note that, formally, a partial-star-indentation is not an indentation in the sense of Definition \ref{def:indentation}. For this reason, we introduce the notion of bent indentations in the next section.
\end{remark}

\begin{remark}
In type \eqref{item:boundary-star-indentation}, the $n$-cubes of the partial star $S$ belong to different $n$-cubes of $K$, whereas, in type \eqref{item:inner-star-indentation}, the $n$-cubes of $S$ belong to the same $n$-cube of $K$. Note also that the number of $n$-cubes of $S$ in type \eqref{item:boundary-star-indentation} is between $1$ and $\mu(K)$. In type  \eqref{item:inner-star-indentation} we always have $\mu_e(S)=4$. In this case, $\Star_{\Refine^j(K)}(e)$ consists of only one $n$-cube, the $n$-cube $Q$ in the statement.
\end{remark}

\begin{remark}
\label{rmk:face-counting}
There is a distinction between type \eqref{item:boundary-star-indentation} and \eqref{item:inner-star-indentation} which we describe as follows.

In type \eqref{item:boundary-star-indentation} case, for each $(n-1)$-cube $q\in S\cap \partial \Refine^j(K)$, the opposite face $q^\op$, with respect to a unique cube in $S$, is also in $\partial S$. Also the mapping $q\mapsto q^\op$ is injective. In type \eqref{item:inner-star-indentation} case, the intersection $S\cap \partial \Refine^j(K)$ consists of four $(n-1)$-cubes $q$ and none of the opposite faces $q^\op$ are on the boundary of $S$, but the intersection $(S-Q)\cap Q$, where $S$ is the spare cube, consists of two of the opposite faces $q^\op$.

\end{remark}

\subsection{Bent indentation}
\label{sec:bent-indentations}

In the case of good complexes, we may generalize the notion of indentation to bent indentation.  This section may be read along with Section \ref{sec:indentation-lift}. Heuristically, a bent indentation is an indentation whose component are joined together by partial-star-indentations; see Figure \ref{fig:bent-indentation} for an illustration.

\begin{definition}
\label{def:bent-indentation}
\index{indentation!bent}
Let $K$ be a good cubical $n$-complex with boundary, and $k\geq 2$. A cubical complex $B= D\cup E \subset \Refine^k(K)$ is a \emph{bent indentation in $K$}, if 
$D$ is an indentation (possibly empty)  and $E$ is a collection of mutually disjoint partial-star-indentations in $K$, for which
 $D$ and $E$ have no common $n$-cubes and  for  each partial-star-indentation $V=\Refine^{k-j}(S_V)$ in $E$, the partial star $S_V \subset \Refine^{j}(K)$ has the properties:
\begin{enumerate}
\item $S_V$ is adjacent to at most two (possibly none) $n$-cubes in separate components of $\cS_j(D)$, 
\item any common face of $S_V$ and  $ \cS_j(D)$ does not intersect $e$, and $S_V$ does not meet $\cS_{j'}(D)$ for $j'\ne j$.
\end{enumerate}
\end{definition}

\begin{figure}[htp]
\centering
\begin{overpic}[scale=.65,unit=1mm]{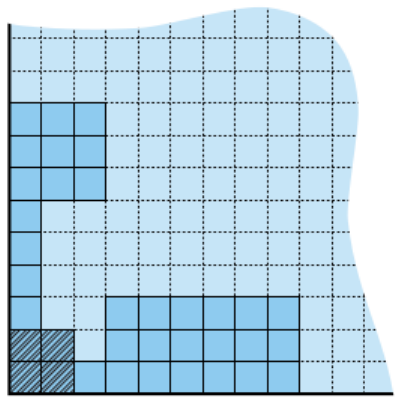} % add: grid
\put(5,5){\tiny {\color{white} $E$}}
%\put(1,5){\tiny $E$}
\put(1.5,23){\tiny $D$}
\put(19,5){\tiny $D$}
%\put(6,6){\tiny {\color{red} $e^\op$}}
\put(-.5,-.5){\tiny$e$}
%\put(4.5,4.5){\tiny$e$}
\end{overpic}
\hfill
\begin{overpic}[scale=.23,unit=1mm]{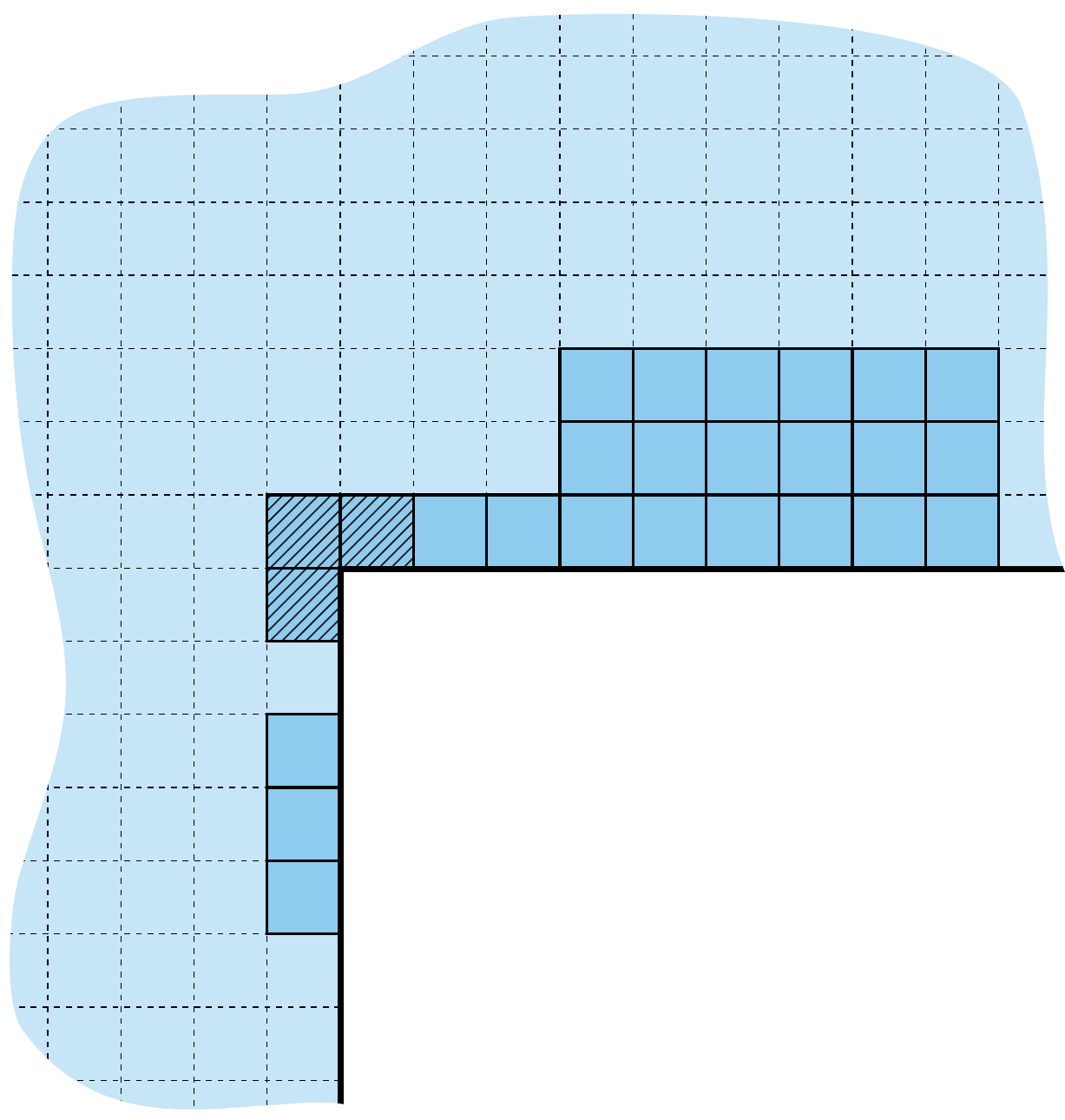} % add: grid
\put(12,12){\tiny $D$}
\put(29,29){\tiny $D$}
\put(12,25.5){\tiny {\color{white} $E$}}
\put(16,23){\tiny$e$}
\end{overpic}
\caption{Bent-indentations $D\cup E$ in $\Refine^k(K)$ as in Definition \ref{def:bent-indentation}; indentations $D$ in medium blue, and partial-star-indentations $E$ in shaded blue. On the right, $E=S \subset \Star_{\Refine^k(K)}(e)$ is of type (1). On the left, $E=S=\Star_{\Refine^k(K)}(e^\op)$ is of type (2).}
\label{fig:bent-indentation}
\end{figure}

Bent indentations admit a flattening similar to indentations. The bilipschitz constant of the flattening depends, however, on the local multiplicity $\mu(K)$ of $K$.

\begin{proposition}[Bent indentation flattening]
\label{prop:flattening-bent-indentation}
Let $K$ be a good cubical complex. Then there exists $ L = L(n,\mu(K))\ge 1$ for the following. For each bent indentation $B\subset \Refine^{k}(K)$,  there exists a piecewise linear $ L$-bilipschitz homeomorphism
\[
\phi_B \colon |\Refine^k(K)-B| \to |\Refine^k(K)|,
\]
which is an identity on $|\Refine^k(K)|\setminus \Wedge_K(B)$. 
\end{proposition}

\begin{proof}
Let $B = D\cup E$ be a union of $n$-complexes as in Definition \ref{def:bent-indentation}. Let $V$ be a component of $E$, and $S_V \subset \Refine^{j}(K)$ be the partial-star in $K$ for which  $V=\Refine^{k-j}(S_V)$. By definition,  $|V| \cap |\partial K|=|S_V \cap \Refine^j(\partial K)|$ is an $(n-1)$-cell.

Suppose that $q$ is the common face of a cube in ${S_V}^{[n]}$ and a cube in $\cS_j(D)^{[n]}$. Then $|\partial K|$ meets $q$ in an $(n-2)$-dimensional  face $\tau_q$ of $q$. Thus  $\tau_q \subset |V| \cap |\partial K|$, and $q \cup (|V| \cap |\partial K|)$  is an $(n-1)$-cell.

Denote now by $C_V$ the union of $|V| \cap |\partial K|$ and the (at most two) faces of the $n$-cubes in $S_V$ which are in common with some $n$-cubes in  $\cS_j(D)^{[n]}$. Let $C'_V = \cl(|\partial V|\setminus C_V )$. Then both $C_V$ and $C'_V$ are $(n-1)$-cells.

For distinct components $V$ and $V'$ of $E$, their intersection $V\cap V'=\emptyset$. Thus by the proof of Lemma \ref{lemma:Wedge-intersections}, the intersection of wedges $\Wedge_K(V)\cap \Wedge_K(V')=\emptyset$. Therefore there exists a piecewise linear $L(n,\mu(K))$-bilipschitz homeomorphism 
\[
\psi_E \colon |\Refine^k(K)-B|\to |\Refine^k(K)-D|,
\]
which is the identity outside $\Wedge_K(E) \cap |\Refine^k(K)-B|$.

Since $D$ is an indentation in $\Refine^k(K)$, we may  apply Proposition \ref{prop:first-flattening-theorem} to obtain an $L(n)$-bilipschitz homeomorphism 
\[\psi_D \colon |\Refine^k(K)-D|\to |\Refine^k(K)|,\]
which is the identity outside $\Wedge_K(D) \cap |\Refine^k(K)-D|$.

The claim follows now with the composition $ \phi_B = \psi_D \circ \psi_E$.
\end{proof}

In what follows, we need also the mapping in Proposition \ref{prop:flattening-bent-indentation} with an additional  local rigidity requirement. Since the proof is only a slight modification of that for Proposition \ref{prop:flattening-bent-indentation}, we omit the details.

\begin{corollary}
\label{cor:flattening-bent-indentation}
Let $K$ be a good cubical complex. Then there exists $\widetilde L = \widetilde L(n,\mu(K))\ge 1$ for the following. Let $B=D\cup E\subset \Refine^{k}(K)$ be a bent indentation, where $D$ is an indentation and $E$ is a collection of partial-star-indentations. Let $j\geq 2$ and $q\in \Refine^{k+j}(\partial D)^{[n-1]}$ be an $(n-1)$-cube which does not meet $\partial K$ or $E$. Then there exists a piecewise linear $\widetilde L$-bilipschitz homeomorphism
\[
\widetilde \phi_B \colon |\Refine^k(K)-B| \to |\Refine^k(K)|,
\]
which is an identity on $|\Refine^k(K)|\setminus \Wedge_K(B)$ and  which is an isometry on the unique $n$-cube $Q\in \Refine^j(\Refine^{k}(K)-B)$ having $q$ as a face.
\end{corollary}

\section{Reservoir-canal systems}
\label{sec:reservoir-canal-system} 

In this section, we formulate a concrete version of Theorem \ref{theorem:RC-vague} to be applied in the following chapters. The following proposition serves as a blueprint for the construction steps ahead and is  best to be read in parallel with the construction.

For the statement, we define the notion of a preference function.
\begin{definition}
\label{def:preference}
Let $U$ be a good cubical $n$-complex and $Y\subset U$ an $(n-1)$-subcomplex. A function $\rho \colon Y^{[n-1]}\to U^{[n]}$ is a \emph{preference function on $Y$ for the pair $(U,Y)$} if, for each $q\in Y^{[n-1]}$,  $\rho(q)$ is an $n$-cube having $q$ as its face. \index{preference function}
\end{definition}
Note that we neither assume a preference function $\rho$ to be injective nor that the $n$-cubes $\rho(q)$ and $\rho(q')$ to be adjacent in $U$ for adjacent $(n-1)$-cubes $q$ and $q'$ in $Y$.

\begin{proposition}
\label{prop:RC-existence}
Let $U$ be a good cubical $n$-complex, let $Y\subset U$ be an adjacently connected $(n-1)$-subcomplex, and let $\rho \colon Y^{[n-1]} \to U^{[n]}$ be a preference function. Then, given $m\ge 1$ and $\nu\in \N$ satisfying $3^\nu\ge 3^{10}m \mu(U)^2$, there exist an adjacently connected $n$-subcomplex $\sfRC=\sfRC_{\Refine^\nu(U)}(Y)$ of $\Refine^\nu(U)$
and an $(n-1)$-subcomplex $\Tr = \Tr_{\Refine^\nu(U)}(Y)$ of $\Refine^\nu(Y) \cup \sfRC^{(n-1)}$ having the following properties:
\begin{enumerate}
\item if a spectral cube $Q$ in $\sfRC$ meets an $(n-1)$-cube $q\in Y^{[n-1]}$ in a face, then $Q\subset \rho(q)$, \label{item:RC-existence-2}
\item $\sfRC\cap \partial Y = \emptyset$ and $\Refine^\nu(Y) \cup \partial \sfRC \subset \Tr$, \label{item:RC-existence-3}
\item graph $\Gamma(\sfRC; \Tr \cap \sfRC)$ has $m$ components $\tau_1,\ldots, \tau_m$, all of which are trees of length at most $\lambda=\lambda(\nu,\# Y^{[n-1]})$, \label{item:RC-existence-4}
\item for each $q\in Y^{[n-1]}$ and $i=1,\ldots, m$, there exits an $n$-cube in $\Span_{\Refine^\nu(U)}(\tau_i)$ having a face in $q$, \label{item:RC-existence-5}
\item the $n$-complex $\widetilde{\sfRC} \subset \Real_{\Refine^\nu(U)}(Y)$ satisfying $\pi_{(U,Y)}(\widetilde{\sfRC}) = \sfRC$ has the property that the intersection of $\widetilde{\sfRC}$ with any component of $\Real_{\Refine^\nu(U)}(Y)$ is a bent indentation of that component. \label{item:RC-existence-6}
\end{enumerate}
\end{proposition}

Having this result at our disposal, Theorem \ref{theorem:RC-vague} follows from Proposition \ref{prop:RC-existence} and Proposition \ref{prop:flattening-bent-indentation}. 

The notation $\sfRC$ stems from the terminology that $\sfRC$ consists of 'reservoirs' $\mathsf R$ (corresponding to spectral cubes in $\cS_2(\sfRC)$) and 'canals' $\mathsf C$ (corresponding to components of $\cS_\nu(\sfRC)$) connecting reservoirs. The name for complex $\Tr$ refers to terminology that $\Tr$ is a 'transformation' of $Y$. The preference function $\rho$ indicates the 'side' of $q\in Y^{[n-1]}$ in which we locate the complex.

In this section, we assume -- for the constructions of $\sfRC$ and $\Tr$ -- that we have made the following choices, which we record as standing assumptions.

\begin{standing}
\label{standing:rho-T}\index{Standing assumptions on $(U;Y, \rho,\cT)$}
Throughout Section \ref{sec:reservoir-canal-system}, we assume that we have fixed 
\begin{itemize}
\item a good cubical $n$-complex $U$ having a flat structure $\sF_U$,
\item an $(n-1)$-subcomplex $Y$ of $U$,
\item a preference function $\rho=\rho_Y\colon Y^{[n-1]} \to U^{[n]}$, 
\item a rooted spanning tree $\cT=\cT_Y$ of $\Gamma(Y)$ and a partial order $<_Y$ in $\cT$ having the root as the unique minimum,
\item the local multiplicity $\mu=\mu(U)$ of $U$, and
\item integers $m, \nu\geq 1$ satisfying $3^\nu \ge 3^{10}m \mu^2$.
\end{itemize}
\end{standing}

We also denote the connected components of $U$ separated by $Y$ as follows. Let $G_1,\ldots, G_r$ be the connected components of $\Gamma(U;Y)$ and, for $j=1,\ldots, r$, let
\[
U_j = \Span_U(G_j).
\]

\begin{remark}
We emphasize that the number $r$ of  connected components of $\Gamma(U;Y)$ depends on the data $(U, Y)$; it has no \emph{a priori} relation with the integers $m$, $\mu$, or $\nu$. The structures constructed below are indexed by $\{1, \ldots, m\}$.
\end{remark}

We begin now the construction of the complexes $\sfRC$ and $\Tr$ by introducing first auxiliary concepts of gates, pre-reservoir-blocks, canal sections, and connectors. The reservoirs and reservoir-canal systems are then finally defined in Section \ref{sec:sub-reservoir-canal-system} and transformations in Section \ref{sec:reservoir-canal-transformation}. The properties of $\sfRC$ and $\Tr$ stated in Proposition \ref{prop:RC-existence} are proved in the course of these sections and summarized in Section \ref{sec:RC-existence-summary}.

\subsection{Gates}
\label{sec:gates}\index{reservoir-canal system!gates}
Let $U$ be a good cubical $n$-complex and assume the Standing assumption \ref{standing:rho-T}. Let  $ \nu(U)=\nu$ be the refinement scale of $U$ from now on.

We fix first model gates in the unit cube $[0,1]^{n-2}$. Since $3^\nu \ge 3^{10}m \mu^2$, we may fix a family 
\[
\mathbf E=\{\zeta_1,\ldots, \zeta_m; \, \zeta_{m+1},\ldots,\zeta_{2m};\, \ldots; \,\zeta_{m (\mu^2-1)+1}\ldots, \zeta_{m\mu^2}\}
\]
of mutually disjoint $(n-2)$-cubes contained in $\Refine^{\nu-2}\left([\frac{4}{9},\frac{5}{9}]^{n-2}\times \{0\}\times \{0\}\right)$. Note that $[\frac{4}{9},\frac{5}{9}]^{n-2}\in \Refine^2([0,1]^{n-2})$, and hence 
\[
\Refine^{\nu-2}\left(\left[\frac{4}{9},\frac{5}{9}\right]^{n-2}\times \{0\}\times \{0\}\right) \subset \Refine^\nu([0,1]^{n-2}\times \{0\}\times \{0\})).
\]
The elements in $\mathbf E$ are called \emph{model gates}.

To define gates for the complex $U$, we use the flat structure $\sF_U$ of $U$ as follows. For each $(n-2)$-cube $e  \in U^{[n-2]}$, we fix an $n$-cube $Q\in U^{[n]}$ that contains $e$. Let $\phi_Q \colon Q\to [0,1]^n$ be the map associated to $Q$ in the flat structure $\sF_U$. By applying an isometry of $[0,1]^n$ if necessary, we may assume that $\phi_Q(e) = [0,1]^{n-2}\times \{0\}\times \{0\}$. We call  each 
\[
\zeta_{e,i}=  \phi_Q^*(\zeta_i),
\]
for $i \in \{1,\ldots, m \mu\}$, a \emph{gate on $e$}, and let 
\[
\sfE(e)= \{\zeta_{e,1},\ldots, \zeta_{e, m}; \, \zeta_{e, m+1},\ldots,\zeta_{e, 2m};\, \ldots; \,\zeta_{e,(\mu^2-1)m+1}\ldots, \zeta_{e,m\mu^2}\}.
\]

Note that gates in $\sfE(e)$ are contained in $\Refine^{\nu-2}(c^2(e))\subset \Refine^\nu(e)$, where $c^2(e) \in \Refine^2(e)$ is the center cube of the center cube $c(e)$ of $e$. Here, we have suppressed the dependency on the flat structure in the notations.

We denote 
\[
\sfE(U) = \bigcup_{e\in U^{[n-2]}} \sfE(e) \subset \Refine^\nu(U)
\]
the collection of \emph{all gates of $U$}. We emphasize that, despite the notation, the gates of $U$ are not elements of $U$ but elements of $\Refine^\nu(U)$.

Since the model gates are mutually disjoint $(n-2)$-cubes in the refinement $\Refine^\nu([0,1]^{n-2}\times \{0\}\times \{0\})$, the stars of gates in $\Refine^\nu(U)$ are mutually disjoint in the following sense. We omit a trivial argument.

\begin{lemma}
Let $\zeta$ and $\zeta'$ be gates in $U$, $\zeta\ne \zeta'$. Then 
\[
\Star_{\Refine^\nu(U)}(\zeta) \cap \Star_{\Refine^\nu(U)}(\zeta') = \emptyset.
\]
\end{lemma}

We distribute now, for each $(n-2)$-cube $e \in U^{[n-2]}$, the gates in $\sfE(e)$ among all pairs $\{q,q'\}$ of $(n-1)$-cubes in $\Star_U(e)^{[n-1]}$; note that for such pair, we have $q\cap q'=e$. For this, we fix for each $e\in U^{[n-2]}$, an injective labeling 
\[
\kappa_e \colon \{ \{q,q'\} \colon q, q' \in \Star_U(e)^{[n-1]}, q\cap q'=e \} \to \{1,\ldots, \mu^2\},
\]
and denote, for each $i\in \{1,\ldots, m\}$,
\[
\zeta_{e,\{q,q'\},i} = \zeta_{e,(\kappa_e(\{q,q'\})-1)m+i}\in \sfE(e).
\]
Let 
\[
\kappa(U) = \{\kappa_e \colon e\in U^{[n-2]}\}
\]
be the entire collection of labels associated to $U^{[n-2]}$.

\subsection{Pre-reservoir-blocks on $Y$}
\label{sec:pre-reservoir-block}

We begin by fixing a family of model reservoirs $\mathbf{R}_1, \ldots, \mathbf{R}_m$ in the unit $n$-cube $[0,1]^n$. For $i=1,\ldots, m$, let
\[
D_i= \left[\frac{4}{9}+\frac{i-1}{3^\nu}, \frac{5}{9}-\frac{i-1}{3^\nu}\right]^{n-1}   \times \left[0,\frac{1}{9}-\frac{i-1}{3^\nu}\right] \subset [0,1]^n
\]
be an $n$-dimensional rectangle in $[0,1]^n$. Rectangles $D_i$ are cubical in the sense that both $\Refine^\nu([0,1]^n)|_{D_i}$ and $\Refine^\nu([0,1]^n)|_{\cl(D_i\setminus D_{i+1})}$ are well-defined complexes for each $i$.  We set $\sfR_i\subset \Refine^\nu([0,1]^n)$, $i=1,\ldots, m$, to be the subcomplex, whose space is $\cl(D_i\setminus D_{i+1})$, that is,  
\[
\mathbf R_i = \Span_K(\{Q\in \Refine^\nu(U)^{[n]}\colon Q\subset \cl(D_i\setminus D_{i-1})\}),
\]
and $D_0 = \emptyset$.  Note that each $|\mathbf R_i|$ is an $n$-cell, and $\Gamma(\sfR_i)$ is connected. We call the cubical complexes $\sfR_i$ \emph{pre-reservoir-blocks}.

To construct pre-reservoir-blocks on $Y$, let $q\in  Y^{[n-1]}$ and  $Q = \rho(q)$. By applying an additional isometry of $[0,1]^n$ if necessary, we may assume that the map $\phi_Q \colon Q\to [0,1]^n$  associated to $Q$ in the flat structure $\sF_U$ has the property that $\phi_Q(q) = [0,1]^{n-1}\times \{0\}$. By the definition of the refinements, we have that $\Refine^\nu(q) = \phi_Q^*(\Refine^\nu([0,1]^n)$. Thus the subcomplexes 
\[
\sfR_{q,i} = \phi_Q^*(\mathbf R_i) \subset \Refine^\nu(Q)
\]
are well-defined. We call subcomplexes $\sfR_{q,1},\ldots, \sfR_{q,m}$ \emph{pre-reservoir-blocks adjacent to $q$}.   The name stems from the fact that \index{reservoir-canal-system!pre-reservoir block}\index{$\sfR_{q,i}$}
\[
\sfR_{q,i} \cap \Refine^\nu(q) = \phi_Q^*(\sfR_i \cap \Refine^\nu([0,1]^{n-1}\times \{0\}))
\]
is an adjacently connected cubical complex contained in $|q|$.

We call the union 
\[
\sfR_q =\bigcup_{i=1}^m \sfR_{q,i}
\]
a \emph{pre-reservoir-cube over $q$}, and observe that it is a refinement of a cube in $\Refine^n(Q)$.
\index{$\sfR_q$}

\subsection{Canal sections}
\label{sec:canal-section}

We use now the spanning tree $\cT$ to define, for each $q\in Y^{[n-1]}$, canal sections on $q$, which connect pre-reservoir-blocks to the stars of the gates. See Figure \ref{fig:Reservoir_Canal_Single} for an illustration.  

\begin{figure}[h!]
\begin{overpic}[scale=.56,unit=1mm]{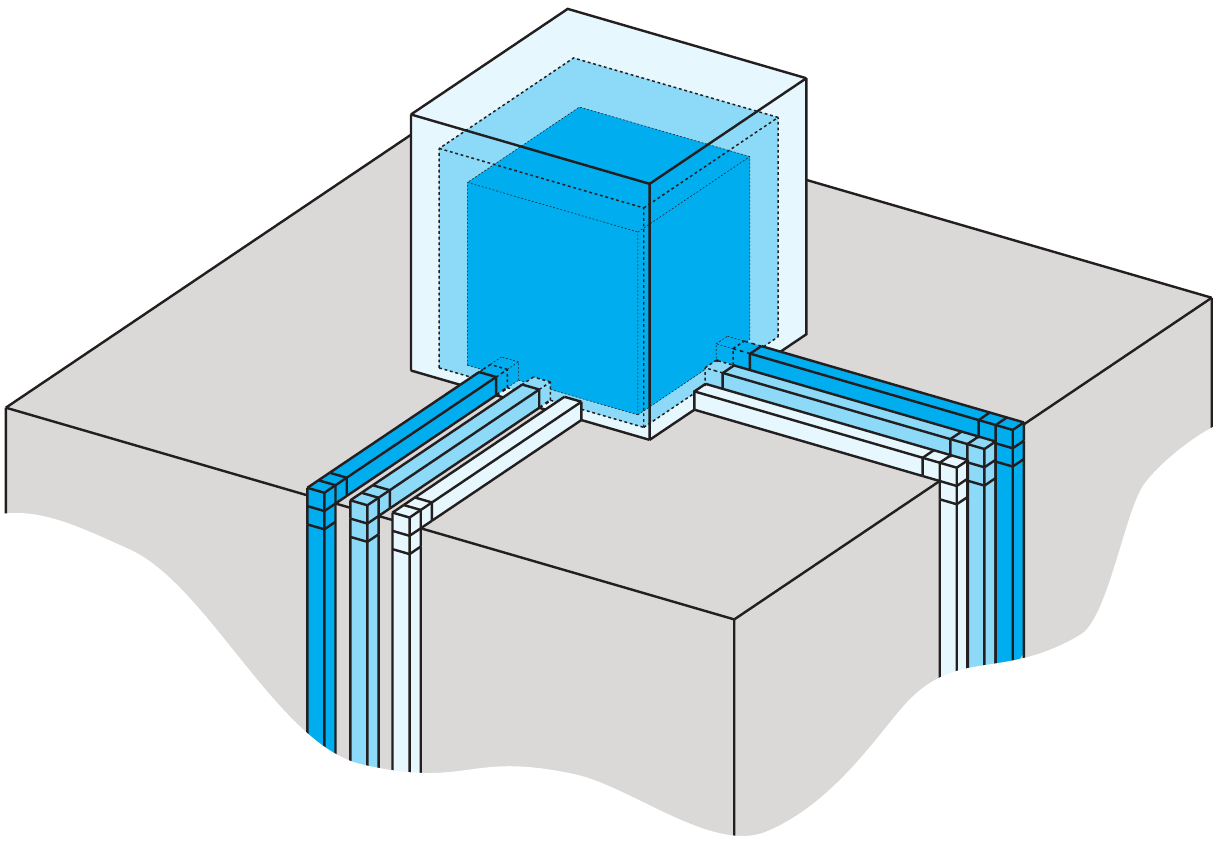}% add:grid
\put(50,75){\tiny$\sfR_{q,1}$}
\put(50,70){\tiny$\sfR_{q,2}$}
\put(21,47){\tiny$q$}
\put(11,33){\tiny$q'$}
\put(80,20){\tiny$q''$}
\put(53,55){\tiny$\sfR_{q,3}$}
\put(38,35){\tiny$\sfC_{\{q,q'\},2}$}
\put(28,41){\tiny$\sfC_{\{q,q'\},3}$}
\put(40,24){\tiny$\sfC_{\{q,q'\},1}$}
\end{overpic}
\caption{Pre-reservoir-blocks and canal stretches over an $(n-1)$-cube $q\in  Y$ (not in scale).}
\label{fig:Reservoir_Canal_Single}
\end{figure}

Let $q\in Y^{[n-1]}$. For each edge $\{q,q'\}$ in $\cT$ and each index $i\in \{1,\ldots, m\}$, let $\sfC_{q,\{q,q'\},i}$ be the unique minimal $n$-subcomplex of $\Refine^\nu(\rho(q))$ having the following properties: \index{reservoir-canal system!canal section} \index{$\sfC_{q,\{q,q'\},i}$}
\begin{enumerate}
\item $\sfC_{q,\{q,q'\},i} \cap \sfR_{q,i}$ is an $(n-1)$-cube and 
\item $\sfC_{q,\{q,q'\},i}\cap \Star_{\Refine^\nu(U)}(\zeta_{q\cap q', \{q,q'\},i})$ is an $(n-1)$-cube. 
\end{enumerate}
We call $\sfC_{q,\{q,q'\}, i}$ \emph{a canal section connecting the pre-reservoir-block $\sfR_{q,i}$ to the star of gate $\zeta_{q\cap q', \{q,q'\},i}$}.
Let 
\[
\sfC_{q,i} = \bigcup_{\{q,q'\}\in \cT} \sfC_{q,\{q,q'\},i}.
\]

The following two observations follow immediately from the minimality and the fact that the complexes $\sfC_{q,i}$ do not meet the boundary of $q$.

\begin{lemma}
The complex $\sfC_{q,i} \subset \Star_{\Refine^\nu(U)}(\Refine^\nu(q))$. In fact, for each $\widehat Q\in (\sfC_{q,\{q,q'\},i})^{[n]}$, the intersection $\widehat Q \cap \Refine^\nu(q)$ is an $(n-1)$-cube.
\end{lemma}

\begin{lemma}
Complexes $\sfC_{q,i}$ and $\sfC_{q',i'}$ do not meet unless $q=q'$ and $i= i'$. Moreover, for $q\in Y^{[n-1]}$ and $i\in \{1,\ldots, m\}$, subcomplexes $\sfC_{q,\{q,q'\},i}$, $\{q,q'\}\in \cT$, are connected components of $\sfC_{q,i}$.
\end{lemma}

Due to the nested nature of the pre-reservoir-blocks, for $i'< i$, each $\sfC_{q,\{q,q'\},i}$ meets the pre-reservoir-blocks $\sfR_{q,i'}$ in an $n$-cube. 

\begin{lemma}
\label{lemma:canal-reservoir-intersection}
Let $q\in Y^{[n-1]}$, $\{q,q'\} \in \cT$, and $i,i'\in \{1,\ldots,m\}$. Then $\sfC_{q,\{q,q'\},i}$ meets $\sfR_{q,i'}$ in an $n$-cell if and only if  $1\leq i'< i\leq m$.
\end{lemma}

\begin{lemma}
For each $q\in Y^{[n-1]}$ and $i\in \{1,\ldots, m\}$, the cubical complex $\sfR_{q,i} \cup \sfC_{q,i}$ is adjacently connected.
\end{lemma}

\begin{proof}
For each $\{q,q'\}\in \cT$, the complex $\sfC_{q,\{q,q'\},i}$ meets the pre-reservoir-block $\sfR_{q,i}$ in an $(n-1)$-cube. Since each $\sfC_{q,\{q,q'\},i}$, and $\sfR_{q,i}$, are adjacently connected, their union is adjacently connected.
\end{proof}

\subsection{Connectors}\label{sec:connector}

We define now, for each edge $\{q,q'\}$ in $\cT$ a subcomplex $\sfJ_{\{q,q'\},i}$, which connects complexes $\sfR_{q,i}\cup \sfC_{q,i}$ and $\sfR_{q',i}\cup \sfC_{q',i}$. We have two cases, analogous to partial-star-indentations (Definition \ref{def:partial-star-indentation}), depending on the choice of preference cubes $\rho(q)$ and $\rho(q')$.

\begin{definition}
\label{def:connector} \index{reservoir-canal system!connector} \index{$\sfJ_{\{q,q'\},i}$}

Let $\{q,q'\}\in \cT$ be an edge and  $i\in \{1,\ldots, m\}$ be an index. We define \emph{a connector $\sfJ_{\{q,q'\},i}\subset \Refine^\nu(K)$  connecting $\sfR_{q,i}\cup \sfC_{q,i}$ and $\sfR_{q',i}\cup \sfC_{q',i}$ over gate $\zeta_{q\cap q', \{q,q'\},i}$} in two separate cases. 
\begin{itemize}
\item [Case 1.]  
For $\rho(q)\neq \rho(q')$, we set $\sfJ_{\{q,q'\},i}$ to be a minimal adjacently connected $n$-subcomplex of  $\Star_{\Refine^{\nu}(U)}(\zeta_{q\cap q', \{q,q'\},i})$ for which 
\[
\Star_{\Refine^\nu(U)}(\zeta_{q\cap q', \{q,q'\},i})\cap \left( \Refine^\nu(\rho(q)) \cup \Refine^\nu(\rho(q'))\right) \subset \sfJ_{\{q,q'\},i}.
\]
\item[Case 2.] For $\rho(q) = \rho(q')$, we set $\sfJ_{\{q,q'\},i} = \Star_{\Refine^\nu(U)}(\zeta^\op_{q\cap q', \{q,q'\},i})$, where $\zeta^\op_{q\cap q', \{q,q'\},i}$ is the edge opposite to $\zeta_{q\cap q', \{q,q'\},i}$ in the unique $n$-cube of $\Refine^\nu(\rho(q))$ containing $\zeta_{q\cap q', \{q,q'\},i}$.
\end{itemize}
\end{definition}

For a connector in case 2, let $\sfQ_{\{q,q'\},i}$ be the unique $n$-cube in $\sfJ_{\{q,q'\},i}$, which does not meet $\partial \rho(q)$; we call $\sfQ_{\{q,q'\},i}$ the \emph{spare cube in connector}. 
\index{reservoir-canal system!spare cube in connector} \index{$\sfQ_{\{q,q'\},i}$}

We make three remarks.

\begin{remark}
The connectors in Cases 1 and 2 are partial-star-indentations of types 1 and 2 at $\zeta_{q\cap q', \{q,q'\},i}$ in $K$, respectively.
\end{remark}

\begin{remark}[Adjustment of canal sections when $\rho(q)=\rho(q')$]
\label{rmk:canal-section-adjust}
If $\rho(q)=\rho(q')$ the connector $\sfJ_{\{q,q'\},i}$ meets  each of the two canal sections, $\sfC_{q,i}$ and $\sfC_{q',i}$, in an $n$-cube. To have an essentially disjoint partition, we replace
$\sfC_{q,i}$ by the subcomplex $\sfC_{q,i}-(\sfJ_{\{q,q'\},i}\cap \sfC_{q,i})$ and $\sfC_{q',i}$ by  $\sfC_{q',i}-(\sfJ_{\{q,q'\},i}\cap \sfC_{q',i})$. For convenience, the notations for these canal sections remain.
\end{remark}

\begin{remark}
If $\rho(q)\ne \rho(q')$, the minimality in the definition ensures that any proper subcomplex of a connector is not a connector.

The intersection $\Star_{\Refine^\nu(U)}(\zeta_{q\cap q', \{q,q'\},i})\cap \left( \Refine^\nu(\rho(q)) \cup \Refine^\nu(\rho(q'))\right)$ has two $n$-cubes. Since  star $\Star_{\Refine^{\nu}(U)}(\zeta_{q\cap q', \{q,q'\},i})$ is cyclic,  we may always fix a connector $\sfJ_{\{q,q'\},i}$ among the at most two possible choices. 
\end{remark}

Two canal sections are joined by a connector into a canal stretch.
\begin{definition}\index{$\sfC_{\{q,q'\},i}$} \index{reservoir-canal system!canal stretch}
For each edge $\{q,q'\}$ in $\cT$ and $i\in \{1,\ldots, m\}$, the complex 
\[
\sfC_{\{q,q'\},i} = \sfC_{q,\{q,q'\},i} \cup \sfJ_{\{q,q'\},i} \cup \sfC_{q',\{q,q'\},i}
\]
is called a \emph{canal stretch of index $i$ over the edge $\{q,q'\}$}.
\end{definition}

\begin{lemma}
\label{lemma:adjacency-canal-stretch}
For each edge $\{q,q'\}\in \cT$ and $i\in \{1,\ldots, m\}$, the canal stretch $\sfC_{\{q,q'\},i}$ and its boundary $\partial \sfC_{\{q,q'\},i}$ are adjacently connected.
\end{lemma}

\begin{proof}

Since the complex $\sfJ_{\{q,q'\},i}$ meets each of the  two complexes $\sfC_{q,\{q,q'\},i}$ and $\sfC_{q,\{q,q'\},i}$ in a face and all three complexes are adjacently connected,  their union
\[
\sfC_{q,\{q,q'\},i} \cup \sfJ_{\{q,q'\},i}  \cup \sfC_{q',\{q,q'\}, i}
\]
is adjacently connected. Thus the first claim follows.

The second claim, i.e.~the connectedness of the graph 
\[
\Gamma( \partial \sfC_{\{q,q'\},i}),
\]
follows from the first claim and the fact the each complex in the union has adjacently connected boundary.
\end{proof}

Before proceeding, we review some notations: canal section $\sfC_{q,\{q,q'\},i}$ connects the pre-reservoir-block $\sfR_{q,i}$ to the star of gate $\zeta_{q\cap q', \{q,q'\},i}$;  the collection $\sfC_{q,i}$ of canal sections connects  the pre-reservoir-block $\sfR_{q,i}$ to all its neighboring gates along $\cT$; the canal stretch $ \sfC_{\{q,q'\},i}$ over the edge $\{q,q'\}$ connects two pre-reservoir-blocks $\sfR_{q,i}$ and $\sfR_{q',i}$.

\subsection{Reservoir and canal systems}
\label{sec:sub-reservoir-canal-system}

We are now ready to define reservoir-canal systems.  We connect, for each index $i\in\{1,\ldots, m\}$,  pre-reservoir-blocks $\sfR_{q,i}$ and $\sfR_{q',i}$ for $\{q,q'\}\in \cT$ using the canal stretch $\sfC_{\{q,q'\},i}$. Since canal stretches of indices $i'>i$ intersect pre-reservoir-blocks of lower index $i$ (Lemma \ref{lemma:canal-reservoir-intersection}), we remove these canal stretches from the pre-reservoir-blocks.  See Figure \ref{fig:Reservoir_Canal_System} for an illustration.

\begin{figure}[h!]
\begin{overpic}[scale=.75,unit=1mm]{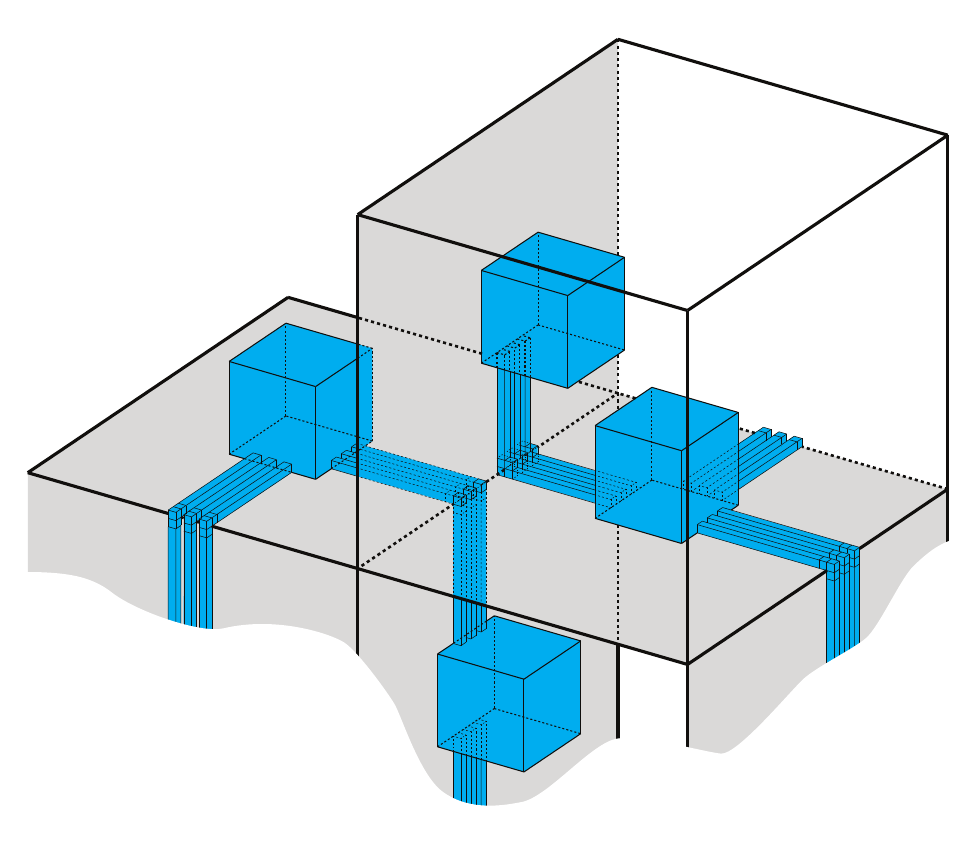}% add:grid
\put(82,30){$q_1$}
\put(50,30){$q_5$}
%\put(52,24){$q_5$}
\put(100,20){ $q_3$}
\put(10,35){ $q_6$}
\put(17,48){ $q_4$}
\put(73,85){ $q_2$}
\put(53,40){$e$}
%\put(51,35){$e$}
\end{overpic}
\caption{Schematic figure of a space of a reservoir-canal system $\sfRC_{\Refine^\nu(U)}(Y)$ - a local picture (not in scale). Cubes in $Y$ are colored in grey.}
\label{fig:Reservoir_Canal_System}
\end{figure}

For each $q\in Y^{[n-1]}$ and $i\in \{1,\ldots, m\}$, we define 
\[
\widehat \sfR_{q,i} = \sfR_{q,i} - \bigcup_{i'>i} \bigcup_{\{q,q'\}\in \cT} \sfC_{q,\{q,q'\},i}
\]
to be the \emph{reservoir adjacent to $q$ of index $i$}.   \index{reservoir-canal system!reservoir} \index{$\widehat \sfR_{q,i}$}

\begin{lemma}
\label{lemma:adjacency-reservoirs}
Complexes $\widehat \sfR_{q,i}$ and $\partial \widehat \sfR_{q,i}$ are adjacently connected.
\end{lemma}
\begin{proof}
Since each canal stretch $\sfC_{\{q,q'\},i'}$ meets $\sfR_{q,i}$ in an $n$-cube if and only if $i' \ge i$, and these $n$-cubes are mutually disjoint, we conclude that the adjacency graphs $\Gamma(\widehat \sfR_{q,i})$ and $\Gamma(\partial \widehat \sfR_{q,i})$ are connected.
\end{proof}

\begin{definition}
\label{def:reservoir-canal-system}
\index{reservoir-canal system}   
\index{$\sfRC_{\Refine^\nu(U)}(Y)_i $}
We call the complex
\[
\sfRC_{\Refine^\nu(U)}(Y)_i = \left(\bigcup_{q\in Y^{[n-1]}} \widehat \sfR_{q,i} \right) \, \cup \left( \bigcup_{\{q,q'\}\in \cT} \sfC_{\{q,q'\},i} \right) \subset \Refine^\nu(U)
\]
the \emph{reservoir-canal system on $Y$ in $\Refine^\nu(U) $ of index $i$, with respect to the spanning tree $\cT$ and the preference function $\rho$}. We also call the subcomplex
\[
\sfRC_{\Refine^\nu(U)}(Y) =\bigcup_{i=1}^m \sfRC_{\Refine^\nu(U)}(Y)_i \subset \Refine^\nu(U)
\]
the \emph{entire reservoir-canal system on $Y$}. 
\end{definition}

We list some properties of reservoir-canal systems. We begin with connectedness properties, which follow from Lemmas \ref{lemma:adjacency-reservoirs} and \ref{lemma:adjacency-canal-stretch}.

\begin{corollary}
\label{cor:adjacency-RC}
Each reservoir-canal system $\sfRC_{\Refine^\nu(U)}(Y)_i$ on $Y$ is adjacently connected and has adjacently connected boundary. Moreover, the union $\sfRC_{\Refine^\nu(U)}(Y)$ is adjacently connected and  $\bigcup_{i=1}^m \partial \sfRC_{\Refine^\nu(U)}(Y)_i$ is also adjacently connected.
\end{corollary}

Second, we note that the spaces of the reservoir-canal systems are $n$-cells. 

\begin{corollary}
\label{cor:RC-is-a-cell}
The space of each $\sfRC_{\Refine^\nu(U)}(Y)_i$ is an $n$-cell. 
\end{corollary}
\begin{proof}
For each $q\in Y^{[n-1]}$, the space of $\widehat \sfR_{q,i}$ is an $n$-cell. Furthermore, for each $\{q,q'\}\in \cT$, the space of $\sfC_{\{q,q'\},i}$ is an $n$-cell and $\widehat \sfR_{q,i}\cap \sfC_{\{q,q'\},i}$ is an $(n-1)$-cube. Thus, for each $\{q,q'\}\in \cT$, the space of $\widehat \sfR_{q,i}\cup \sfC_{\{q,q'\},i}$ is an $n$-cell. Since the reservoirs and canal stretches are otherwise mutually disjoint, the claim follows now by induction in the tree $\cT$. 
\end{proof}

Third, since the complexes $\sfRC_{\Refine^\nu(U)}(Y)_i$ for distinct indices meet only in reservoirs, we have the following. 
\begin{corollary}
\label{cor:reservoir-canal-system-intersection}
For $i\ne i'$, the intersection $\sfRC_{\Refine^\nu(U)}(Y)_i \cap \sfRC_{\Refine^\nu(U)}(Y)_{i'}$ is an $(n-1)$-complex.
\end{corollary}
\begin{proof}
We may assume that $i'>i$. Then the intersection is the union of the intersections $\widehat \sfR_{q,i}\cap \sfC_{\{q,q'\},i}$ for $\{q,q'\}\in \cT$ and the intersections $\widehat \sfR_{q,i}\cap \widehat \sfR_{q,i'}$ for $q\in Y^{[n-1]}$. Since $i'>i$,  the first intersection is always non-empty. The latter intersection is non-empty if only if $|i-i'|=1$. Since each of these intersections is an $(n-1)$-complex, the claim follows. 
\end{proof}

Fourth, we observe that the reservoir-canal system $\sfRC_{\Refine^\nu(U)}(Y)$ is far from the boundary $\partial Y$ in a quantitative sense. 

\begin{lemma}
\label{lemma:border-of-Y} 
The graph distance, with respect to $\Gamma({\Refine^\nu(U)})$, from an $n$-cube in $\sfRC_{\Refine^\nu(U)}(Y)$ to any $n$-cube in ${\Refine^\nu(U)}$ that meets  $\Refine^\nu(\partial Y)$ is  at least  $3^{\nu-2}$. 
\end{lemma}
\begin{proof}
Since  $(n-2)$-cubes on the boundary of $Y$ are not edges of the the tree $\cT$, there are no gates on $\partial Y$. Therefore, there are no canal sections in $\sfRC_{\Refine^\nu(U)}(Y)$ connecting reservoirs to $\Star_{\Refine^\nu(U)}(\partial Y)$. The lemma follows from the fact that reservoirs are located along the center of the center of the cubes $q\in Y^{[n-1]}$. 
\end{proof}

\subsubsection{Partition  $\{ \sfRC_{\Refine^\nu(U)}(Y)_q  \colon q\in Y^{[n-1]}\}$ of $\sfRC_{\Refine^\nu(U)}(Y)$}
\label{sec:RC-partition-UY}

In preparation for a local channeling argument in Section \ref{sec:localization}, we fix a partition of a reservoir-canal system $\sfRC_{\Refine^\nu(U)}(Y)$  with respect to $q\in Y^{[n-1]}$. 

To begin, we divide each canal stretch into two parts by assigning the connector to one of the canal sections according to the partial order in $\cT$. For an edge $\{q, q'\}\in \cT$ with $q<q'$, we let 
\[
\widetilde \sfC_{q,\{q,q'\},i} =\sfC_{q,\{q,q'\},i} \cup J_{\{q,q'\},i} \quad \text{and}\,\,\, \widetilde \sfC_{q',\{q,q'\},i}= \sfC_{q',\{q,q'\},i}.
\]

We set 
\[
\sfRC_{\Refine^\nu(U)}(Y)_{q,i}= \widehat \sfR_{q,i} \cup \bigcup_{\{q,q'\}\in \cT} \widetilde \sfC_{q,\{q,q'\},i},
\]
and
\[
\sfRC_{\Refine^\nu(U)}(Y)_q = \bigcup_{i=1}^m \sfRC_{\Refine^\nu(U)}(Y)_{q,i}.
\]
The collection $\{ \sfRC_{\Refine^\nu(U)}(Y)_q \colon q\in Y^{[n-1]}\}$ is an essential partition of the reservoir-canal system $\sfRC_{\Refine^\nu(U)}(Y)$ in the sense that     \index{$\sfRC_{\Refine^\nu(U)}(Y)_q $} \index{reservoir-canal system!essential partition}
\[
\sfRC_{\Refine^\nu(U)}(Y)=   \bigcup_{q\in Y^{[n-1]}}  \sfRC_{\Refine^\nu(U)}(Y)_q 
\]
and that subcomplexes $\sfRC_{\Refine^\nu(U)}(Y)_q$ do not have common $n$-cubes. We also note that, by Lemmas \ref{lemma:adjacency-reservoirs} and \ref{lemma:adjacency-canal-stretch}, each $\sfRC_{\Refine^\nu(U)}(Y)_q$ is adjacently connected with adjacently connected boundary.

 \subsubsection{Well-located cubes in $\sfRC_{\Refine^\nu(U)}(Y)$}
 
For future references, we also fix the notion of a well-located cube in a reservoir-canal system.

\begin{definition}\label{def:well-located-cube}
An $n$-cube $C$ in the reservoir-canal system $\sfRC_{\Refine^\nu(U)}(Y)$ is \emph{well-located}, if $C$ belongs to a canal section and it is not adjacent to any pre-reservoir-cube or any connector.
\end{definition}
 
 \subsection{Transformation of $Y$}
\label{sec:reservoir-canal-transformation}
\index{separating complex!transformation}\index{reservoir-canal system!transformation}

We use now the reservoir-canal system $\sfRC_{\Refine^\nu(U)}(Y)$ to transform the subcomplex $\Refine^\nu(Y)$ in $\Refine^\nu(U)$, and call the new subcomplex  the \emph{reservoir-canal transformation of $Y$}.

For each $i\in \{1,\ldots, m\}$, fix a spanning tree $\tau_{Y,i} \subset \Gamma(\sfRC_{\Refine^\nu(U)}(Y)_i )$, and take  
\[
\tau_Y = \tau_{Y,1}\cup \cdots \cup \tau_{Y_m}.
\]
The collection of  $(n-1)$-cubes $Q\cap Q'$ representing edges $\{Q,Q'\}$ in $\tau_Y$,
\[
P_{\tau_Y} = \{ Q\cap Q' \in \sfRC_{\Refine^\nu(U)}(Y)^{[n-1]} \colon \{Q,Q'\}\in \tau_Y \},
\]
is called a family of \emph{$\tau_Y$-passages.}
\begin{definition}
\label{def:reservoir-canal-transformation}
The $(n-1)$-subcomplex  \index{$\Tr_{\Refine^\nu(U)}(Y)$} 
\[
\Tr_{\Refine^\nu(U)}(Y) = \Refine^\nu(Y) \cup \left( \sfRC_{\Refine^\nu(U)}(Y)^{(n-1)} -  P_{\tau_Y}\right).
\]
is called the \emph{reservoir-canal transformation of $Y$  
with respect to  $\tau_Y$}.
\end{definition}

\begin{remark}
Although not emphasized in this definition, $\Tr_{\Refine^\nu(U)}(Y)$ depends on Standing assumptions \ref{standing:rho-T}, especially on the spanning tree $\cT$, preference function $\rho$, and the selection of gates in Section \ref{sec:gates}. We omit these dependencies  as well as the choice of the trees $\tau_{Y,i}$ from the notation, as these choices have no role in what follows.
\end{remark}

\begin{remark}
\label{rmk:tau_i}
The spanning tree $\tau_{Y,i}$ is encoded into the reservoir-canal transformation $\Tr_{\Refine^\nu(U)}(Y)$. Indeed, for each $i=1,\ldots, m$, we have that 
\[
\tau_{Y,i} = \Gamma(\sfRC_{\Refine^\nu(U)}(Y)_i; \Tr_{\Refine^\nu(U)}(Y)\cap \sfRC_{\Refine^\nu(U)}(Y)_i).
\]
\end{remark}

We finish this section by recording two basic properties of the reservoir-canal transformation that are used in  the induction steps.

\begin{lemma}
\label{lemma:Y_sfRC-connected}
The transformation $\Tr_{\Refine^\nu(U)}(Y)$ is adjacently connected.
\end{lemma}
\begin{proof}
Since $Y$ is adjacently connected, the refinement $\Refine^\nu(Y)$ is adjacently connected. For each $i=1,\ldots, m$, let $P_i$ be the set of passages induced by the tree $\tau_{Y,i}$. Then $\{P_1,\ldots, P_m\}$ is a partition of the set $P_{\tau_Y}$.

We now check that $\Gamma (\Span_{\Refine^\nu(K)}(\sfRC_{\Refine^\nu(U)}(Y)^{[n-1]} \setminus P_{\tau_Y})$ is connected.
Note that 
\[
\sfRC_{\Refine^\nu(U)}(Y)^{[n-1]} \setminus P_{\tau_Y}  =  \bigcup_{i=1}^m \, (\sfRC_{\Refine^\nu(U)}(Y)_i ^{[n-1]} \setminus P_i).
\]
Let $i \in \{1,\ldots, m\}$. Since $\sfRC_{\Refine^\nu(U)}(Y)_i$ is an adjacently connected $n$-complex, $\Span_{\Refine^\nu(U)}(\sfRC_{\Refine^\nu(U)}(Y)_i ^{[n-1]})$ is an adjacently connected  $(n-1)$-complex. Since all cubes in $P_i$ are contained in the interior of $\sfRC_{\Refine^\nu(U)}(Y)_i $, the graph $\Span_{\Refine^\nu(U)}(\sfRC_{\Refine^\nu(U)}(Y)_i ^{[n-1]} \setminus P_i)$ is adjacently connected. 

Since complexes $\Span_{\Refine^{\nu}(U)}({\sfRC_{\Refine^\nu(U)}(Y)_i}^{[n-1]} \setminus P_i)$ %of $\sfRC_{\Refine^\nu(U)}(Y)$ 
of consecutive indices $i$ meet in single $(n-1)$-cubes, the adjacency graph $\Gamma( \sfRC_{\Refine^\nu(U)}(Y)^{[n-1]} \setminus (P_1\cup \cdots \cup P_m) )$ is connected.

Since the $n$-cubes in the canal stretches in $\sfRC_{\Refine^\nu(U)}(Y)$ have faces in $\Refine^\nu(Y)$ and these faces do not belong to $P_1\cup \cdots \cup P_m$, we have that the graphs $\Gamma(\sfRC_{\Refine^\nu(U)}(Y)^{[n-1]}\setminus (P_1 \cup \cdots \cup P_m))$ and $\Gamma(\Refine^\nu(Y)^{[n-1]})$ belong to the same connected component of $\Gamma(\Tr_{\Refine^\nu(U)}(Y))$. Thus $\Gamma(\Tr_{\Refine^\nu(U)}(Y))$ is connected.
\end{proof}

By Lemma \ref{lemma:border-of-Y}, the boundary of $Y$ remains unchanged  after the reservoir-canal transformation.

\begin{corollary}\label{cor:boundary-of-Y_sfRC}
The $(n-1)$-complexes $\Tr_{\Refine^\nu(U)}(Y)$ and $\Refine^\nu(Y)$ have the same boundary. 
\end{corollary}

\subsection{Receded complexes} 
\label{sec:receded-complex}
%As already mentioned, %anticipated, 
Reservoir-canal systems give rise to bent indentations in realizations $\Real_{\Refine^\nu(U_j)}(\Refine^\nu(Y);\Sigma)$. We now discuss the subcomplexes of $\Refine^\nu(U_j)$ receded by the removal of reservoir-canal system.

Recall that  $G_1,\ldots, G_r$ are the connected components of the cut-graph $\Gamma(U;Y)$, and subcomplexes $U_j = \Span_U(G_j)$ for $j\in \{1,\ldots, r\}$, are the subcomplexes of $U$ spanned by the vertices of $G_j$.

We say that a reservoir-canal system $\sfRC_{\Refine^\nu(U)}(Y)$ \emph{enters $U_j$} if the complex $\sfRC_{\Refine^\nu(U)}(Y)$ has $n$-cubes in $\Refine^\nu(U_j)$, and call, in this case, 
\[
\Rec_{\Refine^\nu(U)}(U_j;Y)= \Refine^\nu(U_j) - \sfRC_{\Refine^\nu(U)}(Y)
\]
a \emph{receded subcomplex of $\Refine^\nu(U_j)$}. 
\index{receded complex} \index{$\Rec_{\Refine^\nu(U)}(U_j;Y)$}
%We also denote
%\[
%\Rec^{\ext}_{\Refine^\nu(U)}(U_j;Y)= \Refine^\nu(U_j) - \sfRC^\tr_{\Refine^\nu(U)}(Y)
%\]

The complex $\Rec_{\Refine^\nu(U)}(U_j;Y)$ is adjacently connected.
We first show a local version of the connectedness property. The statement as well as the proof will be used  in Lemma \ref{lemma:Z_1-core-expanding} to study the relation between cores.

\begin{lemma}
\label{lemma:receded-connected}
For each $j\in \{1,\ldots, r\}$ and each $Q\in U_j^{[n]}$, the adjacency graph $\Gamma(\Refine^\nu(Q)\cap \Rec_{\Refine^\nu(U)}(U_j;Y))$ is connected. 
\end{lemma}

\begin{proof}
Let $Q\in U_j^{[n]}$. If $Q\cap |Y|=\emptyset$, then  $\Refine^\nu(Q) \subset \Rec_{\Refine^\nu(U)}(U_j;Y)$. Thus $\Gamma(\Refine^\nu(Q)\cap \Real(U_j;Y)) = \Gamma(\Refine^\nu(Q))$, which is connected. We assume from now on that $Q\cap |Y|\ne \emptyset$, and consider two cases. 

Suppose first that $Q$ is not in the image of the preference function $\rho$. Then $\Refine^\nu(Q)\cap \sfRC_{\Refine^\nu(U)}(Y)$ consists of $n$-cubes in connectors only. Each of these $n$-cubes meets  $\Refine^\nu(\partial Q)$ in an $(n-1)$-cell. Thus $\Gamma(\Refine^\nu(Q)\cap \Rec(U_j))$ is connected.

Suppose next that $Q$ is the image of one or more $(n-1)$-cubes in $Y$ under the preference function $\rho$. Then the space of $\Refine^\nu(Q)\cap \sfRC_{\Refine^\nu(U)}(Y)$ may be partitioned into essentially disjoint $n$-cells, each of which is either the space of an $n$-cube in $\Refine^2(Q)$ having a face contained in $\partial Q$, or the space of an $n$-cube in $\Refine^\nu(Q)$ having one or more faces contained in $\partial   Q$. Since 
\begin{align*}
&\Refine^\nu(Q) \cap \Rec_{\Refine^\nu(U)}(U_j;Y)\\
&\quad = \Span_{\Refine^\nu(U)} \left( \Refine^\nu(Q) -  (\Refine^\nu(Q) \cap \sfRC_{\Refine^\nu(U)}(Y)) \right),
\end{align*}
each $n$-cube in $P=\Refine^\nu(Q) \cap \Rec_{\Refine^\nu(U)}(U_j;Y)$ may be connected in $P$ to the $n$-cube at the center of $\Refine^\nu(Q)$.  
We conclude that $\Gamma(\Refine^\nu(Q) \cap \Rec(U_j))$ is connected. 
\end{proof}

\begin{corollary}\label{cor:receded-connected}
Each complex $\Rec_{\Refine^\nu(U)}(U_j;Y)$ is adjacently connected.
\end{corollary}

\begin{proof}
Let $\{Q,Q'\} \in \Gamma(U_j)$ and let $q=Q\cap Q'$ be the common face. Since the graphs $\Gamma(\Refine^\nu(Q)\cap \Rec_{\Refine^\nu(U)}(U_j;Y))$ and  $\Gamma(\Refine^\nu(Q')\cap \Rec_{\Refine^\nu(U)}(U_j;Y))$ are connected and can be connected to each other an $(n-1)$-cube in $\Refine^\nu(q)$,   graphs $\Gamma(\Refine^\nu(Q)\cap \Rec_{\Refine^\nu(U)}(U_j;Y))$ and $\Gamma(\Refine^\nu(Q')\cap \Rec_{\Refine^\nu(U)}(U_j;Y))$ belong to the same connected component of the graph $\Gamma(\Rec_{\Refine^\nu(U)}(U_j;Y))$. Since $\Gamma(U_j)$ is connected, we conclude that the graph  $\Gamma(\Rec_{\Refine^\nu(U)}(U_j;Y))$ is connected.
\end{proof}

By Remark \ref{rmk:tau_i} and Corollary \ref{cor:receded-connected}, we conclude that the graph $\Gamma(\Refine^\nu(U);\Tr_{\Refine^\nu(U)}(Y))$ has $r+m$ connected components. We state this observation as a corollary.

\begin{corollary}
\label{cor:reservoir-canal-components}
Let $\Tr_{\Refine^\nu(U)}(Y)$ be the reservoir-canal-transformation of $Y$ in $\Refine^\nu(U)$. Then the connected components of  $\Gamma(\Refine^\nu(U);\Tr_{\Refine^\nu(U)}(Y))$ are 
 \[
\Gamma(\Rec_{\Refine^\nu(U)}(U_1;Y)),\ldots,  \Gamma(\Rec_{\Refine^\nu(U)}(U_r;Y)), \tau_{Y,1},\ldots,\tau_{Y,m}.
\]
\end{corollary}

\subsection{Indentation in realizations}
\label{sec:indentation-lift}

Recall the notion of bent indentation from Section \ref{sec:bent-indentations}. We show now that the lift of a reservoir-canal system to a realization is a bent indentation. Let 
\[
\Dent(U_j,Y)= \pi_{(\Refine^\nu(U_j),\Refine^\nu(Y))}^{-1}\left(\sfRC_{\Refine^\nu(U)}(Y) \cap \Refine^\nu(U_j) \right).
\]
\index{$\Dent_{\Refine^\nu(\Real_{U_j}(Y))}$}

\begin{remark}
Although the reservoir-canal system $\sfRC_{\Refine^\nu(U)}(Y)$ is adjacently connected, its lift $\Dent(U_j,Y)$ to  $\Refine^\nu(\Real_{U_j}(Y))$, $1\leq j\leq r$, is typically not connected. The splitting of the lift of $ \sfRC_{\Refine^\nu(U)}(Y)$ occurs precisely where $Y$ enters the interior of $|\sfRC_{\Refine^\nu(U)}(Y)|$; in fact,  splitting occurs where $Y$ enters the interior of a connector $|\sfJ_{\{q,q'\},i}|,\, i\in \{i,\ldots, m\}$. 
\end{remark}

First we check that the lift of a canal stretch $\sfC_{\{q,q'\},i}$ is a bent indentation, for each $i\in\{1,\ldots,m\}$.

\begin{lemma}
\label{lemma:canal-stretch-as-bent-indentation}
For each $j\in \{1,\ldots, r\}$ and each $i\in\{1,\ldots,m\}$, the lift 
\[
\pi_{(\Refine^\nu(U_j),\Refine^\nu(Y))}^{-1}\left(\sfC_{\{q,q'\},i}\cap \Refine^\nu(U_j) \right)
\]
of the canal stretch $\sfC_{\{q,q'\},i}$, if nonempty, is a bent indentation in the realization $\Refine^\nu(\Real_{U_j}(Y))$. 
\end{lemma}
\begin{proof}
Fix an index $i$, and let $R \subset \sfC_{\{q,q'\},i}$ be the subcomplex whose adjacency graph is a connected component of the cut-graph $\Gamma(\sfC_{\{q,q'\},i};Y)$. Then $R$, if nonempty, has one of the three possible forms:
 \begin{enumerate}
\item  the entire $\sfC_{\{q,q'\},i}$, or  \label{item:P_adjacent_both}
\item the union of a partial star and a canal section in $\sfC_{\{q,q'\},i}$, or \label{item:P_adjacent_one}
\item  a partial star which is not adjacent to either  canal section in $\sfC_{\{q,q'\},i}$.  \label{item:P_isolated}
\end{enumerate}
Moreover, $\Gamma(R)$ is isomorphic to $\Gamma(\pi_{(\Refine^\nu(U_j),\Refine^\nu(Y))}^{-1}\left(R\cap \Refine^\nu(U_j)\right)$.
In all cases, $\pi_{(\Refine^\nu(U_j),\Refine^\nu(Y))}^{-1}\left(R\cap \Refine^\nu(U_j)\right)$, if nonempty, is either a bent indentation or a single partial-star-indentation;  see Definition \ref{def:bent-indentation}. Since disjoint unions of bent indentations are bent indentations, the claim in the lemma follows.
\end{proof}

We now check that lifts of truncated reservoir-canal systems are bent indentations in realizations. Indeed, since pre-reservoir-cubes are refinements of $n$-cubes in $\Refine^{2}(K)$,  we have, by Lemma \ref{lemma:canal-stretch-as-bent-indentation}, the following.

\begin{corollary}
\label{cor:reservoir-canals-as-bent-indentations}
For each $j\in \{1,\ldots,r\}$, the complex $\Dent(U_j;Y)$ is a bent indentation of $\Refine^\nu(\Real_{U_j}(Y))$. 
\end{corollary}

We denote 
\[
\widetilde \Rec_{\Refine^\nu(U)}(U_j;Y) = \pi_{(\Refine^\nu(U_j);\Refine^\nu(Y))}^{-1}(\Rec_{\Refine^\nu(U)}(U_j;Y)).
\]
the lift of the receded complex to the realization,

\medskip

Since $\Dent(U_j;Y)$ is a bent indentation, we have, by Corollary \ref{cor:reservoir-canals-as-bent-indentations} and Proposition \ref{prop:flattening-bent-indentation}, that there exists bilipschitz homeomorphisms from $|\widetilde \Rec_{\Refine^\nu(U)}(U_j;Y)|$  to $|\Real_{U_j}(Y)|$. We record this as a corollary.

\begin{corollary}
\label{cor:rec-bilipschitz}
There exist $L=L(n,K)\ge 1$ and an $L$-bilipschitz homeomorphism $|\widetilde \Rec_{\Refine^\nu(U)}(U_j;Y)|\to |\Real_{U_j}(Y)|$ which is the identity in the complement of the wedge $\Wedge_{\Refine^\nu(\Real_{U_j}(Y))}(\Dent(U_j;Y))$.  
\end{corollary}

\subsection{Proof of Proposition \ref{prop:RC-existence}}
\label{sec:RC-existence-summary}

It suffices to summarize the properties of complexes $\sfRC$ and $\Tr$ stated in Proposition \ref{prop:RC-existence}. 

We note first that $\sfRC = \sfRC_{\Refine^\nu(U)}(Y)$ and $\Tr = \Tr_{\Refine^\nu(U)}(Y)$ are adjacently connected by Corollary \ref{cor:adjacency-RC} and Lemma \ref{lemma:Y_sfRC-connected}. 
%We also note that condition \eqref{item:RC-existence-2} on the spectral cubes of $\sfRC$ follows immediately from the choices of reservoirs and canals. 

Condition \eqref{item:RC-existence-2} is satisfied, since spectral cubes in $\sfRC$ are either pre-reservoir-blocks, cubes in canal sections, or cubes in connectors. The choices of reservoirs and canals are such that pre-reservoir-blocks and cubes in canal sections are contained in $\Refine^\nu(\rho(Y))$ as well as those cubes in connectors which have a face in $Y$.

Condition \eqref{item:RC-existence-3} follows by Corollary \ref{cor:boundary-of-Y_sfRC}. Condition \eqref{item:RC-existence-4} on the number of components of $\Gamma(\sfRC;\Tr\cap \sfRC)$ is guaranteed by the choice of the graph $\tau_Y$. Components of  $\Gamma(\sfRC;\Tr\cap \sfRC)$ are trees by Remark \ref{rmk:tau_i} and their size depends only on $\nu$ and $\# Y^{[n-1]}$ by construction.

Condition \eqref{item:RC-existence-5} is guaranteed by the spanning tree $\cT$ of $\Gamma(Y)$ and the construction of canal sections along $\cT$. Finally, condition \eqref{item:RC-existence-6}, stating that the part of $\widetilde{\sfRC}$ in any component of $\Real_{\Refine^\nu(U)}(Y)$ is a bent indentation of that component, is proved in Corollary \ref{cor:reservoir-canals-as-bent-indentations}. 

This completes a summary of the proof of Proposition \ref{prop:RC-existence}. \qed

%%%%%%%%%%%%%%%%%%%%%%%%%%%%%%%%%%%%%%%%%%%%%%%%%%%%%%%%%%%%%%%%%%%%%%%%%%%%%%%%
%%%%%%%%%%%%%%%%%%%%%%%%%%%%%%%%%%%%%%%%%%%%%%%%%%%%%%%%%%%%%%%%%%%%%%%%%%%%%%%%
%%%%%%%%%%%%%%%%%%%%%%%%%%%%%%%%%%%%%%%%%%%%%%%%%%%%%%%%%%%%%%%%%%%%%%%%%%%%%%%%

\chapter{Separating complexes}
\label{chap:Separating-complexes}

The aim of this chapter is to define a notion central to this article --  separating complexes -- and to show that all good complexes admit separating complexes, possibly after refinement. We also state a theorem on evolution of separating complexes, which serves as the first step in the proof of Quasiregular cobordism theorem.

\section{Definition of separating complexes}
\label{sec:Separating-complexes}

Heuristically, a separating complex $Z$ partitions the underlying complex $K$ into subcomplexes whose realizations resemble collars $\Star_K(\Sigma)$ of the boundary components $\Sigma \subset \partial K$. A controlled sequence $(Z_k)$, called evolution of a separating complexes, of such separating complexes in refinements $\Refine^{\nu k}(K)$ of $K$ yields geometrically uniform partitions of $K$. Here $\nu=\nu(K)$ is a constant associated to complex $K$ called the \emph{refinement scale of $K$}.
\begin{definition}
\label{def:nu}
\index{refinement scale $\nu$}
\index{$\nu(K)$}
Let $K$ be a good cubical $n$-complex with $m$ boundary component, $m\geq 1$. The \emph{refinement scale $\nu(K)\ge 1$ of $K$} is the smallest integer $\nu\ge 1$ satisfying 
\[
3^\nu \ge 3^{10}m \mu(K)^2,
\]
where $\mu(K)$ the multiplicity of $K$.
\end{definition}

For the definition of a separating complex, we refer to Section \ref{sec:adjacency-graph} for the terminologies associated to adjacency graphs and  recall here the following. Let $K$ be a cubical $n$-complex,  $Z\subset K$ an $(n-1)$-subcomplex whose space is in the interior of $|K|$, and $\Sigma$ a boundary component of $K$. Then the graph $\Gamma(K;Z,\Sigma)$ is the connected component of the cut-graph $\Gamma(K;Z)$ for which $\Sigma \subset \Span_K(\Gamma(K;Z,\Sigma))$, the subcomplex $\Comp_K(Z;\Sigma)\subset K$ is 
the $\Sigma$-component of $K$ separated by $Y$, and $\Real_K(Z; \Sigma)$ is the realization of the graph $\Gamma(K;Z,\Sigma)$; see Section \ref{sec:cut-graphs}.  \index{realization $\Real_K(Z;\Sigma)$}

In general, it may occur that $\Sigma\cup \Sigma'\subset \Comp_K(Z;\Sigma) =\Comp_K(Z;\Sigma')$ for two distinct boundary components $\Sigma$ and $\Sigma'$ of $K$. It may also occur that the cut-graph $\Gamma(K;Z)$ has a component whose vertices do not meet $\partial K$. The definition of a separating complex gives us a notion to avoid these issues.

\begin{definition} \index{separating complex}
\label{def:separating-complex}
Let $K$ be a good cubical $n$-complex. An $(n-1)$-subcomplex $Z \subset K$ is a \emph{separating complex of $K$} if 
\begin{enumerate}
\item $Z\cap \Star_K(\partial K) = \emptyset$; \label{item:separating-no-boundary}
\item $\Gamma(Z)$ is connected, that is, $Z$ is adjacently connected as a cubical $(n-1)$-complex;   \label{item:separating-strongly-connected}
\item for each boundary component $\Sigma \subset \partial K$, $\Comp_K(Z;\Sigma)\cap \partial K=\Sigma$, and the realization $\Real_K(Z;\Sigma)$ is carried by a space homeomorphic to $|\Sigma|\times [0,1]$; and  \label{item:separating-mu-1}
\item $K = \bigcup_{\Sigma} \Comp_K(Z;\Sigma)$. \label{item:separating-total}
\end{enumerate}\index{separating complex} 
\end{definition}

\begin{remark}
In condition \eqref{item:separating-mu-1}, we may not replace realization $\Real_K(Z;\Sigma)$ by the complex $\Comp_K(Z;\Sigma)$. In  general, neither spaces $|\Comp_K(Z;\Sigma)|$ and $|\Real_K(Z;\Sigma)|$, nor their interiors, are homeomorphic, respectively. Note, however, that $\interior (|\Comp_K(Z;\Sigma)|\setminus |Z|)$ is homeomorphic to  $\interior |\Real_K(Z;\Sigma)|$. Hence  $\interior (|K|\setminus |Z|)$ is homeomorphic to the interior of $\bigcup_\Sigma |\Real_K(Z;\Sigma)|$.
\end{remark}

In Figure \ref{fig:Separating_complex}, the cut-graph $\Gamma(K;Z)$ has four connected components, and each $\Real_K(Z;\Sigma_i), i=1,\ldots, 4,$ has two boundary components. Since all $2$-cubes adjacent to the $1$-subcomplex $\tau \subset Z$ are cubes in $\Comp_K(Z;\Sigma_1)$, 
subcomplex $\tau$ is contained in $\Comp_K(Z;\Sigma_1)$ but not in $\partial (\Comp_K(Z;\Sigma_1))$.
Thus  $\Comp_K(Z;\Sigma_1)$ has three boundary components.

\begin{figure}[h!]
\begin{overpic}[scale=.45,unit=1mm]{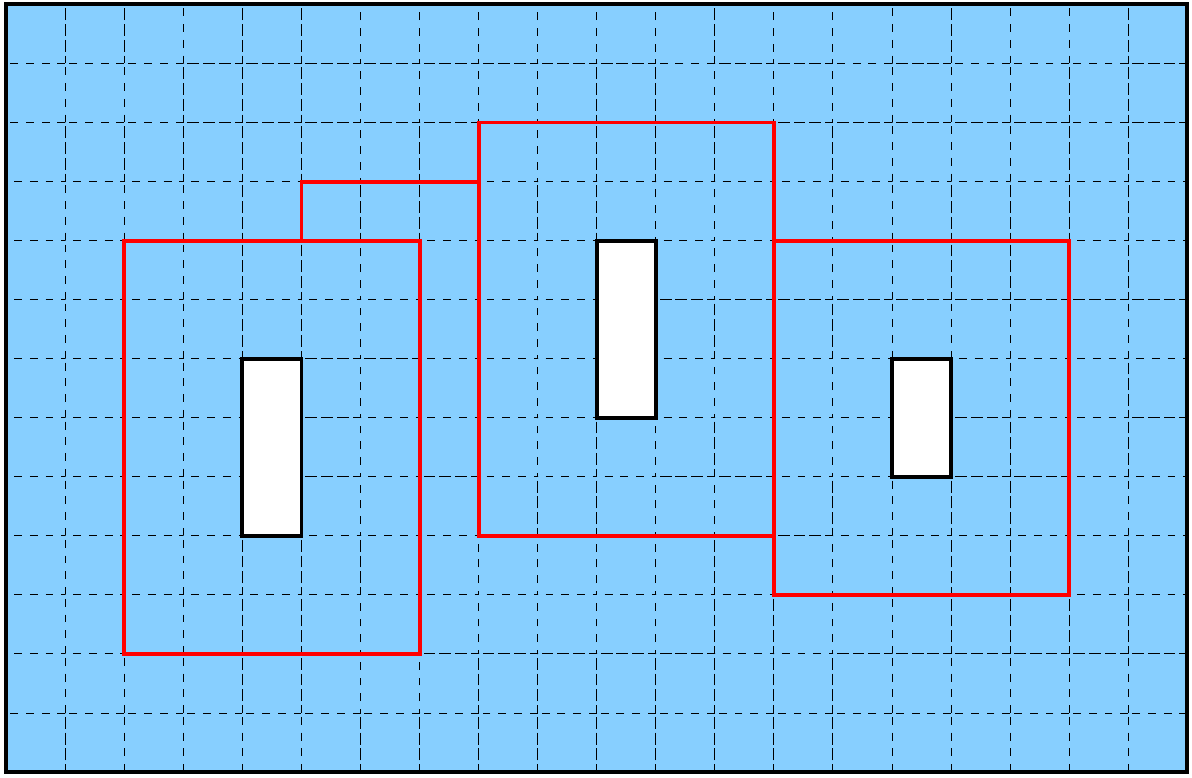}% add:grid
\put(-4,50){\tiny $\Sigma_1$}
\put(15,29){\tiny $\Sigma_2$}
\put(42,38){\tiny $\Sigma_3$}
\put(64.5,29){\tiny $\Sigma_4$}
\put(25,46.5){\tiny $\tau$}
\put(60,11){\tiny $Z$}
\end{overpic}
\caption{Separating complex $Z$ in a complex $K$.}
\label{fig:Separating_complex}
\end{figure}

By condition  \eqref{item:separating-no-boundary}, a separating complex $Z$ of $K$ does not meet the star $\Star_K(\partial K)$ of the boundary $\partial K$ of $K$, thus $\Gamma(\Star_K(\Sigma)) \subset \Gamma(K;Z, \Sigma)$. Therefore, $\Sigma$ and $\Star_K(\Sigma)$ lift isomorphically into the realization $\Real_K(Z;\Sigma)$. In other words, the restriction 
\[
\pi_{(K,Z;\Sigma)}|_{\pi_{(K,Z;\Sigma)}^{-1}(\Star_K(\Sigma))} \colon \pi_{(K,Z;\Sigma)}^{-1}(\Star_K(\Sigma)) \to \Star_K(\Sigma)
\]
is an isomorphism. 

\begin{convention}
From here on, we identify, as we may, subcomplexes $\Sigma\subset \partial K$ and $\Star_K(\Sigma)$ with their preimages $\pi_{(K,Z;\Sigma)}^{-1}(\Sigma)$ and $\pi_{(K,Z;\Sigma)}^{-1}(\Star_K(\Sigma))$ in the realization $\Real_K(Z;\Sigma)$. 
\end{convention}

To this end, we note that separating complexes are stable under refinement. We record this fact as a lemma and omit the straightforward proof. 

\begin{lemma}
\label{lemma:Separating-complex-trivial}
If $Z\subset K^{(n-1)}$ is a separating complex of a cubical $n$-complex $K$, then $\Refine(Z)$ is a separating complex of $\Refine(K)$.
\end{lemma}

\section{Existence of separating complexes}
\label{sec:separating-complex-existence}

We show that -- after refinement -- each good complex admits a separating complex. Recall that subcomplexes of $K$, which do not meet the separating complex $Z \subset K$, are identified with their isomorphic copies in $\Real_K(Z)$.

\begin{theorem}
\label{thm:separating-complex-existence}
Let $n\geq 2$ and let $K$ be a good cubical $n$-complex having at least one boundary component $\Sigma' \subset \partial K$. Then there exists a separating complex $Z$ of $\Refine(K)$ satisfying the following conditions:
\begin{enumerate}
\item for each connected component $\Sigma\subset \partial K$, $\Sigma\ne \Sigma'$, we have that $\Comp_{\Refine(K)}(Z;\Sigma) = \Refine(\Star_K(\Sigma))$, \label{item:separating-1}
\item  there exists a spanning tree $\gamma$ of $\Gamma(\Refine(K - \bigcup_{\Sigma} \Star_K(\Sigma)))$, where the union is taken over all connected components $\Sigma$ of $\partial K$,  for which
\[ 
\Real_{\Refine(K)}(Z;\Sigma') = \Refine(\Star_K(\Sigma')) \cup \Real(\gamma),
\]
and $\Refine(\Star_K(\Sigma')) \cap \Real(\gamma)$ is an $(n-1)$-cube. \label{item:separating-2}
\end{enumerate}
\end{theorem}

\begin{remark}
By condition \eqref{item:separating-2} we further have that $|\Comp_{\Refine(K)}(Z;\Sigma')|\setminus |Z|$ is homeomorphic to $\Sigma' \cup \interior |\Refine(\Star_K(\Sigma'))|$. Thus condition \eqref{item:separating-1} together with Proposition \ref{prop:Riemannian-to-cubical} give a quantitative version of the following topological statement: \emph{on each Riemannin $n$-manifold $M$ with boundary and $n\geq 2$, there exist a closed set $C$ for which $M\setminus C$ is homeomorphic to $\partial M\setminus [0,1)$}.
\end{remark}

\begin{proof}[Proof of Theorem \ref{thm:separating-complex-existence}]
To simplify notations, let $\Sigma_1,\ldots, \Sigma_m$ be the boundary components of $\partial K$ and let $\Sigma'=\Sigma_1$. Since $K$ is a good cubical $n$-complex, the stars $\Star_K(\Sigma_i)$ are mutually disjoint and graphs $\Gamma(\Star_K(\Sigma_i))$ are connected.

Let  $C'$ be the cubical $n$-complex obtained from $K$ by removing all stars $\Star_K(\Sigma_i)$, that is, 
\[
C' =  K - \left( \Star_K(\Sigma_1)\cup \ldots \cup \Star_K(\Sigma_m)\right).
\]
Since the stars $\Star_K(\Sigma_i)$ are mutually disjoint and $\Gamma(K)$ is connected, complex $C'$ is adjacently connected. 
Let $B' = \Star_K(\Sigma_1)$, and
\[
K' = B' \cup C'.
\]
Then $K'$ is adjacently connected and $B'\cap C' =\partial(\Star_K(\Sigma_1))\setminus \Sigma_1$.    

Let $G'\subset \Gamma(C')$ be a spanning tree and let 
\[
P = \{ c(q) \in \Refine(C')^{[n-1]} \colon q\in (C')^{[n-1]}\,\,\, \text{an edge in}\,\, G'\}
\]
be the collection of the center cubes of the edges in $G'$, where $c(a)$ is the center cube of $\Refine(q)$. Let 
\[
Y= \Span_{\Refine(K)} \left(( \bigcup_{Q\in C'^{[n]}}  (\Refine(\partial Q))^{[n-1]}) \setminus \, P\right).
\]
Fix a spanning tree $\gamma \subset \Gamma(\Refine(C'))$ whose edges are not  in ${Y }^{[n-1]}$, and fix an $n$-cube $Q'\in (C')^{[n]}$ with a face $q'$ in $\Star_K(\Sigma_1)$ and let $\omega=c(q')$ be the center cube of $\Refine(q')$. 

We claim that
\[
Z=\Span_{\Refine(K)} \left( (\Refine(C'))^{[n-1]} \setminus (\gamma  \cup \{\omega\})\right)
\] 
is a separating complex in $\Refine(K)$. 

First, since $Z\subset \Refine(C')$, the intersection $Z\cap \Star_{\Refine(K)}(\Refine(\partial K)) =\emptyset$. Connectedness of $\Gamma(Z)$ is immediate from the construction.

Since $\Gamma(\Refine(K');Z)$ is connected, connected components of $\Gamma(\Refine(K);Z)$ are in one-to-one correspondence with the boundary components of $\partial K$. Condition \eqref{item:separating-total} in the definition of separating complex holds.

Finally, for $i=2,\ldots, m$,  the adjacency graph of the realization of $\Gamma(\Refine(\Star_K(\Sigma_i));Z)$ is isomorphic to $\Gamma(\Refine(\Star_K(\Sigma_i));Z)$.
The subgraph 
\[
G= \Gamma(\Refine(B')) \cup \gamma  \cup\{\omega\}
\]  
of $\Gamma(\Refine(K'))$ has a realization $R_{G}= B \cup  C $
and a $G$-quotient 
\[
\pi_{G} \colon R_{G} \to \Refine(K')
\]
for which $\pi_{G}|_B \colon  B\to \Refine(B')$ is an isomorphism, $C$ is a tunnel whose adjacency graph is isomorphic to $\gamma$, and $B \cap C$ is the $(n-1)$-cube $\omega$. Thus, $R_G$ is an extension of $B$ by $C$, and satisfies  Condition  \eqref{item:separating-mu-1} in the definition of separating complex. 

We conclude that $Z$ is a separating complex in $\Refine(K)$ having the properties stated in the proposition.
\end{proof}

We distinguish the particular separating complex constructed in Theorem \ref{thm:separating-complex-existence} in the form to be used in the future.

\begin{remark}
\label{rmk:separating-complex-special}
Let $K$ be a cubical $n$-complex, which is the refinement $\Refine(K_0)$ of a good $n$-complex $K_0$, and $\Sigma'$ be a connected component of $\partial K$. Then there exists a separating complex $\mathcal Z$ in $K$ such that for each component $\Sigma$ of $ \partial K$, $\Sigma\neq \Sigma'$,
\[\Comp_K(\mathcal Z;\Sigma)= C_\Sigma,\]
where $C_\Sigma$ is the collar of $\Sigma$ in $K$ which is isomorphic to $\Sigma \times [0,3]$.
\end{remark}
 
\section{Statement of Evolution theorem}
\label{sec:statement}

We state now, in the form of a theorem, our goal in  constructing a sequence $(Z_k)$ of separating complexes. 
It should be noted that, for the extension theorems in Part \ref{part:QR-extension}, more detailed descriptions of the separating complexes will  be needed. 

The proof of the main theorem of this section (Theorem \ref{theorem:evolution-short}) spans  the next  several sections and is finalized in  Section \ref{sec:evolution}. 

We denote by $\nu = \nu(K)\ge 1$ the refinement scale of a good cubical $n$-complex $K$.

\subsection{Quantitative aims}
\label{sec:evolution-short}

For the statement of Theorem \ref{theorem:evolution-short}, we formalize the aims as follows.

\subsubsection{Relative Wada property}
\label{sec:relative-Wada}
Our first aim may be described in terms  how the complementary components of the separating complexes are progressively intertwined. We call this a relative Wada property.

Given two separating complexes $Z\subset K$ and $Z'\subset \Refine^\nu(K)$, the requirement here is that  for each $q\in Z^{[n-1]}$, every component $\Comp_{\Refine^\nu(K)}(Z';\Sigma)$, $\Sigma\subset \partial K$, enters the union of the two $n$-cubes neighboring $q$. 

 \begin{definition}[Relative Wada property] 
\label{def:Wada}   
\index{Wada property!relative}
Let $K$ be a good cubical $n$-complex, $Z\subset K$ a separating complex, and $\nu=\nu(K)$ the refinement scale of $K$. A separating complex $Z' \subset \Refine^{\nu}(K)$ has the \emph{relative Wada property with respect to $Z$} if,  for each $q \in Z^{[n-1]}$ and each component $\Sigma$ of $\partial K$, there exists an $n$-cube $Q_{\Sigma,q}$ in $ \Comp_{\Refine^{\nu}(K)}(Z'; \Sigma)^{[n]}\cap {\Refine^\nu(Q\cup Q^*)}^{[n]}$, where $Q$ and $Q^*$ are the two $n$-cubes in $K$ having face $q$.
\end{definition}

The name of this property stems from the fact that a sequence $(Z_k)$ of separating complexes having relative Wada property yields a version of the classical Lakes of Wada sharing the same boundary. We discuss this in more detail in Section \ref{sec:Wada},

\subsubsection{Perturbation of a separating complex}
The second aim is a concrete perturbation procedure which yields a geometrical stability for the complementary components of the separating complexes.

\begin{definition}[Indented perturbation]
\index{perturbation!$\lambda$-indented}
\label{def:indented-perturbation}
\index{$\lambda$-tame}
A separating complex $Z'$ of $K'=\Refine^\nu(K)$ of a good cubical $n$-complex $K$, where $\nu = \nu(K)$, is an \emph{indented perturbation of a separating complex $Z$ of $K$} if 
%Let $K$ be a good cubical $n$-complex having at least one boundary component. Let $\nu=\nu(K)\ge 1$ be the refinement scale of $K$. A separating complex $Z'$ of $K'=\Refine^\nu(K)$ is an \emph{indented perturbation of a separating complex $Z$ of $K$} if 
there exist mutually essentially disjoint subcomplexes $T_\Sigma \subset \Refine^\nu(K)$, for boundary components $\Sigma\subset \partial K$,
whose union $T=\bigcup_{\Sigma} T_\Sigma$ has the following properties:
\begin{enumerate}
\item for each $\Sigma$, $\Comp_{K'}(Z';\Sigma) = \left( \Refine^\nu(\Comp_K(Z;\Sigma))-T\right) \cup T_\Sigma$; and \label{item:tame-perturbation-1}
\item the complex $\pi_{(K',Z';\Sigma)}^{-1}( \Refine^\nu(\Comp_K(Z;\Sigma))\, \cap\, T)$ is a bent indentation in the realization $\Real_{K'}(Z';\Sigma)$; \label{item:tame-perturbation-2}
\end{enumerate}
and, for each $\Sigma$, components of $\pi_{(K',Z';\Sigma)}^{-1}(T_\Sigma)$ are tunnels. 

We say that $Z'$ is a \emph{$\lambda$-indented perturbation of $Z$ for some $\lambda \ge 1$}, if $Z'$ is a indented perturbation of $Z$ and, for each component $\Sigma \subset \partial K$, each connected component of $T_\Sigma$ has at most $\lambda$ $n$-cubes.
\end{definition}

\subsubsection{Cores}\label{sec:cores}

Parts of complementary components of separating complexes are undisturbed by the iterative perturbation and intertwining.  We call these parts cores.

\begin{definition}\index{core}\index{$\Core_K(Z;\Sigma)$}
\label{def:core}
Let $K$ a good cubical $n$-complex. For each boundary component $\Sigma \subset \partial K$, we call the subcomplex
\[
\Core_K(Z;\Sigma) = \Comp_K(Z; \Sigma) - \Star_K(Z).
\]
the \emph{core of $\Comp_K(Z;\Sigma)$}.
\end{definition}

\begin{definition}
\label{def:core-expanding}
Let $K$ a good cubical $n$-complex and let $Z\subset K$ be a separating complex. A separating complex $Z' \subset \Refine^\nu(K)$ is \emph{core-expanding with respect to $Z$} if 
\[
|\Core_K(Z;\Sigma)| \subset |\Core_{\Refine^{\nu}(K)}(Z';\Sigma)|.
\]
for each boundary component $\Sigma$ of $K$. Furthermore, we say that a sequence $(Z_k)$ of separating complexes, where $Z_k \subset \Refine^{\nu k}(K)$, is  \emph{core-expanding} if, for each $k\in \N$, $Z_{k+1}$ is core-expanding with respect to $Z_k$.\index{core!expanding}
\end{definition}

For the problem of quasiregular extension in Part \ref{part:QR-extension}, we study bilipschitz classes and quasiconformal classes of the realizations $\Real_K(Z;\Sigma)$ instead of the components $\Comp_K(Z;\Sigma)$. For this reason, the following definitions are given in terms of realizations.

\begin{remark}\label{rmk:cores}
Since the core of the complex $\Comp_K(Z;\Sigma)$ lifts isomorphically into the realization $\Real_K(Z;\Sigma)$, we also consider $\Core_K(Z;\Sigma)$ as a subspace of $\Real_K(Z;\Sigma)$. In other words, cores of $\Real_K(Z;\Sigma)$ and $\Comp_K(Z;\Sigma)$ are so identified.
\end{remark}

\subsubsection{Bilipschitz and quasiconformal stability}
In the following definitions, the metric on the realization $|\Real_{\Refine^{\nu k}(K)}(Z_k; \Sigma)|$, for example, is the lifted  standard metric  induced by the flat metric $d_K$ on $K$; see Definition \ref{def:standard-metric} and  Convention \ref{convention:realization-metric}. 

\begin{definition}
\label{def:bilipschitz-expanding}
Let $K$ be a good cubical $n$-complex, let $Z\subset K$ be a separating complex, and $L\ge 1$. We say that a separating complex $Z'\subset \Refine^\nu(K)$ is an \emph{$L$-bilipschitz perturbation of $Z$}, if $Z'$ is core-expanding with respect to $Z$ and, for each boundary component $\Sigma \subset \partial K$, there exists an $L$-bilipschitz homeomorphism  \index{separating complex!bilipschitz perturbation}
\[
|\Real_{K}(Z; \Sigma)| \to |\Real_{\Refine^\nu(K)}(Z'; \Sigma)|,
\]
which is the identity on $\Core_K(Z; \Sigma)$. We say that  a sequence $(Z_k)$ of separating complexes in $(\Refine^{\nu k}(K))$ is an \emph{$L$-bilipschitz perturbation sequence} if, for each $k\in \N$, $Z_{k+1}$ is an $L$-bilipschitz perturbation of $Z_k$. 
\end{definition}

\begin{definition}\index{separating complex!quasiconformally stable}
\label{def:qc-stable}
Let $K$ a good cubical $n$-complex and $\sfK \ge 1$. We say that a core-expanding sequence $(Z_k)$ is \emph{$\sfK$-quasiconformally stable} if, for each boundary component $\Sigma \subset \partial K$ and each $k \geq 1$, there exists a $\sfK$-quasiconformal homeomorphism 
\[
 |\Real_K(Z_; \Sigma)| \to |\Real_{\Refine^{\nu k}(K)}(Z_{k}; \Sigma)|, 
\]
which is the identity on $|\Core_K(Z; \Sigma)|$.
\end{definition}

The main theorem of this part is the following statement.

\begin{theorem}[Evolution of Separating Complexes]\index{Evolution theorem of separating complexes}
\label{theorem:evolution-short} 
Let $n\geq 3, m\geq 2$, and let $K$ be a good cubical $n$-complex with $m$ boundary components and a flat metric $d_{\sF_K}$; let also $\nu=\nu(K)\ge 1$ be the refinement scale of $K$. Let $Z_0$ be a separating complex of $K$. Then there exist constants $\lambda=\lambda(n)\ge 1$, $L=L(K)\geq 1$ and $\sfK= \sfK(K) \geq 1$, and a sequence $(Z_k)$ of separating complexes, where $Z_k \subset \Refine^{\nu k}(K)$, such that for $k\ge 1$,
\begin{enumerate}
\item each $Z_k$ has the relative Wada property with respect to $Z_{k-1}$, \label{item:evolution-Wada}
\item each $Z_k$ is an $\lambda$-indented perturbation of $Z_{k-1}$, \label{item:evolution-tameness}
\item the sequence $(Z_k)$ is an $L$-bilipschitz perturbation sequence, and \label{item:evolution-Lipschitz}
\item the sequence $(Z_k)$ is $\sfK$-quasiconformally stable. \label{item:evolution-quasiconformal}
\end{enumerate}
\end{theorem} 

The main tension in this theorem lies in the simultaneous validity of the conditions  \eqref{item:evolution-Wada} and \eqref{item:evolution-tameness}. While condition  \eqref{item:evolution-Wada} requires all subcomplexes separated by $Z_k$ getting closer to every single $(n-1)$-cube in $Z_{k-1}$ measured in a uniform scale, condition \eqref{item:evolution-tameness} allows only a controlled perturbation from $Z_{k-1}$ to $Z_k$.

The choices made to obtain these properties, especially the construction of tunnels for \eqref{item:evolution-tameness}, yield as a by-product bilipschitz homeomorphisms, which we may use to obtain conditions \eqref{item:evolution-Lipschitz} and \eqref{item:evolution-quasiconformal}. A localized procedure is used, from second step on, to obtain a uniform control of the sizes of the tunnels, and thereby the quasiconformal stability.

We repeat here  that, for the quasiregular extension theorems in Part \ref{part:QR-extension}, a more detailed description of the construction is needed. 

We complete the proof of Theorem \ref{theorem:evolution-short} in Section \ref{sec:evolution}. The rest of this section is devoted to the discussion of the heuristics behind the construction of $Z_1$ and $Z_2$. Note that the complex $Z_0 = Z$ is given, and its existence follows from Theorem \ref{thm:separating-complex-existence}, possibly after refinement of the underlying complex. The construction of $Z_1$ is given formally in Sections \ref{sec:reservoir-canal-system} and \ref{sec:channeling}, and the construction of $Z_2$ is given in Section \ref{sec:localization}. 

\subsection{Idea of the proof}

Let $Z$ be a separating complex in $K$ and  $\Sigma_1,\ldots, \Sigma_m$ be the boundary components of $K$, that is, connected components of $\partial K$.

\subsubsection{Construction of $Z_1$ from $Z_0 = Z$}
For  $i=1,\ldots, m$, let $K_i = \Comp_K(Z;\Sigma_i)$
%\[
%K_i = \Comp_K(Z;\Sigma_i)
%\]
be the $\Sigma_i$-component separated by $Z$ and $\cL_i = \interior\,(|K_i|\setminus |Z|)$. We view the domains $\cL_i$ as lakes separated by $Z$, and sets $s_i = |K_i|\cap |Z|$ as shorelines of the lakes. Clearly, $|Z|=s_1 \cup \cdots \cup s_m$.

We rearrange these shorelines by fixing first a reservoir-canal system $\sfRC_{\Refine^\nu(K)}(Z)$ and a transformation $\Tr_{\Refine^\nu(K)}(Z)$ of $Z$ as in Proposition \ref{prop:RC-existence}. We create next openings between the components $\sfRC_{\Refine^\nu(K)}(Z)_i$ of $\sfRC_{\Refine^\nu(K)}(Z)$ and the receded subcomplexes $\Rec_{\Refine^\nu(K)}(\Comp_K(Z;\Sigma_i); Z)$ of the complementary components $\Comp_{\Refine^\nu(K)}(Z;\Sigma_i)$. This procedure produces  a new separating complex $Z_1$ in $\Refine^\nu(K)$ with new complementary components $\Comp_{\Refine^\nu(K)}(Z_1;\Sigma_i)$, which are near the corresponding complementary components of $Z$. 
This step is called \emph{channeling} in Section \ref{sec:channeling}.

\subsubsection{Construction of $Z_2$}
For the construction of the second separating complex $Z_2$ and onward, we may not repeat the same construction. Because the spanning tree of $\Gamma(Z_1)$ has graph size roughly $3^{\nu (n-1)}$ times the size of the spanning tree of $\Gamma(Z)$,  we would lose the uniformity in the perturbation condition \eqref{item:evolution-tameness} in Theorem \ref{theorem:evolution-short}.

To remedy this issue, we subdivide the separating complex $Z_1$ into subcomplexes $Z'_q$ of bounded graph size, encoded by the $(n-1)$-cubes $q\in Z^{[n-1]}$. The subcomplexes $Z'_q$ have qualitative properties similar to that of separating complexes, which allow us to repeat the channeling construction locally with respect to each $Z'_q$. Heuristically, we may view this step as perturbing  shorelines of the lakes  locally uniformly. This procedure is called \emph{localized channeling} in Section \ref{sec:localization}.

The cost of this 'localization' is three fold. First, it leads, in terms of complexes, to  multiple channelings instead of a single channeling. Second, an essential partition of the ambient space needs to be carefully defined  in order to address the compatibility of different local constructions near the boundaries of the complexes $Z'_q$. Third, it leads to an inductive process, where the construction of a complex $Z_k$ for $k\ge 2$ depends on the two previous complexes in the sequence, that is, on cubes in $Z_{k-2}$ for coding and on $Z_{k-1}$ for channeling. Nevertheless, the outcome of the local channeling is the separating complex $Z_2\subset \Refine^{2\nu}(K)$ and, after iteration,  the sequence $(Z_k)$ asserted in Theorem \ref{theorem:evolution-short}.

%%%%%%%%%%%%%%%%%%%%%%%%%%%%%%%%%%%%%%%%%%%%%%%%%%%%%%%%%%%%%%%%%%%%%%%%%%%%%%%
%%%%%%%%%%%%%%%%%%%%%%%%%%%%%%%%%%%%%%%%%%%%%%%%%%%%%%%%%%%%%%%%%%%%%%%%%%%%%%%
%%%%%%%%%%%%%%%%%%%%%%%%%%%%%%%%%%%%%%%%%%%%%%%%%%%%%%%%%%%%%%%%%%%%%%%%%%%%%%%

\chapter{Evolution of separating complexes}

The goal of this chapter is to prove the Evolution Theorem (Theorem \ref{theorem:evolution-short}).
\section{Channeling}
\label{sec:channeling}

In this section, we return to the context of a separating complex $Z$ in a cubical $n$-complex, and  make the following assumptions.

\begin{standing}\label{standing:Z}\index{Standing assumptions on $(K;Z)$}
Throughout Sections \ref{sec:channeling}--\ref{sec:evolution}, we denote
\begin{itemize}
\item by $K$ a cubical $n$-complex with $m$ boundary components, having a flat structure $\sF_K$, and admitting a separating complex,
\item by $Z$ a separating complex of $K$, 
\item by $\rho = \rho_Z \colon Z^{[n-1]}\to K^{[n]}$ a preference function on $Z$,
\item by $\cT_Z$ a spanning tree of $\Gamma(Z)$.
\end{itemize}
\end{standing}

\begin{remark}
\label{rmk:preference-cube-special}
For a complex $K=\Refine(K_0)$ and a separating complex  $Z=\mathcal Z$ of $K$ in Remark \ref{rmk:separating-complex-special}, we use a preference function $\rho=\rho_{\mathcal Z}$ for which all preference cubes $\rho(q), q\in {\mathcal Z}^{[n-1]}$, are contained in  $K- \bigcup_\Sigma C_\Sigma$, where $C_\Sigma=\Sigma \times [0,3]$ is a collar and the union is taken over all  connected component $\Sigma$ of $\partial K$.
\end{remark}

To align the complexes separated by $Z$  with the boundary components $\Sigma_1,\ldots, \Sigma_m$ of $K$,  we label the graphs $G_1,\ldots, G_m$ of the cut-graph $\Gamma(K;Z)$ (defined in Section \ref{sec:receded-complex}) so that 
\[ 
\Comp_K(Z;\Sigma_i) = \Span_K(G_i).
\]

We apply now Proposition \ref{prop:RC-existence}  to the pair $(U,Y)=(K, Z)$ and use tree $\cT_Z$ to obtain a reservoir-canal system $\sfRC_{\Refine^\nu(K)}(Z)$ and a transformation $\Tr_{\Refine^\nu(K)}(Z)$ of $Z$. We assume that $\sfRC_{\Refine^\nu(K)}(Z)$ and $\Tr_{\Refine^\nu(K)}(Z)$ are as constructed in Section \ref{sec:reservoir-canal-system}. 

In what follows, we also use other notations associated to these complexes. In particular, 
\[
\Tr_{\Refine^\nu(K)}(Z) = \Refine^\nu(Z)  \cup \left(\sfRC_{\Refine^\nu(K)}(Z)^{(n-1)} - P_{\tau_Z}\right)
\]
where  $P_{\tau_Z}$ are passages associated to the spanning trees $\tau_Z=(\tau_{Z,1},\ldots,\tau_{Z,m})$ of $\left(\Gamma(\sfRC_{\Refine^\nu(K)}(Z)_1), \ldots, \Gamma(\sfRC_{\Refine^\nu(K)}(Z)_m)\right)$.

\subsection{Channeling through openings}
\label{sec:channeling-for-Z-one}
Since we have labeled the complementary components of the separating complex $Z$ with respect to the boundary components of $K$, we change the notation in Section \ref{sec:receded-complex} slightly and denote 
\[
\Rec_{\Refine^\nu(K)}(Z;\Sigma_i)=\Refine^\nu( \Comp_K(Z;\Sigma_i)) - \sfRC_{\Refine^\nu(K)}(Z)
\] 
the \emph{receded complex of $\Refine^\nu( \Comp_K(Z;\Sigma_i))$} for $i \in \{1,\ldots, m\}$. 
\index{$\Rec_{\Refine^\nu(K)}(Z;\Sigma_i)$}

\begin{remark}
With respect to the notations in Section \ref{sec:receded-complex}, we have 
\[
\Rec_{\Refine^\nu(K)}(Z;\Sigma_i) = \Rec_{\Refine^\nu(K)}(\Comp_K(Z;\Sigma_i); Z).
\]
\end{remark}

By Corollary \ref{cor:reservoir-canal-components}, the graph $\Gamma(\Refine^\nu(K);\Tr_{\Refine^\nu(K)}(Z))$ has $2m$ connected components, which are 
\[
\Gamma(\Rec_{\Refine^\nu(K)}(Z;\Sigma_1),\ldots,  \Gamma(\Rec_{\Refine^\nu(K)}(Z;\Sigma_m)), \tau_{Z,1},\ldots,\tau_{Z,m}.
\]
We connect subgraphs $\tau_{Z,i}$ and $\Gamma(\Rec_{\Refine^\nu(K)}(Z;\Sigma_i))$ of $\Gamma(\Refine^\nu(K))$ by adding a connecting edge $\omega_{Z,i}\in {\Refine^\nu(K)}^{[n-1]}$ as follows.

For each $i$, we fix a pair of adjacent $n$-cubes $(C_{Z,i}, \Omega_{Z,i})$ where $C_{Z,i}\in  \sfRC_{\Refine^\nu(K)}(Z)_i$ is well-located and $ \Omega_{Z;i}\in\Rec_{\Refine^\nu(K)}(Z;\Sigma_i)$ according to the following:
\begin{enumerate}
\item choose $\Omega_{Z,i}$ to be a cube contained in the interior of a preference cube if possible; 
\item choose $\Omega_{Z,i}$ to be a cube in $\Refine^\nu(\Comp_K(Z;\Sigma_i))$ otherwise.
\end{enumerate}
Thus,  in the first case, $|C_{Z,i}| \cup |\Omega_{Z,i}|$ is in the interior of a preference cube, and in the second case, $C_{Z,i} \cap \Omega_{Z,i}$ is an $(n-1)$-cube in $\Refine^\nu(Z)$.

\begin{remark}\label{rmk:separating-complex-special-opening}
Suppose that  $Z=\mathcal Z$   is the separating complex in an $n$-complex $K$ as stated in Remark \ref{rmk:separating-complex-special}, and $\Sigma_1=\Sigma'\subset \partial K$ is the boundary component distinguished. The preference function $\rho_\mathcal Z$ chosen in Remark \ref{rmk:preference-cube-special} yields the fact that 
 $|\Omega_{Z,1}| \cap |\Sigma_1\times [0,3]|= \emptyset$ and 
 $\Omega_{Z,i} \subset\Refine^\nu(\Sigma_i\times [0,3])$ for $ i\neq 1$.
\end{remark}

We call the intersection 
\[
\omega_{Z,i}= C_{Z,i}\,  \cap \,\Omega_{Z;i} \in \Tr_{\Refine^\nu(K)}(Z)
\]
an \emph{opening} between  $\sfRC_{\Refine^\nu(K)}(Z)_i$ and $\Rec_{\Refine^\nu(K)}(Z;\Sigma_i)$, and  view $\Omega_{Z,i}$ as a roof cube for $\sfRC_{\Refine^\nu(K)}(Z)_i$. \index{separating complex!channeling!opening} \index{$\omega_{Z,i}$} Now 
\[
G_{Z,i} = \Gamma(\Rec_{\Refine^\nu(K)}(Z;\Sigma_i)) \cup \{ \omega_{Z,i}\} \cup \tau_{Z,i}
\]
is a connected subgraph of $\Gamma(\Refine^\nu(K))$ and 
\[
\Span_{\Refine^\nu(K)}(G_{Z,i}) = \Rec_{\Refine^\nu(K)}(Z;\Sigma_i) \cup \sfRC_{\Refine^\nu(K)}(Z)_i.
\]

\begin{definition}
\label{def:Channeling}
The subcomplex  \index{separating complex!channeling} \index{$\Channel_{\Refine^\nu(K)}(Z) $}
\[
\Channel_{\Refine^\nu(K)}(Z) = \Tr_{\Refine^\nu(K)}(Z) \setminus \{\omega_{Z,1},\ldots, \omega_{Z,m}\} \subset \Refine^\nu(K)
\]
is called a \emph{channeling transformation of $Z$ in $\Refine^\nu(K)$ with respect to the tree $\cT$ and the preference function $\rho\colon Z^{[n-1]}\to K^{[n]}$.}
\end{definition}\index{channeling}

\begin{remark}The channeling transformation
\[
\Channel_{\Refine^\nu(K)}(Z) = \left( \Refine^\nu(Z) \setminus \{\omega_{Z,1},\ldots, \omega_{Z,m}\} \right) \bigcup  \left(\sfRC_{\Refine^\nu(K)}(Z)^{(n-1)} - P_{\tau_Z}\right)
\]
consists of the entire $\Refine^\nu(Z)$ with only the openings removed, plus the $(n-1)$-cubes in $\sfRC_{\Refine^\nu(K)}(Z)$ with a collection of spanning trees in $\sfRC_{\Refine^\nu(K)}(Z)$ removed. Note that two parts in the union have $(n-1)$-cubes in common.
\end{remark}

An immediate consequence of the connectedness of the graphs $G_{Z,i}$ is that the channeling transformation separates $\Refine^\nu(K)$ into exactly $m$ connected components. 

\begin{corollary}
\label{cor:G_Z} The graph  $\Gamma(\Refine^\nu(K);\Channel_{\Refine^\nu(K)}(Z))$ has $m$ connected components, $ G_{Z,1},\ldots,G_{Z,m}$, and the components of $\Refine^\nu(K)$ separated by the channeling transformation $\Channel_{\Refine^\nu(K)}(Z)$ are
\[
\Comp_{\Refine^\nu(K)}(\Channel_{\Refine^\nu(K)}(Z); \Sigma_i) = \Span_{\Refine^\nu(K)}(G_{Z,i})\quad i=1,\ldots, m.
\]
\end{corollary}

\subsection{Structure of subcomplexes separated by channeling}
\newcommand{\Tunnel}{\mathsf{Tunnel}}

As a preliminary step in showing that the channeling $\Channel_{\Refine^\nu(K)}(Z)$ is a separating complex of $\Refine^\nu(K)$, we discuss the structures of the complexes $\Comp_{\Refine^\nu(K)}(\Channel_{\Refine^\nu(K)}(Z); \Sigma_i)$ and $\Real_{\Refine^\nu(K)}(\Channel_{\Refine^\nu(K)}(Z);\Sigma_i)$.

We denote 
\[
Z_1=\Channel_{\Refine^\nu(K)}(Z)
\] 
for simplicity. By the definition of the receded complexes, we have that
\[\Rec_{\Refine^\nu(K)}(Z;\Sigma_i) = \Refine^\nu(\Comp_K(Z;\Sigma_i)) - \sfRC_{\Refine^\nu(K)}(Z)\]
and that
\[
\Comp_{\Refine^\nu(K)}(Z_1;\Sigma_i) = \Rec_{\Refine^\nu(K)}(Z;\Sigma_i) \cup \sfRC_{\Refine^\nu(K)}(Z)_i.
\]
There is no inclusion relation between the complexes $\Comp_{\Refine^\nu(K)}(Z_1;\Sigma_i)$ and $\Comp_{K}(Z;\Sigma_i)$. However, we do have that
\[
\Rec_{\Refine^\nu(K)}(Z;\Sigma_i) \subset \Refine^\nu(\Comp_K(Z;\Sigma_i)) \cap \Comp_{\Refine^\nu(K)}(Z_1;\Sigma_i).
\]

We move now to discuss the realizations, and consider the lifts 
\[
\widetilde \Rec_{\Refine^\nu(K)}(Z;\Sigma_i) = \pi_{(\Refine^\nu(K),Z_1;\Sigma_i)}^{-1}(\Rec_{\Refine^\nu(K)}(Z;\Sigma_i))
\]
and 
\[
\Tunnel_{\Refine^\nu(K)}(Z_1;\Sigma_i) = \pi_{(\Refine^\nu(K),Z_1;\Sigma_i)}^{-1}(\sfRC_{\Refine^\nu(K)}(Z)_i),
\]
of the receded complexes and the reservoir-canal systems, respectively, to  $\Real_{\Refine^\nu(K)}(Z_1;\Sigma_i)$, for $i=1,\ldots, m$.
\index{$\widetilde \Rec_{\Refine^\nu(K)}(Z;\Sigma_i)$}
\index{$\Tunnel_{\Refine^\nu(K)}(Z_1;\Sigma_i)$}

The lift $\widetilde \Rec_{\Refine^\nu(K)}(Z;\Sigma_i)$ is isomorphic to $\Rec_{\Refine^\nu(K)}(Z;\Sigma_i)$, because
$\Rec_{\Refine^\nu(K)}(Z;\Sigma_i) \subset\Comp_{\Refine^\nu(K)}(Z_1;\Sigma_i)$ and none of the $(n-1)$-cubes in $Z_1$ is the common face of two adjacent $n$-cubes in $\Rec_{\Refine^\nu(K)}(Z;\Sigma_i)$.

Complex $\Tunnel_{\Refine^\nu(K)}(Z_1;\Sigma_i)$ is a tunnel, because it is the realization of a tree $\tau_{Z,i} = \Gamma(\sfRC_{\Refine^\nu(K)}(Z)_i;\Channel_{\Refine^\nu(K)}(Z))$.

Let now 
\[\widetilde \omega_{1;Z;i}=\pi_{(\Refine^\nu(K),Z_1;\Sigma_i)}^{-1}(\omega_{Z,i}), \quad i=1,\ldots, m, \] 
be the lifts of the openings $\omega_{Z,i}$ to $\Real_{\Refine^\nu(K)}(Z_1;\Sigma_i)$.  From the construction, we have
\[
\widetilde \omega_{1;Z;i} = \widetilde \Rec_{\Refine^\nu(K)}(Z;\Sigma_i) \cap \Tunnel_{\Refine^\nu(K)}(Z_1;\Sigma_i).
\]
Thus, the realization $\Real_{\Refine^\nu(K)}(Z_1;\Sigma_i)$ is obtained by attaching tunnels $\Tunnel_{\Refine^\nu(K)}(Z_1;\Sigma_i)$ to the receded subcomplex $\widetilde \Rec_{\Refine^\nu(K)}(Z;\Sigma_i)$ at the $(n-1)$-cube $\widetilde \omega_{1;Z;i}$.

We record these structural properties of the realizations in a proposition.

\begin{proposition}
\label{prop:Channel-realization-structure}
Let $Z_1 = \Channel_{\Refine^\nu(K)}(Z)$. Then, for each $i=1,\ldots, m$,
\[
\Real_{\Refine^\nu(K)}(Z_1;\Sigma_i)=\widetilde \Rec_{\Refine^\nu(K)}(Z;\Sigma_i) \cup \Tunnel_{\Refine^\nu(K)}(Z_1;\Sigma_i).
\]
Moreover,
\begin{enumerate}
\item $\widetilde \Rec_{\Refine^\nu(K)}(Z;\Sigma_i)$ is isomorphic to $\Rec_{\Refine^\nu(K)}(Z;\Sigma_i)$, 
\item $\Tunnel_{\Refine^\nu(K)}(Z_1;\Sigma_i)$ is a tunnel, and 
\item $\widetilde \Rec_{\Refine^\nu(K)}(Z;\Sigma_i) \cap \Tunnel_{\Refine^\nu(K)}(Z_1;\Sigma_i)$ is the $(n-1)$-cube $\widetilde \omega_{1;Z;i}$.
\end{enumerate}
\end{proposition}

Next we exhibit the role of  receded complex $\widetilde \Rec_{\Refine^\nu(K)}(Z;\Sigma_i)$ as a mediator in perturbation. 
For this,  we denote
\[
\Dent_{\Refine^\nu(K)}(Z;\Sigma_i) = \pi_{(\Refine^\nu(K), Z_1;\Sigma_i)}^{-1}(\Refine^\nu(\Comp_K(Z;\Sigma_i)) \cap \sfRC_{\Refine^\nu(K)}(Z)),
\]
which is a subcomplex of $\Refine^\nu(\Real_K(Z;\Sigma_i))$. 

Thus, arguing as for Corollary \ref{cor:rec-bilipschitz}, we obtain that there exists a bilipschitz homeomorphism
\[
|\widetilde \Rec_{\Refine^\nu(K)}(Z;\Sigma_i)| \to |\Real_K(Z;\Sigma_i)|.
\]
Since $\Tunnel_{\Refine^\nu(K)}(Z_1;\Sigma_i)$ is a tunnel attached to 
$\widetilde \Rec_{\Refine^\nu(K)}(Z;\Sigma_i)$ at the $(n-)$-cube $\widetilde \omega_{1;Z;i}$,
by Proposition \ref{prop:tunnel-contracting}, there is also a bilipschitz homeomorphism 
\[
|\widetilde \Rec_{\Refine^\nu(K)}(Z;\Sigma_i)|\to |\Real_{\Refine^\nu(K)}(Z_1;\Sigma_i)|,
\]
which is the identity outside the unique $n$-cube in $\widetilde \Rec_{\Refine^\nu(K)}(Z;\Sigma_i)$  having a face $\widetilde \omega_{1;Z;i}$. 

By composing these two maps, we have the following.

\begin{lemma}
\label{lemma:Z_1-bilipschitz-expanding}
There exist a constant $L=L(n,K)\ge 1$ and, for each boundary component $\Sigma_i$ of $K$,  an $L$-bilipschitz homeomorphism
\[
|\Real_{\Refine^\nu(K)}(Z_1;\Sigma_i)| \to |\Real_{K}(Z;\Sigma_i)|,
\]
which is the identity in the complement of  $\Wedge_{\Refine^\nu(K)}(\Dent_{\Refine^\nu(K)}(Z;\Sigma_i)) \cap |\Real_{\Refine^\nu(K)}(Z_1;\Sigma_i)|$, 
in particular, it is identity on $\Core_K(Z;\Sigma_i)$. 
\end{lemma}

In Section \ref{sec:evolution} we will construct a homeomorphism $|\Real_{\Refine^\nu(K)}(Z_1;\Sigma_i)| \to |\Real_K(Z;\Sigma_i)|$ with additional geometric properties, which is suitable for iterations. 

We now verify that $Z_1=\Channel_{\Refine^\nu(K)}(Z)$ is a separating complex of $\Refine^\nu(K)$.

\begin{proposition}
\label{prop:Channel-separating-complex}
The channeling transformation $\Channel_{\Refine^\nu(K)}(Z)$ of the separating complex $Z\subset K$ is a separating complex of $\Refine^\nu(K)$.
\end{proposition}

\begin{proof}
We verify the conditions in Definition \ref{def:separating-complex}. Since $\Channel_{\Refine^\nu(K)}(Z)$ does not enter $\Rec_{\Refine^\nu(K)}(Z;\Sigma_i)$ and $Z$ is a separating complex in $K$, we have that $\Channel_{\Refine^\nu(K)}(Z) \cap \Star_{\Refine^\nu(K)}(\partial \Refine^\nu(K))=\emptyset$. Thus condition \eqref{item:separating-no-boundary} holds. 

We observe, by Lemma \ref{lemma:Y_sfRC-connected}, that the graph $\Gamma(\Tr_{\Refine^\nu(K)}(Z))$ is connected. The removal of mutually disjoint $(n-1)$-cubes $\omega_{Z,1},\ldots, \omega_{Z,m}$ does not change the connectedness. 
Thus $\Gamma\left(\Channel_{\Refine^\nu(K)}(Z)\right)$ is connected, and 
condition  \eqref{item:separating-strongly-connected} holds.

It remains to check  \eqref{item:separating-mu-1} and \eqref{item:separating-total}. Let  $G$ be a connected component of $ \Gamma(\Refine^\nu(K);Z_1)$. By Corollary \ref{cor:G_Z}, $G = G_{Z,i}$ for some $i\in \{1,\ldots, m\}$. 
Thus,   $\Comp(G) = \Comp_{\Refine^\nu(K)}(Z_1;\Sigma_i)$ and $\Real(G) = \Real_{\Refine^\nu(K)}(Z_1;\Sigma_i)$. Since $K= \bigcup_G\Span_{\Refine^\nu(K)}(G)$,  condition \eqref{item:separating-total} holds true. 

From Lemma \ref{lemma:Z_1-bilipschitz-expanding}, the space  $|\Real_{\Refine^\nu(K)}(Z_1;\Sigma_i)|$ is bilipschitz homeomorphic to $|\Real_K(Z;\Sigma_i)|$.
Since $Z$ is a separating complex, $|\Real_K(Z;\Sigma_i)|$ is  homeomorphic to $|\Sigma_i|\times [0,1]$. condition \eqref{item:separating-mu-1} follows.
\end{proof} 

\subsection{Channeling as an evolution step}
We now collect the facts regarding  $\Channel_{\Refine^\nu(K)}(Z)$ as a perturbation of the separating complex $Z$. 

The first  is the relative Wada property.

\begin{proposition}
\label{prop:Channel-Wada}
The separating complex $\Channel_{\Refine^\nu(K)}(Z)$ has the relative Wada property  with respect to $Z$.
\end{proposition}
\begin{proof}
 Let $\rho$ be the preference function and $q\in Z^{[n-1]}$. Then there exists a reservoir contained in $\rho(Q)$, that is, $|\bigcup_{i=1}^m \sfR_{q,i}| \subset \rho(q)$. Thus, for each $i=1,\ldots, m$, we have that $\left( \Refine^\nu(\rho(q)) \cap \sfRC_{\Refine^\nu(K)}(Z)_i\right)^{[n]}\ne \emptyset$. We conclude that $\Channel_{\Refine^\nu(K)}(Z)$ has the relative Wada property with respect to $Z$. 
\end{proof}

The second property is that $\Channel_K(Z)$ is an indented perturbation of $Z$. 

\begin{proposition}
\label{prop:Channel-tame-perturbation}
The separating complex $Z_1=\Channel_{\Refine^\nu(K)}(Z)$ of $\Refine^\nu(K)$ is an $\lambda$-indented perturbation  of $Z$, where $\lambda = \lambda(n,K) \ge 1$.
\end{proposition}

\begin{proof}
Take $T=\sfRC_{\Refine^\nu(K)}(Z)$ and $T_{\Sigma_i}=\sfRC_{\Refine^\nu(K)}(Z)_i$ in Definition \ref{def:indented-perturbation}. The indentation property \eqref{item:tame-perturbation-2} is proved in Corollary \ref{cor:reservoir-canals-as-bent-indentations}, and the fact that complexes $\Tunnel_{\Refine^\nu(K)}(Z_1;\Sigma_i), i=1,\ldots,m,$ are tunnels follows from Proposition \ref{prop:Channel-realization-structure}. The lengths of these tunnels depend only on the size of the tree $\cT_Z$, hence on the refinement scale $\nu=\nu(K)$. Thus $\lambda$ depends only on $n$ and $K$.
\end{proof}

Thirdly, by Lemma \ref{lemma:Z_1-bilipschitz-expanding}, there exists $L=L(n,K)\ge 1$ for which the separating complex $\Channel_{\Refine^\nu(K)}(Z)$ is an $L$-bilipschitz perturbation of $Z$, as in Definition \ref{def:bilipschitz-expanding}.

Finally, we show that $Z_1=\Channel_{\Refine^\nu(K)}(Z)$ is core-expanding with respect to $Z$; recall  Definitions \ref{def:core} and \ref{def:core-expanding}.

\begin{lemma}
\label{lemma:Z_1-core-expanding}
The separating complex $Z_1$ is core-expanding with respect to $Z$, that is, for each $i=1,\ldots, m$, we have that
\[
| \Core_K(Z;\Sigma_i)|\subset |\Core_{\Refine^{\nu}(K)}(Z_1;\Sigma_i)|.
\]
Moreover, each $n$-cube in $P = \Core_{\Refine^{\nu}(K)}(Z_1;\Sigma_i) - \Refine^\nu(\Core_{K}(Z;\Sigma_i))$ may be connected, by a chain of linearly adjacently connected $n$-cubes in $P$ of graph length at most $3^{n \nu}$, to an $n$-cube in $P$ which has  a face in $\Refine^{\nu} \left( \Core_K(Z;\Sigma_i) \cap \Star_K(Z) \right)$. 
\end{lemma}

\begin{proof}
The core-expanding property follows from the inclusions 
\[
\sfRC_{\Refine^\nu(K)}(Z) \subset \Refine^\nu(\Star_K(Z))
\]
and 
\[
\Star_{\Refine^\nu(K)}(\sfRC_{\Refine^\nu(K)}(Z)) \subset \Refine^\nu(\Star_K(Z)).
\]

The difference $\Core_{\Refine^{\nu}(K)}(Z_1;\Sigma_i) - \Refine^\nu(\Core_{K}(Z;\Sigma_i))$ of the cores consists of precisely  those $n$-cubes in $\Rec_{\Refine^\nu(K)}(Z_1;\Sigma_i) \cap \Refine^\nu( \Star_K(Z))$ which do not meet $\partial( \Rec_{\Refine^\nu(K)}(Z_1;\Sigma_i))$.

Let $Q$ be an $n$-cube in
$\Core_{\Refine^{\nu}(K)}(Z_1;\Sigma_i) - \Refine^\nu(\Core_{K}(Z;\Sigma_i))$.
Let $Q'\in \Star_K(Z)^{[n]}$ be the $n$-cube for which $Q\in \Refine^\nu(Q')$ and $q'$ a face of $Q'$ in  $\Star_K(Z)\cap \Core_K(Z)$. A slight modification of the proof of Lemma \ref{lemma:receded-connected} yields a chain 
\[ Q=Q_1, Q_2, \ldots, Q_t\]
of (linearly) adjacently connected $n$-cubes 
in $\Rec_{\Refine^\nu(K)}(Z_1;\Sigma_i) \cap \Refine^\nu(Q')$ which does not meet $\partial( \Rec_{\Refine^\nu(K)}(Z_1;\Sigma_i))$ and for which $Q_p$ has a face in $\Refine^\nu(q')$. Clearly $t \leq 3^{n\nu}$.
\end{proof}

\subsection{Preference function for $\Channel_{\Refine^\nu(K)}(Z)$}
\label{sec:channeling-preference-function}
We finish this section on channeling by defining an induced preference function for $\Channel_{\Refine^\nu(K)}(Z)$, which may be viewed as a perturbation of the preference function $\rho \colon Z^{[n-1]}\to K^{[n]}$ in the construction of $\Channel_{\Refine^\nu(K)}(Z)$. We use this preference function in the iterative construction in the next section.

Note that the $(n-1)$-cubes $q^*$ in $\Channel_{\Refine^\nu(K)}(Z)$ belong to one of the two non-overlapping cases: either $q^*$ is contained in the reservoir-canal system $\sfRC_{\Refine^\nu(K)}(Z)$, or $q^*\in \Refine^\nu(q)$ for some $q\in Z^{[n-1]}$ and $q^*\not \in \sfRC_{\Refine^\nu(K)}(Z)^{[n-1]}$. In the first case, $q^*$ is a face of an $n$-cube in $\sfRC_{\Refine^\nu(K)}(Z)$ and this $n$-cube is in fact unique if $q^*$ belongs to the boundary of a canal stretch. In the second case, $q^*$ is a face of a unique $n$-cube in $\Refine^\nu(\rho(q))$. 
\begin{definition}
\label{def:channeling-preference-function}
\index{preference function!channeling transformation}
\index{$\rho_{\Channel}$}
A preference function \index{$\rho_{\Channel}$}
\[
\rho_{\Channel}\colon {\Channel_{\Refine^\nu(K)}(Z)}^{[n-1]}\to {\Refine^\nu(K)}^{[n]}, 
\]
is a \emph{channeling transformation of the preference function $\rho\colon Z^{[n-1]}\to K^{[n]}$} if 
\begin{enumerate}
\item $\rho_{\Channel}(q^*)\in {\sfRC_{\Refine^\nu(K)}(Z)}^{[n]}$ for $q^* \in {\sfRC_{\Refine^\nu(K)}(Z)}^{[n-1]}$, and
\item $\rho_{\Channel}(q^*) \in \Refine^\nu(\rho(q))$ for $q^* \subset q\in Z^{[n-1]}$ and $q^* \not \in {\sfRC_{\Refine^\nu(K)}(Z)}^{[n-1]}$.
\end{enumerate}
\end{definition}

The preference function $\rho_{\Channel}$ has the following  property. 
\begin{lemma}\label{lemma:preference-function-property}
Let $q^*$ and $q^{**}$ be adjacent $(n-1)$-cubes in $\Channel_{\Refine^\nu(K)}(Z)$. Then there are four (overlapping) possibilities:
\begin{enumerate}
\item $\rho_{\Channel}(q^*)=\rho_{\Channel}(q^{**})$, 
\item $\rho_{\Channel}(q^*)$ and $\rho_{\Channel}(q^{**})$ are adjacent, 
\item  $\rho_{\Channel}(q^*)\neq \rho_{\Channel}(q^{**})$ are not adjacent, but  there exists an $n$-cube $Q$ in union pre-reservoir cubes $\sfR_{\Refine^\nu(K)}(Z)$ which is adjacent to both $\rho_{\Channel}(q^*)$ and $\rho_{\Channel}(q^{**})$ and contains $q^*\cap q^{**}$, 
\item $\rho_{\Channel}(q^*)$ and $\rho_{\Channel}(q^{**})$ belong to the same connector in $\sfRC_{\Refine^\nu(K)}(Z)$.
\end{enumerate}
\end{lemma}
\begin{proof}
We examine the possible location of  $\rho_{\Channel}(q^*)$ according to Definition \ref{def:channeling-preference-function}. It belongs to either a canal section, or  a connector, or a pre-reservoir-block, or it does not belongs to the reservoir-canal system but has a face in an $(n-1)$-cube in $Z$; the same can be said for  $\rho_{\Channel}(q^{**})$. The claim follows by pairing up all possible cases.
\end{proof}

Lemma \ref{lemma:preference-function-property}
provides essential information about connectors in the second channeling step. We return to this discussion in Section \ref{sec:local-reservoir-canal}.

\section{Localized channeling}
\label{sec:localization}
\index{channeling!localized}

Given a separating complex $Z_0=Z$ of a cubical $n$-complex $K$, we have constructed by channeling a separating complex 
\[
Z_1=\Channel_{\Refine^\nu(K)}(Z)
\]
of $\Refine^\nu(K)$ which has the Wada property with respect to $Z_0$. We now construct a separating complex $Z_2$ of $\Refine^{2\nu}(K)$ from $Z_1$ using a \emph{localized} channeling transformation.
 
The reason for localization is  to ensure that all tunnels to be constructed are uniformly bounded in graph length and that the number of the tunnels, from one level to the next, increases geometrically. This is crucial for obtaining the quasiconformal stability and constructing quasiregular extensions.

The method of localization in terms of the separating complexes $Z_1$ and $Z$  yields a general procedure for constructing a sequence of separating complexes. This is discussed in more detail in Section \ref{sec:evolution}.

\begin{remark}
In this and the forthcoming section, to simplify discussion we define only reservoir-canal systems and receded complexes and not their truncated versions. Their truncated versions are defined analogously as before.
\end{remark}

\subsection{Partition of the separating complex  $Z_1$}
\label{sec:Z-localization}\index{localization!separating complex}
Let $Z$ be a separating complex in an $n$-complex,  $\rho \colon Z^{[n-1]}\to K^{[n]}$
a preference function,  and $\cT$ a spanning tree of $\Gamma(Z)$ which satisfy the Standing assumption \ref{standing:Z}. Let $q_0\in Z^{[n-1]}$ be the root of $\cT$, and  $<_Z$ the partial order for which $q_0 < q$ for all $q \in Z^{[n-1]}$. We also assume that $Z_1=\Channel_{\Refine^\nu(K)}(Z)$ is the separating complex in $\Refine^\nu(K)$ constructed in the previous section based on a reservoir-canal system $\sfRC_{\Refine^\nu(K)}(Z)$.

\subsubsection{Localization of $Z_1$ along $Z$}
We begin by dividing the complex $Z_1$ into two subcomplexes
\[
I(Z_1) = Z_1\cap \sfRC_{\Refine^\nu(K)}(Z)
\]
and
\[
II(Z_1) = \Refine^\nu(Z) - \sfRC_{\Refine^\nu(K)}(Z).
\]
Using the partition $\{\sfRC_{\Refine^\nu(K)}(Z)_q, q\in Z^{[n-1]}\}$, defined in Section  \ref{sec:RC-partition-UY}, we further subdivide complexes $I(Z_1)$ and $II(Z_1)$ into subcomplexes
\[
I_q(Z_1) =  Z_1\cap \sfRC_{\Refine^\nu(K)}(Z)_q
\]
and
\[
II_q(Z_1) = \Refine^\nu(q) - \sfRC_{\Refine^\nu(K)}(Z)_q
\]
with $q\in Z^{[n-1]}$. Clearly
\[
I(Z_1) = \bigcup_{q\in Z^{[n-1]}} I_q(Z_1) 
\quad \text{and} \quad
II(Z_1) = \bigcup_{q\in Z^{[n-1]}} II_q(Z_1).
\]
Since complexes $\sfRC_{\Refine^\nu(K)}(Z)_q, q\in Z^{[n-1]},$ do not have $(n-1)$-cubes in common, we have that $\{ I_q(Z_1)^{[n-1]} \colon q\in Z^{[n-1]}\}$ is an essential partition of $I(Z_1)^{[n-1]}$, and that $\{ II_q(Z_1)^{[n-1]} \colon q\in Z^{[n-1]}\}$ is an essential partition of $II(Z_1)^{[n-1]}$

\begin{definition} 
\label{def:localization}\index{separating complex!localization}
The subcomplex 
\[
Z_1(q) = I_q(Z_1) \cup II_q(Z_1)
\]
of $Z_1$ 
is called the \emph{localization of $Z_1$ at $q\in Z^{[n-1]}$}.\index{$Z_1(q)$}
\end{definition}
Clearly, $\{ Z_1(q)^{[n-1]} \colon q\in Z^{[n-1]}\}$ is a partition of $Z_1^{[n-1]}$. We also have that each subcomplex $Z_1(q) $ is adjacently connected.

\begin{lemma}\label{lemma:Z_sfRC_q-connectedness}
For each $q\in Z^{[n-1]}$, the graph $\Gamma(Z_1(q))$ is connected.
\end{lemma}
\begin{proof}
First we observe that each $(n-1)$-cube $q_1$ in $II_q(Z_1)$ may be connected to an $(n-1)$-cube $q_0\in  I_q(Z_1) \cap \Refine^\nu(q)$ through a chain $q_1, \ldots, q_\ell, q_0$ of adjacent $(n-1)$-cubes in which all, except cube $q_0$, are in  $II_q(Z_1)$. Since $I_q(Z_1)$ is adjacently connected (by e.g.~a localized version of the proof of Lemma \ref{lemma:Y_sfRC-connected}), $Z_1(q)$ is adjacently connected.  
\end{proof}

We fix now an ambient $n$-complex $U_{\Refine^\nu(K)}(q)$ of $Z_1(q)$ and perform a local channeling of $Z_1(q)$  in the refinement $\Refine^\nu(U_{\Refine^\nu(K)}(q))$. For each $q\in Z^{[n-1]}$, we take
\[
N_{\Refine^\nu(K)}(q) =\Span_{\Refine^\nu(K)}\left(\{\widehat Q \in \Refine^{\nu}(\rho(q)) \colon \widehat Q \cap q \neq \emptyset \} \right) - \sfRC_{\Refine^\nu(K)}(Z),
\]
and let $\{ \sfRC_{\Refine^\nu(K)}(Z)_q, q\in Z^{[n-1]}\}$ be the partition of 
$ \sfRC_{\Refine^\nu(K)}(Z)$ defined in Section \ref{sec:RC-partition-UY}.

\begin{definition}
\label{def:localized-ambient-space}
The subcomplex  \index{localization!ambient space}
\[
U_{\Refine^\nu(Z)}(q)=N_{\Refine^\nu(K)}(q) \cup \sfRC_{\Refine^\nu(K)}(Z)_q.
\]
is called the \emph{localized ambient space of $Z_1(q)$ in $\Refine^\nu(K)$}. \index{separating complex!localized ambient space}
\end{definition}

The subcomplex $U_{\Refine^\nu(K)}(q)$ is an ambient $n$-complex for $Z_1(q)$ in the following sense. The properties are immediate from the definition and we omit the proof.

\begin{lemma}
\label{lemma:Ch-in-U} 
For each $q\in Z^{[n-1]}$, the subcomplex $U_{\Refine^\nu(K)}(q)$ has the following properties:
\begin{enumerate}
\item $Z_1(q) \subset U_{\Refine^\nu(K)}(q)$, 
\item each $(n-1)$-cube in $Z_1(q)$ is a face of an $n$-cube in $U_{\Refine^\nu(K)}(q)$, and 
\item each $n$-cube in $U_{\Refine^\nu(K)}(q)$ has a face in $Z_1(q)$. 
\end{enumerate}
\end{lemma}

\begin{remark}
\label{rmk:ambient-meet}
In general, complexes $U_{\Refine^\nu(K)}(q)$,  $q\in Z^{[n-1]}$, are not mutually essentially disjoint. In fact, whenever $q$ and $q'$ are adjacent and $\rho(q)=\rho(q')$, we have that
\begin{align*}
U_{\Refine^\nu(K)}(q)^{[n]}\cap U_{\Refine^\nu(K)}(q')^{[n]} &= N_{\Refine^\nu(K)}(q)^{[n]}\cap N_{\Refine^\nu(K)}(q')^{[n]} \\
&=\{\widehat Q \in N_{\Refine^\nu(K)}(q) \colon \widehat Q \cap (q\cap q') \neq \emptyset \}.
\end{align*}
\end{remark}

\subsection{Local reservoir-canal system on $Z_1(q)$}
\label{sec:local-reservoir-canal} \index{localization!reservoir-canal system}
We construct the separating complex $Z_2$ in $\Refine^{2\nu}(K)$ from  $Z_1 = \Channel_{\Refine^\nu(K)}(Z)$ by a local channeling transformation of $Z_1(q)$ in $U_{\Refine^\nu(K)}(q)$ for  $q\in Z^{[n-1]}$.

The localized ambient complex $U_{\Refine^\nu(K)}(q)$ admits a flat structure inherited from $K$, and  $Z_1(q)$ is adjacently connected but is not a separating complex in $U_{\Refine^\nu(K)}(q)$. Before applying the rearrangement process in Section \ref{sec:reservoir-canal-system}, we need to fix a spanning tree $\cT_q$ and define on  $Z_1(q)$ a preference function $\rho_q$  satisfying the Standing assumptions \ref{standing:rho-T}.

Regarding the spanning trees, we set the following.
\begin{notation}
We fix for each $q\in Z^{[n-1]}$, a spanning tree $\cT_q$ of $\Gamma(Z_1(q))$ for which none of its edges is contained in the gates $\zeta_{q\cap q',\{q,q'\},i}$ for any  $\{q,q'\} \in\cT$ and $i\in \{1,\ldots, m\}$. 
\end{notation}

In other words, none of the edges of $\cT_q$ are the refinements of the edges of tree $\cT$ in the previous step. With respect to this tree, the preference cubes associated to an edge have the properties stated in Lemma \ref{lemma:preference-function-property}.
 
\begin{lemma}
\label{lemma:tree-size} \index{$\cT_q$}
There exists a constant $C=C(n, \nu)\geq 1$ for which the size of all spanning trees $\cT_q$, for $q\in Z^{[n-1]}$, are bounded above by $C$.
\end{lemma}
\begin{proof}
This estimate follows from the fact that the number of $(n-1)$-cubes in ${Z_1(q)}$ is determined by the refinement scale, and the number of $(n-1)$-cubes in  reservoirs, canal stretches and stars of gates.
\end{proof}

Regarding the preference functions, we take, for each $q\in Z^{[n-1]}$, the preference function $\rho_q \colon Z_1(q)^{[n-1]}\to U_{\Refine^\nu(K)}(q)^{[n]}$ to be the restriction of  $\rho_\Channel$ defined in Section \ref{sec:channeling-preference-function}, that is,
\[
\rho_q = \rho_{\Channel}|_{Z_1(q)} \colon Z_1(q)^{[n-1]} \to \Refine^\nu(K)^{[n]}.
\]
The image of $\rho_q$ is contained in $U_{\Refine^\nu(K)}(q)^{[n]}$ from the choice of the preference cubes in the definition of $\rho_{\Channel}$. \index{$\rho_q$} 

Following the procedure in Section \ref{sec:reservoir-canal-system}, with data 
\[
(U,Y, \rho,\cT)=\left(U_{\Refine^\nu(K)}(q),Z_1(q), \rho_q,\cT_q \right),
\]
we may construct a local reservoir-canal system 
\begin{align*}
&\sfRC_{\Refine^\nu(U_{\Refine^\nu(K)}(q))}(Z_1(q))\\
&\quad=\left(\sfRC_{\Refine^\nu(U_{\Refine^\nu(K)}(q))}(Z_1(q))_1 ,\ldots, \sfRC_{\Refine^\nu(U_{\Refine^\nu(K)}(q))}(Z_1(q))_m \right)
\end{align*}
in $\Refine^\nu(U_{\Refine^\nu(K)}(q))$.
 
Since $\Refine^\nu(U_{\Refine^\nu(K)}(q)) \subset \Refine^{2\nu}(K)$, there is no ambiguity in denoting 
\[
\sfRC_{\Refine^{2\nu}(K)}(Z_1(q)) = \sfRC_{\Refine^\nu(U_{\Refine^\nu(K)}(q))}(Z_1(q)), 
\]
thus 
\[
|\sfRC_{\Refine^{2\nu}(K)}(Z_1(q))|\subset |U_{\Refine^\nu(K)}(q)| \quad \text{for } q\in Z^{[n-1]}. 
\]

\begin{remark}
We emphasize that the construction of $\sfRC_{\Refine^{2\nu}(K)}(Z_1(q))$ involves three refinement stages $K$, $\Refine^\nu(K)$, and $\Refine^{2\nu}(K)$. More precisely, the system $\sfRC_{\Refine^{2\nu}(K)}(Z_1(q))$, which is a subcomplex of $\Refine^{2\nu}(K)$, is built over localizations $Z_1(q)\subset \Refine^\nu(K)$, which are indexed by $(n-1)$-cubes $q\in Z^{[n-1]} \subset K$.
\end{remark}

In view of Definition \ref{def:connector}, Lemma \ref{lemma:preference-function-property}, and the selection of the  tree $\cT_q$ above,  connectors in each reservoir-canal system $\sfRC_{\Refine^{2\nu}(K)}(Z_1(q))_i$ may be selected to have the following property.

\begin{corollary}
\label{cor:rho-Channel-regularity}
Let $q\in Z^{[n-1]}$.
Then there exists a reservoir-canal system  $\sfRC_{\Refine^{2\nu}(K)}(Z_1(q))$ having the property that, for each $i=1,\ldots, m$ and an edge $\{q^*, q^{**}\} \in \cT_q$, a connector $\sfJ_{\{q^*,q^{**}\},i} \subset \sfRC_{\Refine^{2\nu}(K)}(Z_1(q))_i$ is a partial-star consisting of either two, three or four $n$-cubes and satisfies
\[
\sfJ_{\{q^*,q^{**}\},i} \subset \Refine^\nu(\sfRC_{\Refine^\nu(K)}(Z))\, \cup \, \Refine^\nu(\rho_{\Channel}({Z_1(q)})).
\]
\end{corollary}

\begin{notation}
\label{notation:Euclidean-modules}
In these special cases, we may identify each connector $\sfJ_{\{q^*,q^{**}\},i}$ consisting of two $n$-cubes with
\[
\sfJ^2 = \left( \left([-1,0] \times [0,1] \right) \cup \left([0,1]\times [0,1]\right)\right)\times [0,1]^{n-2}.
\]
Similarly we identify each each connector $\sfJ_{\{q^*,q^{**}\},i}$ consisting of three or four $n$-cubes with 
\[
\sfJ^3 = \left( \left( [0,1] \times [0,1]\right) \cup  \left( [0,1]\times [1,2]\right) \cup \left([1,2]\times [0,1]\right)\right) \times [0,1]^{n-2}
\]
or
\[
\sfJ^4 = [0,2]^2 \times [0,1]^{n-2},
\]
respectively.
\end{notation}

As a final observation on local reservoir-canal systems, we note that, by Lemma \ref{lemma:border-of-Y} and the properties of the local preference functions $\rho_q$, local reservoir-canal systems are far from one another in graph distance.

\begin{corollary}\label{cor:RC-far}
Let $q,q'\in Z^{[n-1]}$, $q\ne q'$. Then the graph distance between subcomplexes $\sfRC_{\Refine^{2\nu}(K)}(Z_1(q))$ and $\sfRC_{\Refine^{2\nu}(K)}(Z_1(q'))$ in the adjacency graph $\Gamma(\Refine^{2\nu}(K))$ is at least $3^{\nu-2}$.
\end{corollary}

\subsection{Localized channeling transformation of $Z_1$}
\label{sec:channeling-global} \index{separating complex!localized channeling}
We fix, for each $q\in Z^{[n-1]}$, a collection of passages $P_{Z_1(q)} \subset \sfRC_{\Refine^{2\nu}(K)}(Z_1(q))^{[n-1]}$, 
as in Section \ref{sec:reservoir-canal-transformation}, and call 
\[
\Tr_{\Refine^{2\nu}(K)}(Z_1(q)) = \Refine^\nu({Z_1(q)}) \cup \left(\sfRC_{\Refine^{2\nu}(K)}(Z_1(q))^{(n-1)} -  P_{Z_1(q)}\right)
\]
a \emph{reservoir-canal transformation of $Z_1(q)$}. Here, the collection $P_{Z_1(q)}$ is composed of the edges of a graph 
$\tau_{Z_1(q)}= \tau_{Z_1(q),1}\cup \cdots \cup\tau_{Z_1(q),m}$, where $\tau_{Z_1(q),i} \subset \Gamma(\sfRC_{\Refine^{2\nu}(K)}(Z_1(q))_i)$ is a spanning tree.

Channeling is now constructed as in Section \ref{sec:channeling-for-Z-one} by creating openings between local reservoir-canal systems $\sfRC_{\Refine^{2\nu}(K)}(Z_1(q))_i$,  $i=1,\ldots, m$, and  the local receded complexes 
\begin{align*}
\Rec_{q,i} &= \left(\Refine^{\nu}(U_{\Refine^\nu(K)}(q)) \cap \Refine^\nu(\Comp_{\Refine^\nu(K)}(Z_1;\Sigma_i)) \right) \\
&\quad - \sfRC_{\Refine^{2\nu}(K)}(Z_1(q))
\end{align*}
of  $\Comp_{\Refine^\nu(K)}(Z_1;\Sigma_i)$.

For each $i\in \{1,\ldots, m\}$, we fix a pair $(C_{q,i}, \Omega_{q,i})$ of adjacent $n$-cubes, where $C_{q,i}$ is a well-located cube in  $\sfRC_{\Refine^{2\nu}(K)}(Z_1(q))_i$, and  $\Omega_{q,i}$ is a cube in $ \interior |\sfRC_{\Refine^\nu(K)}(Z)_i| \cap |\Rec_{q,i}|$. The existence of such a pair follows from the fact that $Z_1$ satisfies the relative Wada property with respect to $Z$.

\begin{remark}
\label{rmk:omega-q-i}
Since cubes $\Omega_{q,i}$ are well-located (see Definition \ref{def:well-located-cube}), it follows that
%From the definition of well-located $n$-cubes (Definition \ref{def:well-located-cube}) it follows that
\[
|C_{q,i}| \cup |\Omega_{q,i}| \subset |\sfRC_{\Refine^\nu(K)}(Z)_i| \cap \interior |\rho(q)|.
\] 
\end{remark}

\bigskip

We call the $(n-1)$-cube
\[
\omega_{q,i}= C_{q,i}\,  \cap \,\Omega_{q,i}
\]
an \emph{opening between $\sfRC_{\Refine^{2\nu}(K)}(Z_1(q))_i$ and $\Rec_{q,i}$}, and $\Omega_{q,i}$ a \emph{roof cube for $\sfRC_{\Refine^{2\nu}(K)}(Z_1(q))_i$ in $\Rec_{q,i}$}. By the choice of  $\omega_{q,i}$, the graph
\[
\Gamma(\Rec_{q,i})\cup \tau_{Z_1(q),i} \cup \{\omega_{q,i}\} 
\]
is connected.

The \emph{local channeling transformation of $Z_1(q)$ associated to $q$}  is
\[
\Channel_{\Refine^{2\nu}(K)}(Z_1(q)) = \Tr_{\Refine^{2\nu}(K)}(Z_1(q)) \setminus \bigcup_{i=1}^m \{\omega_{q,i}\}.
\]

The local channeling does not alter the $(n-1)$-cubes in $\Refine^\nu(Z_1(q))$ meeting  the boundary  $\Refine^\nu(Z_1(q))$. Thus it follows from Corollary \ref{cor:RC-far} that Corollary \ref{cor:boundary-of-Y_sfRC} has its counterpart in the local setting.

\begin{corollary}
\label{cor:localization-conditioning}
For $q\in Z^{[n-1]}$, complex $Z_1(q)$ and its channeling transformation $\Channel_{\Refine^{2\nu}(K)}(Z_1(q))$ have the same $(n-2)$-dimensional boundary. In fact, two transformations in $\{\Channel_{\Refine^{2\nu}(K)}(Z_1(q))\colon  q \in Z^{[n-1]}\}$ can only meet at their boundaries. 
\end{corollary}

Together the local channeling transformations form a channeling transformation of $Z_1$. 

\begin{definition}
\label{def:localized-channeling-transformation}
The $(n-1)$-subcomplex 
\[
\Channel_{\Refine^{2\nu}(K)}\left(Z_1; \fL_1 \right) = \bigcup_{q\in Z^{[n-1]}} \Channel_{\Refine^{2\nu}(K)}(Z_1(q)) \subset \Refine^{2\nu}(K)
\]
is called a \emph{localized channeling transformation of  $Z_1$ associated to the family $\fL_1 = \{Z_1(q) \colon q\in Z^{[n-1]}$\}}. 
\end{definition}

We also denote
\[
\sfRC_{\Refine^{2\nu}(K)}(Z_1;\fL_1) = \bigcup_{q\in Z^{[n-1]}} \sfRC_{\Refine^{2\nu}(K)}(Z_1(q))
\]
and 
\[
\sfRC_{\Refine^{2\nu}(K)}(Z_1;\fL_1)_i = \bigcup_{q\in Z^{[n-1]}} \sfRC_{\Refine^{2\nu}(K)}(Z_1(q))_i.
\]

The argument before Definition \ref{def:channeling-preference-function} readily yields a well-defined transformation of the preference function $\rho_1 = \rho_{\Channel}\colon {Z_1}^{[n-1]}\to {\Refine^\nu(K)}^{[n]}$, which will be used for the local channeling in the next step.

\begin{definition}
\label{def:localized-channeling-preference-function}
A preference function
\[
(\rho_1)_{\Channel} \colon {\Channel_{\Refine^{2\nu}(K)}\left(Z_1; \fL_1 \right)}^{[n-1]}\to {\Refine^{2\nu}(K)}^{[n]}
\]
is a \emph{perturbation of $\rho_1 = \rho_{\Channel}\colon {Z_1}^{[n-1]}\to {\Refine^\nu(K)}^{[n]}$} if 
\begin{enumerate}
\item $(\rho_1)_{\Channel}(q^*)\in {\sfRC_{\Refine^{2\nu}(K)}(Z_1;\fL_1)}^{[n]}$ for $q^* \in {\sfRC_{\Refine^{2\nu}(K)}(Z_1;\fL_1)}^{[n-1]}$,
\item $(\rho_1)_{\Channel}(q^*) \in \Refine^\nu(\rho_1(q))$ for $(n-1)$-cube $q^* \subset q\in Z_1^{[n-1]}$ and $q^* \not \in {\sfRC_{\Refine^\nu(K)}(Z_1;\fL_1)}^{[n-1]}$.
\end{enumerate}
\end{definition}

\subsection{Structure of realizations after localized channeling}
\label{sec:Localized-channeling-structure}

Before showing that a localized channeling transformation 
\[
Z_2 = \Channel_{\Refine^{2\nu}(K)}\left(Z_1; \fL_1 \right)
\]
of  $Z_1$ is indeed a separating complex in $\Refine^{2\nu}(K)$,
we discuss, as a preparatory step, the structure of  $\Real_{\Refine^{2\nu}(K)}(Z_2;\Sigma_i)$. 
 
Note first by construction that  
\[
\Comp_{\Refine^{2\nu}(K)}(Z_2;\Sigma_i) = \Rec_{\Refine^{2\nu}(K)}(Z_1;\Sigma_i) \cup \sfRC_{\Refine^{2\nu}(K)}(Z_1;\fL_1)_i,
\]
where 
\[
\Rec_{\Refine^{2\nu}(K)}(Z_1;\Sigma_i) =  \Refine^\nu( \Comp_{\Refine^\nu(K)}(Z_1;\Sigma_i)) - \sfRC_{\Refine^{2\nu}(K)}(Z_1;\fL_1).
\]
Denote by 
\[
\widetilde \Rec_{\Refine^{2\nu}(K)}(Z_1;\Sigma_i) = \pi_{(\Refine^{2\nu}(K),Z_2;\Sigma_i)}^{-1}(\Rec_{\Refine^{2\nu}(K)}(Z_1;\Sigma_i))\]
the lift of the receded complex, and by
\[
\Tunnel_{\Refine^{2\nu}(K)}(Z_2;\Sigma_i) =  \pi_{(\Refine^{2\nu}(K),Z_2;\Sigma_i)}^{-1}(\sfRC_{\Refine^{2\nu}(K)}(Z_1;\fL_1)_i)
\]
the lift of the reservoir-canal system, to $\Real_{\Refine^{2\nu}(K)}(Z_2;\Sigma_i)$.

The reservoir-canal system $\sfRC_{\Refine^{2\nu}(K)}(Z_1;\fL_1)$ is an essentially disjoint union of localized systems $\sfRC_{\Refine^{2\nu}(K)}(Z_1(q)), q\in Z^{[n-1]}$. 
The connected components of 
$\Gamma(\sfRC_{\Refine^{2\nu}(K)}(Z_1(q)); Z_2)$ are trees $\tau_{Z_1(q),1}, \dots, \tau_{Z_1(q),m}$. The connected components of $\Tunnel_{\Refine^{2\nu}(K)}(Z_2;\Sigma_i)$ are the realizations of these trees, hence may be denoted canonically by  $\Tunnel_{\Refine^{2\nu}(K)}(Z_2;\Sigma_i)_q, q\in Z^{[n-1]}$.

For each $q\in Z^{[n-1]}$, the size of $\tau_{Z_1(q),i}$, hence the graph size of  tunnel $\Tunnel_{\Refine^{2\nu}(K)}(Z_2;\Sigma_i)_q$, is determined by the spanning tree $\cT_q$. Tree $\cT_q$ is contained in $Z_1(q)$, hence has graph size  bounded above by a number depending only on $n$ and $\nu$.

The intersection 
\[
\Tunnel_{\Refine^{2\nu}(K)}(Z_2;\Sigma_i)_q \cap \widetilde \Rec_{\Refine^{2\nu}(K)}(Z_1;\Sigma_i)= \widetilde \omega_{2;q;i} 
\]
is an $(n-1)$-cube, which is the lift 
\[
\widetilde \omega_{2;q;i}= \pi_{(\Refine^{2\nu}(K),Z_2;\Sigma_i)}^{-1}(\omega_{q,i})
\] 
of the opening $\omega_{q,i}$ to $\Real_{\Refine^{2\nu}(K)}(Z_2;\Sigma_i)$. 

The lift $\widetilde \Rec_{\Refine^{2\nu}(K)}(Z_1;\Sigma_i)$ is adjacently connected, because it is isomorphic to $ \Rec_{\Refine^{2\nu}(K)}(Z_1;\Sigma_i)$ which, by construction, is adjacently connected. From the choices of $\omega_{q,i}, q\in Z^{[n-1]}$, it follows that 
\[
\Real_{\Refine^{2\nu}(K)}(Z_2;\Sigma_i) = \widetilde \Rec_{\Refine^{2\nu}(K)}(Z_1;\Sigma_i) \cup \left(\bigcup_{q\in Z^{[n-1]}} \Tunnel_{\Refine^{2\nu}(K)}(Z_2;\Sigma_i)_q \right)
\]
and is adjacently connected.

\begin{remark}
\label{rmk:omega-i-2}
In view of Remark \ref{rmk:omega-q-i}, the $(n-1)$-cube $\widetilde \omega_{2;q;i}$ is a face of an $n$-cube $\widetilde \Omega_{2;q;i} \in \Refine^\nu(\Tunnel_{\Refine^\nu(K)}(Z_1;\Sigma_i))$. Heuristically, we may say that the tunnel $\Tunnel_{\Refine^{2\nu}(K)}(Z_2;\Sigma_i)_q$ is attached to the tunnel part of $\Real_{\Refine^\nu(K)}(Z_1;\Sigma_i)$.
\end{remark}

We summarize the discussions above in a proposition.  

\begin{proposition} 
\label{prop:Localized-realization-structure}
For each $i=1,\ldots, m$, 
\[
\Real_{\Refine^{2\nu}(K)}(Z_2;\Sigma_i) = \widetilde \Rec_{\Refine^{2\nu}(K)}(Z_1;\Sigma_i) \cup \Tunnel_{\Refine^{2\nu}(K)}(Z_2;\Sigma_i),
\]
where 
\begin{enumerate}
\item $\widetilde \Rec_{\Refine^{2\nu}(K)}(Z_1;\Sigma_i)$ is isomorphic to $\Rec_{\Refine^{2\nu}(K)}(Z_1;\Sigma_i)$, 
\item complex $\Tunnel_{\Refine^{2\nu}(K)}(Z_2;\Sigma_i)$ is composed of mutually disjoint tunnels  $\Tunnel_{\Refine^{2\nu}(K)}(Z_2;\Sigma_i)_q$,  $q\in Z^{[n-1]}$, where 
\[ \Tunnel_{\Refine^{2\nu}(K)}(Z_2;\Sigma_i)_q \cap \widetilde \Rec_{\Refine^{2\nu}(K)}(Z_1;\Sigma_i)  =\widetilde \omega_{2;q;i}\]
is an $(n-1)$-cube, and
\item the graph size of each tunnel  $ \Tunnel_{\Refine^{2\nu}(K)}(Z_2;\Sigma_i)_q, q\in Z^{[n-1]},$
is bounded by a constant $ \lambda_\loc$ depending only on $n$ and $\nu$. \index{$\lambda_\loc$}
\end{enumerate}
\end{proposition}

\index{separating complex!localized channeling!evolution}
We now show that $Z_2=\Channel_{\Refine^{2\nu}(K)}(Z_1;\fL_1)$ is a separating complex in $\Refine^{2\nu}(K)$.
\begin{proposition}
\label{prop:localization-separating-complex}
The complex $Z_2 = \Channel_{\Refine^{2\nu}(K)}(Z_1;\fL_1)$ is a separating complex of $\Refine^{2\nu}(K)$.
\end{proposition}

\begin{proof}
We check the conditions in Definition \ref{def:separating-complex} following the argument for Proposition \ref{prop:Channel-separating-complex}. Condition \eqref{item:separating-no-boundary} holds because $Z_2$ lies in the union of complexes $U_{\Refine^\nu(K)}(q)$ which do not meet the boundary of $\Refine^{2\nu}(K)$. Condition \eqref{item:separating-strongly-connected}, $\Channel_{\Refine^{2\nu}(K)}(Z_1;\fL_1)$ is adjacently connected, holds because $\Gamma(Z_1)$ is connected and each $\Channel_{\Refine^\nu(K)}(Z_1(q))$ is adjacently connected.

Since realizations $\Real_{\Refine^{2\nu}(K)}(Z_2;\Sigma_i)$ project onto $ \Comp_{\Refine^{2\nu}(K)}(Z_2;\Sigma_i)$, we conclude that 
$K=\bigcup_{i=1}^m \Comp_{\Refine^{2\nu}(K)}(Z_2;\Sigma_i)$  from the discussion  before Remark \ref{rmk:omega-i-2}. Thus condition \eqref{item:separating-total} holds.

By Proposition \ref{prop:tunnel-contracting} and an argument similar to that for Lemma \ref{lemma:Z_1-bilipschitz-expanding}, there exists a composition of two bilipschitz homeomorphisms 
\[|\Real_{\Refine^{2\nu}(K)}(Z_2;\Sigma_i)| \to  |\widetilde \Rec_{\Refine^{2\nu}(K)}(Z_1;\Sigma_i)| \to |\Real_{\Refine^\nu(K)}(Z_1;\Sigma_i)|.\] 
Since $Z_1$ is a separating complex in $\Refine^\nu(K)$, $|\Real_{\Refine^\nu(K)}(Z_1;\Sigma_i)|$ is homeomorphic to $|\partial \Sigma_i|\times [0,1]$. Hence condition \eqref{item:separating-mu-1} holds.
\end{proof}

The relative Wada  property for the separating complex $\Channel_{\Refine^{2\nu}(K)}(Z_1;\fL_1)$ follows immediately from the construction. 

\begin{corollary}
\label{cor:Z_2-Wada-properties}
The separating complex $Z_2$ has the relative Wada property with respect to $Z_1$. 
\end{corollary}

The separating complex $Z_2$ is core-expanding with respect to $Z_1$; the proof this fact follows from that of Lemma \ref{lemma:Z_1-core-expanding} almost verbatim.

\begin{corollary}
\label{cor:Z_2-core-expanding}
The separating complex $Z_2$ is core-expanding with respect to $Z_1$, in fact, 
\[
|\Core_{\Refine^\nu(K)}(Z_1;\Sigma_i)| \subset |\Core_{\Refine^{2\nu}(K)}(Z_2;\Sigma_i)|
\]
for each $i=1,\ldots, m$. 
Moreover, each $n$-cube in 
$P_2 = \Core_{\Refine^{2\nu}(K)}(Z_2;\Sigma_i) - \Refine^\nu(\Core_{\Refine^\nu(K)}(Z_1;\Sigma_i))$ may be connected, 
by a chain of linearly adjacently connected $n$-cubes in $P_2$ of graph length at most $3^{n \nu}$, to an $n$-cube in $P_2$ which has  a face in 
$\Refine^{2\nu} \left( \Core_{\Refine^\nu(K)}(Z_1;\Sigma_i) \cap \Star_{\Refine^\nu(K)}(Z_1) \right)$. 
\end{corollary}

The fact that $Z_2$ is an indented perturbation is built into the construction of reservoir-canal systems and Proposition \ref{prop:Localized-realization-structure}. We record this as follows.  
 
\begin{corollary}
\label{cor:Z_2-indented-perturbation}
The separating complex $Z_2$ is a $\lambda_\loc$-indented perturbation of $Z_1$, where $\lambda_\loc$ is the constant in Proposition \ref{prop:Localized-realization-structure}.
\end{corollary}

\subsection{Lifts of components of $\sfRC_{\Refine^{2\nu}(K)}(Z_1(q))$}

Let $q\in Z^{[n-1]}$. Although the local reservoir-canal system $\sfRC_{\Refine^{2\nu}(K)}(Z_1(q))$ is adjacently connected, its lift in $\Real_{\Refine^{2\nu}(K)}(Z_2;\Sigma_i)$ is typically not connected. The splitting of the lift of occurs where  $Z_1(q)$ enters the interior of a connector. We describe now the three cases, which are analogous to cases in Section \ref{sec:indentation-lift}. We return to these cases in Part \ref{part:QR-extension}.

\begin{lemma}
Let $j\in\{1,\ldots, m\}$ and let $\sfC=  \sfC_{\{q^*,q^{* *}\},j}$ be a canal stretch in $\sfRC_{\Refine^{2\nu}(K)}(Z_1(q))_j$, which the union of two canal sections $C^* = \sfC_{q^*,\{q^*,q^{* *}\},j}$, $C^{**} = \sfC_{q^{* *}, \{q^*,q^{* *}\},j}$ and a connector $\sfJ = \sfJ_{\{q^*,q^{* *}\},j}$. Let $R \subset \sfC$ be the subcomplex whose adjacency graph is a connected component of the cut-graph  $\Gamma(\sfC;Z_1(q))$. Then $R$ is either
 \begin{enumerate}
\item  the entire $\sfC$,  \label{item:P_adjacent_both-2}
\item the union of one of the canal sections, $C^*$ or $C^{**}$, and one or two, $n$-cubes in the connector $\sfJ$, or \label{item:P_adjacent_one-2}
\item  a single $n$-cube in $\sfJ$  which is not adjacent to either  canal section. \label{item:P_isolated-2}
\end{enumerate}
\end{lemma}

In each case, there is a unique index $i\in\{1,\ldots, m\}$, depending on data,  for which  $\pi^{-1}(R\cap \Refine^\nu(\Comp_{\Refine^\nu(K)}(Z_1;\Sigma_i)))$ is isomorphic to $R$. %and is a bent indentation in $\Real_{\Refine^{2\nu}(K)}(Z_2;\Sigma_i)$. 

\subsection{Combined bent indentations}
\label{sec:combined-indentation} \index{indentation!combined}

By the proof of Proposition \ref{prop:localization-separating-complex}, there exists a composition of a tunnel-contracting bilipschitz homeomorphism and iterated dent-flattening bilipschitz homeomorphism,
\[
|\Real_{\Refine^{2\nu}(K)}(Z_2;\Sigma_i)| \to |\widetilde \Rec_{\Refine^{2\nu}(K)}(Z_1;\Sigma_i)| \to |\Real_{\Refine^\nu(K)}(Z_1;\Sigma_i)|;
\]
compare to Corollary \ref{cor:rec-bilipschitz}.

The bilipschitz constant of this homeomorphism is \emph{a priori} the product of the constants of the composed mappings. Without special care,
the bilipschitz constants of   $|\Real_{\Refine^{k\nu}(K)}(Z_k;\Sigma_i)| \to |\Real_{\Refine^\nu(K)}(Z_1;\Sigma_i)|$ might increase geometrically, so might the quasiconformal distortions.

A closer inspection of the complexes reveals that there is a better way to construct the mapping which allows a uniform control of the distortion after iteration. The key notion here is the combined bent indentations.

We consider now bent indentations in $\Refine^{2\nu}(\Real_K(Z_0;\Sigma_i))$ resulted from the removal of reservoir-canal systems in the first two steps combined. For this 
we set 
\[
\sfRC_*(Z_0,Z_1) = \Refine^\nu(\sfRC_{\Refine^\nu(K)}(Z_0)) \cup \sfRC_{\Refine^{2\nu}(K)}(Z_1;\fL_1).
\]

\begin{lemma}
\label{lemma:reservoir-canal-spectral-lemma}
The spectrum of $\sfRC_*(Z_0,Z_1)$ consists of spectral cubes in $\sfRC_{\Refine^\nu(K)}(Z_0)$ and those spectral cubes in $\sfRC_{\Refine^{2\nu}(K)}(Z_1;\fL_1)$ which are not contained %in the spectrum of 
$\sfRC_{\Refine^\nu(K)}(Z_0)$. Moreover, if a spectral cube of $\sfRC_{\Refine^{2\nu}(K)}(Z_1;\fL_1)$ is not contained in $\sfRC_{\Refine^\nu(K)}(Z_0)$ then it has a face in $Z_0$.
\end{lemma}
\begin{proof}
The first claim is immediate. The second  claim follows from the choice in Definition \ref{def:channeling-preference-function} for the preference function $\rho_{\Channel} \colon Z_1^{[n-1]}$\, $ \to \Refine^\nu(K)^{[n]}$. Indeed, if an $n$-cube in $\sfRC^\tr_{\Refine^{2\nu}(K)}(Z_1;\fL_1)$ is not contained in $\Refine^\nu(\sfRC_{\Refine^\nu(K)}(Z_0))$ then it either has a face in $Z_0$ or is contained in a pre-reservoir-block in $\sfRC_{\Refine^{2\nu}(K)}(Z_1;\fL_1)$. Thus each spectral cube, which is not contained in $\sfRC^\tr_{\Refine^\nu(K)}(Z_0)$ has a face in $Z_0$.
\end{proof}

Set next
\[
\Rec(Z_0,Z_1;\Sigma_i) = \Refine^{2\nu}(\Comp_K(Z_0;\Sigma_i)) - \sfRC_*(Z_0,Z_1),
\]
and
\[
\widetilde \Rec(Z_0,Z_1;\Sigma_i) = \pi_{(\Refine^{2\nu}(K),Z_2;\Sigma_i)}^{-1}(\Rec(Z_0,Z_1;\Sigma_i))
\]
the lift of receded complex to $\Real_{\Refine^{2\nu}(K)}(Z_2;\Sigma_i)$. Observe that 
\begin{align*}
&\widetilde \Rec(Z_0,Z_1;\Sigma_i) \subset\\
&\quad \Refine^{2\nu}(\Real_K(Z_0;\Sigma_i)) \cap \Refine^{\nu}(\Real_{\Refine^{\nu}(K)}(Z_1;\Sigma_i)) \cap \Real_{\Refine^{2\nu}(K)}(Z_2;\Sigma_i).
\end{align*}
We denote 
\begin{align*}
&\Dent_*(Z_0,Z_1;\Sigma_i) \\
&\quad =  \pi_{(\Refine^{2\nu}(K),\Refine^{2\nu}(Z);\Sigma_i)}^{-1}(\Refine^{2\nu}(\Comp_{K}(Z_0;\Sigma_i))\cap \sfRC_*(Z_0,Z_1))
\end{align*}
the combined bent indentations.

\begin{corollary}
\label{cor:Iterated-tunnel-is-bent-indentation}
The complex $\Dent_*(Z_0,Z_1;\Sigma_i)$ is a bent indentation in the realization $\Refine^{2\nu}(\Real_K(Z_0;\Sigma_i))$.
\end{corollary}
\begin{proof}
By Lemma \ref{lemma:reservoir-canal-spectral-lemma}, it suffices to observe that the intersection of any three spectral cubes of $\Dent_*(Z_0,Z_1;\Sigma_i)$ is empty, which follows immediately from the construction of canal stretches over complexes $Z_1(q)$.
\end{proof}

We consider next bent indentations in tunnels. Let
\begin{align*}
&\Dent_*(Z_1;\Sigma_i) = \\
&\quad \pi_{(\Refine^{2\nu}(K),\Refine^\nu(Z_1);\Sigma_i)}^{-1}\left(\Refine^\nu(\Comp_{\Refine^\nu(K)}(Z_1;\Sigma_i)) \cap \sfRC_{\Refine^{2\nu}(K)}(Z_1)\right).
\end{align*}
Then by an argument similar to that for Corollary \ref{cor:Iterated-tunnel-is-bent-indentation}, we have that 
\[ 
\Dent_*^{\sfT}(Z_1;\Sigma_i)=  \Dent_*(Z_1;\Sigma_i) \cap  \Refine^\nu(\Tunnel_{\Refine^\nu(K)}(Z_1;\Sigma_i))
\]
is a bent indentation in the complex $\Refine^\nu(\Tunnel_{\Refine^\nu(K)}(Z_1;\Sigma_i))$.  

By construction, complex $\Real_{\Refine^{2\nu}(K)}(Z_2;\Sigma_i)$ has an essential partition,
\begin{align*}
\Real_{\Refine^{2\nu}(K)}(Z_2;\Sigma_i) &= \widetilde \Rec_{\Refine^{2\nu}(K)}(Z_1;\Sigma_i) \cup \Tunnel_{\Refine^{2\nu}(K)}(Z_2;\Sigma_i) \\
&= \widetilde \Rec(Z_0,Z_1;\Sigma_i) \cup \, \Diff \,  \cup \Tunnel_{\Refine^{2\nu}(K)}(Z_2;\Sigma_i),
\end{align*}
where the difference 
\begin{align*}
\Diff &=\widetilde \Rec_{\Refine^{2\nu}(K)}(Z_1;\Sigma_i)-\widetilde \Rec(Z_0,Z_1;\Sigma_i) \\
&= \pi_{(\Refine^{2\nu}(K),Z_2;\Sigma_i)}^{-1}(\Refine^\nu(\sfRC_{\Refine^\nu(K)}(Z_0)_i) - \sfRC_{\Refine^{2\nu}(K)}(Z_1))\\
&= \Refine^\nu(\Tunnel_{\Refine^\nu(K)}(Z_1;\Sigma_i)) - \Dent_*^\sfT(Z_1;\Sigma_i).
\end{align*}

We show that  the space $|\Real_{\Refine^{2\nu}(K)}(Z_2;\Sigma_i)|$ is bilipschitz to an expansion of $|\Real_K(Z_0;\Sigma_i)|$ with tunnels, by flattening the combined bent indentations.

\begin{proposition}
\label{prop:tree-structure-of-realization}
There exists an $L(n,K)$-bilipschitz homeomorphism 
\[
\phi \colon |\Real_{\Refine^{2\nu}(K)}(Z_2;\Sigma_i)|\to |B_i| \cup |T_i| \cup \bigcup_{q\in Z^{[n-1]}} |T_{q;i}|,
\]
where 
\begin{enumerate}
\item $B_i =\Real_K(Z_0;\Sigma_i)$, 
\item $T_i$ is a  copy of tunnel $\Tunnel_{\Refine^\nu(K)}(Z_1;\Sigma_i)$ which meets $\Refine^\nu(B_i)$ in an $(n-1)$-cube, call it  $\widehat\omega_{1;Z;i}$ 
\item each $T_{q;i},\, q\in Z^{[n-1]},$ is a copy of  tunnel $\Tunnel_{\Refine^{2\nu}(K)}(Z_2;\Sigma_i)_q$ which meets $\Refine^\nu(T_i)$ in an $(n-1)$-cube, call it  $\widehat \omega_{2;q;i}$,
\end{enumerate}
and the homeomorphism $\phi$ 
\begin{enumerate}
\item [(4)] maps $\widetilde \omega_{1;Z;i}\mapsto \widehat \omega_{1;Z;i}$, and the unique $n$-cube in $\widetilde \Rec_{\Refine^\nu(K)}(Z_0;\Sigma_i)$ having $\widetilde \omega_{1;Z;i}$ as a face isometrically onto an $n$-cube in $\Refine^\nu(B_i)$, and maps, for each $q\in Z^{[n-1]}$, $\widetilde \omega_{2; q;i} \mapsto \widehat \omega_{2; q;i}$, and the unique $n$-cube in $\Diff$ having $\widetilde \omega_{2; q;i}$ as a face onto an $n$-cube in $\Refine^\nu(T_i)$, and  
\item [(5)] is the identity in the complement of the union $W_1\cup W_2$ of 
\[
W_1= \Wedge_{\Refine^{2\nu}(\Real_K(Z_0;\Sigma_i))}(\Dent_*(Z_0,Z_1;\Sigma_i))\bigcap |\Real_{\Refine^{2\nu}(K)}(Z_2;\Sigma_i)|,
\]
and 
\[
W_2= \Wedge_{\Refine^\nu(\Tunnel_{\Refine^\nu(K)}(Z_1;\Sigma_i))}(\Dent_*^\sfT(Z_1;\Sigma_i))\bigcap |\Real_{\Refine^{2\nu}(K)}(Z_2;\Sigma_i)|.
\]
\end{enumerate}
In particular, the homeomorphism $\phi$ is identity on $|\Core_{\Refine^{2\nu}(K)}(Z_2;\Sigma_i)|$.
\end{proposition}

\begin{proof}
For the proof, we follow the decomposition of the realization $\Real_{\Refine^{2\nu}(K)}(Z_2;\Sigma_i)$ stated before the proposition. From the locations of the selected openings, we have that 
\begin{align*}
&\widetilde \Rec(Z_0,Z_1;\Sigma_i) \cap \Refine^\nu(\Tunnel_{\Refine^\nu(K)}(Z_1;\Sigma_i)) = \\
&\quad \Refine^\nu(\widetilde \Rec_{\Refine^\nu(K)}(Z_0;\Sigma_i))\cap \Refine^\nu(\Tunnel_{\Refine^\nu(K)}(Z_1;\Sigma_i)) = \Refine^\nu(\widetilde \omega_{1;Z;i}),
\end{align*}
where $\widetilde \omega_{1;Z;i}=\pi_{\Refine^\nu(K), \Refine^\nu(Z)}^{-1}(\omega_{Z,i})$.

Since 
\[
\Dent_*(Z_0,Z_1;\Sigma_i) =\Refine^{2\nu}(\Real_K(Z_0;\Sigma_i))- \widetilde \Rec(Z_0,Z_1;\Sigma_i),
\] 
there exists, as in Corollary \ref{cor:rec-bilipschitz}, an $L(n,K)$-bilipschitz map 
\[
\phi_0 \colon |\widetilde \Rec(Z_0,Z_1;\Sigma_i)|\to |\Real_K(Z_0;\Sigma_i)|,
\]
which is the identity in the complement of $W_1$, and maps the unique $n$-cube in $\widetilde \Rec_{\Refine^\nu(K)}(Z_0;\Sigma_i)$ having $\widetilde \omega_{1;Z;i}$ as a face isometrically onto an $n$-cube in $\Refine^\nu(B_i)$.

The difference $\Diff=\widetilde \Rec_{\Refine^{2\nu}(K)}(Z_1;\Sigma_i)-\widetilde \Rec(Z_0,Z_1;\Sigma_i)$, in view of Remark \ref{rmk:omega-q-i}, meets tunnels $\Tunnel_{\Refine^{2\nu}(K)}(Z_2;\Sigma_i)$, $q\in Z^{[n-1]}$, in the $(n-1)$-cubes $\widetilde \omega_{2;q;i}$.

As before, there exists an $L(n,K)$-bilipschitz map 
\[
\phi_1 \colon |\Diff| \to |\Tunnel_{\Refine^\nu(K)}(Z_1;\Sigma_i)|,
\]  
which is  identity in the complement of $W_2$, in particular  identity on $\widetilde \omega_{1;Z;i}$, and  which maps, for each $q\in Z^{[n-1]}$, the unique $n$-cube in $\Diff$ having $\widetilde \omega_{2;q;i}$ as a face isometrically onto an $n$-cube in $\Refine^\nu(\Tunnel_{\Refine^\nu(K)}(Z_1;\Sigma_i))$.
 
We now glue a copy  $T_i$ of $\Tunnel_{\Refine^\nu(K)}(Z_1;\Sigma_i)$ to $\Refine^\nu(B_i)$ by identifying the $(n-1)$-cube $\widetilde \omega_{1;Z;i}$ in $\Tunnel_{\Refine^\nu(K)}(Z_1;\Sigma_i)$ 
with $\phi_0(\widetilde \omega_{1;Z;i})$ in $|B_i|$. Similarly we 
glue, for each $q\in Z^{[n-1]}$, a copy
 $T_{q;i}$ of tunnel $\Tunnel_{\Refine^{2\nu}(K)}(Z_2,\Sigma_i)_q$ to $\Refine^\nu(T_i)$ by
identifying the $(n-1)$-cube $\widetilde \omega_{2;q;i}\in \Tunnel_{\Refine^{2\nu}(K)}(Z_2,\Sigma_i)_q$  
with  $\phi_1(\widetilde \omega_{2;q;i})$ in  $|T_i|$. This completes the proof.
\end{proof}

\section{Evolution sequence}
\label{sec:evolution}
We now construct the sequence $(Z_k)$ of separating complexes in Theorem  \ref{theorem:evolution-short}. The starting point, as before, is a cubical $n$-complex $K$ which has boundary components $\Sigma_1,\ldots, \Sigma_m$, where $m\ge 2$, and a separating complex $Z$ which satisfy  Standing assumptions \ref{standing:Z}.

\subsection{Construction of  $(Z_k)$}\label{sec:construction-evolution}Let $K_0=K$, and 
\[
K_k=\Refine^{k\nu}(K), \quad \text{for} \quad k\geq 1.
\]
The existence of $Z_1$ and $Z_2$ have been shown in Sections \ref{sec:reservoir-canal-system},  \ref{sec:channeling}, and  \ref{sec:localization}. Complex $Z_2$ is constructed by a localized channeling transformation 
\[
Z_2 = \Channel_{K_2}(Z_1; \fL_1),
\]
where $\fL_1= \{Z_1(q)\colon q \in Z_0^{[n-1]}\}.$
 
We construct the sequence $(Z_k)$ by induction. Suppose that, for $k\ge 2$, we have already defined separating complexes $Z_0, Z_1,\ldots, Z_k$ of complexes $K_0, K_1,\ldots, K_k$, respectively. Let now \index{$\fL_k$} 
\[
\fL_k = \{Z_k(q) \colon q\in {Z_{k-1}}^{[n-1]}\}
\]
be the localization of $Z_k$ as in Definition \ref{def:localization}. We take, as in Definition \ref{def:localized-channeling-transformation},
 \[
Z_{k+1} = \Channel_{K_{k+1}}(Z_k; \fL_k).
\]
 to be a localized channeling transformation of $Z_k$  associated to data:  the localization $\fL_k$, a preference function $\rho_k = (\rho_{k-1})_{\Channel} \colon Z_k^{[n-1]} \to {K_k}^{[n]}$, and  local trees $\cT_q, q\in Z_{k-1}^{[n-1]}$, whose edges are not refinements of the edges of trees in previous steps.

We denote also by  $\sfRC_{K_{k+1}}(Z_k;\fL_k)$ the reservoir-canal system and by $\omega_{k+1;q;i}, q\in Z_{k-1}^{[n-1]}$ and $i\in \{1,\ldots, m\}$, for the openings from  $\sfRC_{K_{k+1}}(Z_k;\fL_k)$ to the receded complex $\Rec_{K_{k+1}}(Z_k;\fL_k)$,  used in the localized channeling.

\subsection{Properties of $(Z_k)$}
\label{sec:summary}
We begin with the properties of the individual complexes $Z_k$.

\begin{lemma}
\label{lemma:summary}
The sequence $(Z_k)$ has the following properties:
\begin{enumerate}
\item each separating complex $Z_k$ has the relative Wada property with respect to $Z_{k-1}$, 
\item each $Z_k$ is core-expanding with respect to $Z_{k-1}$,
\item each $Z_k$ is a $\lambda$-indented perturbation of $Z_{k-1}$, and 
\item each  $Z_k$ is an $L$-bilipschitz perturbation of $Z_{k-1}$.
\end{enumerate}
where $L=L(n, K) \geq 1$ and $\lambda=\lambda(n, K)\geq1$ are constants.
\end{lemma}

For the proof, we note as before that
\[
\Comp_{K_k}(Z_k;\Sigma_i) = \Rec_{K_k}(Z_{k-1};\Sigma_i) \cup \sfRC_{K_k}(Z_{k-1};\fL_{k-1})_i,
\]
where $\Rec_{K_k}(Z_{k-1};\Sigma_i)=  \Refine^\nu(\Comp_{K_{k-1}}(Z_{k-1}; \Sigma_i)) - \sfRC_{K_k}(Z_{k-1};\fL_{k-1})$, and
\[
\Real_{K_k}(Z_k;\Sigma_i) = \widetilde \Rec_{K_k}(Z_{k-1};\Sigma_i) \cup \Tunnel_{K_k}(Z_k;\Sigma_i),
\]
where 
$\widetilde \Rec_{K_k}(Z_{k-1};\Sigma_i) = \pi_{(K_k, Z_k;\Sigma_i)}^{-1}(\Rec_{K_k}(Z_{k-1};\Sigma_i))$, and 
\[
\Tunnel_{K_k}(Z_k;\Sigma_i) =  \pi_{(K_k,Z_k;\Sigma_i)}^{-1}(\sfRC_{K_k}(Z_{k-1};\fL_{k-1})_i).
\]

\begin{proof}[Proof of Lemma \ref{lemma:summary}]
Since $Z_k$ is obtained by a localized channeling transformation of $Z_{k-1}$, the separating complex $Z_k$ satisfies the relative Wada property with respect to $Z_{k-1}$ (see Corollary \ref{cor:Z_2-Wada-properties}) and that $Z_k$ is core-expanding with respect to $Z_{k-1}$ (see Corollary \ref{cor:Z_2-core-expanding}). 

We verify now that $Z_k$ is a $\lambda_\loc$-indented perturbation of $Z_{k-1}$, where $\lambda_\loc$ is the constant in Proposition \ref{prop:Localized-realization-structure}.  Since 
\[
\Comp_{K_k}(Z_k;\Sigma_i) = \left( \Refine^\nu(\Comp_{K_{k-1}}(Z_{k-1}; \Sigma_i) - \sfRC_{K_k}(Z_{k-1})\right)\cup \sfRC_{K_k}(Z_{k-1})_i,
\]
 the first condition for indented perturbation in Definition \ref{def:indented-perturbation} holds.

Note that 
\begin{align*}
&\Refine^\nu(\Real_{K_{k-1}}(Z_{k-1};\Sigma_i))- \widetilde \Rec_{K_k}(Z_{k-1};\Sigma_i)\\
&\quad =\pi_{(K_k,\Refine^\nu(Z_{k-1});\Sigma_i)}^{-1}(\Refine^\nu(\Comp_{K_{k-1}}(Z_{k-1};\Sigma_i))\cap \sfRC_{K_k}(Z_{k-1}));
\end{align*}
see the proof of Corollary \ref{cor:reservoir-canals-as-bent-indentations}). From a local version of  Proposition \ref{prop:Channel-tame-perturbation}, if follows that $Z_k$ is a $\lambda_\loc$-indented perturbation of $Z_{k-1}$.  

Finally, since the tunnels in the complex $\Tunnel_{K_k}(Z_k;\Sigma_i)$ have uniformly bounded size, we have, by Proposition \ref{prop:tunnel-contracting}, that $Z_k$ is an $L$-bilipschitz perturbation of $Z_{k-1}$, where $L$ depends only on $n$ and $\nu$.
\end{proof}

\begin{remark}\label{rmk:core-to-core}
By an argument similar to that for Lemma \ref{lemma:Z_1-core-expanding} and Corollary \ref{cor:Z_2-core-expanding},  the  cores $\Core_{K_k}(Z_k;\Sigma_i)$ satisfy the following property. Given an $n$-cube $Q_0\in \Comp_K(Z;\Sigma_i)$ and an  $n$-cube $Q_k$ in 
\[
\Core_{K_k}(Z_k;\Sigma_i) - \Refine^\nu(\Core_{K_{k-1}}(Z_{k-1};\Sigma_i)),
\] there exists a chain $\mathcal C_k$, 
\[
Q_k=Q_{k,1}, \ldots, Q_{k,p_k}; \,\,Q_{k-1,1},\ldots, Q_{k-1,p_{k-1}};\,\,  \ldots; \,\, Q_{0,1},\ldots, Q_{0,p_0}=Q_0,
\]
of $n$-cubes such that for each $\ell\in \{0,\ldots, k\}$,
\begin{enumerate}
\item $Q_{\ell,1}, \ldots, Q_{\ell,p_\ell}$ is an linearly adjacently connected sequence of $n$-cubes in $\Core_{K_\ell}(Z_\ell;\Sigma_i) -\Refine^\nu(\Core_{K_{\ell-1}}(Z_{\ell-1};\Sigma_i))$ for which $p_0\leq C( K)$, and  $p_\ell \leq 3^{n \nu}$ for $1\leq \ell \leq k$, and
\item $Q_{\ell,p_\ell}$ is adjacent to an $n$-cube in $\Refine^\nu(Q_{\ell-1,1})$ for  $1\leq \ell \leq k$.
\end{enumerate}
\end{remark}

We move now to the quasiconformal stability of the sequence $(Z_k)$. The stability, together with Lemma \ref{lemma:summary}, completes the proof of Theorem \ref{theorem:evolution-short}.

The realization $\Real_{K_k}(Z_k;\Sigma_i)$ has a tree structure similar to that of $\Real_{K_2}(Z_2;\Sigma_i)$ in Proposition \ref{prop:Localized-realization-structure}, which will be used to obtain the quasiconformal stability of the sequence $(Z_k)$.

To set the stage, we define, for each $k\ge 0$ and $0\le \ell \le k-1$, a combined reservoir-canal system \index{$\sfRC_*(Z_\ell,\ldots, Z_k)$} 
\[
\sfRC_*(Z_\ell,\ldots, Z_{k-1}) = \bigcup_{j=\ell+1}^k \Refine^{\nu(k-j)}(\sfRC_{K_j}(Z_{j-1};\fL_{j-1})). 
\]   
Note that $\sfRC_*(Z_\ell, \ldots, Z_{k-1})$ is a subcomplex of $K_k$, and it does not meet  core $\Refine^{\nu (k-\ell)}(\Core_{K_\ell}(Z_\ell;\Sigma_i))$.

The spectrum of the combined reservoir-canal system $\sfRC_*(Z_\ell, \ldots, Z_{k-1})$ satisfies a property similar to that described in Lemma \ref{lemma:reservoir-canal-spectral-lemma}.

\begin{lemma}
\label{lemma:RC_*}
For $k\ge 2$, each spectral cube of $\sfRC_{K_k}(Z_{k-1};\fL_{k-1})$ is either contained in a spectral cube of $\sfRC_*(Z_0,\ldots, Z_{k-2})$ or has a face in $Z_0$. 
\end{lemma}
\begin{proof}
By Lemma \ref{lemma:reservoir-canal-spectral-lemma} the claim holds for $k=2$. Suppose that $k\geq 3$.
Let $Q$ be a spectral cube in $\sfRC_{K_k}(Z_{k-1};\fL_{k-1})$. Applying the second statement in Lemma \ref{lemma:reservoir-canal-spectral-lemma} to complexes $Z_{k-1}$ and $Z_{k-2}$, we have that  $Q$ is either contained in $\sfRC_{K_{k-1}}(Z_{k-2};\fL_{k-2})$, or has a face in $Z_{k-2}$. In the first case, $Q$ is contained in $\sfRC_*(Z_0,\ldots, Z_{k-2})$.

In the second case, $Q$ is contained in the preference cube $\rho_{k-2}(q')$ of some $q'\in Z_{k-2}^{[n-1]}$. Since preference function $\rho_{k-2}=(\rho_{k-3})_\Channel$ is a channeling of $\rho_{n-3}$, the $n$-cube $\rho_{k-2}(q')$ is either contained in $\sfRC_{K_{k-2}}(Z_{k-3};\fL_{k-3})$ or has a face in $Z_{k-2} \cap \Refine^\nu(Z_{k-3})$. Thus $Q$ is either contained in $\sfRC_*(Z_0,\ldots, Z_{k-2})$ or  has a face in $Z_{k-3}$.

Continuing this line of argument, we conclude that $Q$ is either contained in $\sfRC_*(Z_0,\ldots, Z_{k-2})$ or has a face in $Z_0$.
\end{proof}

Let now for $k\geq 2$ and $\ell \in \{0,\ldots, k-1 \}$,
\begin{align*}
&\Dent_*(Z_\ell, \ldots, Z_{k-1};\Sigma_i) \\
&\quad = \pi_{(K_k, Z_k;\Sigma_i)}^{-1}\left( \sfRC_*(Z_\ell,\ldots, Z_{k-1})  \cap \Refine^{\nu(k-\ell)}(\Comp_{K_\ell}(Z_\ell; \Sigma_i)) \right).
\end{align*}
  \index{$\Dent_*(Z_\ell,\ldots, Z_k)$}
Let also  for $\ell \ge 1$,
\[
\Tunnel_{K_\ell}(Z_\ell;\Sigma_i) =  \pi_{(K_\ell,Z_\ell;\Sigma_i)}^{-1}(\sfRC_{K_\ell}(Z_{\ell-1})_i),
\]
and 
\begin{align*}
&\Dent_*^\sfT (Z_\ell, \ldots, Z_{k-1};\Sigma_i) \\
&\quad = \Dent_*(Z_\ell, \ldots, Z_{k-1};\Sigma_i) \cap \Refine^{\nu(k-\ell)}\Tunnel_{K_\ell}(Z_\ell;\Sigma_i).
\end{align*}

\begin{lemma}
\label{lemma:Dent_*}
Let $k \ge 2$. The complex $\Dent_*(Z_0,\ldots, Z_{k-1})$ is a bent indentation in $\Refine^{k \nu}(\Real_{K}(Z;\Sigma_i))$, and 
$\Dent_*^\sfT(Z_\ell,\ldots, Z_{k-1})$ is a bent indentation in $\Refine^{\nu(k-\ell)}(\Tunnel_{K_\ell}(Z_\ell;\Sigma_i))$ for $\ell =1,\ldots, k-1$.
\end{lemma}
\begin{proof}
Since the intersection of three distinct spectral cubes
% in $\sfRC_*(Z_0,\ldots, Z_{k-1})$ 
is empty, we have, by Lemma \ref{lemma:RC_*}, that  $\Dent_*(Z_0,\ldots, Z_{k-1})$ is a bent indentation in $\Refine^{k \nu}(\Real_{K}(Z;\Sigma_i))$.  The proof for $\ell\geq 1$ is similar. 
\end{proof}

We now  state a version of Proposition \ref{prop:Localized-realization-structure} for a fixed $k\ge 2$. We first fix some notations. For $\ell \in\{0, \ldots, k-1\}$, let
\[
\Rec_*(Z_\ell,\ldots, Z_{k-1};\Sigma_i) = \Refine^{\nu (k-\ell)}(\Comp_{K_\ell}(Z_\ell;\Sigma_i))-\sfRC_*(Z_\ell,\ldots, Z_{k-1}),
\]
and 
\[
\widetilde \Rec_*(Z_\ell, \ldots, Z_{k-1};\Sigma_i) = \pi_{(K_k;Z_k;\Sigma_i)}^{-1}(\Rec_*(Z_\ell,\ldots, Z_{k-1};\Sigma_i)).
\]
Let also for $1\leq \ell \leq k-1$, \index{$\Diff_*(Z_\ell,\ldots, Z_{k-1};\Sigma_i)$}
\begin{align*}
&\Diff_*(Z_\ell, \ldots, Z_{k-1};\Sigma_i) \\
&\quad = \widetilde \Rec_*(Z_\ell,\ldots, Z_{k-1};\Sigma_i) - \widetilde \Rec_*(Z_{\ell-1},\ldots, Z_{k-1};\Sigma_i) \\
&\quad = \Refine^{\nu(k-\ell)}(\Tunnel_{K_\ell}(Z_\ell;\Sigma_i))- \Dent_*^\sfT(Z_\ell,\ldots, Z_{k-1};\Sigma_i),
\end{align*}   
where  $\Tunnel_{K_1}(Z_1;\Sigma_i)$ is a tunnel of graph size at most $\#(Z^{[n-1]})$, and
for each $2\leq \ell \leq k-1$, $\Tunnel_{K_\ell}(Z_\ell;\Sigma_i)$ is a collection of tunnels indexed by $Z_{\ell-2}^{[n-1]}$, in which each tunnel has graph size at most $\lambda_\loc$, where $\lambda_\loc$ is the constant in Proposition \ref{prop:Localized-realization-structure}.

\begin{proposition}
\label{prop:general-localized-realization-structure}
Let $k\ge 2$ and  $i=1,\ldots, m$. The realization $ \Real_{K_k}(Z_k;\Sigma_i)$ has an essential partition
\begin{align*}
\Real_{K_k}(Z_k;\Sigma_i) &= \widetilde \Rec_*(Z_0,\ldots, Z_{k-1};\Sigma_i) \\
&\cup \left(\bigcup_{\ell=1}^{k-1} \Diff_*(Z_\ell,\ldots, Z_{k-1};\Sigma_i)\right) 
\cup \Tunnel_{K_k}(Z_k;\Sigma_i),
\end{align*}
where 
\begin{enumerate}
\item $\widetilde \Rec_*(Z_0,\ldots, Z_{k-1};\Sigma_i)$ is isomorphic to $\Rec_*(Z_0,\ldots, Z_{k-1};\Sigma_i)$; 
\item $\Diff_*(Z_1,\ldots, Z_k;\Sigma_i)$  is a dented refined tunnel and
for  $2\leq \ell \leq k-1$, $\Diff_*(Z_\ell,\ldots, Z_k;\Sigma_i)$ is a collection of mutually disjoint complexes indexed by $Z_{\ell-2}^{[n-1
]}$, each of which is a dented refined tunnel, moreover, each tunnel, before refined, has graph size at most $\lambda_\loc$;
\item $\Tunnel_{K_k}(Z_k;\Sigma_i)$ is a collection of mutually disjoint tunnels
indexed by $Z_{k-2}^{[n-1]}$, in which each tunnel has graph size at most $\lambda_\loc$.
\end{enumerate}
\end{proposition}

\begin{proof}
The claim holds for $k=2$. Suppose that $k\ge 3$ and the claim holds for $k-1$. The realization $\Real_{K_k}(Z_k;\Sigma_i)$ is obtained from $\Refine^\nu(\Real_{K_{k-1}}(Z_{k-1};\Sigma_i))$ by removing 
$\pi_{(K_k,Z_k;\Sigma_i)}^{-1}(\sfRC_{K_k}(Z_{k-1})_i)$ and then adding  $\Tunnel_{K_k}(Z_{k-1};\Sigma_i)$. Since
\begin{align*}
&\Dent_*(Z_\ell,\ldots, Z_{k-1};\Sigma_i) \\
&\quad = \Dent_*(Z_\ell,\ldots,Z_{k-2};\Sigma_i) \\
&\qquad \cup  \pi_{(K_\ell,\Refine^{\nu(k-\ell)}(Z_\ell);\Sigma_i)}^{-1}(\Refine^{\nu(k-\ell)}(\Comp_{K_{\ell}}(Z_{\ell};\Sigma_i))\cap \sfRC_{K_k}(Z_{k-1})_i),
\end{align*}
the claim about the structure follows from the induction assumption and the basic properties of the localized channeling construction.

The claim about the size of tunnels follows from the same reasoning as in Proposition \ref{prop:Localized-realization-structure}.
\end{proof}

\begin{remark}\label{omega-labels}
For future reference to the tree structure, we label 
the intersection of two consecutive complexes in the sequence 
\begin{align*}
&\widetilde \Rec_*(Z_0,\ldots, Z_{k-1};\Sigma_i),\,\, \Diff_*(Z_1,\ldots, Z_{k-1};\Sigma_i),\ldots \\
& \Diff_*(Z_{k-1};\Sigma_i),\,\, \Tunnel_{K_k}(Z_k;\Sigma_i)
\end{align*}
as follows. The intersection $\widetilde \Rec_*(Z_0,\ldots, Z_{k-1};\Sigma_i) \bigcap \Diff_*(Z_1,\ldots, Z_{k-1};\Sigma_i)$ is  a refinement of the $(n-1)$-cube $\widetilde \omega_{1;Z:i}$.

For  $\ell\in \{2,\ldots, k-1\}$, the intersection of $ \Diff_*(Z_{\ell-1},\ldots, Z_{k-1};\Sigma_i)$ and $ \Diff_*(Z_\ell,\ldots, Z_{k-1};\Sigma_i)$ consists of  refinements of lifts $\widetilde \omega_{\ell;q;i}$ of the openings $\omega_{\ell;q;i}$ to  $\partial (\widetilde \Rec_{K_\ell}(Z_{\ell-1};\Sigma_i))$
indexed by $q\in Z_{\ell-2}$.

Finally, the intersection  $ \Diff_*(Z_{k-1};\Sigma_i) \bigcap \Tunnel_{K_k}(Z_k;\Sigma_i)$ consists of refinements of  lifts $\widetilde \omega_{k;q;i}$ of the openings $\omega_{k;q;i}$ to $ \partial (\widetilde \Rec_{K_k}(Z_{k-1};\Sigma_i))$ indexed by $ q\in Z_{k-2}$. 
\end{remark}

\begin{remark}
We observe that, for $1\leq k'\leq k$,
\[\Core_{K_{k'}}(Z_{k'};\Sigma_i) \subset \widetilde \Rec_*(Z_0,\ldots, Z_{k-1};\Sigma_i) 
\cup \bigcup_{\ell=1}^{k'-1} \Diff_*(Z_\ell,\ldots, Z_{k-1};\Sigma_i).\]
\end{remark}

Having Proposition \ref{prop:general-localized-realization-structure} at our disposal, we may flatten the combined bent indentations in one step, again by a dent-flattening homeomorphism. 

We first define a cubical complex $X_{k;i}$ as the target for the  flattening. 
For $i\in\{1,\ldots,m\}$ and $k\geq 2$, we consider a cubical $n$-complex,
\[X_{k;i}= \Refine^{k\nu}(B_i)\, \cup \, \bigcup_{\ell=1}^k \Refine^{(k-\ell)\nu}(\sfT_{\ell;i}),\]
where 
\begin{enumerate}
\item  $B_i=\Real_K(Z;\Sigma_i)$,  
\item $ \sfT_{\ell;i}$ is a copy of the tunnel complex  $\Tunnel_{K_\ell}(Z_\ell;\Sigma_i)$, and
\item only consecutive elements in the sequence $B_i, \, \sfT_{\ell;i} \ldots, \sfT_{k;i}$ may have nonempty intersections for which
\begin{enumerate}
\item $\Refine^\nu(B_i) \cap \sfT_{1;i}$ is an $(n-1)$-cube  $\widehat \omega_{1,i}$, and
\item for  $\ell \in \{2, \ldots,  k\}$, $\Refine^\nu(\sfT_{\ell-1;i}) \cap \,T_{\ell;i}$ is a collection of $(n-1)$-cubes $\widehat \omega_{\ell;q;i}$, $q\in Z_{\ell-2}^{[n-1]}$.
\end{enumerate}
\end{enumerate}

\begin{proposition}
\label{prop:general-tree-structure-of-realization} For each $i\in\{1,\ldots,m\}$ and $k\geq 2$,
there exists an $L(n,K)$-bilipschitz homeomorphism 
\[
\varphi_{k;i} \colon |\Real_{K_k}(Z_k;\Sigma_i)|\to |X_{k;i}|,
\]
which maps
\begin{align*} 
&|\widetilde \Rec_*(Z_0,\ldots, Z_{k-1};\Sigma_i)| \to |B_i|, \,\,\,\, |\Diff_*(Z_1,\ldots, Z_{k-1};\Sigma_i)| \to |\sfT_{1;i}|, \ldots,\\
&  |\Diff_*(Z_{k-1};\Sigma_i)| \to |\sfT_{k-1;i}|, \,\,\,\text{and}\,\,\,   |\Tunnel_{K_k}(Z_k;\Sigma_i)| \to |\sfT_{k;i}|,
\end{align*}
homeomorphically. Moreover,
\begin{enumerate}
\item $\varphi_{k;i}$ maps $\widetilde \omega_{1;Z;i}\mapsto 
 \widehat \omega_{1;Z;i}$ and  the unique $n$-cube in $\widetilde \Rec_{\Refine^\nu(K)}(Z_0;\Sigma_i)$ having $\widetilde \omega_{1;Z;i}$ as a face onto an $n$-cube in $\Refine^\nu(B_i)$, and 
for each $\ell \in \{2, \ldots,  k\}$ and $q\in Z_{\ell-2}^{[n-1]}$,
$\varphi_{k;i}$ maps $\widetilde \omega_{\ell;q;i}\mapsto \widehat \omega_{\ell;q;i}$ and 
 the unique $n$-cube in 
$\Refine^\nu(\Diff_*^\sfT(Z_{\ell-1},\ldots, Z_{k-1};\Sigma_i))$ 
having $\widetilde \omega_{\ell;q;i}$ as a face isometrically onto an $n$-cube in $\Refine^\nu(\sfT_{\ell-1,i})$, 

\item $\varphi_{k;i}$ is the identity in the complement of the union $W_1\cup \cdots, \cup W_{k}$ of sets, where
\begin{align*}
W_1= &\Wedge_{\Refine^{k\nu}(\Real_K(Z_0;\Sigma_i))}(\Dent_*^\sfT(Z_0,\ldots,Z_{k-1};\Sigma_1))\\
&\,\,\, \bigcap |\Real_{K_k}(Z_k;\Sigma_i)|,
\end{align*}
\begin{align*}
\quad \,\,\,\, W_\ell =& \Wedge_{\Refine^{\nu(k-\ell)}(\Tunnel_{K_\ell}(Z_\ell;\Sigma_i))}(\Dent_*^\sfT(Z_\ell,\ldots, Z_{k-1};\Sigma_i)) \\
&\,\,\, \bigcap |\Real_{K_k}(Z_k;\Sigma_i)|,
\end{align*}
for $\ell=2,\ldots, k$, and $W_\ell  \cap W_{\ell'} \neq \emptyset$ unless $|\ell-\ell'|\leq 1$.
\end{enumerate}
In particular, $\varphi_{k;i}$ is the identity on $|\Core_K(Z_0;\Sigma_i)|$.
\end{proposition}

The proof is almost verbatim to that of Proposition \ref{prop:tree-structure-of-realization}, we omit the details.

\section{Proof of the Evolution theorem}

To complete the proof of Theorem \ref{theorem:evolution-short}, it remains to check the quasiconformal stability of the sequence $(Z_k)$.

With the structure property in Proposition \ref{prop:general-tree-structure-of-realization}, the proof of the quasiconformal stability of the sequence $(Z_k)$ is standard.

\begin{theorem}\label{theorem:quasiconformal-stable}
There exists a constant $\sK=\sK(n, K)\ge 1$ for the following. For each $i=1,\ldots, m$ and $k\ge 1$, there exists a $\sK$-quasiconformal homeomorphism 
\[
f_{k;i} \colon |\Real_{K_k}(Z_k;\Sigma_i)|\to |\Real_K(Z;\Sigma_i)|,
\]
which is the identity on core $\Core_K(Z;\Sigma_i)$.
\end{theorem}

\begin{proof}
To determine the distortion constant $\sK$, we first make an observation and fix a parameter. The local channeling construction yields a constant $N=N(n, \nu)\ge 1$ for which at most $N$ tunnels in $\Tunnel_{K_{\ell+1}}(Z_{\ell+1};\Sigma_i)$ may be attached to the same tunnel 
in the complex  $\Tunnel_{K_\ell}(Z_\ell;\Sigma_i)$. 
Let $t\in \N$ be the smallest integer with the property that the refined $(n-1)$-cube $\Refine^{t \nu}([0,1]^{n-1})$ contains at least $N$ mutually disjoint $(n-1)$-cubes which do not meet the boundary of $[0,1]^{n-1}$. 

Recall that  tunnel $\Tunnel_{K_1}(Z_1;\Sigma_i)$ has graph size at most $\#(Z^{[n-1]})$ and that 
for each $\ell\geq 2$, all tunnels in complex $\Tunnel_{K_\ell}(Z_\ell;\Sigma_i)$ have  sizes at most $\lambda_\loc$. We also recall that the tunnel-contracting homeomorphism in Proposition \ref{prop:tunnel-contracting} has a bilipschitz constant depending only on $n$ and the size of the tunnel. The tunnel-contracting maps constructed below, subject to an additional requirement, have distortions depending on the number $N(n,\nu)$ as well. Everything considered, these constants depend only on $n$ and $K$.

As a preliminary step, let
\[
\varphi_{k;i} \colon |\Real_{K_k}(Z_k;\Sigma_i)|\to |B_i| \cup \bigcup_{\ell=1}^k |\sfT_{\ell;i}| 
\]
be an $L(n,K)$-bilipschitz dent-flattening homeomorphism defined in Proposition \ref{prop:general-tree-structure-of-realization} having all the properties listed therein. 
Hence each $\varphi_{k;i}$ is also $\sfK_\ind$-quasiconformal for a constant $\sfK_\ind\geq 1$ depending  only on $n$ and $K$. 

When $k=1$, by Proposition \ref{prop:tunnel-contracting} there exists a quasiconformal homeomorphism 
$\widehat \psi_{1;i} \colon |B_i|  \cup  |\sfT_{1;i}|   \to |B_i|$ which is the identity on $\Core_K(Z;\Sigma_i)$  and whose distortion depends only on $n$ and $K$. The composition $\widehat \psi_{1;i} \circ \varphi_{1;i}$ is the claimed map in the proposition.

Let now $k\geq 2$. We fix a tunnel-contracting bilipschitz, hence quasiconformal, homeomorphism 
\[
\psi_{k;i} \colon |B_i| \cup \bigcup_{\ell=1}^{k-1} |\sfT_{\ell;i}| \cup |\sfT_{k;i}| \to |B_i| \cup \bigcup_{\ell=1}^{k-1} |\sfT_{\ell;i}|
\]
as in Proposition \ref{prop:tunnel-contracting}, which 
\begin{enumerate}
\item maps each tunnel  $\sfT_{k;q;i}$, $q\in Z_{k-2}^{[n-1]}$,  in $\sfT_{k;i}$  into the unique $n$-cube $\Omega_{k;q;i} \in \Refine^\nu(\sfT_{k-1;i})$ adjacent to $\sfT_{k;q;i}$,  and 
\item is the identity in the complement of 
$\sfT_{k;i}\, \cup \, \bigcup_{q\in Z_{k-2}^{[n-1]}}\Omega_{k;q;i}$.
\end{enumerate}
Since the graph size of each tunnel $\sfT_{k;q;i}$ is at most $\lambda_\loc$, the bilipschitz constant as well as the quasiconformal distortion may be chosen to depend only on $n$ and $K$.
 
Define now  a tent  $\widehat \Omega_{k;q;i}$ over $\Omega_{k;q;i}$ to be the union of the $n$-cubes in $\Refine^\nu(\sfT_{k-1;i})$, including  $\Omega_{k;q;i}$ itself,  for which $(\widehat \Omega_{k;q;i}, \,\Omega_{k;q;i})$ is isomorphic to $([0,3]^n, [1,2]^{[n-1]}\times [0,1])$. 

Also, for each  $q'\in Z_{k-3}^{[n-1]}$, denote by $\Omega_{k-1;q';i}$ the unique $n$-cube in $\Refine^\nu(\sfT_{k-2,i})$ adjacent to tunnel  $\sfT_{k-1;q';i}$ in $\sfT_{k-1;i}$, and let $\widehat \Omega_{k-1;q';i}$ be  the tent over $\Omega_{k-1;q';i}$  in $|\sfT_{k-2;i}|$ defined analogously as before.

We fix as we may a $\sfK''$-quasiconformal homeomorphism
\[
\psi_{k-1;i}\colon |B_i| \cup \bigcup_{\ell=1}^{k-1} |\sfT_{\ell;i}| \to |B_i| \cup \bigcup_{\ell=1}^{k-2} |\sfT_{\ell;i}|,
\]
which 
\begin{enumerate}
\item  is the identity in the complement of  $\sfT_{k-1;i} \cup \bigcup_{q'\in Z_{k-3}^{[n-1]}} \widehat \Omega_{k-1;q';i}$,
 \item maps $\sfT_{k-1;q';i}$ into $n$-cube $\Omega_{k-1;q';i}$ for each  $q'\in Z_{k-3}^{[n-1]}$, 
\item $\psi_{k-1;i}$  is a scaling on each (previously defined) tent $ \widehat \Omega_{k;q;i}$, $q\in Z_{k-2}^{[n-1]}$, and 
 for each $q'\in Z_{k-3}^{[n-1]}$, $\psi_{k-1;i}$ maps every tent $\widehat \Omega_{k;q;i}$ contained in $|\sfT_{k-1;q';i}|$ onto a cube in 
$\Refine^{t \nu}(\Omega_{k-1;q';i})$ with a face  in $\partial |\sfT_{k-2;i}|$,
\end{enumerate}
where $\sfK'' $ is a constant depending only on $n$ and $K$ and, more precisely, for $k\ge 2$, $\sK''$ depends only on $n$ and $\nu$ and
is independent of  $k$.

Repeat this for $\ell= k-2, \ldots, 1$, we obtain a sequence of 
$\sfK''$-quasiconformal maps 
\[
\psi_{k;i},\,  \psi_{k-1;i}, \, \psi_{k-2;i},\, \ldots, \,\psi_{1;i},
\]
having one essential property. Namely,  for each $\ell\in \{1,\ldots, k\}$,  points in $\sfT_{\ell;i}$ are not moved by $\psi_{\ell+2;i}\circ\cdots \circ \psi_{k;i}$ and they are moved first slightly by $\psi_{\ell+1;i}$, then moved  by the tunnel-contracting map $\psi_{\ell;i}$ into tents situated in $ \sfT_{\ell-1;i}$.  From then on, tents are moved only by a scaling $\psi_{1;i} \circ\cdots \circ \psi_{\ell-1;i}$. In the above, we take  $\psi_{k+1;i}$ and  $\psi_{k+2;i}$  to be the identity map. Points in $|B_i|$ stay fixed under $\psi_{2;i}\circ \cdots, \circ \psi_{k-1;i} \circ \psi_{k;i}$ and are only moved by $\psi_{1;i}$.

Therefore the  composition  
\[f_{k;i}= \psi_{1;i} \circ \cdots, \circ \psi_{k;i} \circ \varphi_{k;i}\colon |\Real_{K_k}(Z_k;\Sigma_i)|\to |\Real_K(Z;\Sigma_i)|,\]
 is a $\sfK$-quasiconformal homeomorphism with  $\sK =  (\sK'')^2 \sK_\ind$ and is identity on $\Core_K(Z;\Sigma_i)$.
\end{proof}

\begin{proof}[Proof of  Theorem \ref{theorem:evolution-short}]
Quasiconformal stability of the sequence $(Z_k)$ follows by taking  the inverse $f_{k;i}^{-1}$ of the mapping $f_{k;i}$ in Theorem \ref{theorem:quasiconformal-stable}.

From this and Lemma \ref{lemma:summary}, Theorem \ref{theorem:evolution-short} follows. 
\end{proof}

\chapter{Lakes of Wada}
\label{sec:Wada}

Lakes of Wada continuum is a compact connected subset of $\bS^n$ which is the common boundary of its (at least three) complementary components. These complementary domains are called Lakes of Wada. The existence of planar Lakes of Wada was first shown by Brouwer \cite{Brouwer} in 1910, and by Yoneyama \cite{Yoneyama} in 1917. Lakes of Wada and the more general Wada basins occur as basins of attraction in the dynamics of nonlinear physical system. 
Existence of  Wada continuum as the common boundary of Fatou components in complex dynamics has been shown recently by Mart\'i--Pete, Rempe, and Waterman \cite{Marti-Rempe-Waterman}. 
Wada continua in higher dimensions were constructed by Luba\'nski \cite{Lubanski} in 1951. \index{Wada!continuum}

Evolution of separating complexes, repeated indefinitely,  yields the existence of Lakes of Wada in Riemannian $n$-manifolds in dimension $n\geq 3$, \emph{with controlled geometry}.  This result is contained implicitly in \cite{Drasin-Pankka}. %, but concluded there by a different method. 

\begin{theorem}[Wada continuum]
\label{intro-thm:Wada-Riemannian-manifold} 
\index{Lakes of Wada!quasiconformally controlled} 
Let $n\ge 3$, $m \ge 2$, and $(M,g)$ be a Riemannian $n$-manifold with boundary components $\Sigma_1,\ldots, \Sigma_m$.
Then there exists a compact connected continuum $S$ in $M$ whose complement  has exactly $m$ components, $ M_1,\ldots, M_m$, 
satisfying $\Sigma_i \subset  M_i$, for which
\[ 
\partial M_1\setminus \partial M = \cdots = \partial M_m \setminus \partial M =\bigcap_{i=1}^m\,  \overline{M_i} =S.
\]  
Moreover, there exists a constant $\mathsf K=\mathsf K(n,M)>1$ so that for each $ i\in\{1,\ldots, m\}$,   $M_i\setminus\partial M$ is $\mathsf K$-quasiconformal to $\Sigma_i\times (0,1)$.
\end{theorem}

\begin{proof}
For the proof we pass from the Riemannian manifold $(M,g)$ to a cubical complex  with a flat metric. In view of Proposition \ref{prop:Riemannian-to-cubical},  there exists a cubical $n$-complex $K$ having a flat metric $d_K$ in which each cube is isometric to a Euclidean unit cube, and for which $(|K|, d_K)$ is quasi-similar to $(M,g)$. 
Therefore, it suffices to prove Theorem \ref{intro-thm:Wada-Riemannian-manifold} in the setting of $(|K|, d_K)$. 

From now on, we continue to use $\Sigma_1, \ldots, \Sigma_m$ for the boundary components of $K$, and refer freely to the construction in Section \ref{sec:evolution}. 

Let $K_k= \Refine^{k \nu}(K)$, and $Z_0, Z_1,\ldots, Z_k, \ldots$, respectively, be the  separating complexes in $K_0, K_1,\ldots, K_k, \ldots $ in Theorem \ref{theorem:evolution-short}. 

Recall that the cores associated to the components separated by $Z_k$ are the complexes
\[
\Core_{K_k}(Z_k;\Sigma_i)= \Comp_{K_k}(Z_k;\Sigma_i) - \Star_{K_k}(Z_k)\quad i= 1,\ldots,m,
\]
and  they are expanding in a strong sense, 
\[
|\Core_{K_{k-1}}(Z_{k-1};\Sigma_i)| \subset \interior |\Core_{K_k}(Z_k;\Sigma_i)|.
\]
Thus the unions 
\[
M_i=\bigcup_{k=1}^\infty |\Core_{K_k}(Z_k;\Sigma_i)|,\quad i=1,\ldots, m,
\]
are mutually disjoint open sets in $M$. Let
\[
S= \bigcap_{k=1}^\infty |\Star_{K_k}(Z_k)|.
\]
We claim that $\partial M_1 \setminus \partial |K| = \ldots = \partial M_m\setminus \partial |K| =S$.

The inclusion $\bigcup_{i=1}^m (\partial M_i  \setminus \partial |K|) \subset S$ is  immediate.

We check next that  $S \subset \partial M_i \setminus \partial |K| $ for each $i\in \{1,\ldots,m\}$. Fix an index $i$ and let  $x\in S$. Then, for each $k\ge 1$, the point $x$ is contained in the union $Q_k \cup Q'_k$ of two adjacent $n$-cubes $Q_k, Q'_k \in \Star_{K_k}(Z_k)$ having a common face in $Z_k^{[n-1]}$.  
The relative Wada property of
$Z_{k+1}$  with respect to $Z_k$ stated in Theorem \ref{theorem:evolution-short} yields the existence of an $n$-cube $Q^*_{k+1}$ in $ \Comp_{K_{k+1}}(Z_{k+1};\Sigma_i)^{[n]} \cap \Refine^\nu(Q_k \cup Q'_k )^{[n]}$. By the definition of cores, the $n$-cube $Q^*_{k+1}$ necessarily contains a point $x_{k+1}$ in $\Core_{K_{k+1}}(Z_{k+1};\Sigma_i)$. Hence $x$ is a limit point of the sequence $(x_{k+1})$ in $ M_i$, thus a boundary point of $M_i$.
This proves that  
\[\partial M_1 \setminus \partial |K| = \ldots = \partial M_m\setminus \partial |K| =S.\]  

Before verifying the quasiconformality, we briefly recall the metrics defined in  Section \ref{sec:metric-realization}. Complex $K$ is equipped a flat metric $d_K$ induced by its flat structure. Distance between points in $M_i\subset |K|$ (also distance between points in $\interior(|\Comp_{K_k}(Z_k;\Sigma_i)|\setminus |Z_k|)$) is measured by $d_K$.
The  realization $\Real_{K_k}(Z_k;\Sigma_i)$ is equipped with the lifted standard metric, that is, the path metric for which each $n$-cube in $\Real_{K_k}(Z_k;\Sigma_i)$ is locally isometric to the corresponding $n$-cube in  $|\Comp_{K_k}(Z_k;\Sigma_i)|\subset |K|$ via lifting. Under these metrics,  $\interior (|\Comp_{K_k}(Z_k;\Sigma_i)|\setminus |Z_k|)$ and $\interior (|\Real_{K_k}(Z_k;\Sigma_i)|)$ are conformal.

Recall also that the  lift $\pi_{K_k, Z_k;\Sigma_i}^{-1}(\Core_{K_k}(Z_k;\Sigma_i))$ of core $\Core_{K_k}(Z_k;\Sigma_i)$ to $\Real_{K_k}(Z_k;\Sigma_i)$   is identified with the core itself;  we use the same notation for both.

For each $i\in \{1,\ldots, m\}$ and  $k\ge 2$, let
\[
g_{k;i}\colon |\Real_K(Z,\Sigma_i)|\to |\Real_{K_k}(Z_k;\Sigma_i)|
\]
be the inverse of the $\sfK$-quasiconformal mapping in Theorem \ref{theorem:quasiconformal-stable}, and let
\[
h_{k;i}\colon \interior (|\Comp_K(Z,\Sigma_i)|\setminus |Z|) \to \interior(|\Comp_{K_k}(Z_k;\Sigma_i)|\setminus |Z_k|)
\]
be the mapping associated to $g_{k;i}$ canonically via lifting.

For each fixed $i$, mappings $(h_{k;i})$ are identity on $|\Core_K(Z,\Sigma_i)|$, hence equicontinous, therefore they form a normal family. Therefore there is a subsequence $(h_{k_j;i})$ which converges locally uniformly to a $\sfK$-quasiconformal map 
\[h_i\colon \interior(|\Comp_K(Z,\Sigma_i)|\setminus |Z|) \to \ker\left(\interior(|\Comp_{K_{k_j}}(Z_{k_j};\Sigma_i)|\setminus |Z_{k_j}|)\right);\]
see V\"ais\"al\"a \cite{Vaisala-book}. Recall that the kernel $\ker (A_j)$ of a sequence of sets $(A_j)$  in $|K|$ is the set of all points in $|K|$ which has a neighborhood that is contained in all but finitely many sets $A_j$. 
Since 
\[
|\Core_{K_k}(Z_k;\Sigma_i)|\subset |\Comp_{K_k}(Z_k;\Sigma_i)|\setminus |Z_k| \subset |\Core_{K_k}(Z_k;\Sigma_i)|\cup |\Star_{K_k}(Z_k)|,
\]
and cores are expanding in a strong sense,  it is straightforward to check that 
\[
\ker\left(\interior(|\Comp_{K_{k_j}}(Z_{k_j};\Sigma_i)|\setminus |Z_{k_j}|)\right)= M_i \setminus \partial |K|.
\]

Since  $ \interior (|\Comp_K(Z,\Sigma_i)|\setminus |Z|)$ is $L(n,K)$-bilipschitz to $\Sigma_i\times (0,1)$, each $M_i\setminus \partial |K|$ is $\sfK$-quasiconformal to the space $\Sigma_i\times (0,1)$ for some constant $\sfK=\sfK(n,K)\geq 1$. 
 
This completes the proof of the theorem.
\end{proof}

\begin{remark}
Topological  Wada continua may be built by iterating the steps in Sections \ref{sec:reservoir-canal-system} and  \ref{sec:channeling} alone. The localized channeling procedure in Section \ref{sec:localization} and the evolution in Section \ref{sec:evolution} are used in constructing Wada continua which are also quasiconformally stable.
\end{remark}

\begin{corollary}[Lakes of Wada]
\label{cor:Wada-sphere} 
Let $n\ge 3$, $m \ge 2$, and $D_1,\ldots, D_m$ be mutually disjoint closed connected PL $n$-dimensional submanifolds in $\bS^n$. 
Then there exists a compact connected continuum $S$ in $\bS^n\setminus \bigcup_{i=1}^m D_i $, whose complement in $\bS^n$  has exactly $m$ components,  $ M_1,\ldots, M_m$ satisfying $D_i \subset \interior M_i$, for which 
\[ \partial M_1= \cdots = \partial M_m =\bigcap_{i=1}^m\,  \overline{M_i}=S.\]  
Moreover, there exists a constant $\mathsf K=\mathsf K(n)>1$  so that 
for each $i\in\{1,\ldots, m\},$  $M_i$  is $\mathsf K$-quasiconformal to $\interior D_i$. 
\end{corollary}

Theorem \ref{intro-thm:Wada-on-sphere} in Introduction follows immediately from Corollary \ref{cor:Wada-sphere}.

\medskip

Our construction in Theorem \ref{intro-thm:Wada-Riemannian-manifold} yields Lakes of Wada which are also John domains. We state this property as a corollary (Corollary \ref{cor:Wada-John-domain}) for  Riemannian manifolds.

The concept of John domains was originated by F. John \cite{John}, and the term John domain was due to  Martio and Sarvas  \cite{Martio-Sarvas}. 
John domains occur frequently in the study of elasticity and geometric analysis.
A proper subdomain $D$ of $\R^n, n\geq 2$ is a $C$-\emph{John domain}, if there exists a constant $C\geq 1$ such that any two points $x_1, x_2 \in D$ can be joined by a rectifiable arc $\gamma \subset D$ for which \index{John domain}
\[\min_{j=1, 2} \, s(\gamma(x_j,x))\leq C  \,d(x,\partial D)\quad \text{for all}\,\, x\in \gamma, \]
\noindent where $s(\gamma)$ is the length of $\gamma$, $\gamma(x_j,x)$ the part of $\gamma$ between $x_j$ and $x$, and $d(x,\partial D)$  the  distance from $x$ to $\partial D$. 

In manifolds, we take  the obvious generalization of the definition of $C$-John domains. The following statement is a straightforward consequence of  Remark \ref{rmk:core-to-core}; we omit the details.

\begin{corollary}\label{cor:Wada-John-domain} 
In Theorem \ref{intro-thm:Wada-Riemannian-manifold}, the continuum $S$ may be chosen  for which the open submanifolds $\interior (M_i)$ are all $C$-John domains, for a constant $C=C(n,M)\geq1$.
\end{corollary}

In dimension $n=2$ the situation changes. While Lakes of Wada in $\bS^2$ are all conformal to the unit disk provided that they are $2$-cells, they are never John domains.

\begin{proposition}\label{proposition:Wada_2dim} 
Let $m\geq 3$, and $\cL_1,\ldots, \cL_m$ be open $2$-cells in $\bS^2$ for which
\[ 
\partial \cL_1= \cdots = \partial \cL_m =\bigcap_{i=1}^m\,  \overline{\cL_i}.
\]  
Then none of $\cL_1,\ldots, \cL_m$ is a John domain.
\end{proposition}

\begin{proof}
Suppose that $\cL_1$ is a John domain. We transform the question to $\overline{\R^2}$ by applying a stereographic projection of $\bS^2$, and assume that $\cL_1$ is a  $C$-John domain in $\overline{\R^2}$ for some $C\geq 1$ and $\infty \in\cL_2$. 
Denote by $S=\bigcap_{i=1}^m\,  \overline{\cL_i}$. 

Let $z_0$ be a point in $\cL_1$, and  $a>0$ and $b>0$ be the smallest and the largest radius $r$, respectively, for which $S$ meets the circle $|z- z_0|=r$. Choose and fix points $p_a, p_b \in S$ for which $|p_a-z_0|= a$ and $|p_b -z_0|=b$. Thus  $\{z \colon |z-z_0|<a\}\subset \cL_1$ and  $\{z \colon |z-z_0|\leq a\}\subset \cL_1 \cup S$.
 
Let $\epsilon = \min\{ |p_a-p_b|/10, a/(10C), (b-a)/(10 C)\}$, $D_a$ and $ D_b$ be  disks $\{z \colon |z-p_a|\leq \epsilon\}$ and $\{z \colon |z-p_b|\leq \epsilon\}$, respectively, and  $\Omega =\{ z \colon a<|z-z_0| <b\}\setminus (D_a\cup D_b)$. Since $p_a$ and $p_b$ are points in the boundary of $ \cL_i$ for all $i$,  each $\cL_i$ enters the interior of $D_a$ and also the interior of D$_b$. 

Thus for each $j \in \{2,3\}$, there exists a PL Jordan arc $\Gamma_j\subset \cL_j \cap (\{z \colon |z-z_0|\geq a \} $ having one end points in  $\interior D_a$ and the other in $ \interior D_b$. Choose and fix an open subarc $\gamma_j$ of $\Gamma_j$ which is contained in the interior of $\Omega$, and  has  one end point on $\partial D_a$ and the other  on the outer boundary component of $\Omega$. Let $\Omega_1$ be the component of $\Omega\setminus (\gamma_a\cup \gamma_b)$ for which the intersection of $\partial \Omega_1 $ with the inner boundary component of $\Omega$ is an arc $\tau$ in $\{z \colon |z-p_a|=\epsilon\}$.

The intersection $\Omega_1\cap \{z \colon |z-z_0|= (a+b)/2\}$ necessarily contains a point in the common boundary $S$. Therefore there is necessarily a point $z_1\in \cL_1\cap \Omega_1$ whose distance to circle $\{z \colon |z-z_0|= (a+b)/2\}$ is less than $\epsilon$.

Now any arc $\gamma\subset \cL_1$ connecting $z_1$ to $z_0$ must intersect $\tau$, say at a point $p$. Since 
\[ 
\min\{|p-z_0|,|p-z_1|\} \geq \min\{ a, (b-a-2\epsilon)/2 \} >  2C|p-p_a|\geq 2 C \dist(p, S), 
\]
$\cL_1$ is not a $C$-John domain. The same argument applies to other $2$-cells.
\end{proof}

%%%%%%%%%%%%%%%%%%%%%%%%%%%%%%%%%%%%%%%%%%%%%%%%%%%%%%%%%%%%%%%%%%%%%%%%%%%%%%%
%%%%%%%%%%%%%%%%%%%%%%%%%%%%%%%%%%%%%%%%%%%%%%%%%%%%%%%%%%%%%%%%%%%%%%%%%%%%%%%
%%%%%%%%%%%%%%%%%%%%%%%%%%%%%%%%%%%%%%%%%%%%%%%%%%%%%%%%%%%%%%%%%%%%%%%%%%%%%%%
%%%%%%%%%%%%%%%%%%%%%%%%%%%%%%%%%%%%%%%%%%%%%%%%%%%%%%%%%%%%%%%%%%%%%%%%%%%%%%%
%%%%%%%%%%%%%%%%%%%%%%%%%%%%%%%%%%%%%%%%%%%%%%%%%%%%%%%%%%%%%%%%%%%%%%%%%%%%%%%
%%%%%%%%%%%%%%%%%%%%%%%%%%%%%%%%%%%%%%%%%%%%%%%%%%%%%%%%%%%%%%%%%%%%%%%%%%%%%%%
%%%%%%%%%%%%%%%%%%%%%%%%%%%%%%%%%%%%%%%%%%%%%%%%%%%%%%%%%%%%%%%%%%%%%%%%%%%%%%%
%%%%%%%%%%%%%%%%%%%%%%%%%%%%%%%%%%%%%%%%%%%%%%%%%%%%%%%%%%%%%%%%%%%%%%%%%%%%%%%
%%%%%%%%%%%%%%%%%%%%%%%%%%%%%%%%%%%%%%%%%%%%%%%%%%%%%%%%%%%%%%%%%%%%%%%%%%%%%%%

\part{Deformation}
\label{part:deformation}

\chapter{Quasiregular deformation theorem}

In this Part, we extend Rickman's planar deformation theory (\cite[Section 5]{Rickman_Acta}) to dimensions $n\ge 3$. In our setting deformation is used to simplify  the cubical Alexander maps on (shellable) cubical complexes by deforming them into quasiregular maps which are expansions of Alexander maps on  simpler cubical complexes; see Part \ref{part:QR-extension}.

Although Rickman does not state his deformation result explicitly in \cite[Section 5]{Rickman_Acta}, we may formulate it as follows. We refer to \cite[Section 3]{Rickman_Acta} for the definition of a mapping complex.

\begin{theorem*}[Rickman, 1985]
Suppose that $K$ is a $2$-dimensional mapping complex whose space is a surface $\Sigma$, $G$ is a subcomplex whose space is a $2$-cell, and $f\colon \Sigma\to \bS^2$ is an Alexander map whose restriction $f|_{|G|}$ evenly covers $\bS^2$. Then there exists a discrete and open map $F\colon \Sigma \times [0,1] \to \bS^2 \times [0,1]$ for which $F|_{\Sigma \times \{0\}} = f$ and $F|_{\Sigma\times \{1\}}$ is an Alexander map $f' \colon \Sigma \to \bS^2$ expanded by simple covers, associated to a mapping complex $K'$ on $\Sigma$ obtained by collapsing the $2$-complex $G$ to a $1$-complex $G'$ on a $1$-cell.
\end{theorem*}

Our main deformation result is a version of of Rickman's theorem for cubical Alexander maps on shellable cubical complexes in dimension $n\geq 3$.

\begin{definition}
\label{def:shellable}
A cubical complex $K$, having an $n$-cell as its space $|K|$, is \emph{shellable}\index{cubical complex!shellable} if there exists an order $q_1, q_2,\ldots, q_m$ of the $n$-cubes of $K$ for which \index{shellability}
\[
(q_1\cup \cdots \cup q_i) \cap q_{i+1}
\]
is an $(n-1)$-cell for each $i=1,\ldots, m-1$.
\end{definition}

We formulate our main result in terms of the star replacement $K^*$ of the cubical complex $K$.

\begin{definition}
\label{def:star-replacement}
\index{cubical complex!star-replacement $K^*$} \index{$K^*$}
Let $K$ be a cubical complex whose space is an $n$-cell. A simplicial complex $K^*$ is a \emph{star-replacement of $K^\Delta$} if $|K^*|=|K|$, $K^*|_{\partial |K|} = K^\Delta|_{\partial |K|}$, and $K^*$ has a unique vertex in the interior of $|K|$.
\end{definition}

\begin{figure}[h!]
\begin{overpic}[scale=0.8,unit=1mm]{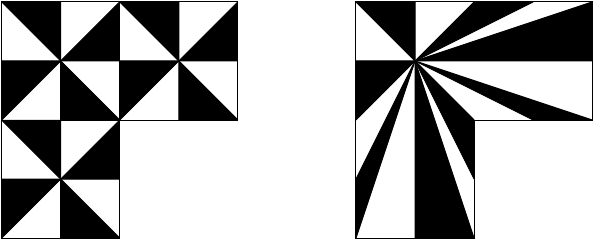} % add: grid
\end{overpic}
\caption{The barycentric triangulation $K^\Delta$ of a cubical complex $K$ in Figure \ref{fig:canonical} and its star-replacement $K^*$.}
\label{fig:star_replacement}
\end{figure}

\begin{restatable}[Quasiregular deformation theorem]{theorem}{Quasiregulardeformation} \index{Quasiregular deformation theorem}
\label{thm:QR-deformation-star} 
Let $n\ge 2$ and $K$ be a shellable cubical $n$-complex. Then there exist a star replacement $K^*$ of $K$ and constants $\sL^\dagger=\sL^\dagger(n, K^*)\ge 1$ and $\sL'=\sL'(n,K)\ge 1$ for the following.

Let $f \colon |K|\to \bS^n$ be an $\sL$-BLD-controlled expansion of a $K^\Delta$-Alexander map for $\sL\ge 1$. 
Then there exists a $\max\{ \sL', \sL\}$-BLD homotopy 
\[ F\colon |K|\times [0,1]\to \bS^n\times [0,1]\quad (\text{modulo}\,\,\, |\partial K|) \]
from $f$ to a $\max\{ \sL^\dagger, \sL\}$-BLD-controlled expansion $f'\colon |K|\to \bS^n$ of a $K^*$-Alexander map. 
In addition, if $|K|$ is a convex subset of $\R^n$ and the flat structure of $K$ consists of affine maps, then we may choose $K^*$ so that its flat structure also consists of Euclidean affine maps.
\end{restatable}

%%%%%%%%%%%%%%%%%%%%%%%%%%%%%%%%%%%%%%%%%%%%%%%%%%%%%%%%%%%%%%%%%%%%%%%%%%%%%%%
%%%%%%%%%%%%%%%%%%%%%%%%%%%%%%%%%%%%%%%%%%%%%%%%%%%%%%%%%%%%%%%%%%%%%%%%%%%%%%%

\chapter{Local deformation}

\section{Local deformation of Alexander stars}
\label{sec:locel_deformation_theory}

\newcommand{\DStar}{\mathrm{N}\Star}
\newcommand{\st}{\mathrm{st}}

We say that a pair $(K,f)$ is an \emph{Alexander $n$-complex} if $K$ is a simplicial $n$-complex and $f\colon |K|\to \bS^n$ is a $K$-Alexander map. 

Recall from the definition of Alexander maps in Section \ref{sec:Alexander-maps} that the target  $\bS^n$ has the structure of the complex $\Sigma^2(\Delta_n)$ (or $\Sigma^2(\Delta^\square_n)$ when the domain is a barycentric triangulation of a cubical complex), which consists of two $n$-simplices $B^n_+$ and $B^n_-$ having all their vertices, $w_0,\ldots, w_n \in \bS^{n-1}$, in common. 

A vertex $v\in K$ is called a \emph{$w_i$-vertex for $f$} if $f(v)=w_i$. \index{$w_i$-vertex}

Let $v$ be a $w_i$-vertex. For $j \ne i$, we define the $w_j$-neighborhood of $\Star_K(v)$ to be
\[
\DStar_f(v;w_j) = \bigcup_{u\in \Link_K(v)\cap f^{-1}(w_j)} \Star_K(u).
\]
where $\Link_K(v)$ is the \emph{link of $v$ in $K$}, that is, $\Link_K(v) = \Star_K(v) \cap (K - \Star_K(v))$. Recall that $K - \Star_K(v)$ is the subcomplex of $K$ consisting of all simplices of $K$ which do not meet $v$. 

Note that $\Star_K(v) \subset \DStar_f(v;w_j)$; see Figure \ref{fig:DStar} for examples.

\begin{figure}[htp]
\begin{overpic}[scale=.43,unit=1mm]{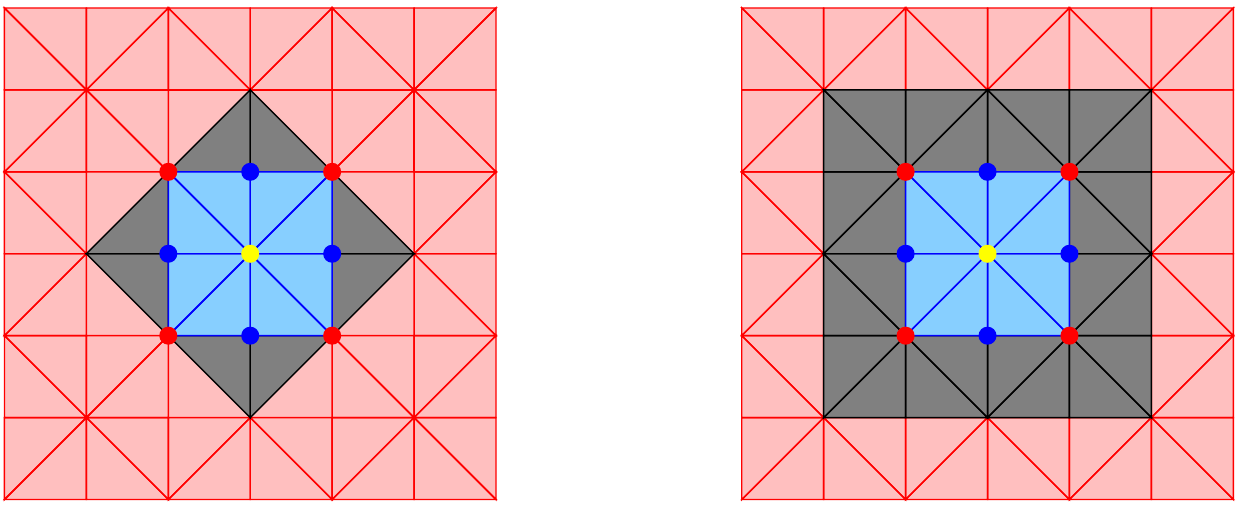} % add: grid
\put(17,20){\tiny $v$}
\put(25,18){\tiny $v_1$}
\put(25,26){\tiny $v_2$}
\put(17,26){\tiny $v_3$}
\put(9,26){\tiny $v_4$}
\put(9,18){\tiny $v_5$}
\put(9,10){\tiny $v_6$}
\put(17,10){\tiny $v_7$}
\put(25,10){\tiny $v_8$}

\put(70,20){\tiny $v$}
\put(78,18){\tiny $v_1$}
\put(78,26){\tiny $v_2$}
\put(70,26){\tiny $v_3$}
\put(62,26){\tiny $v_4$}
\put(62,18){\tiny $v_5$}
\put(62,10){\tiny $v_6$}
\put(70,10){\tiny $v_7$}
\put(78,10){\tiny $v_8$}

\end{overpic}
\caption{Neighborhoods $\DStar_f(v;w_1)$ and $\DStar_f(v;w_2)$ of the star $\Star_K(v)$ of a $w_0$-vertex $v$; here $f^{-1}(w_1) = \{ v_1,v_3,v_5,v_7\}$ and $f^{-1}(w_2) = \{ v_2,v_4,v_6,v_8\}$.}
\label{fig:DStar}
\end{figure}

In deformation, the notion of reduced star has a key role; see Figure \ref{fig:reduced_star} for two examples.

\begin{definition}[Reduced star]
The \emph{reduced star $\Star_f(v;w_j)$ of a $w_i$-vertex $v$ for $i\ne j$} is the subcomplex consisting of all the simplices in $\Star_K(v)$ which do not meet $f^{-1}(w_j)$. \index{reduced star}  
\end{definition}

We say that a $(n-1)$-simplex $\tau$ is  \emph{$w_j$-avoiding} if $\tau \cap f^{-1}(w_j) = \emptyset$. Note that an $(n-1)$-simplex in $\Star_K(v)$ is $w_j$-avoiding if and only if it is contained in $\Star_f(v;w_j)$.

\begin{remark}
In a simplicial $1$-complex, we define the reduced star of a vertex $v$ to be simply the vertex $v$ itself.
\end{remark}

\begin{figure}[htp]
\begin{overpic}[scale=.43,unit=1mm]{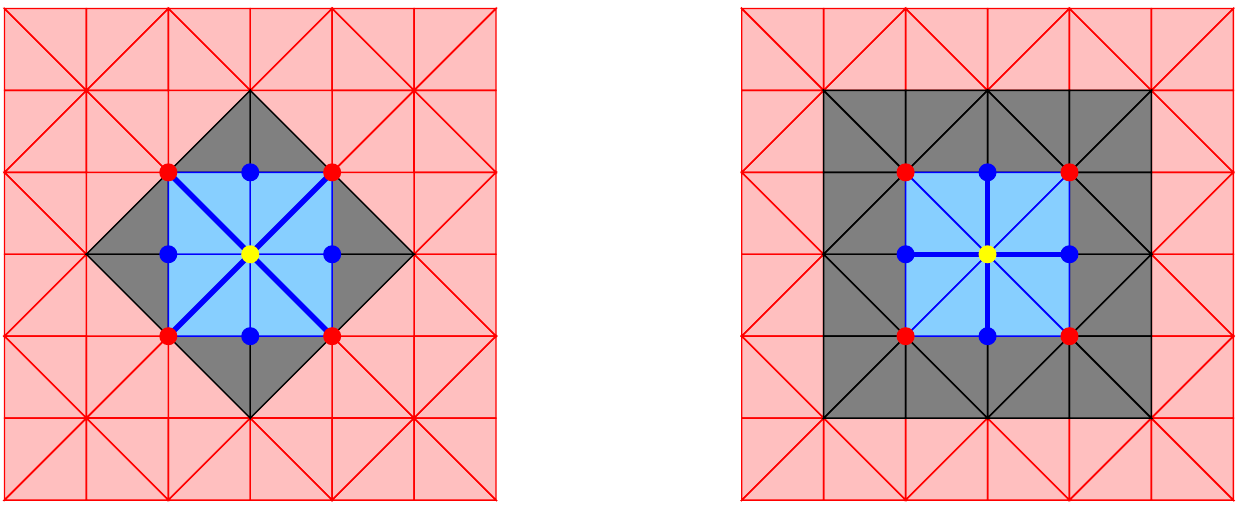} % add: grid
\put(17,20){\tiny $v$}
\put(25,18){\tiny $v_1$}
\put(25,26){\tiny $v_2$}
\put(17,26){\tiny $v_3$}
\put(9,26){\tiny $v_4$}
\put(9,18){\tiny $v_5$}
\put(9,10){\tiny $v_6$}
\put(17,10){\tiny $v_7$}
\put(25,10){\tiny $v_8$}

\put(70,20){\tiny $v$}
\put(78,18){\tiny $v_1$}
\put(78,26){\tiny $v_2$}
\put(70,26){\tiny $v_3$}
\put(62,26){\tiny $v_4$}
\put(62,18){\tiny $v_5$}
\put(62,10){\tiny $v_6$}
\put(70,10){\tiny $v_7$}
\put(78,10){\tiny $v_8$}
\end{overpic}
\caption{Reduced stars $\Star_f(v;w_1)$ and $\Star_f(v;w_2)$ of a star $\Star_K(v)$ marked in bold; here $f^{-1}(w_1) = \{ v_1,v_3,v_5,v_7\}$ and $f^{-1}(w_2) = \{ v_2,v_4,v_6,v_8\}$.}
\label{fig:reduced_star}
\end{figure}

\subsection{Collapse of a complex at a vertex}

We define now the reduction of the complex $K$ at vertex $v$. First we make an elementary observation. 

\begin{lemma}
\label{lemma:link-collapse}
Let $v\in K$ be a $w_i$-vertex, $L=\Link_K(v)$, and $u \in L$ a $w_j$-vertex with $j \ne i$. Then the star $\Star_L(u)$ is isomorphic to the join $\Link_L(u) \star \{ v \} \subset \Star_f(v;w_j)$. 
\end{lemma}

An immediate consequence of this observation is the following proposition on collapse of the complex $K$. For the statement, we give the  definition of a tame vertex. In Figure \ref{fig:DStar}, the vertex $v$ is a $w_j$-tame $w_0$-vertex for $j=1$ and $2$.

\begin{definition}
\label{def:tame-vertex} \index{$w_j$-tame vertex}
A $w_i$-vertex $v\in K$ in an Alexander $n$-complex $(K,f)$ is \emph{$w_j$-tame in $K$ with $j\ne i$} if the following conditions are satisfied:
\begin{enumerate}
\item $v\in \interior K$ and $\Star_K(v)$ is isomorphic to a simplicial complex $P$ in $\R^n$ for which $|P|$ is homeomorphic to $\bar B^n$, and 
\item each $w_j$-vertex $u$ of $\Link_K(v)$ belongs to $\interior K$ and $(\Star_K(u), \Link_K(v)\cap \Star_K(u))$ is isomorphic to a pair $(Q,R)$ of  simplicial complexes for which $|Q| \subset\R^n$,  $|R|\subset \R^{n-1}\times \{0\}$, and $(|Q|,|R|)$ is homeomorphic to  $(\bar B^n, \bar B^{n-1})$.
\end{enumerate}
\end{definition}

\begin{proposition}
\label{prop:star-collapse}
Let $(K,f)$ be an Alexander $n$-complex, let $v\in K$ be a $w_j$-tame $w_i$-vertex in $K$. Then there exists a continuous map $\Phi_{f,j} \colon |K - \Star_K(v)|\to |K|$ having the following properties:
\begin{enumerate}
\item $\Phi_{f,j}$ is the identity on $K-\DStar_f(v;w_j)$; 
\item $\Phi_{f,j}$ is a simplicial map from $\Link_K(v)$ onto $\Star_f(v;w_j)$;
\item $\Phi_{f,j}$ is an $\sL$-BLD map with respect to the metric $d_K$ and homeomorphism  $\interior |K - \Star_K(v)|\to |K|\setminus |\Star_f(v;w_j)|$, where $\sL=\sL(n,\DStar_f(v;w_j))$; and
\item the image 
\[
K^\st_{v,j} = (\Phi_{f,j})_*(K - \Star_K(v))
\]
of $K - \Star_K(v)$ under $\Phi_{f,j}$ is a simplicial complex on $|K|$ with $(n-1)$-skeleton 
\[
(K^\st_{v,j})^{(n-1)} = \left( (\Phi_{f,j})_*(K - \Star_K(v))\right)^{(n-1)} \supset \, \Star_f(v;w_j).
\]
\end{enumerate}  
Moreover, there exists a $K^\st_{v,j}$-Alexander map $f_{v,j} \colon |K^\st_{v,j}| \to \bS^n$ satisfying 
\[
f_{v,j} \circ \Phi_{f,j}= f,
\] 
for which $v$ is a $w_j$-vertex.
\end{proposition}

\begin{definition}
We call $\Phi_{f,j}$ a \emph{collapse map of $K$ at $v$}, and $K^\st_{v,j}$ a \emph{collapse of $K$ at $v$}. \index{collapse map} \index{$K^\st_{v,j}$}
\end{definition}

\begin{proof}[Proof of Proposition \ref{prop:star-collapse}]
Since $v$ is $w_j$-tame, we may combine the local isomorphisms from Lemma \ref{lemma:link-collapse} to a well-defined simplicial map $\Link_K(v) \to \Star_f(v;w_j)$. We then extend its realization $|\Link_K(v)|\to |\Star_f(v;w_j)|$ to a non-collapsing simplicial map $|K - \Star_K(v)|\to |K|$, which is identity on $n$-simplices not meeting $\Link_K(v)$ or only meeting $\Link_K(v)$  in a vertex, and is bilipschitz on the $n$-simplices meeting the link $\Link_K(v)$. Indeed, let $u\in \Link_K(v)$ be a $w_j$-vertex. Since $v$ is $w_j$-tame, both $|\Star_K(v)\cap \Star_K(u)|$ and $|\Star_K(u)-\Star_K(v)|$ are $n$-cells, and $|\Star_K(v)\cap \Star_K(u)| \cap |\Star_K(u)-\Star_K(v)| = |\Link_K(v)\cap \Star_K(u)|$ is an $(n-1)$-cell. Thus there exists a bilipschitz homeomorphism $|\Star_K(u)-\Star_K(v)| \to |\Star_K(u)|$ extending the already defined map.

The existence of the Alexander map $f_{v,j}$ follows immediately from the observation that, in the complex $(\Phi_{f,j})_*(K - \Star_K(v))$, the vertex $v$ is a well-defined $w_j$ vertex. 
\end{proof}

By Proposition \ref{prop:star-collapse}, we may collapse a simplicial complex at a $w_j$-tame vertex for which this collapse produces a new Alexander map $f_{v,j} \colon |K|\to \bS^n$ with respect to the new complex. The mappings $f$ and $f_{v,j}$, however, have different degrees. More precisely,
\[
\deg f = \deg f_{v,j} + (\# \Star_K(v)^{[n]})/2.
\]
Thus, if $|K|$ is a closed manifold, there is no BLD homotopy  $F\colon |K|\times [0,1]\to \bS^n \times [0,1]$ from $f$ to $f_{v,j}$. 

In the rest of this chapter, we construct a map $\widetilde f \colon |K|\to \bS^n$ which is a BLD expansion of $f_{v,j}$ and is homotopic to mapping $f$. More precisely, we prove the following.

\begin{theorem}[Local deformation]
\label{thm:local-deformation}
Let $(K,f)$ be an Alexander complex of dimension $n\ge 2$. Let also $v\in K$ be a $w_j$-tame $w_i$-vertex of $K$ for $j\ne i$. Then there exist $\Ldef = \Ldef(v,\DStar_f(v;w_j)) \ge 1$ and an $\Ldef$-BLD map $F_{v,j} \colon |K|\times [0,1]\to |K|\times [0,1]$ having the following properties:
\begin{enumerate}
\item $F_{v,j}|_{|K|\times \{0\}} = f$ and
\item $F_{v,j}|_{|K|\times \{1\}} \colon |K|\times \{1\} \to \bS^n \times \{1\}$ is an $\Ldef$-BLD-controlled expansion of a $K^\st_{v,j}$-Alexander map, where $K^\st_{v,j}$ is a collapse of $K$ at $v$ with respect to $w_j$.
\end{enumerate}
Moreover, if $|K| \subset \R^n$, $|\DStar_f(v;w_j)|$ is a convex subset, and the flat structure of $K$ consists of affine maps, then  $K^\st_{v,j}$ may be chosen so that its flat structure consists of affine maps.
\end{theorem}

\begin{remark}
\label{rmk:local-deformation}
The proof shows that the constant $\Ldef$ depends, in fact, only on the dimension $n$ and the isometry class of the complex $\DStar_f(v;w_j)$.
\end{remark}

We discuss the proof of Theorem \ref{thm:local-deformation} in two steps: first  a local deformation of complex $K$ in star $\Star_K(v)$ into a  weakly simplicial complex, called clover complex $\Clover_K(v;w_j)$, and an associated $\Clover_K(v;w_j)$-Alexander map, then in the second step, an extraction of the leaves in $\Clover_K(v;w_j)$ into the complex  $K^\st_{v,j}$ and  a controlled expansion $|K|\to \bS^n$ of a $K^\st_{v,j}$-Alexander map.

\subsection{Clovers and weakly simplicial complexes} 
\label{sec:leaves_and_clovers} 

We define now the clover complexes used in  deformation. We begin with an observation on pairing $n$-simplices. 
In what follows, $(K,f)$ is an Alexander $n$-complex for $n\ge 2$.

Let $v_0$ be a $w_i$-vertex.
Since the star $\Star_K(v_0)$ admits an Alexander map, the $n$-simplices in $\Star_K(v_0)$ may be paired for each $j=0,\ldots, n, j\neq i,$ as follows. Given an $(n-1)$-simplex $\tau$ in the reduced star $\Star_f(v_0;w_j)$, there are exactly two $n$-simplices $T$ and $T'$ having $\tau$ as a face. Since $\Star_K(v_0)$ is simplicial, $T\cap T' = \tau$. The  vertices $x_T$ and $x_{T'}$ of $T$ and $T'$, respectively, which are not contained in $\tau$, are distinct for which $f(x_T) = f(x_{T'}) = w_j$, $T=\tau * \{x_T\}$ and $T' = \tau * \{x_{T'}\}$; here the join is taken in the simplicial complex $\Star_K(v_0)$. 

Since $f|_{\interior |T\cup T'|}\colon \interior |T\cup T'| \to \bS^n$ is an embedding, we call $\{T, T'\}$ a \emph{simple pair sharing $\tau$}. \index{simple pair}
 Since each $n$-simplex has a $w_j$-avoiding face, all $n$-simplices in $\Star_K(v_0)$ can be paired, and the pairing is unique.
We denote $\scrS_f(v_0;w_j)$ the family of all simple pairs in $\Star_K(v_0)$ over $w_j$-avoiding faces.

We now define leaves and then clover complexes; see Figure \ref{fig:3-leaved_clover} for an illustration.

\begin{definition}
\label{def:leaf}
\index{leaf complex}
An \emph{$n$-leaf $L$} is an $n$-dimensional CW-complex consisting of two $n$-simplices $T\cup T'$ which have a common vertex $v_L$, and whose intersection $T\cap T'$ is the star of $v_L$ in $\partial T$ and also the star of $v_L$ in $\partial T'$. 
\end{definition}

The space $|L|$ of an $n$-leaf is an $n$-cell and the union of the boundaries $\partial |T| \cup \partial |T'|$  is homeomorphic to $\bS^{n-1} \cup B^{n-1}$.

\begin{definition}
\label{def:clover} \index{clover complex $\Clover_f(v_0;w_j)$}

Let $v_0$ be a $w_i$-vertex. For $j=0, 1,\ldots, n, j\neq i$, a \emph{clover complex $\Clover_f(v_0;w_j)$} of a reduced star $\Star_f(v_0;w_j)$ is a CW-complex, which is a union of leaves for which each $(n-1)$-simplex $\sigma$ in $\Star_f(v_0;w_j)$ is a midrib $r_{L_\sigma}$ of a unique leaf $L_\sigma$ in $\Clover_f(v_0;w_j)$, and the leaves $L_\sigma$ are essentially mutually disjoint in the sense: $L_\sigma \cap L_{\sigma'} = \sigma \cap \sigma'$ for $(n-1)$-simplices $\sigma$ and $\sigma'$ in $\Star_f(v_0;w_j)$.
\end{definition}

\begin{figure}[h!]
\begin{overpic}[scale=.5,unit=1mm]{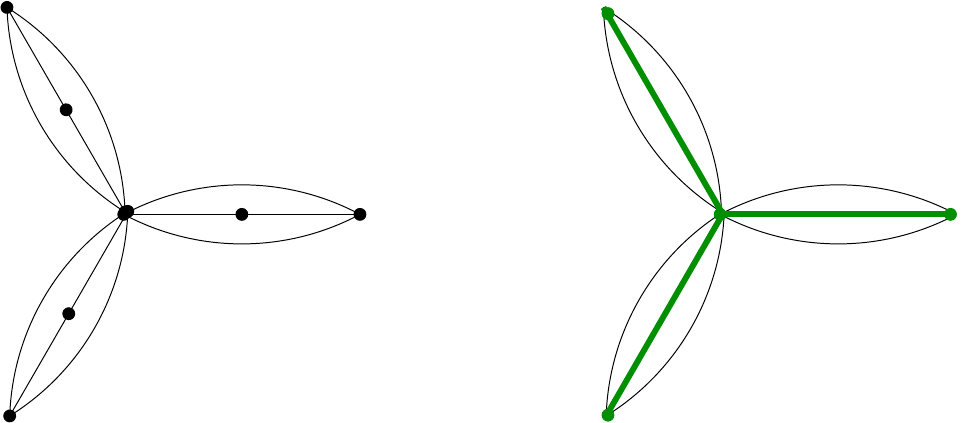} % add: grid
\end{overpic}
\caption{A $2$-dimensional three-leaved clover complex of a reduced star.}
\label{fig:3-leaved_clover}
\end{figure}

Finally, we define a clover map $f^{\cl}\colon |\Clover_f(v_0;w_j)| \to \bS^n$ associated to a $K$-Alexander map $f\colon |K|\to \bS^n$ as follows.

\begin{definition} \index{clover map $f^{\cl}$}
\label{def:clover-map}Let $v_0$ be a $w_i$-vertex in a $(K,f)$-Alexander $n$-complex, and 
$\Clover_f(v_0;w_j)$ be a clover complex of a reduced star $\Star_f(v_0;w_j)$ for $j\ne i$. A simplicial map 
\[
f^{\cl}\colon \Clover_f(v_0;w_j) \to \bS^n
\]
is a \emph{clover map associated to $f$ at $v_0$} if 
\begin{enumerate}
\item $f^\cl$ is an Alexander map in the sense that, for a leaf $L=T\cup T'$ in $\Clover_f(v_0;w_j)$, the images $f^\cl(T)$ and $f^\cl(T')$ are opposite hemispheres,
\item $(f^{\cl})^{-1}(w_i)$ is the set of centers of the leaves in $\Clover(v_0;w_n)$, 
\item $(f^{\cl})^{-1}(w_j) = \{v_0\}$, and $(f^{\cl})^{-1}(w_\ell) =f ^{-1}(w_\ell)$ for $\ell\ne i,j$.
\end{enumerate}
\end{definition}

\begin{remark}
The restriction of a clover map $f^{\cl} \colon |\Clover_f(v_0;w_j)|\to \bS^n$ to a leaf $L$ of $\Clover_f(v_0;w_j)$ is a topological simple cover in the sense that $f^{\cl}(|L|) = \bS^n$ and that $f^{\cl}_{\interior |L|} \colon \interior |L|\to \bS^n$ is an embedding.
\end{remark}

Clover complexes are examples of weakly simplicial complexes.

\begin{definition}
\label{def:weakly-simplicial-complex}\index{weakly simplicial complex}
A $\CW$-complex $P$ in an $n$-manifold $M$ is a \emph{weakly simplicial $n$-complex} if  
\begin{enumerate}
\item $M$ is the union of $n$-cells in $P$,
\item for each $n$-cell $\sigma\in K$,  $P|_\sigma$ is an $n$-simplex,
\item the restriction $P|_{T\cap T'}$ to the intersection of any two adjacent $n$-simplices $T$ and $T'$  is a well-defined $(n-1)$-dimensional simplicial complex, and
\item the $(n-1)$-skeleton $P^{(n-1)}$ is a simplicial complex.
\end{enumerate}
\end{definition}

Since a weakly simplicial $n$-complex $P$ consists of standard $n$-simplices, we say that a mapping $h\colon |P| \to \bS^n$ is $K$-Alexander if $h$ satisfies the usual definition for Alexander maps, i.e., $h$ is a simplicial map which maps adjacent $n$-simplices to opposite hemispheres; see Definition \ref{def:simplicial-Alexander-map}. In what follows, we say that the pair $(P,h)$ is a \emph{weakly simplicial Alexander $n$-complex}.

We may now formulate a version of Proposition \ref{prop:star-collapse} in which a star collapses to a clover complex. Since the proof is analogous to the proof of Proposition \ref{prop:star-collapse}, we omit the details.

\begin{proposition}
\label{prop:star-clover-collapse}
Let $(K,f)$ be a weakly simplicial Alexander $n$-complex, $n \ge 2$, $v\in K$ a $w_j$-tame $w_i$-vertex for $j\ne i$. Then there exists $\sL=\sL(n,\DStar_f(v;w_j))\ge 1$ and an $\sL$-BLD mapping $\Psi_{f,j} \colon |K - \Star_K(v)|\to |K|\setminus \interior |\Clover_f(v_0;w_j)|$ having the following properties:
\begin{enumerate}
\item $\Psi_{f,j}$ is the identity on $K-\DStar_f(v;w_j)$; 
\item $\Psi_{f,j}$ is a simplicial map from $\Link_K(v)$ onto $\partial \Clover_f(v_0;w_j)$; and
\item the union $K^\cl_{v,j} = \Clover_f(v_0;w_j) \cup (\Psi_{f,j})_*(K - \Star_K(v))$ is a well-defined weakly simplicial $n$-complex with space $|K|$.
\end{enumerate}
\end{proposition}

\begin{figure}[htp]
\begin{overpic}[scale=.45,unit=1mm]{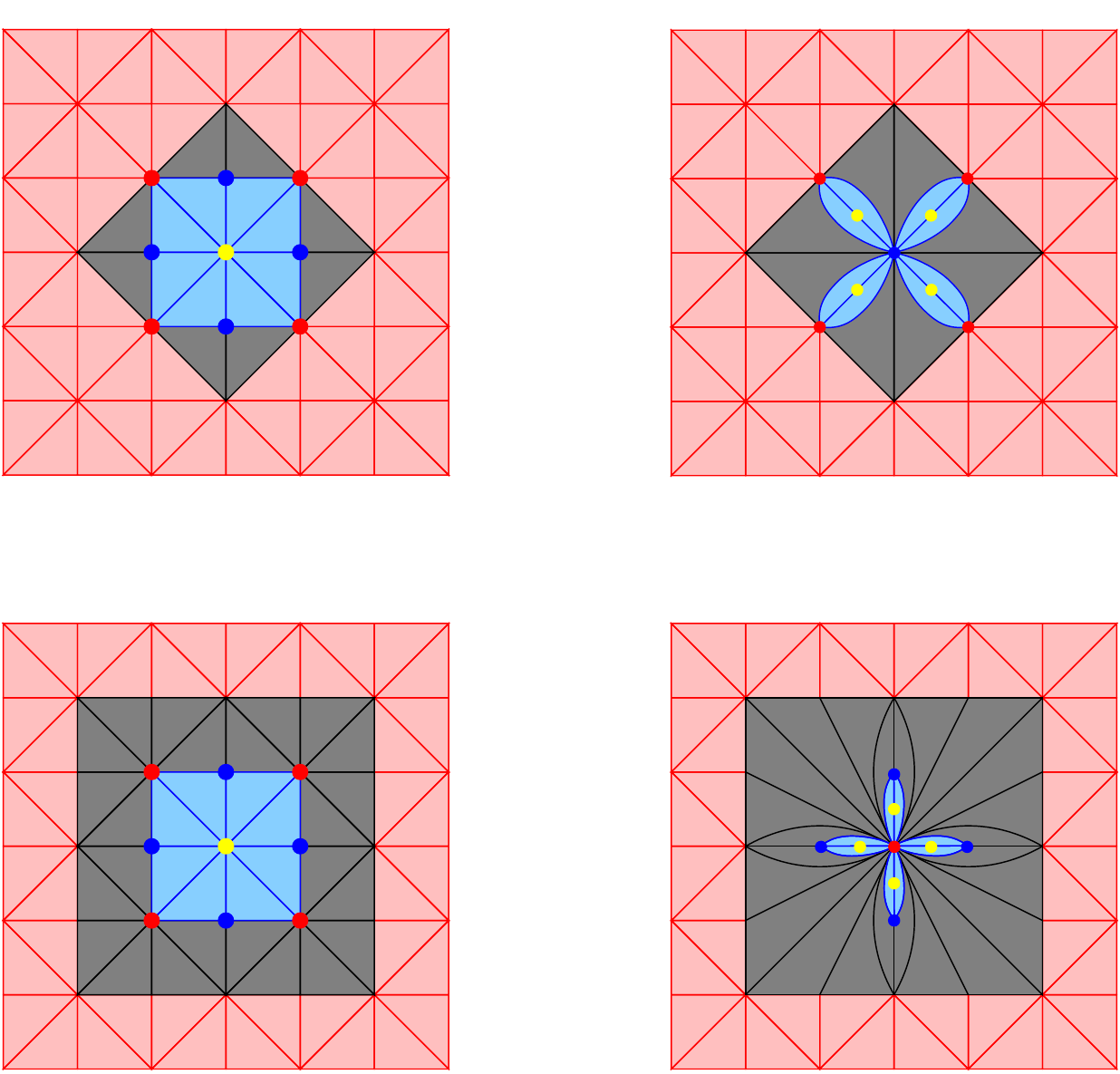} % add: grid
\put(38,35){\tiny $K$}
\put(95,35){\tiny $K^\cl_{v,2}$}
\put(38,85){\tiny $K$}
\put(95,85){\tiny $K^\cl_{v,1}$}
\end{overpic}
\caption{Complexes $K^\cl_{v,1}$ and $K^\cl_{v,2}$ associated to neighborhoods $\DStar_f(v;w_1)$ and $\DStar_f(v;w_2)$ of vertex $v$, respectively, in complex $K$ in Figure \ref{fig:DStar}.} 
\label{fig:deformation-ambient}
\end{figure}

We call the mapping $\Psi_{f,j} \colon |K - \Star_K(v)|\to |K|\setminus \interior |\Clover_f(v_0;w_j)|$ a \emph{pinch map at the vertex $v$} and the complex $K^\cl_{v,j}$ a \emph{clover reduction of $K$ at $v$}.\index{pinch map}

\begin{remark}
We do not state in Proposition \ref{prop:star-clover-collapse} that the complex $K^\cl_{v,j}$ admits an Alexander map $f^\cl_{v,j} \colon |K|\to \bS^n$ associated to the mapping $\Psi_{f,j}$. This is proven separately using deformation construction. 
%This is a non-trivial fact which follows from the deformation construction below.
\end{remark}

\subsection{Deformation in a single star}
\label{sec:local_deformation_star}

In the following proposition, we identify a mapping $X\times \{s\}\to Y\times \{s\}$ with a mapping $X\to Y$. We also say that a $w_i$-vertex $v$ in a weakly simplicial complex $K$ is \emph{$w_j$-tame} if $\DStar_f(v)$ is simplicial and $v$ is $w_j$-tame in the sense of Definition \ref{def:tame-vertex}.

\begin{proposition}
\label{prop:deformation-from-star-to-clover}
Let $(K,f)$ be a weakly simplicial Alexander $n$-complex, $n \ge 2$. Let also $v_0\in K$ be a $w_j$-tame $w_i$-vertex for $j\ne i$. Then there exist a constant $\sL''=\sL''(n,\DStar_f(v_0))\ge 1$, a clover reduction $K^\cl_{v_0,j}$ of $K$ at $v_0$, and a level preserving $\sL''$-BLD mapping $F \colon |K|\times [0,1] \to \bS^n\times [0,1]$ for which 
\begin{enumerate}
\item $F|_{|K|\times \{0\}} = f$ and
\item $F|_{|K|\times \{1\}} \colon |K|\to \bS^n$ is an $\sL''$-BLD $K^\cl_{v_0,j}$-Alexander map.
\end{enumerate}
\end{proposition}

We say that the mapping $f^\cl_{v_0,j} = F|_{|K|\times \{1\}} \colon |K|\to \bS^n$ is obtained from $f$ by \emph{pinching}.

We construct the mapping $F$ using a flow method for $n$-simplices in $\Star_K(v_0)$; see Figure \ref{fig:Deformation_3d_v2} for an illustration. The method does not produce simplicial complexes on levels $|K|\times \{s\}$.

\begin{figure}[htp]
\begin{overpic}[ scale=.68,unit=1mm]{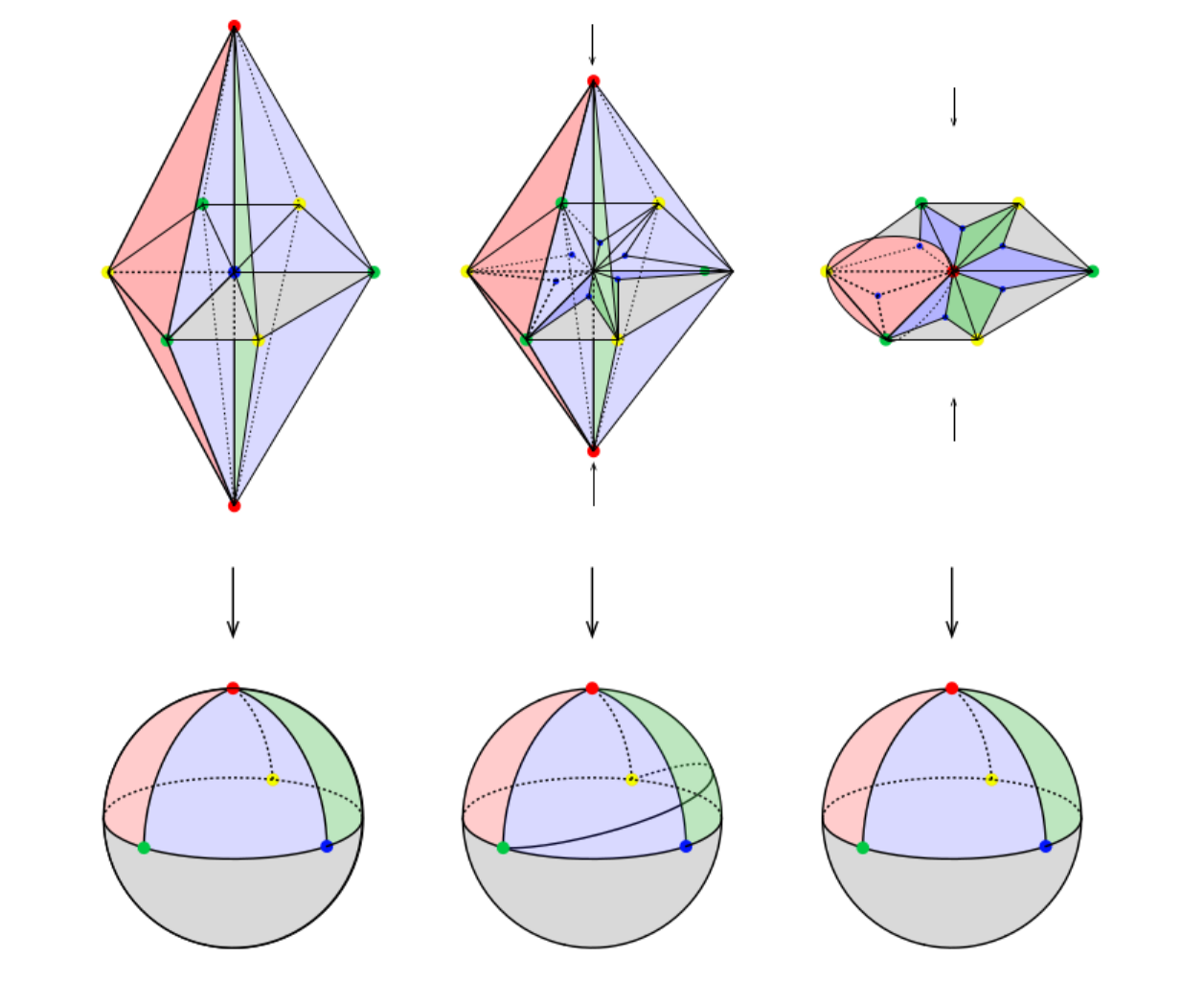} % add: grid
\put(8,28){\tiny $\bS^3_+$}
\put(26,22){\tiny $\bS^3_-$}
\put(41,10){\tiny $\bS^2$}

\put(30,47){\tiny $t=0$}
\put(73,47){\tiny $t=1/2$}
\put(116,47){\tiny $t=1$}
\end{overpic}
\caption{Deformation of a $3$-dimensional Alexander map on a star with two $w_3$-vertices, to an Alexander map on  a clover complex. On the domain side, $10$ of the $12$ outer faces (in red) are removed for viewing.}
\label{fig:Deformation_3d_v2}
\end{figure}

\begin{proof}[Proof of Proposition \ref{prop:deformation-from-star-to-clover}]

We begin the proof by introducing flows $s\mapsto T(s)$ of simplices $T=T(0)\in \DStar_f(v_0;w_j)^{[n]}$, which transform the simple pairs $\scrS_f(v_0;w_j)$ in $\Star_K(v_0)$ simultaneously to leaves of $\Clover(v_0,w_j)$. In order to simplify notations, we fix $i=0$ and $j=n$ throughout this construction. The complex $K^\cl_{v,n}$ and the mapping $F$ are given by this flow.

\medskip
\noindent
\emph{Step 1: (Flow on $n$-simplices in $\Star_K(v_0)$)} 
Let $T$ be an $n$-simplex in $\Star_K(v_0)$. We denote $x_T$ the vertex of $T$ for which $f(x_T)=w_n$ and  $\tau=[v_0,x_1,\ldots, x_{n-1}]$ the $w_n$-avoiding face in $T$. We denote also 
\[
y_\tau = (v_0+x_1+\cdots +x_{n-1})/n
\]
the barycenter of $\tau$.

For each $s\in [0,1]$,  let
\[
y_\tau(s) = (1-s)v_0 + s y_\tau\in \tau,
\] 
and $h \colon \{x_T\} \times [0,1] \to \Star(v_0)$ be the map $(x_T,s)\mapsto (1-s)x_T + s v_0$.
Set also, for each $s\in [0,1]$,
\[
\tau_T(s) = [y_\tau(s),x_1,\ldots, x_{n-1}] \,\subset\, \tau
\]
and, for each $j\in \{1,\ldots, n-1\}$,
\[
\begin{split}
\sigma_{T,j}(s) = & [v_0, x_1,\ldots, x_{j-1}, y_\tau(s), x_{j+1},\ldots, x_{n-1}] \\
& \cup [v_0, x_1,\ldots, x_{j-1}, h(x_T,s), x_{j+1},\ldots, x_{n-1}].
\end{split}
\]
Note that, for each $j\in \{1,\ldots, n-1\}$, $\sigma_{T,j}(s)$ is an $(n-1)$-cell, and the set
\[
\beta_T(s) = \tau_T(s) \cup (\bigcup_{j=1}^{n-1}\sigma_{T,j}(s))
\]
is also an $(n-1)$-cell.

\begin{figure}[h!]
\begin{overpic}[scale=.75,unit=1mm]{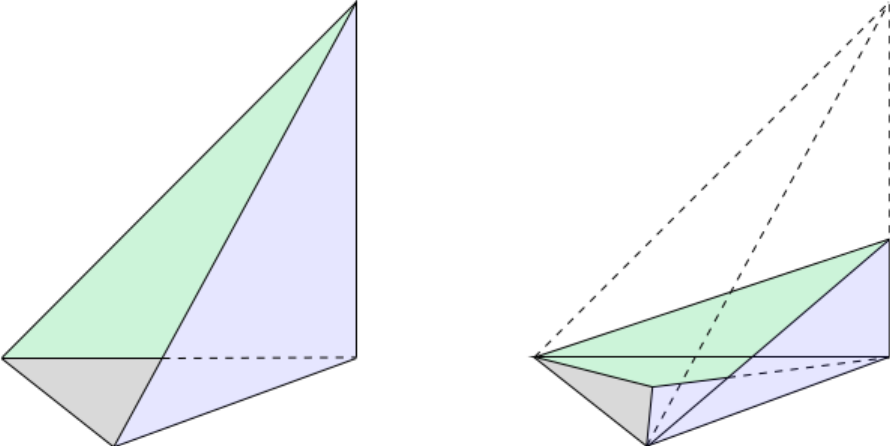} % add: grid
\put(-2,10){\tiny $x_1$}
\put(13,-2){\tiny $x_2$}
\put(46,10){\tiny $v_0$}
\put(46,57){\tiny $x_T$}

\put(66,10){\tiny $x_1$}
\put(81,-2){\tiny $x_2$}
\put(113,10){\tiny $v_0$}
\put(113,57){\tiny $x_T$}

\put(113,26){\tiny $h(x_T,s)$}
\put(81,9){\tiny $y_\tau(s)$}
\end{overpic}
\caption{Boundary of a $3$-dimensional $ T$ with face $[x_1,x_2, x_T]$ removed; cells $\tau_T(s)$, $\sigma_{T,1}(s)$, and $\sigma_{T,2}(s)$.}
\label{fig:y_tau}
\end{figure}

Finally, we choose, for each $s\in [0,1]$,  an $(n-1)$-cell $\sigma_T(s)$ contained in $T$ so that the boundary of $\sigma_T(s)$ coincides with the boundary of $\beta_T(s)$ and that the union
\[
\eta(s)=\tau_T(s) \cup \left( \bigcup_{j=1}^{n-1} \sigma_{T,j}(s) \right) \cup \sigma_T(s)
\]
is the  boundary of an $n$-cell contained in $T$. Then $\eta(s)$ is homeomorphic to an $(n-1)$-sphere and hence bounds an $n$-cell, which we denote $T(s)$. 

To fulfill these requirements, we let $z=(x_T+x_1+\cdots + x_{n-1})/n$ and $z(s)= (1-s)x_T +s z$. We may take, for example,  $\sigma_T(s)$ to be the graph of a PL map over $\tau$, which is affine on  each of these simplices,  $\tau_T(s)$ and  $[v_0, x_1,\ldots, x_{j-1}, y_\tau(s), x_{j+1},\ldots, x_{n-1}]$, $j=1,\ldots,n-1$ and which maps points $x_1,x_2,\ldots, x_{n-1}, v_0, y_\tau(s)$ to  points $x_1,x_2,\ldots, x_{n-1}, h(x_T,s)$ and 
\[
\frac{s}{2} y_\tau(s)+(1-\frac{s}{2}) z(s),
\]
respectively. Since $\sigma_T(s)$ is chosen as a graph of a PL map over $\tau$, we have that each $T(s)$ is an $n$-cell and, further, choose $\sigma_T(s)$ so that $T(s)$ is $L$-bilipschitz homeomorphic to $T$ with $L=L(n,\Star_K(v_0))$.

By these choices the $n$-cells $T(s)$ for $s\in [0,1]$ we have that $T(0)=T$ and that $T(s)\supset T(s')$ for $0 \le s < s' \le 1$. Moreover, there exists a level preserving $\check L$-bilipschitz embedding 
\[
\varphi_T \colon |T|\times [0,1]\to |T|\times [0,1]
\]
 satisfying $\varphi_T(|T|\times \{s\}) = T(s)\times \{s\}$ for each $s\in [0,1]$, where $\check L = \check L(n)$.

For $s\in [0,1]$, we give now the $n$-cell $T(s)$ a simplicial structure with vertices $y_\tau(s)$, $x_1,\ldots, x_{n-1}$, and $h(x_T,s)$; see Figure \ref{fig:y_tau}.

Although that the $n$-simplices $T(s)$ do not induce even a weakly simplicial structure on $\bigcup_{T\in \Star_K(v)^{[n]}} T(s)$, the above choices yield that the neighboring $n$-simplices in $\bigcup_{T} T(s)$ for each $s\in [0,1]$ are \emph{compatible} in the following sense:
\begin{enumerate}[label=(C\arabic*)]
\item   \label{item:neighboring1} If $T=[v_0,x_1,\ldots,x_n]$ and $T' \in \Star_K(v_0)$ is an $n$-simplex  which shares an $w_n$-avoiding $(n-1)$-simplex $\tau$ with $T$, then
\begin{enumerate}
\item $\tau_T(s) = \tau_{T'}(s)$, and
\item $\sigma_{T,j}(s) \cap \tau = \sigma_{T',j}(s) \cap \tau \quad \text{for} \,j =1,\ldots, n-1.$
\end{enumerate}
\item \label{item:neighboring2}
If $T=[v_0,x_1,\ldots,x_n]$ and $T'' =[v_0,x_1,\ldots,x_{\ell-1},x''_{\ell}, x_{\ell+1},\ldots, x_n]$  are $n$-simplices sharing a face $\xi = [v_0,x_1,\ldots,x_{\ell-1},x_{\ell+1},\ldots, x_n]$, then
\[
\sigma_{T,\ell}(s) \cap \partial T'' = \sigma_{T'',\ell}(s)\cap \partial T.
\]
\item \label{item:neighboring3} If $T$ and $T''$ have a common face $\xi$, we may further choose $\varphi_T$ and $\varphi_{T''}$ so that $\varphi_T|_{|\xi|\times [0,1]} = \varphi_{T''}|_{|\xi|\times [0,1]}$.
\end{enumerate}

This defines the flow $s\mapsto T(s)$ of the simplex $T$. For completeness, we denote $s\mapsto T(s)|_\sigma$ the restriction of this flow to a face $\sigma$ of $T$. Note that, by construction, adjacent $n$-simplices induce identical flows on the common face.

By construction, we now have that, for $s=1$, 
\[
\Clover_f(v_0;w_n) = \bigcup_{T\in \Star_K(v_0)} T(1).
\]

\medskip
\noindent
\emph{Step 2: (Flow of $K$)} 
Since flow  of $\Star(v_0)$ has already been defined, we now consider $n$-simplices $T$ in $K - \Star_K(v)$. 

Suppose that $T$ belongs to $K-\DStar_f(v_0;w_n)$. We take $s\mapsto T(s)$ to be the constant flow $s\mapsto T$. 

Suppose next that $T$ belongs to $\DStar_f(v_0;w_n)- \Star(v_0)$ and 
has a face in $\Link_K(v_0)$. Then there exists a unique $n$-simplex $T'$ in $\Star_K(v_0)$ having a common $(n-1)$-face $\sigma$ with $T$. We may define the flow $s\mapsto T(s)$ of $T$ by requiring that the restriction of the flows $s\mapsto T(s)|_\sigma$ agrees with the already defined flow $s\mapsto T'(s)|_\sigma$. Then each $T(s)$ is an $n$-cell and there exists a level preserving $L'$-bilipschitz embedding $\varphi_T \colon |T|\times [0,1]\to |T\cup T'|\times [0,1]$ for which $\varphi_T(|T|\times \{s\}) = T(s)\times \{s\}$ for each $s$, where $L'=L'(n,\DStar_f(v_0;w_n))$.

Finally we consider the $n$-simplices $T$ in $\DStar_f(v_0;w_n) - \Star(v_0)$ which do not have face in $\Link_K(v_0)$, and denote the collection of these $n$-simplices by $\mathcal T$.  Since flows of all vertices in $K$ have already been fixed, we extend the flows of vertices into each $T\in \mathcal T$  to obtain a flow $s\mapsto T(s)$ with the properties that for each $T'\in K^{[n]}$ adjacent to $T$,  $s\mapsto T(s)$ agrees with $s\mapsto T'(s)$ on their common face and that  there exists 
 a level preserving $L''$-bilipschitz embedding 
 \[\varphi_{\mathcal T} \colon \cup_{T\in \mathcal T} |T|\times [0,1]\to |K|\times [0,1],\] where $L''=L''(n,\DStar_f(v_0;w_n))$.

Since the flows of adjacent $n$-simplices $T$ and $T'$ in $K$ agree on the common face, the flows on individual $n$-simplices define a flow $s\mapsto K(s)$ of the complex $K$, where $K(s) = \bigcup_{T\in K^{[n]}} T(s)$. 

We conclude that, for $s=1$, $K(1)$ is a weakly simplicial complex, which is a clover reduction of $K$ at $v$. More precisely, the flow $s\mapsto K(s)$ at $s=1$ induces a pinch map $\Psi_{f,n}\colon |K-\{v_0\}| \to K\setminus \interior |\Clover_f(v_0;w_n)|$ by formula $\Psi_{f,n}(x) = \varphi_T(x,1)$ for $x\in |T|$ for each $T\in (K-\{v_0\})^{[n]}$. Thus  
\[
K(1) = \Clover_f(v_0;w_n) \cup (\Psi_{f,n})_*(K-\{v_0\})
\]
is a clover reduction $K^\cl_{v,j}$ of $K$ at $v$.

\medskip
\noindent
\emph{Step 3: (Construction of the mapping $F$)} For each $s\in (0,1]$, we fix a family $(\alpha_{T,s} \colon T(s)\to \bS^n)_{T\in K^{[n]}}$ of $\widetilde L$-bilipschitz embeddings by the formula 
\[\alpha_{T,s}(\pr_1(\varphi_T(x,s)) = f(x), \quad (x,s)\in |T|;\]
 here $\widetilde L=\widetilde L(n,L, L', L'')$, and $\pr_1$ is the projection to the first coordinate. By the compatibility conditions for embeddings $\varphi_T$, we have, for adjacent $T$ and $T'$, that $\alpha_{T,s}|_{T(s)\cap T'(s)} = \alpha_{T',s}|_{T(s)\cap T'(s))}$.

We define a map $F_s \colon |K| \to \bS^n$ by the formula $F_s|_{T(s)} = \alpha_{T,s}$ for each $T\in K^{[n]}$. Then $F_s$ is $\widetilde L$-BLD. By changing the parametrization of $n$-simplices in $K^\cl_{v,j}$, we may assume that $F_1 \colon |K|\to \bS^n$ is a $K^\cl_{v,j}$-Alexander map.

We define now a mapping
\[
F \colon |K|\times [0,1]\to \bS^n \times [0,1],
\]
by $F(x,s) = F_s(x)$ for each $(x,s)\in |K|\times [0,1]$. Then  $F$ is a BLD mapping.
This completes the proof.
\end{proof}

\subsection{Detachment of simple covers}
\label{sec:star_clover_simple_cover}

We may detach now all the leaves of the clover $\Clover_f(v_0;w_j)$ from $K^\cl_{v,j}$, and continue to detach if more simple covers appear, to obtain a collapse $K^\st_{v,j}$ of $K$ and a controlled BLD expansion of a $K^\st_{v,j}$-Alexander map; see Figure \ref{fig:clover-detachment} 
for an illustration.

\begin{figure}[htp]
\begin{overpic}[scale=.45,unit=1mm]{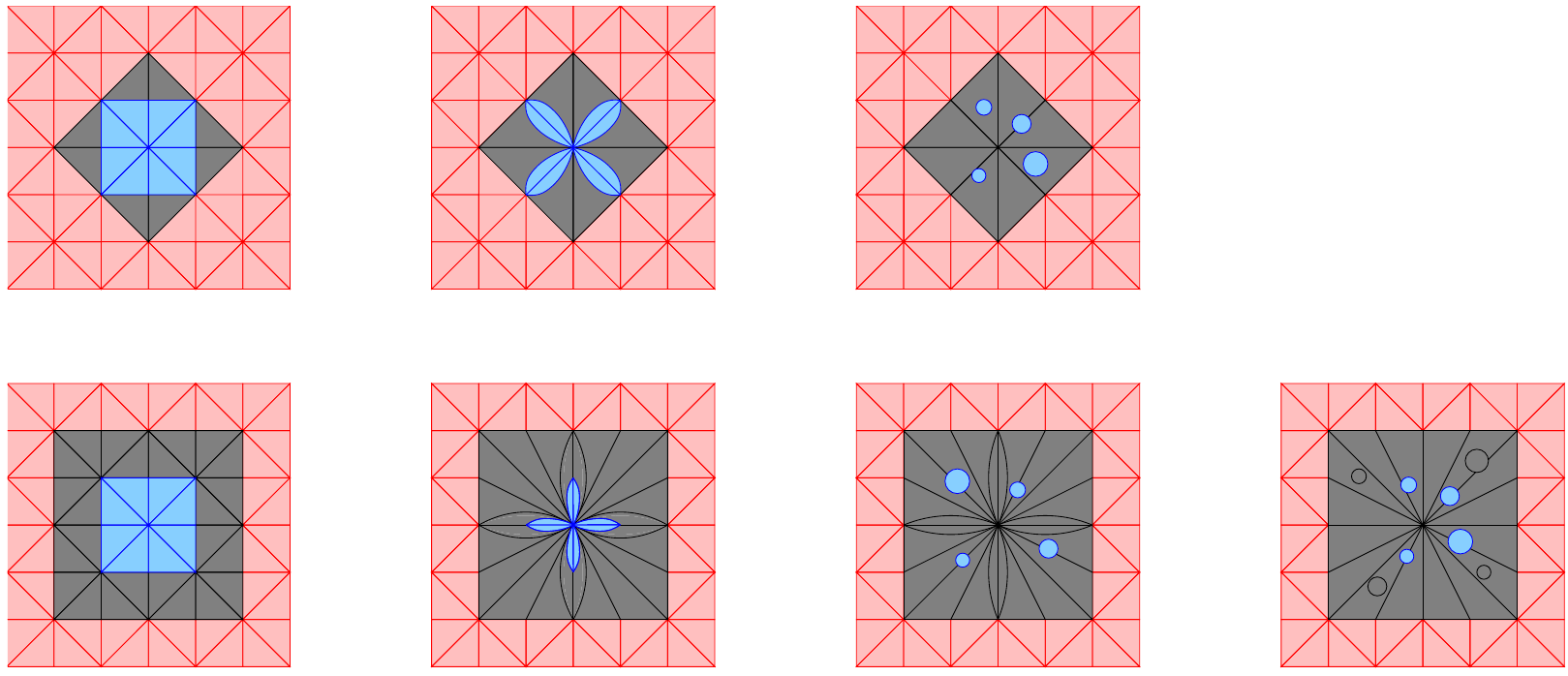} % add: grid  
\put(24,21){\tiny $K$}
\put(57,21){\tiny $K^\cl_{v,2}$}
\put(91,50){\tiny $K^\st_{v,1}$}
\put(24,50){\tiny $K$}
\put(57,50){\tiny $K^\cl_{v,1}$}
\put(124,21){\tiny $K^\st_{v,2}$}
\end{overpic}
\caption{Reduction from $K$ to $K^\st_{v,1}$ and $K^\st_{v,2}$ through complexes $K^\cl_{v,1}$ and $K^\cl_{v,2}$, respectively, for neighborhoods $\DStar_f(v;w_1)$ and $\DStar_f(v;w_2)$ in Figure \ref{fig:DStar}. Here the simple covers are not arranged to give a controlled expansion due to the limitation of planar illustration.} 
\label{fig:clover-detachment} 
\end{figure}

\newcommand{\sLdetach}{\sL_{\mathrm{detach}}}   

In the following statement, we suppress the dependence of the complex $K^\cl_{v,j}$ on the pinching map $\Psi_{f,j}\colon |K - \Star_K(v)|\to |K|$.

\begin{proposition}
\label{prop:clover-detachment}
Let $(K,f)$ be a weakly simplicial Alexander $n$-complex, $n \ge 2$,
$v\in K$ a $w_j$-tame $w_i$-vertex for $j\ne i$. Let also $f^\cl_{v,j} \colon |K|\to \bS^n$ an $\sL$-BLD $K^\cl_{v,j}$-Alexander map obtained from $f$ by pinching. Then there exist $\sLdetach=\sLdetach(n,\sfK, \Star_{K^\cl_{v,j}}(v)))\ge 1$ and a level preserving $\sLdetach$-BLD map $F\colon |K|\times [0,1]\to \bS^n \times [0,1]$ for which
\begin{enumerate}
\item $F|_{K\times \{0\}} = f$ and
\item $F|_{K\times \{1\}} \colon |K|\to \bS^n$ is an $\sLdetach$-BLD-controlled expansion of a $K^\st_{v,j}$-Alexander map $|K|\to \bS^n$.
\end{enumerate}
Moreover, if $|K| \subset \R^n$, $|\DStar_f(v)|$ is a convex subset, and the flat structure of $K$ consists of affine maps, then $K^\st_{v,j}$ may be chosen so that its flat structure consists of affine maps.
\end{proposition}

\begin{proof}
It suffices to observe that leaves $L$ of the clover $\Clover_f(v;w_j)$ form a simple cover in the sense that $f^\cl_{v,j}|_{\interior |L|} \colon \interior |L|\to \bS^n$ is embedding and $f^\cl_{v,j}|_{\partial |L|} \colon \partial |L|\to \bS^n$ is a tame PL $(n-1)$-cell in $\bS^n$. Thus, by fixing a framing for each leaf, we may shrink each leaf by Proposition \ref{prop:shrink} 
to obtain an isotopy from $f^\cl_{v,j}$ to a controlled BLD expansion $\widetilde f \colon |K|\to \bS^n$ of a $\sK^\st_{v,j}$-Alexander map $f^\st_{v,j} \colon |K|\to \bS^n$. 
The isotopy yields a level preserving BLD map $F\colon |K|\times [0,1] \to \bS^n \times [0,1]$ in the statement. Since $\Star_K(v)^{[n]}$ has finitely many elements, it suffices to repeat this detachment of leaves finitely many times if necessary to obtain complex $K^\st_{v,j}$. 

Under the additional assumptions, we may choose the collapse map $|K - \Star_K(v)|\to |K|$ to be affine on simplices. This induces the wanted flat structure on $K^\st_{v,j}$.
\end{proof}

We finish this section by observing that Propositions  \ref{prop:deformation-from-star-to-clover} and \ref{prop:clover-detachment} together immediately yield Theorem \ref{thm:local-deformation}. 

\begin{proof}[Proof of Theorem \ref{thm:local-deformation}]
It suffices to combine the BLD mappings $F_1 \colon |K|\times [0,1]\to \bS^n \times [0,1]$ and $F_2 \colon |K|\times [0,1]\to \bS^n \times [0,1]$ given by Propositions \ref{prop:deformation-from-star-to-clover} and \ref{prop:clover-detachment}, respectively, into a BLD mapping $F\colon |K|\times [0,1]\to \bS^n \times [0,1]$ given by formula 
\[
(x,t) \mapsto \left\{ \begin{array}{ll}
F_1(x,2t). & t\in [0,1/2] \\
F_2(x,2t-1), & t\in [1/2,1].
\end{array}\right.
\]
This concludes the proof.
\end{proof}

In what follows, we use notations $K\searrow K^\st_{v,j}$ and $f \searrow f^\st_{v,j}$, when we refer to a collapse of the complex $K$ at $v$ and to the corresponding deformation of a $K$-Alexander map $f \colon |K|\to \bS^n$ to a BLD-controlled expansion of a $K^\st_{v,j}$-Alexander map provided by Theorem \ref{thm:local-deformation}; see Figure \ref{fig:simple_Alexander_merge} for an illustration. 

\begin{figure}[h!]
\begin{overpic}[scale=0.6,unit=1mm]{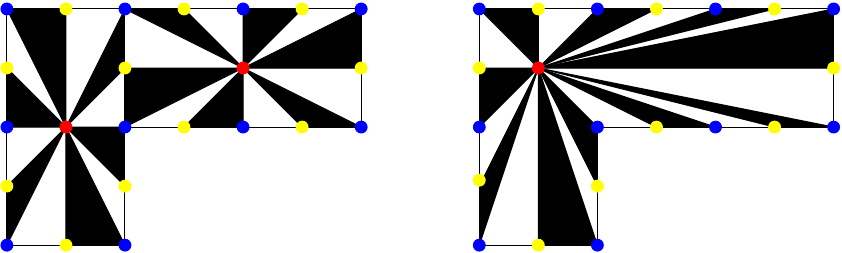} % add: grid
\put(13,20){\tiny $v$}
\put(55,21){\tiny $v$}
\end{overpic}
\caption{Collapse $K \searrow K^\st_{v,j}$ of a simplicial complex at a vertex; location of the vertex $v$ moved due to due to the limitation of planar illustration.}
\label{fig:simple_Alexander_merge}
\end{figure}

\section{Local deformation of Alexander star pairs}
\label{sec:merge-Alexander-pair}

In what follows, we say that an Alexander complex $(K,f)$ is an \emph{Alexander star} if $K$ is a simplicial complex $K=\Star_K(v)$ of a vertex $v\in K$.

\begin{definition}\label{def:Alexander_pair} 
A pair $\{(K_1,f_1),(K_2,f_2)\}$ of $n$-dimensional Alexander stars is an \emph{Alexander star pair} \index{Alexander star pair} if
\begin{enumerate}
\item $K_1\cap K_2$ is a simplicial complex whose space $|K_1\cap K_2|$  is an $(n-1)$-cell, 
\item $K_1\cap K_2$ has a vertex in $\interior |K_1\cap K_2|$, and
\item the mapping $f = f_1 \# f_2 \colon |K_1\cup K_2|\to \bS^n$, defined by $f|_{|K_i|}=f_i$ for $i=1,2$, is a well-defined Alexander map. \label{item:Alexander_pair-3}
\end{enumerate}
In this case, we say  $\{(K_1,f_1),(K_2,f_2)\}$ is a \emph{Alexander star pair} if $K_1\cap K_2$ is a star.
\end{definition}

We discuss below two methods of simplifying the Alexander complex $(K_1\cup K_2,f_1\# f_2)$. First is a merge of Alexander stars, which reduces the complex to a single Alexander star. The second is a reduction of the intersection $K_1\cap K_2$.

\subsection{Merge of Alexander stars}

The pair $(K_1\cup K_2, f_1\# f_2)$ is an Alexander complex, but is not an Alexander star. However, there is a star having the same boundary as $K_1\cup K_2$ which admits an Alexander map, hence is an Alexander star; see Figure \ref{fig:simple_Alexander_merge}
for an illustration. We record this as follows.

\begin{definition}
An Alexander star $(K,f)$ is called a \emph{merge of the Alexander star pair $\{(K_1,f_1),(K_2,f_2)\}$} if $|K|=|K_1\cup K_2|$, $K|_{\partial |K|} = (K_1\cup K_2)|_{\partial |K|}$, and $f|_{|\partial K|} = (f_1\# f_2)_{|\partial K|}$. \index{Alexander star pair!merge}
\end{definition}

Each Alexander star pair admits a merge.

\begin{lemma}
\label{lemma:star_pair_map}
Let $\{(K_1,f_1),(K_2,f_2)\}$ be an Alexander star pair, and let $K =\Star_K(v_K)$ be a star satisfying $|K|=|K_1\cup K_2|$ and $K|_{\partial |K|}=(K_1\cup K_2)|_{\partial |K|}$. Then there exists a $K$-Alexander map $f\colon |K|\to \bS^n$ for which $f|_{|\partial K|} = (f_1\# f_2)|_{|\partial (K_1\cup K_2)|}$. In particular, $(K,f)$ is an Alexander star.
\end{lemma}

\begin{proof}
Let $K_1 = \Star_{K_1}(v_{K_1})$ and $K_2=\Star_{K_2}(v_{K_2})$. Since $f_1$ and $f_2$ agree on $K_1\cap K_2$, we have that $f(v_{K_1}) = f(v_{K_2})$. Now, let $\theta \colon K^{[n]} \to (K_1\cup K_2)^{[n]}$ defined by the formula $\theta(T)\cap \partial (K_1 \cap K_2) = T\cap \partial K$.

We define now a $K$-simplicial map $g \colon |K|\to \bS^n$ defined by formulas $g(v_K)= f(v_{K_1})$, $g|_{\partial |K|} = f|_{\partial |K|} \colon \partial |K|\to \bS^n$, and $g(|T|) = (f_1\# f_2)(|\theta(T)|)$ for each $T\in K^{[n]}$. The mapping $g$ is well-defined, since $K=\Star_K (v_K)$ is a star. 

To show that $g$ is a $K$-Alexander map, let $T$ and $T'$ be adjacent $n$-simplices in $K$. We have two cases. If $\theta(T)$ and $\theta(T')$ are adjacent either in $K_1$ or in $K_2$, we have that $g(|T|)$ and $g(|T'|)$ are mapped to opposite hemispheres. Suppose now that $\theta(T)$ and $\theta(T')$ do not belong to the same complex $K_1$ or $K_2$. We may assume that $\theta(T)\in K_1$ and $\theta(T')\in K_2$. Since $T$ and $T'$ are adjacent, also $T\cap \partial K_1$ and $T'\cap \partial K_2$ are adjacent. Thus there exists an $(n-1)$-simplex $\sigma\in K_1\cap K_2$ which is adjacent to both $T\cap \partial K_1$ and $T'\cap \partial K_2$. Let $T_1$ and $T_2$ be $n$-simplices in $K_1$ and $K_2$, respectively, for which $T_1 \cap \partial K_1 = \sigma = T_2\cap \partial K_2$. Since $f_1\# f_2$ is an Alexander map, we have that $(f_1\#f_2)(T_1)$ and $(f_1 \# f_2)(T_2)$ are opposite hemispheres. Since $T_1$ is adjacent to $T$ and $T_2$ is adjacent to $T'$, we conclude that $g(|T|)$ and $g(|T'|)$ are opposite hemispheres. Thus $g$ is an Alexander map.
\end{proof}

A merge $(K,f)$ of an Alexander star pair $\{ (K_1,f_1),(K_2,f_2)\}$ can be realized as a collapse $K_1\cup K_2 \searrow K$ if $K_1\cap K_2$ is a star.

\begin{lemma}
\label{lemma:merge-of-Alexander-pair}
Let $\{(K_1,f_1),(K_2,f_2)\}$ be an Alexander star pair, where $K_1 = \Star_{K_1}(v_1)$, $K_2=\Star_{K_2}(v_2)$, and $v_1$ and $v_2$ are $w_j$-vertices. Suppose that $K_1\cap K_2 = \Star_{K_1\cap K_2}(v)$, where $v\in K_1\cap K_2$ is a $w_j$-tame $w_i$-vertex, and that  $(K,f)$ is a merge of $\{(K_1,f_1),(K_2,f_2)\}$. Then $(K_1\cup K_2)^\st_{v,j}$ is isomorphic to $K$. In addition, if $|K_1\cup K_2|$ is a convex subset of $\R^n$ and the flat structure of $K_1\cup K_2$ consists of affine maps, then we may choose $K^\st_{v,j}$ so that its flat structure consists of affine maps.
\end{lemma}

\begin{proof}
Since $v_1$ and $v_2$ are the only $w_j$-vertices in $K_1\cup K_2$, we have that $\Star_{K_1\cap K_2}(v)$ is the reduced star $\Star_{f_1\# f_2}(v, w_j)$. Thus $(K_1\cup K_2)^\st_{v,j}$ consists of $\Star_{K_1\cap K_2}(v)$ and the image of $(K_1\cup K_2)-\{v\}$ under a collapse map. Since the collapse map maps $v_1$ and $v_2$ onto $v$ and vertices $v_1$, $v_2$ and $v$ are the only vertices in the interior of $K_1\cup K_2$, we conclude that $v$ is the only vertex in the interior of $(K_1\cup K_2)^\st_{v,j}$. Since $(K_1\cup K_2)^\st_{v,j}$ coincides with $K_1\cup K_2$ on the boundary, we conclude that $(K_1\cup K_2)^\st_{v,j}$ is isomorphic to $K$. 

The second claim is a direct consequence of Proposition \ref{prop:clover-detachment}.
\end{proof}

In particular, we have the following corollary of Theorem \ref{thm:local-deformation};  the last claim here follows from the uniqueness of simplicial Alexander maps. 

\begin{corollary}
\label{cor:merge-of-Alexander-pair}
Let $\{(K_1,f_1),(K_2,f_2)\}$ be an Alexander star pair, where $K_1 = \Star_{K_1}(v_1)$ and $K_2=\Star_{K_2}(v_2)$, and $v_1$ and $v_2$ are $w_j$-vertices. Suppose that $K_1\cap K_2 = \Star_{K_1\cap K_2}(v)$, where $v\in K_1\cap K_2$ is a $w_j$-tame $w_i$-vertex. Let  $(K,f)$ be a merge of $\{(K_1,f_1),(K_2,f_2)\}$. Then there exist $\sL=\sL(n,K_1,K_2)\ge 1$ and an $\sL$-BLD mapping $F\colon |K|\times [0,1]\to \bS^n\times [0,1]$ for which $F|_{|K|\times \{0\}} = f_1\# f_2$ and $F|_{|K|\times \{1\}}$ is an $\sL$-BLD-controlled expansion of a $\widetilde K$-Alexander map, where $\widetilde K$ is isomorphic to $K$. In addition, if $|K_1\cup K_2|$ is a convex subset of $\R^n$ and the flat structure of $K_1\cup K_2$ consists of affine maps, then we may choose $\widetilde K = K$ and $F_{|K|\times \{1\}} = f$.
\end{corollary}

\subsection{Reduction of the intersection}

In another perspective, we may simplify an Alexander star pair $K_1\cup K_2$ by collapsing the intersection $K_1\cap K_2$ at a vertex; see Figure \ref{fig:Alexander_star_pair_reduction}. We use this observation to prove  reduction theorems by induction in dimensions. This observation is a direct corollary of Proposition \ref{prop:star-collapse} and we omit the details.

\begin{figure}[h!]
\begin{overpic}[scale=0.5,unit=1mm]{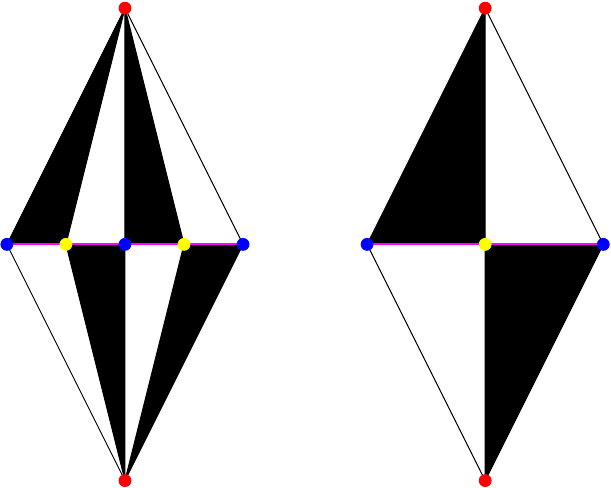}%add: grid
\put(-1,30){\tiny $K_1$}
\put(-14,20){\tiny $K_1\cap K_2$}
\put(-1,10){\tiny $K_2$}
\put(11,18){\tiny $v$}
\put(39,18){\tiny $v$}
\put(54,20){\tiny $Q$}
\put(49,30){\tiny $P$}
\end{overpic}
\caption{Reduction $K_1\cap K_2\searrow Q$  yields a reduction \, $\{v_{K_1}, v_{K_2}\}* (K_1 \cap K_2) \searrow \{v_{K_1}, v_{K_2}\}*Q$ of one dimension higher,  as in Lemma \ref{lemma:Alexander_star_pair_reduction}}
\label{fig:Alexander_star_pair_reduction}
\end{figure}

\begin{lemma}
\label{lemma:Alexander_star_pair_reduction}
Let $\{(K_1,f_1),(K_2,f_2)\}$ be an $n$-dimensional Alexander star pair, where $K_1 = \Star_{K_1}(v_1)$, $K_2=\Star_{K_2}(v_2)$ and  $v_1$ and $v_2$ are $w_j$-vertices. Let $k\ne i,j$, and $v$ be a $w_k$-tame $w_i$-vertex in $K_1\cap K_2$ for which $|\Star_{K_1\cap K_2}(v)| \subset \interior |K_1\cap K_2|$. Then there exists a collapse $(K_1\cup K_2)^\st_{v,k}$ of $K_1\cup K_2$ at $v$ and a collapse map $\Phi_{f,k} \colon |(K_1\cup K_2)-\{v\}|\to |K_1\cup K_2|$ for which the restriction $\Phi_{f,k}|_{|(K_1\cap K_2)-\{v\}|} \colon |(K_1\cap K_2)-\{v\}| \to |K_1\cap K_2|$ is well-defined and $(K_1\cup K_2)^\st_{v,j}|_{|K_1\cap K_2|} = (\Phi_{f,k})_*((K_1\cap K_2)-\{v\})\, \supset \Star_f(v;w_k)$. 

In addition, if $|K_1\cup K_2|$ is a convex subset of $\R^n$ and the flat structure of $K_1\cup K_2$ consists of affine maps, then we may choose $K^\st_{v,k}$ so that its flat structure consists of affine maps.
\end{lemma}

Similarly as for the merge of stars, we have the following corollary of Theorem \ref{thm:local-deformation}.

\begin{corollary}
\label{cor:Alexander_star_pair_reduction}
Let $\{(K_1,f_1),(K_2,f_2)\}$ be an $n$-dimensional Alexander star pair, where $K_1 = \Star_{K_1}(v_1)$, $K_2=\Star_{K_2}(v_2)$ and both $v_1$ and $v_2$ are $w_j$-vertices. Let $K=K_1\cup K_2$. Let also $k\ne i,j$ and  $v$  a $w_k$-tame $w_i$-vertex in $K_1\cap K_2$ for which $|\Star_{K_1\cap K_2}(v)| \subset \interior |K_1\cap K_2|$. Then there exist $\sL=\sL(n,K_1,K_2)$ and an $\sL$-BLD map $F\colon|K|\times [0,1]\to \bS^n\times [0,1]$ for which $F|_{|K|\times \{0\}} = f_1\# f_2$ and $F|_{|K|\times \{1\}} \colon |K| \to \bS^n$ is an $\sL$-BLD-controlled expansion of a $K^\st_{v,k}$-Alexander map. In addition, if $K$ is a convex subset of $\R^n$ and the flat structure of $K$ consists of affine maps, then we may choose $K^\st_{v,k}$ so that its flat structure consists of affine maps.
\end{corollary}

%%%%%%%%%%%%%%%%%%%%%%%%%%%%%%%%%%%%%%%%%%%%%%%%%%%%%%%%%%%%%%%%%%%%%%%%%%%%%%%
%%%%%%%%%%%%%%%%%%%%%%%%%%%%%%%%%%%%%%%%%%%%%%%%%%%%%%%%%%%%%%%%%%%%%%%%%%%%%%%
%%%%%%%%%%%%%%%%%%%%%%%%%%%%%%%%%%%%%%%%%%%%%%%%%%%%%%%%%%%%%%%%%%%%%%%%%%%%%%%

\chapter{Deformation on shellable cubical complexes}
\label{sec:deformation_shellable_cubical_complexes}

As said in the Introduction, the flat structure on cubical complexes gives the stability needed for deformation of Alexander maps on triangulated cubical complexes locally. 
The shellability of cubical complexes  is a global condition which allows the underlying complexes and the maps to be reduced inductively.

\section{Shellable cubical complexes}
\label{sec:shellable_complexes}

In this section, we verify shellability of a cubical complex in three different cases: (1) cubical complexes of a $2$-cell, (2) refinements of an $n$-cube, and (3) cubical subcomplexes of $\partial ([0,1]^n)$ whose spaces are $(n-1)$-cells.
We also show from (3) that the boundary of a tunnel, modulo a single $(n-1)$-cube, is a shellable cubical $(n-1)$-complex. In the future, we use these complexes as building blocks of more complicated shellable complexes. We begin with the two dimensional result.

\begin{proposition}
\label{prop:reconstructibility_2-dim}
Let $K$ be a cubical complex for which $|K|$ is a $2$-cell. Then $K$ is shellable.
\end{proposition}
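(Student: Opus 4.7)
The plan is to prove the statement by induction on the number $m$ of $2$-cubes in $K$. The base case $m=1$ is immediate, since a single cube is vacuously shellable. For the inductive step, I would produce a \emph{peelable} cube $q^\star\in K$, meaning a $2$-cube whose intersection $q^\star\cap\overline{|K|\setminus q^\star}$ is a $1$-cell. Since removing a $2$-cube attached to the rest along a $1$-cell from a $2$-cell leaves a $2$-cell, $K\setminus\{q^\star\}$ is a cubical complex on a $2$-cell with $m-1$ cubes; by the induction hypothesis it admits a shelling $q_1,\dots,q_{m-1}$, and appending $q_m=q^\star$ produces a shelling of $K$.

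The intersection $q\cap\overline{|K|\setminus q}$ equals the union of those edges of $q$ shared with another $2$-cube, i.e.\ the interior edges of $q$. This union is a $1$-cell precisely when the interior edges form a non-empty proper connected arc in the $4$-cycle $\partial q$. The non-peelable cases are therefore: $q$ has no boundary edges at all (so the interior edges fill the entire $4$-cycle $\partial q$); $q$ has no interior edges (which forces $K=\{q\}$ by connectedness of $|K|$, contradicting $m\ge 2$); or $q$ has exactly two boundary edges which are opposite in $\partial q$. Since $|K|$ has non-empty boundary, at least one cube of $K$ is boundary-touching, so the inductive step amounts to ruling out the pathological configuration in which every boundary-touching cube of $K$ has its two boundary edges opposite in $\partial q$.

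To rule out this configuration I would run a strip argument. Beginning at any such boundary cube $q_0$, its two interior edges are opposite in $\partial q_0$; follow one of them into the unique adjacent cube $q_1$, and always exit each subsequent $q_i$ through the edge of $\partial q_i$ opposite to the entry edge. The procedure is reversible and $K$ is finite, so the sequence closes into a cycle of cubes whose union is an embedded annulus $A\subset |K|$ with disjoint boundary circles $\gamma^+$ and $\gamma^-$. Each boundary-touching strip cube places one of its opposite boundary edges on $\gamma^+$ and the other on $\gamma^-$, with both edges lying on $\partial|K|$, while each interior strip cube contributes only interior edges of $|K|$ to $\gamma^\pm$. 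If every strip cube is boundary-touching, then $\gamma^+$ and $\gamma^-$ are two disjoint simple closed curves contained in the single simple closed curve $\partial|K|$, which is impossible. In the remaining mixed case one invokes the Jordan curve theorem in the simply connected disk $|K|$: each of $\gamma^\pm$ bounds a subdisk, the annulus $A$ must sit between two such nested subdisks, and yet $\gamma^\pm\cap\partial|K|\neq\emptyset$; tracking the complementary regions of $\gamma^+\cup\gamma^-\cup\partial|K|$ in the disk $|K|$ and matching them against the locations of the remaining cubes of $K$ produces a contradiction. The main obstacle I foresee is precisely this mixed case, where the annulus is built from a combination of boundary-touching and interior cubes; the purely boundary-cube subcase is handled immediately by the uniqueness of $\partial|K|$ as a simple closed curve, but the mixed case demands careful topological bookkeeping of the arcs of $\gamma^\pm\cap\partial|K|$.
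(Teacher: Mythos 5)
Your proposal follows the paper's overall strategy---induct on the number of $2$-cubes and peel off a suitable boundary cube---but your peelability criterion is too weak, and the inductive step as written is false. You declare $q^\star$ peelable when $q^\star\cap\overline{|K|\setminus q^\star}$ (the arc of interior edges of $q^\star$) is a $1$-cell, and assert that removing such a cube from a $2$-cell leaves a $2$-cell. This fails when an interior vertex of that arc lies on $\partial |K|$. Consider the L-shaped complex with three unit squares $q=[0,1]^2$, $q'=[1,2]\times[0,1]$, $q''=[0,1]\times[1,2]$: the interior edges of $q$ are $\{1\}\times[0,1]$ and $[0,1]\times\{1\}$, which do form a $1$-cell, so $q$ is peelable on your account; yet $\overline{|K|\setminus q}=q'\cup q''$ is two squares meeting only at the point $(1,1)$, not a $2$-cell, so your induction hypothesis cannot be applied to it. The correct sufficient condition, and the one the paper works with, is that $q\cap\partial|K|$ be \emph{connected}. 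In the L-shape $q\cap\partial|K|$ consists of the arc $([0,1]\times\{0\})\cup(\{0\}\times[0,1])$ together with the isolated vertex $(1,1)$, hence is disconnected, so $q$ is correctly excluded, while $q'$ and $q''$ are acceptable first cubes to remove. Once the criterion is corrected, your catalogue of bad configurations must also include cubes whose boundary edges form one arc but which carry extra isolated boundary vertices, not only cubes with two opposite boundary edges.

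Independently of the above, the strip argument for the key existence lemma is incomplete: you assert without proof that the zig-zag sequence of cubes closes into an \emph{embedded} annulus (a priori a cube could be revisited, or the chain could self-touch along non-rung edges), and you acknowledge that the case of a strip containing both boundary-touching and interior cubes remains unresolved. The paper avoids all of this with a short descent argument: if every boundary-touching cube $q$ had disconnected $q\cap\partial|K|$, then removing any such cube $q'$ would disconnect $|K|$ into regions each meeting $\partial|K|$ in a $1$-dimensional set; choosing a region and a boundary-touching cube inside it yields a new cube disjoint from $q'$, and iterating would produce infinitely many distinct boundary-touching cubes in a finite complex, a contradiction.
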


\begin{proof}
Let $\mathcal C$ be the collection of all $2$-cubes $q\in K^{[2]}$ whose intersection $q\cap \partial |K|$ with $\partial |K|$ contains at least one $1$-cube. We first show that there is a cube $q_1 \in \mathcal C$ for which $q_1 \cap \partial |K| $ is connected. It follows then that $\overline{|K|\setminus  q_1}$ is a $2$-cell.

Suppose that for all $q\in \mathcal C$ the intersection $q \cap \partial |K|$ is not connected, and let $q'\in \mathcal C$. Then $|K|\setminus q'$ has more than one components each of which meets $\partial |K|$ in a $1$-dimensional set. Let $D_1$ be any one of these components, and  $q''\in \mathcal C$ be a $2$-cube contained in $\overline{D_1}$.

Since $q''\cap \partial |K|$ is disconnected, $|D_1|\setminus q''$  has a component $D_2$ which does not meet $q'$ and whose intersection with $\partial |K|$ is $1$-dimensional. Let $q''' \in \mathcal C$ be a $2$-cube contained in $\overline{D_2}$. Since this process may be continued indefinitely and $\mathcal C$ is finite, we conclude that there exists a cube $q_1\in \mathcal C$ for which $q_1\cap \partial |K|$ is connected.

Now let $m=\# K^{[2]}$, $K_m=K$, and $K_{m-1}$ the subcomplex of $K$ having space $\overline{|K|\setminus  q_1}$. The claim now follows by induction.
\end{proof}

The iterative application of the following lemma gives shellability of all refinements $\Refine^k(Q)$ of an $n$-cube $Q$.

\begin{lemma}
\label{lemma:refinement_shellability}
Let $n\geq 1$ and $Q$ be an $n$-cube. Then the refinement $\Refine(Q)$ of $Q$ is a shellable cubical complex.
\end{lemma}
\begin{proof}
Since $\Refine(Q)$ is isomorphic to the Euclidean $n$-complex $[0,3]^n$ in which $n$-cubes are unit cubes whose vertices are integer lattice points, it suffices to prove the shellability for the complex $P=[0,3]^n$.

Let $n\geq 1$, and $q_1=[1,2]^n$ be the center $n$-cube in the complex $P=[0,3]^n$. Let, for each $n$-cube $q \in P^{[n]}$,  $\delta(q)$ be the dimension of the intersection $q\cap q_1$. Observe that any ordered sequence,
\[
q_1, q_2, \ldots, q_{3^n-1}, q_{3^n},
\]
of the $n$-cubes in $P$ which satisfies $\delta(q_{i+1})\leq \delta(q_i)$ for $i=1,\ldots, 3^n-1$, gives the defining property for the shellability.
\end{proof}

\begin{lemma}
\label{lemma:shellability_faces}
Let $K_Q$ be a cubical complex on an $n$-cube $Q$ that is isomorphic to the standard cubical structure of the Euclidean cube $[0,1]^n$. Let $P$ be a subcomplex of $K_Q|_{\partial Q}$ whose space $|P|$ is an $(n-1)$-cell. Then $P$ is a shellable cubical $(n-1)$-complex.
\end{lemma}
\begin{proof}
For each $(n-1)$-cube $q$ in $K_Q|_{\partial Q}$, denote $\hat q$ the unique cube in $(K_Q|_{\partial Q})^{(n-1)}$ opposite to $q$, that is, $q\cap \hat q = \emptyset$. Then for each $q'\in (K_Q|_{\partial Q})^{(n-1)}\setminus \{q,\hat q\}$, the  intersection $q\cap q'$ is an $(n-2)$-cube. Since $|P|$ is an $(n-1)$-cell, there exists a face $q_0$ of $Q$ for which $q_0\in P$ but $\hat q_0\not \in P$.

We fix now a labeling, $q_0,q_1,\ldots,q_m$, of the $(n-1)$-cubes in $P$. Since all other cubes connect to $q_0$ in an $(n-1)$-face, the sequence $q_0, \ldots,  q_m$ satisfies the defining property for the shellability.
\end{proof}

\begin{corollary}
\label{cor:reconstructibility_faces}
Let $T$ be a tunnel and let $q\subset \partial T$ be an $(n-1)$-cube. Then $\partial T - q$ is a shellable cubical $(n-1)$-complex. 
\end{corollary}
\begin{proof}
We prove the claim by induction on the size of adjacency graph $\Gamma(T)$ of $T$. Let $Q_0\in T^{[n]}$ be the (unique) $n$-cube having $q$ as a face. Since $\Gamma(T)$ is tree, we may give $\Gamma(T)$ a partial order in which $Q_0$ is the root. 

Let $Q$ be a leaf of $\Gamma(T)$ and $T'$ be the tunnel $T'=T-Q$. Since $\# \Gamma(T') = \# \Gamma(T) -1$, we have by induction assumption that $\partial T'-q $ is shellable. Let $q'_0,\ldots, q'_k$ be a labeling of $(n-1)$-cubes in $T'$ for which $q'_k \cap (q'_0\cup \ldots \cup q'_{k-1})$ is an $(n-2)$-cell for each $k$. Let also $\ell$ be such that $q'_\ell=T'\cap Q$. 

We label now the $(n-1)$-cubes $q''_1,\ldots, q''_{2n-1}$ of $\partial Q-q'$ so that $q''_1$ has a face in $q'_\ell$, that $q''_2$ does not meet $q'_\ell$, and that $q''_{2n-1}$ has a face in $q'_{\ell+1}$. Then the sequence 
\[
q'_0, q'_1,\ldots, q'_{\ell-1}, q''_1,q''_2,\ldots, q''_{2n-1}, q'_{\ell+1},\ldots, q'_k
\]
satisfies the shellability property. Thus
\[
\partial T - q  = (\partial T' - q) \cup (\partial Q - (T'\cap Q))
\]
is shellable. 
\end{proof}

\begin{remark}\label{rmk:non_shellable}
Shellability for simplicial complexes has a long history.
It is well-known that not all triangulations of a tetrahedron are shellable; see  M.\;Rudin \cite{Rudin_MaryEllen}. To the other direction, it is a result of Frankl \cite{Frankl} that every triangulation of a $2$-cell is shellable, and a result of Sanderson \cite{Sanderson-PAMS} that every triangulation of a $3$-cell has a shellable subdivision.

Not all cubical $n$-complexes of cells are shellable for any $n\geq 3$. See  Bing \cite[Example 2]{Bing_Poincare} for a $3$-dimensional example;  higher dimensional examples may be obtained by taking the product of Bing's example with Euclidean cubes.
\end{remark}

\section{Proof of the Quasiregular deformation theorem}

We are now ready to prove Theorem \ref{thm:QR-deformation-star} stated in the beginning of this part.

\Quasiregulardeformation*

\begin{convention}
We assume in what follows, as we may, that a cubical Alexander map $f\colon |K|\to \bS^n$ has the property that its $w_k$-vertices are barycenters of $k$-cubes; see Section \ref{sec:triangulation}. In particular, vertices at the centers of $n$-cubes are $w_n$-vertices.
\end{convention}

The proof of Theorem \ref{thm:QR-deformation-star} is by double induction -- first on the dimension and then on the complexity of the cell. We first prove the theorem in dimension two, which  serves as the initial step in the induction by dimension.

\begin{lemma}
\label{lemma:cubical_deformation_dim_2}
Let $K$ be a cubical $2$-complex whose space is a cell. Then there exist a star replacement $K^*$ of $K$ and constants $\sL^\dagger=\sL^\dagger(K^*)\ge 1$ and $\sL'=\sL'(K)\ge 1$ for the following.
Let  $f \colon |K|\to \bS^2\times \{0\}$ be an $\sL$-BLD-controlled expansion of a $K^\Delta$-Alexander map for $\sL\ge 1$.
Then there exists a $\max\{ \sL', \sL\}$-BLD map $F\colon |K|\times [0,1]\to \bS^2\times [0,1]$ for which 
\begin{enumerate}
\item $F|_{|K|\times \{0\}} = f$, 
\item $F(x,t)=f(x,0)$ for $(x,t)\in |\partial K|\times [0,1]$, and 
\item $F|_{|K|\times \{1\}}\colon |K|\to \bS^2\times \{1\}$ is $\max\{\sL^\dagger,\sL\}$-BLD-controlled expansion of a $K^*$-Alexander map. 
\end{enumerate}
In addition, if $|K|$ is a convex subset of $\R^2$ and flat structure of $K$ consists of affine maps, then we may choose $K^*$ so that its flat structure consists of affine maps.
\end{lemma}

\begin{remark}
For every $\ell\in \N$, there exists finitely many isometry classes of cubical $2$-complexes $K$ for which $\# K^{[2]} = \ell$. Thus the constants $\sL^\dagger(K^*)$ and $\sL'(K)$ depend only on the size of $K$.  
\end{remark}

\begin{proof}[Proof of Lemma \ref{lemma:cubical_deformation_dim_2}]
We prove the claim by induction on the number $\ell = \# K^{[2]} \ge 1$ of $2$-cubes in $K$. The claim clearly holds when $\# K^{[2]}=1$ and, in this case, $K^* = K^\Delta$.

Assume now that $\ell\ge 2$ and that the claim holds for all complexes $K'$ satisfying $\# (K')^{(2)} \le \ell$. Let $K$ be a cubical complex with $\# K^{[2]} = \ell+1$.

\medskip
\noindent
\emph{Case 1:}
Suppose that $f\colon |K|\to \bS^2$ is a $K^\Delta$-Alexander map. By Proposition \ref{prop:reconstructibility_2-dim}, $K$ is shellable.
Hence there exist a subcomplex $\widetilde K$ of $K$ and a $2$-cube $\widetilde q \in K$ for which $K = \widetilde K \cup \widetilde q$ and $|\widetilde K\cap \widetilde q|$ is an $1$-cell. Thus $\widetilde q\cap \partial K$ contains either one,  two, or three faces of $\widetilde q$.  Let $\widetilde f = f|_{|\widetilde K|} \colon |\widetilde K|\to \bS^2$ and note that $\widetilde f$ is a $\widetilde K^\Delta$-Alexander map.

Since $\#\widetilde K^{[2]} = \ell$, the induction assumption yields a star replacement ${\widetilde K}^*$ and an $\sL'(\widetilde K)$-BLD mapping $\widetilde F \colon |\widetilde K|\times [0,1]\to \bS^2\times [0,1]$ as in the statement. In particular, $\widetilde F|_{|\widetilde K|\times \{1\}}$ is an $\sL^\dagger({\widetilde K}^*)$-BLD-controlled expansion of a ${\widetilde K}^*$-Alexander map.

Let ${\widetilde f}^* \colon |\widetilde K| \to \bS^2$ be the ${\widetilde K}^*$-Alexander map underlying the BLD map $\widetilde F|_{|\widetilde K|\times \{1\}} \colon |\widetilde K|\times \{1\} \to \bS^2\times \{1\}$. Then $({\widetilde K}^*,{\widetilde f}^*)$ is an Alexander star and $\{ ({\widetilde K}^*,{\widetilde f}^*), (\widetilde q^\Delta, \widetilde f|_{|\widetilde q|})\}$ is an Alexander star pair. We extend $\widetilde F$ to a map $|\widetilde K\cup \widetilde q| \times [0,1]\to \bS^2\times [0,1]$, by   $(x,t) \mapsto (x,t)$ on $|\widetilde q|\times [0,1]$.

Denote by $\widetilde v$ and $v$, respectively,  the only vertices of ${\widetilde K}^*$ and $\widetilde q^\Delta$ contained in $\interior |\widetilde K|$ and $\interior |\widetilde q|$, respectively. Then $L = {\widetilde K}^* \cap \widetilde q^\Delta$ is a $1$-complex having only $w_0$ and $w_1$ vertices and the $w_1$ vertices are contained in the interior of $|L|$. See Figure \ref{fig:three_cases_joins} for an illustration on all isomorphism classes of $L$.

\begin{figure}[h!]
\begin{overpic}[scale=0.75,unit=1mm]{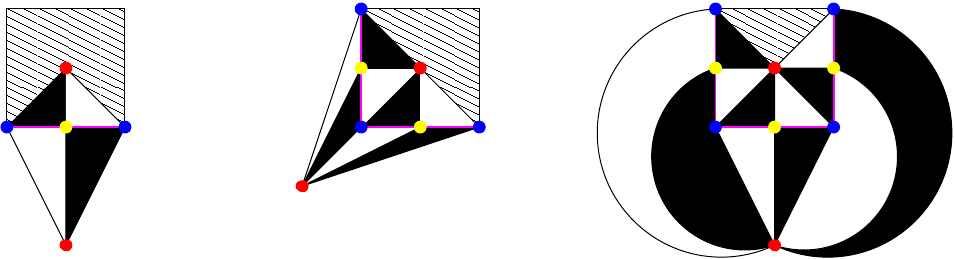} % add: grid
\put(36,7){\tiny $\widetilde v$}
\put(10,0){\tiny $\widetilde v$}
\put(98,-1){\tiny $\widetilde v$}
\put(9,25){\tiny $v$}
\put(54,25){\tiny $v$}
\put(98,26){\tiny $v$}
\put(11,28){\tiny $\widetilde q$}
\put(56,28){\tiny $\widetilde q$}
\put(100,29){\tiny $\widetilde q$}
\end{overpic}
\caption{Joins of $L$ with vertices $\widetilde v$ and $v$ in all cases}.
\label{fig:three_cases_joins}
\end{figure}

There are two cases. Suppose first that $|L|$ consists of exactly one face of $\widetilde q$. Then $L^\Delta$ has a single $w_1$-vertex $u\in L^\Delta$. By Lemma \ref{lemma:merge-of-Alexander-pair}, we may merge the stars ${\widetilde K}^*$ and $\widetilde q^\Delta$ at $u$ to a complex $ K^*$ for which $|K^*| = |{\widetilde{K}}^*\cup \widetilde q^\Delta|$, $u$ is the only vertex in the interior of $ K^*$.

By Corollary \ref{cor:merge-of-Alexander-pair}, there exists a constant $\sL=\sL(\widetilde K, \widetilde q)\ge 1$ and an $\sL$-BLD map $F^*\colon |\widetilde K \cup \widetilde q|\times [1,2] \to \bS^2\times [1,2]$ for which $F^*|_{|\widetilde K \cup \widetilde q|\times \{1\}} = \widetilde F|_{|\widetilde K \cup \widetilde q|\times \{1\}}$ and $F^*|_{|\widetilde K \cup \widetilde q|\times \{2\}} \colon |\widetilde K \cup \widetilde q|\to \bS^2$ is an $\sL$-BLD-controlled expansion of a $ K^*$-Alexander map. Fix a ball $B$ for which $2B$ is contained in a $2$-simplex in ${\widetilde q}^\Delta$.  We extend now $F^*$ to the interval $[2,3]$ and move all new simple covers in $|{\widetilde K}^*|$ into $B$ by a BLD isotopy.

Finally, we define a mapping $F\colon |K|\times [0,1]\to \bS^2\times [0,1]$ by
\[
(x,t) \mapsto \left\{ \begin{array}{ll}
\widetilde F(x,3t), & t\le 1/3 \\
F^*(x, 3t-1), & t\ge 1/3.
\end{array}\right.
\]
The first part of the claim in lemma follows in this case.

Suppose now that $|L|$ is a union of two or three faces of $q$; see Figure \ref{fig:three_cases_joins}. In this case, we first apply  Lemma \ref{lemma:Alexander_star_pair_reduction} and Corollary \ref{cor:Alexander_star_pair_reduction} 
to reduce the complex $L^\Delta$, and simultaneously the complex ${\widetilde K}^*\cup \widetilde q^\Delta$, to a complex of having a single $w_1$-vertex in the interior; see Figure \ref{fig:Dim_2_intersections} for an example. After this, we complete the proof by following the case of a single face. Since the proof is otherwise analogous, we omit the technical details. 

In both cases we obtained a BLD map $F\colon |K|\times [0,1]\to \bS^2\times [0,1]$ which is a homotopy from $f$ to a BLD expansion of a $K^*$-Alexander map $|K|\to \bS^2$. Since the distortion constants depend only distortion constants $\sL^\dagger(\widetilde K)$, $\sL'(\widetilde K)$, and distortion constants of Corollaries \ref{cor:merge-of-Alexander-pair} and \ref{cor:Alexander_star_pair_reduction}, we conclude that $F$ and its restriction $F|_{|K|\times \{1\}}$ are $\sL''(K)$ and $\sL^\dagger(K^*)$-BLD, respectively. This completes the proof of the first claim in the lemma for the case that $f$ is a $K^\Delta$-Alexander map.

We now verify the second claim on affine structure, and suppose that $|K| \subset \R^2$ is convex and $K$ has an affine flat structure. Write $K=\widetilde K \cup \widetilde q$ as before and note that, before applying the second part of  the induction hypothesis to $\widetilde K$, we need to map $\widetilde K$ isomorphically to a complex whose space is convex.

For this we fix a piecewise linear $2$-cell $q'$ in $\R^2$ for which $q'\cap |K|= |\partial K \cap \widetilde q|$ and $|q'\cup \widetilde q|$ is convex.
Fix  also  a piecewise linear homeomorphism $\psi \colon |K| \to |K\cup q'|$ for which  
\begin{enumerate}
\item $\psi(|\widetilde K|)= |K|$, $\psi(|\widetilde q|)=|q'|$,
\item $\psi(|\widetilde K \cap \widetilde q|) = |\partial K \cap \widetilde q|$, $\psi(|\partial K \cap \widetilde q|) = |q'\cap \partial (K\cup q')|$, and 
\item $\psi$ is affine on simplices of $\widetilde K^\Delta \cup \widetilde q^\Delta$.
\end{enumerate}
Then  $\psi_*K$ is a cubical complex on $|K\cup q'|$ isomorphic to $K$ and $\psi_*K^\Delta$ is a simplicial complex on $|K\cup q'|$ isomorphic to $K^\Delta$.
Moreover,  complex $\psi_*\widetilde K$ has now a convex space $|\psi_*\widetilde K|=|K|$ and, because $\psi$ is piecewise linear, an affine structure. 

The induction hypothesis now yields a star replacement $(\psi_*\widetilde K)^*$ having an affine structure, and a bilipschitz homotopy  $\widehat F \colon |\psi_*\widetilde K|\times [0,1]\to \bS^2\times [0,1]$ from $\widetilde f \circ \psi^{-1}$ to a BLD-controlled expansion ${\widehat f}^*$ of a 
$(\psi_*\widetilde K)^*$-Alexander map. As before, $\widehat F$ may be extended  canonically to a homotopy  $\widehat F \colon |\psi_*(\widetilde K \cup \widetilde q)| \times [0,1]\to \bS^2\times [0,1]$. 

After repeating the previous deformation steps, almost verbatim, to 
$(\psi_*\widetilde K)^* \cup (\psi_*\widetilde q)^\Delta$ and $\widehat F|_{|\psi_*(\widetilde K \cup \widetilde q)| \times \{1\}}$, we arrive at a star replacement $(\psi_*K)^*$ of  $\psi_*K$ having an affine flat structure, and a BLD homotopy from ${\widehat f}^*$ to a BLD expansion of a  $(\psi_*K)^*$-Alexander map on $|K\cup q'|$. Since $\psi$ is piecewise linear, the second part of the claim on the affine structures follows immediately by pulling the homotopy and the star replacement $(\psi_*K)^*$ back to $K$ by ${\psi_*}^{-1}$.

\medskip
\noindent
\emph{Case 2: (General case)} Suppose now that $f \colon |K|\to \bS^2$ is an $\sL$-BLD-controlled expansion of a $K^\Delta$-Alexander map $|K|\to \bS^2$ and let $\widetilde K$ and $\widetilde q$ be as in Case 1. 

We fix also a $2$-simplex $\sigma \in \widetilde q^\Delta$, which has a face in $\partial K$, and a Euclidean ball $B \subset \sigma$ satisfying $2B \subset \sigma$. Since $f$ is a controlled expansion, we may assume, by applying Proposition \ref{prop:move} if necessary, that all simple covers are contained in $B$. Since the isotopy of simple covers has distortion constant depending only on $K$, we may increase the constant $\sL''(K)$ if necessary, so that the isotopy $|K|\times [0.1]\to \bS^2\times [0,1]$ is $\sL''(K)$-BLD. 

After these preparatory steps, we may apply Case 1 to mapping $f$ in place of an Alexander map $|K|\to \bS^2$ with a requirement that  $B$ is moved by throughout  the process.  This yields a $2\sL''(K)$-BLD mapping $F\colon |K|\times [0,1]\to \bS^2\times [0,1]$ for which $F|_{|K|\times \{0\}} = f$, and $F_{|K|\times \{1\}} \colon |K|\to \bS^2$ is a $\max\{ \sL^\dagger(K^*), \sL\}$-BLD expansion of a $K^*$-Alexander map. This concludes the proof of the first claim for  the general case. Proof of the claim on affine structure follows also from that for Case 1 almost verbatim. 
\end{proof}

\begin{figure}[h!]
\begin{overpic}[scale=0.50,unit=1mm]{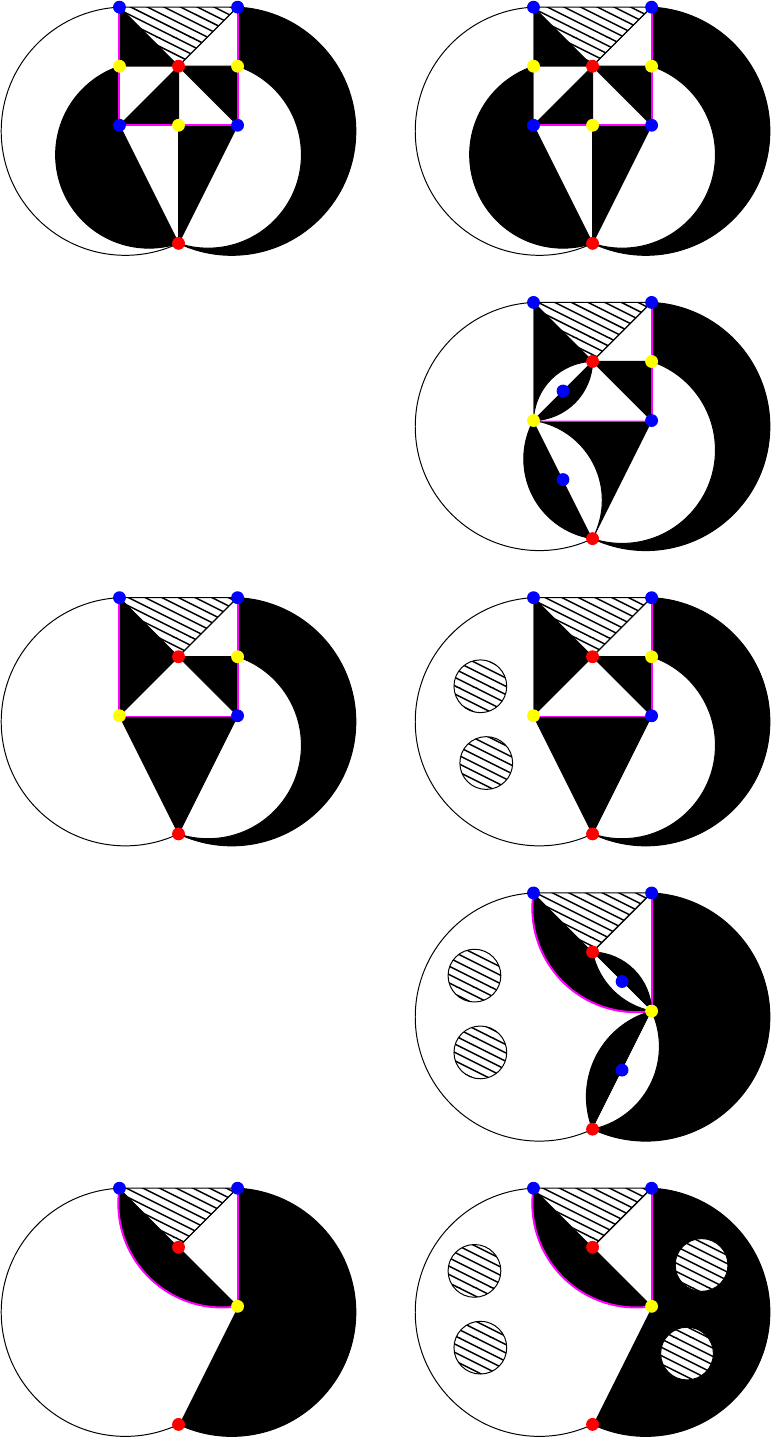} % add: grid
\end{overpic}
\caption{Deformation of an Alexander star pair to a simple Alexander star pair expanded by simple coves, for the third picture in Figure \ref{fig:three_cases_joins}. \emph{On the left:} Alexander maps associated to the  reduction steps of the original complex. \emph{On the right:} Branched covers associated to the deformation steps.} 
\label{fig:Dim_2_intersections}
\end{figure}

\begin{proof}[Proof of Theorem \ref{thm:QR-deformation-star}]
We start with the induction on the dimension and then on the number of top dimensional cubes.

The induction assumption on the dimension states: \emph{Suppose that  $n\ge 2$ and that, for each shellable Alexander $n$-complex $(K,f)$, there exist a reduction sequence $K^\Delta \searrow \cdots \searrow K^*$, a constant $\sL'=\sL'(n,K)\ge 1$, and an $\sL'$-BLD mapping $F'\colon |K|\times [0,1]\to \bS^n\times [0,1]$ for which $F'|_{|K|\times \{0\}} = f$ and $F'|_{|K|\times \{1\}} \colon |K|\to \bS^n$ is an $\sL'$-BLD-controlled expansion of a $K^*$-Alexander map. Moreover, if $|K|\subset \R^n$ is convex and $K$ has an affine flat structure, then $K^*$ has an affine flat structure.} 

This statement holds for $n=2$ by Lemma \ref{lemma:cubical_deformation_dim_2}. Let $n \ge 2$ be a dimension for which the induction assumption on the dimension holds. 

Assume next that in dimension $n+1$ the following holds: \emph{Suppose that $\ell\ge 1$ and that, for each shellable Alexander $(n+1)$-complex $(K,f)$ and $\# K^{[n+1]} = \ell$, there exist a reduction sequence $K^\Delta \searrow \cdots \searrow K^*$, a constant $\sL''=\sL''(n,K)\ge 1$, and an $\sL''$-BLD mapping $F''\colon |K|\times [0,1]\to \bS^n\times [0,1]$ for which $F''|_{|K|\times \{0\}} = f$ and $F''|_{|K|\times \{1\}} \colon |K|\to \bS^n$ is an $\sL''$-BLD-controlled expansion of a $K^*$-Alexander map. Moreover, if $|K|\subset \R^n$ is convex and $K$ has an affine flat structure, then $K^*$ has an affine flat structure.} This statement holds trivially for $\ell=1$. 

Suppose now that the induction assumption holds for all $(n+1)$-complexes $K$ satisfying $\# K^{[n+1]} = \ell$ and,  
for the induction step, suppose that $(K,f)$ is a shellable Alexander $(n+1)$-complex for which $\# K^{[n+1]}=\ell+1$. We proceed as in Lemma \ref{lemma:cubical_deformation_dim_2}.

Let $f\colon |K|\to \bS^n$ be an $\sL$-BLD-controlled expansion of a $K^\Delta$-Alexander map $h \colon |K|\to \bS^n$.

By the shellability of $K$, there exist a shellable subcomplex $\widetilde K$ of $K$, whose space is an $(n+1)$-cell, and an $(n+1)$-cube $\widetilde q\in K$ for which  $K = \widetilde K \cup \widetilde q$, $|\widetilde K\cap \widetilde q|$ is an $n$-cell, and $\widetilde q^\Delta = {\widetilde q}^*$. The cubical complex $\widetilde K\cap \widetilde q$ is shellable by Lemma \ref{lemma:shellability_faces}, and complex $\widetilde K$ is shellable since $K$ is shellable. 

Again, as in the two dimensional case, we may assume that supports of the  simple covers in $f$ expanding $h$ are contained in a small ball $B$ in an $(n+1)$-simplex $\sigma\in \widetilde q^\Delta$ which has a face in $\partial K$. Let $\widetilde f = f|_{|\widetilde K|} \colon |\widetilde K|\to \bS^{n+1}$. Then $\widetilde f$ is an Alexander map.

By the (second) induction assumption on the number of top dimensional cubes, there exists a reduction sequence $\widetilde K^\Delta \searrow \cdots \searrow {\widetilde K}^*$, hence also a reduction sequence  $\widetilde K^\Delta\cup {\widetilde q}^\Delta \searrow \cdots \searrow {\widetilde K}^*\cup {\widetilde q}^\Delta$.

Also  by induction there exist $\sL'=\sL'(n,K)$ and an $\sL'$-BLD map $\widetilde F\colon |\widetilde K|\times [0,1]\to \bS^{n+1}\times [0,1]$ for which $\widetilde F|_{|\widetilde K|\times \{0\}} = \widetilde f$ and $\widetilde F|_{|\widetilde K|\times \{1\}} \colon |\widetilde K|\to \bS^{n+1}$ is an $\sL'$-BLD-controlled expansion of a ${\widetilde K}^*$-Alexander map. We extend  $\widetilde F$ to $|\widetilde K \cup \widetilde q|\times [0,1]$ canonically as in the two dimensional case, then move the new simple covers into ball $B$. We denote the new mapping also by $\widetilde F \colon |K|\times [0,1]\to \bS^{n+1}\times [0,1]$.

Let $L = {\widetilde K}^* \cap \widetilde q^\Delta$, and  $\widetilde v$ and $v$ be the centers of the stars ${\widetilde K}^*$ and $\widetilde q^\Delta$, respectively. From the labeling of  the vertices it follows  that $L$ admits an $L$-Alexander map $|L|\to \bS^n$. Since $L$ is shellable by Lemma \ref{lemma:shellability_faces}, by the induction assumption on the  dimension, we obtain a reduction sequence 
\[
L = L_0 \searrow \cdots \searrow L_m,
\]
where $L_m$ is a star of a $w_{n-1}$-vertex $u$. By coning the elements in complex $L_\ell$ with vertices $\widetilde v$ and $v$, respectively, the sequence $L_0 \searrow \cdots \searrow L_m$ induces a reduction sequence 
\[
{\widetilde K}^* \cup \widetilde q^\Delta = P_0 \searrow \cdots \searrow P_m,\]
where 
\[P_m=  v * (( \partial {\widetilde q}^\Delta - L)\cup L_m) \,\,   \bigcup \,\,  \widetilde v * ((\partial {\widetilde K}^* - L) \cup L_m )
\]
is a simple Alexander star pair of dimension $n+1$, and $v$ and $\widetilde v$ are $w_{n+1}$-vertices.
By  Lemma \ref{lemma:merge-of-Alexander-pair}, we obtain an Alexander star $K^*= ({\widetilde K}^*\cup \widetilde q^\Delta)^*$ by taking  the collapse $(P_m)^\st_{u,n+1}$.

By Corollaries \ref{cor:Alexander_star_pair_reduction} and \ref{cor:merge-of-Alexander-pair}, and the induction assumption, there exist  a constant $\sL''=\sL''(n,K)$ and an $\sL''$-BLD map $F' \colon |\widetilde K \cup \widetilde q|\times [1,2]\to \bS^{n+1}\times [1,2]$ for which $F'|_{|\widetilde K \cup \widetilde q|\times \{1\}} = F'|_{|\widetilde K \cup \widetilde q|\times \{1\}}$ and $F'|_{|\widetilde K \cup \widetilde q|\times \{2\}} \colon |\widetilde K \cup \widetilde q|\to \bS^{n+1}$ is an $\sL''$-BLD-controlled expansion of a $K^*$-Alexander map. Again, we extend the map $F'$ to $|K|\times [2,3]\to \bS^{n+1}\times [2,3]$ by an isotopy of simple covers, which moves the simple covers into $B$. We use the notation $F'$ also for the extension after rescaling the interval.

We finish the construction of the homotopy by combining the maps $\widetilde F$ and $F'$ to a BLD map $F\colon |K|\times [0,1]\to \bS^{n+1}\times [0,1]$.

To check that $K^*$ may be chosen to have an affine structure when $|K|\subset \R^n$ is convex and $K$ has affine structure, we need to transform  the space of $\widetilde K$ to a convex set as in Lemma \ref{lemma:cubical_deformation_dim_2}. As in the two dimensional case, 
we fix an $(n+1)$-cube $q'$ and a PL homeomorphism $\psi \colon |K|\to |K\cup q'|$ for which $|\psi (\widetilde K)| =|K|$, hence convex,  $|\psi(\widetilde q)|= |q'|$,  and $\psi_* \widetilde K$ consists of affine simplices. By performing the deformation of the mapping $\widetilde f \circ \psi^{-1}$ on $\psi_*\widetilde K \cup \psi_* \widetilde q$, we  obtain an affine star replacement $(\psi_*\widetilde K)^*$ of $\psi_*\widetilde K$  and a BLD homotopy from $\widetilde f \circ \psi^{-1}$  to a BLD-controlled expansion of a $(\psi_*\widetilde K)^*$-Alexander map. 
 The conclusion follows by pulling back the BLD homotopy by  $(\psi_*)^{-1}$ and setting a $K^* = (\psi_*)^{-1}(\psi_* \widetilde K)^*$, as in the two dimensional case. Since the argument is analogous to that in Lemma \ref{lemma:cubical_deformation_dim_2}, we omit the details. 
 
This completes the induction step and the proof.
\end{proof}

\begin{remark}
For each $n\in \N$ and every $\ell\in \N$, there exists finitely many isometry classes of cubical $n$-complexes $K$ for which $\# K^{[n]} = \ell$. Hence the constants $\sL^\dagger(K^*)$ and $\sL'(K)$ in Theorem \ref{thm:QR-deformation-star} depend only dimension $n$ and the size of $K$.  
\end{remark}

\subsection{Deformation in specific cubical models}

We record an immediate corollary of Theorem \ref{thm:QR-deformation-star}. This observation follows from the fact that $q^\Delta$, for a cube $q$, is a star.

\newcommand{\Ldeformshellable}{\sL_{\mathrm{deform}}}

\begin{corollary}
\label{cor:extension-over-cube} 
Let $q=[0,1]^{n-1}$ be an $(n-1)$-cube, and 
$K$ be a shellable cubical complex having an affine flat structure,  space $|K|=|q|$, and boundary $\partial K = \partial q$. Then there exists $\Ldeformshellable=\Ldeformshellable(n, K) \ge 1$ for the following. 

Let $f\colon |K|\to \bS^{n-1}$ be an $\sL$-BLD-controlled expansion of a $K^\Delta$-Alexander map for $\sL\ge 1$, and $\sigma$ be an $(n-1)$-simplex in $q^\Delta$. Then there exists a $\max\{ \Ldeformshellable, \sL\}$-BLD map $F \colon |q|\times [0,1]\to \bS^{n-1}\times [0,1]$ for which 
\begin{enumerate}
\item $F|_{|q|\times \{0\}} = f$,
\item $F(x,t) = (f(x),t)$ for $(x,t)\in (|\sigma|\cup \partial |q|) \times [0,1]$, and
\item $F|_{|q|\times \{1\}} \colon |q|\times \{1\} \to \bS^{n-1}\times \{1\}$ is a $\max\{ \Ldeformshellable, \sL\}$-BLD-controlled expansion of a $q^\Delta$-Alexander map.
\end{enumerate}
\end{corollary}

%\section{Merge in bands of cubes}

%In our application is the deformation, 

Another typical case is a merge in a bands of cubes. To illustrate, let $P$ be a cubical $(n-1)$-complex whose space is a closed $(n-1)$-manifold, and let $K=P\times [0,2]$ be a cubical $n$-complex with $n$-cubes $K^{[n]} = \{ q\times [0,1], q\times [1,2] \colon q\in P^{[n-1]}\}$. Then for each $k=0,\ldots, n-1$ and each $\xi\in P^{[k]}$, there exists exactly two $(k+1)$-cubes $\xi_- \subset P\times [0,1]$ and $\xi_+ \subset P\times [1,2]$ for which $\xi_- \cap \xi_+ = \xi\times \{1\}$. In particular, for each $q\in P^{[n-1]}$, the union  $(q_+)^\Delta \cup (q_-)^\Delta$ is an Alexander star pair in $K^\Delta$. Since these pairs have mutually disjoint interiors, we may merge all  such $n$-dimensional Alexander star pairs individually. After that we continue, inductively in dimension $k=n-2,\ldots, 0$, to merge $(k+1)$-dimensional Alexander star pairs having a common face in $P^{[k]}$. This iterative merge of Alexander stars yields a complex $h_*\left((P\times [0,1])^\Delta\right)$, where $h \colon |P|\times [0,1]\to |P|\times [0,2]$ is the map $(x,t)\mapsto (x,2t)$. Note that spaces $|h_*\left((P\times [0,1])^\Delta\right)| =|K|$.
We record this deformation as a corollary.

\newcommand{\Band}{\mathrm{Band}}

\begin{corollary}
\label{cor:Band-merge}
For $n\ge 2$, there exists a constant $\sL_\Band = \sL_\Band(n)\ge 1$ for the following. Let $P$ be a cubical $(n-1)$-complex whose space is a closed $(n-1)$-manifold and let $K=P\times [0,2]$ be a cubical $n$-complex with $n$-cubes $K^{[n]} = \{ q\times [0,1], q\times [1,2] \colon q\in P^{[n-1]}\}$. Then there exists an $\sL_\Band$-BLD map $F\colon |K| \times [0,1] \to \bS^{n-1}\times [0,1]$ for which $F|_{|K|\times \{0\}} \colon |K|\to \bS^{n-1}$ is the orientation preserving $K^\Delta$-Alexander map and $F_{|K|\times \{1\}} \colon |K|\to \bS^{n-1}$ is an $\sL_\Band$-BLD-controlled expansion of an $h_*\left((P\times [0,1])^\Delta\right)$-Alexander map.   
\end{corollary}

%%%%%%%%%%%%%%%%%%%%%%%%%%%%%%%%%%%%%%%%%%%%%%%%%%%%%%%%%%%%%%%%%%%%%%%%%%%%%
%%%%%%%%%%%%%%%%%%%%%%%%%%%%%%%%%%%%%%%%%%%%%%%%%%%%%%%%%%%%%%%%%%%%%%%%%%%%%
%%%%%%%%%%%%%%%%%%%%%%%%%%%%%%%%%%%%%%%%%%%%%%%%%%%%%%%%%%%%%%%%%%%%%%%%%%%%%
%%%%%%%%%%%%%%%%%%%%%%%%%%%%%%%%%%%%%%%%%%%%%%%%%%%%%%%%%%%%%%%%%%%%%%%%%%%%%

%%%%%%%%%%%%%%%%%%%%%%%%%%%%%%%%%%%%%%%%%%%%%%%%%%%%%%%%%%%%%%%%%%%%%%%%%%%%%%
%%%%%%%%%%%%%%%%%%%%%%%%%%%%%%%%%%%%%%%%%%%%%%%%%%%%%%%%%%%%%%%%%%%%%%%%%%%%%%%%
%%%%%%%%%%%%%%%%%%%%%%%%%%%%%%%%%%%%%%%%%%%%%%%%%%%%%%%%%%%%%%%%%%%%%%%%%%%%%%%%
%%%%%%%%%%%%%%%%%%%%%%%%%%%%%%%%%%%%%%%%%%%%%%%%%%%%%%%%%%%%%%%%%%%%%%%%%%%%%%%%

\chapter{Hopf theorem for Alexander maps}
\label{sec:uniqueness_theorems}

The Quasiregular deformation theorem has a consequence in terms of homotopy, which may be considered as a Hopf theorem for Alexander maps.

 \begin{theorem}[Homotopy theorem of Alexander maps]
\label{thm:cell_uniqueness}
Let $n\geq 2$, and $K_1$ and $K_2$ be two shellable cubical complexes on an $n$-cell $E$ for which $K_1|_{\partial E} = K_2|_{\partial E}$. Let $f_1 \colon E\to \bS^n$ be a $K_1^\Delta$-Alexander map and $f_2\colon E\to \bS^n$ be a $K_2^\Delta$-Alexander map and  let, for each
 $i=1,2$,  $\check f_i \colon E \to \bS^n$ be a BLD-controlled expansion of $f_i$ for which  $\deg\, \check f_1=\deg\, \check f_2$. Then $\check f_1$ and $\check f_2$ are quasiregularly homotopic.
\end{theorem}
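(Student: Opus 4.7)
The plan is to apply the Cubical Deformation Theorem (Theorem~\ref{intro-thm:cubical_deformation}) to $f_1$ and $f_2$ separately, reduce each $\check f_i$ to an expansion of a common star-replacement Alexander map by a common number of essentially disjoint free simple covers, and then iterate the moving theorem (Theorem~\ref{thm:move}) to conclude.

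First, for $i=1,2$, I apply Theorem~\ref{intro-thm:cubical_deformation} to the pair $(K_i,f_i)$. This provides a star-replacement $K_i^*$ of $K_i^\Delta$, a $K_i^*$-Alexander map $f_i^*$ with $f_i^*|_{\partial E}=f_i|_{\partial E}$, and a branched cover homotopy, relative to $\partial E$, from $f_i$ to an expansion $\check f_i^\dagger$ of $f_i^*$ by $m_i:=\tfrac{1}{2}(\#(K_i^\Delta)^{(n)}-\#(K_i^*)^{(n)})$ essentially disjoint free simple covers. The assumption $K_1|_{\partial E}=K_2|_{\partial E}$ combined with the standing convention of Section~\ref{sec:cubical_Alexander_maps} (that the interior vertex of a $k$-cube maps to $w_k$) forces $f_1|_{\partial E}=f_2|_{\partial E}$. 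Since $K_1^*$ and $K_2^*$ have the same boundary structure and each has a single interior vertex, an ambient isotopy of $E$ relative to $\partial E$ identifies them with a common simplicial complex $K^*$. Applying Lemma~\ref{lemma:star_replacement} then yields a branched cover homotopy $f_1^*\simeq f_2^*$ through $K^*$-Alexander maps.

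Next I incorporate the free simple covers already present in the $\check f_i$. Writing $\check f_i$ as an $n_i$-fold expansion of $f_i$ by essentially disjoint free simple covers, I first apply Theorem~\ref{thm:move} to transport the supports of these $n_i$ simple covers into a collar $U$ of $\partial E$. The proof of the Cubical Deformation Theorem is assembled from the local star-by-star deformations of Theorem~\ref{thm:deform_collapse_expand}, each of which is supported in a star of an interior vertex; the resulting total homotopy is therefore the identity in some collar of $\partial E$, which I may take to contain $U$. Consequently the Cubical Deformation homotopy extends trivially across $U$ to a branched cover homotopy $\check f_i\simeq\check f_i^{\dagger\dagger}$, where $\check f_i^{\dagger\dagger}$ is an expansion of $f_i^*$ by $m_i+n_i$ essentially disjoint free simple covers. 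A direct simplex count gives
\[
\deg\check f_i^{\dagger\dagger}=\deg f_i^*+m_i+n_i=\tfrac{1}{2}\#(K_i^\Delta)^{(n)}+n_i=\deg f_i+n_i=\deg\check f_i,
\]
so the hypothesis $\deg\check f_1=\deg\check f_2$ forces $m_1+n_1=m_2+n_2=:N$.

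Finally I conclude by iterating Theorem~\ref{thm:move}. I first park the $N$ simple covers of $\check f_1^{\dagger\dagger}$ in a collar of $\partial E$ (using Theorem~\ref{thm:move}) so that they survive the homotopy $f_1^*\simeq f_2^*$ produced above, which is again supported compactly in $\interior E$. This carries $\check f_1^{\dagger\dagger}$ to an expansion of $f_2^*$ by $N$ essentially disjoint free simple covers. The remaining problem is to show that two such expansions of $f_2^*$, with the $N$ free simple covers at possibly different cell packages, are branched cover homotopic; this follows by applying Theorem~\ref{thm:move} a total of $N$ times, moving one free simple cover at a time while treating the map on the complement as the ambient branched cover. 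Concatenating the three branched cover homotopies yields $\check f_1\simeq\check f_2$. The main obstacle is the bookkeeping in the third paragraph: one must verify that the Cubical Deformation homotopy is the identity in a genuine collar of $\partial E$, so that the extra free simple covers of $\check f_i$ can be safely parked there and carried through the deformation.
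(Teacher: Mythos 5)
Your overall plan mirrors the paper's proof: identify $K_1^*\cong K_2^*$, apply the Cubical Deformation Theorem to reduce each $f_i$ to an expansion of a common star-replacement Alexander map, count degrees, and then equalize the resulting expansions using Theorem~\ref{thm:move}. That is the right skeleton, and your degree count is correct. However, the step you flagged as the main obstacle is indeed a genuine flaw as you have written it. The Cubical Deformation homotopy is \emph{not} the identity in any collar of $\partial E$. Each local deformation from Theorem~\ref{thm:deform_collapse_expand} is supported in the star $\Star(v_0)$ of some interior vertex $v_0$ and is constant only on $\Star(v_0)^c$; it is \emph{rel} $\partial E$ (that is, $F_s|_{\partial E}=f|_{\partial E}$ for all $s$), but it genuinely moves the map on the interior of $\Star(v_0)$, including points arbitrarily close to $\partial E$ whenever $v_0$ lies in the first interior layer. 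Since the reduction sequence from $K_i^\Delta$ to $K_i^*$ must eliminate all interior vertices, including those adjacent to $\partial E$, the union of the supports of the local homotopies covers a neighborhood of $\partial E$ in $\interior E$, and parking the extra $n_i$ simple covers in a fixed collar does not protect them.

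The correct way to carry the $n_i$ pre-existing free simple covers of $\check f_i$ through the reduction is to interleave moves: run the reduction $K_i^\Delta=P_0\searrow P_1\searrow\cdots\searrow K_i^*$ step by step, and before each local deformation at a vertex $v$, use Theorem~\ref{thm:move} to transport all currently present free simple covers (the original $n_i$ together with those already created) into $n$-simplices contained in $\Star(v)^c$; the local deformation then commutes with them and at its conclusion produces new free simple covers, which Theorem~\ref{thm:move} again repositions before the next step. Since each $\Star(v)$ is a proper subcomplex of the current complex, such a destination always exists. This yields the branched cover homotopy $\check f_i\simeq\check f_i^{\dagger\dagger}$ you want, with $\check f_i^{\dagger\dagger}$ an expansion of $f_i^*$ by $m_i+n_i$ free simple covers. (The paper's own proof elides this bookkeeping entirely, just asserting the conclusion after ``by the Cubical Deformation Theorem''; you were right to notice it needs an argument, but the collar idea is not the right one.) The remaining two paragraphs of your proposal are fine: after identifying $f_1^*$ with $f_2^*$ by an isotopy rel $\partial E$ — which, unlike the Cubical Deformation homotopy, can be taken supported away from $\partial E$ — iterating Theorem~\ref{thm:move} completes the argument exactly as in the paper.
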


\begin{proof}
Since $E$ is a cell and $K_1|_{\partial E} = K_2|_{\partial E}$, the star-replacements $K_1^*$ and $K_2^*$ of $K_1^\Delta$ and $K_2^\Delta$ are isomorphic. We may therefore assume that $K_1^*=K_2^*$. Then, by the Quasiregular deformation theorem, each map $f_i$ is a quasiregularly homotopic to a BLD-controlled expansion of a $K_i^*$-Alexander map $h_i \colon E\to \bS^n$. Since $f_1$ and $f_2$ are either both orientation preserving or both orientation reversing and $K_1^*$ is a star, we have that $h_1 = h_2$. Thus both $\check f_1$ and $\check f_2$ are quasireguarly homotopic to the same quasiregular controlled expansion of an $K_1^*$-Alexander map $E\to \bS^n$. 
\end{proof}

As a direct consequence of Theorem \ref{thm:cell_uniqueness}, we obtain the following theorem stated in the introduction.

\introthmdeformation*

We also obtain immediately a version of the Hopf degree theorem for Alexander maps. For the statements, we say that a cubical complex $K$, for which $|K|\approx \bS^n$, is \emph{shellable} if there exists an $n$-cube $Q\in K^{(n)}$ for which $K' = K\setminus \{Q\}$ is shellable cubical complex.

\index{Hopf theorem for Alexander maps}

\introthmHopftheoremvone*

In these theorems, the level mappings  in the intermediate stages of the homotopy are merely branched covers and  not Alexander maps.

Finally we record also two normal forms for cubical Alexander maps $\bS^n \to \bS^n$. The first is a reduction to the identity map, and the second to a winding map.

\begin{theorem}[Normal forms for cubical Alexander maps]
\label{thm:Hopf_theorem_v2}
Let $n\geq 2$ and $(K,f)$ be a shellable cubical Alexander $n$-complex on $\bS^n$ for which the Alexander map $f \colon \bS^n\to \bS^n$ is orientation preserving. Then $f$ is quasiregularly homotopic 
\begin{enumerate}
\item to a controlled expansion of the identity map $\bS^n \to \bS^n$ by simple covers, and \label{item:expansion-simple-cover}
\item to a winding map $\bS^n \to \bS^n$. \label{item:winding}
\end{enumerate}
\end{theorem}

\begin{figure}
\begin{overpic}[scale=0.65,unit=1mm]{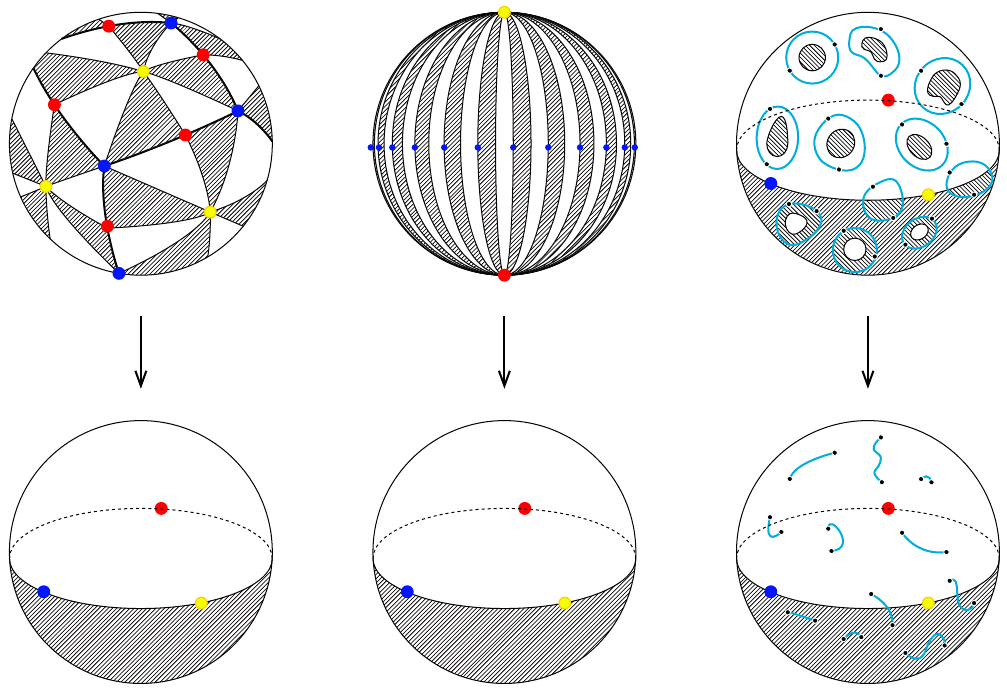} % add: grid
\put(25,28){\tiny $\bS^2$}
\put(65, 28){\tiny $\bS^2$}
\put(105,28){\tiny $\bS^2$}
\end{overpic}
\caption{A cubical Alexander map, a winding map, and an identity map expanded by simple covers in two-dimension, as in Theorem \ref{thm:Hopf_theorem_v2}.}
\label{fig:Hopf}
\end{figure}

In the  two theorems above, only part \eqref{item:winding} of Theorem \ref{thm:Hopf_theorem_v2} requires a proof. 

We first recall the notion of a winding map. For each $k\in \N$, let  $re^{i\theta} \mapsto re^{ik\theta}$ be the standard \emph{winding map of order $k$ in $\R^2$}. We also call its restriction to $\bS^1\to \bS^1$ a winding map on $\bS^1$. Inductively, for each $n\ge 2$, we call a map $F = f \times \id \colon \R^{n-1}\times \R \to \R^{n-1}\times \R$  the \emph{winding map of order $k$ on $\R^n$} if $f$ is a winding of order $k$ in $\R^{n-1}$. Similarly, we call the restriction $F|_{\bS^{n-1}} \colon \bS^{n-1}\to \bS^{n-1}$ a winding map of order $k$ on $\bS^{n-1}$.

\begin{proof}[Proof of part \eqref{item:winding} of Theorem \ref{thm:Hopf_theorem_v2}]
The proof is by induction on dimension. In dimension $n=2$, it suffices to observe that a winding map $\bS^2\to \bS^2$ is quasiregularly homotopic to a BLD-controlled expansion of the identity map $\id \colon \bS^2 \to \bS^2$. Thus, for $n=2$, the claim follows from \eqref{item:expansion-simple-cover}.

Suppose now that the claim holds on $\bS^n$ for some $n\ge 2$. Let $K$ be a shellable cubical complex on $\bS^{n+1}$ and let $f$ be a $K^\Delta$-Alexander map.  Let $Q\in K$ be an $(n+1)$-cube for which $K' = K \setminus \{Q\}$ is shellable. By Theorem \ref{thm:QR-deformation-star}, we may reduce $(K')^\Delta$ to its star-replacement, which is the barycentric triangulation $(Q')^\Delta$ of a single $(n+1)$-cube $Q'$ having space $|K'|$, and deform the $K^\Delta$-Alexander map $f$ to a quasiregular controlled expansion of a $(Q\cup Q')^\Delta$-Alexander map. Since $|Q\cap Q'|$ is an $n$-sphere, a $(Q\cup Q')^\Delta$-Alexander map is a suspension of a $(Q\cap Q')^\Delta$-Alexander map. Since a winding map $\bS^{n+1} \to \bS^{n+1}$ is a suspension of a winding map $\bS^n \to \bS^n$, the claim now follows from the induction assumption.
 \end{proof}

%%%%%%%%%%%%%%%%%%%%%%%%%%%%%%%%%%%%%%%%%%%%%%%%%%%%%%%%%%%%%%%%%%%%%%%%%%%%%%%
%%%%%%%%%%%%%%%%%%%%%%%%%%%%%%%%%%%%%%%%%%%%%%%%%%%%%%%%%%%%%%%%%%%%%%%%%%%%%%%
%%%%%%%%%%%%%%%%%%%%%%%%%%%%%%%%%%%%%%%%%%%%%%%%%%%%%%%%%%%%%%%%%%%%%%%%%%%%%%%
%%%%%%%%%%%%%%%%%%%%%%%%%%%%%%%%%%%%%%%%%%%%%%%%%%%%%%%%%%%%%%%%%%%%%%%%%%%%%%%
%%%%%%%%%%%%%%%%%%%%%%%%%%%%%%%%%%%%%%%%%%%%%%%%%%%%%%%%%%%%%%%%%%%%%%%%%%%%%%%
%%%%%%%%%%%%%%%%%%%%%%%%%%%%%%%%%%%%%%%%%%%%%%%%%%%%%%%%%%%%%%%%%%%%%%%%%%%%%%%
%%%%%%%%%%%%%%%%%%%%%%%%%%%%%%%%%%%%%%%%%%%%%%%%%%%%%%%%%%%%%%%%%%%%%%%%%%%%%%%
%%%%%%%%%%%%%%%%%%%%%%%%%%%%%%%%%%%%%%%%%%%%%%%%%%%%%%%%%%%%%%%%%%%%%%%%%%%%%%%
%%%%%%%%%%%%%%%%%%%%%%%%%%%%%%%%%%%%%%%%%%%%%%%%%%%%%%%%%%%%%%%%%%%%%%%%%%%%%%%

\part{Extension}
\label{part:QR-extension}

\chapter{Quasiregular extension theorem}
\label{sec:QRextension-statement}

In this part, we study  quasiregular extension for complexes associated to the partition by a separating complex.  The deformation theorems from Part \ref{part:deformation} have a critical role in constructing maps.  The main result is stated in Theorem \ref{thm:Real-extension-problem}.

\medskip

In what follows, we work with the following notion of evolution sequence. 

\begin{definition}
\index{evolution sequence of separating complexes}
A sequence $(Z_k)$ of separating complexes,  $Z_k \subset \Refine^{\nu k}(K)$, $k\geq 0$, is an \emph{evolution sequence of separating complexes} if
\begin{itemize}
\item $K$ is the refinement $\Refine(K_0)$ of a good cubical $n$-complex $K_0$ with boundary,
\item $Z_0=\mathcal Z$ is the separating complex in $K$ formulated in Remark \ref{rmk:separating-complex-special} based on the construction  in Theorem \ref{thm:separating-complex-existence}, and 
\item sequence $(Z_k)$ is constructed inductively by the evolution process  in Section \ref{sec:evolution}. 
\end{itemize}
Here $\nu = \nu(K)\ge 1$ is the refinement scale of $K$.
\end{definition}

Note that in the construction of $(Z_k)$, some  other structure constants, such as the length of tunnels $\lambda_\loc$, have also been fixed.

Since $Z_k$ rarely admits a well-defined Alexander map, instead of working on complexes $\Comp_{\Refine^{\nu k}(K)}(Z_k;\Sigma)$, 
we consider the extension problem on the realizations
\[
\Real_k(\Sigma) = \Real_{\Refine^{\nu k}(K)}(Z_k;\Sigma).
\]
Let $\pi_k \colon \Real_k(\Sigma) \to \Comp_k(\Sigma)$ be the canonical projection, and denote by
\[
\Upsilon_k(\Sigma) = \pi_k^{-1}(\Comp_k(\Sigma) \cap Z_k)
\]
the inner boundary component of $\Real_k(\Sigma)$. For a fixed $k$, complexes  $\Real_k(\Sigma)$ are equipped flat metrics lifted from the flat metric in the cubical complex $\Refine^{\nu k}(K)$.

Our main extension theorem reads as follows.

\begin{restatable}[Quasiregular extension theorem]{theorem}{Realextensionproblem} \index{Quasiregular extension theorem}
\label{thm:Real-extension-problem}
Let $n\geq 3$. Then there exists $\sL^\Diamond=\sL^\Diamond(n)\ge 1$ for the following. Let $K=\Refine(K_0)$ the refinement of a good cubical $n$-complex $K_0$ with boundary, and $(Z_k)$ be an evolution sequence of separating complexes in $K$.  Let $\sL\ge \sL^\Diamond$ and, for each $k\ge 0$ and a boundary component $\Sigma$ of $K$, an $\sL$-BLD-controlled expansion 
\[
\widetilde f_{k,\Sigma} \colon |\Upsilon_k(\Sigma)|\to \bS^{n-1}\times \{0\}
\]
of a $\Upsilon_k(\Sigma)^\Delta$-Alexander map, 

Then there exists $\sL'=\sL'(n,K;\sL)\ge 1$ and an $\sL'$-BLD extension 
\[
\widetilde F_{k,\Sigma} \colon |\Real_k(\Sigma)|\to \bS^{n-1}\times [0,3^{\nu k}]
\]
of $\widetilde f_{k,\Sigma}$ for which the restriction $\widetilde F_{k,\Sigma}|_{|\Sigma|} \colon |\Sigma|\to \bS^{n-1}\times \{3^{\nu k}\}$ is an $\sL$-BLD-controlled expansion of a $\Refine^{\nu k}(\Sigma)^\Delta$-Alexander map.
\end{restatable}

\begin{remark}
It should be noted that we do not fix a simplicial structure on $\bS^{n-1}\times [0,3^{\nu k}]$, and that the extension $F_{k,\Sigma}$ in Theorem \ref{thm:Real-extension-problem} is neither a simplicial map nor an expansion of such.
\end{remark}

\begin{remark}
For each fixed $k$, mappings 
$F_{k,\Sigma}$ in Theorem \ref{thm:Real-extension-problem}  are the building blocks for constructing a well-defined quasiregular map $F_k\colon |K|\to \bS^{n-1}\times \R$. We address the method of combining $F_{k,\Sigma}$  for all components $\Sigma\subset \partial K$ in Part \ref{part:Alexander-Rickman}.
\end{remark}

The idea of the extension is based on an observation: each $\Real_k(\Sigma)$ has a hierarchical decomposition into essentially disjoint subcomplexes by evolution. Indeed, we have from Proposition \ref{prop:general-localized-realization-structure} that
\begin{align*}
\Real_k(\Sigma) = \,\,&\widetilde \Rec_*(Z_0,\ldots, Z_{k-1};\Sigma) \\
&\cup \left(\bigcup_{\ell=1}^{k-1} \Diff_*(Z_\ell,\ldots, Z_{k-1};\Sigma)\right)\\
&\cup \Tunnel_{\Refine^{\nu k}(K)}(Z_k;\Sigma).
\end{align*}
For the purpose of recording, we denote by $\mathcal{Real}_k(\Sigma)$ the collection 
\[
\mathcal{Real}_k(\Sigma) = \{ \widetilde \Rec_k(\Sigma) \} \cup \cdiff_1(\Sigma) \cup \cdots \cup \cdiff_{k-1}(\Sigma) \cup \cT_k(\Sigma),
\]
where
\begin{enumerate}
\item $\widetilde \Rec_k(\Sigma) = \widetilde \Rec_*(Z_0,\ldots,Z_k;\Sigma)$,
\item  $\cdiff_\ell(\Sigma)$ is the collection of the connected components of the complex $\Diff_*(Z_\ell,\ldots, Z_{k-1};\Sigma)$, and
\item $\cT_k(\Sigma)$ is the collection of connected components of the complex $\Tunnel_{\Refine^{\nu k}(K)}(Z_k;\Sigma)$.
\end{enumerate}
We set also $\cdiff_k(\Sigma) = \cT_k(\Sigma)$.

The collection $\mathcal{Real}_k(\Sigma)$ is an essential partition of $\Real_k(\Sigma)$, that is, for $E\ne E'$ in $\mathcal{Real}_k(\Sigma)$, the intersection $E\cap E'$ has no interior.

\subsection{Hierarchy of components} 
The decomposition of $\Real_k(\Sigma)$ above, together with Remark \ref{rmk:omega-i-2},  reveals an intrinsic hierarchy.
We  associate two functions to the collection $\mathcal{Real}_k(\Sigma)$: a  level function $\ell$ of the elements and a hierarchy function $\alpha$ based on the intersections; together, these functions give  $\mathcal{Real}_k(\Sigma)$ a natural tree structure with root $\widetilde \Rec_*(\Sigma)$.

\begin{notation}
Let $\ell \colon \mathcal{Real}_k(\Sigma) \to \{0,\ldots, k\}$ be the function defined by $\ell(\widetilde \Rec_*)=0$,  by $\ell(E)=\ell$ for $E\in \cdiff_\ell(\Sigma)$, and by $\ell(E)=k$ for $E\in \cT_k(\Sigma)$.
\end{notation}

\begin{notation}\label{notation:alpha}
Let  $\alpha \colon \mathcal{Real}_k(\Sigma) \to \mathcal{Real}_k(\Sigma)$ be the (unique) function for which $\alpha(\widetilde \Rec_*(\Sigma)) = \widetilde \Rec_*(\Sigma)$ and, for $\ell(E)>0$, let $\alpha(E)\in \mathcal{Real}_k(\Sigma)$ be the unique element satisfying $\ell(\alpha(E))= \ell(E)-1$ and  $\alpha(E) \cap E \ne \emptyset$.
\end{notation}

For the construction of extension of $\widetilde f_{k,\Sigma}$, we begin with an extension over  $\Tunnel_{\Refine^{\nu k}(K)}(Z_k;\Sigma)$ in Section \ref{sec:extension-tunnels}. We move next to the differences in $\widetilde \Diff_*(Z_\ell,\ldots, Z_{k-1};\Sigma)$ in the order of decreasing  $\ell$ in Chapter \ref{chap:extension-differences};  this is  the most demanding step. Finally, we extend the mapping over $\widetilde \Rec_*(Z_0,\ldots, Z_{k-1};\Sigma)$ and complete the proof of Theorem \ref{thm:Real-extension-problem} in Chapter \ref{chap:final-extension}.

The construction follows a general principle. Extension is constructed in the essentially disjoint elements in $\mathcal{Real}_k(\Sigma)$ separately. Within each element, the extension is constructed with the help of bilipschitz mappings akin to the partition. Every point is moved by non-conformal maps at most a bounded number of times, with the number depending only on $n$ and $K$, before being mapped to the target.  Therefore the distortion of the final extension remains bounded by a constant depending only on $n$ and $K$, in particular independent of $k$.

\chapter{Roof-admissible maps}

Quasiregular extension of the Alexander map $|\Upsilon_k(\Sigma)|\to \bS^{n-1}\times \{0\}$ 
from an inner boundary component of  $\Real_k(\Sigma)$ over to $|\Real_k(\Sigma)|$  is  to be constructed inductively on the elements in the partition $\mathcal{Real}_k(\Sigma)$. 

In this chapter we discuss the methods of constructing extension on complexes modeled on the elements in $\mathcal{Real}_k(\Sigma)$.
The proof of Theorem \ref{thm:Real-extension-problem} is given in Chapter \ref{chap:final-extension}.

\section{Roof-adjusted complexes}

\subsection{Dented refined tunnels} We begin by defining dented refined tunnels. Let 
\[
\cT(\lambda)  = \{ T \colon T \text{ is a tunnel with } \# T^{[n]} \le \lambda\}
\]
be the collection of all tunnel of length at most $\lambda\in \mathbb{N}$; a tunnel of length zero is an $(n-1)$-cube.
We also denote, for integer  $r\geq 0$, 
\[
\cA_r(\lambda) = \{ \Refine^{\nu r}(T) \colon T\in \cT(\lambda)\},
\]
and
\[
\cD_r(\lambda) = \{A - D \colon A\in \cA_r(\lambda),\ D \text{ bent indentation}\}.
\]
We call elements of $\cA_r(\lambda)$ \emph{refined tunnels} and elements of $\cD_r(\lambda)$ \emph{dented refined tunnels}.

For each $E\in \cD_r(\lambda)$, there exists a unique $A_E \in \cA_r(\lambda)$ and a unique bent indentation $D_E \subset A_E$ for which $E = A_E- D_E$; moreover, there exists a unique $T_E\in \cT(\lambda)$ for which $A_E = \Refine^{\nu r}(T_E)$. We call the triple $(T_E,A_E,D_E)$ an \emph{encoding of $E$}.

\begin{remark}\label{rmk:mu_D}
Refinement does not increase the local multiplicity of a complex. Thus $\mu(D_E) \leq \mu(A_E)=\mu(T_E)$  and $\mu(E)\leq \mu(T_E)$.
\end{remark}

\begin{notation}[Metric] \label{notation:side-length}
From now on, for  $E= A_E -D_E \in \cD_r(\lambda)$, we consider $E$ as a subcomplex of the cubical complex $(A_E ,d_{A_E})$  with a flat metric. 
For a subcomplex $C\subset A_E$ whose space $|C|$ is a cube, we denote by $\SL_E(C)$ the side length of  $|C|$.
In particular, the side length of an $n$-cube $Q\in T_E$  is $3^{\nu r}$.
\end{notation}

\subsection{Roof complexes}

We next define roof cubes and roof complexes for elements in $\cT(\lambda)$, $\cA_r(\lambda)$, and $\cD_r(\lambda)$.

We say that an $n$-cube $\Omega$ is a \emph{roof cube of $T\in \cT(\lambda)$} if $T^* = T\cup \Omega$ is a tunnel in $ \cT(\lambda+1)$. We call $\omega(T;\Omega) = T\cap \Omega$ the \emph{roof face} and call $\omega^+(T;\Omega)$, the face opposite to $\omega(T;\Omega)$, the \emph{roof top}. Note that, for  $\lambda=0$, we have $T\cup \Omega = \Omega$ and $\omega(T;\Omega) = T$.

Similarly, we say that a cubical complex $\Omega$ is a \emph{roof complex of a refined tunnel $A= \Refine^{\nu r}(T) \in \cA_r(\lambda)$} of a tunnel $T\in \cT(\lambda)$, if there exists an $n$-cube $Q$ for which  $\Omega = \Refine^{\nu r}(Q)$ and $T\cup Q\in \cT(\lambda+1)$. In this case, we have 
\[A^* = A\cup \Omega \in \cA_r(\lambda+1).\]
We denote $\omega(A;\Omega)=\Refine^{\nu r}(\omega(T;Q))$ and $\omega^+(A;\Omega) = \Refine^{\nu r}(\omega^+(T;Q))$ the \emph{roof face} and the \emph{roof top} of the roof complex $\Omega$, respectively.

We state now the most general case as follows.
\begin{definition}
\index{roof complex}
A cubical complex $\Omega$ is a \emph{roof complex of $E\in \cD_r(\lambda)$}, with $E=A-D$, $A\in \cA_r(\lambda)$ and $D$ a bent indentation, provided that $\Omega$ is a roof complex of $A$ for which  $\Omega \cap D = \emptyset$.
\end{definition}

The roof face $\omega(E;\Omega)$ and roof top $\omega^+(E;\Omega)$ for the dented $E$ are defined analogously. For  $E\in \cD_r(\lambda)$ and a roof complex $\Omega$ of $E$, we define the  \emph{roof cylinder} to be
\[
\sigma(E;\Omega) = \partial \Omega - (\omega(E;\Omega)\cup \omega^+(E;\Omega)).
\]

We also define  inner roof complexes as follows. 

\begin{definition}\label{inner-roof-complex}Let $E= A-D\in \cD_r(\lambda)$.
A subcomplex $\Omega' \subset E$, whose space $|\Omega'|$ is a cube, is a \emph{inner roof complex} if \index{roof complex!inner}
\begin{enumerate}
\item  $\omega = |\Omega'|\cap |\partial A|$ is a face of $|\Omega'|$ and, for the face $\omega^+$ of $|\Omega'|$ opposite to $\omega$, $\dist_{d_A}(\omega^+, |D|\cup |\partial A|)\ge \SL_E(|\Omega'|)$, or
\item there exists a spectral cube $C$ of $D$ and a face $c$ of $C$ for which  $c\cap |\partial A| = \emptyset$ and $|\Omega' \cap D| \subset c$.
\end{enumerate}
\end{definition}

If $\Omega'$ is a cube, we call it \emph{inner roof cube}. Again the \emph{inner roof face} $\omega(E;\Omega')$, the \emph{inner roof top} $\omega^+(E;\Omega')$, and \emph{inner roof cylinder} $\sigma(E;\Omega')$ are similarly defined.

\section{Definition of roof-admissible maps}

For the discussion, we fix some notations.

\begin{notation}
For $E\in \cD_r(\lambda)$, we denote by $\cD^*_r(\lambda;E)$ the collection of all \emph{roof-adjusted dented refined tunnels} 
\[
E^*=(E\cup \Omega) -  (\Omega_1\cup \cdots \cup \Omega_m),
\]
where $E\in \cD_r(\lambda)$, $\Omega$ is a roof complex of $E$, and $\Omega_1,\ldots, \Omega_m$ are inner roof complexes of $E$. 
We also denote
\[
\cD^*_r(\lambda)= \bigcup_{E\in \cD_r(\lambda)} \cD^*_r(\lambda;E).
\]
\end{notation}

Let  $E\in \cD_r(\lambda)$. We define now a class of roof-adjusted maps $|\partial E^*|\to \bS^{n-1}\times \R$, which lift the given BLD expansions of $(\partial E)^\Delta$-Alexander maps $|\partial E|\to \bS^n$ to  the roof-adjusted complexes 
$E^*$. 

For convenience, we set $\Omega_0 = \Omega$, and for each $j=0,\ldots, m$, we let 
\[
\pi_{E,\Omega_j} \colon |\Omega_j|\to |\omega(E;\Omega_j)|
\]
be the unique orthogonal projection which maps $|\omega^+(E;\Omega_j)|$ isometrically to $|\omega(E;\Omega_j)|$. Let also $\delta_{E,\Omega_j} \colon |\Omega_j|\to \R$ be the function $x\mapsto \dist(x,\pi_{E,\Omega_j}(x))$.

\begin{definition}
\label{def:BLD roof-adjustment}
Let $E\in \cD_r(\lambda)$. A Lipschitz mapping $f^* \colon |\partial E^*| \to \bS^{n-1}\times \R$ is a \emph{roof adjustment of a BLD expansion $f\colon |\partial E| \to \bS^{n-1}$} of a $(\partial E)^\Delta$-Alexander map if there exist constants
\[0 \le a < b_1,\ldots, b_m < b_0\]
for which
\begin{enumerate}
\item $f^*|_{|\partial E^* \cap \partial E|} \colon |\partial E^* \cap \partial E| \to \bS^{n-1}\times \{a\}$ is the mapping $x\mapsto (f(x),a)$,  
\item for each $j=0,\ldots, m$, $f^*|_{\omega^+(E;\Omega_j)} \colon |\omega^+(E;\Omega_j)| \to \bS^{n-1}\times \{b_j\}$ is the mapping $x\mapsto (f(\pi_{E,\Omega_j}(x)) ,b_j)$, where $\Omega_0=\Omega$, and
\item for each $x\in \sigma(E;\Omega_j)$, 
\[
f^*(x) = (f(\pi_{E,\Omega_j}(x)),a+(b_j-a)\delta_{E,\Omega_j}(x)/\SL_E(|\Omega_j|)).
\]
\end{enumerate}
We call the sequence $(a;b_0,\ldots, b_m)$ the \emph{levels of $f^*$}. 
\end{definition}

We emphasize that, roof adjustments are assumed to be Lipschitz instead of BLD. The reason  is that the image of a roof adjustment $f^*$ and the target do not have the same dimension, thus $f^*$ is not open hence not quasiregular.

\begin{definition}
\label{def:roof-admissible-map}\index{roof-admissible map}
Let $E\in \cD_r(\lambda)$. A Lipschitz map $\widetilde f\colon |\partial E^*|\to \bS^{n-1}\times \R$ is \emph{roof-admissible} if $\widetilde f$ is a roof adjustment of a BLD-controlled expansion  $f\colon |\partial E|\to \bS^{n-1}$ of a $P$-Alexander map, where $P$ is a simplicial complex having space $|P|=|\partial E|$.
\end{definition}

\begin{definition}\label{def:roof-admissible-equal-modulo}\index{roof-admissible map!equal modulo roof top}
Let $E \in \cD_r(\lambda)$. Two roof-admissible maps $ \widetilde f\colon |\partial E^*|\to \bS^{n-1}\times \R$ and $ \widetilde f' \colon |\partial E^*|\to \bS^{n-1}\times \R$ are \emph{equal modulo $\omega^+(E;\Omega)$} (modulo roof top), denoted by $\widetilde f\doteq \widetilde f'$, if $\widetilde f|_{|\partial E^* - \omega^+(E;\Omega)|} =  \widetilde f'|_{|\partial E^*-\omega^+(E;\Omega)|}.$  \index{$\doteq$}
\end{definition}

We make two remarks on the levels $(a; b_0, b_1,\ldots, b_m)$.

\begin{remark}
The Lipschitz constant $\sL^*$ of $f^*$ depends on the BLD constant $\sL$ of $f$ and the levels $a,b_0,\ldots,b_m$ and \emph{vica versa}. Indeed, $\sL^*\ge \SL_E$ and 
\[
\frac{1}{\sL^*} \le \frac{b_j - a}{\SL_E(|\Omega_j|)} \le \sL^*.
\]
\end{remark}

\begin{remark}
The formula 
\[
x\mapsto (f(\pi_{E,\Omega_j}(x)),a+(b_j-a)\delta_{E,\Omega_j}(x)/\SL_E(|\Omega_j|))
\]
gives a BLD extension $|\Omega_j|\to \bS^{n-1}\times \R$ of $f^*|_{|\Omega_j|\cap |\partial E^*|}$. 
\end{remark}

\section{Extension of roof-admissible maps}
In this section we discuss the meaning of extending a roof-admissible map. Formally, the extension does not necessarily  yield an extension of the given roof-admissible map, but an extension of an 'equivalent' roof-admissible map. For the statement, we give the following definitions.

\begin{definition}
\label{def:roof-admissible-domination}\index{roof-admissible map!domination}
We say that a roof-admissible map $\widetilde f\colon |\partial E^*|\to \bS^{n-1}\times \R$ \emph{dominates the roof-admissible map}  $\widetilde f' \colon |\partial E^*|\to \bS^{n-1}\times \R$, denoted $\widetilde f' \ll \widetilde f$, if  the levels $(a;b_0,\ldots, b_m)$ and $(a';b_1',\ldots, b_m')$ of $\widetilde f$ and $\widetilde f'$, respectively, satisfy $a < a'< b'_0 <b_0$, and $b_j < b_j'$ for $j=1,\ldots, m$.
\end{definition}

The following lemma shows that roof adjustments $f^*_0$ and $f^*_1$ are BLD homotopic if  $f^*_1 \ll f^*_0$ and the underlying maps $f_0$ and $f_1$ are BLD homotopic. 

\begin{lemma}
\label{lemma:roof-admissible-homotopy}
Let $E\in \cD_r(\lambda)$ and $E^*=(E\cup \Omega)-(\Omega_1 \cup \cdots \cup \Omega_m)$, where $\Omega$ is a roof complex of $E$ and $\Omega_1,\ldots, \Omega_m$ inner roof complexes of $E$. For $i=0,1$, let $f_i\colon |\partial E|\to \bS^{n-1}$ be a BLD-controlled expansion of a $(\partial E)^\Delta$-Alexander map,  $f_i^* \colon |\partial E^*|\to \bS^{n-1}\times\R$ a roof adjustment of $f_i$ with levels 
\[(a^{(i)}; b_0^{(i)},\ldots, b_m^{(i)})\]
 for which $f_1^* \ll f_0^*$, and let $F\colon |\partial E| \times [s_0,s_1] \to \bS^{n-1}\times [s_0,s_1]$ be a level preserving BLD homotopy from $f_0$ to $f_1$. Then there exists a BLD homotopy $\widetilde F\colon |\partial E^*| \times [s_0,s_1]\to \bS^{n-1}\times \R$ from $f^*_0$ to $f^*_1$. The result is quantitative in the sense that the BLD constant of $\widetilde F$ depends only on the BLD constant of $F$ and the ratios $(a^{(1)}-a^{(0)})/(s_1-s_0)$ and $(b_j^{(1)}-b_j^{(0)})/(s_1-s_0)$ for $j=0,\ldots, m$.
\end{lemma}

\begin{proof}
We may assume that $s_0=0$. Let also $\lambda \colon [0,s_1]\to \R$ be the linear map $t\mapsto a^{(1)} + (a^{(0)}-a^{(1)})t/3^{\nu r}$ and, for each $j=0,\ldots, m$, let $\lambda_j \colon [0,s_1] \to \R$ be the linear map $t\mapsto b^{(1)}_j - (b^{(1)}_j-b^{(0)}_j)t/s_1$.

We define first an auxiliary mapping $\widehat F\colon |\partial E^*|\times [0,s_1] \to \bS^{n-1}\times \R$ as follows. For $(x,t)\in |\partial E\cap \partial E^*|\times [0,s_1]$, we set $\widehat F(x,t) = (\id \times \lambda)F(x,t)$. For each $j=0,\ldots, m$ and $(x,t)\in |\omega(E;\Omega_j)|\times [0,s_1]$, we set $\widehat F(x,t) = (\id \times \lambda_j)F(\pi_j(x),t)$. Finally, for each $t\in [0,s_1]$, we define $\widehat F$ on each $\sigma(E;\Omega_j)\times \{t\}$ so that the restriction $\widehat F|_{|\partial E^*|\times \{t\}} \colon |\partial E^*|\to \bS^{n-1}\times \R$ is a roof adjustment of $F|_{|\partial E|\times \{t\}} \colon |\partial E|\to \bS^{n-1}\times \R$. We denote $\widehat f \colon |\partial E^*|\to \bS^{n-1}$ and $\widehat h \colon |\partial E^*|\to \R$ the coordinate mappings of $\widehat F$, that is, $\widehat F=(\widehat f,\widehat h)$.

Let $\tau$ be the $(n-1)$-simplex in $\Sigma^2(\Delta^\square_{n-1})$ which does not contain $w_{n-1}$. Then $\widehat F\left(\sigma(E;\Omega_j)\times [0,s_1]\right) \subset \tau \times \R$. Since this image has dimension lower than $\sigma(E;\Omega_j)\times [0,s_1]$, the mapping $\widehat F$ is not a BLD map on $\sigma(E;\Omega_j)\times (0,s_1)$. We use a perturbation of $\widehat F$ in the spherical coordinate to obtain $\widetilde F$. 

Let $\Psi \colon \bS^{n-1} \times [0,s_1]\to \bS^{n-1}\times [0,s_1]$ be an orientation preserving and level preserving bilipschitz isotopy from the identity $\id_{\bS^{n-1}}$ back to the identity $\id_{\bS^{n-1}}$ for which $\Psi_t(y) \ne \Psi_s(y)$ for each $y\in \tau$ and $0 < s < t < s_1$, where, for each $t\in [0,s_1]$, $\Psi_t \colon \bS^{n-1}\to \bS^{n-1}$ is the mapping satisfying $\Psi(y,t)=(\Psi_t(y),t)$ for all $y\in \bS^{n-1}$.

We define now the map $\widetilde F \colon |\partial E^*|\times [0,s_1] \to \bS^{n-1}\times \R$ by formula $\widetilde F(x,t) = (\Psi_t(\widehat f(x,t)), \widehat h(x,t))$ for $(x,t)\in |\partial E^*|\times [0,s_1]$.
\end{proof}

The BLD homotopy in the previous lemma is used to reduce the extension problem of a roof-admissible map to a simpler problem. Before stating the next proposition, we make a remark to explain its role in constructing extensions.

\begin{remark}
Proposition \ref{prop:X_E-extension} below divides the construction of an extension of a roof-admissible map $|\partial E^*|\to \bS^{n-1}\times \R$ into two separate steps: construction of a collar map $H$, which reduces the extension problem to a simpler problem, and the solution $G$ to this  more accessible extension problem. In applications, the collar map $H$ is a branched cover homotopy constructed by applying the Quasiregular deformation theorem (Theorem \ref{thm:QR-deformation-star}) and its corollaries.
\end{remark}

\begin{proposition}[Extension of roof-admissible maps]
\label{prop:X_E-extension}
Given $\sL\geq 1$ and $\lambda\ge 1$, there exists $\Lfundamental=\Lfundamental(n,\nu,\lambda; \sL)\ge 1$ for the following. 
Let $E\in \cD_r(\lambda)$ and $\widetilde f\colon |\partial E^*|\to \bS^{n-1}\times \R$ be a roof-admissible map. Suppose that there exist
\begin{enumerate}
\item  a roof-admissible map $\widetilde f' \colon |\partial E^*|\to \bS^{n-1}\times \R$ satisfying $\widetilde f' \ll \widetilde f$,
\item an $\sL$-BLD map $H\colon |\partial E^*|\times [0,3^{\nu r} ] \to \bS^{n-1}\times \R$ satisfying 
\[
H|_{|\partial E^*|\times \{0\}} \doteq \widetilde f \,\,\,\, \text{and}\,\,\, H|_{|\partial E^*|\times \{3^{\nu r}\}} \doteq \widetilde f',
\] 
and \label{item:H-G-1}
\item an $\sL$-BLD map $G \colon |E^*| \to \bS^{n-1} \times \R$ satisfying $G|_{|\partial E^*|} \doteq \widetilde f'$.\label{item:H-G-2}
\end{enumerate}
Then there exists an $\Lfundamental$-BLD map  $\widetilde F\colon |E^*|\to \bS^{n-1}\times \R$ for which
\[
\widetilde F|_{|\partial E^*|}\colon  |\partial E^*|\to  \bS^{n-1} \times \R
\]
is a roof-admissible map equal to $\widetilde f$ modulo $\omega^+(E)$.
\end{proposition}

\begin{remark}The  number $3^{\nu r}$ in the proposition is the common side length of the $n$-cubes in $T_E$, hence also the difference of the top level and the bottom level of the admissible maps on $\partial E^*$.
\end{remark}

\begin{proof}[Proof of Proposition \ref{prop:X_E-extension}] 
Let 
\[
X_E = \left( |\partial E^*|\times  [0,3^{r \nu}] \right) \cup \left( |E^*|\times \{3^{r \nu}\} \right).
\]

We begin with an observation that there exists an $L$-bilipschitz homeomorphism $\theta \colon X_E\to |E^*|\times \{0\}$ which is the identity on $|\partial E^*|\times \{0\}$, for a constant $L=L(n,\lambda)\ge 1$. 

Indeed, by Proposition \ref{prop:flattening-bent-indentation}, $|E^*|$ is  bilipschitz equivalent to $|A_E|$, and thus also to $[0,3^{r \nu}]^n$ with a constant depending only on $n$ and $\lambda_\loc$. Moreover, there is a level preserving bilipschitz homeomorphism from $|\partial E^*|\times  [0,3^{r \nu}]$ to $\partial ([0,3^{r \nu}]^n) \times [0,3^{r \nu}]$. Thus $X_E$ is bilipschitz equivalent to $|\partial Q-q|$, where $Q=[0,3^{r \nu}]^{n+1}$ and $q=[0,3^{r \nu}]^n \times \{0\}$. Since all  bilipschitz constants depend only on $n$ and $\lambda$, there exists a bilipschitz homeomorphism $\theta$ as claimed.

Let $(a;b_0,\ldots, b_m)$ and $(a';b_0',\ldots, b'_m)$ be the levels of $\widetilde f$ and $\widetilde f'$. Since $\widetilde f' \ll \widetilde f$, we have that $a<a'$, $b_0>b_0'$, and $b_j<b'_j$ for $j=1,\ldots, m$.

Since $G|_{|\partial E^*|}$ is roof-admissible and equivalent to $\widetilde f'$ modulo $\omega^+(E)$, we have that $G(|\omega^+(E)|) = \bS^{n-1}\times \{a'\}$. Let now $f\colon |\omega^+(E)|\to \bS^{n-1}$ be the map defined by $G(x)=(f(x),a')$ for $x\in |\omega^+(E)|$.

Since $G|_{|\partial E^*-\omega^+(E)|}= H|_{|\partial E^*-\omega^+(E)|\times \{3^{\nu r}\}}$, the mapping $\widetilde H \colon X_E\to \bS^{n-1}\times \R$, 
defined by 
\begin{align*}
\widetilde H (x,t)
= \begin{cases}
 H(x,t) &\text{on} \,\,\,  (|\partial E^*-\omega^+(E)| \times [0,3^{r \nu}]),\\
G(x)  &\text{on} \,\,\,  |E^*|\times \{3^{r \nu}\},\\
\left(f(x), b(t) \right)  &\text{on}\,\,\,  |\omega^+(E)|\times [0,3^{r \nu}],
\end{cases}
\end{align*}
where $b\colon  [0,3^{r \nu}] \to [b'_0,b_0]$ is the function $t\mapsto b_0' + (b_0-b_0')\frac{3^{r \nu}-t}{3^{r \nu}}$, is well-defined and BLD. 
Clearly, $\widetilde H|_{|\partial E^*|\times \{0\}}= H|_{|\partial E^*|\times \{0\}}\doteq \widetilde f$.

The extension $\widetilde F \colon |E^*|\to \bS^{n-1} \times \R$ of $\widetilde f$ modulo $\omega^+(E)$ may now be defined as the composition
\[
\widetilde F = \widetilde H \circ \theta^{-1} \colon |E^*|\times \{0\}\to \bS^{n-1}\times \R,
\]
where we identify $|E^*|\times \{0\}$ with $|E^*|$. Mapping $\widetilde F$ is $\Lfundamental$-BLD for a constant $\Lfundamental$ depending only on $n, \nu, \lambda$, and $\sL$. 
\end{proof}

\chapter{Extensions over undented tunnels}

In the next two sections, we give extensions over tunnels and refined tunnels. These two cases can be viewed as a preliminary for the general case of dented refined tunnels.

\section{Extension over tunnels}
\label{sec:extension-tunnels}

Our extension result for tunnels reads as follows.

\begin{proposition}
\label{prop:extension-over-tunnels}
There exists $\sL^\Diamond=\sL^\Diamond(n) \ge 1$ for the following.
Let  $T \in \cT(\lambda)$ be a tunnel with a flat metric, $\Omega$ a roof cube of $T$, and $T^*= T\cup \Omega$. Let $\sL\ge \sL^\Diamond$, and let $f\colon |\partial T|\to \bS^{n-1}$ be an $\sL$-BLD-controlled expansion of a $T^\Delta$-Alexander map and  $\widetilde f \colon |\partial T^*|\to \bS^{n-1}\times \R$ be an $\sL$-BLD roof adjustment of $f$.  Then there exist  $\sL'=\sL'(n, \nu, \lambda; \sL)\geq 1$ and an $\sL'$-BLD extension 
\[
\widetilde F \colon |T^*|\to \bS^{n-1}\times [0,1]
\]
for which $\widetilde F|_{|\partial T^*|}\colon  |\partial T^*|\to  \bS^{n-1} \times \R$ is an $\sL$-BLD roof-admissible map satisfying 
\begin{enumerate}
\item $\widetilde F|_{|\partial T^*|} \doteq \widetilde f$, and
\item $\widetilde F|_{|\omega^+(T;\Omega)|}\colon  |\omega^+(T;\Omega)| \to  \bS^{n-1} \times \{1\}$ is an $\sL$-BLD-controlled expansion of an $(\omega^+(T;\Omega))^\Delta$-Alexander map, of degree $\deg(\widetilde f|_{|\partial T^* \cap\, \partial T|})$.
\end{enumerate}
\end{proposition}

\begin{proof}
The proof is by induction on $\lambda$. Recall that a tunnel of length zero is defined to be an $(n-1)$-cube.

Clearly, the claim holds for $\lambda=0$. Suppose that the claim holds for all tunnels $T$ of size $\lambda -1$, that is, for a tunnel $T$ of size $\lambda-1$, a roof cube $\Omega$ of $T$, and a roof $\sL$-admissible map $f\colon |\partial T^*|\to \bS^{n-1}\times \R$ there exists an $\sL'(n,\lambda-1;\sL)$-BLD map $F\colon |\partial T^*|\times [0,1]\to \bS^{n-1}\times [0,1]$ satisfying the properties of the claim.

Let now $T$ be a tunnel of length $\lambda$, $\Omega$ be a roof cube of $T$, and let $\tilde f \colon |\partial (T\cup \Omega)|\to \bS^{n-1}\times \R$ be a roof-admissible map.

Let $Q\in T^{[n]}$ be a leaf of $T$ for which $T' = T-Q$ is a tunnel with a roof cube $\Omega$. Let $q = T'\cap Q$ and let $q'$ be the face of $Q$ opposite to $q$. We fix now a piecewise affine map $\phi \colon |\partial Q - q| \to |q|$ for which $\phi|_{q'}$ is a scaling by $1/3$ onto the center cube $c_q$ of $q$ and that all other faces of $Q$ are mapped affinely. Let also $\widetilde f' \colon |\partial T'|\to \bS^{n-1}\times \R$ be the mapping defined by $\widetilde f'|_{|\partial T-q|} = \widetilde f$ and $\widetilde f'|_{|q|} = \widetilde f \circ \phi^{-1}$. By Lemma \ref{lemma:roof-admissible-homotopy}, there exists an $\sL'(n,\lambda;\sL)$-BLD map $F' \colon |\partial T'|\times [0,1]\to \bS^{n-1}\times \R$ from $\widetilde f'$ to an $L$-Lipschitz roof-admissible map on $|\partial T'|$. Thus, by the induction assumption, there exists an $\sL''(n,\lambda;\sL)$-BLD map $\widetilde F' \colon |T'|\to \bS^{n-1}\times \R$ which is an extension of the roof-admissible map  $F'|_{|\partial T'|\times \{1\}}$ modulo $\omega^+(T;\Omega)$ as in the claim.

We extend now $\phi \colon |\partial Q-q|\to |q|$ to a bilipschitz homeomorphism $\Phi \colon |T|\to |T'|$ with a distortion constant depending only on $n$ and define $\widehat F\colon |T|\to \bS^{n-1}\times \R$ to be the mapping $\widehat F = \widetilde F \circ \Phi$. The claim follows now from an application of Proposition \ref{prop:X_E-extension} to $T\cup \Omega$. This completes the construction.
\end{proof}

\section{Extension over refined tunnels}

We give first an extension result for refined tunnels in $\cA(\lambda, \mu)$.

Let $A\in \cA_r(\lambda)$ be the refinement $\Refine^{\nu r}(T)$ of a tunnel $T$. 
As before,  $\SL_A(Q)$ denotes the side length of a cube $Q$ in $|A|$ and that cubes in $T$, hence the roof complexes, have side length $3^{\nu r}$.

\begin{proposition}
\label{prop:extension-over-refined-tunnels}
There exists $\sL^\Diamond = \sL^\Diamond(n)\ge 1$ for the following. Let $A\in \cA_r(\lambda)$ be a refined tunnel with a flat metric,  $\Omega$ be a roof complex for $A$, and $\Omega_1,\ldots, \Omega_m$ be inner roof complexes in $A$ satisfying, for $j\ne i$, $\dist(|\Omega_i|,|\Omega_j|) \ge \max\{ \SL_A(|\Omega_i|), \SL_A(|\Omega_j|)\}$; let 
\[
A^*=(A\cup \Omega)-(\Omega_1,\ldots, \Omega_m).
\]
Let also $\sL\ge \sL^\Diamond$, $f\colon |\partial A|\to \bS^{n-1}$ an $\sL$-BLD-controlled expansion of an $(\partial A)^\Delta$-Alexander map, and let 
\[
\widetilde f\colon |\partial A^*|\to \bS^{n-1}\times \R
\]
be an $\sL$-BLD roof adjustment of $f$ with levels 
\[
(0; \SL_A(|\Omega|), \SL_A(|\Omega_1|), \ldots, \SL_A(|\Omega_m|)).
\]
Then there exist $\sL'=\sL'(n,\nu, \lambda; \sL)\ge 1$ and an $\sL'$-BLD extension $\widetilde F\colon |A^*|\to \bS^{n-1} \times [0,3^{\nu r}]$ of $\widetilde f$ modulo $\omega^+(A;\Omega)$  for which $\widetilde F|_{|\partial A^*|} \colon |\partial A^*|\to \bS^{n-1}\times \R$ is an $\sL$-BLD roof-admissible map satisfying
\begin{enumerate}
\item $\widetilde F|_{|\partial A^*|} \doteq \widetilde f$, and
\item $\widetilde F|_{|\omega^+(A;\Omega)|}\colon  |\omega^+(A;\Omega)| \to  \bS^{n-1} \times \{3^{\nu r}\}$ is an $\sL$-BLD-controlled expansion of an $(\omega^+(A;\Omega))^\Delta$-Alexander map.
\end{enumerate}
\end{proposition}

We emphasize that, in the Proposition,  the BLD constant $\sL'$ does not depend on the refinement index $r$.

\begin{proof}
The proof is an induction on $\lambda$. Clearly, the claim holds for $\cA^*_r(0)$. 

Assume now that the claim holds for all complexes in $\cA^*_r(\lambda-1)$. 
Let $A\in \cA_r(\lambda)$, $\Omega$ be a roof complex of $A$, and $\Omega_1\ldots, \Omega_m$ be inner roof complexes in $A$. Let  $\widetilde f \colon |\partial A^*|\to \bS^{n-1}\times \R$ be the $\sL$-BLD roof-admissible map, with levels $(a; b_0,\ldots, b_m)$, obtained by roof adjustment of a BLD expansion $f\colon |\partial A|\to \bS^{n-1}$ of an Alexander map.
 Since $A$ has no bent indentation, all inner roof complexes $\Omega_j$ have faces $|\omega(A;\Omega_j)|$ in $|\partial A|$. 

In the following, we construct first an extension   $ F\colon |A \cup \Omega|\to \bS^{n-1}\times \R$ of $f$ (modulo $\omega^+(A;\Omega)$), then show that this extension induces an extension $\widetilde F \colon |A^*|\to \bS^{n-1}\times \R$ of $\widetilde f$ (modulo $\omega^+(A;\Omega)$).

\medskip
\noindent \emph{Step 0:}
As a preparatory step for the induction, let $A=\Refine^{\nu r}(T)$ be a refinement of tunnel $T\in \cT(\lambda)$, and $Q_0\in T^{[n]}$ be the $n$-cube for which $\Refine^{\nu r}(Q_0) \cap \Omega = \omega(A;\Omega)$. We  give $\Gamma(T)$ a partial order for which $Q_0$ is the root, and  fix  a leaf $Q\in T^{[n]}$ in this partial order. Then $q=Q\cap (T-Q)$ is a face of $Q$,  $T'=T-Q$ is a tunnel in $\cT(\lambda-1)$, and 
\[A' = \Refine^{\nu r}(T')\in \cA_r(\lambda-1).\]
 Let  $q^+$ be the face of $Q$ opposite to $q$. 

We may assume that $\sptMod f \cap |\partial A\cap Q|=\emptyset$, that is, $ f|_{|\partial A\cap Q|}$ is an $\partial A^\Delta$-Alexander map. Indeed, since $f$ is a controlled expansion of an $\partial A^\Delta$-Alexander map,  we first apply Proposition \ref{prop:move} to move all the simple covers in $\partial A \cap Q$ to $\partial A \cap \partial A'$ by a BLD homotopy $|\partial A|\times [0,3^{\nu r}]\to \bS^{n-1}\times [0,3^{\nu r}]$ from $f$ to a BLD-controlled expansion $|\partial A|\to \bS^{n-1}$ with all simple covers in $\partial A \cap \partial A'$.

We define now bands of cubes in cylinder $\sigma = \Refine^{\nu r}(\partial Q-(q\cup q^+))\subset \partial A$ as follows. We say that an $(n-1)$-cube $c\in \sigma^{[n-1]}$ belongs to $B_1 = \Band_1(\sigma;q^+)$ if $c\cap q^+\ne \emptyset$. Next we define inductively that $c$ be belongs to $B_j = \Band_j(\sigma;q^+)$ if $c$ shares a face with an $(n-1)$-cube $c'$ which belongs to $B_{j-1} = \Band_{j-1}(\sigma;q^+)$. Note that this $(n-1)$-cube $c'$ is unique and we denote $c_\Band=c'$ in what follows. Under this definition,  $(n-1)$-cubes $c\in \sigma^{[n]}$ which meet $q$ belong to band $B_{3^{\nu r}}=\Band_{3^{\nu r}}(\sigma;q^+)$. 

As in the proof of Proposition \ref{prop:extension-over-tunnels}, we fix a piece-wise affine map 
\[
\phi \colon |\partial Q-q|\to |q|
\]
 which maps the face $q^+$ of $Q$, opposite to $q$, by scaling $1/3$ onto the center cube $c_q$ of $q$ and that all other faces of $Q$ are mapped affinely. We extend $\phi$ as a map $\phi \colon |\partial A|\to |\partial A'|$ by setting it to be the identity map in the complement of $|\partial Q-q|$. 

We define
\[ 
f' \colon |\partial A'|\to \bS^{n-1}\times \R
\]
by setting $ f'|_{|\partial A-q|} = f$, and $ f'|_{|q|} =  f \circ \phi^{-1}$. 

In the following, we deform  $f'$ to a BLD expansion of an $(\partial A')^\Delta$-Alexander map in $3^{\nu r}$ (essentially identical) steps. In fact, the deformation is first performed for $f$ on bands in $|\partial Q - (q\cup q^+) |$, a pair at a time.

\medskip
\noindent
\emph{Step $1$ (Merge of $B_1$ and $B_2$):} 
Since $B_1\cup B_2 = \Band_1(\sigma;q^+)\cup \Band_2(\sigma;q^+)$ is isometric to complex $(\partial q^+)\times [-1,1]$, we may fix an isometry $\Psi_1 \colon (\partial q^+)\times [-1,1]\to |\Band_1(\sigma;q^+)\cup \Band_2(\sigma;q^+)|$ for which $\Psi_1((\partial q^+)\times [-1,0])=|\Band_2(\sigma;q^+)|$. Let  $\tau_1 \colon (\partial q^+)\times [-1,0] \to (\partial q^+)\times [-1,1]$ be the map $(x,t)\mapsto (x,2t+1)$, and denote by $\tau = \Psi_1 \circ \tau_1 \circ \Psi_1^{-1} \colon |B_2| \to |B_1\cup B_2|$ the stretching map. 

By Corollary \ref{cor:Band-merge} on deformation and Lemma \ref{lemma:roof-admissible-homotopy}, there exists an $\sL^\Diamond$-BLD homotopy 
\[F_1 \colon |B_1\cup B_2|\times [0,1]\to \bS^{n-1}\times [0,1] \,\,\,(\text{modulo}\,\,  |\partial (B_1\cup B_2)|)\]  from $ f|_{|B_1\cup B_2|}$ to an $\sL^\Diamond$-BLD-controlled expansion $ f_1 \colon |B_1\cup B_2|\to \bS^{n-1}$ of an $\tau_*(B_2^\Delta)$-Alexander map.

We define now a BLD homotopy 
$F'_1 \colon |\partial A'|\times [0,1]\to \bS^{n-1}\times [0,1]$  by 
\[
(x,t) \mapsto \left\{ \begin{array}{ll}
 F_1\circ (\phi \times \id)^{-1}(x,t), & x\in |\phi_*(B_1\cup B_2)| \\
(f'(x),t), & x\not \in  |\phi_*(B_1\cup B_2)|
\end{array}\right.
\]
 
We fix next a level preserving bilipschitz isotopy $\psi_1\colon |\partial A'|\times [1,2]\to |\partial A'|\times [1,2]$ from the identity map to the bilipschitz map $h\colon |\partial A'|\to |\partial A'|$, which is the identity in the complement of $\phi(|q^+\cup B_1\cup B_2|)$ and which maps, by scaling, $\phi(|q^+|)$ to $\phi(|q^+\cup B_1|)$, and $\phi(|B_1\cup B_2|)$ to $\phi(|B_2|)$ for which $h_*\circ \phi_*\circ \tau_*(B_2^\Delta) = \phi_*(B_2^\Delta)$; the structure on $\phi(|q^+\cup B_1|)$ is now $h_*\circ \phi_*((q^+\cup B_2)^\Delta)$.

We extend  $F'_1$, by $\psi_1$, to a BLD homotopy $|\partial A'|\times [0,2]\to \bS^{n-1}\times [0,2]$, and then rescale $[0,2]$ to the interval $[0,1]$ by $(x,t)\mapsto (x,t/2)$. The map thus obtained, again denoted by $F'_1$, is a homotopy from $f'$ to a BLD-controlled expansion $f'_1  \colon |\partial A'|\to \bS^{n-1}$ of an Alexander map on complex  
\[
(\partial A'-q)^\Delta \cup \phi_*(B_{3^{\nu r}} \cup \cdots \cup B_2)^\Delta \cup (h_1)_*\phi_*(q^+)^\Delta.
\]
Applying  Proposition \ref{prop:move} again, we move simple covers on $\sptMod f'_1 \cap |\phi_*(B_2)|$ by homotopy into $|\phi_*(B_4)|$, and may assume that $\sptMod f'_1|_{|\phi_*(B_2)|}$ is a $\phi_*(B_2)^\Delta$-Alexander map.

To keep the distortion under control, we assume, as we may, that  BLD homotopy $F'_1$ moves the existing simple covers by similarities only.

\medskip
\noindent
\emph{Steps $2$-$3^{\nu r}$ (Merge of $B_{j-1}$ and $B_j$):} By repeating Step 1 inductively for $j=2,\ldots, 3^{\nu r}$, we merge the band $B_{j-1}$ with $B_j$ and scale the space of the complex $\phi_*(q^+)$ from 
$|\phi_*(B_{j-2} \cup \cdots \cup B_1 \cup q^+)|$ to $|\phi_*(B_{j-1}\cup \cdots \cup B_1 \cup q^+)|$. 

This produces, for each $j$, a BLD homotopy 
\[
F'_j \colon |\partial A'|\times [j-1,j]\to \bS^{n-1}\times [j-1,j]
\]
from $f'_{j-1}=F'_{j-1}|_{|\partial A'|\times \{j-1\}} \to \bS^{n-1}$ to a BLD-controlled expansion $f'_j \colon |\partial A'|$\, $\to \bS^{n-1}$ of an Alexander map.  Again for the construction, at  the end of each step, we move the simple covers of $f'_{j-1}$ on $|\phi_*(B_{j-1}\cup B_j)|$ into $|\phi_*(B_{j+2})|$; finally into $|\partial A'-q|$ for $j\ge 3^{\nu r}-1$. Also, all simple covers, after created, are moved by similarities only.

\medskip
\noindent
\emph{Final step (for $f'$ and $f$):} Putting  these steps together, we obtain a BLD homotopy 
\[
F'\colon |\partial A'|\times [0,3^{\nu r}] \to \bS^{n-1}\times [0,3^{\nu r}]
\]
from $f'$ to a BLD-controlled expansion $f'' \colon |\partial A'|\to \bS^{n-1}$ of an $(\partial A')^\Delta$-Alexander map. By Lemma \ref{lemma:roof-admissible-homotopy}, homotopy $F'$ gives rise to a  BLD homotopy $H \colon |\partial (A'\cup \Omega)|\times [0,3^{\nu r}] \to \bS^{n-1}\times \R$ from the roof-admissible  $\widetilde {f'}$ to a roof adjustment $\widetilde {f''}$ of $f''$.

Since $A'\in \cA_r(\lambda -1)$, by the induction assumption, there exists a BLD map $G \colon |A'\cup \Omega|\to \bS^{n-1}\times \R$ satisfying $G_{|\partial (A'\cup \Omega)|} \doteq  \widetilde {f''}$. Thus, by Proposition \ref{prop:X_E-extension}, there exists a BLD map $\widetilde {F'} \colon |A'\cup \Omega| \to \bS^{n-1}\times \R$ for which $\widetilde {F'}|_{|\partial (A'\cup \Omega)|} \doteq \widetilde {f'}$. It suffices now to extend $\phi\colon |\partial A| \to |\partial A'|$ to a bilipschitz homeomorphism $\Phi \colon |A\cup \Omega|\to |A'\cup \Omega|$ and define $ F \colon |A\cup \Omega| \to \bS^{n-1}\times \R$ by $ F = \widetilde {F'} \circ \Phi$.

\medskip
\noindent
\emph{Final step (for $\widetilde f$):} We now adjust  the  map $F$, in a neighborhood of each inner roof complex, to obtain a mapping  $\widetilde F \colon |A^*| \to \bS^{n-1}\times \R$ which extends $\widetilde f$ modulo $\omega^+(A;\Omega)$. This is done by precomposing $F$ with a bilipschitz homeomorphism.

By assumption, $\dist(|\Omega_j|,|\Omega_i|) \ge \max\{ \SL_A(|\Omega_j|), \SL_A(|\Omega_i|)\}$ for $j\ne i$. Since also $\dist(|\omega^+(A;\Omega_j)|, |\partial A|) = \SL_A(|\Omega_j|)$ for each $j$, we may fix mutually disjoint cubes $Q_1,\ldots, Q_m$ in $|A|$ containing cubes $|\Omega_1|,\ldots, |\Omega_m|$, respectively, which satisfy $\dist(|\Omega_j|, |A|\setminus Q_j)=\SL_A(|\Omega_j|)/2$ for each $j$. Since each $|\Omega_j|$ has a face in $|\partial A|$, so does each $Q_j$. Let $q_j = Q_j\cap |\partial A|$.

As a preparatory step, let, for each $j$, $H_j \subset Q_j$ be the convex hull of $|\Omega_j|\cup q_j$. 
% and let $D_j = \cl(Q_j \setminus H_j)$. 
We also fix an orthogonal projection $\pi_j \colon Q_j \to q_j$ and denote $\varphi_j \colon Q_j \to Q_j$ the map, which is the identity on $\partial Q_j$ and, for each $x\in q_j$, maps segments $\pi_j^{-1}(x)\cap \cl(Q_j\setminus H_j)$ linearly to segments $\pi^{-1}_j(x)$ and segment $\pi_j^{-1}(x)\cap H_j$ to point $x$; see Figure \ref{fig:refined-tunnels-convex-hull} for an illustration. The mapping $\varphi_j$ is bilipschitz with a constant depending only on $n$.

\begin{figure}[htp]
\begin{overpic}[scale=0.5,unit=1mm]{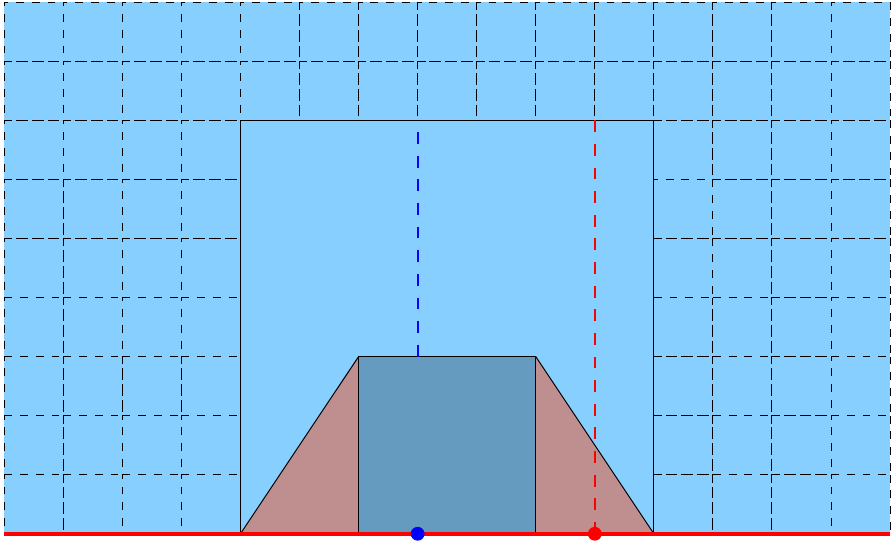} % add: grid
\put(40,10){\tiny $\Omega_j$}
\put(21,31){\tiny $Q_j$}
\put(25,3){\tiny $H_j$}
\put(36,30){\tiny $L_j(x)$}
\put(36,-2){\tiny $x$}

\put(51,-2){\tiny $y$}
\put(41, 21){\tiny $\pi_j^{-1}(y)$}
\end{overpic}
\caption{Illustration on the configuration for the map $\varphi_j \colon Q_j\to Q_j$, where $L_j(x)=\pi_j^{-1}(x)\cap \cl(Q_j\setminus H_j)$ and $x,y\in q_j$; cubes $\Omega_j$ and $Q_j$ not in scale.}
\label{fig:refined-tunnels-convex-hull}
\end{figure}

We redefine now the mapping $F$ in each $Q_j$ so that the new map satisfies the conditions of the roof map on $\partial \Omega_j\setminus q_j$. 

Write $F=(F_1, F_2) \colon |A\cup \Omega|\to \bS^{n-1}\times \R$, where  $F_1 \colon |A\cup \Omega |\to \bS^{n-1}$ and $F_2 \colon |A\cup \Omega|\to \R$ are the component mappings of $F$. By precomposing $F$ with a bilipschitz homeomorphism of $|A|$, which maps $|\omega^+(A;\Omega_j)|$ closer to $\Omega$, we may assume that $F_2(x)\ge 3 b_j$ for $x\in |\omega^+(A;\Omega_j)|$. Note that, we may choose this homeomorphism so that the bilipschitz constant depends only on $\lambda$ and $\sL$. 

We set first $g_j=F_1 \circ \varphi_j \colon Q_j \to \bS^{n-1}$. It remains to find a function $h_j \colon Q_j \to \R$ for which the map $(g_j,h_j) \colon Q_j \to \bS^{n-1}\times \R$ agrees with $F$ on $\partial Q_j$ and satisfies the conditions the roof map. Thus it remains to find a function $h_j$ which agrees with $F_2$ on $\partial Q_j$, $h_j$ maps $|\omega^+(A;\Omega_j)|$ to a fixed level $b_j$ and maps the cyclinder $\sigma(A;\Omega_j)$ affinely. Thus the function $h_j$ is prescribed on $|\omega^+(A;\Omega_j) \cup \sigma(A;\Omega_j)|$, up to an affine scaling. Since $h_j$ is constant function $a$ on $q_j$, we may further affinely extend $h_j$ to the convex hull $H_j$. We now extend $h_1$ to $\cl(Q_j\setminus H_j)$ affinely on segments $\pi_j^{-1}(x) \cap \cl(Q_j\setminus H_j)$. This completes the construction of the function $h_j$.

We define  $\check F \colon |A\cup \Omega|\to \bS^{n-1}\times \R$ by setting $\check F= F$ in the complement of $Q_1\cup \cdots \cup Q_m$, and  set $\check F|_{Q_j} = ( g_j,  h_j)$ for each $Q_j$. Finally, take $\widetilde F\colon |A^*|\to \bS^{n-1}\times \R$ to be the restriction of $\check F$ to $|A^*|$. This concludes the construction of the mapping in the statement.
\end{proof}

\chapter{Extension over dented refined tunnels}
\label{chap:extension-differences}

\section{Extension statement}

The general strategy of extension over roof-adjusted dented refined tunnels is to restore the dents, that is, a reduction to Proposition \ref{prop:extension-over-refined-tunnels}.

The proof is based on a case study on a classification of $n$-cubes in the indentation $D_E$ of a dented refined tunnel $E$. To avoid complications with the classification and to avoid unnecessary cases from the point of view of the Quasiregular cobordism theorem, we restrict in the proof to the of dented refined tunnels appearing in the reservoir-canal-construction. For this reason, we state the result for the subset 
\[
\cD_k(\Sigma) =  \left(\bigcup_{\lambda=1}^\infty \cD_k(\lambda)\right) \cap \mathcal{Real}_k(\Sigma),
\]
instead of all dented refined tunnels in $\cD_k(\lambda)$ for some $\lambda\ge 1$.

We also assume that each complex $E$ in $\cD_k(\Sigma)$ we have fixed a roof cube $\Omega_E$ and inner roof cubes $\Omega_{E,1},\ldots, \Omega_{E,s_E}$, according to the hierarchy of the elements of $\mathcal{Real}_k(\Sigma)$. We denote 
\[
E^* = (E\cup \Omega_E)-(\Omega_{E,1}\cup \cdots \cup \Omega_{E,s_E}) 
\]
this roof adjusted dented refined tunnel, and $\cD^*_k(\Sigma)$ their collection.
\index{$\cD_k(\Sigma)$}
\index{$\cD^*_k(\Sigma)$}

\begin{proposition}
\label{prop:extension-over-differences}
There exists $\sL^\Diamond = \sL^\Diamond(n)\ge 1$ for the following. Let $\sL \ge \sL^\Diamond$, let $k\ge 1$ and $E\in \cD_k(\Sigma)$ be a dented refined tunnel with a flat metric from $\Real_k(\Sigma)$, and let
\[
E^*=(E\cup \Omega_E)-(\Omega_{E,1}\cup \cdots \cup \Omega_{E,s_E}) \in \cD^*_k(\Sigma)
\]
be the associated roof-adjusted dented refined tunnel. Let also $f\colon |\partial E|\to \bS^{n-1}$ be an $\sL$-BLD-controlled expansion of a $(\partial E)^\Delta$-Alexander map, and  $\widetilde f\colon |\partial E^*|\to \bS^{n-1}\times \R$ be an $\sL$-Lipschitz roof adjustment of $f$.

Then there exist a constant $\sL'=\sL'(n,\nu,\lambda,\mu; \sL)$ and an $\sL'$-BLD extension 
\[
\widetilde F\colon |E^*| \to \bS^{n-1}\times [0,3^{\nu k}]
\]
of $\widetilde f$ modulo $\omega^+(E;\Omega)$, for which $\widetilde F|_{|\partial E^*|}\colon  |\partial E^*|\to  \bS^{n-1} \times \R$ is an $\sL$-Lipschitz roof-admissible map for which
\begin{enumerate}
\item $\widetilde F|_{|\partial E^*|} \doteq \widetilde f$, and
\item $\widetilde F|_{|\omega^+(E;\Omega)|}\colon  |\omega^+(E;\Omega)| \to  \bS^{n-1} \times \{3^{\nu k}\}$ is an $\sL$-BLD-controlled  expansion of an $(\omega^+(E;\Omega))^\Delta$-Alexander map. 
\end{enumerate}
\end{proposition}

\begin{remark}
To simplify the discussion, we write the proof in the case $k \ge 2$. The case $k=1$ is similar.
\end{remark}

For the proof, we introduce a peeling process for bent indentations, which reduces the extension problem to the extension problem for refined tunnels. For the peeling process, we introduce the notion of layers of bent indentations and classify cubes in $D_E$ in terms of their position in layers.

\section{Layers of bent indentation}

For the proof of Proposition \ref{prop:extension-over-differences}, we describe a peeling process, based on layers, for a bent indentation $D_E$ of $E$. Let $E\in\cD_k(\Sigma)$ and let $(T_E,A_E,D_E)$ be its representation,  where $D_E\subset A_E$ is a bent indentation.

\begin{notation}
\label{notation:layer}\index{layer} \index{$\Layer_j(D_E)$}
We define $\Layer_j(D_E) \subset D_E$ to be the subcomplex of $D_E$ spanned by $n$-cubes $Q\in D_E^{[n]}$ determined as follows:
\begin{enumerate}
\item $Q$ belongs to the $1$st layer $\Layer_1(D_E)$ if $Q\cap \partial A_E\ne \emptyset$, and 
\item $Q$ belongs to the $j$th layer $\Layer_j(D_E)$ for $j\ge 2$ if $Q$ has a non-empty intersection with an $n$-cube in $\Layer_{j-1}(D_E)$.
\end{enumerate}
For completeness, we set also $\Layer_0(D_E) = D_E\cap \partial A_E$.  
\end{notation}

We denote $j_E\ge 1$ the number of non-empty layers, that is, the smallest number $j\ge 1$ for which $\Layer_{j+1}(D_E)=\emptyset$. So, $1\leq j_E\leq 3^{\nu r}$. For completeness, we set also $\Layer_0(D_E) = D_E\cap \partial A_E$ and $\Layer_{j_E+1}(D_E)=\emptyset$.

\subsection{Trichotomy of $(n-1)$-cubes in $\partial \Layer_j(D_E)$}

Before discussing the case of $n$-cubes, we divide the $(n-1)$-cubes on the boundary of $D_E$ into three classes. For the complex $\partial \Layer_j(D_E), 1\leq j\leq j_E$, we have a partition
\[
\partial \Layer_j(D_E)=  (\partial \Layer_j(D_E))^{\floor}\cup (\partial \Layer_j(D_E))^{\ceiling} \cup (\partial \Layer_j(D_E))^{\wall}
\]
into \emph{floor}, \emph{ceiling}, and \emph{wall}, respectively, where  
\begin{enumerate}
\item $(\partial \Layer_j(D_E))^{\floor}=  \Layer_j(D_E) \cap \Layer_{j-1}(D_E)$,
\item $(\partial \Layer_j(D_E))^{\ceiling}$ is the subcomplex of $\partial \Layer_j(D_E)$ spanned by the $(n-1)$-cubes not intersecting $|(\Layer_j(D_E))^{\floor}|$, and 
\item $(\partial \Layer_j(D_E))^{\wall}$ is the subcomplex of $\partial \Layer_j(D_E)$ spanned by the $(n-1)$-cubes not in  $(\partial \Layer_j(D_E))^{\floor}\bigcup (\partial \Layer_j(D_E))^{\ceiling}$. 
\end{enumerate}
\index{floor}
\index{ceiling}
\index{wall}

Regarding individual $(n-1)$-cubes in $\partial \Layer_j(D_E)$, we use the following terminology. 

\begin{definition} \index{floor cube} \index{ceiling cube} \index{wall cube}
We say that $q\in \partial \Layer_j(D_E)$ is a \emph{floor cube}, \emph{ceiling cube}, or \emph{wall cube}, if $q$ belongs to $(\partial \Layer_j(D_E))^{\floor}$, $ (\partial \Layer_j(D_E))^{\ceiling}$, or $(\partial \Layer_j(D_E))^{\wall}$, respectively.
\end{definition}
We also say that a floor cube (resp. ceiling cube) $q$  of $\Layer_j(D_E)$ is an \emph{inner cube} of the said type  if $q$ does not meet the boundary of  $(\partial \Layer_j(D_E))^\floor$ (resp. $(\partial \Layer_j(D_E))^\ceiling$). If $q$ is not an inner cube, we say that $q$ is an \emph{edge cube}. Note that edge cubes meet walls.
\index{edge cube}
\index{inner cube}

\subsection{Corner and transition cubes in $\Layer_j(D_E)$}
\label{sec:corners-and-transition-cubes}

We define first corner cubes in $\Layer_j(D_E)$; see Figure \ref{fig:corner-cube-types} for an illustration.

\begin{definition}
\label{def:corner-cube}
An $n$-cube $Q\in \Layer_j(D_E)$ is called a \emph{corner cube} if one of the three (mutually exclusive) subcases holds:
 \begin{enumerate}
\item [(i)] $e_Q=Q\cap \Layer_{j-1}(D_E)\ne \emptyset$ does not contain $(n-1)$-cubes, 
\item [(ii)] $e_Q=Q\cap \Layer_{j+1}(D_E)\neq \emptyset$ does not contain $(n-1)$-cubes, 
\item [(iii)] $Q\cap \Layer_{j+1}(D_E)= \emptyset$ and $Q\cap \Layer_{j-1}(D_E)$ contains two adjacent $(n-1)$-cubes in $Q$.
\end{enumerate} 
\end{definition}

\begin{figure}[h!]
\begin{overpic}[scale=0.8,unit=1mm]{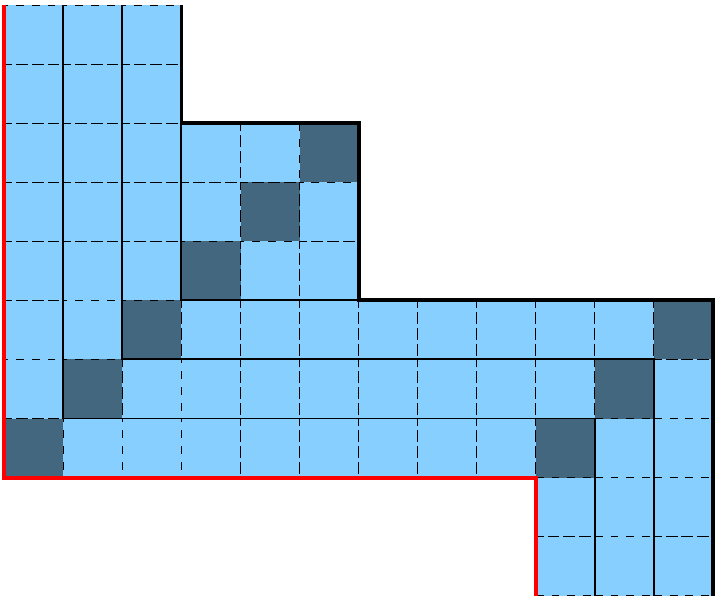} % add: grid
\put(30,13){\tiny $\Layer_0(D_E)$}
\put(58,42){\tiny $(\partial \Layer_3(D_E))^{\ceiling}$}

\put(4,76){\tiny $1$}
\put(12,76){\tiny $2$}
\put(20,76){\tiny $3$}

%\put(76,4){\tiny $1$}
%\put(84,4){\tiny $2$}
%\put(92,4){\tiny $3$}

\put(11,28){\tiny {\color{white}$Q_{ii}$}}
\put(4,36){\tiny $1$}
\put(4,28){\tiny $1$}
\put(4,20){\tiny {\color{white}$1$}}
\put(12,20){\tiny $1$}
\put(20,20){\tiny $1$}

\put(12,36){\tiny $2$}
\put(20,28){\tiny $2$}
\put(20,36){\tiny {\color{white}$3$}}

\put(76,20){\tiny {\color{white}$Q_{i}$}}
\put(68,20){\tiny $1$}
\put(76,12){\tiny $1$}
\put(84,28){\tiny {\color{white}$2$}}
\put(84,20){\tiny $2$}
\put(76,28){\tiny $2$}

%\put(76,36){\tiny $3$}
%\put(84,36){\tiny $3$}
%\put(92,28){\tiny $3$}
%\put(92,20){\tiny $3$}
%\put(92,36){\tiny {\color{white}$3$}}

\put(42,60){\tiny {\color{white}$Q_{iii}$}}
\put(28,60){\tiny $4$}
\put(28,52){\tiny $4$}
\put(36,60){\tiny $5$}
\put(36,52){\tiny {\color{white}$5$}}
\put(44,52){\tiny $5$}
\put(44,44){\tiny $4$}
\put(36,44){\tiny $4$}
\put(28,44){\tiny {\color{white}$4$}}

\end{overpic}
\caption{$\Layer_0(D_E)$ marked with red and layers $1$--$5$ by numbers. Cubes $Q_i$, $Q_{ii}$, and $Q_{iii}$ are corner cubes of the corresponding type in layers $1$, $2$, and $6$, respectively.}
\label{fig:corner-cube-types}
\end{figure}

By the structure of the reservoir-canal construction, a corner cube $Q$ always belongs to a refinement of a connector;  type (i) corner cubes belong to connectors of Case 1 in Definition \ref{def:connector} and types (ii) and (iii) corner cubes belong to connectors of Case 2. Moreover, the intersection $e_Q$ of type (i) or (ii) in Definition \ref{def:corner-cube} is an $(n-2)$-cube. 

\begin{notation}
For a corner cube $Q\in \Layer_j(D_E)$ of type (iii), we set $e_Q$ to be the unique $(n-2)$-cube 
in $Q$ which does not meet  $\Layer_{j-1}(D_E)$.
\end{notation}

For cubes in refinements of canal sections and reservoirs, we introduce another special class of cubes, \emph{transition cubes}. For this reason, we define first edge and inner cubes.

\begin{definition}
\label{def:edge-cube-n-dim}\index{edge cube} \index{inner cube}
An $n$-cube $Q\in \Layer_j(D_E)$ is an \emph{edge cube} if $Q\cap (\partial \Layer_j(D_E))^{\wall}\ne \emptyset$; other $n$-cubes in $\Layer_{j-1}(D_E)$ are inner cubes. 
\end{definition}

Note that a corner cube of type (iii) is always an edge cube. We define also the following subclass of edge cubes.

\begin{definition}
\label{def:transition-cube}\index{transition cube}
An edge cube $Q\in D_E$ is a \emph{transition cube (between two spectra)} if $Q$ is in a refinement of spectrum $\cS_i(D_E)$ and it meets an $n$-cube in a different spectrum $\cS_{i'}(D_E)$ for $i'\neq i$.
\end{definition}

Each transition cube $Q$ in $\Layer_j(D_E)$ is associated to two other transition cubes in $\Layer_j(D_E)$ as follows. Let $Q\in \Layer_j(D_E)$ be a transition cube. Then there are exactly two transition cubes $Q'\in \Layer_j(D_E)$ and $Q''\in \Layer_j(D_E)$ for which $Q\cap Q'\cap Q''$ is an $(n-2)$-cube and two of the cubes $Q,Q',Q''$ are edge cubes. We call the triple $\{Q,Q',Q''\}$ a \emph{cluster of $Q$}. Note that the cluster $\{Q, Q', Q''\} $ of $Q$ forms the shape of the letter $L$.
\index{cluster}
\index{$\{Q,Q',Q''\}$}

\subsection{Classification of $n$-cubes in $\Layer_j(D_E)$}
\label{sec:classification}

We classify now the $n$-cubes in $\Layer_j(D_E)$. We consider the following mutually exclusive categories of $n$-cubes in $\Layer_j(D_E)$ based on the properties of reservoirs, canal sections and connectors. In each case we associate subcomplexes $\Theta(Q) \subset \Layer_j(D_E)$ and $\theta(Q) \subset (\partial \Layer_j(D_E))^\floor$ to each $n$-cube $Q\in \Layer_j(D_E)$ considered.
\index{$\Theta(Q)$}
\index{$\theta(Q)$}

\medskip
\noindent {\bf Category I: Neighborhoods of corners.} 

\medskip
\noindent {\bf{Category I.1A}} Let $Q$ be a corner cube of type (i) in $\Layer_j(D_E)$. We set
\[
\Theta(Q)=\Star_{\Layer_j(D_E)}(e_Q) 
\] 
and observe that $|\Theta(Q)|$ is an $n$-cell. The intersection of $\Theta(Q)$ with the floor,
\[
\theta(Q) = \Theta(Q) \cap (\partial \Layer_j(D_E))^{\floor},
\] 
is an $(n-1)$-cell. Since $Q$ is of type (i), then the numbers of $n$-cubes and $(n-1)$-cubes in $\Theta(Q)$ and $\theta(Q)$ are $3$ and $2$, respectively, due to the assumption $k\ge 2$; in the case $k=1$, the numbers of $n$ and $(n-1)$-cubes in $\Theta(Q)$ and $\theta(Q)$ depend on $\mu(K)$.

\medskip
\noindent {\bf{Category I.1B}} Let $Q$ be a corner cube of type (ii) in $\Layer_j(D_E)$. We set
\[
\Theta(Q)=\Star_{\Layer_j(D_E)}(e_Q) \quad \text{and}\quad \theta(Q) = \Theta(Q) \cap (\partial \Layer_j(D_E))^{\floor}.
\] 
Again $|\Theta(Q)|$ is an $n$-cell and $\theta(Q)$ is an $(n-1)$-cell. Moreover, since $Q$ is of type (ii), complexes $\Theta(Q)$ and $\theta(Q)$ contain three $n$-cubes and four $(n-1)$-cubes, respectively.

\medskip
\noindent {\bf{Category I.1C}} Let $Q$ be a corner cube of type (iii) in $\Layer_j(D_E)$; in this case $\Star_{\Layer_j(D_E)}(e_Q)=Q$. We set 
\[
\Theta(Q)=Q \quad \text{and}\quad \theta(Q) = Q \cap (\partial \Layer_j(D_E))^{\floor}. 
\] 
Then $|\Theta(Q)|$ is an $n$-cube and $\theta(Q)$ is an $(n-1)$-cell which contains two $(n-1)$-cubes.

\medskip

\noindent {\bf{Category I.2.}} $Q$ is an $n$-cube in $\Layer_j(D_E)$, which is not a corner cube, but is adjacent to a corner cube $Q'$ in $\Layer_j(D_E)$ for which $Q\in \Theta(Q')$. In this case, we take
\[
\Theta(Q)=\Theta(Q') \quad \text{and}\quad \theta(Q)= \theta(Q').
\]

\medskip

\noindent{\bf Category II: Cubes away from corners.} Away from the corners, there is a one-to-one correspondence between the $n$-cubes in
\[
(\Layer_j(D_E))^\dagger =\Layer_j(D_E) - \bigcup \{ \Theta(Q)\colon Q \text{ a corner cube in }\Layer_j(D_E)
\}, 
\]
and the $(n-1)$-cubes in 
\begin{align*}
((\partial \Layer_j(D_E))^{\floor})^\dagger 
&= (\Layer_j(D_E))^{\floor} \\
&\quad - \bigcup \{ \theta(Q)\colon Q \text{ a corner cube in }\Layer_j(D_E)\}.
\end{align*}
For an $(n-1)$-cube $q$ in $((\partial \Layer_j(D_E))^{\floor})^\dagger$,  denote by $Q_q$  the unique $n$-cube in $(\Layer_j(D_E))^\dagger $ having $q$ as a face. Conversely, for an $n$-cube $Q \in (\Layer_j(D_E))^\dagger $,  denote by $q_Q$  its $(n-1)$-face in $((\partial \Layer_j(D_E))^{\floor})^\dagger$.

We formulate now the complexes $\Theta(\cdot)$ and $\theta(\cdot)$ in the remaining cases.

\medskip
\noindent {\bf{Category II.1}} If $Q \in \Layer_j(D_E)^\dagger$ is an inner cube, we set $\Theta(Q)=Q$ and $\theta(Q)=q_Q$.

\medskip
\noindent {\bf{Category II.2A.}} If $Q\in \Layer_j(D_E)^\dagger$ is an edge cube, which is an transition cube, we set
\[
\Theta(Q)= Q\cup Q' \cup Q''
\quad \text{and} \quad
\theta(Q)=\Theta(Q)\cap (\partial \Layer_j(D_E))^{\floor},
\]
where $\{Q,Q',Q''\}$ is the cluster of $Q$.

\medskip 

\noindent {\bf{Category II.2B.}} If $Q \in \Layer_j(D_E)^\dagger$ is an edge cube, which is not a transition cube, we set
\[
\Theta(Q)=Q \quad \text{and} \quad \theta(Q)= \Theta(Q)\cap (\partial \Layer_j(D_E))^{\floor}.
\]

\subsection{Equivalence relation based on the classification}
\label{sec:equivalence-relation}

From the classification, we have that the collection $\{\Theta(Q) \colon Q \in \Layer_j(D_E)\}$ is an essential partition of $\Layer_j(D_E)$, and the collection $\{\theta(Q) \colon Q\in \Layer_j(D_E)\}$ is an essential partition of $(\partial \Layer_j(D_E))^{\floor}$. Before we move on, we record a simple observation on these partition.

\begin{lemma}
Let $Q,Q'\in \Layer_j(D_E)$. Then $\Theta(Q)=\Theta(Q')$ if and only if $\theta(Q)=\theta(Q')$.
\end{lemma}
\begin{proof}
Let $Q\in \Layer_j(D_E)$. If $Q$ belongs to Category I and $Q'\in \Layer_j(D_j)$ has the property that $\Theta(Q')=\Theta(Q)$. Then there exits a corner cube $Q''\in \Layer_j(D_E)$ for which $\Theta(Q'')=\Theta(Q') =\Theta(Q)$. Thus also $\theta(Q'')=\theta(Q')=\theta(Q)$.

Suppose now that $Q$ belongs to Category II. If $Q$ is as in Category II.1 or II.2B, then $\Theta(Q)=Q$ and the claim holds. The claim clearly holds in Category II.2A. 
\end{proof}

Let now $\sim_{j-1}$ be the equivalence relation in $(\partial \Layer_j(D_E))^{\floor}$ having equivalence classes $\theta(Q)$ for $Q\in \Layer_j(D_E)$. That is,  $q\sim_{j-1} q'$ if and only if $q$ and $q'$ belong to the same $\theta(Q)$ for some $Q\in \Layer_j(D_E)$. We denote $[q]_{j-1}= \theta(Q)$ the equivalence class of $q$ under $\sim_{j-1}$ and denote 
\[ 
|[q]_{j-1}|=\bigcup \{q'\colon q'\in [q]_{j-1}\} = |\theta(Q)|.
\]

\begin{definition}
\label{def:tent} \index{tent} \index{$\tau([q]_{j-1})$}
A \emph{tent $\tau([q]_{j-1})$ over an equivalence class $[q]_{j-1}$} for $q\in (\partial \Layer_j(D_E))^{\floor}$ is
\[
\tau([q]_{j-1})=  \Theta(Q_q) \cap \left( (\partial \Layer_j(D_E))^{\ceiling} \bigcup (\partial \Layer_j(D_E))^{\wall}\right),
\]
where $Q_q$ is an $n$-cube in $\Layer_j(D_E)$ for which $[q]_{j-1}=\theta(Q_q)$. 
\end{definition}

We have the following finiteness property for tents, which we record as a lemma.

\begin{lemma}
\label{lemma:tents-classes}
The collection 
\[
\{\tau([q]_{j-1}) \colon q\in (\partial \Layer_j(D_E))^{\floor}\}
\]
is an essential partition of $(\partial \Layer_j(D_E))^{\ceiling} \cup (\partial \Layer_j(D_E))^{\wall}$, and collection 
\[
\{([q]_{j-1}, \tau([q]_{j-1})) \colon q\in (\partial \Layer_j(D_E))^{\floor}\}
\]
contains only a bounded number of isomorphic classes with the number depending only on $n$ and $\mu(K)$.
\end{lemma}

As a final observation related to the classification of cubes, we observe that tents are shellable cubical complexes.

\begin{lemma}
\label{lemma:tau-shellable}
Let $1\leq j\leq j_E$ and let $q\in (\partial \Layer_j(D_E))^{\floor}$ be an $(n-1)$-cube. Then the tent $\tau([q]_{j-1})$ is a shellable $(n-1)$-complex whose space $|\tau([q]_{j-1})|$ is an $(n-1)$-cell. 
\end{lemma}

\begin{proof}
Let $Q_q$ be the cube in $\Layer_j(D_E)$ having $q$ as a face. We consider now the five cases of $Q_q$.

\medskip
\noindent \emph{Categories I.1 and I.2.} In these cases $[q]_{j-1}=\theta(Q_q)$ has a linear adjacency graph. Similarly $\Theta(Q_q) \cap \partial \Layer_j(D_E))^\ceiling$ has a linear adjacency graph and $|\Theta(Q_q) \cap \partial \Layer_j(D_E))^\ceiling|$ is an $(n-1)$-cell. Since the complex $\Theta(Q_q)\cap (\partial \Layer_j(D_E))^{\wall}$ is either empty or has a linear adjacency graph, we conclude that $|\Theta(Q_q) \cap \partial \Layer_j(D_E))^\ceiling|$ is either empty or an $(n-1)$-cell which meets in a complex $\Theta(Q_q) \cap \partial \Layer_j(D_E))^\ceiling$, whose space is an $(n-2)$-cell. Thus $\tau([q]_{j-1})$ is shellable and $|\tau([q]_{j-1})|$ is an $(n-1)$-cell.

\medskip
\noindent\emph{Category II.1.} In this case $\tau([q]_{j-1})$ is a single $(n-1)$-cube opposite to $q$ and the both claims hold trivially.

\medskip
\noindent\emph{Category II.2A.} In this case the argument is analogous to Category I.1.

\medskip
\noindent\emph{Category II.2B.} The tent $\tau([q]_{j-1})$ consists of two adjacent $(n-1)$-cubes. Thus the claims hold trivially.
\end{proof}

\begin{remark}
\label{rmk:corner-cube-type-2}
For the forthcoming discussion, we note that, for $Q\in \Layer_j(D_E)$, the tent $\tau(\theta(Q))$ contains as many $(n-1)$-cubes as $\theta(Q)$, that is, $\# (\tau(\theta(Q))^{[n-1]} \ge \# (\theta(Q))^{[n-1]}$, except when $Q$ belongs to Category I.1B. This  exception is the source of the construction in Section \ref{sec:peeling-spare-cubes}.
\end{remark}

We finish this section by recording a summary on the intersections of the tents.
\begin{lemma}
\label{lemma:tau-intersections}
Let $1\leq j\leq j_E$ and let $q$ and $q'$ in $(\partial \Layer_j(D_E))^{\floor}$ be adjacent $(n-1)$-cubes for which $[q]_{j-1}\neq[q']_{j-1}$. Then the intersection $\tau([q]_{j-1})\cap \tau([q']_{j-1})$ is either an $(n-2)$-cube or a pair of two adjacent $(n-2)$-cubes.
\end{lemma}
\begin{proof}
Let $Q$ and $Q'$ be $n$-cubes for which $[q]_{j-1}=\theta(Q)$ and $[q']_{j-1}=\theta(Q')$. We may assume that $Q$ and $Q'$ are adjacent. We consider the cases by categories. Note that cases are symmetric in $Q$ and $Q'$.

Suppose first that $Q$ is in Category I. Then $Q'$ is either in Category II.1 or in Category II.2B and $Q$ is in Category I.2. In the first case, the intersection $\tau([q]_{j-1})\cap \tau([q']_{j-1})$ is a single $(n-2)$-cube and in the second case, the intersection $\tau([q]_{j-1})\cap \tau([q']_{j-1})$ consists of two adjacent $(n-2)$-cubes. Thus in both cases the intersection is shellable with a space which is an $(n-2)$-cell.

Suppose now that both cubes $Q$ and $Q'$ are in Category II. We are left with two cases. Suppose that $Q$ is in Category II.2A, that is, $Q$ is an edge cube which is also an transition cube. Then $\tau([q]_{j-1})$ is a shellable complex consisting of five $(n-1)$-cubes; two wall cubes and three ceiling cubes on the boundary of $\Theta(Q)$ with a pair of $(n-1)$-cubes, forming an $(n-1)$-cell, meeting cube $Q'$. Regarding cube $Q'$, we have two cases. Either $Q'$ is an inner cube in Category II.1 or a wall cube in Category II.2B. Thus the intersection $\tau([q]_{j-1})\cap \tau([q']_{j-1})$ is either an $(n-2)$-cube or a pair of adjacent $(n-2)$-cubes.

Suppose finally that $Q$ is in Category II.1. Then $Q'$ is either also in Category II.1 or in Category II.2B. Also in this case, in both cases, the intersection $\tau([q]_{j-1})\cap \tau([q']_{j-1})$ is a single $(n-2)$-cube.
\end{proof}

\section{Peeling off layers}

We define
\[
D_j = \bigcup_{i=0}^j \Layer_i(D_E) \quad\text{and}\quad E_j = E\cup \bigcup_{i=j+1}^{j_E} \Layer_i(D_E)
\]
for  $j=0,\ldots, j_E$ and call the sequences
\[
D_E = D_{j_E} \supset \cdots \supset D_0=D_E\cap \partial A_E
\quad \text{and} \quad
E = E_{j_E} \subset \cdots \subset E_0 = A_E
\]
a \emph{peeling of $D_E$} and a \emph{filling of $E$}, respectively.

We call the set 
\[
\partial^+ D_j = D_j \cap E_j.
\]
the \emph{positive boundary of $D_j$}. We have the following properties of $\partial^+D_j$.
\begin{lemma}
For each $j\ge 1$, we have that
\begin{enumerate}
\item $\partial D_j =\partial^+ D_j \cup (D_j\cap \partial A_E)$, 
\item $\partial E_j = \partial^+ D_j \cup (E_j\cap \partial A_E)$,
\item $\partial^+D_j  = (\partial^+D_j \cap D_{j-1}) \cup ( (\partial \Layer_j(D_E))^{\ceiling} \cup (\partial \Layer_j(D_E))^{\wall})$, and
\item $\partial^+D_{j-1} =  (\partial^+ D_j \cap D_{j-1})  \cup (\partial \Layer_j(D_E))^{\floor}$.
\end{enumerate}
\end{lemma}

The discussion on tents $\tau([q]_{j-1})$ gives now the following lemma.

\begin{lemma}[Peeling a layer]
\label{lemma:psi_j}
There exists, for each $j\ge 1$, an $\sL(n)$-bilipschitz map 
\[
\psi_j \colon |\partial^+ D_j|\to |\partial^+ D_{j-1}|
\]
for which 
\begin{enumerate}
\item $\psi_j|_{(\partial^+ D_j) \cap D_{j-1}} = \id$, 
\item for each $(n-1)$-cube $q\in (\partial \Layer_j(D_E))^{\floor}$, $\psi_j(|\tau([q]_{j-1})|) = |[q]_{j-1}|$ and the restriction $\psi_j|_{|\tau([q]_{j-1})|} \colon |\tau([q]_{j-1})| \to |[q]_{j-1}|$ is affine on simplices of $\tau([q]_{j-1})^\Delta$, and
\item for each $q'\in (\partial \Layer_j(D_E))^\ceiling$, which is an inner cube,
the restriction of $\psi_j|_{|q'|}$ is a translation through $|\Layer_j(D_E)|$. 
\end{enumerate}
\end{lemma}
\begin{proof}
It suffices to observe that for each $q\in (\partial \Layer_j(D_E))^{\floor}$, $\tau([q]_{j-1})$ is a cubical $(n-1)$-cell for which $\tau([q]_{j-1})\cap [q]_{j-1}$ is an $(n-2)$-cell. The existence of $\psi_j$ now follows from local considerations in each case.
\end{proof}

Mappings $\psi_j$ in Lemma \ref{lemma:psi_j} may be extended to bilipschitz homeomorphisms $|A_E|\to |A_E|$. We omit the details. 
 
\begin{lemma}[Filling a layer]
\label{lemma:psi_j-extension}
There exists $\sL=\sL(n)\ge 1$ for the following. For $1\leq j \leq j_E$ there exists an $\sL$-bilipschitz homeomorphism 
\[
\Psi_j \colon |A_E|\to |A_E|
\]
for which 
\begin{enumerate}
\item $\Psi_j|_{|\partial^+ D_j|} = \psi_j \colon |\partial^+ D_j|\to |\partial^+ D_{j-1}|$, 
\item $\Psi_j|_{|\partial A_E|} = \id$, 
\item $\Psi_j|_Q = \id$ if $Q\in E_j^{[n]}$ and $Q\cap |\Layer_j(D_E)| = \emptyset$, and
\item for each inner roof cube $\Omega_{E,\ell}$, the restriction $\Psi_j|_{\Omega_{E,\ell}}$ is a translation through $|\Layer_j(D_E)|$.
\end{enumerate}
In particular, $\Psi_j(|D_j|) = \Psi_j(|D_{j+1}|)$ and $\Psi_j(|E_j|) = |E_{j-1}|$.
\end{lemma}
We also set $\Psi_j= \id \colon |A_E|\to |A_E|$ for $j\geq j_E +1$, for convenience.

\section{Spare simple covers}
\label{sec:peeling-spare-cubes}

We recall that our goal is to develop a method to deform the mapping $\widetilde F \colon |E^*|\to \bS^{n-1}\times [0,3^{\nu k}]$, where $E^* = (E\cup \Omega_E)-(\Omega_{E,1}\cup \cdots \cup \Omega_{E,s_E})$ to a mapping $\widehat F \colon |E\cup \Omega_E|\to \bS^{n-1}\times [0,3^{\nu k}]$ and then use extension over undented refined tunnels (Proposition \ref{prop:extension-over-refined-tunnels}) to finish the extension in this case.

The main issue in application of the filling lemma (Lemma \ref{lemma:psi_j-extension}) and deformation methods to obtain $\widehat F$, lies in 
the existence of corner cubes of Category I.1B. 
As pointed out in Remark \ref{rmk:corner-cube-type-2}, for an $(n-1)$-cube $q\in (\partial \Layer_j(D_E))^\floor$ for which $[q]_{j-1}=\theta(Q)$ for an $n$-cube $Q$ in Category I.1B, we have that  $\# ([q]_{j-1})^{[n-1]} > \# (\tau([q]_{j-1]})^{[n-1]}$. Thus it is not possible to deform an Alexander map on the simplicial complex $\Psi_j(\tau([q]_{j-1]})^\Delta)$ to an Alexander map on the simplicial complex $([q]_{j-1})^\Delta$ unless we have additional simple covers which may be used to build the missing $(n-1)$-simplices.

With  this particular issue in mind, we defined connectors of Case 2 in Definition \ref{def:connector}. These connectors are stars, each of which has three $n$-cubes meeting the boundary in faces and a spare cube contained completely in the interior.

To set the stage, let $\sfR$ be a refinement of a connector $\sfJ$ of Case 2 in $D_E$, let $\sfQ$ be a spare cube in $\sfJ$ and $\sfJ^\tr = \sfJ-\sfQ$ be its truncation. Denote $\sfR_{\sfQ}= \sfR|_{|\sfQ|}$ and $\sfR_{\sfJ^\tr}=\sfR|_{|\sfJ^\tr |}$.

Since $\sfR$ is a refined connector, it is isometric to $3^{\nu r}([-1,1]^2 \times [0,1]^{n-2})$ for some $r\in \{0,1\ldots, k\}$, we may make the following isometric identifications and choices:
\begin{enumerate}
\item $|\sfR| = |\sfJ| = 3^{\nu r}([-1,1]^2 \times [0,1]^{n-2})$,
\item $|\sfR_{\sfQ}| = 3^{\nu r} ([0,1]^2 \times [0,1]^{n-2})$, and
\item $|\sfR_{\sfJ^\tr}|=3^{\nu r} ([-1,1]\times [-1,0] \times [0,1]^{n-2}\bigcup  [-1,0]\times [0,1] \times [0,1]^{n-2})$.
\end{enumerate}

With this identification, the walls of $\sfR_{\sfQ}$ correspond to $(n-1)$-cubes 
\begin{align*}
& \left(3^{\nu r}([0,1]^2 \times [0,1]^{n-3}\times \{0\})\, \bigcup \,3^{\nu r}([0,1]^2 \times [0,1]^{n-3} \times \{1\})\right)\\
&\cup \,\left(3^{\nu r}([0,1]\times\{1\} \times [0,1]^{n-2})\, \bigcup \,3^{\nu r}(\{1\}\times [0,1] \times [0,1]^{n-2}) \right)\\
& \equiv V \cup H,
\end{align*}
where $V$ stands for 'vertical' and $H$ stands for 'horizontal'.

The issue at hand is solved by deforming walls of $\sfQ$ to floor of $\sfQ$ in  refinements. Since, in any refinement, the number of $(n-1)$-cubes in the walls of $\sfQ$ is exactly twice as many as that in the floor. The deformation  allows us to add one  $(n-1)$-cube worth of additional simple covers to each $(n-1)$-cube in the floor of $\sfQ$. Since the number of $(n-1)$-cubes in the floor of $\sfJ$ is exactly twice of that in the floor of $\sfQ$. With these added simple covers, the deformation on the floor  of $\sfQ$ may be continued.

The deformation on the walls is an isotopy on the horizontal part $H$, which does not produce any simple covers. Deformation on the vertical walls $V$ can be seen as follows. Figure \ref{fig:peeling-off-corners-first}  presents the first step of the deformation of the barycentric subdivision of the vertical wall. 
Note that, in all dimensions $n\ge 3$, we have that a barycentric subdivision in four $(n-1)$-cubes occupying the space $[-1,1]^2\times [0,1]^{n-3}$ is taken to barycentric subdivision of a complex of three $(n-1)$-cubes with space isometric to $\left( ([-1,0] \times [-1,0]) \cup ([-1,0] \times [0,1]) \cup ([0,1]\times [-1,0])\right)\times [0,1]^{n-3}$. Thus Figure \ref{fig:peeling-off-corners-first} is representative.
For a general peeling argument and the proof of the associated deformation, see the proof of Proposition \ref{prop:extension-over-differences}.

Although the peeling of spare cubes will be done simultaneously with the rest of the peeling of the complex $D_E$, we indicate two of the subsequent steps in  Figures \ref{fig:peeling-off-corners-continuation-I} and \ref{fig:peeling-off-corners-continuation-II}, since these steps exhibits the transportation of the additional simple covers produced by peeling of layers. We call these additional simple covers  \emph{spare simple covers}. 

We remark that, in each step, some of the spare simple covers are kept on the vertical part, but most of them are transported to the deformed horizontal part. 
More precisely, we move spare simple covers in $(n-1)$-cubes which do not meet a corner cube to the horizontal part after each deformation.

As mentioned earlier, the mere fact that the number of $(n-1)$-cubes in the walls of $\sfR_{\sfQ}$ is exactly twice of that in the floor of  $\sfR_{\sfQ}$ allows the simple covers be distributed, at the end of the deformation,  for which each $(n-1)$-cube in the floor of $\sfR_{\sfQ}$ has exactly one $(n-1)$-cube worth of additional simple covers.

%%%%%%%%%%%%%%%%%%%%%%%%%%%%%%%%%%%%%%%%%%%%%%%%%%%%%%%%%%%%%%%%%%%%%%%%%%%%%
%%%%%%%%%%%%%%%%%%%%%%%%%%%%%%%%%%%%%%%%%%%%%%%%%%%%%%%%%%%%%%%%%%%%%%%%%%%%%
%%%% Removed
%%%%%%%%%%%%%%%%%%%%%%%%%%%%%%%%%%%%%%%%%%%%%%%%%%%%%%%%%%%%%%%%%%%%%%%%%%%%%
%%%%%%%%%%%%%%%%%%%%%%%%%%%%%%%%%%%%%%%%%%%%%%%%%%%%%%%%%%%%%%%%%%%%%%%%%%%%%

%%%%%%%%%%%%%%%%%%%%%%%%%%%%%%%%%%%%%%%%%%%%%%%%%%%%%%%%%%%%%%%%%%%%%%%%%%%%%
%%%%%%%%%%%%%%%%%%%%%%%%%%%%%%%%%%%%%%%%%%%%%%%%%%%%%%%%%%%%%%%%%%%%%%%%%%%%%
%%%% Removal ends
%%%%%%%%%%%%%%%%%%%%%%%%%%%%%%%%%%%%%%%%%%%%%%%%%%%%%%%%%%%%%%%%%%%%%%%%%%%%%
%%%%%%%%%%%%%%%%%%%%%%%%%%%%%%%%%%%%%%%%%%%%%%%%%%%%%%%%%%%%%%%%%%%%%%%%%%%%%

\begin{convention}\label{convention:restored}
In the forthcoming proof of Proposition \ref{prop:extension-over-differences}, we assume that all spare cubes $\sfQ\in \cQ_E$ have been peeled and spare simple covers have been created and transported to the floor of $\sfQ$ as described by the process indicated in this section. 
%In other words, we begin the proof with the expansion  $E^{\exp}$ of $E$ obtained by restoring all  complexes $\sfR_{\sfQ}$ on spare cubes $\sfQ\in \cQ_E$.
%\end{convention}

%\begin{notation}\label{notation:C-spare}
In the following, we say that an expansion of the Alexander map satisfies Condition $\cC^\spare$ (or its variation) provided that spare simple covers are created and transported according to (a variation of) this process and the number of spare simple covers in each $(n-1)$-cube is bounded by a constant $N(n,\mu)$. 
%In the following, we say that an expansion of the Alexander map satisfies Condition $\cC^\spare$ (or its variation) provided that the number of spare simple covers in each $(n-1)$-cube is bounded by a constant $N(n,\mu)$. 
\end{convention}

\begin{figure}[h!]
\centering
\begin{overpic}[scale=0.6,unit=1mm]{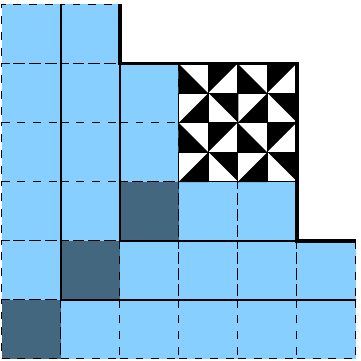} % add: grid
%\put(31,25){\tiny $Q_c$}
\end{overpic}
\hfill
\begin{overpic}[scale=0.6,unit=1mm]{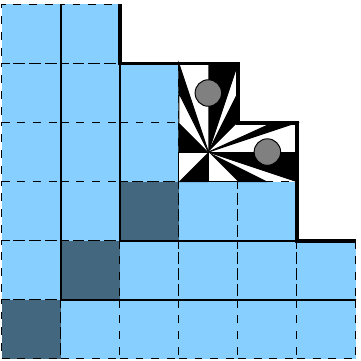} % add: grid
%\put(21,27){\Tiny {\color{white}$1$}}
\end{overpic}
\hfill
\begin{overpic}[scale=0.6,unit=1mm]{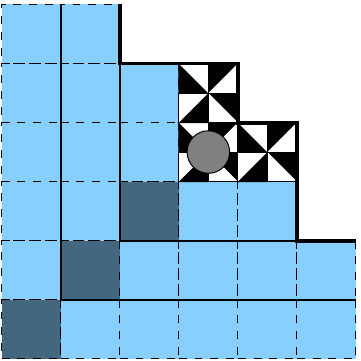} % add: grid
\put(20.5,20.5){\tiny {\color{white}$1$}}
\end{overpic}

\caption{Deformation of the Alexander map on the barycentric subdivision of a vertical wall of $\sfR_{\sfQ}$. The disks represent clusters of  well-placed simple covers. The number in the disk indicates the number of clusters of simple covers; each cluster has $\# ([0,1]^{n-1})^\Delta/2$, i.e., the number of simplices in a barycentric subdivision of an $(n-1)$-cube.}
\label{fig:peeling-off-corners-first}
\end{figure}

\begin{figure}[h!]
\centering
\begin{overpic}[scale=0.6,unit=1mm]{Part4-layer-corner-deformation-local-3.pdf} % add: grid
\put(20.5,20.5){\tiny {\color{white}$1$}}
\end{overpic}
\hfill
\begin{overpic}[scale=0.6,unit=1mm]{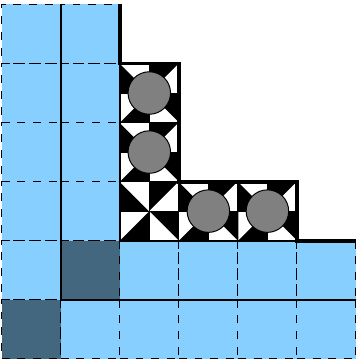} % add: grid
\put(14.5,26.5){\tiny {\color{white}$1$}}
\put(14.5,20.5){\tiny {\color{white}$1$}}
\put(20.5,14.5){\tiny {\color{white}$1$}}
\put(26.5,14.5){\tiny {\color{white}$1$}}
\end{overpic}
\hfill
\begin{overpic}[scale=0.6,unit=1mm]{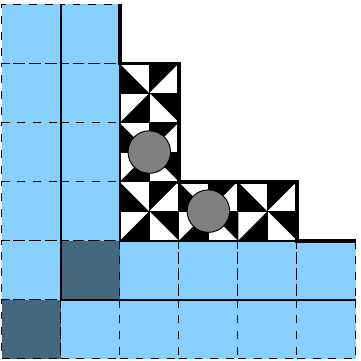} % add: grid
%\put(14.5,26.5){\tiny {\color{white}$1$}}
\put(14.5,20.5){\tiny {\color{white}$1$}}
\put(20.5,14.5){\tiny {\color{white}$1$}}
%\put(26.5,14.5){\tiny {\color{white}$1$}}
\end{overpic}
\caption{Second step in the deformation of the Alexander map on the barycentric subdivision of a wall of $\sfR_{\sfQ}$. On the left: Barycentric subdivision after first step. In the middle: Barycentric subdivision after deformation. On the right: On a vertical wall, after two clusters of the spare simple covers are moved to non-visible horizontal part $H$.}
\label{fig:peeling-off-corners-continuation-I}
\end{figure}

\begin{figure}[h!]
\centering
\begin{overpic}[scale=0.6,unit=1mm]{Part4-layer-corner-deformation-local-4-v2.pdf} % add: grid
%\put(14.5,26.5){\tiny {\color{white}$1$}}
\put(14.5,20.5){\tiny {\color{white}$1$}}
\put(20.5,14.5){\tiny {\color{white}$1$}}
\end{overpic}
\hfill
\begin{overpic}[scale=0.6,unit=1mm]{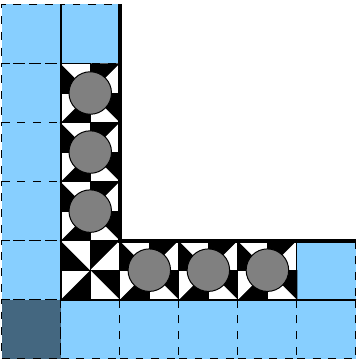} % add: grid
\put(8.5,26.5){\tiny {\color{white}$1$}}
\put(8.5,20.5){\tiny {\color{white}$1$}}
\put(8.5,14.5){\tiny {\color{white}$2$}}
\put(14.5,8.5){\tiny {\color{white}$1$}}
\put(20.5,8.5){\tiny {\color{white}$1$}}
\put(26.5,8.5){\tiny {\color{white}$1$}}
\end{overpic}
\hfill
\begin{overpic}[scale=0.6,unit=1mm]{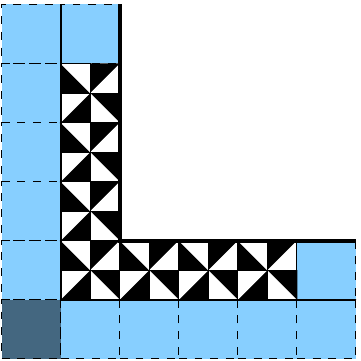} % add: grid
\put(8.5,20.5){\tiny {\color{white}$1$}}
\put(8.5,14.5){\tiny {\color{white}$1$}}
\put(14.5,8.5){\tiny {\color{white}$1$}}
\put(20.5,8.5){\tiny {\color{white}$1$}}
\end{overpic}
\caption{Third step in the deformation of the Alexander map on the barycentric subdivision of a wall of $\sfR_{\sfQ}$. At the end of peeling off a spare cube $\sfQ$, all simple covers are moved to the non-visible horizontal part $H$.}
\label{fig:peeling-off-corners-continuation-II}
\end{figure}

%%%%%%%%%%%%%%%%%%%%%%%%%%%%%%%%%%%%%%%%%%%%%%%%%%%%%%%%%%%%%%%%%%%%%%%%%%%%%%%
%%%%%%%%%%%%%%%%%%%%%%%%%%%%%%%%%%%%%%%%%%%%%%%%%%%%%%%%%%%%%%%%%%%%%%%%%%%%%%%
%%%%%%%%%%%%%%%%%%%%%%%%%%%%%%%%%%%%%%%%%%%%%%%%%%%%%%%%%%%%%%%%%%%%%%%%%%%%%%%

\section{Proof of the extension proposition}

Before giving the proof, we  fix, for each $1\leq j\leq j_E$, a bilipschitz homeomorphism $\Psi_j \colon |A_E|\to |A_E|$ as in Lemma \ref{lemma:psi_j-extension}, and also a structure 
\[
P_{j-1} = (\Psi_j)_*((\partial E_j)^\Delta)
\]
on $|\partial E_{j-1}|$.

In the proof, when we say that simple covers are collected in (or, moved into) a ball $B$ in $q$, we mean  that  simple covers are contained in (or, moved into) the center half $(1/2) B$ of a ball $B$, which is contained in the interior of  $|q|$ and of radius $3^{-n-5} \SL_E(q)$, where $\SL_E(q)$ is the side length of $|q|$. When we refer to \emph{balls}, we mean balls which contain simple covers.

\begin{proof}[Proof of Proposition \ref{prop:extension-over-differences}] 

Let $f\colon |\partial E|\to \bS^{n-1}$ be an $\sL$-BLD-controlled expansion of a $(\partial E)^\Delta$-Alexander map and $\widetilde f\colon |\partial E^*|\to \bS^{n-1}\times \R$  an $\sL$-Lipschitz roof adjustment of $f$  in the statement of the proposition. Let also $\mu = \mu(K)$.

Regarding the peeling process in spare cubes, we refer to the previous section on the discussion of creation and transportation of spare simple covers. Note that, in this peeling process, we always peel off a corner cube of type (iii) before encountering a corner cube of type (ii). Thus we always have the necessary spare simple covers for the deformation.

\medskip

\noindent
\emph{Step 1:} In this step, we define inductively  a  sequence $f_j \colon |\partial E_j|\to \bS^{n-1}, j=j_E, \ldots, 1,$ of $\sL'(n,\sL)$-BLD-controlled expansion of $(\partial E_j)^\Delta$-Alexander maps and the roof adjustments 
$\widetilde f_j \colon |\partial E_j^*|\to \bS^{n-1}\times \R$.

Let $f_{j_E} = f$ and $\widetilde f_{j_E} = \widetilde f_{j_E}$. We may assume that all simple covers of $f_{j_E}$ are contained in balls  $B_{j_E,q}\subset \interior |q|$, $q \in (\partial E_{j_E})^{[n-1]}$, by applying an isotopy $|\partial E_{j_E}|\times [0,1] \to |\partial E_{j_E}|\times [0,1]$. We may further assume, after a second isotopy, that all  balls (containing simple covers) in $(\Layer_{j_E}(D_E))^{\wall} \cup (\Layer_{j_E}(D_E))^{\ceiling}$ are in inner cubes of $(\Layer_{j_E}(D_E))^{\ceiling}$.

For the construction of $f_{j_E-1}$, let 
\[
\mathbf{B}= \bigcup \{\tau([q]_{j_E-1})\colon q\in (\Layer_j(D_E))^{\floor}, \, q \text{ not an inner cube}\}.
\]
Recall that tents $\tau([q]_{j_E-1})$ are shellable by Lemma \ref{lemma:tau-shellable} and the collection $\{([q]_{j-1}, \tau([q]_{j-1})) \colon q\in (\Layer_j(D_E))^{\floor}\}$ contains only a bounded number of isomorphism and isometry classes. We may put $\mathbf{B}$ in the place of the band and $\tau([q]_{j_E-1})$ the place of $q\times [0,2]$ in Corollary \ref{cor:Band-merge}.

We deform now $\mathbf B$ back to the original complex in three steps. First, using shellability, we deform in individual tents to obtain stars. Then, using shellability again, we deform on common boundaries of adjacent tents. Finally, in the third step, we combine stars replacing the tents and obtained simple covers to back to the original cubical structure in $|\mathbf B|$. In case of a corner cube $Q$ of type (ii), we use the excess simple covers given by the peeling of the spare cubes to build the cubical structure. After these steps, we obtain a $\sL(n,\mu, \sL)$-BLD homotopy 
\[
H_{j_E-1}^{(1)} \colon |\partial E_{j_E-1}|\times [0,1] \to |\partial E_{j_E-1}|\times [0,1]
\]
from $f_{j_E} \circ (\Psi_{j_E}|_{|\partial E_{j_E}|})^{-1} \colon |\partial E_{j_E-1}|\to \bS^{n-1}$, a BLD-controlled expansion of an $(P_{j_E-1})^\Delta$-Alexander map, to a BLD-controlled expansion of a $(\partial E_{j_E-1})^\Delta$-Alexander map $f_{j_E-1}' \colon |\partial E_{j_E-1}|\to \bS^{n-1}$, whose simple covers are in balls contained in cubes in $(\partial E_{j_E-1})^{[n-1]}$.

After applying another isotopy $H_{j_E-1}^{(2)}$, we may assume that  
\begin{enumerate}
\item all balls in $ (\Layer_{j_E-1}(D_E))^{\wall} \cup (\Layer_{j_E-1}(D_E))^{\ceiling}$ have been moved into the inner cubes of $(\Layer_{j_E-1}(D_E))^{\ceiling}$, and  \label{item:move-to-inner}
\item there are at most $N(n,\mu)$ balls  in each inner cube $q$ of the ceiling. \label{item:move-to-center}
\end{enumerate}
The reason for adding Condition \eqref{item:move-to-center} is to not over-crowd those inner cubes near the wall with balls. The concatenation 
\[
H_{j_E-1}=H_{j_E-1}^{(2)} * H_{j_E-1}^{(1)}\colon |\partial E_{j_E-1}|\times [0,1] \to |\partial E_{j_E-1}|\times [0,1]
\]
is a BLD homotopy from $f_{j_E}$ to $f_{j_E-1}$, an BLD-controlled expansion of an $(\partial E_{j_E-1})^\Delta$-Alexander map whose simple covers have the said properties above.  Let $\widetilde f_{j_E-1}$ be the roof-adjusted modification of $f_{j_E-1}$.

\medskip
 
Suppose that $f_{j_E}, \ldots, f_{j+1}$, as well as $\widetilde f_{j_E}, \ldots, \widetilde f_{j+1}$, have been defined for some $j\geq 1$. Following the construction of $H_{j_E-1}$ almost verbatim, we obtain a homotopy 
\[
H_j \colon |\partial E_j |\times [0,1] \to |\partial E_j |\times [0,1]
\]
from $f_{j+1}$ to $f_j$ for which 
\begin{enumerate}
\item all  balls in $ (\partial \Layer_j(D_E))^{\wall} \cup (\partial \Layer_j(D_E))^{\ceiling}$ have been moved into the inner cubes of $(\partial \Layer_j(D_E))^{\ceiling}$, and  
\item there are at most $N(n,\mu)$ balls in each inner cube  of the ceiling.
\end{enumerate}
The second condition is achieved by moving, for each spectral cube $P$ of $D_E$ (reps. each partial star $S$), the balls  in $C=(\partial \Layer_j(D_E))^{\ceiling} \cap \widehat P$ (reps. balls in $C^*=(\partial \Layer_j(D_E))^{\ceiling} \cap \widehat S$\,) by at most  a bounded graph distance towards the center of $C$ (resp. $C^*$); the upper bound for the  distance and the number $N(n,\mu)$ are mutually dependent.  

Continuing this process inductively,  we obtain a BLD  homotopy 
\[
H_1 \colon |\partial E_1|\times [0,1] \to |\partial E_1|\times [0,1]
\]
from $f_1$ to a BLD-controlled expansion  $f_0$ of a $(\partial A_E)^\Delta$-Alexander map which satisfies  a version of Condition $\cC^{\spare}$ that puts an upper bound, $N(n,\mu)$, on the number of simple covers in each $(n-1)$-cube. Note that complex $A_E$ is undented.

\medskip

Indeed, a version of Condition $\cC^\spare$
can be achieved for the following reason. Since the precise moves are similar to the ones indicated in Figures \ref{fig:peeling-off-corners-continuation-I} and \ref{fig:peeling-off-corners-continuation-II}, we only indicate the different cases. Let $P$ be a spectral cube of $\cS_\ell(D_E)$,  $\widehat P=\Refine^{\nu r- \ell}(P)$, and $J= 3^{(\nu r -\ell)(n-1)}$. Then $\partial \widehat P \cap \partial E$ has at most $(2n-1) J$ many $(n-1)$-cubes, hence the number of balls containing the simple covers of the original mapping $f_E$ is at most $(2n-1) J$. The number of simple covers created by the deformation in the subsequent steps is precisely half of the number of $(n-1)$-simplices in $(\partial \widehat P \cap \partial E)^\Delta$ on its 'wall', which is at most $c(n) J$. Packing $(2n+c(n))J$ balls into  into $J$ many $(n-1)$-cubes in $\widehat P \cap \partial A_E$ for which 
a version of Condition $\cC^\spare$
holds can easily be done, provided that we diligently move the balls towards the center of $\widehat P$ in each step. 

Suppose next that $\widehat S=\Refine^{\nu r- \ell}(S)$ is a partial-star-indentation. 
Thus a version of Condition $\cC^\spare$
may be achieved on $\widehat S \cap \partial A_E$ for a similar reason, only this time constants depend also on $\mu$.

\medskip

\noindent \emph{Step 2:} In this step, we pull back  homotopies $H_j$ onto the boundary $|\partial A_E|$ of the undented $A_E$ by conjugation with suitable homeomorphisms, and  combine them into a single homotopy.

Let $g\colon |E|\to |A_E|$ be a bilipschitz indentation flattening map as in Proposition \ref{prop:flattening-bent-indentation} for which $g$ is the identity in the complement of wedge $\Wedge(D_E)$ of $|D_E|$. We may further assume that the restriction $g|_{\Omega_{E,i}}$  on each inner roof cube $\Omega_{E,i}$  is a scaling. Set $g_{j_E}=g$. 
 
 A closer inspection of the proof of Proposition \ref{prop:flattening-bent-indentation} shows that there exist $\sL(n)$-bilipschitz homeomorphisms
\[
g_j \colon |E_j|\to |A_E|, \quad j= j_E-1,\ldots, 1,
\]
for which
 \begin{enumerate}
 \item $g_j$ is the  identity in the complement of wedge $\Wedge(D_j)$ of $D_j$,
 \item  $g_j \circ \psi_{j+1} = g_{j+1}|_{|\partial^+ D_{j+1}|}$, 
 \item  $g_j|_Q = g_{j-1}|_Q$ for each $Q\in E_j^{[n]}$ satisfying $Q\cap |\partial \Layer_j(D_E)|=\emptyset$, and 
 \item  $g_j|_{\Omega_{E,i}}$ is a scaling of each inner roof cube $\Omega_{E,i}$. 
\end{enumerate}
The  conjugation
\[
F_j = (g_j \times \id) \circ H_j \circ (g_j^{-1}\times \id) \colon |\partial A_E|\times [0,1] \to |\partial A_E|\times [0,1]
\]
 is a homotopy from 
\[
f_{j+1} \circ \Psi_{j+1}^{-1} \circ g_j^{-1}|_{|\partial A_E|} = f_{j+1}\circ (g_j \circ \Psi_{j+1})^{-1}|_{|\partial A_E|} = f_{j+1} \circ g_{j+1}^{-1}|_{|\partial A_E|}
\]
to $f_j \circ g_j^{-1}$. 

Thus the mapping
\[
F \colon |\partial A_E|\times [0,j_E] \to |\partial A_E|\times [0,j_E]
\]
defined by 
\[
F(x,t) = F_{j_E-j} (x,t-(j-1))
\]
for $(x,t)\in |\partial A_E|\times [j-1,j]$ and $1\leq j \leq j_E$,  is a homotopy from $f_{j_E}\circ g_{j_E}^{-1}=f\circ g^{-1}$ to a BLD-controlled expansion  $f_0 \colon |\partial A_E|\to \bS^{n-1}$ of a $(\partial A_E)^\Delta$-Alexander map.

\medskip
\noindent \emph{Step 3:} Let ${A_E}^* = (A_E \cup \Omega) - (g(\Omega_{E,1}) \bigcup \cdots \bigcup g(\Omega_{E,s_E}))$ be the roof adjustment of $A_E$ in which dent $D_E$  has been restored. 

Let ${\widetilde f}\,^{^{*}}  \colon |\partial {A_E}^*|\to \bS^{n-1}\times \R$ be the roof-admissible map defined  by ${\widetilde f}\,^{^{*}}|_{|\partial {A_E}^*|\cap |\partial A_E|} = \widetilde f \circ g^{-1}|_{|\partial {A_E}^*|\cap |\partial A_E|}$ and ${\widetilde f}\,^{^{*}} = \widetilde f$ elsewhere.

Fix now a roof adjustment  $\widetilde f_0 \colon |\partial {A_E}^*|\to \bS^{n-1}\times \R$ of $f_0$ dominated by ${\widetilde f}\,^{^{*}}$, $\widetilde f_0 \ll {\widetilde f}\,^{^{*}}$, as in Definition \ref{def:roof-admissible-domination}. 
By Lemma \ref{lemma:roof-admissible-homotopy}, $F$ gives rise to a $\sL(n, \nu, \sL)$-BLD homotopy $H \colon |\partial {A_E}^*|\times [0,j_E] \to \bS^{n-1}\times [0,j_E]$ from ${\widetilde f}\,^{^{*}}$ to  $\widetilde f_0$. Extending $H$ trivially over $[j_E, 3^{\nu r}]$, 
we get now a homotopy 
\[
H\colon |\partial {A_E}^*| \times [0, 3^{\nu r} ] \to \bS^{n-1}\times [0, 3^{\nu r}]
\] 
from ${\widetilde f}\,^{^{*}}$ to  $\widetilde f_0$.

Since ${A_E}^*$ is undented, by Proposition \ref{prop:extension-over-refined-tunnels}, there exists a $\sL'(n,\nu, \mu,\lambda, \sL)$-BLD extension $G \colon |{A_E}^*|\to \bS^{n-1}\times \R$ of $\widetilde f_0$ modulo $\omega^+(A_E; \Omega)$. Therefore, by Proposition \ref{prop:X_E-extension}, ${\widetilde f}\,^{^{*}}$ has a BLD extension $ F' \colon | {A_E}^*|\to \bS^{n-1}\times \R$ modulo $\omega^+(E;\Omega)$. 

Since the homeomorphism $g\colon |E| \to |A_E|$ is identity outside  wedge $\Wedge(D_E)$, it induces a natural homeomorphism $g^*\colon |E^*| \to |{A_E}^*|$ for which $g^* =g$ on $|E^*| \cap |E|$ and $g^*|_{\Omega}$ is the identity.

The mapping $\widetilde F =  F' \circ g^* \colon |E^*|\to \bS^{n-1}\times \R$ is a $\sL(n,\nu,\mu, \lambda,\sL)$-BLD extension of $\widetilde f$ modulo $\omega^+(E;\Omega)$. This concludes the proof.
\end{proof}

%%%%%%%%%%%%%%%%%%%%%%%%%%%%%%%%%%%%%%%%%%%%%%%%%%%%%%%%%%%%%%%%%%%%%%%%%%%%%%%%
%%%%%%%%%%%%%%%%%%%%%%%%%%%%%%%%%%%%%%%%%%%%%%%%%%%%%%%%%%%%%%%%%%%%%%%%%%%%%%%%

%%%%%%%%%%%%%%%%%%%%%%%%%%%%%%%%%%%%%%%%%%%%%%%%%%%%%%%%%%%%%%%%%%%%%%%%%%%%%%%%
%%%%%%%%%%%%%%%%%%%%%%%%%%%%%%%%%%%%%%%%%%%%%%%%%%%%%%%%%%%%%%%%%%%%%%%%%%%%%%%%

\chapter{Proof of the Quasiregular extension theorem}
\label{chap:final-extension}

We now return to the realization $\Real_k(\Sigma)$ and its  partition 
\[
\mathcal{Real}_k(\Sigma) = \{ \widetilde \Rec_k(\Sigma) \} \cup \cdiff_1(\Sigma) \cup \cdots \cup \cdiff_{k-1}(\Sigma) \cup \cT_k(\Sigma).
\]
For convenience, write $\cdiff_k(\Sigma)=\cT_k(\Sigma)$.

By the construction of $Z_k$, lengths of the tunnels in $\cT_k(\Sigma), k\geq 2,$ are at most $\lambda_\loc$ - a number fixed in Proposition \ref{prop:Localized-realization-structure}; thus $\cT_k(\Sigma) \subset \cT(\lambda_\loc)$. For $k=0$ and $k=1$, $\cT_0(\Sigma)$ and $\cT_1(\Sigma)$  are contained in $\cT(\lambda_0)$ for a number $\lambda_0$ depending on the complex $K_0$. 
We assume, as we may, that 
\[
\lambda_0 \ge \lambda_\loc.
\]
Cases $k=0,1$ can be easily deduced as special cases; the control of distortion begins with $k=2$.

\begin{convention}
From here on, we fix an integer $k\geq 2$ and a flat metric on the cubical complex  $\Real_k(\Sigma)$  in which each $n$-cube has side length $1$. 
\end{convention}

Under this convention, $\cT_k(\Sigma)\subset \cD_0(\lambda_0)$.
Each $E\in \cdiff_\ell(\Sigma)$ has a representation $E = A_E- D_E$, where  $A_E= \Refine^{\nu (k-\ell)}(T_E) \in \cA_{k-\ell}(\lambda_0)$, $T_E\in \cT(\lambda_0)$ is a tunnel whose cubes $Q$ have side length $\SL_{\Real_k(\Sigma)}(Q)=3^{\nu(k-\ell)}$, and  $D_E \subset A_E$ is a bent indentation. Hence  
\[
\cdiff_\ell(\Sigma)\subset \cD_{k-\ell}(\lambda_0).
\]

Each $E\in \mathcal{Real}_k(\Sigma)\setminus \{ \widetilde \Rec_*(\Sigma)\}$ has a unique roof complex determined by the hierarchy defined in Notation \ref{notation:alpha}. Indeed, let $E\in \cdiff_\ell(\Sigma)$ and  $\alpha(E)$ be the unique $n$-cube in $\Real_k(\Sigma)$ for which $\ell(\alpha(E))=\ell -1$. Then $\alpha(E)\cap E = \alpha(E)\cap A_E = \Refine^{\nu(k-\ell)}(q_E)$ for an $(n-1)$-cube $q_E$  in $\partial T_E$. Let $Q_E$ be the unique $n$-cube in $\alpha(E)$ having $q_E$ as a face. Then $\Omega(E) = \Refine^{\nu (k-\ell)}(Q_E)$ is the roof complex of $E$ satisfying $\omega(E;\Omega(E))=\Refine^{\nu(k-\ell)}(q_E)$. 
For simplicity, we denote $\omega(E) = \omega(E;\Omega(E))$, $\omega^+(E) = \omega^+(E;\Omega(E))$, and $\sigma(E) = \sigma(E;\Omega(E))$ the roof face, roof top, and roof cylinder of $E$, respectively. 

Note that  $\Omega(E)$ is an inner roof cube of $\alpha(E)$ and, by the construction,  the inner roof cubes of $\alpha(E)$ are mutually disjoint. 
We set 
\[
E^* = (E \cup \Omega(E)) - \bigcup_{\alpha(E')=E} \Omega(E')
\]
for each  $E\in \mathcal{Real}_k(\Sigma)\setminus \{\widetilde \Rec_*(\Sigma)\}$.
Complexes $E^*$ yield a roof-adjusted essential partition 
\[
\mathcal{Real}^*_k(\Sigma) = \{ (\widetilde \Rec_*(\Sigma))^* \} \cup \{ E^* \colon E\in \mathcal{Real}_k(\Sigma)\setminus \{\widetilde \Rec_*(\Sigma)\} \}
\]
of $\Real_k(\Sigma)$.

Finally, we define \emph{roof-adjustment of $\Upsilon_k(\Sigma)$} by 
\[
\Upsilon_k^*(\Sigma) = \left( \bigcup_{E^*\in \mathcal{Real}^*_k(\Sigma)} \partial E^* \right) - \Refine^{\nu k}(\Sigma).
\]
Note that
\[ 
Y^*_k(\Sigma) = \Upsilon_k(\Sigma) \cup \bigcup_{E\in \mathcal{Real}_k(\Sigma)} (\sigma(E) \cup \omega^+(E) ).
\]

We are now ready to construct a BLD extension $|\Real_k(\Sigma)|\to \bS^{n-1}\times \R$ of a given quasiregular controlled expansion $|\Upsilon_k(\Sigma)|\to \bS^{n-1}$. We recall first the statement of the theorem. 

\Realextensionproblem*

\begin{proof}
Let $\widetilde f_{k,\Sigma} \colon |\Upsilon_k(Z)|\to \bS^{n-1}\times \{0\}$ be an $\sL$-BLD-controlled expansion of a $(\Upsilon_k(\Sigma))^\Delta$-Alexander map. To simplify the discussion, we may assume  that 
\[
\sL\geq \sL^\Diamond.
\]
The extension of $\widetilde f_{k,\Sigma}$ is constructed in two steps: first over tunnels and differences and then, separately, over $(\widetilde \Rec_*(\Sigma))^*$. The extension over tunnels and differences is constructed recursively following the reverse ordering of the level $\ell(E)$. 

\medskip
\noindent \emph{Step 1.1: Extension over tunnels.}
Let $T\in \mathcal{Real}_k(\Sigma)$ be a tunnel, that is, $\ell(T)=k$, and set $\widetilde f_T = \widetilde f_{k,\Sigma}|_{|\partial T \cap \Upsilon_k(\Sigma)|}$. Mapping $\widetilde f_T$ has a natural extension over the $(n-1)$-cube $\omega(T)$ as a $\omega(T)^\Delta$-Alexander map; this extension is denoted again as $\widetilde f_T$. 
We extend $\widetilde f_T$ as a product over $\Omega(T)$ , thus $\widetilde f_T|_{|\omega^+(T)|} \colon |\omega^+(T)|\to \bS^{n-1} \times \{1\}$ is an $\omega^+(T)^\Delta$-Alexander map. Since $\sL\geq \sL^\Diamond $, the  restriction ${\widetilde f}^*_T= \widetilde f_T|_{\partial T^*}$ of the extended $\widetilde f_T$  is an  $\sL$-BLD roof-admissible map on $\partial T^*$.

Proposition \ref{prop:extension-over-tunnels}  yields an $\sL''$-BLD, $\sL''=\sL''(n,\nu,\lambda_0;\sL)$, extension
\[
F_T \colon |T^*|\to \bS^{n-1}\times [0,1]
\]
of ${\widetilde f}^*_T$, modulo $\omega^+(T)$, for which 
$F_T|_{|\omega^+(T)|}\colon  |\omega^+(T)| \to  \bS^{n-1} \times \{1\}$ is an $\sL$-BLD-controlled expansion of an $\omega^+(T)^\Delta$-Alexander map.

Since roof-adjusted tunnels $T^*$ in $\mathcal{Real}^*_k(\Sigma)$ are mutually disjoint,  extensions $F_T$ over  $T^*$'s may be constructed simultaneously and independently for all $T$. Assume  that $\widetilde f_{k,\Sigma}$ has been extended over all $T^*$, modulo roof tops, and denote this extension by $F_k$.

\medskip
\noindent \emph{Step 1.2: Extension over differences.}Let
\[
V_\ell =  \cdiff_\ell (\Sigma) \,\, \cup \cdots \cup \,\, \cdiff_k(\Sigma), \quad \ell=1,\ldots, k,
\]
where $\cdiff_k(\Sigma)=\cT_k(\Sigma)$.

Assume that $\ell \in \{1,\ldots, k-1\}$ and that mapping $\widetilde f_{k,\Sigma}$ has been extended to an $\sL''$-BLD map
\[ 
F_{\ell+1} \colon V_{\ell+1}  \to \bS^{n-1}\times [0,3^{\nu(k-\ell-1)}],
\]
such that, for each $E'\in \cdiff_{\ell+1}(\Sigma)$, the restriction $F_{\ell+1}|_{|\partial (E')^*|} $ is an $\sL$-BLD roof-admissible map.

Let $E\in \cdiff_\ell(\Sigma)$    
%00000 $E\in \mathcal{Real}_k(\Sigma)$ be a difference of level $\ell$ 
and   $\widetilde f_E = \widetilde f_{k,\Sigma}|_{|\Upsilon_k(\Sigma) \cap E|}$. 
We first extend $\widetilde f_E|_{|\partial E\cap \partial E^*|}$ to a roof-admissible map ${\widetilde f}^*_E$ on $\partial E^*$ that is compatible with the already defined  $F_{\ell+1}$ as follows. For each $E'$ satisfying $\alpha(E') = E$, we extend $\widetilde f_E|_{|\partial E\cap \partial E^*|}$ over $|\partial E^*\cap (E')^*|$ as $\widetilde F_{\ell+1}|_{|\partial E^*\cap (E')^*|}$, and  extend $\widetilde f_E|_{|\partial E\cap \partial E^*|}$ over $|\partial E^*\cap \Omega(E)|$ as in the case of tunnels. The extension ${\widetilde f}^*_E$ is $\sL$-BLD roof-admissible on $\partial E^*$. Following  the proof of  Proposition \ref{prop:extension-over-differences}, we may construct an $\sL'''$-BLD ($\sL'''=\sL'''(n,\nu, \lambda_0;\sL)$)  extension 
\[F_{\ell,E}\colon |E^*|\to \bS^{n-1}\times [3^{\nu(k-\ell-1)},3^{\nu(k-\ell)}]\] 
of ${\widetilde f}^*_E$ modulo $\omega^+(E)$, 
 for which the restriction $F_{\ell,E}|_{\omega^+(E)}$ is an $\sL$-BLD expansion of an $\omega^+(E)^\Delta$-Alexander map.

Since the extension may be constructed simultaneously for  all differences of level $\ell$, we obtain an $\sL'''$-BLD extension $\widetilde F_\ell \colon V_\ell \to \bS^{n-1}\times [0, 3^{\nu(k-\ell)}]$ from the induction assumption.
  
Applying this process recursively for $\ell=k-1,\ldots, 1$, we obtain  an $\sL'''$-BLD  extension 
 \[
F_1\colon \Real_k(\Sigma) - (\widetilde \Rec_*(\Sigma))^* \to \bS^{n-1}\times [0, 3^{\nu(k-1)}]
 \]
of $\widetilde f_{k,\Sigma}$ apart from the space of the receded complex $(\widetilde \Rec_*(\Sigma))^*$.

\medskip

\noindent \emph{Step 2: Extension over $(\widetilde \Rec_*(\Sigma))^*$.} 
In the final part of extension, recall that  complex $K$ and the initial separating complex $Z_0=\mathcal Z$  are the ones constructed in   Theorem \ref{thm:separating-complex-existence}  and  fixed in Remark \ref{rmk:separating-complex-special}. Recall also  from  Remark \ref{rmk:preference-cube-special} about properties of the preference function and from Remark \ref{rmk:separating-complex-special-opening} about the choices of the openings in the channeling construction of $Z_1$ from $Z_0$.
For simplicity, we denote
\[
R^*=(\widetilde \Rec_*(\Sigma))^*.
\]
The construction of $\mathcal Z$ in  Theorem \ref{thm:separating-complex-existence}  reveals that $R^*$ has two possible forms.

 In the first case, $\Sigma$ is the distinguished boundary component $\Sigma'$ in the construction. In this case, the receded complex has the structure
\[
R^* = \Refine^{\nu k}( C_\Sigma \cup T_\Sigma)-D_\Sigma = \Refine^{\nu k}(C_\Sigma) \cup (\Refine^{\nu k}(T_\Sigma) - D_\Sigma),
\]
where $C_\Sigma$ is the collar of $\Sigma$ in $K$ isomorphic to $\partial \Sigma \times [0,3]$, $T_\Sigma\subset K$ is a tunnel whose length is  bounded by the number of $n$-cubes in $K$, and $D_\Sigma\subset \Refine^{\nu k}(T_\Sigma)$ is a bent indentation. We take $E=\Refine^{\nu k}(T_\Sigma)-D_\Sigma$ and treat $E$ as a difference. Let $\Omega_\Sigma = \Omega(E) \subset \Refine^{\nu k}(C_\Sigma)$ be the roof complex for $E$, and set $E^* = E\cup \Omega(E)$. We  fix an extension of $\widetilde f_{k,\Sigma}$ over $|E^*|$ as in Step 1.2, and continue to denote the extended map by $F_1$. It remains to extend  $\widetilde f_{k,\Sigma}$ over $\Refine^{\nu k}(C_\Sigma) - \Omega_\Sigma$.

In the second case, $\Sigma\neq \Sigma'$, the receded complex 
\[
R^* = \Refine^{\nu k}(C_\Sigma) - \Omega_\Sigma,
\]
where  $C_\Sigma$ is the collar of $\Sigma$ in $K$ isometric to $\Sigma \times [0,3]$, and $\Omega_\Sigma$ is the roof complex of the unique difference $E$ in $\mathcal{Real}^*_k(\Sigma)$, which meets $R^*$. Since  $\widetilde f_{k,\Sigma}$ has already been extended to the map $F_1$ over $|E^*|$, thus also already over $|\Omega_\Sigma|$.  It remains to extend $\widetilde f_{k,\Sigma}$  over $\Refine^{\nu k}(C_\Sigma) - \Omega_\Sigma$.

Given a boundary component $\Sigma$, take $j=0$ in the first case and $j=1$ in  the second case, respectively. 
Let $Q_\Sigma$ be the cube in $\Refine^{\nu j}(C_\Sigma)$ for which $|\Omega_\Sigma|=|Q_\Sigma|$, thus
\[
R^* = \Refine^{\nu k}(C_\Sigma) - \Refine^{\nu (k-j)}(Q_\Sigma).
\]
We now  extend  $\widetilde f_{k,\Sigma}^*$ over every complex $\Refine^{\nu (k-j)}(Q)$, where $Q \in \Refine^{\nu j}(C_\Sigma)\setminus \{Q_\Sigma\}$ is an $n$-cube having a face $\omega_Q$ in $\Refine^{\nu j}(\Sigma_-)$ and $\Sigma_- = \partial C_\Sigma - \Sigma$. 
The extension of $\widetilde f_{k,\Sigma}^*$ over each $\Refine^{\nu (k-j)}(Q)$ is the natural product extension $\Refine^{\nu j}(Q) \to \bS^{n-1}\times [0,3^{\nu j}]$, $(x,t) \mapsto (\widetilde f_{k,\Sigma}(x), t)$, given by the identification $\omega_Q \times [0,3^{\nu j}] = \Refine^{\nu j}(Q)$ with $\omega_Q \times \{0\} = \omega_Q$.

This construction  yields an $\sL''''$-BLD mapping 
\[
F_{P_\Sigma} \colon |P_\Sigma|\to \bS^{n-1}\times \R
\]
on the collar $P_\Sigma = \bigcup_{Q} \Refine^{\nu(k-j)}(Q)$ of $\Refine^{\nu k}(\Sigma_-)$ in $\Refine^{\nu k}(C_\Sigma)$ for a number $\sL''''$ depending only on  $n$ and $\sL'''$, for which the restriction on $\Sigma_+ = \partial P_\Sigma - \Refine^{\nu k}(\Sigma_-)$, $F_{P_\Sigma}|_{\Sigma_+} \colon |\Sigma_+|\to \bS^{n-1}\times \{ 3^{\nu j}\}$, is an $\sL$-BLD-controlled expansion of an $(\Sigma_+)^\Delta$-Alexander map.

Since $\Refine^{\nu k}(C_\Sigma)-P_\Sigma$ is a product, to finalize the proof,  we extend  $F_{P_\Sigma}|_{\Sigma_+}$ over $\Refine^{\nu k}(C_\Sigma)-P_\Sigma$ by taking the product extension. The extended map $F_{P_\Sigma}$ is  $\sL'''''$-BLD for a number $\sL'''''$ depending only on $n$ and $\sL''''$, and its restriction to the boundary $\Sigma$ is an $\sL$-BLD-controlled expansion of a $\Refine^{\nu k}(\Sigma)^\Delta$-Alexander map.

Together mappings $F_1$ and $F_{P_\Sigma}$ yield the $\sL'$-BLD extension 
\[\widetilde F_{k,\Sigma}\colon |\Real_k(\Sigma)|\to \bS^{n-1}\times [0,3^{\nu k}]\] of $\widetilde f_{k,\Sigma}$, with $\sL'=\sL'(n, K; \sL)$,  claimed in the theorem.
\end{proof}

\part{Weaving}
\label{part:Alexander-Rickman}

\chapter{Weaving theorem}

In this part, we provide the last step in the proof of the Quasiregular cobordism theorem, i.e.,  combining  quasiregular maps $\widetilde F_{k,\Sigma} \colon |\Real_k(\Sigma)|\to \bS^{n-1}\times [0,3^{\nu k}]$ constructed in  the Quasiregular extension theorem (Theorem \ref{thm:Real-extension-problem})
into a map $|K| \to \bS^n\setminus \interior (B_1\cup \cdots \cup B_p)$. 

Essential  to this combination is a method of weaving. 
This method stems from Rickman's sheet construction  \cite[Section 7]{Rickman_Acta} and also, in another version, from  \cite[Section 7]{Drasin-Pankka}.

\section{Statement of Weaving theorem}
For the statement of the Weaving theorem, we fix some terminologies related to partitions of the domain and the target of the mappings.

\subsection{Partition of the domain}

Let $K$ be a good cubical complex with $m$ boundary components, and  $\cC(K)$ be the collection of the boundary components of $K$. 
Let $\nu \in \N$ be the refinement scale associated to $K$ fixed in Definition \ref{def:nu}.

For a closed set $X\subset \interior |K|$, let  $\cU_K(X)$ be the family of connected components of $|K|\setminus X$, and $\overline{\cU}_K(X)$ be the family of closures (in $|K|$) of the elements in $\cU_K(X)$.
Since $X$ is contained in the interior of $|K|$, functions 
\[\beta_X \colon \cC(K)\to \cU_K(X)\quad \text{and} \quad \overline{\beta}_X \colon \cC(K) \to \overline{\cU}_K(X),\]
 given by formulas $|\Sigma|\subset \beta_X(\Sigma) \subset \overline{\beta_X}(\Sigma)$ for $\Sigma \in \cC(K)$, are well-defined. \index{$\beta_X(\Sigma)$}

Suppose that  $Z$ is a separating complex of $\Refine^{\nu k}(K)$. Then functions $\beta_{|Z|}$ and $\overline{\beta}_{|Z|}$ are bijections.
Retaining the notations  in Chapter \ref{chap:Separating-complexes}, we get
\[
\beta_{|Z|}(\Sigma) = |\Comp_{\Refine^{\nu k}(K)}(Z;\Sigma)| \setminus |Z| \quad\text{and}\quad
\overline{\beta}_{|Z|}(\Sigma) = |\Comp_{\Refine^{\nu k}(K)}(Z;\Sigma)|.
\]
We denote also for $\Sigma\in \cC(K)$, $\pi_\Sigma \colon \Real_{\Refine^{\nu k}(K)}(Z;\Sigma) \to \Comp_{\Refine^{\nu k}(K)}(Z;\Sigma)$ the canonical projection, and  \index{$\Upsilon(Z;\Sigma)$} 
\[
 \Upsilon_{\Refine^{\nu k}(K)}(Z;\Sigma) = \pi_\Sigma^{-1}(Z \cap \Comp_{\Refine^{\nu k}(K)}(Z;\Sigma)),
\]
the inner boundary component of $ \Real_{\Refine^{\nu k}(K)}(Z;\Sigma)$.

\begin{definition}
\label{def:weaved-approximation}
\index{weaved approximation}
A closed set $X\subset \interior |K|$ is a \emph{weaved approximation of a separating complex $Z \subset \Refine^{\nu k}(K)$} provided that
\begin{enumerate}
\item  $\beta_X \colon \cC(K)\to \cU_K(X)$ is a bijection, and
\item for each $\Sigma\in \cC(K)$, there exists a homeomorphism 
\[\psi_\Sigma \colon |\Real_{\Refine^{\nu k}(K)}(Z;\Sigma)|\setminus |\Upsilon_{\Refine^{\nu k}(K)}(Z;\Sigma)| \to \beta_X(\Sigma),\]
 which is bilipschitz with respect to the inner metrics of the spaces.
\end{enumerate}
\end{definition}

Observe that  $\overline{\beta}_X(\Sigma)  = \beta_X(\Sigma)  \cup (X \cap \overline{\beta}_X(\Sigma))$. Hence  a homeomorphism $\psi_\Sigma \colon |\Real_{\Refine^{\nu k}(K)}(Z;\Sigma)|\setminus |\Upsilon_{\Refine^{\nu k}(K)}(Z;\Sigma)| \to \beta_X(\Sigma)$, which is bilipschitz with respect to the inner metrics, extends continuously as a Lipschitz map 
\[\overline{\psi}_\Sigma \colon |\Real_{\Refine^{\nu k}(K)}(Z;\Sigma)|\to \overline{\beta}_X(\Sigma).\] 
We call $\overline{\psi}_\Sigma$ a \emph{weaving map}. Note that $\overline{\psi}_\Sigma$ need not be injective.

Although not formally defined, we also may view $\overline{\beta}_X(\Sigma)$ as a weaved approximation of $\overline{\beta}_{|Z|}(\Sigma) = |\Comp_{\Refine^{\nu k}(K)}(Z;\Sigma)|$, and $\overline{\psi}_\Sigma$ as a weaved approximation of $\pi_\Sigma$.

\subsection{Partition of the target}

We discuss now a partition of the target of the map, that is, $\bS^n$. 

\begin{definition}
Let $S\subset \bS^n$ be an $(n-1)$-sphere. We say 
$\cS$ is  a \emph{$\Sigma^2(\Delta^\square_{n-1})$-structure of $S$} if there exists a homeomorphism $h \colon \bS^{n-1}\to S$ for which $\cS = h_*(\Sigma^2(\Delta^\square_{n-1}))$. 
We all the pair  $(S,\cS)$ a \emph{structured $(n-1)$-sphere}.
\end{definition}

Two structured $(n-1)$-spheres $(S,\cS)$ and $(S',\cS')$ in $\bS^n$ are said to be \emph{compatible} if either $S\cap S'$ is an $(n-1)$-cell or an $(n-2)$-sphere, and $\cS|_{S\cap S'} = \cS'_{S\cap S'}$.

\begin{definition}
\index{pillow partition}   
\label{def:pillow-partition}
An essential partition $\{D_1,\ldots, D_p\}$ of $\bS^n$ into bilipschitz $n$-cells is a \emph{pillow partition of $\bS^n$}, if spheres $\partial D_i$ have pairwise compatible $\Sigma^2(\Delta^\square_{n-1})$-structures.
\end{definition}

The Weaving theorem asserts the existence of a weaved approximation and weaving maps.

\begin{restatable}[Weaving theorem]{theorem}{Quasiregularcombinationtheorem}\index{Weaving theorem}
\label{thm:combination-I}
Let $K$ be a good cubical $n$-complex with $m$ boundary components, where $n\geq 3$ and  $m\ge 2$. Let also $k\in \N$, $Z\subset \Refine^{\nu k}(K)$ a separating complex, obtained by a localized channeling transformation of a separating complex in $\Refine^{\nu (k-1)}(K)$. Then, given $p\in \{2,\ldots, m\}$ and a surjective function $c \colon \cC(K) \to \{1,\ldots, p\}$ on the collection $\cC(K)$ of boundary components of $K$, there exist
\begin{enumerate}
\item a weaved approximation $X\subset |K|$ of $Z$, 
\item a pillow partition $\{D_1,\ldots, D_p\}$ of $\bS^n$,
\item a map $\varphi \colon X\to \bigcup_{j=1}^p \partial D_j$, and
\item for each $\Sigma\in \cC(K)$, a weaving map 
\[
\overline{\psi}_\Sigma \colon |\Real_{\Refine^{\nu k}(K)}(Z;\Sigma)| \to \overline{\beta}_X(\Sigma),
\]
whose restriction 
\[
\varphi \circ \overline{\psi}_\Sigma|_{|\Upsilon(Z;\Sigma)|} \colon |\Upsilon(Z;\Sigma)|\to \partial D_{c(\Sigma)}
\]
on $\Upsilon(Z;\Sigma)$, the inner boundary component of $\Real_{\Refine^{\nu k}(K)}(Z;\Sigma)$, is a BLD-controlled expansion of an $\Upsilon(Z;\Sigma)^\Delta$-Alexander map. 
\end{enumerate}
The statement in quantitative in the sense that all constants depend only on $n$ and $K$.
\end{restatable}

The proof of this theorem spans the rest of this part. In Chapter \ref{chap:pillows} we introduce notions pillows and pillow maps, which serve as preliminaries for  $X$ and $\varphi$. The weaved approximation $X$ and the map $\varphi$, called Alexander--Rickman map, are discussed in Chapter \ref{chap:Alexander--Rickman}.

\section{Proof of the Quasiregular cobordism theorem}

We first prove a version of Theorem \ref{intro-thm:qrcobordism} for cubical complexes.

\begin{theorem}[Cubical quasiregular cobordism theorem] \index{Cubical quasiregular cobordism theorem}
\label{intro-thm:qrcobordismII}  
Let $n\geq 2$,  $m\ge p\geq 2 $, $K$ be a good cubical $n$-complex with $m$ boundary components, and $\cC(K)$  the collection of boundary components of $K$.  Let $B_1,\ldots, B_p$ be pairwise disjoint closed Euclidean balls in $\bS^n$, and
$c \colon \cC(K) \to \{1,\ldots, p\}$ a surjection. Then there are constants $\sK=\sK(n, K, B_1,\ldots, B_p)$ and $\sL=\sL(n,K,B_1,\ldots,B_p)$ for the following.

For each $k\geq 2$, 
there exists a $\sK$-quasiregular map 
\[
f \colon |K|\to \bS^n\setminus \interior (B_1 \cup \cdots \cup B_p)
\]
for which  $f(|\Sigma|) = \partial B_{c(\Sigma)}$ for $\Sigma\in \cC(K)$.
Moreover, there exists a $\Sigma^2(\Delta^\square_{n-1})$-structure on each $\partial B_{c(\Sigma)}$ for which the restriction $f|_{|\Sigma|}\colon |\Sigma|\to \partial B_{c(\Sigma)}$ is an $\sL$-BLD-controlled expansion of an $(\Refine^{\nu k}(\Sigma))^\Delta$-Alexander map. 
\end{theorem}

\begin{proof}Fix an integer $k\geq 2$, and also a pillow partition $\{D_1,\ldots, D_p \}$ of $\bS^n$ for which $D_j$'s are pairwise isometric and $\bigcup_{j=1}^p \partial D_j$ is a branched sphere. Let $Z$ be a separating complex of $\Refine^{\nu k}(K)$.

Let now  $X\subset \interior |K|$ be the weaved approximation,   $\varphi \colon X\to \bigcup^p_{i=1} \partial D_j$  the map, and $\overline{\psi}_\Sigma \colon |\Real_{\Refine^{\nu k}}(Z;\Sigma)|\to \overline{\beta}_X(\Sigma), \Sigma\in \cC(K)$ the weaving maps,  in the statement of Theorem \ref{thm:combination-I}.

For each $D_j$, we fix a point $x_j\in \interior D_j$, a (common) radius $r_0=r_0(k)>0$, and a quasiconformal map $\theta_j \colon D_j \setminus \{x_j\} \to \bS^{n-1}\times [0,\infty)$ for which $\theta_j(\partial D_j) = \bS^{n-1}\times \{0\}$, $\theta_j^{-1}(\bS^{n-1} \times [3^{\nu k},\infty)) = B^n(x_j,r_0)\subset \interior D_j$ is a ball, and the restriction $\theta_j^{-1}|_{\bS^{n-1}\times [3^{\nu k},\infty)}$ is a conformal map $(x,t) \mapsto x_j + r_0 3^{-t}x$. We may choose $r_0(k)$ and points $x_j$  so that mappings $\theta_j$ are quasiconformal with a constant depending only on $n$ and $p$. Fix now a quasiconformal mapping 
\[g \colon \bS^n \, \setminus \interior (\bigcup_{j=1}^p B(x_j, r_0)) \to \bS^n \setminus \interior (B_1\cup \cdots \cup B_p),\]
 which maps  $\partial B(x_j,r_0)$ to $\partial B_j$ with distortion depending only on  $n, B_1, \ldots, B_p$.

For each $\Sigma \in \cC(K)$, mapping  
\[
f_\Sigma = \theta_\Sigma \circ \varphi \circ \overline{\psi}_\Sigma|_{\Upsilon_{\Refine^{\nu k}(K)}(Z;\Sigma)} \colon \Upsilon_{\Refine^{\nu k}(K)}(Z;\Sigma) \to \bS^{n-1}\times \{0\}
\]
is a BLD-controlled expansion of an Alexander map. By the Quasiregular extension theorem (Theorem \ref{thm:Real-extension-problem}), $f_\Sigma$ admits a BLD extension 
\[F_\Sigma \colon |\Real_{\Refine^{\nu k}(K)}(Z;\Sigma)|\to \bS^{n-1}\times [0,3^{\nu k}]\]
 for which $F_\Sigma|_{|\Sigma|} \colon |\Sigma|\to \bS^{n-1}\times \{3^{\nu k}\}$ is a BLD-controlled expansion of a $\Refine^{\nu k}(\Sigma)^\Delta$-Alexander map.

We define now $\widetilde F_\Sigma \colon \overline{\beta}_X(\Sigma) \to D_{c(\Sigma)}\setminus \interior B^n(x_j,r_0)$ to be the unique map satisfying
\[
\widetilde F_\Sigma \circ \overline \psi_\Sigma = {\theta_{c(\Sigma)}}^{-1} \circ F_\Sigma.
\]
Thus $\widetilde F_\Sigma$ is a quasiregular extension of $\varphi$ into $\beta_X(\Sigma)$. In other words, $\widetilde F_\Sigma|_{\beta_X(\Sigma)} \colon \beta_K(X)\to \interior (D_{c(\Sigma)}\setminus B^n(x_j,r_0))$ is $\sK$-quasiregular with  $\sK$ depending only on $n$ and $K$, and  the restriction $\widetilde F_\Sigma|_{\overline{\beta}_X(\Sigma) \cap X} = \varphi_{\overline{\beta}_X(\Sigma) \cap X}$.

Finally we define 
\[
f\colon |K|\to \bS^n\setminus (B_1\cup \cdots \cup B_p)
\]
by the formula
\[
f|_{\overline{\beta}_X(\Sigma)} = g\circ \widetilde F_\Sigma
\]
for $\Sigma \in \cC(K)$. Since $\{\bar \beta_X(\Sigma)\colon \Sigma\in \cC(K)\}$ is an essential partition of $|K|$,  mapping $f$ is quasiregular on $|K|$ with a constant depending only on the $n, K$, and $B_1,\ldots, B_p$. Furthermore, for each $\Sigma \in \cC(K)$, $f|_{|\Sigma|}\colon |\Sigma|\to \partial B_{c(\Sigma)}$ is a BLD-controlled expansion of a $\Refine^{\nu k}(\Sigma)^\Delta$-Alexander map 
\end{proof}

As the reader may have observed, the Quasiregular cobordism theorem (Theorem \ref{intro-thm:qrcobordism}) is an immediate consequence of Theorem \ref{intro-thm:qrcobordismII}. \index{Quasiregular cobordism theorem}

\introthmqrcobordism*

\begin{proof} 
Let $g$ be the Riemannian metric on $M$. By Proposition \ref{prop:Riemannian-to-cubical}, $M$ admits a cubical structure $K$ and a flat metric $d_K$ for which $(|K|, d_K)$ is quasi-similar to $(M, g)$.  Let $d_0\in \N$ and let $c\colon \cC(K)\to \{1,\ldots, p\}$ be a subjection.

We fix an evolution sequence $(Z_k)$ of separating complexes as in Section \ref{sec:evolution}, and an index $k\in \N$ for which $\# (\Refine^{\nu k}(\Sigma)^{[n-1]} \ge d_0$ for each $\Sigma\in \cC(K)$. 

By Theorem \ref{intro-thm:qrcobordismII}, there exist a constant $\sK= \sK(n,K, N)=\sK(n,M,N)$,
and a $\sK$-quasiregular map $f\colon |K|\to \bS^n\setminus (B_1\cup \cdots \cup B_p)$ for which $f|_{|\Sigma|} \colon |\Sigma|\to \partial B_{c(\Sigma)}$ is an Alexander expanded by simple covers. Since $\#\Refine^{\nu k}(\Sigma)^{[n-1]} \ge d_0$,  mappings $f|_{|\Sigma|}$ have degree at least $d_0$. Thus $f$ also has degree at least $d_0$. This completes the proof.
\end{proof}

\chapter{Pillow expansion of complexes}
\label{chap:pillows}

To discuss the proof of Weaving theorem \ref{thm:combination-I}, we define  branched spheres, pillow cells, pillow spheres, and pillow maps. After that we discuss pillow expanded complexes and  pillow maps.

\medskip

\noindent{\bf{Notation.}} In this chapter, $\sigma$ is an $(n-1)$-simplex isometric to an $(n-1)$-simplex in a barycentric triangulation of a cubical $n$-complex. 

\section{Branched spheres}\label{sec:branched-sphere}

Let 
\[
\Sigma^\infty(\sigma) = (\sigma \times \Z_+)\Big/{\sim}
\]
be the quotient space, where $\sim$ is the minimal equivalence relation for which $(x,j) \sim (x,0)$ for $x\in \partial \sigma$ and $j\in \Z_+$. We call $\Sigma^\infty(\Sigma)$ the \emph{infinite branched $(n-1)$-sphere}. Let also 
\[\Pi \colon (\sigma \times \Z_+) \to \Sigma^\infty(\sigma)\]
 be the quotient map $(x,j) \mapsto [(x,j)]$ and let $\widehat \Pi \colon \Sigma^\infty(\sigma) \to \sigma$ be the projection $[(x,j)] \mapsto x$.

For each $\rho\geq 2$, we call the subspace
\[
\Sigma^\rho(\sigma) = \Pi(\sigma \times \{1,\ldots, \rho\}) \subset \Sigma^\infty(\sigma)
\]
the \emph{$\rho$-branched $(n-1)$-sphere} and $\rho$ the \emph{rank of $\Sigma^\rho(\sigma)$}.\index{branched sphere}
Note that $\Sigma^1(\sigma) \approx \sigma$ and $\Sigma^2(\sigma) \approx \bS^2$.  For $\rho \geq 3$, the space $\Sigma^\rho(\sigma)$ is not a manifold. In what follows, we identify $\Sigma^1(\sigma) = \sigma$. 

As a space, $\Sigma^2(\sigma)$ is isometric with $\Sigma^2(\Delta^\square_{n-1})$. As a complex, $\Sigma^\rho(\sigma)$ carries a non-simplicial  structure with $(n-1)$-simplices $\Pi(\sigma\times \{j\})$ for $j\in \{1,\ldots, \rho\}$. This structure on $\Sigma^2(\sigma)$ is isomorphic to  $\Sigma^2(\Delta^\square_{n-1})$.

For each $i\in \Z_+$, we denote
\[
\Sigma^\infty(\sigma)_i = \Pi(\sigma\times \{i,i+1\})
\]
and note that we may identify $\Sigma^\infty(\sigma)_i$ with $\Sigma^2(\sigma)$. Indeed, for each $i\in \Z_+$, we have an embedding 
\[
\iota_i \colon \Sigma^2(\sigma) \to \Sigma^\rho(\sigma)
\]
for which the diagram
\[
\xymatrix{
\sigma \times \{1,2\} \ar[rr]^{(x,j) \mapsto (x,i+j-1)} \ar[d]_\Pi & & \sigma\times \{i,i+1\} \ar[d]^\Pi \\
\Sigma^2(\sigma) \ar[rr]^{\iota_i} & & \Sigma^\infty(\sigma)
}
\]
commutes.

We call $(n-1)$-cells $\Pi(\sigma\times \{i\})$ \emph{sheets of $\Sigma^\infty(\sigma)$}. Similarly, we call  $\Pi(\partial \sigma \times \{i\}) = \Pi(\partial \sigma \times \{1\})$ the \emph{rim of $\Sigma^\infty(\sigma)$}. Note that the rim carries a well-defined structure of the boundary of an $(n-1)$-simplex.

\section{Pillows and pillow maps}

\subsection{Pillow cells and pillow spheres}
\label{sec:pillow}

A  pillow $P(\sigma)$ is merely a metric $n$-cell whose boundary has the structure of the sphere $\Sigma^2(\sigma)$. To give a pillow well-defined a metric structure, we give it a triangulation as follows.

Suppose  $\sigma$ is an $(n-1)$-simplex in a barycentric triangulation, let $\tau=[v_0,\ldots, v_n]$ be an $n$-simplex in the said triangulation having $\sigma$ as its face. We denote by   
 \[
\mathrm{proj}_\sigma\colon \tau \to \sigma
\]
the affine projection which maps the vertex $v_n$ to the barycenter of $\sigma$.

We glue two copies of $\tau$ together along the part of boundary $\partial \tau$ outside  $\interior\;\sigma$ by setting \index{$P(\sigma)$}
\[
P(\sigma) = (\tau\times \{-1,1\}) \Big/{\sim},
\] 
where $\sim$ is the minimal equivalence relation for which $(x,-1)\sim (x,1)$ for $x\in (\partial \tau)\setminus \interior \sigma$; here the interior is taken in the relative topology of $\partial \tau$.  
The quotient $P(\sigma)$ carries a structure consisting of  two $n$-simplices $\tau_{-1}= \tau\times \{-1\}$ and $\tau_1=\tau \times \{1\}$ for which   $\tau_{-1}\cap \tau_1$ is the union of $n$ numbers of $(n-1)$-simplices. 

The space $P(\sigma)$ is an $n$-cell whose boundary $\partial P(\sigma)$ carries an $\Sigma^2(\Delta_{n-1}^\square)$-structure with $(n-1)$-simplices $\partial_+ P(\sigma) = \tau_{+1} \cap \partial P(\sigma)$ and $\partial_{-} P(\sigma) = \tau_{-1} \cap \partial P(\sigma)$.

To give $P(\sigma)$ a metric structure, we give both $\tau_{-1}$ and $\tau_1$ the metric structure induced by identification with $\tau$ and give $P(\sigma)$ the inner metric induced by these identifications. This makes $P(\sigma)$ a metric $n$-cell, bilipschitz equivalent to $\bar B^n$. This metric structure identifies $\partial P(\sigma)$ isometrically with $\Sigma^2(\sigma)$. We call the metric cell $P(\sigma)$ a \emph{pillow associated to $\sigma$}. 
\index{pillow} 
\index{$P(\sigma)$}

We glue now pillows along their boundaries to branched spheres. Recall that we have identified $\partial P(\sigma)$ with $\Sigma^2(\sigma)$. We define now 
\[
\iota^\infty \colon \partial P(\sigma)\times \Z_+ \to \Sigma^\infty(\sigma)
\]
to be the mapping which identifies $\partial P(\sigma)\times \{i\}$ with $\Sigma^\infty(\sigma)_i$ by formula $(x,i) \mapsto \iota_i(x)$.
The space
\[
P^\infty(\sigma) = \left( P(\sigma)\times \Z_+ \right)\Big/{\sim_{\iota^\infty}}
\]
where $\sim_{\iota^\infty}$ is the minimal equivalence relation induced by the map $\iota^\infty$, is called the \emph{infinite pillow space}. Let $\pi_{P^\infty(\sigma)} \colon P(\sigma)\times \Z_+ \to P^\infty(\sigma)$ be the canonical projection. We denote $P^\infty(\sigma)_i = \pi_{P^\infty(\sigma)}(P(\sigma)\times \{i\})$ for each $i\in \Z_+$. Note that $P^\infty(\sigma)_i$ is merely a copy of $P(\sigma)$ and we have a natural projection 
\[\Theta \colon P^\infty(\sigma) \to \sigma, [(x,i)] \mapsto \mathrm{proj}_\sigma(x).\]

For each $\rho\in \Z_+$, we set $\PC^\rho(\sigma) \subset P^\infty(\sigma)$ to be the subspace
\[
\PC^\rho(\sigma) = \pi_{P^\infty(\sigma)}(P(\sigma)\times \{1,\ldots, \rho\}).
\]
Topologically $\PC^\rho(\sigma)$ is an $n$-cell and $\PC^\rho(\sigma) = P^\infty(\sigma)_1 \cup \cdots \cup P^\infty(\sigma)_\rho$. We call $\PC^\rho(\sigma)$ a \emph{pillow cell of rank $\rho$}.

Pillow sphere $\PS^\rho(\sigma)$ is obtained from $\PC^{\rho+1}(\sigma)$ by identifying the pillows $P^\infty(\sigma)_1$ and $P^\infty(\sigma)_{\rho+1}$; see Figure \ref{fig:intro-bs}. 
\begin{figure}[htp]
\begin{overpic}[scale=0.12,unit=1mm]{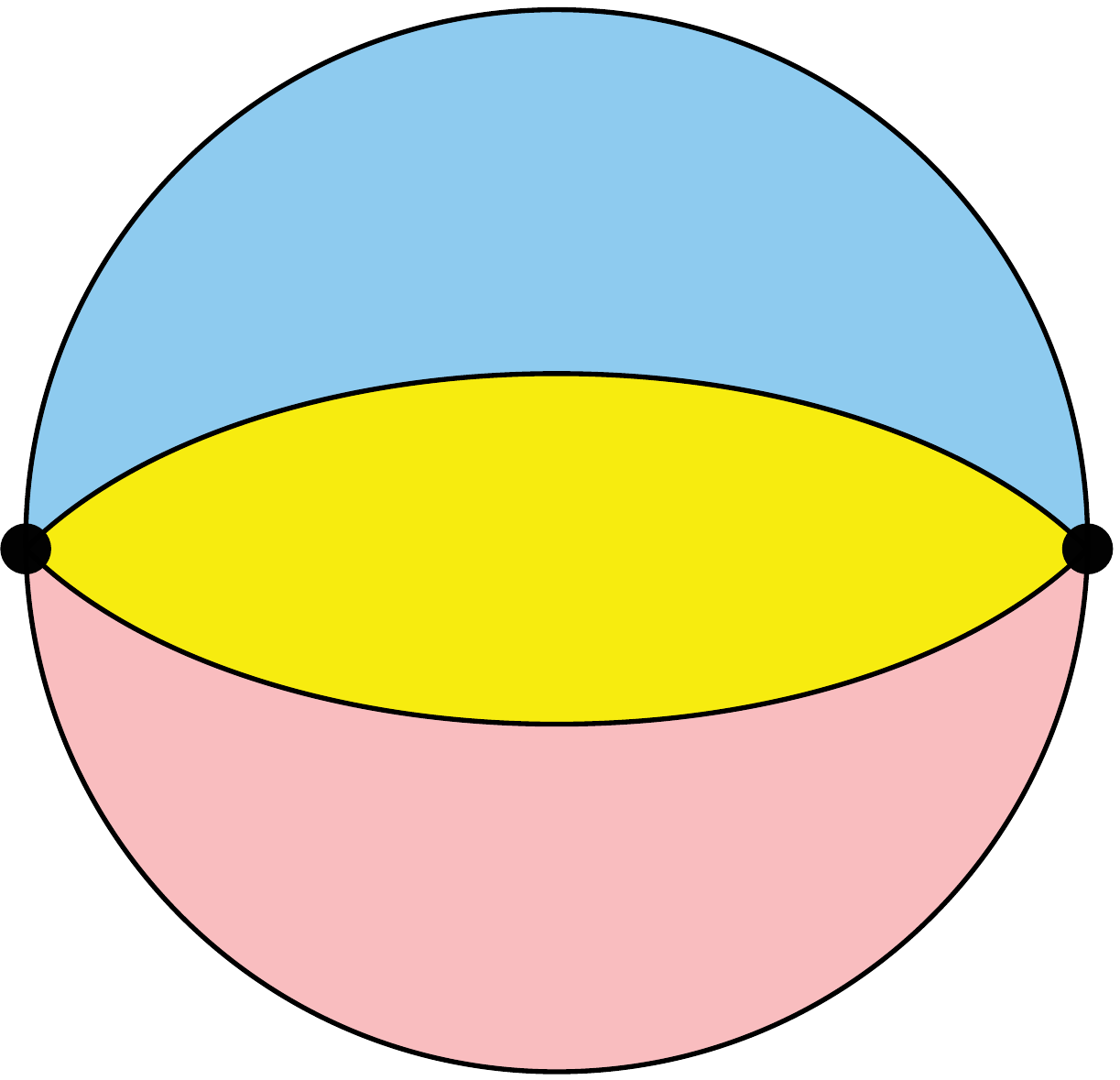} % add: grid
\put(22,21){\tiny$P_1$}
\put(11,3){\tiny$P_2$}
\put(11,11){\tiny$P_3$}
\put(11,19){\tiny$P_4$}
\end{overpic}
\caption{$\bS^2$ as a pillow-sphere of rank $4$, containing a codimension one embedded branched sphere of rank $4$.}
\label{fig:intro-bs}
\end{figure}
More precisely, let $\sim_{\PS}$ be the minimal equivalence relation for which $\pi_{P^\infty(\sigma)}(x,1) \sim_{\PS} \pi_{P^\infty(\sigma)}(x,\rho+1)$ for $x\in P(\sigma)$. We define  \index{pillow sphere}
\[
\PS^\rho(\sigma) = \PC^{\rho+1}(\sigma)\Big/{\sim_{\PS}}
\]
and let $\pi_{\PS} \colon \PC^{\rho+1}(\sigma) \to \PS^\rho(\sigma)$ be the canonical projection. 

We make some observations on $\pi_{\PS}$. Since, for each $i=1,\ldots, \rho$, the restriction $\pi_{\PS}|_{P^\infty(\sigma)_i} \colon P^\infty(\sigma)_i \to \PS^\rho(\sigma)$ is an embedding, we identify the image $\pi_{\PS}(P^\infty(\sigma)_i)$ with $P^\infty(\sigma)_i$ and call these $n$-cells \emph{ pillows of $\PS^\rho(\sigma)$}. Note also  that $\pi_{\PS} \colon \Sigma^\rho(\sigma) \to \PS^\rho(\sigma)$ is an embedding onto the set $\bigcup_{i=1}^\rho \partial P^\infty(\sigma)_i$. Following the convention for pillows, we again denote this image  by $\Sigma^\rho(\sigma)$.

We also note that the projection $\Theta \colon \PC^{\rho+1}(\sigma) \to \sigma$ factors through $\PS^\rho(\sigma)$ as $\Theta = \Theta^\rho \circ \pi_{\PS}$, where $\Theta^\rho \colon \PS^\rho(\sigma) \to \sigma$.

The role of the pillow sphere is to induce a structured pillow partition on $\bS^n$ for Theorem \ref{thm:combination-I}.  We record this as a remark.

\begin{remark}
\label{rmk:pillow-sphere-partition}
A pillow sphere $\PS^\rho(\sigma)$ induces a pillow partition of $\bS^n$ in the sense of Definition \ref{def:pillow-partition}. Indeed, we may fix a $\sL(n, \rho)$-bilipschitz homeomorphism $\varphi^\rho \colon \bS^n \to \PS^\rho(\sigma)$. Since $\partial \pi_{\PS}(P^\infty(\sigma)_i)$ has an $\Sigma^2(\Delta^\square_{n-1})$-structure, the collection $\{ D_1,\ldots, D_\rho\}$, where $D_i = (\varphi^\rho)^{-1}(\pi_{\PS}(P^\infty(\sigma)_i))$, is a pillow partition on $\bS^n$. 
\end{remark}

\subsection{Pillow expansions}
\label{sec:pillow-expansions-pillow-maps}

We move from pillows associated to a single simplex to the notion of pillows associated to a subcomplex. 

Let $Y$ be a cubical $(n-1)$-complex and let $\varrho\colon Y^\Delta \to \Z_+$ be a \emph{rank} function on $Y^\Delta$.

We call 
\[
\Sigma^\varrho(Y^\Delta) = \bigcup_{\sigma\in Y^\Delta} \Sigma^{\varrho(\sigma)}(\sigma)
\]
the \emph{$\varrho$-sheet over $Y$} 
\index{$\Sigma^\varrho(Y^\Delta)$}\index{$\varrho$-sheet} 
and
\[
\PC^\varrho(Y^\Delta) = \bigcup_{\sigma \in Y^\Delta} \PC^{\varrho(\sigma)}(\sigma).
\]
the \emph{$\varrho$-pillow over $Y^\Delta$}.  
\index{$P^\varrho(Y^\Delta)$}  
\index{$\varrho$-pillow} 
Note that
\[
\PC^{\varrho(\sigma)}(\sigma)\cap \PC^{\varrho(\sigma')}(\sigma') =\Sigma^{\varrho(\sigma)}(\sigma)\cap \Sigma^{\varrho(\sigma')}(\sigma') = \sigma\cap \sigma'
\]
for all $\sigma,\sigma'\in Y^{\Delta}$. Moreover, $\PC^\varrho(Y^\Delta)\cap |Y|=\Sigma^\varrho(Y^\Delta)\cap |Y|$ is the space of the $(n-2)$-skeleton of $Y$.

We denote  
\[
\Theta^\varrho \colon \PC^\varrho(Y^\Delta) \to Y,
\] 
the map for which $\Theta^\varrho|_{\PC^{\varrho(\sigma)}(\sigma)}$ is the pillow projection $\Theta^{\varrho(\sigma)}\colon \PC^{\varrho(\sigma)}(\sigma) \to \sigma$, for each $\sigma\in (Y^\Delta)^{[n-1]}$. 

\subsection{Pillow maps}
We now define a class of mappings $\PC^\rho(Y^\Delta) \to \PS^p(\Delta^\square_{n-1})$, given  $p\ge 2$, a cubical $(n-1)$-complex $Y$, and a rank function $\varrho \colon (Y^\Delta)^{[n-1]} \to \Z_+$. Since space $P^{\varrho(\sigma)}(\sigma)$ is an $n$-cell, we may assume, for each $\sigma\in (Y^\Delta)^{[n-1]}$, that the open $n$-cell $\interior P^{\varrho(\sigma)}(\sigma) \subset P^\varrho(Y^\Delta)$ has one of the two possible orientations.  We take the standard orientation on the $n$-sphere $\PS^p(\Delta^\square_{n-1})$.

\begin{definition}
\label{def:pillow-map}
\index{pillow map}
Let $p\ge 2$ and $\varphi \colon (Y^\Delta)^{[n-1]}\to \Z_+$ be a rank function on $Y^\Delta$. A mapping $F\colon \PC^\varrho(Y^\Delta) \to \PS^p(\Delta^\square_{n-1})$ is a \emph{pillow map} if
\begin{enumerate}
\item for each $\sigma\in (Y^\Delta)^{[n-1]}$ and each $i\in \{1,\ldots, \varrho(\sigma)\}$, the restriction $F|_{P^\infty(\sigma)_i} \colon P^\infty(\sigma)_i \to \PS^p(\Delta^\square_{n-1})$ is a simplicial embedding onto a pillow of $\PS^p(\Delta^\square_{n-1})$, \label{item:pillows-1}
\item for each $\sigma\in (Y^\Delta)^{[n-1]}$ and each $i\in \{1,\ldots, \varrho(\sigma)\}$, the restriction $F|_{\partial P^\infty(\sigma)_i} \colon \partial P^\infty(\sigma)_i \to \partial F(P^\infty(\sigma)_i)$ is an isomorphism of $\Sigma^2(\Delta^\Delta_{n-1})$-structures, \label{item:pillows-2}
\item for each $\sigma\in (Y^\Delta)^{[n-1]}$ and each $i\in \{1,\ldots, \varrho(\sigma)-1\}$, $F(P^\infty(\sigma)_i)\ne F(P^\infty(\sigma)_{i+1})$, and  also $F(P^\infty(\sigma)_{\varrho(\sigma)})\ne F(P^\infty(\sigma)_1)$.
 \label{item:orientation}
\end{enumerate} 
\end{definition}

By conditions \eqref{item:pillows-1} and \eqref{item:orientation}, a pillow map is a discrete, open, and oriented  map. By \eqref{item:pillows-2}, the restriction 
\[
F|_{\Sigma^\varrho(Y^\Delta)} \colon \Sigma^\varrho(Y^\Delta) \to \Sigma^p(\Delta^\square_{n-1})
\]
is well-defined.

\section{Pillow expansion}
\label{sec:pillow-expansion-complex}

Let $U$ be a good cubical $n$-complex and $Y$ an $(n-1)$-subcomplex of $U$ for which the graph $\Gamma(U;Y)$ has $r$ connected components $G_1,\ldots, G_r$.

For $i\in\{1,\ldots, r\}$, let $U_i=\Span_U(G_i)$ be the subcomplex of $U$ spanned by the vertices of graph $G_i$  and $\Real(U;Y)_i$  be the realization of $G_i$. Denote by \index{realization $\Real(U;Y)$}
\[
\Real(U;Y)=\Real(U;Y)_1\bigsqcup \Real(U;Y)_2 \bigsqcup \cdots \bigsqcup \Real(U;Y)_r
\]
the disjoint union of the realizations, and by  
\[
\pi_{(U;Y)} \colon \Real(U;Y) \to U
\]
the map for which 
\[
\pi_i=\pi_{(U;Y)}|_{\Real(U;Y)_i} \colon \Real(U;Y)_i \to U_i
\]
are the canonical quotient maps. Here, we assume that spaces of $\Real(U;Y)_i$ are oriented so that  mappings $\pi_i$ are orientation preserving. 

Let 
\[
\Upsilon_Y=\Upsilon_1\bigsqcup \Upsilon_2 \bigsqcup \cdots \bigsqcup \Upsilon_r
\]
be the preimage of $Y$ in $\Real(U;Y)$, where, for each $i$,  
\[
\Upsilon_i= \pi_i^{-1}(U_i\cap Y)
\]
is  the \emph{inner boundary component of $\Real(U;Y)_i $}. 
\index{$\Upsilon_i$} 
If there is no ambiguity, notations $U_i$ and $\Real(U;Y)_i$ of complexes are sometime used to denote the underlying spaces.

We fix now a rank function $\varrho \colon (Y^\Delta)^{[n-1]}\to \Z_+$ and denote $\PC^\varrho(Y^\Delta)$ the $\varrho$-pillow over $Y^\Delta$ as before. We glue components $\Real(U;Y)_i, i=1,\ldots, r,$ together along $\PC^\varrho(Y^\Delta)$ as follows. For this we make the following observation.

For each  $\sigma \in (Y^\Delta)^{[n-1]}$, the preimage $\pi_{(U;Y)}^{-1}(\sigma)$  consists of exactly two copies, $\sigma_1$ and $\sigma_2$, of $\sigma$ in different components of $\Real(U;Y)$. Also in each  $\Sigma^{\varrho(\sigma)}(\sigma)$, there are exactly two $(n-1)$-simplices, namely $\sigma^{\varrho(\sigma)}_\BOT = \Pi^{\varrho(\sigma)}(\sigma\times \{1\})$ and $\sigma^{\varrho(\sigma)}_\TOP = \Pi^{\varrho(\sigma)}(\sigma\times \{\varrho(\sigma)+1\})$, each of which meets only one pillow in $\PC^{\varrho}(Y^\Delta)$. 

Let 
\[
G_{(U;Y)} \colon |\Upsilon_Y| \to \partial \PC^\varrho(Y^\Delta)\subset \Sigma^\varrho(Y^\Delta)
\]
be the map constructed as follows:
\begin{enumerate}
\item for each $\sigma \in (Y^\Delta)^{[n-1]}$, switching the labels of $\sigma_1$ and $\sigma_2$ if needed, mappings $G_{(U;Y)}|_{\sigma_1}$ and $G_{(U;Y)}|_{\sigma_2}$ are orientation preserving,
\item  images $G_{(U;Y)}(\sigma_1) = \sigma^{\varrho(\sigma)}_\BOT$ and  $G_{(U;Y)}(\sigma_2) = \sigma^{\varrho(\sigma)}_\TOP$, and
\item $\Theta^\varrho \circ G_{(U;Y)} = \pi_{(U;Y)}|_{\Upsilon_Y}$.
\end{enumerate}

In particular, the diagram 
\[
\xymatrix{
|\Upsilon_Y| \ar[r]^{G_{(U;Y)}} \ar[d]_{\pi_{(U;Y)}|_{\Upsilon_Y}} & \Sigma^\varrho(Y^\Delta) \ar[d]^{\Theta^\varrho} \\
|Y| \ar[r]^{\id} & |Y| 
}
\]
commutes.

\begin{definition}\label{def:varrho-expansion}
The quotient space
\[
\Exp^\varrho(U;Y) = \Real(U;Y) {\bigsqcup}_{G_{(U;Y)}} \PC^\varrho(Y^\Delta)
\]
is called the \emph{$\varrho$-expansion of $U$ over $Y$}.  \index{$\Exp^\varrho(U;Y) $} 
\end{definition}

\begin{remark}
It is understood that  $\Sigma^\varrho(Y^\Delta)$ is an  $(n-1)$-dimensional structure embedded in  $\Exp^\varrho(U;Y)$.
This structure has an important role in the forthcoming discussion. 
\end{remark}

The closures of the complementary components of $ \Sigma^\varrho(Y^\Delta)$ in $\Exp^\varrho(U;Y)$ are  
\[
\{\Real(U;Y)_i \colon 1\leq i \leq r\}\,  \cup \, \{P(\sigma)_j \colon \sigma \in (Y^\Delta)^{[n-1]}, \,1\leq j\leq \varrho(\sigma)\}. 
\]
Let 
\[
\Pi^\varrho_{G_{(U;Y)}} \colon  \Real(U;Y) \bigsqcup \PC^\varrho(Y^\Delta) \to \Real(U;Y) \, {\bigsqcup}_{G_{(U;Y)}} \PC^\varrho(Y^\Delta)
\]
 be the canonical quotient map.

\begin{remark} 
We view $\Real(U;Y)_i, i=1,\ldots, r,$ and $\PC^\varrho(Y^\Delta)$ as subspaces of $\Exp^\varrho(U;Y)$ whenever it is convenient. Since  $|\Real(U;Y)| \setminus |\Upsilon_Y|$ and $|U|\setminus |Y|$ are homeomorphic, we may also consider $|U|\setminus |\Star_U(Y)|$ as a subspace of $|\Real(U;Y)| \subset \Exp^\varrho(U;Y)$.
\end{remark}

\subsection{Metric on $\Exp$}\label{sec:metric-Exp}

On the pillow $P(\Delta_{n-1})$, there is a canonical length metric $d_{P(\Delta_{n-1})}$ associated to the Euclidean metric on the $n$-simplices $(\Delta_n)_+$ and $(\Delta_n)_-$ isomorphic to the two $n$-simplices in $P(\Delta_{n-1})$. 
The pillow cell $\PC^\rho(\Delta_{n-1})$, resp. the pillow-sphere  $\PS^\rho(\Delta_{n-1})$,  may be equipped with the length metric $d_{\PC^\rho(\Delta_{n-1})}$, resp. $d_{\PS^\rho(\Delta_{n-1})}$, induced by the metrics on the individual pillows $P(\Delta_{n-1})_i$.

Associated to each $(n-1)$-simplex $\sigma$ in a barycentric triangulation of an $n$-cube, 
there is a non-degenerate simplicial affine map $\eta_\sigma \colon P(\sigma) \to P(\Delta_{n-1})$ (unique modulo  the permutations of the vertices of $\Delta_{n-1}$).
Metrics $d_\sigma$, $d_{P(\sigma)}$, $d_{\PC^\rho(\sigma)}$, and $d_{\PS^\rho(\sigma)}$ on  simplex $\sigma$, pillow $P(\sigma)$, pillow cell $\PC^\rho(\sigma)$, and pillow-sphere $\PS^\rho(\sigma)$, respectively, may be defined by the pull-backs by $\eta_\sigma$ of the Euclidean metric on $\Delta_{n-1}$,  and the metrics on $d_{P(\Delta_{n-1})}$, $d_{\PC^\rho(\Delta_{n-1})}$, and $d_{\PS^\rho(\Delta_{n-1})}$, respectively.

Thus, in the case of  $Y$ being an $(n-1)$-subcomplex of a good cubical $n$-complex, there is a canonical length metric on the pillow expansion $P^\varrho(Y^\Delta)$ induced by the metrics on the individual pillows.

The space $\Exp^\varrho(U;Y)$ has a length metric induced by the individual metrics on $\Real(U;Y)_i$, $ i=1,\ldots, r$ and $\PC^\varrho(Y^\Delta)$. By unraveling the identification, we observe that this metric is determined by the flat metric on $U$. 
In fact, there exists a constant $L'=L'(n,\varrho)\ge 1$,  depending only on the dimension $n$ and the rank function $\varrho$, so that each $n$-simplex in $\Real(U;Y)^\Delta \bigcup \PC^\varrho(Y^\Delta)$ is $L'$-bilipschitz equivalent to any $n$-simplex in $U^\Delta$.
 A fortiori, we have the following lemma; we omit the straightforward proof. 
 
\begin{lemma}
\label{lemma:metric-Exp} 
There exist a constant $L=L(n,\varrho)\ge 1$ and an $L$-bilipschitz homeomorphism $\phi \colon \Exp^\varrho(U;Y) \to |U|$ for which $h(\PC^\varrho(Y^\Delta)) \subset |\Star_U(Y)|$, and $h^{-1}|_{|U|\setminus |\Star_U(Y)|} = \id$.
\end{lemma}

\section{Pillow maps on expanded complexes}
\label{sec:pillow-expansion-Alexander-maps}

Having the notion $\Exp^\varrho(U;Y)$ at our disposal, we state the main result of this chapter.

\begin{restatable}{theorem}{Pillowadjustment}
\label{thm:pillow-adjustment}
Let $U$ be an orientable good $n$-complex and $Y$ an $(n-1)$-subcomplex of $U$ for which the graph $\Gamma(U;Y)$ has $r$ connected components and 
\[
\Real(U;Y)=\Real(U;Y)_1\bigsqcup \Real(U;Y)_2 \bigsqcup \cdots \bigsqcup \Real(U;Y)_r.
\]
Let also $2 \le p \le r$ and $c \colon \{1,\ldots, r\}\to \{1,\ldots, p\}$ a surjection. Then there exist a rank function $\varrho \colon (Y^\Delta)^{[n-1]}\to \{p,\ldots, 2p\}$ and an expanded pillow map 
\[\cP \colon \Exp^\varrho(U;Y) \to \PS^p(\Delta^\square_{n-1})\]
 for which 
\begin{enumerate}
\item $\cP|_{\PC^\varrho(Y^\Delta)} \colon \PC^\varrho(Y^\Delta) \to \PS^p(\Delta^\square_{n-1})$ is a pillow map and 
\item $\cP(\Real(U;Y)_i) = P^\infty(\Delta^\square_{n-1})_{c(i)}$.
\end{enumerate}
\end{restatable}

\begin{figure}[h!]
\begin{overpic}[scale=0.25,unit=1mm]{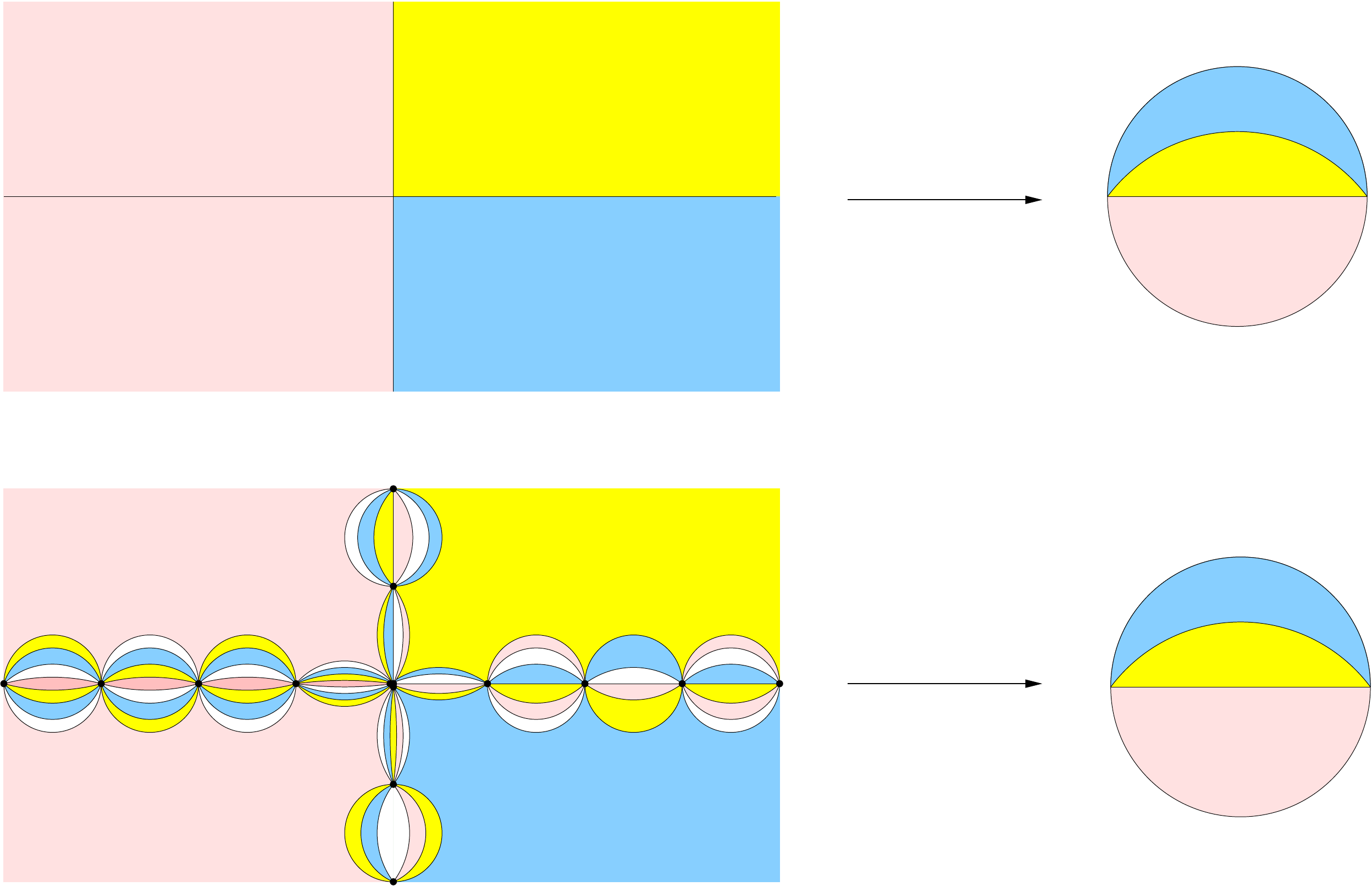} % add: grid
\end{overpic}
\caption{Example of a space $\Exp^\varrho(U;Y)$ and an expanded pillow map $\cP \colon \Exp^\varrho(U;Y) \to \PS^p(\Delta^\square_{n-1})$.}
\label{fig:Old-Part2-Map_f_v3}
\end{figure}

We refer to Figure \ref{fig:Old-Part2-Map_f_v3} for an illustration. Before the proof, we make a few comments on the statement. 

\begin{remark} The mapping $\cP$ maps $n$-simplices in $\Exp^\varrho(U;Y)$ to $n$-simplices in $\PS^p(\Delta^\square_{n-1})$. On  $\PC^\varrho(Y^\Delta)$, mapping $\cP$ resembles an Alexander map which maps adjacent simplices to adjacent simplices. On $\Real(U;Y)$ however $\cP$ is a folding map, that is not open.
\end{remark}

\begin{remark}
Since $\cP|_{\PC^\varrho(Y^\Delta)} \colon \PC^\varrho(Y^\Delta) \to \PS^p(\Delta^\square_{n-1})$ is a pillow map, we have that $\cP|_{|\partial \Real(U;Y)_i|} \colon |\partial \Real(U;Y)_i|\to |\partial P^\infty(\Delta^\square_{n-1})_{c(i)}|$ is an Alexander map.
\end{remark}

In general, the complex $Y^\Delta$ need not admit an Alexander map $|Y^\Delta|\to \bS^{n-1}$.
%the remark above yields an interesting fact that 
In particular, a collection $\{ A_i \colon |\partial \Real(U;Y)_i| \to |\partial P^\infty(\Delta^\square)_{c(i)}| \colon i=1,\ldots, r\}$ of Alexander maps need not descend to a well-defined map $|Y^\Delta|\to |\Sigma^p(\Delta^\square_{n-1})|$.

We prove Theorem \ref{thm:pillow-adjustment} in two steps. We discuss first Alexander maps on the boundaries $\partial \Real(U;Y)_i$ and extend them over $\Real(U:Y)$. Then we discuss the choice of the rank function $\varrho$ and the pillow map $\PC^\rho(Y^\Delta) \to \PS^p(\Delta^\square_{n-1})$.

\subsection{Alexander maps on boundaries of realizations}

Although $Y^\Delta$ need not admit an Alexander map, the complex $\Upsilon_i^\Delta$ does admit an Alexander map onto $\Sigma^2(\Delta^\square_{n-1})$. This Alexander map admits an extension over $\Real(U;Y)_i$ which is an Alexander map onto the pillow $P(\Delta^\square_{n-1})$. We record this observation as a lemma.

\begin{lemma}
\label{lemma:existence-of-Alexander-maps}
Let $U$ be a good cubical $n$-complex whose space is an oriented $n$-manifold  and $Y \subset U$ be a separating complex. Then, for each $i\in \{1,\ldots, r\}$, 
\begin{enumerate}
\item the inner boundary $\Upsilon_i$ of the realization $\Real(U;Y)_i$ admits an Alexander map \label{item:Alexander}
\[
A_{\Upsilon_i} \colon \Upsilon_i^\Delta \to \Sigma^2(\Delta_{n-1});
\]
\item  for each $\sigma\in Y^\Delta$ with $\sigma\in \partial U_{i_{-}}(\sigma)$ and $\sigma\in \partial U_{i_{+}}(\sigma)$, mappings  $A_{\Upsilon_{i_{-}(\sigma)}}$ and $ A_{\Upsilon_{i_{+}(\sigma)}}$ agree on the $(n-2)$-skeletons of $\sigma$ in the following sense:\label{item:compatible}
\[
A_{\Upsilon_{i_{-}(\sigma)}}\circ \pi_{i_{-}(\sigma)}^{-1}|_{\sigma^{[n-2]}} = 
A_{\Upsilon_{i_{+}(\sigma)}}\circ \pi_{i_{+}(\sigma)}^{-1}|_{\sigma^{[n-2]}};
\]
\item the Alexander map $A_{\Upsilon_i}$ has an extension to a simplicial folding map 
 \[
A_{\Real(U;Y)_i} \colon \Real(U;Y)_i^\Delta \to P(\Delta^\square_{n-1}).
\]
 \end{enumerate}
\end{lemma}

By the definition of separating complex (Definition \ref{def:separating-complex}), each $|\Upsilon_i|$ is an $(n-1)$-manifold. Claim \eqref{item:Alexander} in Lemma \ref{lemma:existence-of-Alexander-maps} follows from Alexander's Theorem.
We outline the proof to emphasize the connection between orientation on $|U|$ and that on the realizations.

\begin{proof}
Recall that  $\Real(U;Y)_i$ is oriented so that the quotient map $\pi_i$ is orientation preserving. We retain also the argument and the notations immediately before the statement of the lemma.

To prove \eqref{item:Alexander}, let $\sigma\in \partial U_i^\Delta \cap Y^\Delta$ be an $(n-1)$-simplex. In the case  $i_+(\sigma)\neq i_-(\sigma)=i $ or $i_-(\sigma)\neq i_+(\sigma)=i $, the simplex $\sigma$ has an orientation induced from the unique $n$-cube in $U_i$ containing $\sigma$. 

Suppose  that $i_+(\sigma)=i_-(\sigma)=i$ and that $\tau_+$ and $\tau_-$ are the $n$-simplices in $U$ for which $\tau_+\cap \tau_- = \sigma$, $p_U(\tau_+)=1$, and $p_U(\tau_-)=-1$. The simplex $\sigma$ has two lifts $\sigma_1 \ne \sigma_2$ in $\partial \, \Real(U;Y)_i$, and the lifts $\sigma_1$ and $\sigma_2$, respectively, are the faces of $n$-simplices $\tau_1$ and $\tau_2$, respectively. After relabeling if needed, we may assume that $\pi_i(\tau_1) = \tau_+$ and $\pi_i(\tau_2)=\tau_-$; thus $\tau_1$ and $\tau_2$ are positively and negatively oriented, respectively, in $\Real(U;Y)_i$.  We define $p_i(\sigma_1)=1$ and $p_i(\sigma_2)=-1$. 
This yields a parity function $p_i \colon \Upsilon_i^\Delta \to \{\pm 1\}$. 
The parity function $p_i$ gives rise to an Alexander map $A_{\Upsilon_i} \colon \Upsilon_i^\Delta \to \Sigma^2(\Delta_{n-1})$.

The compatibility \eqref{item:compatible} follows from the fact that quotients $\pi_i$ are orientation preserving.

Since ${\Real(U;Y)_i}$ is a cubical complex, the vertices, $v(\tau)_0,\ldots, v(\tau)_n$, of any $n$-simplex $\tau\in {\Real(U;Y)_i}^\Delta$ may be labeled so that $v(\tau)_k$ is the barycenter of an $k$-cube in ${\Real(U;Y)_i}$. On the target side, two $n$-simplices  in the pillow $P(\Delta_{n-1})$ have all their vertices in common. Denote by $w_n$ the  vertex  in the interior of their common face, and let other vertices $w_0, w_1,\ldots, w_{n-1}$ be labeled so that the Alexander map $A_{\Upsilon_i} \colon \Upsilon_i^\Delta \to \Sigma^2(\Delta_{n-1})$ is index preserving.
Thus $A_{\Upsilon_i}$ may be extended to a index preserving simplicial map 
\[
A_{\Real(U;Y)_i} \colon \Real(U;Y)_i^\Delta \to P(\Delta^\square_{n-1}).
\]
\end{proof}

\subsection{Proof of Theorem \ref{thm:pillow-adjustment}}

As a preparatory step, let $2\leq p\leq r$ and $c\colon \{1,\ldots r\} \to \{1,\ldots, p\}$ be a surjection as in the statement of Theorem \ref{thm:pillow-adjustment}. For each $i\in \{1,\ldots, r\}$, let 
\[
A_{i,c(i)} = \iota_{c(i)} \circ A_{\Real(U;Y)_i} \colon \Real(U;Y)^\Delta_i \to \PS^p(\Delta^\square_{n-1}),
\]
where $\iota_j \colon P(\Delta^\square_{n-1}) \to \PS^p(\Delta^\square_{n-1})$ is the simplicial extension of the embedding $\iota_j \colon \Sigma^2(\Delta^\square_{n-1}) \to \Sigma^p(\Delta^\square_{n-1})$ onto the pillow $P^\infty(\Delta^\square_{n-1})_j$ of $\PS^p(\Delta^\square_{n-1})$.
 
Let also 
\[
A_c \colon \Real(U;Y)^\Delta\to \PS^p(\Delta^\square_{n-1})
\]
be the map defined by $A_c|_{\Real (U;Y)_i} = A_{i,c(i)}$ for  $i\in \{1,\ldots, r\}$.

\begin{remark}
Suppose $\varrho \colon (Y^\Delta)^{[n-1]} \to \Z_+$ is a rank function and consider spaces $\Real(U;Y)_i$ as subspaces of $\Exp^\varrho(U;Y)$. For $i\ne i'$, the intersection $Y_{i, i'} = {\Real (U;Y)_i}^\Delta\cap \Real (U;Y)_{i'}^\Delta$ in $\Exp^\varrho(U;Y)$ is contained in the $(n-2)$-skeleton of either complex. By Lemma \ref{lemma:existence-of-Alexander-maps}, $A_{i,c(i)}$ and $A_{i',c((i')}$ agree on $Y_{i, i'}$, hence mapping $A_c \colon \Real(U;Y)^\Delta \to \PS^p(\Delta^\square_{n-1})$ is well-defined also for $\Real(U;Y)^\Delta \subset \Exp^\varrho(U;Y)$. 
\end{remark}

\begin{proof}[Proof of Theorem \ref{thm:pillow-adjustment}]
We define a rank function $\varrho \colon (Y^\Delta)^{[n-1]}\to \{p,\ldots, 2p\}$ as follows. 

Let $\sigma\in (Y^\Delta)^{[n-1]}$, and let $i_+ = i_+(\sigma)$ and $i_-=i_-(\sigma)$ be the indices in $\{1,\ldots, r \}$ for which $\sigma$ is positively oriented with respect to $U_{i_+(\sigma)}$, and is negatively oriented with respect to $U_{i_-(\sigma)}$. Let  $\sigma_+\in \Upsilon_{i_+}$ and $\sigma_-\in \Upsilon_{i_-}$, respectively, be the lifts of $\sigma$ in the corresponding realizations. Let $\tau_+$ and $\tau_-$ be the $n$-simplices in $\Real(U;Y)_{i_+}$ and $\Real(U;Y)_{i_-}$, respectively,
 having $\sigma_+$ and $\sigma_-$ as its face. 

Let $\rho_\sigma\in \{p,\ldots, 2p\}$ be the smallest integer for which there exists a pillow map $A_{\sigma} \colon \PC^{\rho_\sigma}(\sigma) \to \PS^p(\Delta^\square_{n-1})$ having the properties:
\begin{enumerate}
\item pillow $A_{\sigma}(P(\sigma)_\TOP)$ is adjacent to the pillow in $\PS^p(\Delta^\square_{n-1})$ that contains $A_c(\tau_+)$, \label{item:top}
\item pillow $A_{\sigma}(P(\sigma)_\BOT)$ is adjacent to the pillow in $\PS^p(\Delta^\square_{n-1})$ that contains $A_c(\tau_-)$, and   \label{item:bot}
\item $A_\sigma|_{\partial \sigma}=A_c|_{\partial \sigma}$.
\end{enumerate}

Since pillows in the pillow-sphere $\PS^p(\Delta^\square_{n-1})$ form a cycle of length $p$, it follows from Lemma \ref{lemma:existence-of-Alexander-maps} that such pillow map $A_\sigma$ and number $\rho_\sigma$ exist. 

Define the rank function $\varrho \colon (Y^\Delta)^{[n-1]}\to \{p,\ldots, 2p\}$ by $\sigma \mapsto \rho_\sigma$, and a pillow map $A^\varrho \colon \PC^\varrho(Y^\Delta) \to \PS^p(\Delta^\square_{n-1})$ by the condition $A^\varrho|_{P^{\varrho(\sigma)}}= A_\sigma$. 

Since maps $A_c$ and $A^\varrho$ agree on the boundary $\partial \PC^\varrho(Y^\Delta)$, the mapping $\cP \colon \Exp^\varrho(U;Y) \to S^\varrho(\Delta^\square_{n-1})$ for which 
\[
\cP|_{\Real(U;Y)} = A_c, \quad\text{and}\quad \cP|_{P^\varrho(Y^\Delta)} = A^\varrho
\]
is well-defined. 
Since $A_c$ and $A^\varrho$ are Alexander maps, also $\cP$ is an Alexander map. Conditions (1) and (2) for $\cP$ are satisfied by choices of $A_c$ and $A^\varrho$. This completes the construction. 
\end{proof}

\begin{remark}\label{rmk:coloring-function}\index{coloring function}
Heuristically, we may regard $\{1,\ldots,p\}$ as a collection of colors and $c \colon \{1,\ldots, r\}\to \{1,\ldots,p\}$ a coloring function. 
Let  $\PS^p(\Delta^\square_{n-1})$ be the pillow-sphere of rank $p$.
As the name suggests, we associate to each pillow $\PS^\infty(\Delta_{n-1})_j,  j=1,\ldots,p,$ in $\PS^p(\Delta^\square_{n-1})$  the color of its index.  In view of  Theorem \ref{thm:pillow-adjustment}, we may also associate  to the realization $\Real (U;Y)_i$ the color $c(i)$ and to each pillow $\tau$ in $\PC^\varrho(Y^\Delta)$ the color of its image $\cP(\tau)$. 
\end{remark}

For the forthcoming construction, we need to fix, for  each $j\in \{1,\ldots,p\}$, a component  $\Real(U;Y)_{ i_j}$ of  $\Real(U;Y)$ which is mapped by $F_{\cT_{Y^\Delta}}$ to  the pillow $P(\Delta_{n-1})_j$ of color $j$. To simplify the notation, we rearrange the indices in the components  of $\Real(U;Y)$, if necessary, and make the following convention.

\begin{convention}\label{convention:coloring}From now on, we assume the surjection $c\colon \{1,\ldots, r\} \to \{1,\ldots,p\}$ satisfies 
\[ c(j)=j, \quad \text{for} \,\,\, j=1,\ldots p.\]
\end{convention}

\chapter{Rickman expansion of complexes}
\label{chap:Alexander--Rickman}

In this chapter we begin the construction of  a weaved approximation $X$ of a separating complex $Z\subset \Refine^{\nu k}(K)$, and a mapping $\varphi \colon X\to \Sigma^p(\Delta^\square_{n-1})$ for Theorem \ref{thm:combination-I}. We complete the proof of Theorem \ref{thm:combination-I} in Chapter \ref{chap:proof-of-combination-I}. 

We do not construct set $X$ for $(K;Z)$ directly, instead, construct a simplicial  set $\X \subset U$ and a map $f \colon \X \to \Sigma^\varrho(\Delta^\square_{n-1})$ in the more general setting used in Theorem \ref{thm:pillow-adjustment}, that is, $U$ is an orientable good $n$-complex and $Y$ is an $(n-1)$-subcomplex of $U$ for which the graph $\Gamma(U;Y)$ has $r$ connected components. 

\subsubsection{Alexander-Rickman maps}
We give first a definition for $(n-1)$-dimensional simplicial sets.
\begin{definition}
\label{def:simplicial-set}
\index{simplicial set}
A closed space $X$ is \emph{$(n-1)$-dimensional simplicial set} 
if there exists a finite collection $\cF$ of mutually essentially disjoint $(n-1)$-simplices which cover $X$ and have the property that, whenever two simplices $\sigma$ and $\sigma'$ in $\cF$ meet, 
there exist a face $\xi$  of $\sigma$ and  a face $\xi'$  of $\sigma'$ for which
\begin{enumerate}
\item  $\sigma\cap \sigma'= \xi\cap\xi'$, and 
\item $(\xi\cup \xi') \setminus (\sigma\cap\sigma') \subset  \interior \,(\xi \cup\xi')$.
\end{enumerate}
We call $\cF$ a \emph{simplicial structure of $X$} and the pair $(X,\cF)$ a \emph{simplicial $(n-1)$-space}. Simplices in $\cF$ are \emph{adjacent} if their intersection has dimension $n-2$.
\end{definition}

We define Alexander--Rickman maps from a simplicial space to a branched sphere as follows.

\begin{definition}
\label{def:Alexander-Rickman-map}
\index{Alexander-Rickman map}
A mapping $f\colon X\to \Sigma^p(\Delta^\square_{n-1})$ from a simplicial $(n-1)$-space $(X,\cF)$ to a branched sphere $\Sigma^p(\Delta^\square_{n-1})$ is an \emph{Alexander--Rickman map} if any two adjacent $(n-1)$-simplices in $X$ are mapped to the opposite hemispheres of the boundary $\partial (\PS^p(\Delta^\square_{n-1})_j)$ of a pillow $\PS^\infty(\Sigma_{n-1})_j$ in $\PS^p(\Sigma_{n-1})$.
\end{definition}

Following the scheme of Chapter \ref{chap:pillows}, we construct an auxiliary space $\Exp^\varrho(U;Y,\cP)$ over $\Exp^\varrho(U;Y)$ with respect to an expanded pillow map $\cP \colon \Exp^\varrho(U;Y)\to \PS^p(\Delta^\square_{n-1})$,  a simplicial set $\X$ in  $\Exp^\varrho(U;Y,\cP)$ corresponding to $X$, and a map $ \X \to \Sigma^\varrho(\Delta^\square_{n-1})$ with analogous properties to $\varphi$.

The space $\Exp^\varrho(U;Y,\cP)$ is constructed over $\Exp^\varrho(U;Y)$ for which there exists a canonical quotient map $\Exp^\varrho(U;Y,\cP) \to \Exp^\varrho(U;Y)$. Heuristically, this quotient closes the openings created between components in $\Exp^\varrho(U;Y) \setminus \Sigma^\varrho(Y^\Delta)$ of the same color associated to $\cP$. In addition, we may view $\Exp^\varrho(U;Y, \cP)$ as a version of $\Exp^\varrho(U;Y)$, where $\Sigma^\varrho(Y^\Delta)$ is replaced with an $(n-1)$-dimensional simplicial set $\X$ which has the same number of complementary components as $Y$.

In what follows, let $(U;Y)$ be a pair as in Chapter \ref{chap:pillows}, that is, $U$ is a good complex and $Y$ an $(n-1)$-dimensional subcomplex of $U$ having the property that each $(n-1)$-cube in $Y$ is a face of two $n$-cubes of $U$. Let again $r$ be the number of complementary components of $Y$ in $U$. Let also $\varrho \colon (Y^\Delta)^{[n-1]}\to \Z_+$ be the rank function of $\Exp^\varrho(U;Y)$. 

\section{Rickman pillows}
\label{sec:Rickman-pillows}

We begin with a procedure of combining pillows in a pillow cell. 
Let $\sigma$ be an $(n-1)$-simplex, $\PC^\rho(\sigma)$ a pillow cell on $\sigma$, and $\cP \colon \PC^\rho(\sigma) \to \PS^p(\Delta^\square_{n-1})$ be a pillow map, where $p \le \rho \le 2p$. 
We construct cells $R^\rho(\sigma)_1,\ldots, R^\rho(\sigma)_p$, one for each $j\in \{1,\ldots, p\}$, which form an essential partition of $\PC^\rho(\sigma)$. We call the cells constructed  \emph{Rickman pillows}. \index{Rickman pillow $R^\rho(\sigma)_j$}

Let $\beta\ge 10p$  and $\mathsf T_\sigma$ be a $\beta$-iterated barycentric subdivision of $\sigma$. Let $\zeta_\sigma$ be the unique face of $\sigma$ whose vertices are barycenters of cubes in $Y$ of non-zero dimensions. 
Choose and fix next $p$ mutually disjoint $(n-2)$-simplices $\zeta_{\sigma,1},\ldots, \zeta_{\sigma,p}$ in the triangulation $\mathsf T_\sigma|_{\zeta_\sigma}$ of $\zeta_\sigma$.  For simplicity, we omit $\sigma$ in the indices and write $\zeta_j = \zeta_{\sigma,j}$, and denote for each $j$, by $\xi_j$   the unique $(n-1)$-simplex in $\mathsf T_{\sigma}$ having $\zeta_j$ as its face.  Here the subdivision $\mathsf T_\sigma$ is introduced for the purpose of marking cuts on the face of $\sigma$ only; the simplex $\sigma$ is not actually triangulated.

Let $j\in \{1,\ldots, p\}$. Since $p \le \rho \le 2p$, there exists at least one and at most two indices $t\in \{1,\ldots, \rho-1\}$ for which $\cP(P^\infty(\sigma)_t) = \PS^p(\Delta^\square_{n-1})_j$; we call these pillows $P^\infty(\sigma)_t$ the \emph{pillows of color $j$}. Let $1\le t_j^{\min} \le t_j^{\max} \le \rho-1$ be such indices. Note that either $t_j^{\max} = t_j^{\min}$ or $t_j^{\max}-t_j^{\min} = p$. 
We set 
\[
\cI_j(\cP) = \left\{ \begin{array}{ll}
\{ t_j^{\min}+1,\ldots, t_j^{\max}-1\}, & \text{ if } t_j^{\min} < t_j^{\max} \\
\emptyset, & \text{ if } t_j^{\min} = t_j^{\max},
\end{array}\right.
\]
and note that $\cI_j(\cP) \subset \{2,\ldots, \rho-2\}$. 
The union 
\[
V_j(\cP) = \bigcup_{t\in \cI_j(\cP)} P^\infty(\sigma)_t,
\]
of pillows in $\PC^\rho(\sigma)$ 'between' two color $j$ pillows, if nonempty, is a pillow cells of rank $(p-1)$.

Let $j\in \{1,\ldots, p\}$. If $t_j^{\min} = t_j^{\max}$, no modification is made to the pillow of  color $j$. If $t_j^{\min} < t_j^{\max}$, we combine the interiors of two color $j$ pillows, $P^\infty(\sigma)_{t_j^{\min}} $ and $P^\infty(\sigma)_{t_j^{\max}}$ following these steps:
\begin{itemize}
\item[(1)] first cut along $\interior \,\zeta_j$ on the rim of $\Sigma^\rho(\sigma)$ to obtain two copies $(\zeta_j)_+$ and $(\zeta_j)_-$ of $\zeta_j$ whose union is an $(n-2)$-sphere,
\item[(2)] then seal the opening by sewing $(\Sigma^\rho(\sigma)\cap V_j(\cP)) \setminus \interior \, \zeta_j$ along $\interior (\zeta_j)_+$,
\item[(3)] then seal the other opening by sewing $((\Sigma^\rho(\sigma)\setminus V_j(\cP))\cup \partial \sigma) \setminus \interior \, \zeta_j$ along $\interior (\zeta_j)_-$.
\end{itemize}
This operation produces an $(n-1)$-dimensional simplicial set $\Sigma^\rho(\sigma;\cP)$ in $\PC^\rho(\sigma)$, which has $p$ complementary components $\mathring{R}(\sigma; \cP)_1,\ldots, \mathring{R}(\sigma; \cP)_p$ for which  $\mathring{R}(\sigma; \cP)_j$ is an open $n$-cell of color $j$ with  closure $R^\rho(\sigma; \cP)_j$  a tame closed $n$-cell. 

We call  $n$-cells $R^\rho(\sigma;\cP)_1, \ldots, R^\rho(\sigma;\cP)_p$  \emph{Rickman pillows of $\cP$} and the set $\Sigma^\rho(\sigma;\cP)$  a \emph{Rickman sheet of $\cP$}.

The identification of the copies $(\zeta_j)_\pm$ of $\zeta_j$ with $\zeta_j$ itself yields a projection $\lambda_{\sigma,\cP} \colon \Sigma^\rho(\sigma;\cP) \to \Sigma^\rho(\sigma)$ for which the composition 
\[
f_{\sigma,\cP} = \cP \circ \lambda_{\sigma,\cP} \colon \Sigma^\rho(\sigma;\cP) \to \Sigma^p(\Delta^\square_{n-1})
\]
is an Alexander--Rickman map satisfying $f_{\sigma,\cP}(\partial R^\rho(\sigma;\cP)_j) = \partial \PS^p(\Delta^\square_{n-1})_j$. 

We may now give each $n$-cell $R^\rho(\sigma;\cP)_j$ a simplicial structure by coning the complex on the boundary with a one interior point. The map $f_{\sigma,\cP}$ extends now to a coning map $\cR_{\sigma,\cP} \colon \PC^\rho(\sigma) \to \PS^p(\Delta^\square_{n-1})$ satisfying $\cR_{\sigma,\cP}(R^\rho(\sigma;\cP)_j) = \PS^p(\Delta^\square_{n-1})_j$ for each $j=1,\ldots, p$. In each Rickman pillow, the mapping $\cR_{\sigma,\cP}$ is either $1$-to-$1$ or $2$-to-$1$.

\subsection{Rickman sheets}

Let now $\cP \colon \PC^\varrho(Y^\Delta) \to \PS^p(\Delta^\square_{n-1})$ be a pillow map. Then 
\[
\bigcup_{\sigma\in (Y^\Delta)^{[n-1]}} ( \bigcup_{j=1}^p  \,\,R^{\varrho(\sigma)}(\sigma; \cP)_j) = \bigcup_{\sigma\in (Y^\Delta)^{[n-1]}} \PC^{\varrho(\sigma)}(\sigma) = \PC^\varrho(Y^\Delta).
\]

We call the union of Rickman sheets 
\index{$\Sigma^\varrho(Y; \cP)$}
\[
\Sigma^\varrho(Y; \cP)=\bigcup_{\sigma\in (Y^\Delta)^{[n-1]}} \Sigma^{\varrho(\sigma)}(\sigma; \cP) \, \subset \PC^\varrho(Y^\Delta)  
\]
the \emph{Rickman sheet induced by $\cP$}.

Together the simplicial structures on  $\Sigma^{\varrho(\sigma)}(\sigma; \cP)$"s  give the space $\Sigma^\varrho(Y;\cP)$ a structure as a simplicial space which admits a simplicial map $\Sigma^\varrho(Y;\cP) \to Y^\Delta$. 

Also, as  in the case of individual pillow cell, there exists a pillow map $ |\PC^\varrho(Y^\Delta)|\to \PS^p(\Delta^\square_{n-1})$ which agrees with $\cP$ on $\partial \PC^\varrho(Y^\Delta)$ and maps each $R^{\varrho(\sigma)}(\sigma; \cP)_j$ onto $\PS^p(\Delta^\square_{n.1})_j$.

\section{Unified Rickman pillows}
\label{sec:unified-Rickman-pillows}

Next we combine Rickman pillows of the same color associated to all $(n-1)$-simplices in $\sigma\in Y^\Delta$. For this, we fix a spanning tree $\cT_{Y^\Delta}$ in the adjacency graph $\Gamma(Y^\Delta)$.

Let $\beta = \beta(p,Y)\ge 1$ be the smallest integer for which a $\beta$-fold iterated barycentric subdivision $\mathsf T_\sigma$ of $\sigma$ has at least
\[
10\, p \left(\max_{\sigma\in (Y^\Delta)^{[n-1]}} \# \{ \sigma' \in (Y^\Delta)^{[n-1]} \colon \sigma' \text{ is adjacent to } \sigma\}\right)
\]
mutually disjoint $(n-2)$-simplices on a face of $\sigma$. 

Let $\cP \colon \PC^\varrho(U;Y) \to \PS^p(\Delta^{\square}_{n-1})$ be a pillow map and let $\Sigma^\varrho(Y;\cP)$ be a Rickman sheet induced by $\cP$, which is constructed using $\beta$.

 For each edge $\{\sigma,\sigma'\}\in \cT_{Y^\Delta}$, we fix a collection $\{\delta_{\{\sigma,\sigma'\},1}, \ldots, \delta_{\{\sigma,\sigma'\},p}\}$ of mutually disjoint $(n-2)$-simplices in $\mathsf T_{\sigma} \cap \mathsf T_{\sigma'}$.
An $(n-2)$-simplex $e$ in $Y^\Delta$ may represent several edges in $\cT_{Y^\Delta}$, that is, there are distinct edges $\{\sigma,\sigma'\}$ and $\{\sigma'',\sigma'''\}$ for which $\sigma\cap\sigma'=\sigma''\cap \sigma'''=e$. Hence we require, in addition, all  $(n-2)$-simplices chosen with respect to the edges in $\cT_{Y^\Delta}$  be mutually disjoint.
We further assume that none of these $(n-2)$-simplices  meet any  
$\zeta_{\sigma,1},\ldots, \zeta_{\sigma,p}$, for $\sigma \in Y^\Delta,$ previously chosen in Section \ref{sec:Rickman-pillows}. These additional requirements may be fulfilled in view of the choice of number $\beta$.
Let
\[
D(\{\sigma,\sigma'\}) = \interior ( \delta_{\{\sigma,\sigma'\},1}\cup \cdots \cup \delta_{\{\sigma,\sigma'\},p}),
\]
where the interior is taken with respect to $\sigma\cap \sigma'$.

Given an edge $\{\sigma,\sigma'\}$ of $\cT_{Y^\Delta}$,  we create now, for each $j\in \{1,\ldots, p\}$, an opening between adjacent Rickman pillows $R^{\varrho(\sigma)}(\sigma; \cP)_j$ and $R^{\varrho(\sigma')}(\sigma; \cP)_j$ along $\delta_{\{\sigma,\sigma'\},j}$.

 \begin{figure}[h!]
\begin{overpic}[scale=0.4,unit=1mm]{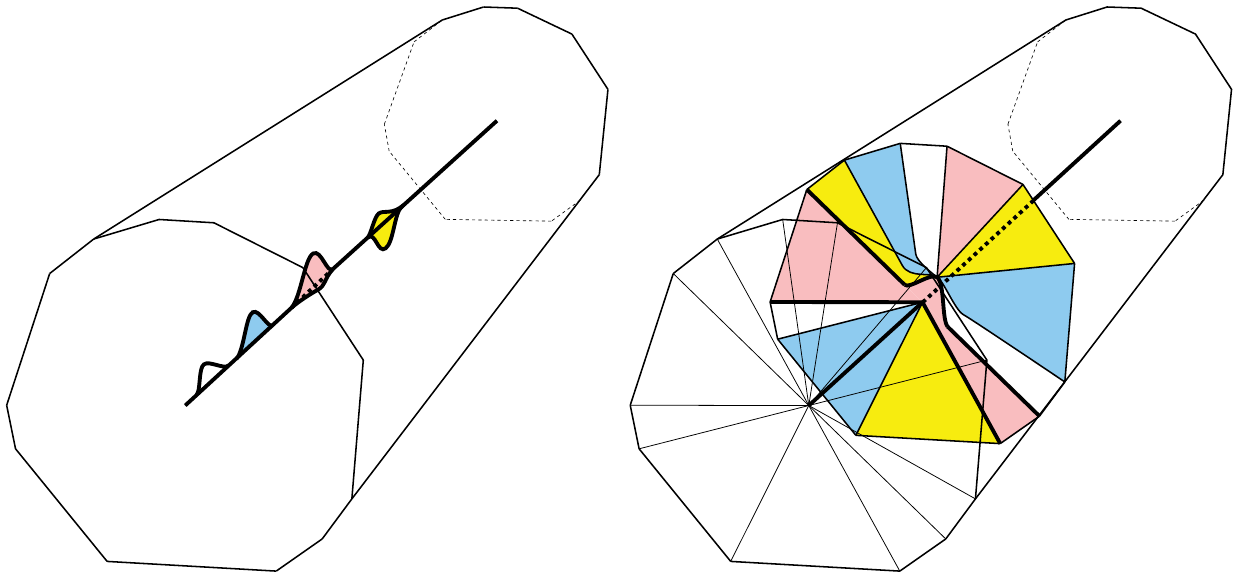} % add: grid
\end{overpic}
\caption{Towards a merge of Rickman pillows -- a local picture.}
\label{fig:unified Rickman_pillows}
\end{figure}
 
Let 
\[
\Sigma^\varrho(Y^\Delta; \cP, \cT_{Y^\Delta}) = \left( \bigsqcup_{\sigma \in Y^{[n-1]}} \sigma \times \{1,\ldots, \varrho(\sigma)\}\right) \Big/_{\sim_{\cT_{Y^\Delta}}},
\]
be the quotient space for which  $\sim_{\cT_{Y^\Delta}}$ is the smallest equivalence relation satisfying the following conditions. \index{$\Sigma^\varrho(Y; \cP, \cT_{Y^\Delta})$}
\begin{enumerate} 
\item Let $\sigma \in (Y^\Delta)^{[n-1]}$ and $(x,t)\in \sigma \times \{1,\ldots, \varrho(\sigma)\}$. 
\begin{enumerate}
\item [(i)] Suppose that $x\not \in D(\{\sigma, \sigma'\})$ for any $\{\sigma,\sigma'\}\in \cT_{Y^\Delta}$. Set the equivalence class $[(x,t)]_{\cT_{Y^\Delta}}$ to be $[(x,t)]_\cP$;
\item [(ii)] Suppose that $x\in \interior \delta_{\{\sigma,\sigma'\},j}$ for some 
$\{\sigma,\sigma'\}\in \cT_{Y^\Delta}$. Set $(x,t)\sim_{\cT_{Y^\Delta}} (x,t^{\min}_j(\sigma))$ for all $t\le t^{\min}_j(\sigma)$, and set $(x,t)\sim_{\cT_{Y^\Delta}} (x, 1+t^{\min}_j(\sigma))$ for all $t \geq 1+t^{\min}_j(\sigma)$. Under this, two copies $[(\delta_{\{\sigma,\sigma'\},j},t^{\min}_j(\sigma))] $ and $[(\delta_{\{\sigma,\sigma'\},j},1+ t^{\min}_j(\sigma))] $ of $\delta_{\{\sigma,\sigma'\},j}$ form an $(n-2)$-sphere which encloses an $(n-1)$-cell $\xi_{(\sigma,\sigma'),j}$. 
\end{enumerate}
\item For $\sigma, \sigma'  \in (Y^\Delta)^{[n-1]}$ with $\{\sigma,\sigma'\}\in \cT_{Y^\Delta}$, we also identify the $(n-1)$-cells $\xi_{(\sigma,\sigma'),j}$ and $\xi_{(\sigma',\sigma),j}$, and denote it  by $\xi_{\{\sigma',\sigma\},j}$.
\end{enumerate}
We call   $ \Sigma^\varrho(Y^\Delta; \cP, \cT_{Y^\Delta})$ a  \emph{unified Rickman sheet}.

The quotient map
\[
\Pi_{\cT_{Y^\Delta}} \colon \bigsqcup_{\sigma \in Y^{[n-1]}} \sigma \times \{1,\ldots, \varrho(\sigma)\} \to \Sigma^\varrho(Y^\Delta; \cP, \cT_{Y^\Delta})
\]
yields a natural triangulation and also a canonical length metric, induced by the metrics on individual simplices $\sigma\times \{t\}$, on the space $\Sigma^\varrho(Y^\Delta; \cP, \cT_{Y^\Delta})$.

Since the equivalence classes of $\sim_{\cT_{Y^\Delta}}$ are contained in the equivalence classes of $\sim_{\cP}$, there exists a natural projection 
\[
\lambda_{\cT_{Y^\Delta}} \colon \Sigma^\varrho(Y^\Delta; \cP, \cT_{Y^\Delta}) \to \Sigma^\varrho(Y; \cP), \quad [(x,t)]_{\cT_{Y^\Delta}} \mapsto [(x,t)]_{\cP},
\]
induced by the inclusion. Furthermore, since $\lambda_{\cT_{Y^\Delta}}$ maps the $(n-1)$-simplices in $\Sigma^\varrho(Y^\Delta; \cP,\cT_{Y^\Delta})$ one-to-one onto the $(n-1)$-simplices in $\Sigma^\varrho(Y^\Delta; \cP)$, the composition 
 \[
\cP_{\cT_{Y^\Delta}} = \cP \circ \lambda_{\cT_{Y^\Delta}} \colon \Sigma^\varrho(Y^\Delta; \cP,\cT_{Y^\Delta}) \to \Sigma^p(\Delta^\square_{n-1})
\]
is a simplicial branched covering map. 

We describe now the $n$-cell bounded by 
$\Sigma^\varrho(Y^\Delta; \cP,\cT_{Y^\Delta})$ and the structure of the map $\cP_{\cT_{Y^\Delta}}$.
By construction, for  $\{\sigma,\sigma'\}\in \cT_{Y^\Delta}$ and $1\leq j\leq p$, the closure $S_{\{ \sigma,\sigma'\} ,j}$ of $(\lambda_{\cT_{Y^\Delta}})^{-1}(\delta_{\{\sigma, \sigma'\},j})$ is an $(n-2)$-sphere bilipschitz equivalent to a Euclidean $(n-2)$-sphere of radius $r_{\sigma,\sigma'}>0$. We glue 
a Euclidean $(n-1)$-ball $\bar B^{n-1}(r_{\sigma,\sigma'})$ along $S_{\{\sigma,\sigma'\},j}$ and denote by $\xi_{\{\sigma,\sigma'\},j}$ the obtained $(n-1)$-cell. Let 
\[
\check \Sigma^\varrho(Y; \cP,\cT_{Y^\Delta})=  \Sigma^\varrho(Y; \cP,\cT_{Y^\Delta}) \cup  \bigcup_{\{\sigma,\sigma'\}\in \cT_{Y^\Delta}} \left(\bigcup_{j=1}^p\,  \xi_{\{\sigma,\sigma'\},j}\right),
\]
and let 
\[
\check \lambda_{\cT_{Y^\Delta}} \colon \check \Sigma^\varrho(Y; \cP, \cT_{Y^\Delta}) \to \Sigma^\varrho(Y; \cP)
\]
be an extension of $\lambda_{\cT_{Y^\Delta}}$ for which  $\check \lambda_{\cT_{Y^\Delta}}(\xi_{\{\sigma,\sigma'\},j})=\delta_{\{\sigma,\sigma'\},j}$ for each $j$.

For each $\sigma\in Y^\Delta$, the preimage $(\check \lambda_{\cT_{Y^\Delta}})^{-1}(R^{\varrho(\sigma)}(\sigma; \cP)_j \, \cap\, \Sigma^\varrho(Y; \cP))$ of the boundary of a Rickman pillow
is an $(n-1)$-sphere bilipschitz equivalent to a Euclidean $(n-1)$-sphere $r_\sigma>0$. We glue a Euclidean $n$-ball $\bar B^n(r_\sigma)$ into this sphere and denote by $\cR^{\varrho(\sigma)}(\sigma;\cP, \cT_{Y^\Delta})_j$ the puffed Rickman pillow enclosed by the sphere.

The boundary of the puffed $\cR^{\varrho(\sigma)}(\sigma;\cP, \cT_{Y^\Delta})_j$ consists of either two or four $(n-1)$-simplices in $\Sigma^\varrho(Y; \cP, \cT_{Y^\Delta})$ and one $(n-1)$-cell $\xi_{\{\sigma,\sigma'\},j}$ for each $\{\sigma,\sigma'\}\in \cT_{Y^\Delta}$.

We call the union 
\index{Rickman pillow!unified $\cR^\varrho(Y; \cP, \cT_{Y^\Delta})_j$}
\[
\cR^\varrho(Y^\Delta; \cP, \cT_{Y^\Delta})_j = \bigcup_{\sigma\in (Y^\Delta)^{[n-1]}}  \cR^{\varrho(\sigma)}(\sigma;\cP, \cT_{Y^\Delta})_j 
\]
of the puffed Rickman pillows the \emph{unified Rickman pillow of color $j$}.

\begin{lemma}
\label{lemma:Rickman-n-cells}
The unified Rickman pillows 
\[
\cR^\varrho(Y; \cP, \cT_{Y^\Delta})_1,\ldots, \cR^\varrho(Y; \cP, \cT_{Y^\Delta})_p
\]
are $n$-cells. Furthermore, $\bigcup_{j=1}^p \partial \cR^\varrho(Y; \cP, \cT_{Y^\Delta})_j = \Sigma^\varrho(Y; \cP, \cT_{Y^\Delta})$. 
\end{lemma}
\begin{proof}
If $n$-cells $\cR^{\varrho(\sigma)}(\sigma;\cP, \cT_{Y^\Delta})_j$ and $\cR^{\varrho(\sigma')}(\sigma';\cP, \cT_{Y^\Delta})_j$ are adjacent in $\cT_{Y^\Delta}$, they meet in $\xi_{\{\sigma,\sigma'\},j}$. Since $\cT_{Y^\Delta}$ is a tree, the first claim follows.  Furthermore, since the interior of each $\xi_{\{\sigma,\sigma'\},j}$ is contained in the interior of $\cR^\varrho(Y^\Delta; \cP, \cT_{Y^\Delta})_j$, the second claim follows.
\end{proof}

The $(n-1)$-simplices in $\partial \cR^{\varrho(\sigma)}(\sigma,\cP,\cT_{Y^\Delta})_j$ give $\Sigma^\varrho(Y^\Delta; \cP,\cT_{Y^\Delta})$ the  structure of a simplicial set. The map $\cP_{\cT_{Y^\Delta}} \colon \Sigma^\varrho(Y^\Delta; \cP,\cT_{Y^\Delta}) \to \Sigma^p(\Delta^\square_{n-1})$ defined above is an Alexander-Rickman map with respect to this simplicial structure. The following lemma gives a geometrical description for the structure  on $\cP_{\cT_{Y^\Delta}}$.

Heuristically, a unified Rickman pillow is similar to the space of a tunnel. The proof of the following lemma uses the idea  of deforming Alexander maps on the boundaries of tunnels by removing one leaf at time, as in Proposition \ref{prop:extension-over-tunnels}.

\begin{lemma}
\label{lemma:contraction-to-leaf}
Let $\sigma_0\in \cT_{Y^\Delta}$ be a leaf. Then, for each $j\in \{1,\ldots, p\}$, there exist
\begin{enumerate}
\item a bilipschitz map 
\[h_j \colon \cR^{\varrho(\sigma_0)}(\sigma_0;\cP,\cT_{Y^\Delta})_j \to \cR^\varrho(Y^\Delta;\cP, \cT_{Y^\Delta})_j,\]
from a puffed Rickman pillow to a unified Rickman pillow, which is the identity on $\cR^{\varrho(\sigma_0)}(\sigma_0;\cP,\cT_{Y^\Delta})_j \cap \Sigma^\varrho(Y^\Delta;\cP,\cT_{Y^\Delta})$, and
 \item a bilipschitz map
 \[g_j \colon R^{\varrho(\sigma_0)}(\sigma_0;\cP,\cT_{Y^\Delta})_j \to \cR^\varrho(\sigma_0;\cP, \cT_{Y^\Delta})_j,\]
from a Rickman pillow to a puffed Rickman pillow, which is the identity on all $(n-1)$-simplices in $\partial R^{\varrho(\sigma_0)}(\sigma_0;\cP,\cT_{Y^\Delta})_j $ except on one, namely $\tau_0$, that is adjacent to $\xi_{\{\sigma_0,\widehat \sigma_0\},j}$, and maps $\tau_0$ onto $\tau_0\cup \xi_{\{\sigma_0,\widehat \sigma_0\},j}$,
 \end{enumerate}
 for which 
\[
\cP_{\cT_{Y^\Delta}} \circ h_j \circ g_j|_{\partial R^{\varrho(\sigma_0)}(\sigma_0;\cP,\cT_{Y^\Delta})_j} \colon  \partial R^{\varrho(\sigma_0)}(\sigma_0;\cP,\cT_{Y^\Delta})_j \to \Sigma^p(\Delta^\square_{n-1})_j
\]
is BLD homotopic to a BLD-controlled expansion of a ${\partial R^{\varrho(\sigma_0)}(\sigma_0;\cP,\cT_{Y^\Delta})_j}^\Delta$-Alexander map. 

The result is quantitative in the sense that the bilipschitz constants of $h_j$ and $g_j$ depend only on the dimension $n$, subdivision index $\beta$, and the size of the tree $\cT_{Y^\Delta}$.
\end{lemma}

\begin{proof}
Let $\sigma'\neq \sigma_0$ be another leaf of $\cT_{Y^\Delta}$, and $\cR_{j,\sigma'}$ be the closure of $\cR^\varrho(Y^\Delta;\cP, \cT_{Y^\Delta})_j\setminus \cR^{\varrho(\sigma')}(\sigma';\cP,\cT_{Y^\Delta})_j$ in the unified Rickman pillow $\cR^\varrho(Y^\Delta;\cP, \cT_{Y^\Delta})_j$. Then $\cR_{j,\sigma'}\cap \cR^{\varrho(\sigma')}(\sigma';\cP,\cT_{Y^\Delta})_j$ is the $(n-1)$-cell $\xi_{\{\sigma',\sigma''\},j}$ previously fixed in the construction, where $\{\sigma',\sigma''\}\in \cT_{Y^\Delta}$.  
Let  $m_{\sigma'}$  be the number of $(n-1)$-simplices in $\cR^{\varrho(\sigma')}(\sigma';\cP,\cT_{Y^\Delta})_j\cap \Sigma^\varrho(Y^\Delta; \cP, \cT_{Y^\Delta})$, and $m_{\sigma''}$ be the number of $(n-1)$-simplices in $\cR^{\varrho(\sigma'')}(\sigma'';\cP,\cT_{Y^\Delta})_j\cap \Sigma^\varrho(Y^\Delta; \cP, \cT_{Y^\Delta})$.

We subdivide $\xi_{\{\sigma',\sigma''\},j}$ into $m_{\sigma'}$ $(n-1)$-simplices for which there exists a  bilipschitz contraction map 
\[h_{j,\sigma'} \colon \cR^\varrho(Y^\Delta;\cP, \cT_{Y^\Delta})_j \to \cR_{j,\sigma'}, \]
that is the identity map on $\cR_{j,\sigma'}\cap \Sigma^\varrho(Y^\Delta; \cP, \cT_{Y^\Delta})$ and is simplicial from $\cR^{\varrho(\sigma')}(\sigma';\cP,\cT_{Y^\Delta})_j \cap \Sigma^\varrho(Y^\Delta; \cP, \cT_{Y^\Delta})$ to $\xi_{\{\sigma',\sigma''\}}$.
 
 Since $\cP|_{\partial R^{\varrho(\sigma')}(\sigma';\cP)} \colon  \partial R^{\varrho(\sigma')}(\sigma';\cP) \to \Sigma^p(\Delta^\square_{n-1})_j$ is an Alexander map, the composition $\cP_{\cT_{Y^\Delta}} \circ h_{j,\sigma'}^{-1}|_{
 \cR_{j,\sigma'} \cap \Sigma^\varrho(Y^\Delta; \cP, \cT_{Y^\Delta})}$
 is an Alexander-Rickman map.

We finish this step by giving  $\partial \cR^{\varrho(\sigma'')}(\sigma'';\cP, \cT_{Y^\Delta})_j \cap \cR_{j,\sigma'}$  
another structure by attaching 
the $(n-1)$-structure $\xi_{\{\sigma',\sigma''\},j}$ to an $(n-1)$-simplex $\tau$ in $\cR^{\varrho(\sigma'')}(\sigma'';\cP, \cT_{Y^\Delta})_j \cap \Sigma^\varrho(Y^\Delta; \cP, \cT_{Y^\Delta})$ for which $\tau \cap \xi_{\{\sigma',\sigma''\},j}$ is an $(n-2)$-cell. 
More precisely, let $\tau'$ be the  $(n-1)$-simplex having space  $|\tau'|= |\tau \cup \xi_{\{\sigma',\sigma''\},j}|$ and  having all its $(n-2)$-faces, except one, the same as that of   $\tau$. Then $\tau \cup \xi_{\{\sigma',\sigma''\},j}$ is an expansion of the simplex $\tau'$ by $ \xi_{\{\sigma',\sigma''\},j}$. Associated to this local modification, $\partial \cR^{\varrho(\sigma'')}(\sigma'';\cP, \cT_{Y^\Delta})_j \cap (\cR_{j,\sigma'} \cup \xi_{\{\sigma',\sigma''\},j})$ is an expansion of an underlying complex having $m_{\sigma''}$ $(n-1)$-simplices by the complex $\xi_{\{\sigma',\sigma''\}}$ with $m_{\sigma'}$ simplices. 
  
By passing to an inner collar of $\cR_{j,\sigma}$ we may now homotope, with a level preserving BLD homotopy, the mapping $\cP_{\cT_{Y^\Delta}} \circ h_{j,\sigma'}^{-1}$ to a BLD-controlled expansion of an Alexander-Rickman map. Thus, we may assume that $h_{j,\sigma'}^{-1}$ is a bilipschitz map for which $\cP_{\cT_{Y^\Delta}} \circ h_{j,\sigma'}^{-1}$ is a BLD-controlled expansion of an Alexander map. Mappings may be chosen so that  the bilipschitz constant of $h_{j,\sigma'}$ depends only on $n$ and $\beta$.

We iterate now the step above along the tree $\cT_{Y^\Delta}$ to obtain a bilipschitz map $h_j$ as a composition of the mappings $h_{j,\sigma'}^{-1}$ above in such a way that the support, i.e., the union of simple covers, of the BLD-controlled expansion is mapped by a scaling at each step. The bilipschitz constant of $h_j$ depends only on $n$, $\beta$, and size of the tree $\cT_{Y^\Delta}$.

The construction of the last mapping $g_j$ for the Lemma follows the previous steps almost verbatim. This completes the proof.
\end{proof}

We set 
\[
\Omega^\varrho(Y^\Delta; \cP, \cT_{Y^\Delta}) =  \bigcup_{j=1}^p \cR^\varrho(Y^\Delta; \cP, \cT_{Y^\Delta})_j,
\] 
and view $\Omega^\varrho(Y^\Delta; \cP, \cT_{Y^\Delta})$ as an expansion of $\PC^\varrho(Y^\Delta;\cP)$. Similarly as for Rickman pillows, we may give $\Omega^\varrho(Y^\Delta; \cP, \cT_{Y^\Delta})$ a simplicial structure as a star of $n$-simplices. With respect to this structure, the map $\cP \circ \lambda_{\cT_{Y^\Delta}} \colon \Sigma^\varrho(Y^\Delta; \cP,\cT_{Y^\Delta}) \to \Sigma^p(\Delta^\square_{n-1})$ extends to a simplicial map 
\[
F_{\cT_{Y^\Delta}} \colon \Omega^\varrho(Y^\Delta; \cP, \cT_{Y^\Delta}) \to \PS^p(\Delta^\square_{n-1}).
\]

\subsection{Expansion of a complex by unified Rickman pillows}

We now glue components of $\Real(U;Y)$ together along  $\Omega^\varrho(Y; \cP, \cT_{Y^\Delta})$. This yields an expansion $\Exp^\varrho(U;Y; \cP, \cT_{Y^\Delta})$ of $U$ in the spirit of  the expansion $\Exp^\varrho(U;Y)$.

Let $G_{\cT_{Y^\Delta}} \colon \Upsilon_Y \to \partial \Omega^\varrho(Y; \cP, \cT_{Y^\Delta})$ be the map induced by the gluing map $G_{(U;Y)} \colon \Upsilon_Y \to \partial \PC^\varrho(Y^\Delta)\subset \Sigma^\varrho(Y;\cP)$, used in the definition of $\Exp^\varrho(U;Y)$, which satisfies $G_{(U;Y)} = \check \lambda_{\cT_{Y^\Delta}} \circ G_{\cT_{Y^\Delta}}$. 
We define
\index{$\Exp^\varrho(U;Y; \cP, \cT_{Y^\Delta})$}
\[
\Exp^\varrho(U;Y; \cP, \cT_{Y^\Delta}) = \Real(U;Y) \bigsqcup_{G_{\cT_{Y^\Delta}}} \Omega^\varrho(Y; \cP, \cT_{Y^\Delta}).
\]

We do not repeat the details similar to that in  the case  $\Exp^\varrho(U;Y)$. We merely emphasize that  $\Omega^\varrho(Y; \cP, \cT_{Y^\Delta})$ and $\Real(U;Y)\setminus |\Upsilon_Y|$ may be regarded as subspaces of  $\Exp^\varrho(U;Y; \cP, \cT_{Y^\Delta})$. In particular, the Alexander map $F_{\cT_{Y^\Delta}}\colon \Omega^\varrho(Y; \cP, \cT_{Y^\Delta}) \to \PS^p(\Delta^\square_{n-1})$ agrees with $\cP$ on the boundary of $\Omega^\varrho(Y; \cP, \cT_{Y^\Delta})$ though gluing. Thus we may extend $F_{\cT_{Y^\Delta}}$ over $\Real(U;Y)$ to a simplicial map 
\[
F_{\cT_{Y^\Delta}} \colon \Exp^\varrho(U;Y; \cP, \cT_{Y^\Delta}) \to \PS^p(\Delta^\square_{n-1})
\]
by defining $F_{\cT_{Y^\Delta}} = \cP$ on $\Real(U;Y)$.

Finally, we comment that $\Exp^\varrho(U;Y; \cP, \cT_{Y^\Delta})$ has a well-defined natural metric structure.

First, there is a length metric on the unified Rickman sheet  $\Sigma^\varrho(Y; \cP, \cT_{Y^\Delta})$ induced by the metrics on the individual $(n-1)$-simplices. Second, for each $j\in\{1,\ldots,p\}$, consider $\cR^{\varrho(\sigma)}(Y; \cP, \cT_{Y^\Delta})_j$ as a cone over $\partial (\cR^{\varrho(\sigma)}(Y; \cP, \cT_{Y^\Delta})_j)$ with respect to an arbitrary, but fixed, base point in the interior. We define in $\cR^{\varrho(\sigma)}(Y; \cP, \cT_{Y^\Delta})_j$ a conical metric with respect to the metric on the boundary $\partial \cR^{\varrho(\sigma)}(Y; \cP, \cT_{Y^\Delta})_j$. We may choose these metrics so that these $n$-cells are bilipschitz equivalent to one another other with a uniform constant depending only on $n$ and $\# Y^{[n-1]}$.

The metric on $\Omega^\varrho(Y; \cP, \cT_{Y^\Delta})$ is now the length metric induced by these conical metrics. The space $\Exp^\varrho(U;Y; \cP, \cT_{Y^\Delta})$ is given the length metric $d_{\Exp}$ induced by the length metrics on $\Real(U;Y)$ and  $\Omega^\varrho(Y; \cP, \cT_{Y^\Delta})$. Since we glue $\Omega^\varrho(U;Y,\cP,\cT_{Y^\Delta})$ along its boundary to $\Real(U;Y)$, we have, with respect to this metric, that the expansion $\Exp^\varrho(U;Y; \cP, \cT_{Y^\Delta})$ is bilipschitz to $|U|$ similarly as $\Exp^\varrho(U;Y;\cP)$ is bilipschitz to $|U|$. We record this observation as a lemma.

\begin{lemma}
\label{lemma:metric-Exp-2}
There exists $L'=L'(n, \varrho)\ge 1$ and an $L'$-bilipschitz homeomorphism 
\[
\widetilde h \colon \Exp^\varrho(U;Y; \cP, \cT_{Y^\Delta}) \to |U|
\]
for which $\widetilde h(\Omega^\varrho(Y; \cP, \cT_{Y^\Delta})) \subset |\Star_U(Y)|$ and $\widetilde h|_{|U|\setminus  |\Star_U(Y)|} = \id$.
\end{lemma}

\section{Simplicial subspace $\X$}

We are now ready to construct simplicial space $\X$. The construction is similar to that for Rickman pillows. In this case, we connect the preimage components of the open pillows in $\PS^p(\Delta^\square_{n-1})$, of the same color, under $F_{\cT_{Y^\Delta}}$.

Recall from Convention \ref{convention:coloring} that the coloring function $c\colon \{1,\ldots, r\} \to \{1,\ldots,p\}$ satisfies $c(j)=j, \quad \text{for} \,\,\, j=1,\ldots p$. 
 Thus, for  $j\in \{1,\ldots,p\}$, the component $\Real(U;Y)_{j}$ of  $\Real(U;Y)$  is mapped by $F_{\cT_{Y^\Delta}}$ to  the pillow $P(\Delta_{n-1})_j$. 

Fix  for each $j$, an $(n-1)$-simplex $\sigma_j$ in the inner boundary component $\Upsilon_{j}$ of $\Real(U;Y)_{ i_j}$ whose projection  $\pi(\sigma)$ is a leaf of  $\cT_{Y^\Delta}$.
Observe that tree $\cT_{Y^\Delta}$ could have been chosen so that leaves with this property exist.
Fix next an $(n-2)$-simplex $\eta_j $  contained in $\sigma_j$ whose projection in $(Y^\Delta)^{[n-2]}$ is not an edge of tree $\cT_{Y^\Delta}$. Simplices $\eta_1,\ldots, \eta_p$,  may be chosen for which their projections on $Y^\Delta$ are  mutually disjoint. We combine the unified Rickman pillow $\cR^{\varrho}(Y^\Delta; \cP, \cT_{Y^\Delta})_j$ with  $\Real(U;Y)_{j}$ by creating an opening at $\eta_j$ between
 $\cR^{\varrho(\sigma_j)}(\sigma_j; \cP, \cT_{Y^\Delta})_j$ and $\Real(U;Y)_{j}$. Since this procedure is formally the same as that in Section \ref{sec:unified-Rickman-pillows},  we give only an outline.

For an index $j\in \{1,\ldots, p\}$, we fix an $(n-2)$-simplex $\eta'_j$, having the same boundary as $\eta_j$, which together with $\eta_j$  bounds an $(n-1)$-cell $\xi_j$ in $\Real(U;Y)_{j}$.

We then connect $\cR^\varrho(Y^\Delta; \cP, \cT_{Y^\Delta})_j$ to $\Real(U;Y)_{j}$ by moving the unified Rickman sheet $\Sigma^\varrho(Y^\Delta;\cP,\cT_{Y^\Delta})$ locally along each cell $\xi_j$ to create openings as in the construction of Rickman pillows. We denote \index{$\X$}
\[
\X = \widetilde \Sigma^\varrho(Y^\Delta;\cP,\cT_{Y^\Delta})
\]
the simplicial subspace obtained after the move, and  $\widetilde \cR^\varrho(Y^\Delta; \cP, \cT_{Y^\Delta})_j, j=1,\ldots, p$, the modified unified Rickman pillows.

Clearly, $\X$ has $r$ complementary components in $\Exp^\varrho(Y; \cP, \cT_{Y^\Delta})$, namely, 
\[
\sfR^\varrho(U; Y; \cP, \cT_{Y^\Delta})_j =  \left\{
\begin{array}{lr}
|\Real(U;Y)_{j}| \cup \widetilde \cR^\varrho(Y; \cP, \cT_{Y^\Delta})_j  & j=1,\ldots,p,\\
| \Real(U;Y)_j|  & j=p+1,\ldots, m.
\end{array}\right.
\]

We fix a map $\widetilde \lambda \colon \widetilde \Sigma^\varrho(Y^\Delta;\cP,\cT_{Y^\Delta}) \to \Sigma^\varrho(Y^\Delta;\cP,\cT_{Y^\Delta})$ which, for each $j\in \{1,\ldots, p\}$, closes the openings $\xi_j$, that is, $\widetilde \lambda$ is bilipschitz with respect to  the inner metric of the complement of spheres $\eta_j\cup \eta'_j$ and maps each $\eta'_j$ bilipschitzly to $\eta_j$, respectively.
Since the cell $\xi_j$ is bilipschitz equivalent to the Euclidean $(n-1)$-ball and $\cR^\varrho(Y^\Delta; \cP, \cT_{Y^\Delta})_j$ is bilipschitz equivalent to a Euclidean ball, we conclude that, for each $j\in \{1,\ldots, p\}$, there exists a Lipschitz map
\[
\bar \psi_j \colon \Real(U;Y)_{j} \to \sfR^\varrho(U;Y; \cP, \cT_{Y^\Delta})_j
\]
which is bilipschitz with respect to the inner metrics when restricted to the interiors of the spaces, the identity on $\partial \Real(U;U)_{j}\setminus \interior\;\xi_j$, and a bilipschitz embedding on $\xi_j$.

Since $\widetilde \lambda$ is a simplicial isomorphism from $\X$ to $\Sigma^\varrho(Y^\Delta;\cP,\cT_{Y^\Delta})$, we conclude that the map
\[
\varphi = F_{\cT_{Y^\Delta}} \circ \widetilde \lambda \colon \X \to \Sigma^p(\Delta^\square_{n-1})
\]
is a Alexander--Rickman map.

We upgrade now Lemma \ref{lemma:contraction-to-leaf} to a contraction statement for complementary components of $\X$. Recall that $\Upsilon_i$ is the inner boundary component of $\Real(U;Y)_i$.

\begin{proposition}
\label{prop:contraction-weaving}
For each $j\in \{1,\ldots, p\}$ there exists a weaving map 
\[\bar \psi_j \colon |\Real(U;Y)_{j}| \to \sfR^\varrho(U; Y; \cP, \cT_{Y^\Delta})_j\]
 for which  $\bar \psi_j(|\Upsilon_{j}|) = \X \,\cap \, \sfR^\varrho(U;Y; \cP, \cT_{Y^\Delta})_j$,  
and   $\varphi \, \circ \, \bar \psi_j|_{|\Upsilon_{j}|} \colon  |\Upsilon_{j}| \to \Sigma^p(\Delta^\square_{n-1})_j$ is a BLD-controlled expansion of an $\Upsilon_{j}^\Delta$-Alexander map.
\end{proposition}

\begin{remark}\label{rmk:contraction-weaving}
For $j\in\{p+1,\ldots, m\}$, Proposition \ref{prop:contraction-weaving} holds with $\sfR^\varrho(U; Y; \cP, \cT_{Y^\Delta})_j =|\Real(U;Y)_{j}|$ 
and $\bar \psi_j$ the identity map.
\end{remark}

\begin{proof}
We fix first $j\in \{1,\ldots, p\}$. Let $\widetilde \cR_j$ be the closure of the interior of $\widetilde \cR^\varrho(Y; \cP, \cT_{Y^\Delta})_j$ in the inner metric and let 
\[\lambda_j \colon \widetilde \cR_j\cup |\Real(U;Y)_{j}| \to \widetilde \cR^\varrho(Y; \cP, \cT_{Y^\Delta})_j\cup |\Real(U;Y)_{j}|\]
 be the natural identification map obtained as the completion of the identity of the interior. We denote 
 \[\widetilde \X_j = \partial \widetilde \cR_j \cup |\Upsilon_{j}| \setminus \interior \xi_j=\partial (\widetilde \cR_j \cup |\Real(U;Y)_{j}|)\setminus \Sigma_i,\] 
 and $\widetilde \varphi_j = \varphi \circ \lambda_j \colon \widetilde \X_j \to \Sigma^p(\Delta^\square_{n-1})_j$.

Since the change of simplicial structure from $\partial \cR^{\varrho(\sigma_j)}(\sigma_j; \cP, \cT_{Y^\Delta})_j$ to $\partial \widetilde \cR^{\varrho(\sigma_j)}(\sigma_j; \cP, \cT_{Y^\Delta})_j$ has no role in the proof of Lemma \ref{lemma:contraction-to-leaf}, we may first apply Lemma \ref{lemma:contraction-to-leaf} to contract $\widetilde \cR_j$ to a copy of $\widetilde \cR^{\varrho(\sigma_j)}(\sigma_j; \cP, \cT_{Y^\Delta})_j$ in $\widetilde \cR_j$,  then use the same argument to contract this copy of  $\widetilde \cR^{\varrho(\sigma_j)}(\sigma_j; \cP, \cT_{Y^\Delta})_j$ into $|\Real(U;Y)_{j}|$. Now a simple adaptation of the argument of  Lemma \ref{lemma:contraction-to-leaf} yields a bilipschitz homeomorphism $h_j \colon |\Real(U;Y)_{j})|\to \widetilde \cR_j \cup |\Real(U;Y)_{j}|$ for which $h_j(|\Upsilon_{j}^\Delta|) = \widetilde X_j$
 and $\widetilde \varphi_j \circ h_j \colon |\Upsilon_{j}^\Delta|\to \Sigma^p(\Delta^\square_{n-1})_j$ is a BLD-controlled expansion of an $\Upsilon_{j}^\Delta$-Alexander map. 

Now the composition $\bar \psi_j = \lambda_j \circ h_j \colon |\Real(U;Y)_{j}| \to \sfR^\varrho(U;Y;\cP,\cT_{Y^\Delta})_j$ is a weaving map having the property that 
\[
\varphi \circ \bar \psi_j = \widetilde \varphi_j \circ h_j \colon |\Upsilon_{j}^\Delta|\to \Sigma^p(\Delta^\square_{n-1})_j
\]
is a BLD-controlled expansion of an $\Upsilon_{j}^\Delta$-Alexander map.
\end{proof}

\chapter{Proof of the Weaving theorem}
\label{chap:proof-of-combination-I}

We now give the proof of Theorem \ref{thm:combination-I}. We name this short section  a chapter because of its role in the final part of the proof of the Quasiregular cobordism theorem.

We begin by recalling the statement of the Weaving theorem.

\Quasiregularcombinationtheorem*

\begin{proof}
We begin by introducing an essential partition of $Z$ into subcomplexes of uniformly bounded sizes. More precisely, since $Z$ is a obtained by a localized channeling transformation of a separating complex $Z'$ in $\Refine^{\nu(k-1)}(K)$, that is, $Z = \Channel_{\Refine^{\nu k}(K)}(Z';\fL_1)$, we may fix a family $\{(U_q,Y_q)\}_{q \in (Z')^{[n-1]}}$ of pairs where $U_q \subset \Refine^{\nu k}(K)$ is a good complex, $Y_q \subset U_q$, and $\{Y_q\}_{q\in (Z')^{[n-1]}}$ is an essential partition of $Z$ into adjacently connected subcomplexes for which $\max \{\# Y_q^{[n-1]}\colon q\in (Z')^{[n-1]}\}$ depends only on $n$ and $\nu$, hence only on $n$ and $K$. See Sections \ref{sec:Z-localization} and \ref{sec:construction-evolution}.

For each $q\in (Z')^{[n-1]}$, we apply Theorem \ref{thm:pillow-adjustment} to obtain a rank function $\varrho_q \colon (Y^\Delta_q)^{[n-1]} \to \Z$ which admits a pillow expansion $\Exp^{\varrho_q}(U_q,Y_q)$ and pillow map $\cP_q \colon \PC^{\varrho_q}(Y_q^\Delta) \to \PS^p(\Delta^\square_{n-1})$.

We fix, for each $Y^\Delta_q$, a spanning tree $\cT_q$ and pass, as in Section \ref{sec:unified-Rickman-pillows}, from $\Exp^{\varrho_q}(U_q,Y_q)$ and $\cP_q$ via $\cT_q$ to space $\Exp^{\varrho_q}(U_q; Y_q; \cP_q; \cT_q)$ and map $F_{\cT_q} \colon \Exp^{\varrho_q}(U_q; Y_q; \cP_q; \cT_q) \to \PS^p(\Delta^\square_{n-1})$. Let also $\X_q=\widetilde \Sigma^{\varrho_q}(Y_q^\Delta; \cP_q, \cT_{Y^\Delta_q})$ be a simplicial set admitting an Alexander--Rickman map $\varphi_q \colon \X_q \to \Sigma^p(\Delta^\square_{n-1})$ and, for $j=1,\ldots, p$, $\sfR^{\varrho_q}(U_q;Y^\Delta_q; \cP,\cT_q)_j$ complementary components of $\X_q$. For each $j\in \{1,\ldots, p\}$, let $\bar \psi_{q,j} \colon |\Real(U_q;Y_q)_{j}|\to \sfR^{\varrho_q}(U_q;Y^\Delta_q; \cP_q, \cT_q)_j$ be a weaving map as in Proposition \ref{prop:contraction-weaving}. 

Since $\{ Y^\Delta_q\}_q$ is a partition of $Y^\Delta$, we may now define a rank function $\varrho \colon (Y^\Delta)^{[n-1]}\to \Z_+$ by $\varrho|_{(Y^\Delta_q)^{[n-1]}} = \varrho_q$, and set $\sfR^\varrho = \bigcup_q\sfR^{\varrho_q}(U_q;Y_q^\Delta; \cP_q, \cT_q)$. 

Since the space $\sfR^{\varrho_q}(U_q;Y^\Delta_q; \cP_q, \cT_q)$ expands the subcomplex $U_q$ and agrees with $U_q$ on $\partial U_q$,  we obtain a composite expansion $\EXP^\varrho(\Refine^{\nu k}(K); Z)$ of $\Refine^{\nu k}(K)$ induced by $\sfR^\varrho$.

Let $\X = \bigcup_q \X_q \subset \EXP^\varrho(\Refine^{\nu k}(K);Z)$. The mappings $\bar \psi_{q,j}$ combine now to weaving maps $\bar \psi_{\Sigma,\X} \colon |\Real_{\Refine^{\nu k}(K)}(Z;\Sigma)| \to \bar \beta_\X(\Sigma)$ for the boundary components $\Sigma$, and mappings $\varphi_q$ to an Alexander--Rickman map $\varphi_\X \colon \X \to \Sigma^p(\Delta^\square_{n-1})$.

By Lemma \ref{lemma:metric-Exp-2}, each expanded space $\Exp^{\varrho_q}(U_q; Y_q; \cP_q; \cT_q)$ is bilipschitz equivalent to $|U_q|$. Thus $\EXP^\varrho(\Refine^{\nu k}(K);Z)$ is bilipschitz equivalent to $|\Refine^{\nu k}(K)|$. Let $\theta \colon |\Refine^{\nu k}(K)|\to \EXP^\varrho(\Refine^{\nu k}(K); Z)$ be one such bilipschitz homeomorphism. 

Take $X = \theta^{-1}\X$ and set $\bar \psi_\Sigma = \theta^{-1} \circ \bar \psi_{\Sigma,\X} \colon  |\Real_{\Refine^{\nu k}(K)}(Z;\Sigma)| \to \bar \beta_X(\Sigma)$. 
Then $\bar \psi_\Sigma$ is a weaving map for which  
\[\varphi_\X \circ \theta \colon \bar\psi_\Sigma|_{|\Upsilon(Z;\Sigma)|} \colon |\Upsilon(Z;\Sigma)|\to \Sigma^p(\Delta^\square_{n-1})\] is a BLD-controlled expansion of a $\Upsilon(Z;\Sigma)^\Delta$-Alexander map. Set $\varphi= \varphi_\X \circ \theta$. This concludes the proof.
\end{proof}

%%%%%%%%%%%%%%%%%%%%%%%%%%%%%%%%%%%%%%%%%%%%%%%%%%%%%%%%%%%%%%%%%%%%%%%%%%%%%%%%
%%%%%%%%%%%%%%%%%%%%%%%%%%%%%%%%%%%%%%%%%%%%%%%%%%%%%%%%%%%%%%%%%%%%%%%%%%%%%%%%
%%%%%%%%%%%%%%%%%%%%%%%%%%%%%%%%%%%%%%%%%%%%%%%%%%%%%%%%%%%%%%%%%%%%%%%%%%%%%%%%
%%%%%%%%%%%%%%%%%%%%%%%%%%%%%%%%%%%%%%%%%%%%%%%%%%%%%%%%%%%%%%%%%%%%%%%%%%%%%%%%
%%%%%%%%%%%%%%%%%%%%%%%%%%%%%%%%%%%%%%%%%%%%%%%%%%%%%%%%%%%%%%%%%%%%%%%%%%%%%%%%
%%%%%%%%%%%%%%%%%%%%%%%%%%%%%%%%%%%%%%%%%%%%%%%%%%%%%%%%%%%%%%%%%%%%%%%%%%%%%%%%

\part{Applications}
\label{part:Applications}

In this part, we discuss applications of  quasiregular deformation theorems and the Quasiregular cobordism theorem (Theorem \ref{intro-thm:qrcobordism}). Before studying higher dimensional versions of Rickman's large local index theorem and the Heinonen--Rickman theorem on wild branching, we discuss two immediate applications.

\section{Quasiregular mappings with prescribed preimages}
\label{sec:prescribed_preimages}

As discussed in the introduction, an immediate consequence of Theorem \ref{intro-thm:qrcobordism} is an observation that, for $n\ge 3$, there exist quasiregular maps $\bS^n \to \bS^n$ with arbitrarily assigned preimages. The corresponding statement is false in dimension $n=2$ as Remark \ref{rmk:preimage-failure-dim2} shows.

\introthmpreimages*

%\begin{theorem}
%\label{intro-thm:QR_preimages}
%Let $n\ge 3$, $p\ge 1$, and let $z_1,\ldots, z_p$ be distinct points in $\bS^n$ and let $Z_1,\ldots, Z_p$ be mutually disjoint finite non-empty sets in $\bS^n$.  Then there exists a quasiregular map $f\colon \bS^n \to \bS^n$ satisfying $f^{-1}(z_i) = Z_i$ for each $i=1,\ldots, p$.
%\end{theorem}

\begin{proof}
Let $B_1,\ldots, B_p\, (p\geq 2)$ be mutually disjoint $n$-cells containing points $z_1,\ldots, z_p$ in their interiors, respectively. We fix, for each $x \in \bigcup_i Z_i$,
a PL $n$-cell  $G_x \subset \bS^n$ with $x\in \interior G_x$ so that cells in $\{G_x\colon x \in \bigcup_i Z_i\}$ are pairwise disjoint.
For each $i=1,\ldots, p$, let $G_i = \bigcup_{x\in Z_i} G_x$

By Theorem \ref{intro-thm:qrcobordism}, there exists a quasiregular map
\[
f \colon \bS^n\setminus \interior \bigcup_{i=1}^p G_i \to \bS^n\setminus \interior \bigcup_{i=1}^p B_i,
\]
 which maps $\partial G_x$ onto $\partial B_i$ for each $x\in Z_i$.  We extend $f$ to a map $f\colon \bS^n\to \bS^n$ by taking the cone extension $G_x \to B_i$ from  $f|_{\partial G_x} \colon \partial G_x \to \partial B_i$, for each $x\in Z_i$ and $i=1,\ldots, p$. The extension is also quasiregular.  
 
The assertion in the case $p=1$ is a direct consequence of that in $p=2$.
\end{proof}

\begin{remark}
By Picard constructions (\cite{Rickman_Acta}, \cite{Drasin-Pankka}), we may assign empty pre-images at finitely many points in $\bS^n$ for  quasiregular maps $\R^n \to \bS^n$. Theorem \ref{intro-thm:QR_preimages} complements this result by showing that we may also prescribe  finite pre-images at finitely many points for  quasiregular maps $\bS^n \to \bS^n$.
\end{remark}

\begin{remark}
\label{rmk:preimage-failure-dim2}
Theorem \ref{intro-thm:QR_preimages} does not hold in dimension two, which we may see as follows.

There is no entire function $f\colon \R^2\to \R^2$ whose preimages of three distinct points $z_1, z_2$, and $z_3$, consist of one, one, and two points, respectively. Indeed, if there were an entire $f$ with this property then,  by the Big Picard Theorem, $f$ would be a polynomial of degree $p\geq 2$. Then the derivative $f'$ would have at least $3p-4$ zeros counting multiplicities; hence $p-1=\deg f'\geq 3p-4$, which is impossible.

In view of the Stoilow factorization theorem \cite[Theorem 5.5.1]{Astala-Iwaniec-Martin-book}, every quasiregular map $ \R^2\to \R^2$ is composition $h\circ \varphi$ of a quasiconformal homeomorphism $\varphi\colon \R^2\to \R^2$ and an entire function $h\colon \R^2\to \R^2$. Therefore,
there is no quasiregular map $f\colon \bS^2\to \bS^2$ whose preimages of four distinct points $z_1, z_2,z_3$, and $z_4$ consist of one, one, one, and two points, respectively.
\end{remark}

\section{Radial limits of bounded quasiregular mappings}

A classical theorem of Fatou states that bounded analytic functions on the unit disk in $\R^2$ have radial limits almost everywhere on the unit circle. In comparison, the picture for boundary behavior of bounded spatial quasiregular maps in higher dimensions is  incomplete.

There are several growth conditions under which a spatial quasiregular map has radial limits; see, for example, Martio and Rickman  \cite{Martio-Rickman_boundary} and Rajala \cite{Rajala_Radial}. As for the non-existence, Martio and Srebro \cite{Martio-Srebro_locally_injective}, and Heinonen and Rickman \cite[Section 9.2]{Heinonen-Rickman_Duke} have constructed bounded spatial quasiregular mappings on the unit ball for which the radial limit does not exist on a set of Hausdorff dimension arbitrarily close to the dimension of the boundary.

The proof of the $3$-dimensional theorem of Heinonen and Rickman \cite[Section 9.2]{Heinonen-Rickman_Duke} applies almost verbatim to all dimensions $n\geq 3$, provided that the  theorem of Berstein and Edmonds used in their argument  is replaced by Theorem \ref{intro-thm:cobordism_short}.
For this reason, we merely state the result.

\begin{corollary}
\label{cor:radial}
Let $n\geq 3$, and $\Gamma$ be a geometrically finite torsion free Kleinian group without parabolic elements acting on the $(n-1)$-sphere $\bS^{n-1}$ whose limit set $\Lambda_\Gamma$ is not the whole sphere. Then there exists  a bounded quasiregular map $f\colon B^n\to B^n$ such that $f $ has no radial limit at points in $\Lambda_\Gamma$.
\end{corollary}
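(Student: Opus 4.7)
The plan is to follow the construction of Heinonen--Rickman in \cite{Heinonen-Rickman_Duke}, replacing their $3$-dimensional use of the Berstein--Edmonds extension theorem with the higher-dimensional version given by Theorem \ref{intro-thm:Berstein-Edmonds_extension} (or, equivalently, Theorem \ref{intro-thm:Berstein-Edmonds_existence}). Since $\Gamma$ is geometrically finite, torsion free, and without parabolic elements, the action on $\mathbb H^n$ is convex cocompact. Consequently, the convex core $C_\Gamma / \Gamma$ is a compact hyperbolic $n$-manifold with (possibly empty) totally geodesic-like boundary, and the Dirichlet fundamental domain $D$ in $\mathbb H^n$ for $\Gamma$ has a tessellation by $\Gamma$-translates whose combinatorial structure is encoded in the Cayley graph of $\Gamma$ with respect to a suitable finite symmetric generating set $S$.

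First, I would fix a $\Gamma$-invariant PL structure on $B^n \cong \mathbb H^n \cup \Omega_\Gamma$ (via a bilipschitz identification of $B^n$ with $\mathbb H^n$ in the interior) so that each $\Gamma$-translate $g D$ is a PL $n$-cell whose boundary meets its neighbors $gs D$, $s\in S$, in $(n-1)$-faces. Enumerate the generations by the word metric on $\Gamma$: let $D_k$ be the union of $g D$ with $|g|_S = k$. As $k \to \infty$, the sets $D_k$ accumulate exactly on $\Lambda_\Gamma$, and by convex cocompactness they shrink toward $\Lambda_\Gamma$ at a controlled exponential rate with uniform shape (in the spherical metric on $\overline{B^n}$). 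For each $g$, the translate $gD$ has a fixed PL type with $\#S$ distinguished boundary $(n-1)$-faces, so after rescaling it carries a uniform geometry.

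Second, on each $gD$ I would construct a PL (hence quasiregular) branched covering map $f_g\colon gD \to \bS^n \setminus \interior (B_1 \cup \cdots \cup B_{\#S})$ to a sphere with $\#S$ disjoint $n$-balls removed, prescribing on the boundary faces of $gD$ Alexander-type maps of two different degrees (say alternating between $d_+$ and $d_-$) in such a way that, whenever two translates $gD$ and $gsD$ meet along a common face, their prescribed boundary maps agree. This system of compatible boundary data is realizable because Theorem \ref{intro-thm:Berstein-Edmonds_existence} allows one to prescribe, on each boundary component of the $n$-cell $gD$, an Alexander map (up to free simple covers) onto the corresponding boundary sphere $\partial B_i$ without topological obstruction, as long as $n\ge 3$ and the chosen surjection $c\colon \{1,\dots,\#S\} \to \{1,\dots,\#S\}$ is consistent across the Cayley graph. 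Gluing the $f_g$'s together yields a globally defined PL branched cover $f\colon B^n \to \bS^n$ which, because the images $\bS^n\setminus\interior(B_1\cup\cdots\cup B_{\#S})$ are contained in a fixed compact subset of $\bS^n \setminus \{\infty\} \subset \R^n$, takes values in a bounded region; after post-composition with a bilipschitz embedding into $B^n$ it becomes the desired bounded quasiregular map. Uniform quasiregularity follows because only finitely many PL types of pieces occur (one per $\Gamma$-orbit class), so the local distortion is bounded uniformly.

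Finally, to rule out radial limits on $\Lambda_\Gamma$, I would argue as in \cite[Section 9.2]{Heinonen-Rickman_Duke}: a radial ray to a point $\xi \in \Lambda_\Gamma$ passes through infinitely many translates $g_k D$ with $|g_k|_S \to \infty$; by the alternating choice of boundary degrees and the compatible combinatorics used to define the $f_g$'s, the values $f(\gamma(t))$ along such a ray visit, infinitely often, two disjoint open sets in $B^n$ (corresponding to the interiors of two different removed balls $B_i, B_j$ after the post-composition). Hence $\lim_{t\to 1^-} f(\gamma(t))$ fails to exist for every $\xi \in \Lambda_\Gamma$. The main obstacle I expect is the bookkeeping in the second step: choosing the coloring function $c$ and the alternating boundary assignments consistently across the Cayley graph so that (i) the boundary data on every shared face in $gD\cap gsD$ match, (ii) two preassigned accumulation values in $B^n$ are reached along every ray to $\Lambda_\Gamma$, and (iii) only finitely many isomorphism classes of prescribed boundary maps occur, which is what guarantees uniform quasiregularity. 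This is a purely combinatorial exercise on the Cayley graph of $\Gamma$, analogous to the two-coloring argument in \cite{Heinonen-Rickman_Duke} but carried out relative to the surjection $c$ allowed by Theorem \ref{intro-thm:Berstein-Edmonds_existence}, and works in all dimensions $n\ge 3$ because Theorem \ref{intro-thm:Berstein-Edmonds_existence} removes the dimension restriction that was previously inherited from Berstein--Edmonds.
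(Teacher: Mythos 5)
Your proposal follows exactly the approach the paper intends: transplant the Heinonen--Rickman construction from \cite[Section 9.2]{Heinonen-Rickman_Duke} into dimensions $n\ge 3$ by substituting Theorem~\ref{intro-thm:Berstein-Edmonds_existence} for the $3$-dimensional Berstein--Edmonds extension theorem wherever it is invoked. The paper's proof is merely these two sentences of reference; your sketch unpacks what that transplantation entails (Cayley-graph decomposition by fundamental domains, per-cell branched covers with matching boundary Alexander data, and the oscillation argument along radial rays to $\Lambda_\Gamma$), and correctly identifies that the dimension restriction being lifted is precisely the one previously inherited from Berstein--Edmonds.
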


\section{Rickman's large local index theorem}

In this section we prove a version of Rickman's large local index theorem in \cite{Rickman-AASF} for quasiregular mappings $\bS^n\to \bS^n$ in dimensions $n\ge 3$.\index{Large local index theorem}

\introthmRickmanAASF*

Here $\deg(F)$ is the degree of the map $F$ and $i(x,F)$ the \emph{local index of $F$ at the point $x\in \bS^n$}. We refer to \cite{Rickman_book} for the formal definition of the local index and recall that $i(x,F)$ has the property that, each $x\in \bS^n$ has a neighborhood $U$ for which
\[
i(x,F) = \max_{y} \#(V \cap F^{-1}(y))
\]
for each neighborhood $V$ of $x$ contained in $U$; see \cite[Section I.4.2]{Rickman_book}.

The Cantor set $E_F$ in Theorem \ref{intro-thm:Rickman-AASF} is tame, i.e., there is a homeomorphism $h\colon \bS^n\to \bS^n$ for which $h(E_F)$ is the standard ternary Cantor set contained in $\bS^1\subset \bS^n$. In Section \ref{sec:Heinonen-Rickman}), we construct a similar example in dimension $n=4$ where the large local index set is a wild Cantor set. For the definition of wild Cantor sets, see Definition \ref{def:wild}.

The outline of the proof of Theorem \ref{intro-thm:Rickman-AASF} follows the $3$-dimensional theorem of Rickman in \cite{Rickman-AASF}.
At the core of the construction, however, is the Quasiregular cobordism theorem (Theorem \ref{intro-thm:qrcobordism}).

The essential part of  Theorem \ref{intro-thm:Rickman-AASF} is contained in the following proposition. Having this proposition at our disposal, a standard Schottky group argument yields Rickman's large local index theorem in all dimensions $n\geq 3$.

\begin{proposition}
\label{prop:Rickman-AASF}
Let $n\ge 3$ and $p\ge 2$. Then there exists a constant $\sK=\sK(n,p)\ge 1$ for the following: for each $c>0$, there exist pairwise disjoint balls $B^n(x_1,r), \ldots, B^n(x_p,r)$ in $\bS^n$  and a $\sK$-quasiregular mapping $f \colon \bS^n \to \bS^n$ for which $i(x_1,f) = \cdots = i(x_p,f) \ge c$ and which, for each $j=1,\ldots, p$, satisfies $f^{-1}f(B^n(x_j,r)) = B^n(x_j,r)$.
\end{proposition}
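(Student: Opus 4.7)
The plan is to apply the Mixing Theorem (Theorem \ref{intro-thm:mixing_short}) to $\bS^n$ with $p$ small balls removed and then to close up the resulting map by a standard radial cone construction over those balls. To set up, I will fix pairwise distinct points $x_1, \ldots, x_p \in \bS^n$ and a radius $r>0$ so small that the closed Euclidean balls $B_j := B^n(x_j,r)$ are pairwise disjoint, and let $M := \bS^n \setminus \interior(B_1 \cup \cdots \cup B_p)$. Then $M$ is an oriented PL $n$-manifold with $p$ boundary components $\Sigma_j := \partial B_j$, whose PL type is determined by $n$ and $p$.

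Given $c>0$, I will apply Theorem \ref{intro-thm:mixing_short} to this $M$ with $m := p$ and $d_0 := \lceil c \rceil$, obtaining a $\sK_1$-quasiregular branched covering map
\[
g\colon M \to \bS^n \setminus \interior(B_1' \cup \cdots \cup B_p')
\]
of degree $d \geq c$, where $B_1', \ldots, B_p'$ are pairwise disjoint closed Euclidean balls in $\bS^n$ and the constant $\sK_1 = \sK_1(n,p)$ depends only on $n$ and $p$; each restriction $g|_{\Sigma_j}$ is an Alexander map expanded by free simple covers mapping onto some target sphere $\partial B_i'$. After relabeling the target balls, I may assume $g|_{\Sigma_j}\colon \Sigma_j \to \partial B_j'$ for every $j$. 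Since the induced coloring of boundary components is then a bijection, the standard degree formula for branched covers yields $\deg g|_{\Sigma_j} = \deg g = d$ for every $j$.

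Next, I will extend $g$ across each $B_j$ by a cone construction. Let $y_j \in \bS^n$ denote the center of $B_j'$, and fix similarities $\psi_j\colon B_j \to \ball$ and $\psi_j'\colon B_j' \to \ball$ sending $x_j$ and $y_j$, respectively, to the origin. Setting $h_j := \psi_j' \circ g|_{\Sigma_j} \circ \psi_j^{-1}\colon \bS^{n-1}\to \bS^{n-1}$, I define
\[
f|_{B_j}\bigl(\psi_j^{-1}(t\omega)\bigr) := (\psi_j')^{-1}\bigl(t\cdot h_j(\omega)\bigr), \qquad t \in [0,1],\ \omega \in \bS^{n-1}.
\]
By Convention \ref{convention:standard} the boundary map $h_j$ is $\sK_0$-quasiregular with $\sK_0 = \sK_0(n)$, so a routine calculation in spherical coordinates shows that the cone extension $f|_{B_j}$ is $\sK_2$-quasiregular with $\sK_2 = \sK_2(n)$, satisfies $f(x_j) = y_j$, and has local index $i(x_j, f) = \deg h_j = d$. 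Gluing $g$ with the $p$ cone extensions along the spheres $\Sigma_j$ will then produce a map $f\colon \bS^n \to \bS^n$ which is $\sK$-quasiregular for $\sK := \max(\sK_1, \sK_2) = \sK(n,p)$ and satisfies $i(x_1, f) = \cdots = i(x_p, f) = d \geq c$.

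Finally, the invariance conditions on the balls will follow directly from the construction. Since $g(M) \subset \bS^n \setminus \interior(B_1' \cup \cdots \cup B_p')$, one has $g^{-1}(\interior B_j') = \emptyset$ for each $j$, while $f|_{B_j}\colon B_j \to B_j'$ is surjective with $f|_{B_j}^{-1}(B_j') = B_j$; therefore $f(B_j) = B_j'$ and $f^{-1}f(B_j) = f^{-1}(B_j') = B_j$, as required. The decisive — and only non-routine — step will be the invocation of the Mixing Theorem, which supplies the quasiregular building block $g$ with distortion depending only on $n$ and $p$ and with arbitrarily large degree; the cone extensions over the removed balls are a classical mechanism for converting boundary degree into large local index at an interior point.
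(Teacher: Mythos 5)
Your overall strategy matches the paper's: invoke the Mixing Theorem (you use Theorem \ref{intro-thm:mixing_short}, the paper works directly with the technical form Theorem \ref{intro-thm:mixing} via an explicit separating complex; your shortcut is legitimate because the PL type of $\bS^n\setminus\interior(B_1\cup\cdots\cup B_p)$ is determined by $n$ and $p$), then extend radially into the removed balls. The gap is in the radial extension.

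You define $f|_{B_j}$ by the \emph{linear} cone $t\omega\mapsto t\cdot h_j(\omega)$ and assert a "routine calculation" gives $\sK_2 = \sK_2(n)$. This is false: the constant grows with the degree. The boundary map $h_j$ is an Alexander map (expanded by simple covers) of degree $d\geq c$, so viewed on the Euclidean unit sphere its tangential derivative has magnitude of order $d^{1/(n-1)}$ at almost every point. The linear cone contributes radial derivative of magnitude $1$, so the singular values of $D(f|_{B_j})$ at a generic point are approximately $1, d^{1/(n-1)}, \ldots, d^{1/(n-1)}$. Thus $\|D(f|_{B_j})\|^n/J_{f|_{B_j}} \approx d^{n/(n-1)}/d = d^{1/(n-1)}$, which tends to infinity as $c\to\infty$. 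So the distortion of your $f$ is not bounded in terms of $n$ and $p$ alone, and the proposition is not established. (The familiar special case $n=2$: the linear cone over $e^{i\theta}\mapsto e^{id\theta}$ is $d$-quasiregular, not uniformly quasiregular.)

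The fix, which is what the paper's Step 3 does, is to use a \emph{radial power} extension in the spirit of \cite[Example I.3.2]{Rickman_book}: replace the radial factor $t$ by $t^{\beta_j}$ for a suitable exponent $\beta_j$ depending on the degree. Then both the radial and tangential parts of the derivative acquire the same factor $t^{\beta_j - 1}$ up to multiplicative constants controlled by the BLD constant $L_0(n)$ of the Alexander map (Convention \ref{convention:standard}), and the extension is quasiregular with a constant depending only on $n$, independently of $d$. With this change, the remainder of your argument — including the bijectivity of the boundary coloring, the equality $\deg g|_{\Sigma_j} = \deg g$, the computation $i(x_j,f) = \deg h_j$, and the verification of $f^{-1}f(B_j) = B_j$ — goes through.
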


\begin{proof}[Proof of Theorem \ref{intro-thm:Rickman-AASF}]
Let $f\colon \bS^n \to \bS^n$ be a quasiregular mapping as in Proposition \ref{prop:Rickman-AASF} and let, for each $j=1,\ldots, p$, $\gamma_j\colon \bS^n \to \bS^n$ be the reflection with respect to the sphere $\partial B^n(x_j,r)$ in $\bS^n$, respectively. Let $\Gamma$ be the (Schottky) groups generated by the set $\{\gamma_1,\ldots, \gamma_p\}$. For each word $w=i_1\cdots i_k$ with letters $i_j \in \{1,\ldots, p\}$, let $\gamma_w = \gamma_{i_1}\circ \cdots \circ \gamma_{i_k} \in \Gamma$.

Let $M_0 = \bS^n\setminus \bigcup_{j=1}^p B^n(x_j,r)$ and, for each $k\in \Z_+$, let $M_k = M_{k-1} \cup \bigcup_{i=1}^p \gamma_i M_{k-1}$. Let now $E_f$ be the Cantor set $E_f = \bigcap_{k=1}^\infty (\bS^n \setminus M_k)$.  Then there exists a unique map $F\colon \bS^n \to \bS^n$ for which $F|_{M_0} = f$ and, for each word $w=i_1\cdots i_k$, we have $F\circ \gamma_w|_{M_0} = \gamma_w \circ F|_{M_0}$. The mapping $F$ is $\sK$-quasiregular and satisfies the conclusions of the claim; we refer to \cite{Rickman-AASF} for details.
\end{proof}

\begin{proof}[Proof of Proposition \ref{prop:Rickman-AASF}]

We reduce the proof to the construction of a separating complex $Z$, and an application of the Cubical quasiregular cobordism theorem (Theorem \ref{intro-thm:qrcobordismII}).

Let $c>0$ and $p\ge 2$. We construct now a quasiregular mapping $f \colon \bS^n \to \bS^n$ which has local index $i(\cdot,f)\ge c$ at $x_1,\ldots, x_p\in \bS^n$.

\medskip
\emph{Step 1: Initial configuration.} 
Let $Q$ be an $n$-cube, $k\in \N$ with $3^{k} \ge p+1$, and $\Refine^{k}(Q)$ the $k$-th refinement of $Q$. Let  $S$ be the topological sphere obtained by gluing two copies of $Q$ together along the boundary. In other words, $S$ is the quotient space $(Q\times \{1,2\})/{\sim}$,  where $\sim$ is the smallest equivalence relation satisfying $(x,1)\sim (x,2)$ for all $x\in \partial Q$.

Let  $K$ be the natural double of $\Refine^{k}(Q)$ in $S$, of which $\Refine^{k}(Q)$ is  a subcomplex of $K$. Let also $q\subset Q$ be a face of $Q$, and $Q_1,\ldots, Q_{p-1}$ be adjacent $n$-cubes in $\Refine^{k}(Q)$ such that each $Q_j$ has a face $q_j$ contained in $q$ and their union $\bigcup_{i=1}^{p-1} Q_j$, in the metric $d_{\Refine^{k}(Q)}$, is isometric to a Euclidean cube $[0,1]^{(n-1)}\times [0,p-1]$.

For each $j=1,\ldots, p-1$, let $Q'_j$ be the unique $n$-cube in $K\setminus \Refine^{k}(Q)$  for which $q_j=Q_j \cap Q'_j$ is a common face of $Q_j$ and $Q'_j$. Also, for  each $j=1,\ldots, p-1$, let $Q''_j=Q_j$ if $j$ is odd, and $Q''_j=Q'_j$ if $j$ is even, and let $\widetilde Q_j$ be the center cube of $Q''_j$ in the refinement $\Refine (K)$. We fix an $n$-cube $\widetilde Q_p \in \Refine(K)$  which meets neither $q$ nor any one of the cubes $Q''_1,\ldots, Q''_{p-1}$.

Let now $\widetilde K= \Refine(K) \setminus \bigcup_{i=1}^p \widetilde Q_j$ be the subcomplex of $\Refine(K)$ with $n$-cubes $\widetilde Q_1,\ldots, \widetilde Q_p$ removed, and  $Z$ be the subcomplex  of $ \widetilde K$ for which
\[
|Z| =  \bigcup_{i=1}^{p-1} \partial Q''_j .
\]
Then $Z$ is a separating complex of $\widetilde K$.

\medskip
\emph{Step 2: Application of the Quasiregular cobordism theorem to  $\widetilde K$.}
In view of Theorem \ref{intro-thm:qrcobordism},  there exist a refinement index
$k\in \N$, satisfying $3^k\geq p+1$ and $3^{k(n-1)}\geq c$,  a constant $\sK'=\sK'(n,p)$, and
a $\sK'$-quasiregular map $\widetilde f\colon |\widetilde K|\to \bS^n \setminus \left( B_1\cup \cdots \cup B_p\right)$ such that every restriction $\widetilde f|_{\partial \widetilde Q_j} \colon \partial \widetilde Q_j \to \partial B_j$ is a  $\Refine^k(\widetilde K)^\Delta$-Alexander map expanded by properly placed standard simple covers, where $B_1,\ldots, B_p$ are pairwise disjoint Euclidean balls, and  $\sK'\ge 1$ is a constant depending only on the dimension $n$ and the number  $p$.

Since the sphere $S= |K|$, in the metric $d_K$, is bilipschitz to the Euclidean sphere $\bS^n$, the sphere $S$ in the metric $d_{\Refine^k(K)}$ is $(3^k, L)$-quasi-similar to $\bS^n$ for a constant $L=L(n,p)$ independent of $k$.
In particular, there exist pairwise disjoint Euclidean balls $B'_j=B^n(x_j,r), j=1,\ldots,p,$ of the same size, and a quasiconformal homeomorphism
\[
F \colon (|\widetilde K|, d_{\widetilde K}) \to \bS^n \setminus \interior \left( B'_1\cup \cdots \cup B'_p\right),
\]
with a distortion constant depending only on $n$ and $p$.

\medskip

\noindent \emph{Step 3: Extension over $\bS^n$.} It remains to extend the mapping
\[f = \widetilde f \circ F^{-1} \colon \bS^n\setminus (B'_1 \cup \cdots \cup B'_p) \to \bS^n \setminus (B_1 \cup \cdots \cup B_p)\]
 to a quasiregular map $\bS^n \to \bS^n$. For simplicity, we assume that all balls are contained in $\R^n \subset \bS^n$. We may also assume that,  for each $j=1,\ldots, p$, the mapping $f|_{\partial B'_j} \colon \partial B'_j \to \partial B_j$ is an Alexander map expanded by properly placed standard simple covers.

Then,  with $\beta_j= \deg F|_{\partial \widetilde Q_j}$, the radial extensions $B'_j \to B_j$
\[
x\mapsto \left(\frac{|x-x_j|}{r}\right)^{\beta_j} \left( f\left(r\frac{x-x_j}{|x-x_j|} + x_j\right)-x_j \right)+x_j,
\]
of $f$ into $B'_j$, yield the desired $\sK$-quasiregular extension $\bS^n \to \bS^n$ with $\sK = \sK(\sK',n)$; see \cite[Example I.3.2]{Rickman_book} for the model case. Note that the degree $\deg F|_{\partial \widetilde Q_j}$ is independent on $j$ and
in fact is equal to the degree of $f$.

By the choice of refinement index $k$, the local index of $f$ satisfies $i(x_j,f)\ge 3^{k(n-1)}\geq c$ for each $j=1,\ldots,p$. This completes the proof.
\end{proof}

\section{Wildly branching quasiregular maps in dimension $4$}
\label{sec:Heinonen-Rickman}

In this section, we prove a $4$-dimensional version of a theorem of Heinonen and Rickman on wild branching of quasiregular maps \cite[Theorem 2.1]{Heinonen-Rickman_Topology} and discuss its applications to the Jacobian problem and to the Fatou problem. The main theorem reads as follows.\index{Wildly branching quasiregular map}

\introthmHeinonenRickmanTopologyfourdim*

\begin{definition}\label{def:wild}
A Cantor set $X\subset \bS^n$ is \emph{tame}, if there is a homeomorphism $h \colon \bS^n \to \bS^n$ for which $h(X) \subset \bS^1 \subset \bS^n$. It is \emph{wild} if there exists no such homeomorphism. \index{Cantor set!tame} \index{Cantor set!wild}
\end{definition}

There are wild Cantor sets in all dimensions. The  classical construction of Antoine \cite{Antoine} gives an example, the so-called \emph{Antoine's necklace}, in dimension three. The (topological) construction of Antoine's necklace was generalized to dimensions $n \ge 4$ by Blankinship \cite{Blankinship}. In the proof of Theorem \ref{intro-thm:Heinonen-Rickman_Topology_fourdim}, we use a quasi-self-similar version of  Blankinship's wild Cantor set; this (geometrical) construction in dimension $4$ is discussed in the appendix. The dimension restriction in Theorem \ref{intro-thm:Heinonen-Rickman_Topology_fourdim} stems from this dimension constraint. In fact, we are not aware of the existence of quasi-self-similar Cantor sets in dimensions $n\ge 5$.

We mention in passing, that the $3$-dimensional result of Heinonen and Rickman in \cite{Heinonen-Rickman_Topology} is    based on a self-similar Antoine's necklace; see also Semmes \cite{Semmes}.

\subsection{Jacobians of quasiregular maps}

Theorem \ref{intro-thm:Heinonen-Rickman_Topology_fourdim} yields also an example of a quasiregular map $f\colon \bS^4\to \bS^4$ for which the Jacobian $J_f$ is not comparable to the Jacobian of any quasiconformal map $\bS^4\to \bS^4$. The argument follows almost verbatim from that of Heinonen and Rickman for dimension $3$ in \cite{Heinonen-Rickman_Topology}.

\begin{corollary}
There exists  a quasiregular map $f\colon \bS^4\to \bS^4$ for which the Jacobian $J_f$ of the map is not comparable to the Jacobian of any quasiconformal map $\bS^4\to \bS^4$.
\end{corollary}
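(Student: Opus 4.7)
The plan is to deduce the statement directly from Corollary~\ref{intro-cor:parametrization} by observing that a hypothetical quasiconformal $g$ with $J_g \asymp J_f$ would produce a bilipschitz parametrization of the metric $4$-sphere furnished by that corollary. Let $f \colon \bS^4 \to \bS^4$ be the quasiregular map provided by Theorem~\ref{intro-thm:Heinonen-Rickman_Topology_fourdim}, so that $J_f(y) \asymp \dist(y,X)^s$ almost everywhere for a wild Cantor set $X \subset \bS^4$. Define the length metric
\[
d(x,y) \;:=\; \inf_{\gamma}\int_{\gamma} J_f^{1/4}\, ds
\]
on $\bS^4$, where the infimum ranges over rectifiable curves $\gamma$ from $x$ to $y$ and $ds$ is Euclidean arclength. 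This $d$ is (up to bilipschitz equivalence) the metric appearing in Corollary~\ref{intro-cor:parametrization}; in particular $(\bS^4, d)$ is not bilipschitz equivalent to the Euclidean $\bS^4$.

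Suppose, towards a contradiction, that $g \colon \bS^4 \to \bS^4$ is quasiconformal and that there exist $0 < c_1 \le c_2 < \infty$ with $c_1 J_g \le J_f \le c_2 J_g$ almost everywhere. For any quasiconformal $g$, the ratio of the largest and smallest singular values of $Dg$ is bounded by the quasiconformality constant, giving the pointwise Riemannian comparability $g^*g_{\mathrm{euc}} \asymp J_g^{1/2}\, g_{\mathrm{euc}}$ with constants depending only on that distortion. Integrating this along rectifiable paths and using that $g \colon (\bS^4, g^*g_{\mathrm{euc}}) \to (\bS^4, g_{\mathrm{euc}})$ is an isometry, we obtain
\[
|g(x) - g(y)| \;\asymp\; \inf_{\gamma}\int_\gamma J_g^{1/4}\, ds \;\asymp\; d(x,y),
\]
where the second comparability uses $J_g \asymp J_f$. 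Hence $g$ is a bilipschitz homeomorphism from $(\bS^4, d)$ onto the Euclidean $\bS^4$, which directly contradicts Corollary~\ref{intro-cor:parametrization}.

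The main technical point is the Riemannian comparability $g^*g_{\mathrm{euc}} \asymp J_g^{1/2}\,g_{\mathrm{euc}}$ for quasiconformal $g$, together with the resulting path-length identity $|g(x) - g(y)| \asymp \inf_{\gamma}\int_\gamma J_g^{1/4}\, ds$; both are standard facts about quasiconformal maps in dimension four. All other steps are immediate from the definitions, with the decisive nonexistence input supplied by Corollary~\ref{intro-cor:parametrization}.
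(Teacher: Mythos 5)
Your overall strategy --- build a metric from $J_f$, show that any quasiconformal $g$ with $J_g \asymp J_f$ would be bilipschitz from $(\bS^4,d)$ to the Euclidean $\bS^4$, then invoke the nonexistence of a bilipschitz parametrization --- is the same outline as the Heinonen--Rickman argument the paper cites. What differs is the choice of metric. Heinonen--Rickman (and the paper's Section~\ref{sec:intro_BLD_parametrization}) work with the strong $A_\infty$-metric $D_{J_f}$, a genuine metric comparable to $d_{J_f}(x,y) = \left(\int_{B_{x,y}} J_f\right)^{1/4}$; for a quasiconformal $g$ one then uses that $D_{J_g}$ is comparable to the pull-back of the Euclidean metric. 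You instead work with the length metric $\delta(x,y) = \inf_\gamma \int_\gamma J_f^{1/4}\,ds$ and derive the bilipschitz estimate directly from the pointwise quasiconformality bound $|Dg| \asymp J_g^{1/4}$, which is a more hands-on version of that same step.

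The gap is the sentence ``This $d$ is (up to bilipschitz equivalence) the metric appearing in Corollary~\ref{intro-cor:parametrization}.'' The corollary is proved for the $A_\infty$-metric $D_w \asymp D_{J_f}$, not for your length metric $\delta$, and the comparability $\delta \asymp D_{J_f}$ is a substantive claim that you leave unjustified. For a general strong $A_\infty$-weight the two can differ drastically --- a weight vanishing along a hyperplane has a degenerate length metric but a perfectly good $A_\infty$-quasidistance --- so the assertion needs an input specific to this weight. Two ways to close the gap: (i) establish $\delta \asymp D_{J_f}$, e.g.\ by using the fact cited in the paper that $f\colon (\bS^4, D_{J_f}) \to \bS^4$ is BLD, which gives $\ell_{D_{J_f}}(\gamma) \asymp \ell(f\circ\gamma) \asymp \int_\gamma J_f^{1/4}\,ds$ on paths, and combine this with the quasi-convexity of $A_\infty$-metrics; or (ii) bypass the Corollary and re-run Semmes' argument directly for $\delta$: since $J_f \asymp \dist(\cdot,X)^s$, a Euclidean $r$-ball centered on $X$ has $\delta$-diameter at most $C r^{1+s/4}$ (the Euclidean segment joining two points of such a ball stays within $O(r)$ of $X$), so $X$ has $\delta$-Hausdorff dimension strictly less than $2$ for $s$ large; a bilipschitz homeomorphism $(\bS^4,\delta)\to\bS^4$ would then carry $X$ onto a set of zero $\mathcal{H}^2$-measure whose complement is simply connected by the Martio--Rickman--V\"ais\"al\"a theorem, contradicting the non-simple-connectedness of $\bS^4\setminus X$. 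Either repair completes your argument; as written, the decisive comparability claim is unsupported.
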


\subsection{Proof of Theorem \ref{intro-thm:Heinonen-Rickman_Topology_fourdim}}

We prove the theorem by reducing the question to an application of the Cubical quasiregular cobordism theorem (Theorem \ref{intro-thm:qrcobordismII}) and an iterative process as in the proof of Theorem \ref{intro-thm:Rickman-AASF}. We begin by describing the construction of the Cantor set and the corresponding separating complex.

As described in more detail in the appendix, Blankinship's Cantor set $X$ is obtained as the intersection
\[
X = \bigcap_{k=0}^\infty X_k,
\]
where
$X_0=T$ is homeomorphic to $B^2 \times \bS^1\times \bS^1$ and for each $k\ge 1$,
$X_k$ is a compact manifold with boundary, consisting of $m^k$ components $T_w$ each of which is homeomorphic to $B^2 \times \bS^1\times \bS^1$, where $m \in \N$ is a fixed even integer and $w$ is a word of length $k$ in letters $\{1,\ldots, m\}$.
For each $k\ge 0$, the pair $(X_k, X_{k+1})$ is a union of  mutually disjoint pairs $(T_w, T_w \cap X_{k+1})$ each of which is homeomorphic to the pair $\left((B^2 \times \bS^1)\times \bS^1, \mathcal A\times \bS^1\right)$, where  $\mathcal A$ is a union of $m$ mutually disjoint  solid $3$-tori $A_1,\ldots, A_m$  linked in $B^2 \times \bS^1$ as in the construction of Antoine's necklace in $\R^3$.

From now on, we assume that $m$ is large and that the solid tori $A_1,\ldots, A_m$ are isometric to each other and are similar to an embedded solid $3$-torus $A$ for which $X_0 = A\times \bS^1$. Set $A_0=A$.  We may further assume that the pairs $(T_w, T_w\cap X_{k+1})$ belong to two similarity classes for all  words $w$ of length $k$ and all $k\ge 1$; see the appendix for more discussion.

The main part of the proof is to define a suitable cubical complex $K$ on $\left( A \times \bS^1, \mathcal A\times \bS^1 \right)$ which admits a separating complex $Z$. After that, we apply Theorem \ref{intro-thm:qrcobordism} to construct a quasiregular branched covering map  $f$ on $(A \times \bS^1)\setminus \interior ( \mathcal A\times \bS^1)$ associated
 to $(K,Z)$, and then  appeal to the quasi-similarity in the construction to obtain the map $\bS^4 \to \bS^4$ claimed.

\medskip
\noindent \emph{Step 1: Cubical $2$-complexes $\{R_i\colon 0\leq i\leq m\}$ on $2$-tori $\{\partial A_i\colon 0\leq i\leq m\}$.} We set
\[
A_0 = A = B^2\times \bS^1 \subset \R^3.
\]
Given $\ell\ge 2$, let $C_\ell$ be a cubical complex on $\bS^1$ having $\ell$\, $1$-simplices of equal length, e.g.\;with $1$-simplices
\[
\sigma_{k,\ell} = \{ e^{i\theta}\in \bS^1 \colon \theta \in [2\pi (k-1)/\ell, 2\pi k]\},\,\,\,\, \text{for}\,\, k=1,\ldots, \ell.
\]

Let now $R_0$ be a cubical $2$-complex on $\partial A = \partial B^2 \times \bS^1$ isomorphic to $C_4  \times C_{m/2}$, to be fixed more precisely later. This structure subdivides the longitudinal direction of $\partial A$ into $m/2$ equal parts. For each $k=1,\ldots, m/2$, we set
\[
G_k = B^2 \times \sigma_{k,m/2}
\]
to be a $3$-cell.

We also identify (topologically) each solid $3$-torus $A_i$  with $B^2\times \bS^1$. For each even index $i\in \{1,\ldots, m\}$, let $R_i$ be a cubical $2$-complex  on $\partial A_i$  isomorphic to $C_2\times C_2$, to be fixed later. We consider those $A_i$ with even indices as the \emph{vertical rings}. For an odd index $i$, let $R_i$ be a $2$-complex on $\partial A_i$ isomorphic to $C_2\times C_4$. These rings are considered as \emph{horizontal rings}.

Observe that, for each $i=0,\ldots, m$, the cubical complex on the surface $\partial A_i$ is isomorphic to a shellable cubical refinement of the  model complex $C_2\times C_2$.

As a preparation for the next step, we arrange the solid $3$-tori $A_1,\ldots, A_m$ in such a way that each even-indexed torus $A_{2k}$ is contained in $G_k$ and the odd-indexed tori $A_{1+ 2k}$ are symmetric with respect to the $2$-disk $G_k \cap G_{k+1}$ for $k=1,\ldots, m/2$, where we identify $G_{m/2 +1} = G_1$.

\medskip
\emph{Step 2: Cubical $3$-complexes $\{P_i\}$ on the collars $\{M_i\}$ for $\{\partial A_i\}$, and cubical $2$-complexes $\{Y_i\}$ on the filling disks $\{D'_i\}$.} We fix  an inner collar $M_0$ for $\partial A$ in $A$ and mutually disjoint outer collars $M_1,\ldots, M_m$ for $A_1,\ldots, A_m$, respectively, in such a way that  $A'_0= A \setminus \interior M_0 = B^2(1-\varepsilon) \times \bS^1$ for some $\varepsilon>0$ and that each solid $3$-torus $A'_i = A_i \cup M_i$ is contained in the interior of $A'_0$. Note that configuration $(A'_0,A'_1,\ldots, A'_m)$ is homeomorphic to $(A,A_1,\ldots, A_m)$.

Since collars $M_0$ and $M_1,\ldots, M_m$ are product spaces, we may associate to each collar $M_i$,  for $i=0,\ldots, m$, a natural product structure $P_i$ isomorphic to $R_i \times [0,1]$ for which $P_i|_{\partial A_i} = R_i$. In particular, $P_i|_{\partial A'_i}$ is also isomorphic to $R_i$.

By making the tori $A'_1,\ldots, A'_m$ uniformly bilipschitz to solid tori $B^2(r)\times \bS^1(t)$ for some parameters $r$ and $t$ if necessary and rearranging their placement, we may  fix $2$-cells $D_1,\ldots, D_m$, uniformly bilipschitz to the Euclidean disk $B^2(t-2r)$, with the properties that
\begin{enumerate}
\item $D_i \cap \partial A'_i = \partial D_i$, and
\item intersections $D_i \cap A'_{i-1}$ and $D_i \cap A'_{i+1}$ are $2$-disks in the interior of $D_i$, and $D_i \cap A'_j= \emptyset$ for $j\ne i-1,i,i+1$ modulo $m$; and
\item $A'_0\setminus (A'_1\cup \cdots \cup A'_m\cup D_1\cup\cdots \cup D_m)$ is homeomorphic to $\partial A'_0 \times [0,1)$.
\end{enumerate}

For each $i=1,\ldots, m$, let
\[
D'_i = D_i \setminus (A'_{i-1} \cup A'_{i+1}).
\]
We give $D'_i$ a cubical structure $Y_i$ isomorphic to the cubical structure illustrated
in Figure \ref{fig:Cubical_disks}, for the even-indexed  $D'_i$ on the left and and the  odd-indexed  $D'_i$ on the right.

\begin{figure}[h!]
\begin{overpic}[scale=.25,unit=1mm]{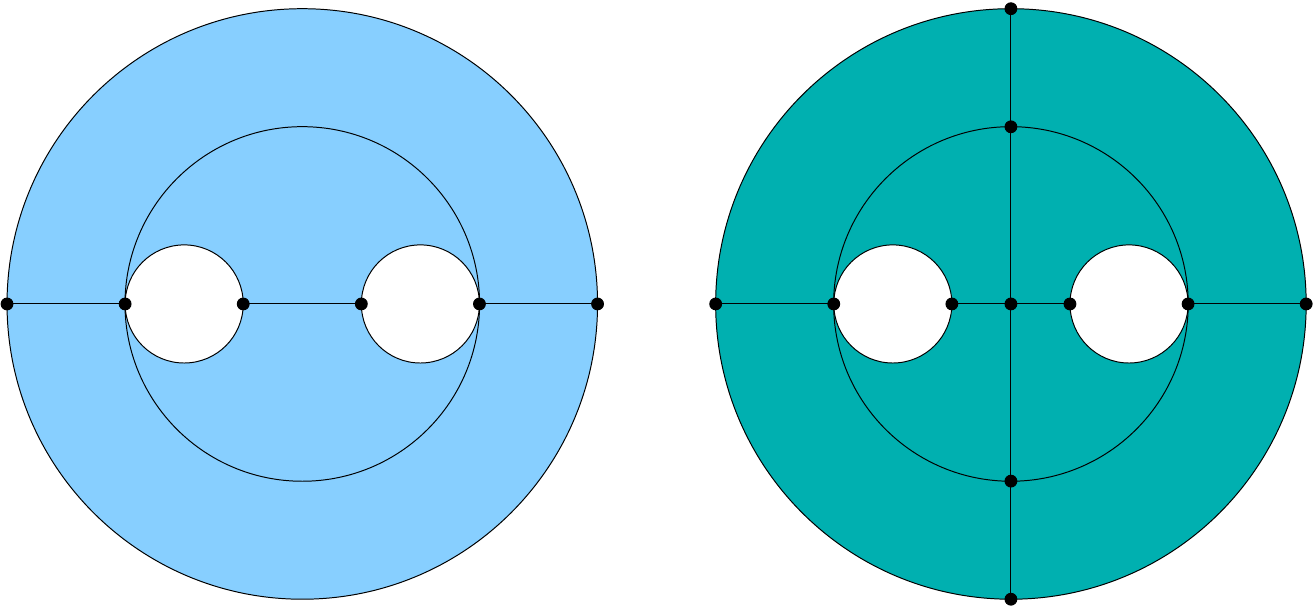} % add: grid
\end{overpic}
\caption{Cubical $2$-complexes for surfaces $D'_i$; the even-indexed on the left and the odd-indexed on the right.}
\label{fig:Cubical_disks}
\end{figure}

Further, we may assume
that the complexes $Y_i$ and $P_j$ agree on  $D_i \cap A'_j$ whenever the intersection is non-empty. Now there is a  well-defined $2$-complex $Y$ on
\[
\Sigma = \bigcup_{i=1}^m (D'_i \cup \partial A'_i) ,
\]
which contains each $P|_{\partial A'_i}$ and each $Y_i$ as a subcomplex. We note that at this stage $A\setminus \Sigma$ has $m+1$ components, each of which is homeomorphic to the product $(\bS^1\times \bS^1) \times [0,1)$.

\medskip
\noindent \emph{Step 3: Cubical $3$-complex $P$ on $A_0\setminus \interior (A_1\cup \cdots \cup A_m)$.} Having the preliminary cubical structures on $\partial A'_0$ and $\Sigma$, we now define a cubical structure on $A_0\setminus \interior (A_1\cup \cdots A_m)$ as follows.

Recall that, for each $k=1,\ldots, m/2$, the $3$-cell $G_k$ contains the vertical ring $A'_{2k}$ and two horizontal half rings $A'_{2k-1}\cap G_k$ and $A'_{2k+1}\cap G_k$. Moreover, we may assume that the cubical structure $Y$ on $\Sigma$ has been chosen so that each subcomplex $X|_{\Sigma \cap G_k}$ is a well-defined cubical subcomplex of $Y$  isomorphic to the first refinement $\Refine(P_0|_{G_k \cap \partial A'_0})$ of the complex $P_0$ on $G_k \cap \partial A'_0$. Furthermore, these isomorphisms are induced by the maps $\pi_k \colon G_k \cap \partial A'_0 \to \Sigma\cap G_k$; see  Figure \ref{fig:Antoine-slice} for an illustration. For this reason, we now replace the complex $R_0$ on $A_0$ by its first refinement $\Refine(R_0)$, and change the cubical complex $P_0$ accordingly.

We may now define the cubical complex $P$ on $A_0 \setminus \interior (A_1\cup \cdots \cup A_m)$ to be the unique cubical $3$-complex (up to isomorphism) for which
\begin{enumerate}
\item $P|_{A_0\setminus A'_0} = P_0$,
\item $P|_{A'_i\setminus \interior A_i} = P_i$ for each $i=1,\ldots, m$,
\item $P|_{A'_0\setminus \interior (A'_1\cup \cdots \cup A'_m\cup D'_1\cup \cdots \cup D'_m)}$ is isomorphic to the product $P_0|_{\partial A'_0} \times [0,1)$, and
\item $P|_{\Sigma}= Y$.
\end{enumerate}
Then $Y$ is a separating complex for $P$.

\begin{figure}[htp!]
\begin{overpic}[scale=.12,unit=1mm]{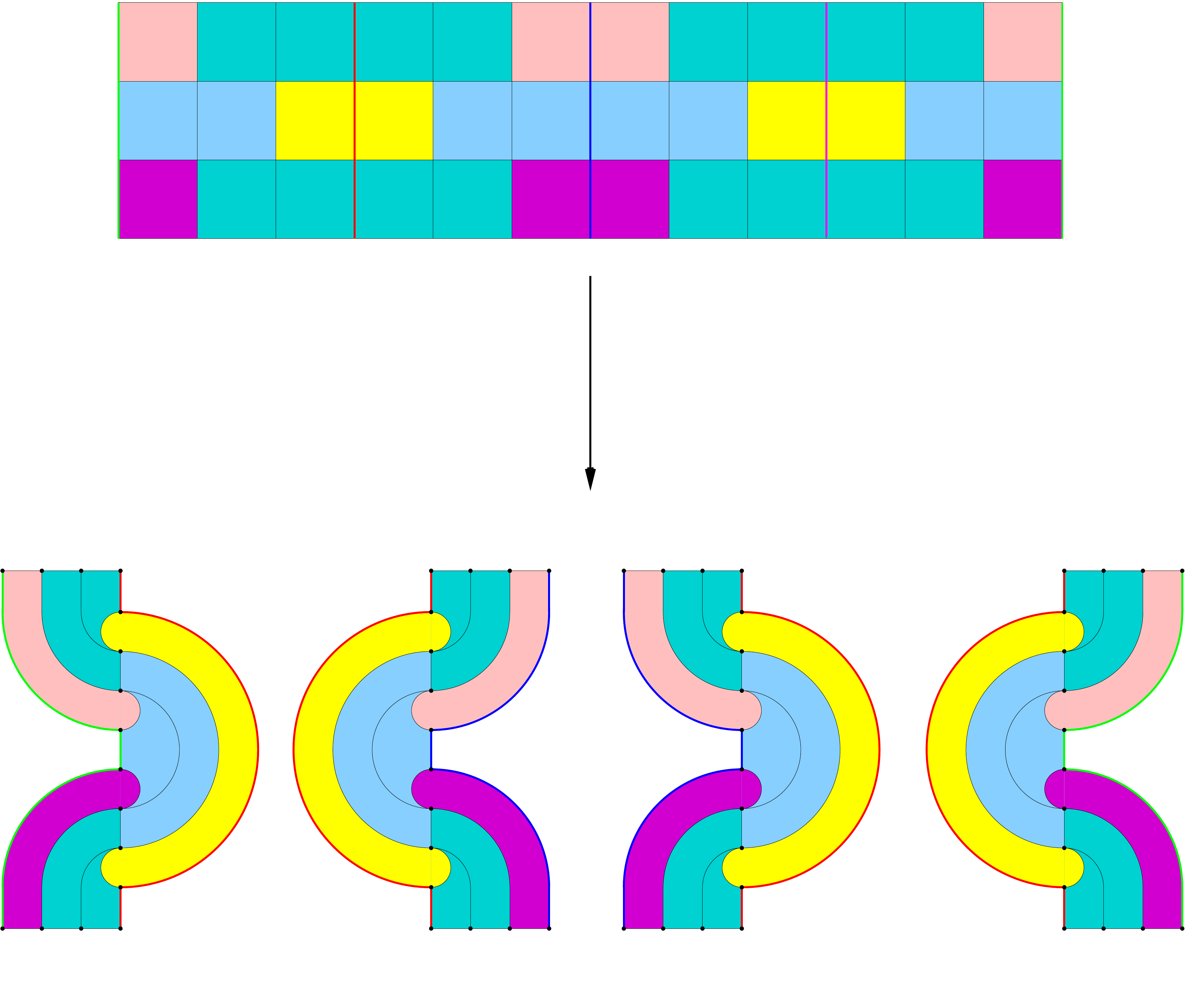} % add: grid
\put(37,37){\tiny $\pi_k$}
\end{overpic}
\caption{Cubical complex $\Refine(P_0|_{ \partial A'_0\cap B_k})$ unfolded (above); cubical complex $X|_{\Sigma\cap G_k}$ unfolded and sliced (below).}
\label{fig:Antoine-slice}
\end{figure}

\medskip
\noindent\emph{Step 4: Cubical $4$-complex $K$ on $A\times \bS^1$, and the separating complex $Z$.}
We now consider the $4$-dimensional pair $(A \times \bS^1, \mathcal A\times \bS^1)$. Recall that $\mathcal A= A_1\cup \cdots \cup A_m$.

Let
\[
K = P \times C_2
\]
be a cubical $4$-complex on $A\times \bS^1$ and set
\[
Z = Y \times C_2 \subset K.
\]
Then $Z$ is a separating complex for $K$

\FloatBarrier

\medskip
\noindent\emph{Step 5: Quasiregular branched cover on $(A\times \bS^1) \setminus \interior (\mathcal A\times \bS^1) $}
By Theorem \ref{intro-thm:qrcobordismII}, there exist a constant $\sK_1=\sK(n,K)\ge 1$, a refinement $\Refine^k(K)$ of $K$ for a sufficiently large $k \in \N$,
and a $\sK_1$-quasiregular mapping
\[
f\colon (A\times \bS^1) \setminus \interior (\mathcal A\times \bS^1) \to B^4 \setminus \interior (B_1\cup \cdots \cup B_m),
\]
where $B_1,\ldots, B_m$ are pairwise disjoint Euclidean $4$-balls in the interior of $B^4$, for which the restrictions $f|_{\partial (A\times \bS^1)} \colon\partial (A\times \bS^1) \to \partial B^4$ and $f|_{\partial A_i} \colon \partial A_i \to \partial B_i$, for $i=1,\ldots, m$, are all of the same degree, call it $c_0$,  and each of which is a $(\Refine^k(K))^\Delta$-Alexander map expanded by properly placed standard simple covers. Note that  we may arrange for the multiplicity of the map $f$ to be greater than any given $c\geq c_0$ by further refinement.

\FloatBarrier

\medskip
\noindent\emph{Step 6: Iterative construction.}
To complete the proof, we set $X_0=T=A\times \bS^1$, and
index the components $\{T_w\}$ of $X_k$ by words $w$ in letters $\{1,\ldots, m\}$ of length $k$, in such a way that
$T_{wi} \subset T_w$ for each word $w$ and $i\in \{1,\ldots,m\}$. In view of Remark \ref{rmk:Cantor_Semmes_package} on the construction of the Cantor set $X$, we may fix, for each word $w$ of length $k$, a sense-preserving $(b^k,L)$-quasi-similarity map $\varphi_w \colon A \times \bS^1 \to T_w$, which  is a homeomorphism between pairs  $(A \times \bS^1, \mathcal A\times \bS^1)$ and $(T_w, T_w \cap X_{k+1})$ and where $c>0, 0<b<1,$ and $L\geq1$ are constants.

Recall  that, after a partition based on the (model) complex $C_2\times C_2$,
the cubical complex on each surface $\partial A_i$, for $i=0,\ldots, m$, is a totally shellable refinement of $C_2\times C_2$.
Thus, we may fix these homeomorphisms $\varphi_w$ inductively so that, for each word $w$ and letter $i$, the induced complex $\varphi_{wi}(K)|_{\partial T_{wi}}$ is a totally shellable refinement of the complex $K|_{\partial T_{wi}}$, and that each $\varphi_w$ is  $\sK_2$-quasiconformal, with   $\sK_2\geq 1$ a constant independent of $w$ and $i$.

Fix for each $j=1,\ldots, m,$ a sense-preserving similarity $\lambda_j\colon B^4\to B_j$, and for a given word $w=i_1\cdots i_k$, write $\lambda_w = \lambda_{i_1}\circ \cdots \circ \lambda_{i_k}$.

Let $f\colon (A\times \bS^1) \setminus \interior (\mathcal A\times \bS^1) \to B^4 \setminus \interior (B_1\cup \cdots \cup B_m)$
be the map defined in Step 4. Since the cubical complexes on  $\partial A_i$ for $i=0,\ldots, m$, are totally shellable refinements of the same complex $C_2\times C_2$, the Alexander maps $f|_{\partial T_i}$ and  $\lambda_i \circ f \circ \varphi_0 \circ \varphi_i^{-1}|_{\partial T_i}$ from $\partial T_i$ to $\partial B_i$ are in fact BLD branched cover homotopic, with a BLD constant depending only on $n$ and $m$. Thus we may assume that the restrictions of
\[
f \colon T \setminus \interior (T\cap X_1) \to B^4 \setminus \interior(B_1\cup\cdots\cup B_m)
\]
to boundary components are conjugate to each other by (uniformly) quasi-similarities of the domains and similarities of the targets.

Repeating the construction of the map $f$ inductively on the length of the word $w$,
we obtain  a constant $\sK'\ge 1$ depending only on $\sK_1$, $\sK_2$,  and,
 for each $k\ge 1$ and a word $w$ of length $k$, a $\sK'$-quasiregular mapping
\[
f_w \colon T_w \setminus \interior (T_w\cap X_{k+1}) \to \lambda_w(B^4) \setminus (\cup_{j=1}^m \interior \lambda_{wi}(B^4))
\]
having the property that, for each $i=1,\ldots, m$, the mappings $f_w$ and $f_{wi}$ agree on $\partial T_{wi}$ and the map
\[
f \colon X_0 \setminus X\to B^4,
\]
defined by the condition $f|_{T_w\setminus \interior (T_w \cap X_{k+1})} = f_w$ for each $w$, is  well-defined and $\sK'$-quasiregular.

The quasiregular map $f\colon X_0\setminus X \to B^4$ extends now over to the Cantor set $X$ as a $\sK'$-quasiregular map.
\medskip

\noindent\emph{Step 7: Final extension.}
A similar construction using a separating complex, for example, extends $f|_{\partial X_0} \colon \partial X_0 \to \partial B^4$  to a $\sK''$-quasiregular map $\bS^4\setminus \interior X_0 \to \bS^4 \setminus \interior B^4$ for some constant $\sK''\ge 1$ depending only on $n$ and the original complex $K$.
By combining these two parts, we obtain a $\sK$-quasiregular map $f\colon \bS^n \to \bS^n$ with $\sK=\max\{ \sK',\sK''\}$.

By the construction, the local index  $i(x,f)=c'$ for each $x\in X$ for some number $c' $ which is at least as large as $c_0=\deg (f|_{\partial (A\times \bS^1)})$.
On the other hand, in the complement of the Cantor set $X$, the local degree of $f$ is determined by the cubical complex $K$ on $A\times \bS^1$, and hence there exists $m_0\ge 1$ for which $i(x,f)\le m_0$ for $x\not \in X$. This completes the construction.

\medskip
\noindent\emph{Step 7: The Jacobian estimates.} To verify the Jacobian estimates in the theorem, we make the following observations. First, since the restriction $F|_{X_0\setminus \interior X_1}$ is BLD, the  estimates hold in $X_0\setminus \interior X_1$. Second,
quasi-similarities $\varphi_w \colon T \to T_w$, for words $w$ of length $k$,  have scaling constant $b^k$  by the construction of the Blankinship necklace.
Third, after applying a  quasiconformal mapping of $\R^4$ which is identity outside $B^4$, we may assume the balls $B_1,\ldots, B_m$ are centered on the axis $\{(0,0,0)\}\times \R$ and have the same diameter $1/(2m)$.
Thus, similarities $\lambda_1,\ldots, \lambda_m$ have the same scaling constant $1/(2m)$.

It is now easy to observe that, for each $k\ge 1$ and a word $w$ of length $k$, the Jacobian $J_f|_{T_w\setminus X_{k+1}}$ is comparable to $\left((1/(2mb))^k\right)^4$. On the other hand, for $x\in T_w\setminus X_{k+1}$,  the distance $\dist(x,X)$ is comparable to $b^k$. Let
\[
s=-4\log(2mb)/\log(b).
\]
Then $\dist(x,X)^s$ and $J_f(x)$ are comparable at each point of differentiability in $T_w\setminus X_{k+1}$ for the mapping $f$.

This completes the proof of Theorem \ref{intro-thm:Heinonen-Rickman_Topology_fourdim}. \qed

\section{Bilipschitz and BLD parametrizations of metric spheres}\label{sec:intro_BLD_parametrization}\index{Bilipschitz parametrization of spheres}

It is known that, for each $n\geq 3$ and $n\neq 4$, there exists a topological sphere $(S,d)$, nearly indistinguishable from $\bS^n$ by classical analysis in the sense advocated by Semmes \cite{Semmes}, which is  not a bilipschitz copy of $\bS^n$ but  may be mapped onto $\bS^n$ by a BLD map.
Theorem \ref{intro-thm:Heinonen-Rickman_Topology_fourdim} may be used to furnish an example of this type for dimension $n=4$.

At the core of these examples is a theorem of Martio, Rickman and V\"ais\"al\"a \cite[III.5.1]{Rickman_book}: \emph{If $A$ is a closed subset of $\bS^n$ of zero $(n-2)$-dimensional Hausdorff measure, then  $\bS^n\setminus A$ is simply connected.}

\medskip

\emph{The case $n\geq 5$.}
Siebenmann and Sullivan  \cite{Siebenmann-Sullivan} observed that, for each $n\geq 5$, there exist finite $n$-dimensional polyhedra which are homeomorphic to the standard  $\bS^n$ but are not bilipschitz to $\bS^n$. Their assertion is  based on a deep work of Cannon \cite{cannon:recognition} and Edwards \cite{edwards:icm}, see also \cite{Edwards_arXiv}, which asserts that the double suspension $\Sigma^2 H^{n-2}$ of any $(n-2)$-dimensional homology sphere $H^{n-2}$ is homeomorphic to $\bS^n$, where the polyhedron $\Sigma^2 H^{n-2}$ is equipped with a canonical barycenter metric associated to a fixed triangulation of $H^{n-2}$.
Note that, by the theorem of Alexander, there exists a PL branched covering map $\Sigma^2 H^{n-2}\to \bS^n$.

The double suspension $\Sigma^2 H^{n-2}$ may be considered as the join $\bS^1 * H^{n-2}$.  The complement of the suspension circle $\Gamma$ in $\Sigma^2 H^{n-2}$ is not simply connected; therefore every homeomorphism $f\colon \Sigma^2 H^{n-2}\to \bS^n$ maps $\Gamma$ onto a curve $ f(\Gamma)$ whose complement in $\bS^n$ is not simply connected. Thus, by the theorem of Martio, Rickman and V\"ais\"al\"a,
$f(\Gamma)$ has positive $(n-2)$-dimensional Hausdorff measure. Therefore $f$ can not be H\"older continuous of order greater than $1/(n-2)$, in particular, not
bilipschitz. It was asked by Siebenmann and Sullivan in \cite{Siebenmann-Sullivan} \emph{whether $\Sigma^2 H^{n-2}$ and $\bS^n$ are quasisymmetrically equivalent.} This question seems inaccessible at the moment.

Since homology spheres are true spheres in dimensions one and two, the argument above is restricted to dimensions $n\geq 5$.

\medskip

\emph{The cases $n=3$ or $4$.} The argument, given here, leading to Corollary \ref{cor:parametrization}  is not new; it combines the discussions in \cite{Heinonen-Rickman_Topology}, \cite{Heinonen-Rickman_Duke}, and \cite{Semmes}.

David and Semmes introduced the notion of strong $A_{\infty}$-weights  in \cite{David-Semmes_A_infinity_1990} and \cite{Semmes_A_infinity_1993}. A strong $A_{\infty}$-weight $w$ is a nonnegative locally integrable function  in $\R^n$, which is doubling and for which  the distance function $d_w\colon \R^n \times \R^n \to [0,\infty)$ defined by
\[
d_w(x,y)=\left(\int_{B_{x,y}} w \,dx \right)^{1/n}
\]
for $x,y\in \R^n$, where $B_{x,y}$ is the unique $n$-ball containing $x, y\in \R^n$ with diameter $|x-y|$,
is comparable to a metric.

If $f\colon \bS^n \to \bS^n$ is a quasiregular map, then its Jacobian $J_f$ is a strong $A_{\infty}$-weight; see \cite{Heinonen-Koskela_Acta}.
Moreover, if $D_J$ is a metric comparable to  $d_{J_f}$, then the map $f\colon (\bS^n, D_J)\to \bS^n$ is BLD; see \cite[Proposition 3.1]{Heinonen-Rickman_Topology}.

Let $X$ be a self-similar Antoine's necklace in $\bS^3$ and $s>0$.
Semmes showed in \cite{Semmes} that
the function $w\colon \bS^n \to [0,\infty)$,
\[
x \mapsto \min\{1,\dist(x,X)^s\},
\]
is a strong $A_{\infty}$-weight for which the distance function $d_w$ is comparable to a metric $D_w$, and that the space $(\bS^3, D_w)$ is linearly locally contractible, has a Hausdorff measure comparable to the Lebesgue measure on $\bS^3$, and supports Sobolev and Poincar\'e inequalities. However, $(\bS^3, D_w)$ is not
bilipschitz to $\bS^3$ when $s> 3$. The argument follows  from the fact that, when $s>3$, $X$ has Hausdorff dimension less than one in $(\bS^3, D_w)$, but $\bS^3 \setminus X$ is not simply-connected.

There is nothing special about dimension $3$. Semmes' argument works for any quasi-self-similar Cantor set $X$  in $\bS^4$ as well. When we specialize the Cantor set $X$ and the quasiregular map $f$ to those in Theorem \ref{intro-thm:Heinonen-Rickman_Topology_fourdim}, the metrics $D_{J_f}$ and $D_w$ are comparable. Therefore, we have the following.

\begin{corollary}
\label{cor:parametrization}
There exists a metric $d$ on $\bS^4$ for which $(\bS^4,d)$ is not bilipschitz equivalent to $\bS^4$, but there is a BLD map $f\colon (\bS^4,d) \to \bS^4$. On the other hand, the space $(\bS^4,d)$ is linearly locally contractible, Ahlfors $4$-regular, and supports Sobolev and Poincar\'e inequalities.
\end{corollary}

If a tame Cantor set  is used in Semmes' argument instead,  then the space $(\bS^n, D_w)$ is bilipschitz equivalent to $\bS^n$ for any $s>0$; see \cite[Remark 4.24]{Semmes}.

\section{UQR mappings with wild Julia sets in dimension $4$}

Recall that a quasiregular self-mapping $f\colon M \to M$ of a Riemannian manifold $M$ is uniformly quasiregular (UQR), if there exists a constant $\sK>1$ for which $f$ and all its iterates are $\sK$-quasiregular. There are  UQR maps in $\bS^3$ whose Julia sets are wild Cantor sets \cite{Fletcher-Wu}.

Wild Cantor sets are known to exist in all dimensions $n\geq 3$. However, geometrically self-similar wild Cantor sets are known only in dimension three. In order for a Cantor set to be a potential candidate for the Julia set of a UQR map, it needs to be at least quasi-self-similar (Definition \ref{def:quasi_self_similar}). We are able to construct such a Cantor set which satisfies a sharper condition (Remark \ref{rmk:Cantor_Semmes_package}) sufficient for this purpose in $\R^4$. The Hopf theorem (Theorem \ref{thm:Hopf_theorem_v2}) is then used to build a UQR map, in dimension $4$, whose Julia set is the above wild Cantor set.

\introthmwildJuliasetdimfour*

We refer to \cite{Fletcher-Wu} for more discussion on the role of wild Julia sets in complex dynamics. In the following proof, we identify, using stereographic projection, $\bS^4$ with the one point compactification $\bar \R^4$ of $\R^4$.

\begin{proof}[Proof of Theorem \ref{intro-thm:wild_Julia_set_dim4}]
Let
\[
X = \bigcap_{k=0}^\infty X_k
\]
be the quasi-self-similar wild Cantor set constructed  in Appendix,  associated to the  parameters $b$, $m$, and $\rho$ subject to the relations \eqref{eq:rho_b} and \eqref{eq:m}.
We retain, from here on, all notations from this particular construction.

We assume that the initial $4$-tube $X_0$ is chosen to have pattern $T$ and that the number of linked $4$-tubes in $X_1$  is $m=48 d^3$ for some even integer $d$.
Denote by $B(r)$ the closed ball $B^4(0,r)$ in $\R^4$, and note from the construction that
\[
X_1= \bigcup_{j=1}^m X_{1,j} \subset X_0\subset B(2) \subset \R^4.
\]

We have now the essential partitions
\[
\overline{\R^4}=  (\overline{\R^4}\setminus B(4))  \cup (B( 4)\setminus B( 3))  \cup (B(3)\setminus X_1) \cup X_1
\]
and
\[
\overline{\R^4}=  (\overline{\R^4}\setminus B( 4^d)) \cup (B( 4^d)\setminus B( 3)) \cup (B(3)\setminus X_0) \cup X_0
\]
of $\overline{\R^4}$. As in \cite{Fletcher-Wu}, we construct a UQR map $f\colon \overline{\R^4} \to \overline{\R^4}$ for which
$f(X_1)= X_0$,
$f(B(3)\setminus X_1) = B(3)\setminus X_0$,
$f(B( 4)\setminus B( 3))= B( 4^d)\setminus B( 3)$, and
$f(\overline{\R^4}\setminus B(4)) = \overline{\R^4}\setminus B( 4^d)$.

\medskip

\noindent
\emph{Step 1:} Let $\varphi_j \colon X_0 \to X_{1,j}$ be the homeomorphisms  in Appendix, and let $f|_{X_1} \colon X_1\to X_0$ be the $m$-fold covering map satisfying
\[
f|_{X_{1,j}}=\varphi^{-1}_j
\]
for each $j=1,\ldots, m$.

\medskip
\noindent
\emph{Step 2:} We define next $f|_{B(3)\setminus X_1} \colon B(3)\setminus X_1 \to B(3) \setminus X_0$ to be the composition of two winding maps $\omega$ and $\omega'$,  and bilipschitz homeomorphisms of $\R^4$ as follows.

Let $\omega \colon \R^4 \to \R^4$ be the degree $m/2$ winding map
\[
 (x_1,x_2, r,\theta)\mapsto (x_1,x_2, r, \theta m/2),
\]
where $(r,\theta)$ are the polar coordinates in $\R^2$.

The winding map $\omega$ is a BLD map, which maps the triple $( B(3),X_0,X_1)$ onto the triple $(B(3), X_0, \omega(X_{1,1}) \cup \omega(X_{1,2}))$. Sets $\omega(X_{1,1})$ and $\omega (X_{1,2})$ remain linked inside $X_0$ and we may straighten them by a bilipschitz homeomorphism $\xi \colon \R^4 \to \R^4$, which is identity on $\R^4\setminus \interior X_0$, in such a way that the involution $\iota \colon X_0\to X_0$,
\[
(x_1,x_2,x_3,x_4) \mapsto (-x_1, x_2,x_3,-x_4),
\]
interchanges the images $\omega(X_{1,1})$ and $\omega(X_{1,2})$. In particular,
\[
\iota (\xi\circ \omega(X_0))=X_0,\,\, \iota (\xi \circ \omega (X_{1,1}))=\xi \circ \omega (X_{1,2}), \,\,\iota (\xi \circ \omega (X_{1,2}))=\xi \circ \omega (X_{1,1}).
\]

Let now $\omega' \colon \R^4 \to \R^4$ to be the winding map
\[
(x_1, r'\cos \theta', r'\sin \theta',x_4)\mapsto (x_1, r'\cos (2 \theta'), r'\sin (2\theta'),x_4),
\]
where $(r',\theta')$ are the polar coordinates in $\R^2$. So $\omega'$ is a degree $2$ sense preserving BLD map under which
\[
\omega' \circ \xi \circ \omega (X_{1,1})=\omega'\circ \xi \circ \omega (X_{1,2}),\,\quad  \omega'\circ \xi \circ \omega ( B(3))= B(3),
\]
and
\[
(\omega' \circ \xi \circ \omega (X_0),\, \omega' \circ \xi \circ \omega (X_{1,1}) )
\approx (B^3 \times \bS^1,\,\tau \times \bS^1),
\]
where $\tau\approx B^2 \times \bS^1$ is a solid torus contained in the interior of $B^3$.

Let now $\eta \colon \R^4 \to \R^4$ be a bilipschitz homeomorphism of $\R^4$ which is identity outside $B(3)$ and for which
\[
\eta\left( \omega' \circ \xi \circ \omega(X_{1,1})\right) =\eta \left( \omega' \circ \xi \circ \omega (X_{1,2}) \right)= X_0.
\]
Thus $\eta\circ \omega' \circ \xi \circ \omega|_{ X_{1,j}}\colon X_{1,j} \to X_0$ for all $j=1,\ldots, m$. We may further adjust the bilipschitz map $\eta$ in $\interior B(3)\setminus X_1$ and find a $4$-manifold $P\approx B^3\times \bS^1$ satisfying $X_0\subset \interior P\subset P \subset \interior B(2)$, and $P=\eta\left( \omega' \circ \xi \circ \omega(X_0)\right)$.

The composition
\[
f|_{ {B(3)}}= \eta\circ \omega' \circ \xi \circ \omega|_{ {B(3)}}\colon ( {B(3)}, X_1)\to ( {B(3)},  X_0)
\]
is a BLD map of degree $m$ which extends the already defined BLD map $f|_{X_1} \colon X_1 \to X_0$, and maps
\[
\left(B(3)\setminus P,\, P\setminus X_0,\, X_0\setminus X_1\right) \mapsto \left(B(3)\setminus B(2),\, B(2)\setminus P,\, P\setminus X_0\right).
\]

\medskip
\noindent \emph{Step 3:} Before defining the map $f \colon \overline{\R^4}\setminus  B( 4) \to \overline{\R^4} \setminus B( 4^d)$, we recall first so-called the \emph{Mayer's power map}. Mayer constructed, for any $n\geq 3$ and $d \geq 2$, a UQR map $p\colon \overline{\R^n} \to \overline{\R^n}$ of degree $d^{n-1}$, whose Julia set is the unit sphere $\bS^{n-1}$.
The restriction $p|_{\R^n \setminus \{0\}} \colon \R^n \setminus \{0\} \to \R^n\setminus \{0\}$ of $p$ is the higher dimensional counterpart of the degree $d^{n-1}$ power map. In fact, $p$ is derived from the Zorich map on $\R^{n-1}\times \R$ using a cylindrical structure $K\times \R$, where $K$ is the standard cubical partition of  $\R^{n-1}$; see \cite[Theorem 2]{Mayer} for the detailed construction.

As a preparation for the Hopf Theorem (Theorem \ref{thm:Hopf_theorem_v2}), we consider a modified Mayer's map $q\colon \overline{\R^4} \to \overline{\R^4}$ associated to a modified Zorich map given by a refined cylindrical structure $K^\Delta \times \R$, where  $K^\Delta$ is the canonical triangulation of $K$. In this triangulation a unit cube is subdivided into $48$ $3$-simplices. Thus our modified Mayer's map $q$ has degree $m= 48 d^3$.
We leave details to the interested reader.

We define now $f \colon \overline{\R^4} \to \overline{\R^4}$ in the complement of $B(4)$ to agree with $q$. Note that the sphere $\partial B(4)$ is mapped onto $\partial B(4^d)$ under the map $q$, and that  $f|_{\partial B(4)}=q|_{\partial B(4)}$ is a sense preserving cubical Alexander map of degree $m$.

\medskip
\noindent
\emph{Step 4:}
It remains to find a quasiregular mapping $B(4) \setminus B(3) \to B(4^d)\setminus B(3)$ extending the already defined parts of $f$.

Recall that $f|_{\partial B(3)}\colon \partial B(3)\to \partial B(3)$ is the composition of a degree $m/2$ winding map $\omega$ and a degree $2$ winding map $\omega'$, modulo bilipschitz adjustments. In view of Theorem \ref{thm:Hopf_theorem_v2}, we may deform $f|_{\partial B(3)}$ to a winding map on $\partial B(3)$ through a BLD map homotopy.
The homotopy yields a BLD extension $B(4)\setminus B(3) \to B(4^d)\setminus B(3)$ of the restrictions $f|_{\partial B(3)} \colon \partial B(3) \to B(3)$ and $f|_{\partial B(4)} \colon \partial B(4) \to \partial B(4^d)$.
This completes the construction of the quasiregular map $f\colon \overline{\R^4} \to \overline{\R^4}$.

\medskip
\noindent \emph{Final step:} The uniform quasiregularity of $f$ follows from two observations. First, by the construction of the Cantor set $X$, there exist $\lambda \ge 1$ and $L\ge 1$ for which the iterated compositions $f_k \circ \cdots \circ f_{k+\ell-1} \colon X_{k+\ell} \to X_k$ of the maps $f_j=f|_{X_{j+1}} \colon X_{j+1}\to X_j$ are $(\lambda^\ell,L)$-quasi-similarities. Second, the Mayer's map $q$ is UQR. The fact that $f$ has the wild Cantor set $X$ as its Julia set follows from the argument in \cite{Fletcher-Wu} almost verbatim.

This completes the proof of Theorem \ref{intro-thm:wild_Julia_set_dim4}.
\end{proof}

\part*{Appendix}

\section{A quasi-self-similar wild Cantor set in dimension $4$}
\label{sec:wild_Cantor}\index{Wild Julia set}

A Cantor set $X$ in $\R^n$ is \emph{tame} if there is a homeomorphism of $\R^n$ that maps $X$ onto the standard ternary Cantor set contained in a line. A Cantor set in $\R^n$ is \emph{wild} if it is not tame. Antoine constructed the first wild Cantor set in $\R^3$, which is now known as Antoine's necklace. Blankinship \cite{Blankinship} extended Antoine's construction to produce wild Cantor sets in $\R^n$ for every $n\geq 4$. \index{wild Cantor set} Constructions of wild Cantor sets are abundant in geometric topology, see e.g.~Bing \cite{Bing52} and Daverman and Edwards \cite{Daverman-Edwards} for more examples.

While there exist geometrically self-similar Antoine's necklaces allowing a sufficiently large number of tori at each stage \cite{Heinonen-Rickman_Topology}, \cite{Zeljko}, \cite{Pankka-Wu}, it is unknown whether geometrically self-similar wild Cantor sets exist in $\R^n$ for $n\geq 4$; see Garity and Repov{\v s} \cite[pp. 675-679]{Pearl_open_problems_topology}.

In this section we construct a wild Cantor set in $\R^4$ which is quasi-self-similar; we do not know whether quasi-self-similar wild Cantor sets exist in dimensions five or higher. The notion of quasi-self-similarity was introduced by McLaughlin \cite{McLaughlin}; see also \cite{Falconer}.

\begin{definition}\label{def:quasi_self_similar} \index{quasi-self-similarity}
 A nonempty set $X$ in a  metric space $(S,d)$ is \emph{$L$-quasi-self-similar for $L\ge 1$}, if there exists a radius $r_0>0$ such that, given any ball $B$ of radius $r<r_0$, there exists a map $f_B \colon B\cap X \to X$ satisfying
 \[
\frac{1}{L} \frac{r_0}{r} d(y,z) \le d(f_B(y),f_B(z)) \le L \frac{ r_0}{r} d(y,z)\]
for all $y,z\in B\cap X$.
A metric space $(X,d)$ is \emph{self-similar} if it is $1$-quasi-self-similar.
\end{definition}

\begin{theorem}\label{thm:AB_4_dim}
There exist geometrically quasi-self-similar wild Cantor sets in $\R^4$.
\end{theorem}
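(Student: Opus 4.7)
The plan is to adapt Blankinship's topological construction of a wild Cantor set in $\R^4$ and equip it with enough rigid geometry to achieve quasi-self-similarity. The model tube is $T_0 = N \times \bS^1 \subset \R^4$, where $N = B^2 \times \bS^1 \subset \R^3 \times \{0\}$ is a standard solid $3$-torus. Fix an even integer $m$ (sufficiently large), a shrinking factor $b \in (0,1)$, and a linking radius $\rho$, subject to the compatibility conditions (roughly $m b^4 < 1$ together with packing conditions in $\R^3$) needed for $m$ copies of $N$ at scale $b$ to form a cyclic Antoine-style chain inside $N$. Inside $N$ place $m$ solid $3$-tori $N_1, \ldots, N_m$ pairwise linked as in Antoine's necklace, with alternating horizontal and vertical ring orientations chosen from a fixed pair of patterns. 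Setting $T_j = N_j \times \bS^1$ yields $X_1 = T_1 \cup \cdots \cup T_m$, and the pair $(T_0, X_1)$ has the shape $(B^2 \times \bS^1 \times \bS^1,\; \mathcal A \times \bS^1)$ used in Section \ref{sec:Heinonen-Rickman}. Iterate: for each word $w$ in letters $\{1,\ldots,m\}$ of length $k$, place $m$ subtubes inside $T_w$ using the same pattern as the $T_j$ sit inside $T_0$, and set $X = \bigcap_{k\ge 0} X_k$ with $X_k = \bigcup_{|w|=k} T_w$.

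Quasi-self-similarity will follow once only finitely many similarity classes of local configurations are allowed to appear. Using two alternating ring types through the chain, each subtube $T_j$ is the image of $T_0$ under a concrete sense-preserving affine similarity $\varphi_j$ of $\R^4$ of ratio $b$, composed with a uniformly bilipschitz correction needed to close the chain inside $N$. Composing, each $\varphi_w = \varphi_{i_1} \circ \cdots \circ \varphi_{i_k}$ for $w = i_1 \cdots i_k$ is a sense-preserving $(b^k, L)$-quasi-similarity of $\R^4$ mapping $(T_0, X_1)$ onto $(T_w, T_w \cap X_{k+1})$ with $L$ independent of $w$ and $k$. For any small ball $B \subset \R^4$ of radius $r$ meeting $X$, choose the longest word $w$ with $B \cap X \subset T_w$ and $\operatorname{diam}(T_w)$ comparable to $r$; then $\varphi_w^{-1}$ realizes the required expanding map in Definition \ref{def:quasi_self_similar}, with constants depending only on $L$, $m$, and $b$.

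Wildness is then established by a Blankinship-type linking argument. Since in each slice $N \times \{\mathrm{pt}\}$ the subchain $\mathcal A \times \{\mathrm{pt}\}$ is non-trivially linked as a $3$-dimensional Antoine necklace, a meridional $2$-sphere linking the core $2$-torus of $T_0$ represents a class in $\pi_2(\R^4 \setminus X_0)$ which cannot be killed in $\R^4 \setminus X_{k+1}$ for any $k$: a null-homotopy, intersected with a generic slice $N \times \{\mathrm{pt}\}$, would provide a null-homotopy of the meridian of $N$ disjoint from $\mathcal A$, contradicting the classical Antoine obstruction. The persistence at every scale under the maps $\varphi_w$ then shows that no homeomorphism of $\R^4$ can carry $X$ into a line, so $X$ is wild. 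The main obstacle is reconciling rigidity with flexibility: genuinely self-similar Antoine necklaces are not known to exist even in $\R^3$, so the chain cannot be closed using similarities alone, and the bilipschitz corrections must be designed carefully — from a finite list of local shapes, with uniform clearances between neighbouring subtubes — so that their distortion does not accumulate under iteration. This quantitative closure of the chain, essentially a metric packing problem for linked tori in $\R^3$, is the heart of the construction; the topology and the wildness then reduce to the classical Blankinship template.
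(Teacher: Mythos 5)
There is a genuine gap in the geometric construction, and it is not cosmetic. You define the subtubes as products $T_j = N_j \times \bS^1$ with $N_j \subset N \subset \R^3$ an Antoine $3$-torus, and you iterate "using the same pattern." But then the last $\bS^1$ factor never shrinks: every $T_w$ is $N_w \times \bS^1$ for some nested Antoine solid torus $N_w$, so the limit is $X = C \times \bS^1$, a Cantor set times a circle — a $1$-dimensional set, not a Cantor set. You also assert that $\varphi_j$ "is the image of $T_0$ under a concrete sense-preserving affine similarity of ratio $b$," which is incompatible with your own formula $T_j = N_j \times \bS^1$: a ratio-$b$ similarity shrinks the $\bS^1$ factor to a circle of radius $b$, and a radius-$b$ circle does not sit inside $\bS^1$ in $\R^4$ in any way that preserves the product structure. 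Either $T_j$ is a product (and the iteration fails to produce a Cantor set), or $\varphi_j$ is a $b$-similarity (and then $T_j$ is not $N_j \times \bS^1$). These two statements cannot both hold.

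What Blankinship's construction actually requires — and what the paper's proof supplies — is that the subtubes be shrunk in all four directions, which forces the roles of the two circle factors of the $2$-torus core to rotate: the meridian of the parent tube must become a longitude of a child tube, and the previously "idle" $\bS^1$ must itself become the linking direction one scale down. Your instinct that only "finitely many similarity classes of local configurations" should appear is exactly right, but the product framework cannot realize it. The paper achieves it by working with two non-similar model tubes $T$ and $\tilde T$ (tubular neighborhoods of the round torus $\kappa$, a surface of revolution in a $3$-plane, and the flat torus $\tilde\kappa = \bS^1(b)\times\bS^1 \subset \R^2\times\R^2$, respectively), placed inside each other by explicit orientation-preserving isometries $\Phi$ and $\Psi$ that permute coordinates so as to exchange the two torus factors, and by alternating $T$- and $\tilde T$-type subtubes along the chain of $m$ linked tori. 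The uniform bilipschitz correction $H\colon (T, X_1) \to (\tilde T, \tilde X_1)$ then collapses the two types to a single template for the purpose of the $(b^k, L)$-quasi-similarity estimate. Your sketch correctly identifies the shape of the final argument — $(b^k, L)$-quasi-similar $\varphi_w$, Blankinship's linking obstruction for wildness — but omits the construction that makes the subtubes simultaneously genuinely $4$-dimensionally shrinking and drawn from a finite similarity class list, which is the actual mathematical content of the theorem.
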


Topologically the Cantor set that we construct is an Antoine-Blankinship's necklace. We recall first the terminologies of the construction.
A solid $n$-tube in $\R^n,\, n\geq 3$, is a topological space homeomorphic to $B^2 \times (\bS^1)^{n-2}$.
Consider the embedding of $m$ linked solid $4$-tubes $T_1, T_2, \dots, T_m$  in an unknotted solid tube $T$ in $\R^n$ as  in Antoine \cite{Antoine} for $n=3$, or as in Blankinship \cite{Blankinship} for $n\geq 4$.
The Cantor set in question is the intersection
\[
X= \bigcap_{k=0}^\infty X_k,
\]
 where $X_k$ is a collection of $m^k$ disjoint solid $4$-tubes and for each tube $\tau$  in $X_k$ the triple $(\R^n, \tau, \tau\cap X_{k+1})$ is homeomorphic to $(\R^n, T, \bigcup_1^m T_j)$.  Chosen with care, the diameters of the components of $X_k$ approach zero as $k\to \infty$; hence $X$ is a Cantor set in $\R^n$.  See also \cite{Pankka-Vellis} for another description of Blankinship's construction.

 In dimension $3$, geometric self-similarity of $X$ can be reached by choosing $m$ sufficiently large;
 see \cite{Pankka-Wu} for related discussion.

In dimension $4$, we  construct a  wild Cantor set $X=\bigcap_{k=1}^\infty X_k$  in $\R^4$ whose difference sets $\{\tau\setminus X_{k+1}\colon \tau\in X_k \,\,\text{and}\,\, k\geq 0 \}$ belong to precisely  two  (geometric) similarity classes.
Because our construction follows that of Blankinship topologically, the
 wildness of $X$ follows from    \cite[Section 2]{Blankinship}.

\subsection{Construction}
For the proof of Theorem \ref{thm:AB_4_dim}, our  construction differs geometrically from that of Blankinship.

We first give an overview of the construction. To fit the steps together, we need Proposition \ref{prop:parameters} below.

Let  $\Phi\colon \R^4\to \R^4$ and $\Psi\colon \R^4\to \R^4$ be isometries defined by $x\mapsto A_\Phi x + e_3$ and $x\mapsto A_\Psi x + e_3$, respectively, where the linear mappings $A_\Phi$ and $A_\Psi$ are given by matrices
\[
A_\Phi = \left[ \begin{array}{cccc}
1 & 0 & 0 & 0 \\
0 & 0 & 0 & 1 \\
0 & 0 & -1 & 0 \\
0 & 1 & 0 & 0
\end{array} \right]
\quad \text{and} \quad
A_\Psi = \left[ \begin{array}{cccc}
0 & 0 & 1 & 0 \\
0 & 0 & 0 & 1 \\
1 & 0 & 0 & 0 \\
0 & 1 & 0 & 0
\end{array}\right]
\]
in the standard basis, and  $e_3$ is the vector $(0,0,1,0)$. In particular,
given $(x_1,x_2,x_3,x_4)$, we have
\[
\Phi(x_1,x_2,x_3,x_4)= (x_1,x_4,1-x_3,x_2)
\]
and
\[
\Psi(x_1,x_2,x_3,x_4)= (x_3,x_4,1+x_1,x_2),
\]
and that both isometries $\Phi$ and $\Psi$ are orientation preserving.
Heuristically, $A_\Phi$ is essentially an exchange of coordinates in $\{0\}\times \R
\times \{0\}\times \R$, and $A_\Psi$ exchanges the $\R^2$
factors of $\R^4=\R^2\times \R^2$.

Let  $\varrho \colon \R^4\to \R^4$ be the rotation
\[
(x_1,x_2,r,\theta) \mapsto (x_1,x_2,r, \theta + 2\pi/m)
\]
where $(r,\theta)$ are the polar coordinates in $\R^2$, that is,
\[(x_1,x_2,x_3, x_4) \mapsto \left(x_1,\, x_2,\, x_3 \cos(\frac{2\pi}{m})-x_4\sin(\frac{2\pi}{m}),\,  x_3\sin(\frac{2\pi}{m})+x_4\cos(\frac{2\pi}{m})\right)\]
in Cartesian coordinates.
Let also, for $j\in \mathbb Z_+$,
$\varrho^j\colon \R^4\to \R^4$ be the $j$th iterate of $\varrho$, that is, the map
\[
(x_1,x_2,r,\theta) \mapsto (x_1,x_2,r, \theta + 2j\pi/m).
\]

Let $b_0, b_1, c_0,$ and $ c_1 $ be constants in $(0,1)$, whose values will be fixed later in Lemma \ref{lem:dist}.
We denote $\rho = \min\{ c_0,c_1\}/10$, and fix constants $b\in (0,1)$ and $m\in 2\Z_+$ which satisfy
\begin{equation}\label{eq:rho_b}
  0<b<\min\{ b_0,b_1,\rho/10\},
\end{equation}
and
\begin{equation}\label{eq:m}
4 b^2/3 \le {2\pi}/{m} \le 3b^2/2.
\end{equation}
Let  $\lambda \colon \R^4\to \R^4$ be the scaling map
\[x\mapsto bx.\]

In the construction of the Cantor set, we iterate two geometric model configurations $T$ and $\widetilde T$ each of which is a solid $4$-tube. In what follows, $m$ is the number of $4$-tubes inside the initial configuration, and  $b$ is the scaling constant. Note  that
$b$ is not comparable to the reciprocal of $m$. We will return to this particular point later.

We now discuss the iteration process formally assuming that the $4$-tubes $T$ and $\widetilde T$ have been fixed.
We set for each $j=1,\ldots, m$,
\[
X_{1,j} = \left\{ \begin{array}{ll}
(\varrho^j \circ \Phi \circ \lambda)T, & j\ \text{even},\\
(\varrho^j \circ \Phi \circ \lambda) \widetilde T, & j\ \text{odd},
\end{array}\right.
\quad
\text{and}
\quad
\widetilde X_{1,j} = \left\{
\begin{array}{ll}
(\varrho^j \circ \Psi \circ \lambda) T, & j\ \text{even},\\
(\varrho^j \circ \Psi \circ \lambda) \widetilde T, & j\ \text{odd}.
\end{array}\right.
\]
Let also
\[
X_1 = \bigcup_{j=1}^m X_{1,j}\quad \text{and}\quad \widetilde X_1 = \bigcup_{j=1}^m \widetilde X_{1,j}.
\]
By suitable choices of $c_0, c_1, b_0$, and $b_1$, sets $X_1$ and $\widetilde X_1$ are unions of mutually disjoint $4$-tubes. Moreover in $X_1$ and also in $\widetilde X_1$, half of the $4$-tubes are similar to the model tube $T$ and half are similar to  the model tube $\widetilde T$. This choice allows us to establish the quasi-self-similarity (instead of the self-similarity) of the Cantor set.

For the iteration, we need the following proposition.

\begin{proposition}\label{prop:parameters}
There exist constants $ b_0, b_1, c_0,$ and $c_1 $ in $(0,1)$, and bilipschitz equivalent $4$-tubes $T$ and $\widetilde T$ for which the following holds: for each $j=1,\ldots, m$,
\begin{enumerate}
\item[(i)] $X_{1,j} \subset \interior T$ and $\widetilde X_{1,j} \subset \interior \widetilde T$;
\item[(ii)] for $i\ne j$, $X_{1,i}\cap X_{1,j}=\emptyset$ and $\widetilde X_{1,i} \cap \widetilde X_{1,j} = \emptyset$;
\item[(iii)] $X_{1,j}$ and $X_{1,j+1}$ (similarly  $\widetilde X_{1,j}$ and $\widetilde X_{1,j+1}$) are disjoint tubes, Hopf linked in $\R^4$; here $m+1 \equiv 1 (\text{mod}\,m)$.
\end{enumerate}
\end{proposition}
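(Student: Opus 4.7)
The strategy is to take $T$ to be a standard tubular neighborhood of a Clifford-like $2$-torus in $\R^4$, and to take $\tilde T$ to be a ``rotated'' tube, related to $T$ by a similarity that conjugates the roles of $\Phi$ and $\Psi$; the proof then reduces to three elementary geometric checks (containment, disjointness, linking). Concretely, I would set
\[
T \;=\; \{(u,v)\in \R^2\times \R^2 : \big||u|-c_0\big|\le b_0,\ \big||v|-c_1\big|\le b_1\},
\]
a thickening of the torus $c_0\bS^1\times c_1\bS^1$, and take $\tilde T = \Psi(\Phi^{-1}(T))$ (or a small perturbation thereof) so that the pair $(T,\tilde T)$ is invariant under the two transfer rules used in even/odd slots. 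Both are diffeomorphic (hence bilipschitz equivalent) tubular neighborhoods of the same combinatorial object.

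\textbf{Verification of (i) and the size regime.} After scaling by $\lambda$ of ratio $b$, each copy $\lambda(T)$ has diameter $O(b)$, and $\Phi\lambda(T)$ is an $O(b)$-neighborhood of the point $e_3=(0,0,1,0)$. Since $\varrho^j$ preserves the $(x_3,x_4)$-circle of radius $1$ through $e_3$, the set $X_{1,j}$ is an $O(b)$-neighborhood of the point $\varrho^j(e_3)$ on the core circle $\{x_1=x_2=0,\,x_3^2+x_4^2=1\}\subset T$. Choosing $b$ smaller than the radii $b_0,b_1,c_0,c_1$ in a universal way (this is the role of \eqref{eq:rho_b} and the definition $\rho=\min\{c_0,c_1\}/10$), one reads off $X_{1,j}\subset\interior T$; the same computation works with $\Psi$ in place of $\Phi$ for $\tilde X_{1,j}\subset\interior\tilde T$.

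\textbf{Verification of (ii).} Consecutive centers $\varrho^j(e_3)$ and $\varrho^{j+1}(e_3)$ are separated by Euclidean distance $2\sin(\pi/m)$, which by \eqref{eq:m} is comparable to $b^2$. The tubes $X_{1,j}$ and $X_{1,j+1}$ have diameter $O(b)$, but their overlap is controlled in the direction tangent to the equator by the second-order quantity $1-\cos(2\pi/m)\sim b^4$; this is why the construction works only when $b$ and $1/m$ are coupled as in \eqref{eq:m} (tubes are ``fat'' along one equatorial direction, and the angular gap must exceed the corresponding transverse tube thickness $O(b^2)$). A direct coordinate calculation, using that $\Phi$ sends the long direction of $T$ into the $x_2$-axis and the rotation axis is the $(x_3,x_4)$-plane, shows that consecutive images have empty intersection when $b_0,b_1$ are chosen sufficiently small relative to $\rho$. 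Non-adjacent tubes ($|i-j|\ge 2 \bmod m$) are trivially disjoint from the angular separation alone.

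\textbf{Verification of (iii) and the main obstacle.} The Hopf linking is the substantive point. The isometries $\Phi$ and $\Psi$ were chosen so that $\Phi$ maps the $x_2$-circle of $T$ (a generator of the $\bS^1$ factor) into the $x_4$-direction, which is the direction of motion under $\varrho$; meanwhile the other $\bS^1$ factor of $T$ is mapped into the $x_2$-direction, orthogonal to the equator. Thus the core $2$-torus of $X_{1,j+1}$ has one generator that pierces the transverse disk $\{x_4=\text{const}\}\cap X_{1,j}$ with linking number $\pm 1$. I would verify this by computing the mod-$2$ linking number of the two core $2$-tori in $\R^4$ explicitly in coordinates, using that the two factors of $T$ play asymmetric roles under $\Phi$. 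The hard part is checking that the alternation between $T$ and $\tilde T$ for even/odd $j$ is consistent: the Hopf link must close up after $m$ steps, which forces $m$ to be even and forces $\tilde T$ to be chosen as the image of $T$ under the symmetry relating $\Phi$ and $\Psi$. This compatibility is what motivates defining two (bilipschitz equivalent) tube patterns rather than one, and it is also the reason the resulting Cantor set is only quasi-self-similar rather than self-similar: the similarity class of a descendent tube depends on parity. The same analysis, with $\Phi$ replaced by $\Psi$, handles $\tilde X_{1,j}$, completing the proof.
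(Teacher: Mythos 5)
The core idea---build $T$ and $\tilde T$ as tubular neighborhoods of $2$-tori, then check containment, disjointness, and linking---is the right framework, but your concrete choice of $T$ does not have the geometry the construction requires, and I do not see how to repair the proof without reproducing what the paper actually does.

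\textbf{The missing anisotropy.} You take
$T = \{(u,v) : \bigl||u|-c_0\bigr|\le b_0,\ \bigl||v|-c_1\bigr|\le b_1\}$
with $c_0,c_1,b_0,b_1$ fixed constants in $(0,1)$ and $b\ll\min\{b_0,b_1,c_0,c_1\}$. The core $c_0\bS^1\times c_1\bS^1$ then has both generating circles of radius $O(1)$, and after applying $\lambda$ (scaling by $b$), $\lambda T$ is $O(b)$ in \emph{every} direction: each scaled circle has radius $O(b)$ and the tube thickness is $O(b\, b_0)=O(b)$. But the $m\approx 1/b^2$ copies are placed at angular spacing $2\pi/m\sim b^2$ along a circle of radius $\approx 1$, so consecutive cores are separated by only $O(b^2)$. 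A body whose every transverse extent is $O(b)$ cannot be packed at $O(b^2)$ spacing, so (ii) fails. You in fact notice, in your discussion of (ii), that the ``transverse tube thickness'' must be $O(b^2)$---but that is inconsistent with the $T$ you wrote down. The paper avoids this by building $T$ and $\tilde T$ as $\rho b$-neighborhoods of \emph{anisotropic} tori $\kappa$ and $\tilde\kappa$, each with one generating circle of radius $b$ (the meridian $\gamma$ resp. $\tilde\gamma$) and one of radius $\approx 1$ (the longitude $l$ resp. $\tilde l$). After $\lambda$, the meridian becomes $O(b^2)$ and the tube radius $\rho b^2$, matching the $O(b^2)$ angular spacing; the quantitative Lemma~\ref{lem:dist} then gives $\dist(\tau_i,\tau_j)\ge c_0 b^2$, and $\rho=\min\{c_0,c_1\}/10$ makes the $\rho b^2$-tubes disjoint. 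This meridian-scale-$b$/longitude-scale-$1$ asymmetry is the structural ingredient your proposal lacks.

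\textbf{Containment (i) also fails as written.} You claim $\Phi\lambda(T)$ is an $O(b)$-blob near $e_3=(0,0,1,0)$ and conclude $X_{1,j}\subset\interior T$. But $e_3=((0,0),(1,0))$ has $|u|=0$, so $e_3\in T$ would force $c_0\le b_0$; in that regime the $u$-factor of your $T$ degenerates to a disk and $T$ is homeomorphic to $B^3\times\bS^1$, not a solid $4$-tube. If instead $c_0>b_0$ (so $T$ really is a $4$-tube), then $e_3\notin T$ and the containment argument breaks down. In the paper the core $\kappa$ is deliberately positioned so that the marked meridian $\gamma$ is a circle of radius $b$ centred at $e_3$, and the final lemma verifies $\dist(x,\kappa)<\rho b$ for $x\in X_{1,m}$, using the quantitative bound $\max\{\dist(y,\kappa):y\in(\Phi\circ\lambda)\kappa\}\le b^2$ together with $b<\rho/10$.

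\textbf{On using two tube shapes.} You set $\tilde T=\Psi\Phi^{-1}(T)$, which is \emph{isometric} to $T$. The paper uses two genuinely non-isometric tubes ($\kappa$ is a round torus of revolution, $\tilde\kappa$ is flat), explains that only a bilipschitz (not isometric) pair map $H\colon(T,X_1)\to(\tilde T,\tilde X_1)$ with $H|_{X_{1,j}}=\varrho^j\circ\Psi\circ\Phi^{-1}\circ\varrho^{-j}$ can be constructed, and records in a remark that it is \emph{unknown} whether one can take $\tilde T=T$ (which would upgrade quasi-self-similarity to self-similarity). Your choice of $\tilde T$ does not by itself resolve the compatibility question of why two shapes are needed; in particular it does not address how the piecewise isometries $\varrho^j\Psi\Phi^{-1}\varrho^{-j}$, which differ on different $X_{1,j}$, glue to a globally bilipschitz map of pairs. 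The explicit $\kappa$/$\tilde\kappa$ choice in the paper is precisely calibrated so that $\Phi$ and $\Psi$ swap their roles and the alternating chain of linked tori closes up.

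In short: the high-level outline (neighborhoods of tori, parity-alternating placement, reduce to three elementary checks) matches the paper, but without the $b$-versus-$1$ anisotropy of $\kappa$ and $\tilde\kappa$, neither (i) nor (ii) can be verified, and Lemma~\ref{lem:dist} (the quantitative core of (ii)) has no analogue in your setup.
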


Assuming for now the validity of the proposition, we fix a bilipschitz homeomorphism (of pairs) $H \colon (T,X_1) \to (\widetilde T, \widetilde X_1)$ having the property that for each $j=1,\ldots,m$, the restriction $H|_{X_{1,j}}$ satisfies
\[
H|_{X_{1,j}} = \varrho^j \circ \Psi \circ \Phi^{-1} \circ \varrho^{-j}.
\]
Note that, in particular, each restriction $H|_{X_{1,j}} \colon X_{1,j} \to \widetilde X_{1,j}$ is an isometry.

For each $j=1,\ldots, m$, let $\varphi_j \colon T \to X_{1,j}$ be the homeomorphism
\[
\varphi_j = \left\{ \begin{array}{ll}
\varrho^j \circ \Phi \circ \lambda,& j\ \text{even},\\
\varrho^j \circ \Phi \circ \lambda \circ H, & j\ \text{odd}.
\end{array}\right.
\]
Then $X_1 = \bigcup_{j=1}^m \varphi_j T.$
We set, for each $k\ge 1$,
\[
X_{k+1} = \bigcup_{|\alpha|=k} \bigcup_{j=1}^m (\varphi_{\alpha_1}\circ \cdots \circ \varphi_{\alpha_k} \circ \varphi_j) T,
\]
where $\alpha = (\alpha_1,\ldots, \alpha_k) \in \{1,\ldots, m\}^k$, and set  $X_0 = T$ for completeness.
 Note again that, for each $k\in {\mathbb Z}_+$, half of the connected components of $X_k$ are similar to $T$ and half are similar to $\widetilde T$.

The intersection
\[
X = \bigcap_{k=0}^\infty X_k
\]
is a wild Cantor set. The quasi-self-similarity of $X$ follows from the fact that each map $\varphi_{\alpha_1}\circ \cdots \circ \varphi_{\alpha_k}$ is
$(b^k,L)$-quasi-similar, where $L$ is a bilipschitz constant for the mapping $H$. The wildness follows now directly from \cite[Section 2]{Blankinship}.

For the proof of Theorem \ref{thm:AB_4_dim}, it remains to  verify Proposition \ref{prop:parameters}.

\subsection{Proof of Proposition \ref{prop:parameters}}
Let $b\in(0,1/10)$ be the constant in \eqref{eq:rho_b} to be determined, and $m$ be an even integer satisfying \eqref{eq:m}.

Since a solid $4$-tube $B^2\times \bS^1\times \bS^1$ may be considered as the regular neighborhood of its core $\{0\} \times \bS^1\times \bS^1$ in $\R^4$, we first construct the toroidal cores.

Heuristically, starting with a torus $\bS^1(b)\times \bS^1(1)$ in $\R^4$, we  place $m$ tori comparable in size to
 $\bS^1(b) \times \bS^1(b^2)$ in such a way that their smaller generating circles $ \bS^1(b^2)$ are linked in $\R^3$, and go around the larger generating circle $\bS^1(1)$ of the initial torus. In the next step, corresponding to each one of these $m$ tori, we place $m$ smaller tori comparable in size to
$\bS^1(b^3) \times \bS^1(b^2)$ so that their  $ \bS^1(b^3)$-circles are linked in a Euclidean $3$-space and go around the  $\bS^1(b)$-circle of their predecessor. The construction may be continued by induction.
We note here that the scaling constant is roughly $b$, and we need $m\approx 1/b^2$ small circles to go around the previous generating circle. The condition \eqref{eq:m} stems from this observation.

\smallskip

\emph{Four circles.}
We fix four (families of) circles which are meridians and longitudes of the cores of two (families of) tubes.

For the first two families of tori, we set
\[
\gamma \equiv \gamma(b) = \{(0,x_2,x_3,0)\in \R^4\colon x_2^2+(x_3-1)^2=b^2\}
\]
and
\[
l \equiv l(b) = \{ (0,0,x_3,x_4) \in \R^4\colon x_3^2+x_4^2=(1-b)^2\}.
\]
For the next two families, we set
\[
\widetilde \gamma \equiv \widetilde \gamma(b) = \bS^1(b)\times \{(1,0)\}
\]
and
\[
\widetilde l \equiv \widetilde l(b) = \{(b,0)\} \times \bS^1.
\]
The circles $\gamma$ and $\widetilde \gamma$ are orthogonal and  they have a common center $e_3=(0,0,1,0)$, and circles $l$ and $\widetilde l$ are invariant under the rotation $\varrho$.

\smallskip

\emph{Toroidal cores.}
Let $\kappa$ be the round torus obtained by revolving $\gamma$ in $\R^4$ with respect to the hyperplane $P = \R^2\times \{(0,0)\}$ in $\R^4$, that is,
\[
\kappa\equiv \kappa(b) =\{(0,b  \cos \phi, (1+b\sin \phi)\cos\theta,   (1+b\sin \phi)\sin \theta)) \colon \phi, \theta \in [0, 2\pi] \}.
\]
Then $\gamma$ is a meridian and $l$ is a longitude of $\kappa$; they will be designated as the \emph{marked meridian} and the \emph{marked longitude} of $\kappa$, respectively.

Let $\widetilde \kappa=  \bS^1(b) \times \bS^1 \subset \R^2\times \R^2 $ be the \emph{flat} torus in $\R^4$, that is,
\[\widetilde \kappa \equiv \widetilde \kappa(b)=\{(b  \cos \psi, b \sin\psi, \cos \theta, \sin \theta  \colon \psi, \theta \in [0, 2\pi] \}.
\]
Then $\widetilde \gamma$ is a meridian and $\widetilde l$  is a longitude of $\widetilde \kappa$. In fact, $\widetilde \kappa$ is the surface of revolution of $\widetilde \gamma$ with respect to $P$. We call $\widetilde \gamma$ and $\widetilde l$ the \emph{marked meridian} and the \emph{marked longitude} of $\widetilde \kappa$, respectively.

We summarize the properties of toroidal cores $\kappa$ and $\widetilde \kappa$ with respect to the properties of $\Phi$ and $\Psi$ as follows.
Since $ \Phi(P)=\R\times \{(0,1)\}\times \R $, we have  that
\begin{enumerate}
\item the embedded $2$-tori $(\Phi\circ \lambda)\kappa$ and $(\Phi\circ \lambda)\widetilde \kappa$ are surfaces of revolution, with respect to the hyperplane $\R\times \{(0,1)\}\times \R$, of the marked meridians
\[
(\Phi\circ \lambda)\gamma=\{(0,0,x_3,x_4)\colon  (x_3-(1-b))^2+x_4^2 =b^4\},
\]
and
\[
(\Phi\circ \lambda)\widetilde \gamma=\{(x_1,0,1-b,x_4)\colon  x_1^2+x_4^2 =b^4\},
\]
respectively;
\item the marked meridians  $(\Phi\circ \lambda) \gamma$  and $(\Phi\circ \lambda)\widetilde \gamma$
have a common center $(0,0,1-b,0)$, which lies on the longitude $l$ of the torus $\kappa$, and the axle of tori $(\Phi\circ \lambda)\kappa$ and $(\Phi\circ \lambda)\widetilde \kappa$, by which we mean
the circle of revolution of the center $(0,0,1-b,0)$ with respect to the hyperplane $\Phi(P)$,  is the marked meridian $\gamma$ of  $\kappa$; and
\item
\[
\max\{\dist(y, \kappa)\colon y\in (\Phi\circ \lambda)\kappa \} \leq b^2,
\]
and
\[
\max\{\dist(y, \kappa)\colon y\in (\Phi\circ \lambda) \widetilde \kappa \} \leq b^2.
\]
\end{enumerate}
Similarly, regarding the embedding $\Psi \circ \lambda$, we have that
\begin{enumerate}
\item tori $(\Psi\circ \lambda)\kappa$ and $(\Psi\circ \lambda)\widetilde \kappa$ are the surfaces of revolution, with respect to the hyperplane $\{(0,0)\}\times \R^2=\Psi(P)$, of the meridians
\[
(\Psi\circ \lambda)\gamma =\{(x_1,0,1,x_4)\colon ( x_1-b)^2+x_4^2 =b^4 \},
\]
and
\[
(\Psi\circ \lambda)\widetilde \gamma=\{(b,0,x_3,x_4)\colon  (x_3-1)^2+x_4^2 =b^4\},
\]
respectively;
\item these two meridians  $(\Psi\circ \lambda) \gamma$  and $(\Psi\circ \lambda)\widetilde \gamma$ have a common  center $(b,0,1,0)$, which lies on the longitude  $\widetilde l$ of $\widetilde \kappa$, and
the  axle of $(\Psi\circ \lambda)\kappa$ and $(\Psi\circ \lambda)\widetilde \kappa$, i.e.,
the circle of revolution of the center $(b,0,1,0)$ with respect to $\Psi(P)$, is the marked meridian $\widetilde \gamma$ of  $\widetilde \kappa$; and
\item we have
\[\max\{\dist(y, \widetilde \kappa)\colon y\in (\Psi\circ \lambda)\kappa \}\leq b^2, \]
and
 \[ \max\{\dist(y, \widetilde \kappa)\colon y\in (\Psi\circ \lambda) \widetilde \kappa \}\leq b^2.\]
\end{enumerate}
\smallskip

\emph{Cyclically linked cores.}
For each $j=1,\ldots, m$, let $\sigma_j \subset \R^4$ be the circle
\[
\sigma_j = \left\{ \begin{array}{ll}
(\varrho^j\circ \Phi\circ \lambda)\gamma, & j\ \text{even}, \\
(\varrho^j\circ \Phi\circ \lambda)\widetilde \gamma, & j\ \text{odd}.
\end{array}\right.
\]
The circles $\sigma_1,\ldots, \sigma_m$ form a necklace chain in $\R\times \{0\}\times \R^2$. More precisely,
\begin{enumerate}
\item circles $\sigma_1,\ldots, \sigma_m$ are pairwise disjoint, and their centers $z_1,\ldots,z_m$ are equally spaced on the longitude $l$ of $\kappa$,
\item $\varrho^2 (\sigma_j)=\sigma_{j+2}$ for each $j=1,\ldots, m-2$, $\varrho^2 (\sigma_{m-1})=\sigma_1$, and $\varrho^2 (\sigma_m)= \sigma_2$, and
\item circles $\sigma_i$ and $\sigma_j$ are (Hopf) linked in  $\R\times \{0\}\times \R^2$ if and only if $i-j \equiv \pm 1 (\text{mod}\,m)$.
\end{enumerate}

Similarly circles
\[
\widetilde \sigma_j = \left\{
\begin{array}{ll}
(\varrho^j\circ \Psi\circ \lambda)\gamma, & j\ \text{even}, \\
(\varrho^j\circ \Psi\circ \lambda)\widetilde \gamma, & j\ \text{odd},
\end{array}\right.
\]
for $j= 1,\ldots, m$, also form a necklace chain in $\R\times \{0\}\times \R^2$.

Let $\tau_1,\ldots,\tau_m$ and $\widetilde \tau_1,\ldots,\widetilde \tau_m$, where
\[
\tau_j = \left\{ \begin{array}{ll}
(\varrho^j\circ \Phi\circ \lambda)\kappa, & j\ \text{even}, \\
(\varrho^j\circ \Phi\circ \lambda)\widetilde\kappa, & j\ \text{odd},
\end{array}\right.
\quad
\text{and}
\quad
\widetilde \tau_j = \left\{ \begin{array}{ll}
(\varrho^j\circ \Psi\circ \lambda)\kappa, & j\ \text{even}, \\
(\varrho^j\circ \Psi\circ \lambda)\widetilde\kappa, & j\ \text{odd},
\end{array}\right.
\]
be two sets of cyclically linked $2$-tori.
For small enough $b$, tori $\tau_1,\ldots,\tau_m$ are mutually disjoint; the same holds for tori $\widetilde \tau_1,\ldots,\widetilde \tau_m$. The distance between these tori may be estimated quantitatively as follows.

\begin{lemma}
\label{lem:dist}
There exist absolute constants $b_0>0$ and $c_0>0$ with the property  that if $0<b<b_0$, then
\[
\dist (\tau_i, \tau_j)\geq c_0 b^2
\]
for $i,j\in \{1,\ldots, m\}$, $i\ne j$.
Similarly, there exist absolute constants $b_1>0$ and $c_1>0$ so that if $0<b<b_1$ then
\[
\dist(\widetilde \tau_i, \widetilde \tau_j) \geq c_1 b^2
\]
for all $i,j \in \{1,\ldots, m\}$ and $i\ne j$.
\end{lemma}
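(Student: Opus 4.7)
My plan is to exploit the cyclic $\varrho$-symmetry of the construction to reduce the problem to the basepoint $i = 0$ and then to bound $\dist(\tau_0, \tau_d)$ for $d \in \{1, \ldots, \lfloor m/2\rfloor\}$ by a case analysis on $d$. Since $\varrho$ is an isometry and the index $m$ is cyclic, this reduction is immediate. The preparatory observation, which I would record first, is that both $(\Phi\circ\lambda)\kappa$ and $(\Phi\circ\lambda)\tilde\kappa$ share the common axle $\gamma$ and lie at distance exactly $b^2$ from it (a direct minimization over $\psi$ of the distance to $g_\psi = (0, b\cos\psi, 1+b\sin\psi, 0)$ yields $b^2$ in both cases, independently of the other angular parameters), and that a straightforward trigonometric computation gives $\dist(\varrho^i\gamma, \varrho^j\gamma) = 2(1-b)|\sin(\pi(i-j)/m)|$.

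For $d \ge 2$, the constraint $2\pi/m \ge 4b^2/3$ from \eqref{eq:m} combined with the estimate $\sin(2\pi/m) \ge (4b^2/3)(1-O(b^2))$ yields $\dist(\gamma, \varrho^d\gamma) \ge \tfrac{8}{3}(1-b)b^2(1-O(b^2))$, which strictly exceeds $2b^2$ for $b$ small. Since both tori lie at distance $b^2$ from their respective axles, the triangle inequality $\dist(\tau_0, \tau_d) \ge \dist(\gamma, \varrho^d\gamma) - 2b^2$ gives $\dist(\tau_0, \tau_d) \ge c_0 b^2$ for some absolute $c_0 > 0$. This case is routine.

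The hard part will be the case $d = 1$, where the axle separation is at most $\tfrac{3}{2}(1-b)b^2 < 2b^2$ and so the naive triangle-inequality bound collapses. Here I would work directly from the explicit parameterizations. The geometric point is that $\tau_0$ lies entirely in $\{x_1 = 0\}$ while $\tau_1$ has $x_1 = b^2\cos\psi \in [-b^2, b^2]$, and moreover the meridian of $(\Phi\circ\lambda)\kappa$ lies in the $(x_3,x_4)$-plane while the meridian of $(\Phi\circ\lambda)\tilde\kappa$ lies in the $(x_1,x_4)$-plane, so the two tori spread in transverse directions out of their common axle. Expanding the squared distance between a point of $\tau_0$ and a point of $\tau_1 = \varrho(\Phi\circ\lambda)\tilde\kappa$ in powers of $b$, and minimizing the $(x_2,x_3)$-contribution over $\theta, \theta'$ (a planar computation which reduces to the minimum $b^4\sin^2\phi$ of $|R\,\hat u - b\,\hat v|^2$ with $R = b + b^2\sin\phi$), the squared distance simplifies to leading order to
\[
b^4\cos^2\psi + b^4\sin^2\phi + \bigl(\beta - b^2(\cos\phi - \sin\psi)\bigr)^2 + O(b^6),
\]
where $\beta = 2\pi/m$. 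Minimizing this expression under the unit-circle constraints $\cos^2\psi+\sin^2\psi = \cos^2\phi+\sin^2\phi = 1$, I would set $\delta = \sin\psi - \cos\phi$ and observe that the first two terms sum to at least $b^4(2|\delta| - \delta^2)$; a short case analysis then shows the global minimum is $(2b^2 - \beta)^2 + O(b^6)$, attained at $(\phi,\psi) = (0, -\pi/2)$. Since $\beta \in [4b^2/3, 3b^2/2]$ forces $2b^2 - \beta \ge b^2/2$, this yields $\dist(\tau_0, \tau_1) \ge b^2/3$ for $b$ small enough, completing the first half of the lemma. The statement for $\tilde\tau_i, \tilde\tau_j$ follows by the identical argument with $\Psi$ replacing $\Phi$ and $\tilde\gamma$ as the common axle, the roles of the coordinate pairs $(x_1,x_2)$ and $(x_3,x_4)$ being symmetrically swapped.
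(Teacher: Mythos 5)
Your argument is correct and reaches the same conclusion, but the route genuinely differs from the paper's, and in one place you supply a step that the paper omits. The paper's proof reduces by rotational symmetry to a single distance $\dist\bigl((\Phi\circ\lambda)\kappa,\ (\varrho\circ\Phi\circ\lambda)\tilde\kappa\bigr)$, i.e.\ the adjacent case $d=1$ only, and says nothing about non-adjacent pairs; you close that gap explicitly via the axle-separation argument, using the (correct and easily verified) fact that every point of $(\Phi\circ\lambda)\kappa$ and of $(\Phi\circ\lambda)\tilde\kappa$ lies at distance exactly $b^2$ from the axle circle $\gamma$, together with the exact formula $\dist(\gamma,\varrho^d\gamma)=2(1-b)\sin(d\beta/2)$ (which indeed holds, since in the expansion $b^2(\sin s-\sin t)^2+(u-v)^2+4uv\sin^2(d\beta/2)$ with $u,v\ge 1-b$ the last term is bounded below by $4(1-b)^2\sin^2(d\beta/2)$). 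For the hard case $d=1$, the paper passes to the $\ell^1$-type bound $\sum|x_i|\le 2(\sum x_i^2)^{1/2}$ and does a two-case analysis on whether $|\beta+b^2(\sin\psi-\cos\phi)|$ is large or small, deriving a lower bound on $|\sin\phi|$ or $|\cos\psi|$ in the small case. You instead minimize the squared $\ell^2$ distance directly: minimize in $\theta,\theta'$ (the circle-to-circle computation giving $b^4\sin^2\phi$), then set $\delta=\sin\psi-\cos\phi$, use the identity $\cos^2\psi+\sin^2\phi=2(1-\sin\psi\cos\phi)\ge 2|\delta|-\delta^2$ (which I checked: $2-\sin^2\psi-\cos^2\phi+(\sin\psi-\cos\phi)^2=2(1-\sin\psi\cos\phi)$, and $1-uv\ge |u-v|$ for $u,v\in[-1,1]$), and observe that after this substitution the $\delta^2$ terms cancel and the objective is piecewise linear in $\delta$, with an explicit global minimum $(2b^2-\beta)^2\ge b^4/4$ at $\delta=-2$. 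This is tighter and more transparent than the paper's case analysis, and is a valid alternative proof.

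One small imprecision: the error in your leading-order formula is $O(b^5)$, not $O(b^6)$. The term $b\beta\cos\theta'$ in the fourth coordinate is $O(b^3)$ and is multiplied in the cross term by the $O(b^2)$ quantity $\beta+b^2\delta$, giving $O(b^5)$. Since your leading term is bounded below by $b^4/4$, this does not affect the conclusion, but the exponent should be corrected.
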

\begin{proof}
Note that each set of distance estimates claimed in the lemma is rotational invariant. For the first claim, it suffices to check that
\begin{equation}
\label{eq:dist_1}
\dist ((\Phi\circ \lambda)\kappa, (\varrho\circ \Phi\circ \lambda)\widetilde\kappa) > c_0 b^2,
\end{equation}
for some absolute constants $b_0$ and $c_0$ in $(0,1)$ and for $0 <b< b_0$.

Recall that $\varrho$ is a rotation by an angle $2\pi/m$ and that  
\[4b^2/3 < 2\pi/m< 3b^2/2;\]
denote, in the following,  $\beta=2\pi/m$. Then
\[
(\Phi \circ \lambda)\kappa = p ([0,2\pi)^2)
\]
and
\[
(\varrho\circ \Phi \circ \lambda)\widetilde \kappa = q ([0,2\pi)^2),
\]
where $p\colon [0,2\pi)^2 \to \R^4$ is the mapping
\[
p(\phi,\theta) = \left(0, \, (b+b^2\sin \phi)\sin \theta, \, 1-(b+b^2\sin \phi)\cos\theta,\,  b^2 \cos \phi\right)
\]
for $\phi,\theta \in [0,2\pi]$, and $q\colon [0,2\pi)^2 \to \R^4$ is the mapping
\[
\begin{split}
q(\psi,\theta) = ( & b^2 \cos \psi,\,  b\sin\theta,\, (1-b\cos \theta)\cos \beta- b^2 \sin \psi \sin \beta,  \\
&(1-b\cos \theta) \sin \beta + b^2 \sin \psi \cos \beta )
\end{split}
\]
for $\psi,\theta \in [0,2\pi]$.

Since $\sum_{i=1}^4|x_i|\leq 2 (\sum_{i=1}^4 |x_i|^2)^{1/2}$  for $x\in \R^4$, a direct computation, using the fact that $\sin \beta= \beta + O(\beta^3)$ and $\cos \beta=1+O(\beta^2)$ as $\beta\to 0$,  yields the following estimates
\begin{align*}
&2 |p(\phi,\theta)-q(\psi,\theta')|\\
&\quad \geq b^2 |\cos \psi|+| b\sin\theta' -(b+b^2\sin \phi)\sin\theta |\\
&\qquad + |(1-b\cos \theta')\cos \beta- b^2 \sin \psi \sin \beta- (1- (b+b^2\sin \phi)\cos \theta)|\\
&\qquad +| (1-b\cos \theta') \sin \beta + b^2 \sin \psi \cos \beta- b^2 \cos \phi |\\
&\quad =  b^2 |\cos \psi|+| b\sin\theta' -(b+b^2\sin \phi)\sin\theta |
+ | b\cos \theta' -(b+b^2\sin \phi)\cos\theta | \\
&\qquad +| \beta + b^2 (\sin \psi -  \cos \phi )|+O(b^3)\\
&\quad \geq  b^2 |\cos \psi|+ b (| \sin\theta' -(1+ b \sin \phi) \sin\theta |^2 \\
&\qquad + |\cos \theta' -(1+ b\sin \phi)\cos \theta|^2)^{1/2}
 +| \beta + b^2 (\sin \psi -  \cos \phi )|+O(b^3) \\
&\quad = b^2 |\cos \psi|+ b(1 +(1-2(1+b\sin \phi) \cos(\theta -\theta') +b\sin \phi)^2)^{1/2}\\
&\qquad +| \beta + b^2 (\sin \psi -  \cos \phi )|+O(b^3)\\
&\quad \geq b^2 |\cos \psi|+b^2 |\sin \phi| +| \beta + b^2 (\sin \psi -  \cos \phi )|+O(b^3).
\end{align*}

If $| \beta + b^2 (\sin \psi -  \cos \phi )|\geq b^2/4$, then \eqref{eq:dist_1} holds trivially. Otherwise, $| \beta + b^2 (\sin \psi -  \cos \phi )| < b^2/4$, which yields
\[
{13}/{12}< \cos \phi -  \sin \psi < {7}/{4}.
\]
Thus $ \cos \phi >0$ and $\sin \psi <0.$
Hence, at least one of the two inequalities $0< \cos \phi  < {7}/{8}$ and $0< -\sin \psi  < {7}/{8} $ holds. As a consequence, either  $|\sin \phi| > 3/8$ or $| \cos \psi| > {3}/{8}$. In either case \eqref{eq:dist_1} holds true.

The estimate of the distances between tori $\widetilde \tau_1, \ldots, \widetilde \tau_m$ is similar. This completes the proof of the lemma.
\end{proof}

\emph{The $4$-tubes.} We are now ready to choose the $4$-tubes $T$ and $\widetilde T$, and to verify the claims of Proposition \ref{prop:parameters}
Let $b_0, b_1, c_0,$ and $ c_1$ be the constants in Lemma \ref{lem:dist}, and let $b, \rho$ and $m$ be constants satisfying \eqref{eq:rho_b} and \eqref{eq:m}.
We now define the model $4$-tubes by
\[
T \equiv T(\rho,b) = \{ x\in \R^4 \colon \dist(x,\kappa) \le \rho b\}
\]
and
\[
\widetilde T \equiv \widetilde T(\rho,b) = \{ x\in \R^4 \colon \dist(x,\widetilde \kappa) \le \rho b\}.
\]
Note that $T$ and $\widetilde T$ are not isometric, since $\kappa$ and $\widetilde \kappa$ are not isometric. So the $4$-tubes in $X_{1,1}, \ldots,X_{1,m}, \widetilde X_{1,1}, \ldots,\widetilde X_{1,m}$ are similar  to either $T$ or  $\widetilde T$. Since these tubes have cores $\tau_1,\ldots,\tau_m, \widetilde \tau_1,\ldots, \widetilde \tau_m$, respectively.
From \eqref{eq:rho_b} and Lemma \ref{lem:dist}, it follows that the $4$-tubes $X_{1,1}, \ldots,X_{1,m}$ are pairwise disjoint; the same holds
 for  $4$-tubes $\widetilde X_{1,1}, \ldots,\widetilde X_{1,m}$. So statement $(ii)$ in the claim of proposition holds.

Since $X_{1,1},\ldots, X_{1,m}$ are cyclically (Hopf) linked in $\R^4$ by the construction and the same holds for $4$-tubes $\widetilde X_{1,1}, \ldots,\widetilde X_{1,m}$. Thus statement $(iii)$ also holds.

Since the core tori $\tau$ and $\widetilde \tau$ are bilipschitz equivalent, the $4$-tubes $T$ and $\widetilde T$ are bilipschitz equivalent as claimed in the proposition.

\smallskip

It remains to verify statement $(i)$ in the proposition.
\smallskip

\begin{lemma} Under the conditions in \eqref{eq:rho_b}, $X_1 \subset \interior X_0$ and $\widetilde X_1 \subset \interior \widetilde X_0$.
\end{lemma}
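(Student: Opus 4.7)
The plan is to reduce both inclusions to a triangle inequality combining three facts already established in the construction: (a) the core $\tau_j$ (resp.\ $\tilde\tau_j$) of each smaller tube $X_{1,j}$ (resp.\ $\tilde X_{1,j}$) lies close to $\kappa$ (resp.\ $\tilde\kappa$); (b) the rotation $\varrho$ preserves both $\kappa$ and $\tilde\kappa$; and (c) the scaling $\lambda$ shrinks the tube radius by a factor $b$, so the tubes $X_{1,j}$ and $\tilde X_{1,j}$ have radius $\rho b\cdot b=\rho b^2$. Since $\tilde T$ and $T$ are by definition the closed $\rho b$-neighborhoods of $\tilde\kappa$ and $\kappa$, respectively, the inclusions reduce to the numerical estimate $(\rho+1)b^2<\rho b$, which follows from $b<\rho/10$.

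For the first inclusion, fix $j\in\{1,\dots,m\}$ and $y\in X_{1,j}$. By definition of $X_{1,j}$, either $y\in(\varrho^j\circ\Phi\circ\lambda)T$ or $y\in(\varrho^j\circ\Phi\circ\lambda)\tilde T$. Since $\varrho^j\circ\Phi\circ\lambda$ scales distances by $b$, the point $y$ lies within $\rho b^2$ of the core $\tau_j=(\varrho^j\circ\Phi\circ\lambda)\kappa$ or $(\varrho^j\circ\Phi\circ\lambda)\tilde\kappa$. The bounds recorded just before Lemma \ref{lem:dist} give
\[
\max\{\dist(z,\kappa):z\in(\Phi\circ\lambda)\kappa\}\le b^2\quad\text{and}\quad\max\{\dist(z,\kappa):z\in(\Phi\circ\lambda)\tilde\kappa\}\le b^2.
\]
Since $\varrho$ preserves $\kappa$ (it is a rotation in the $(x_3,x_4)$-plane, while $\kappa$ is a surface of revolution about that plane), applying $\varrho^j$ preserves these bounds, and therefore $\dist(\tau_j,\kappa)\le b^2$. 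By the triangle inequality,
\[
\dist(y,\kappa)\le \rho b^2+b^2=(\rho+1)b^2.
\]
From \eqref{eq:rho_b} we have $b<\rho/10$, so $(\rho+1)b<(\rho+1)\rho/10<\rho$ because $\rho<1$; hence $(\rho+1)b^2<\rho b$ and $y\in\interior T=\interior X_0$.

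The second inclusion is symmetric. Any $y\in\tilde X_{1,j}$ lies within $\rho b^2$ of $\tilde\tau_j=(\varrho^j\circ\Psi\circ\lambda)\kappa$ or $(\varrho^j\circ\Psi\circ\lambda)\tilde\kappa$. The analogous bounds
\[
\max\{\dist(z,\tilde\kappa):z\in(\Psi\circ\lambda)\kappa\}\le b^2\quad\text{and}\quad\max\{\dist(z,\tilde\kappa):z\in(\Psi\circ\lambda)\tilde\kappa\}\le b^2,
\]
together with the fact that $\varrho$ preserves $\tilde\kappa=\bS^1(b)\times\bS^1$, give $\dist(\tilde\tau_j,\tilde\kappa)\le b^2$. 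The same arithmetic shows $\dist(y,\tilde\kappa)\le(\rho+1)b^2<\rho b$, so $y\in\interior\tilde T=\interior\tilde X_0$. No step is subtle; the only thing to check is the rotational invariance of the two model tori $\kappa$ and $\tilde\kappa$ under $\varrho$, which is immediate from their explicit parametrizations.
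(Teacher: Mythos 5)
Your proof is correct and follows essentially the same route as the paper's: estimate $\dist(y,\kappa)$ by the triangle inequality through the core $\tau_j$, using the $\rho b^2$ tube radius and the bound $\max\{\dist(z,\kappa): z\in\tau_j\}\le b^2$, then invoke $b<\rho/10$. The only cosmetic difference is that the paper proves the inclusion for $j=m$ and transfers to other $j$ by rotation, whereas you run the triangle-inequality argument uniformly for all $j$ after noting $\varrho$-invariance of $\kappa$ and $\tilde\kappa$.
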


\begin{proof} Recall that the core $\tau_m$ of $X_{1,m}$ is the torus $(\Phi\circ \lambda)\kappa$, and the axle of $\tau_m$ is the meridian $\gamma$ of $\kappa$. Recall also that $\max\{\dist(y, \kappa)\colon y\in (\Phi\circ \lambda)\kappa \} \leq b^2$. Therefore,
for $x\in X_{1,m}$, we have
\begin{eqnarray*}
\dist(x, \kappa) &\leq&  \dist(x, \tau_m)+\max\{\dist(y,\kappa)\colon y\in \tau_m\} \\
&\leq& \rho b^2 +b^2  <\rho b/5.
\end{eqnarray*}
Thus, $X_{1,m} \subset \interior X_0$ and, by rotation, $X_{1,j} \subset \interior X_0$ for all even $j$; the proof of inclusion for odd $j$ is similar. Hence $ X_1 \subset \interior  X_0$.
Similarly $\widetilde X_1 \subset \interior \widetilde X_0$.
\end{proof}

This completes the proof of Proposition \ref{prop:parameters} and the proof of Theorem \ref{thm:AB_4_dim}.

\smallskip

\begin{remark}
\label{rmk:Cantor_Semmes_package}
The wild Cantor set $X$ constructed for Theorem \ref{thm:AB_4_dim}
satisfies a condition sharper than the quasi-self-similarity. In fact, the Cantor set
\[
X=\bigcap_{k=0}^\infty \bigcup_{|\alpha|=k} \varphi_\alpha T,
\]
where, for each $\alpha\in \{1,\ldots, m\}^k$,  $\varphi_\alpha$ is a $(b^k,L)$-quasi-similarity,
and $0<b<1$ and $L\geq 1$ are constants.
\end{remark}

\begin{remark}
In the construction above, $T$ and $\widetilde T$ are both homeomorphic to $B^2\times (\bS^1)^2$ but are  not geometrically similar to each other. Therefore the wild Cantor set $X$ constructed above is only quasi-self-similar.

We do not know whether  the $4$-tubes $T$ and $\widetilde T$ may be chosen to be the same set. Such a choice, if possible,
would yield a self-similar (instead of quasi-self-similar) wild Cantor set.
\end{remark}

\bibliographystyle{abbrv}
\bibliography{Deformation}

\printindex

\end{document}